\definecolor{mygreen}{rgb}{0,0.7,0.3}
\definecolor{myblue}{rgb}{0,0.50,1.20}
\definecolor{myorange}{rgb}{1,0.5,0.1}
\definecolor{fillred}{rgb}{1,0.9,0.9}
\definecolor{fillgreen}{rgb}{0.9,1,0.9}
\definecolor{refkey}{rgb}{0,0.7,0.3}
\definecolor{labelkey}{rgb}{1,0,0}
\pgfplotsset{compat=1.17}
\crefname{thm}{Theorem}{Theorems}
\crefname{cor}{Corollary}{Corollaries}
\crefname{lem}{Lemma}{Lemmas}
\crefname{prop}{Proposition}{Propositions}
\crefname{dfn}{Definition}{Definitions}
\crefname{ex}{Example}{Examples}
\crefname{claim}{Claim}{Claims}
\crefname{conj}{Conjecture}{Conjectures}
\crefname{conv}{Notation}{Notations}
\crefname{rem}{Remark}{Remarks}
\crefname{prob}{Problem}{Problems}
\crefname{figure}{Figure}{Figures}
\crefname{table}{Table}{Tables}
\crefname{section}{Section}{Sections}
\crefname{subsection}{Section}{Sections}
\crefname{appendix}{Appendix}{Appendices}
\crefname{lemdef}{Lemma-Definition}{Lemma-Definitions}
\crefname{conv}{Convention}{Conventions}
\crefname{introthm}{Theorem}{Theorems}
\crefname{introcor}{Corollary}{Corollaries}
\crefname{introconj}{Conjecture}{Conjectures}
\newtheorem{thm}{Theorem}
\newtheorem{lem}[thm]{Lemma}
\newtheorem{prop}[thm]{Proposition}
\newtheorem{cor}[thm]{Corollary}
\newtheorem{conj}[thm]{Conjecture}
\newtheorem{introthm}{Theorem}
\newtheorem{introconj}[introthm]{Conjecture}
\theoremstyle{definition}
\newtheorem{dfn}[thm]{Definition}
\newtheorem{rem}[thm]{Remark}
\newtheorem{ex}[thm]{Example}
\numberwithin{figure}{section}
\numberwithin{equation}{section}
\numberwithin{thm}{section}
\newcommand{\bZ}{\mathbb{Z}}
\newcommand{\bC}{\mathbb{C}}
\newcommand{\bM}{\mathbb{M}}
\newcommand{\bN}{\mathbb{N}}
\newcommand{\bE}{\mathbb{E}}
\newcommand{\bs}{{\boldsymbol{s}}}
\newcommand{\sfs}{\mathsf{s}}
\newcommand{\sff}{\mathsf{f}}
\newcommand{\Exch}{\mathrm{Exch}}
\newcommand{\bExch}{\bE \mathrm{xch}}
\newcommand{\uf}{\mathrm{uf}}
\newcommand{\f}{\mathrm{f}}
\newcommand{\A}{\mathcal{A}}
\newcommand{\cC}{\mathcal{C}}
\newcommand{\cD}{\mathcal{D}}
\newcommand{\cF}{\mathcal{F}}
\newcommand{\cO}{\mathcal{O}}
\newcommand{\cR}{\mathcal{R}}
\newcommand{\Skein}[1]{\mathscr{S}_{\mathfrak{sp}_{4},#1}}
\newcommand{\Bweb}[1]{\mathsf{BWeb}_{\mathfrak{sp}_{4},{#1}}}
\newcommand{\Eweb}[1]{\mathsf{EWeb}_{\mathfrak{sp}_{4},{#1}}}
\newcommand{\Cweb}[1]{\mathsf{CWeb}_{\mathfrak{sp}_{4},{#1}}}
\newcommand{\Tang}[1]{\mathsf{Tang}_{\mathfrak{sp}_{4},{#1}}}
\newcommand{\Diag}[1]{\mathsf{Diag}_{\mathfrak{sp}_{4},{#1}}}
\newcommand{\Desc}[1]{\mathsf{Desc}_{\mathfrak{sp}_{4},{#1}}^{\varpi_1}}
\newcommand{\Tree}[1]{\mathsf{Tree}_{\mathfrak{sp}_{4},{#1}}}
\newcommand{\SimpWil}[1]{\mathsf{SimpWil}_{\mathfrak{sp}_{4},{#1}}^{\varpi_1}}
\newcommand{\DescWil}[1]{\mathsf{DescWil}_{\mathfrak{sp}_{4},{#1}}^{\varpi_1}}
\newcommand{\interior}{\mathrm{int}}
\newcommand{\CA}{\mathscr{A}}
\newcommand{\UCA}{\mathscr{U}}
\newcommand{\Conf}{\mathrm{Conf}}
\newcommand{\bD}{{\boldsymbol{\Delta}}}
\newcommand{\CV}{\mathsf{CV}}
\newcommand{\oset}[3][0ex]{%
  \mathrel{\mathop{#3}\limits^{
    \vbox to#1{\kern-2\ex@
    \hbox{$\scriptstyle#2$}\vss}}}}
\newcommand{\overbar}[1]{\oset{#1}{-\!\!\!-\!\!\!-}}
\newcommand\dnode[2]{\filldraw[draw=#2,fill=#2!15,even odd rule](#1) circle(1.7pt) circle(3pt)}
\newcommand\qarrow[2]{\draw[-latex,shorten >=2pt,shorten <=2pt] (#1) -- (#2) [thick];} 
\newcommand\qsarrow[2]{\draw[-latex,shorten >=3pt,shorten <=3pt] (#1) -- (#2) [thick];} 
\newcommand\qsharrow[2]{\draw[-latex,shorten >=4pt,shorten <=2pt] (#1) -- (#2) [thick];} 
\newcommand\qstarrow[2]{\draw[-latex,shorten >=2pt,shorten <=4pt] (#1) -- (#2) [thick];} 
\newcommand{\uniarrow}[3]{\draw[-latex,thick,#3] (#1) to (#2);}
\newcommand\qshdarrow[2]{\draw[->,dashed,shorten >=4pt,shorten <=2pt] (#1) -- (#2) [thick];} 
\newcommand\qstdarrow[2]{\draw[->,dashed,shorten >=2pt,shorten <=4pt] (#1) -- (#2) [thick];} 
\def\centerarc(#1)(#2:#3:#4)
\tikzset{
  mid arrow/.style={postaction={decorate,decoration={
        markings,
        mark=at position .5 with {\arrow[#1]{stealth}}
      }}},
}
\newcommand{\markedpt}[1]{
\node[fill,circle,inner sep=1.2pt] at(#1) {};
}
\newcommand{\quiverplusC}[3]{
    \begin{scope}[>=latex]
    {\color{mygreen}
    \path(#1) coordinate(x1);
    \path(#2) coordinate(x2);
    \path(#3) coordinate(x3);
    \foreach \j in {1,2,3}
    {
        \foreach \k in {1,2,3}
        {
            \foreach \l in {1,2}
            {
            \path($(x\j)!0.333*\l!(x\k)$) coordinate(x\j\k\l);
            }
        }
    }
    \dnode{x121}{mygreen};
    \draw(x122) circle(2pt);
    \dnode{x312}{mygreen};
    \draw(x311) circle(2pt);
    {\color{myblue}
        \draw(x231) circle(2pt);
        \dnode{x232}{myblue};
    }
    \dnode{$(x121)!0.5!(x312)$}{mygreen} coordinate(G2);
    \draw($(x122)!0.5!(x311)$) circle(2pt) coordinate(G1);
    \qsarrow{G2}{x121}
    \qsarrow{x312}{G2}
    \qarrow{G1}{x122}
    \qarrow{x311}{G1}
    \qstarrow{x121}{G1}
    \qsharrow{G1}{G2}
    \qstarrow{G2}{x311}
    {\color{myblue}
    \qarrow{x122}{x231}
    \qarrow{x231}{G1}
    \qarrow{G2}{x232}
    \qarrow{x232}{x312}
    }
    }
    \draw($(x2)!0.5!(x3)$) coordinate(h);
    \draw($(x1)!0.1!(h)$) node{$\ast$};
    \end{scope}
}
\newcommand{\quiverminusC}[3]{
    \begin{scope}[>=latex]
    {\color{mygreen}
    \path(#1) coordinate(x1);
    \path(#2) coordinate(x2);
    \path(#3) coordinate(x3);
    \foreach \j in {1,2,3}
    {
        \foreach \k in {1,2,3}
        {
            \foreach \l in {1,2}
            {
            \path($(x\j)!0.333*\l!(x\k)$) coordinate(x\j\k\l);
            }
        }
    }
    \dnode{x121}{mygreen};
    \draw(x122) circle(2pt);
    \dnode{x312}{mygreen};
    \draw(x311) circle(2pt);
    {\color{myblue}
        \draw(x232) circle(2pt);
        \dnode{x231}{myblue};
    }
    \dnode{$(x121)!0.5!(x312)$}{mygreen} coordinate(G2);
    \draw($(x122)!0.5!(x311)$) circle(2pt) coordinate(G1);
    \qsarrow{G2}{x121}
    \qsarrow{x312}{G2}
    \qarrow{G1}{x122}
    \qarrow{x311}{G1}
    \qsharrow{x122}{G2}
    \qstarrow{G2}{G1}
    \qsharrow{G1}{x312}
    {\color{myblue}
    \qarrow{x121}{x231}
    \qarrow{x231}{G2}
    \qarrow{G1}{x232}
    \qarrow{x232}{x311}
    }
    }
    \draw($(x2)!0.5!(x3)$) coordinate(h);
    \draw($(x1)!0.1!(h)$) node{$\ast$};
    \end{scope}
}
\newcommand{\quiverplus}[3]{
    \begin{scope}[>=latex]
    {\color{mygreen}
    \path(#1) coordinate(x1);
    \path(#2) coordinate(x2);
    \path(#3) coordinate(x3);
    \foreach \j in {1,2,3}
    {
        \foreach \k in {1,2,3}
        {
            \foreach \l in {1,2}
            {
            \path($(x\j)!0.333*\l!(x\k)$) coordinate(x\j\k\l);
            }
        }
    }
    \dnode{x121}{mygreen};
    \draw(x122) circle(2pt);
    \dnode{x312}{mygreen};
    \draw(x311) circle(2pt);
    {\color{mygreen}
        \draw(x231) circle(2pt);
        \dnode{x232}{mygreen};
    }
    \dnode{$(x121)!0.5!(x312)$}{mygreen} coordinate(G2);
    \draw($(x122)!0.5!(x311)$) circle(2pt) coordinate(G1);
    \qsarrow{G2}{x121}
    \qsarrow{x312}{G2}
    \qarrow{G1}{x122}
    \qarrow{x311}{G1}
    \qstarrow{x121}{G1}
    \qsharrow{G1}{G2}
    \qstarrow{G2}{x311}
    {\color{mygreen}
    \qarrow{x122}{x231}
    \qarrow{x231}{G1}
    \qarrow{G2}{x232}
    \qarrow{x232}{x312}
    }
    }
    \draw($(x2)!0.5!(x3)$) coordinate(h);
    \draw($(x1)!0.1!(h)$) node{$\ast$};
    \end{scope}
}
\newcommand{\quiverminus}[3]{
    \begin{scope}[>=latex]
    {\color{mygreen}
    \path(#1) coordinate(x1);
    \path(#2) coordinate(x2);
    \path(#3) coordinate(x3);
    \foreach \j in {1,2,3}
    {
        \foreach \k in {1,2,3}
        {
            \foreach \l in {1,2}
            {
            \path($(x\j)!0.333*\l!(x\k)$) coordinate(x\j\k\l);
            }
        }
    }
    \dnode{x121}{mygreen};
    \draw(x122) circle(2pt);
    \dnode{x312}{mygreen};
    \draw(x311) circle(2pt);
    {\color{mygreen}
        \draw(x232) circle(2pt);
        \dnode{x231}{mygreen};
    }
    \dnode{$(x121)!0.5!(x312)$}{mygreen} coordinate(G2);
    \draw($(x122)!0.5!(x311)$) circle(2pt) coordinate(G1);
    \qsarrow{G2}{x121}
    \qsarrow{x312}{G2}
    \qarrow{G1}{x122}
    \qarrow{x311}{G1}
    \qsharrow{x122}{G2}
    \qstarrow{G2}{G1}
    \qsharrow{G1}{x312}
    {\color{mygreen}
    \qarrow{x121}{x231}
    \qarrow{x231}{G2}
    \qarrow{G1}{x232}
    \qarrow{x232}{x311}
    }
    }
    \draw($(x2)!0.5!(x3)$) coordinate(h);
    \draw($(x1)!0.1!(h)$) node{$\ast$};
    \end{scope}
}
\newcommand{\quiversquare}[4]{
\begin{scope}[>=latex]
    {\color{mygreen}
    \path(#1) coordinate(x1);
    \path(#2) coordinate(x2);
    \path(#3) coordinate(x3);
				\path(#4) coordinate(x4);
				\foreach \i in {1,2}
				{
    \path($(x1)!\i/3!(x4)$) coordinate(x14\i);
				\path($(x2)!\i/3!(x3)$) coordinate(x23\i);
				}
				\foreach \j in {0,1,2,3,4}
				{
				\draw($(x141)!\j/4!(x231)$) circle(2pt) coordinate(v1\j);
				\draw($(x142)!\j/4!(x232)$) coordinate(v2\j);
				\dnode{v2\j}{mygreen};
				}
				\draw[myblue]($(x1)!1/4!(x2)$) circle(2pt) coordinate(yl);
				\draw[myblue]($(x1)!3/4!(x2)$) circle(2pt) coordinate(yr);
				\draw($(x4)!1/4!(x3)$) coordinate(zl);
				\dnode{zl}{myblue};
				\draw($(x4)!3/4!(x3)$) coordinate(zr);
				\dnode{zr}{myblue};
				}
\end{scope}
}
\newcommand{\bline}[3]{
    \path (#1)++(0,-#3) coordinate(m1);
    \path (#2)++(0,-#3) coordinate(m2);
    \filldraw[gray!30] (m1) -- (#1) -- (#2) -- (m2) --cycle;
    \draw[thick] (#1) -- (#2);
}
\newcommand{\smallsq}[1]{\draw(0,0) coordinate(A) -- (0,1.2)  coordinate(B) -- (1.2,1.2)  coordinate(C) -- (1.2,0)  coordinate(D) --cycle;
#1;
\foreach \i in {0,1.2} \foreach \j in {0,1.2} \markedpt{\i,\j};}
\tikzset{
    symbol/.style={
        draw=none,
        every to/.append style={
            edge node={node [sloped, allow upside down, auto=false]{$#1$}}}
    }
}
\tikzset{
    ->-/.style 2 args={
        postaction={decorate},
        decoration={markings, mark=at position #1 with {\arrow[thick, #2]{>}}}
    },
    ->-/.default={0.5}{}
}
\tikzset{
    -<-/.style 2 args={
        postaction={decorate},
        decoration={markings, mark=at position #1 with {\arrow[thick, #2]{<}}} 
    },
    -<-/.default={0.5}{}
}
\tikzset{
    overarc/.style={
        white, double=red, double distance=1.6pt, line width=2.4pt
    }
}
\tikzset{
    overarcblack/.style={
        white, double=black, double distance=1.6pt, line width=2.4pt
    }
}
\tikzset{
    wline/.style={
        line width=1.6pt, red!15, preaction={draw, line width=2.8pt, red}
    }
}
\tikzset{
    overwline/.style={
        line width=1.6pt, red!15, preaction={preaction={draw, line width=6.8pt, white},draw, line width=2.8pt, red}
    }
}
\tikzset{
	webline/.style={
		red, ultra thick
	}
}
\tikzset{
    overarcblack/.style={
        white, double=black, double distance=1.6pt, line width=2.4pt
    }
}
\tikzset{
	weblineblack/.style={
		black, ultra thick
	}
}
\tikzset{
    wlineblack/.style={
        line width=1.6pt, black!15, preaction={draw, line width=2.8pt}
    }
}
\tikzset{
	weblinegreen/.style={
		mygreen, ultra thick
	}
}
\tikzset{
    wlinegreen/.style={
        line width=1.6pt, green!15, preaction={draw, line width=2.8pt, mygreen}
    }
}
\newcommand{\bdryline}[3]{
    \coordinate (temp1) at #1;
    \coordinate (temp2) at #2;
    \coordinate (temp11) at ($(temp1)!#3!-90:(temp2)$);
    \coordinate (temp22) at ($(temp2)!#3!90:(temp1)$);
    \filldraw[gray!30] (temp1) -- (temp11) -- (temp22) -- (temp2) --cycle;
    \draw[very thick] (temp1) -- (temp2);
}
\newcommand{\CoG}[3]{
    \path(#1) coordinate(x1);
    \path(#2) coordinate(x2);
    \path(#3) coordinate(x3);
    \path($(x1)!0.5!(x2)$) coordinate(H);
    \path($(x3)!0.667!(H)$) circle(2pt) coordinate(G);}
\newcommand{\triv}[3]{
    \CoG{#1}{#2}{#3}
    \draw[webline] (#1) -- (G);
    \draw[webline] (#2) -- (G);
    \draw[wline] (#3) -- (G);
}
\title[Skein and cluster algebras for $\mathfrak{sp}_4$]{Skein and cluster algebras of unpunctured surfaces for $\mathfrak{sp}_4$}
\author[Tsukasa Ishibashi]{Tsukasa Ishibashi}
\address{Tsukasa Ishibashi, Mathematical Institute, Tohoku University, 
6-3 Aoba, Aramaki, Aoba-ku, Sendai, Miyagi 980-8578, Japan.}
\email{tsukasa.ishibashi.a6@tohoku.ac.jp}
\urladdr{https://sites.google.com/view/tsukasa-ishibashi/home} 
\author[Wataru Yuasa]{Wataru Yuasa}
\address{Wataru Yuasa, Department of Mathematics, Kyoto University,
Kitashirakawa Oiwake-cho, Sakyo-ku, Kyoto 606-8502, Japan}
\email{yuasa.wataru.6m@kyoto-u.ac.jp}
\urladdr{https://wataruyuasa.github.io/math/}
\subjclass[2020]{13F60, 57K31 (Primary), 57K20 (Secondary)}
\keywords{Cluster algebra; Skein algebra; Positivity}
\begin{document}

\begin{abstract}
Continuing to our previous work \cite{IYsl3} on the $\mathfrak{sl}_3$-case, we introduce a skein algebra $\mathscr{S}_{\mathfrak{sp}_4,\Sigma}^{q}$ consisting of $\mathfrak{sp}_4$-webs on a marked surface $\Sigma$ with certain ``clasped'' skein relations at special points, and investigate its cluster nature. 
We also introduce a natural $\mathbb{Z}_q$-form $\mathscr{S}_{\mathfrak{sp}_4,\Sigma}^{\mathbb{Z}_q} \subset \mathscr{S}_{\mathfrak{sp}_4,\Sigma}^q$, while the natural coefficient ring $\mathcal{R}$ of $\mathscr{S}_{\mathfrak{sp}_4,\Sigma}^q$ includes the inverse of the quantum integer $[2]_q$. 
We prove that its boundary-localization  $\mathscr{S}_{\mathfrak{sp}_4,\Sigma}^{\mathbb{Z}_q}[\partial^{-1}]$ is included into a quantum cluster algebra $\mathscr{A}^q_{\mathfrak{sp}_4,\Sigma}$ that quantizes the function ring of the moduli space $\mathcal{A}_{Sp_4,\Sigma}^\times$. 
Moreover, we obtain the positivity of Laurent expressions of elevation-preserving webs in a similar way to \cite{IYsl3}. 
We also propose a characterization of cluster variables in the spirit of Fomin--Pylyavksyy \cite{FP16} in terms of the $\mathfrak{sp}_4$-webs, and give infinitely many supporting examples on a quadrilateral.
\end{abstract}
\maketitle

\setcounter{tocdepth}{1}
\tableofcontents


\section{Introduction}\label{sec:intro}
We continue to investigate the relationship between the cluster and skein quantizations of the (decorated) $G$-character varieties of surfaces. The $\mathfrak{sl}_2$-case is established by Muller \cite{Muller16} and the $\mathfrak{sl}_3$-case is investigated (via the $\mathfrak{sl}_3$-skein algebra defined by \cite{FrohmanSikora20}) in our previous paper \cite{IYsl3}. In the present paper, we are going to deal with the $\mathfrak{sp}_4$-case. 
Kuperberg~\cite{Kuperberg96} has established the skein theory for rank two Lie algebras, including $\mathfrak{sp}_4$. He gave a diagrammatic interpretation of categories of finite-dimensional representations of their quantum groups. The diagrammatic category of $U_q(\mathfrak{sp}_4)$ was further studied by Bodish~\cite{Bod20,Bod22}, and that for $U_q(\mathfrak{sp}_{2n})$ by \cite{BERT}.   
On the other hand, the canonical cluster $K_2$-structure on the moduli space $\A_{G,\Sigma}$ of decorated twisted $G$-local system on a marked surface $\Sigma$ \cite{FG03} has been constructed by Le \cite{Le19} (for classical types, and particular cluster charts) and \cite{GS19} (for all semisimple types, and a more general class of cluster charts). Let $\sfs(\mathfrak{g},\Sigma)$ be the mutation class of seeds that encodes this structure, where $\mathfrak{g}=\mathrm{Lie}(G)$.  

\subsection{Comparison of skein and cluster algebras}
In this paper, we first define a skein algebra $\mathscr{S}_{\mathfrak{sp}_4,\Sigma}^q$ spanned by certain $\mathfrak{sp}_4$-webs on an unpunctured marked surface $\Sigma$, subject to certain ``clasped'' skein relations (\cref{def:bdry-skeinrel}) as well as the interior $\mathfrak{sp}_4$-skein relations studied by Kuperberg. Here the coefficient ring of the skein algebra is $\cR=\cR_q:=\bZ[q^{\pm 1/2},1/[2]_q]$, which includes the inverse of the quantum integer $[2]_q$. 
We also introduce the set $\Bweb{\Sigma} \subset \mathscr{S}_{\mathfrak{sp}_4,\Sigma}^q$ of \emph{basis webs}, which gives an $\cR$-basis (\cref{thm:basis-web}). Then we define the \emph{$\bZ_q$-form} $\mathscr{S}_{\mathfrak{sp}_4,\Sigma}^{\bZ_q} \subset \mathscr{S}_{\mathfrak{sp}_4,\Sigma}^q$ to be the $\bZ_q$-span of $\Bweb{\Sigma}$, which turns out to be closed under the multiplication (\cref{lem:crossroads-in-Zv}). 

On the other hand, we construct quantum clusters associated with decorated triangulations as \emph{web clusters} in the skein algebra $\mathscr{S}_{\mathfrak{sp}_4,\Sigma}^q$, and show that they are mutation-equivalent to each other as a comparison between the quantum exchange and skein relations (\cref{prop:compatibility,thm:q-mutation-equivalence}). Hence the associated quantum seeds give rise to a canonical mutation class $\sfs_q(\mathfrak{sp}_4,\Sigma)$ that quantizes $\sfs(\mathfrak{sp}_4,\Sigma)$, which defines a quantum (upper) cluster algebras $\CA_{\mathfrak{sp}_4,\Sigma}^q \subset \UCA_{\mathfrak{sp}_4,\Sigma}^q$ over $\bZ_q=\bZ[q^{\pm 1/2}]$ in the skew-field $\mathrm{Frac}\mathscr{S}_{\mathfrak{sp}_4,\Sigma}^q$ of fractions.  

The following is our first result, which gives a comparison of skein and cluster algebras:

\begin{introthm}[Comparison of skein and cluster algebras: \cref{subsec:S in A,subsec:S in U}]\label{introthm:comparison}
For any connected (triangulable) unpunctured marked surface $\Sigma$, we have an inclusion  $\mathscr{S}_{\mathfrak{sp}_4,\Sigma}^{\bZ_q}[\partial^{-1}] \subset \UCA_{\mathfrak{sp}_4,\Sigma}^q$. Here $\mathscr{S}_{\mathfrak{sp}_4,\Sigma}^{\bZ_q}[\partial^{-1}]$ denotes the boundary-localization of the $\bZ_q$-form of $\mathfrak{sp}_4$-skein algebra (\cref{def:localized-skein-alg}). 
Moreover if $\Sigma$ has at least two special points, then we have inclusions 
\begin{align*}
    \mathscr{S}_{\mathfrak{sp}_4,\Sigma}^{\bZ_q}[\partial^{-1}] \subset
    \CA_{\mathfrak{sp}_4,\Sigma}^q \subset  \UCA_{\mathfrak{sp}_4,\Sigma}^q.
\end{align*}
Moreover, 
\begin{itemize}
    \item the inclusions are mapping class group equivariant;
    \item the bar-involution on $\UCA_{\mathfrak{sp}_4,\Sigma}^q$ restricts to the mirror-reflections on $\mathscr{S}_{\mathfrak{sp}_4,\Sigma}^{\bZ_q}[\partial^{-1}]$;
    \item the ensemble grading of $\UCA_{\mathfrak{sp}_4,\Sigma}^q$ restricts to the endpoint grading of $\mathscr{S}_{\mathfrak{sp}_4,\Sigma}^{\bZ_q}[\partial^{-1}]$.
\end{itemize}
\end{introthm}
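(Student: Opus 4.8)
The plan is to prove the containment one basis web at a time, by combining a single ``Laurent expansion'' result in one web cluster with the quantum Laurent-phenomenon (starfish) criterion for membership in the upper cluster algebra. Fix a decorated triangulation $T$ together with its web cluster. First I would show that the boundary-localized $\bZ_q$-form $\mathscr{S}_{\mathfrak{sp}_4,\Sigma}^{\bZ_q}[\partial^{-1}]$ embeds into the quantum Laurent polynomial ring $\mathcal{L}_T^q$ of $T$, with coefficients in $\bZ_q$. To promote this to membership in $\UCA_{\mathfrak{sp}_4,\Sigma}^q=\bigcap_{\Sigma'}\mathcal{L}_{\Sigma'}^q$, I would invoke the quantum starfish criterion, which reduces membership to Laurent-ness in the seed $\Sigma_T$ together with each of its one-step mutations $\mu_k\Sigma_T$. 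Because $\mathscr{S}_{\mathfrak{sp}_4,\Sigma}^{\bZ_q}[\partial^{-1}]$ is defined intrinsically on $\Sigma$ while the web clusters of distinct triangulations are mutation-equivalent by \cref{prop:compatibility}, the embedding into $\mathcal{L}_T^q$ holds verbatim for every web cluster; those neighbors $\mu_k\Sigma_T$ that are not themselves web clusters I would treat by realizing the mutated cluster variable as an explicit $\mathfrak{sp}_4$-web and re-running the expansion, equivalently by checking that the exchange substitution $x_k\mapsto$ (binomial)$/x_k'$ introduces no pole along $x_k'=0$.

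For the Laurent expansion itself --- the technical heart --- I would place an arbitrary basis web $w\in\Bweb{\Sigma}$ into good position with respect to $T$, cut it along the edges of $T$, and decompose the resulting pieces into elementary webs supported on individual triangles, each of which is engineered to be a cluster monomial in the web cluster variables. Applying the interior Kuperberg relations together with the clasped boundary relations of \cref{def:bdry-skeinrel} then rewrites $w$ as a $\bZ_q$-linear combination of cluster monomials, with only boundary (frozen) variables inverted. The delicate point is coefficient control: the ambient coefficient ring $\cR$ contains $1/[2]_q$, yet the reduction must remain inside $\bZ_q$, so I would lean on the closedness of the $\bZ_q$-form under multiplication (\cref{lem:crossroads-in-Zv}) to guarantee that every intermediate re-expansion has $\bZ_q$-coefficients. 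I expect the main obstacle to lie exactly here, since the $\mathfrak{sp}_4$-web calculus is substantially richer than the $\mathfrak{sl}_3$-case of \cite{IYsl3} and the $[2]_q$-denominators must be shown to cancel at each crossroads-type reduction.

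To upgrade the inclusion to $\mathscr{S}_{\mathfrak{sp}_4,\Sigma}^{\bZ_q}[\partial^{-1}]\subset\CA_{\mathfrak{sp}_4,\Sigma}^q$ when $\Sigma$ has at least two special points, I would strengthen the Laurent expansion to a genuinely polynomial one. The strategy is a triangular, PBW-type straightening: each basis web expands with a leading cluster monomial plus strictly lower terms, so that an induction rewrites $w$ as a $\bZ_q$-polynomial in the cluster variables, inverting only frozen boundary variables. The hypothesis of a second special point furnishes precisely the extra frozen boundary web needed to clear the denominators that survive in the general Laurent expansion, so the straightening terminates in cluster monomials and realizes $w$ inside the lower cluster algebra; the inclusion $\CA_{\mathfrak{sp}_4,\Sigma}^q\subset\UCA_{\mathfrak{sp}_4,\Sigma}^q$ is then the standard one.

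Finally, I would settle the three compatibility statements by checking them on the web cluster variables of a fixed $T$ and extending. Equivariance under the mapping class group $\mathrm{MCG}(\Sigma)$ is essentially formal: both $\mathscr{S}_{\mathfrak{sp}_4,\Sigma}^{\bZ_q}[\partial^{-1}]$ and the mutation class $\sfs_q(\mathfrak{sp}_4,\Sigma)$ are built intrinsically from $\Sigma$, so a diffeomorphism carries web clusters to web clusters compatibly with the embedding. For the bar-involution versus mirror-reflection, I would verify that both $(\,\overline{\phantom{x}}\,)$-operations act by $q^{1/2}\mapsto q^{-1/2}$ and agree on each web cluster variable, and then extend across $\mathrm{Frac}\,\mathscr{S}_{\mathfrak{sp}_4,\Sigma}^q$ using that each involution is determined by its values on a single cluster. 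For the ensemble versus endpoint grading, I would match degrees on web cluster variables --- identifying the ensemble degree read off from the cokernel of the exchange matrix with the boundary endpoint multiset of the corresponding elementary web --- and conclude by additivity of both gradings under multiplication.
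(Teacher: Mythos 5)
Your treatment of the inclusion $\mathscr{S}_{\mathfrak{sp}_4,\Sigma}^{\bZ_q}[\partial^{-1}]\subset\UCA^q_{\mathfrak{sp}_4,\Sigma}$ and of the three compatibility bullets is essentially the paper's route: expand any basis web by cutting along the edges of a triangulation (the cutting trick, \cref{lem:cutteing-trick}, packaged as \cref{thm:Cweb-exp}), control the $[2]_q$-denominators via \cref{lem:crossroads-in-Zv}, and then invoke the quantum upper bound theorem (\cref{thm:q-Laurent}) at the seed of a decorated triangulation and its neighbours, all of which the paper realizes as web clusters of decorated cell decompositions. The degree and involution comparisons reduce to a check on a single web cluster exactly as you say.

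The gap is in your argument for $\mathscr{S}_{\mathfrak{sp}_4,\Sigma}^{\bZ_q}[\partial^{-1}]\subset\CA^q_{\mathfrak{sp}_4,\Sigma}$. A ``triangular, PBW-type straightening'' that rewrites every basis web as a $\bZ_q$-polynomial in the variables of one fixed cluster (with frozen variables inverted) cannot exist: the localized skein algebra contains the cluster variables of infinitely many distinct seeds (e.g.\ the webs $x_i^{(k)}$ of \cref{sect:quad}), and a non-initial cluster variable is a Laurent polynomial with a genuine denominator in the initial cluster, never a polynomial. Even the weaker reading --- landing in the lower cluster algebra generated by the initial cluster and its one-step mutations --- is unsupported, since the relevant seeds are not acyclic and the lower bound is in general a proper subalgebra of $\CA$. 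More fundamentally, to place an element in $\CA^q$ you must exhibit it in terms of things you \emph{know} are cluster variables, and your proposal never identifies such a supply. The paper's mechanism is different and is the essential content here: the sticking trick (\cref{lem:sticking-trick}) reduces the generators of $\mathscr{S}_{\mathfrak{sp}_4,\Sigma}^{\bZ_q}[\partial^{-1}]$ to the set $\SimpWil{\Sigma}$ of simple Wilson lines of type $1$ (\cref{thm:localized-gen}, \cref{thm:generator-Zv-form}) --- this is where the hypothesis of at least two special points actually enters, to allow sticking to two distinct boundary intervals --- and then each simple Wilson line is recognized as a cluster variable because it occurs in an explicit web cluster on a quadrilateral (\cref{thm:S in A}, using the clusters of \cref{fig:flip_sequence_web}). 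Also note that your claim ``the web clusters of distinct triangulations are mutation-equivalent by \cref{prop:compatibility}'' misattributes the statement: \cref{prop:compatibility} only verifies the compatibility relation for a single seed; mutation-equivalence of the quantum seeds is \cref{thm:q-mutation-equivalence}.
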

As in our previous study on the $\mathfrak{sl}_3$-case, the inclusion $\mathscr{S}_{\mathfrak{sp}_4,\Sigma}^{\bZ_q}[\partial^{-1}] \subset \UCA_{\mathfrak{sp}_4,\Sigma}^q$ is provided by the \emph{cutting trick} (\cref{lem:cutteing-trick}), which is the way to obtain the Laurent expression of a given web in a web cluster. The stronger inclusion $\mathscr{S}_{\mathfrak{sp}_4,\Sigma}^{\bZ_q}[\partial^{-1}] \subset \CA_{\mathfrak{sp}_4,\Sigma}^q$ is provided by the \emph{sticking trick} (\cref{lem:sticking-trick}), by which we can reduce the generators of the localized skein algebra into those corresponding to cluster variables. Actually, we show that the boundary-localized $\bZ_q$-form can be generated by the set $\SimpWil{\Sigma}$ (\cref{thm:generator-Zv-form}), which consists of the quantum counterpart of the matrix coefficients of \emph{simple Wilson lines} \cite{IOS} in the vector representation. Then it is easy to see that they are actually cluster variables up to boundary webs (cf.~\cite[Proposition 4.12]{IOS}). 

\begin{introconj}\label{introconj:coincidence}
For any  connected (triangulable) unpunctured marked surface $\Sigma$, we have the equalities
\begin{align*}
    \mathscr{S}_{\mathfrak{sp}_4,\Sigma}^{\bZ_q}[\partial^{-1}] =
    \CA_{\mathfrak{sp}_4,\Sigma}^q = \UCA_{\mathfrak{sp}_4,\Sigma}^q.
\end{align*}
\end{introconj}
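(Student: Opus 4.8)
The plan is to prove the single reverse inclusion $\UCA_{\mathfrak{sp}_4,\Sigma}^q \subseteq \mathscr{S}_{\mathfrak{sp}_4,\Sigma}^{\bZ_q}[\partial^{-1}]$; combined with the chain $\mathscr{S}_{\mathfrak{sp}_4,\Sigma}^{\bZ_q}[\partial^{-1}] \subseteq \CA_{\mathfrak{sp}_4,\Sigma}^q \subseteq \UCA_{\mathfrak{sp}_4,\Sigma}^q$ already furnished by \cref{introthm:comparison}, this single inclusion collapses all three algebras. Equivalently, the goal is to upgrade the set of basis webs $\Bweb{\Sigma}$, which is an $\cR$-basis of the skein algebra by \cref{thm:basis-web}, to a $\bZ_q$-spanning set of the entire upper cluster algebra after boundary localization.

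First I would establish that the quantum exchange matrices attached to the web seeds in $\sfs_q(\mathfrak{sp}_4,\Sigma)$ have full rank, so that $\UCA_{\mathfrak{sp}_4,\Sigma}^q$ admits the Berenstein--Fomin--Zelevinsky intersection description: it equals the intersection, taken over the initial web cluster and its one-step mutations, of the associated quantum Laurent rings. This reduces membership in $\UCA_{\mathfrak{sp}_4,\Sigma}^q$ to a finite, local Laurent condition. The full-rank verification should follow from the explicit shape of the quivers associated to decorated triangulations, since each internal arc contributes an independently mutable column.

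Next, fixing the initial web cluster, I would expand an arbitrary $x \in \UCA_{\mathfrak{sp}_4,\Sigma}^q$ as a quantum Laurent polynomial in its cluster variables and run a leading-term argument. Using the endpoint and ensemble gradings of \cref{introthm:comparison} together with the positivity and explicit form of the Laurent expansions of elevation-preserving webs (analogous to \cite{IYsl3}), one assigns to each basis web a well-defined leading Laurent monomial and shows that distinct basis webs carry distinct leading monomials. This unitriangularity lets one peel off the leading term of $x$ by subtracting a $\bZ_q$-multiple of the matching basis web while remaining inside $\UCA_{\mathfrak{sp}_4,\Sigma}^q$; iterating, and invoking a finiteness bound on the exponents coming from the same positivity control, shows the process terminates, so $x$ is a $\bZ_q$-combination of basis webs and hence lies in $\mathscr{S}_{\mathfrak{sp}_4,\Sigma}^{\bZ_q}[\partial^{-1}]$.

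An alternative, more purely cluster-theoretic route is to prove directly that $\CA_{\mathfrak{sp}_4,\Sigma}^q = \UCA_{\mathfrak{sp}_4,\Sigma}^q$ via local acyclicity: cut $\Sigma$ along a triangulation into elementary pieces, use the cutting trick (\cref{lem:cutteing-trick}) to show each piece yields an acyclic cluster algebra, and glue by Muller's local-acyclicity machinery \cite{Muller16}; since an acyclic cluster algebra satisfies $\CA = \UCA$, the sandwich again forces the triple equality. In either route the main obstacle is the same structural difficulty: for $\mathfrak{sp}_4$ the quivers arising from triangulations are substantially more intricate than in the $\mathfrak{sl}_2$ and $\mathfrak{sl}_3$ cases, so verifying the separation of leading monomials (equivalently, the removable-vertex/acyclic structure on the elementary pieces) is delicate, and one must additionally track the $q$-powers throughout in order to keep every identity at the quantum level rather than merely at its classical shadow.
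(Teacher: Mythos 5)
You are attempting to prove \cref{introconj:coincidence}, which the paper deliberately leaves as an open conjecture: it is verified only for a triangle (\cref{S=A_triangle}) and in the classical limit $q=1$ (\cref{prop:Tree=A=U}, which imports the geometric result $\CA_{\mathfrak{sp}_4,\Sigma}=\UCA_{\mathfrak{sp}_4,\Sigma}=\cO(\A_{Sp_4,\Sigma}^\times)$ from \cite{IOS} rather than any Laurent-expansion argument). So there is no proof in the paper to compare against, and your argument must stand on its own. It does not: both routes you sketch bury the essential difficulty in an unproven step.

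In the leading-term route the crucial claims are (i) that distinct basis webs have distinct leading Laurent monomials in a fixed web cluster, and (ii) that every leading monomial of an element of $\UCA_{\mathfrak{sp}_4,\Sigma}^q$ is realized by some basis web. Nothing in the paper supplies either: \cref{thm:Cweb-exp} only produces \emph{some} Laurent expression after clearing denominators, with no control over which monomials occur, and the gradings of \cref{introthm:comparison} are far too coarse to separate basis webs --- for instance all the $n$-bangles of a fixed loop have the same (trivial) endpoint degree. Claim (ii) is a Fock--Goncharov-duality-type statement, that $\Bweb{\Sigma}$ is parametrized by the full lattice of potential leading exponents; it is essentially equivalent to the conjecture itself, and without it the peeling step can fail at the first iteration because no basis web matches the leading term of $x$. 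Note also that the sandwich $\mathscr{S}_{\mathfrak{sp}_4,\Sigma}^{\bZ_q}[\partial^{-1}]\subseteq\CA_{\mathfrak{sp}_4,\Sigma}^q\subseteq\UCA_{\mathfrak{sp}_4,\Sigma}^q$ is only available when $\Sigma$ has at least two special points (\cref{thm:S in A}), so for a surface with a single special point your setup is incomplete from the start. The local-acyclicity route has the same gap in different clothing: Muller's machinery requires an explicit cover of the cluster algebra by acyclic cluster localizations, and exhibiting such a cover for the $\mathfrak{sp}_4$ quivers --- which are of infinite mutation type and contain Kronecker subquivers, cf.\ \cref{sect:quad} --- is exactly what is not known; ``each piece yields an acyclic cluster algebra'' is asserted, not proved, and the pieces obtained by cutting are triangles, whose acyclicity does not transfer to the glued algebra without the covering argument. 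As written, the proposal is a plausible research program, not a proof.
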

When $\Sigma=T$ is a triangle, this conjecture is true (\cref{S=A_triangle}). 
The classical counterpart $q=1$ of this conjecture is proved by Ishibashi--Oya--Shen \cite{IOS}. See \cref{prop:Tree=A=U}.
In particular, $\Bweb{\Sigma}$ provides a basis of the classical cluster algebra $\CA_{\mathfrak{sp}_4,\Sigma}$ over $\bC$. 

We also obtain the following positivity result. 

\begin{introthm}[Quantum Laurent positivity of webs: \cref{thm:positivity_cluster}]\label{introthm:positivity}
Any elevation-preserving web with respect to an ideal triangulation $\Delta$ is expressed as a Laurent polynomial with coefficients in $\cR_+=\bZ_+[q^{\pm 1/2},1/[2]_q]$ in the quantum cluster associated with any decorated triangulation $\bD=(\Delta,m_\Delta,\bs_\Delta)$ over $\Delta$. In particular, the following webs are quantum GS-universally positive Laurent polynomials (\cref{def:GS-univ})
\begin{itemize}
     \item over $\bZ_q$: descending loops/arcs of type $1$ with or without legs (see \cref{def:diagram});
    \item over $\cR$: the geometric bracelets or the bangles
    (\cref{fig:bracelet}) of type $2$ along simple loops. 
\end{itemize}
\end{introthm}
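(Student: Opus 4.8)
The plan is to follow the strategy of the $\mathfrak{sl}_3$-case \cite{IYsl3}, reducing the positivity to a purely local analysis in each triangle of $\Delta$ via the cutting trick. Fix a decorated triangulation $\bD=(\Delta,m_\Delta,\bs_\Delta)$ and let $W$ be an elevation-preserving web. First I would isotope $W$ into a standard position transverse to the edges of $\Delta$, so that near each edge the strands are vertically stacked according to their elevations and $W$ meets each triangle in a collection of elementary pieces. The elevation-preserving hypothesis is exactly what guarantees that this can be carried out without creating crossings whose resolution would introduce signs, and that the heights on the two sides of each glued edge already agree.

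Next I would apply the cutting trick (\cref{lem:cutteing-trick}) to expand $W$ as a sum, over the admissible states on the edges of $\Delta$, of products of the contributions of $W$ in each triangle. The key point is that, after using the explicit $\mathfrak{sp}_4$-quiver attached to a triangle (the web-cluster construction preceding \cref{prop:compatibility}), each triangle contribution is a positive Laurent monomial in the cluster variables of $\bD$, while the matching factors produced along edges are only powers of $q^{\pm1/2}$ together with, for type-$2$ strands, powers of $[2]_q$. Since $\cR_+$ is closed under addition and multiplication and contains $q^{\pm1/2}$ and $[2]_q^{\pm1}$, positivity is preserved under both the product over triangles and the sum over states, which yields the asserted expression with coefficients in $\cR_+$. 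The sharpening to $\bZ_+[q^{\pm1/2}]$ for the type-$1$ webs is because in that case no $[2]_q$-factor is ever produced, so the expansion stays inside the $\bZ_q$-form (\cref{lem:crossroads-in-Zv}).

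For the GS-universal positivity of the listed examples, the point is that each can be presented as an elevation-preserving web with respect to \emph{every} ideal triangulation. A descending loop or arc of type $1$ (\cref{def:diagram}) is by construction elevation-preserving in the triangulation in which it descends; putting its underlying curve in minimal position against an arbitrary triangulation $\Delta'$ and re-presenting it as a descending diagram there, the first part applies and produces a positive Laurent expression in the $\Delta'$-cluster. As this holds for every $\Delta'$, we obtain GS-universal positivity over $\bZ_q$ (\cref{def:GS-univ}). For the bracelets or bangles of type $2$ along a simple loop (\cref{fig:bracelet}), the same reduction applies after a bracelet-type resolution as in \cite{IYsl3}; here the type-$2$ strands force the appearance of $[2]_q$, which is precisely why this case is positive only over $\cR$ rather than over $\bZ_q$.

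The main obstacle is the control of the quantum reordering and of the $\mathfrak{sp}_4$-skein relations needed to bring each triangle piece into basis form (\cref{thm:basis-web}). Unlike the $\mathfrak{sl}_3$-case, the $\mathfrak{sp}_4$-webs carry both type-$1$ and type-$2$ strands and the clasped relations at special points (\cref{def:bdry-skeinrel}), whose resolutions a priori contain the minus signs inherent to Kuperberg's $\mathfrak{sp}_4$-relations. The heart of the argument is therefore to verify that, for elevation-preserving configurations, every potentially negative contribution either cancels or never arises, so that the cutting trick genuinely outputs coefficients in $\cR_+$. This is the step where the elevation-preserving hypothesis is indispensable and where the analysis departs most sharply from the $\mathfrak{sl}_3$ setting.
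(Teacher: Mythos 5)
Your proposal follows essentially the same route as the paper's proof of \cref{thm:positivity-web}: expand the web along the edges of the split triangulation via the cutting trick (\cref{lem:cutteing-trick}), whose coefficients are already positive thanks to the elevation-preserving ordering, reduce to local pieces in biangles and triangles, and trace the $1/[2]_q$ factors to type-$2$ strands crossing edges. The one substantive point is that your write-up defers exactly the step that constitutes the proof. The paper classifies the local ends into four type-$1$ patterns $X_1,\dots,X_4$ and five type-$2$ patterns $Y_1,\dots,Y_5$, and then computes \emph{every} gluing pattern $B(X_i,X_j)$, $B(Y_i,Y_j)$, $T(X_i,X_j,\emptyset)$, $T(Y_i,Y_j,\emptyset)$, $T(Y_i,X_j,X_k)$ explicitly, checking that each is either zero or a polynomial in $\Eweb{T}$ with coefficients in $\bZ_+[v^{\pm 1/2}]$, except for factors $1/[2]$ attached solely to the piece $Y_3$ --- which arises exactly once per intersection of a type-$2$ edge of $G$ with an edge of $\Delta$, yielding the precise statement that $[2]^k G J$ is positive. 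Your assertion that ``every potentially negative contribution either cancels or never arises'' is precisely this finite verification, so as written the argument is a correct plan rather than a complete proof; also note that the exceptional factors are inverses $1/[2]$ coming from the triangle pieces containing $Y_3$, not $[2]_q$ itself produced by the edge matching. The remaining ingredients in your outline --- the final expansion into the chosen web cluster via the positive relations \eqref{eq:skein_rel_triangle}--\eqref{eq:skein_rel_rot_2}, and the observation that elements of $\Desc{\Sigma}$ are elevation-preserving for every ideal triangulation with legs avoiding the edges (so $k=0$ and the expansion stays over $\bZ_q$) --- match the paper.
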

Here the webs in the first item form a subset $\Desc{\Sigma}$ (see \cref{def:web-set}) that contains geometric bracelets/bangles of type~$1$ and give a generating set of $\mathscr{S}_{\mathfrak{sp}_4,\Sigma}^{\bZ_q}[\partial^{-1}]$ (\cref{thm:generator-Zv-form}). 

\begin{figure}
    \begin{tikzpicture}[scale=.1]
        \begin{scope}
            \draw[dashed] (0,0) circle [radius=5];
            \draw[dashed] (0,0) circle [radius=15];
            \draw[very thick] (0,0) circle [radius=10];
            \node at (0,-15) [below=5pt] {a simple loop $\gamma$};
        \end{scope}
        \begin{scope}[xshift=-45cm]
            \draw[dashed] (0,0) circle [radius=4];
            \draw[dashed] (0,0) circle [radius=16];
            \draw[very thick] (15:14) arc (15:345:14);
            \draw[very thick] (15:12) arc (15:345:12);
            \draw[very thick, dotted] (15:10) arc (15:30:10);
            \draw[very thick, dotted] (-15:10) arc (-15:-30:10);
            \draw[very thick] (15:8) arc (15:345:8);
            \draw[very thick] (15:6) arc (15:345:6);
            \draw[overarcblack, very thick] (-15:8) to[out=north, in=south] (15:6);
            \draw[overarcblack, very thick] (-15:10) to[out=north, in=south] (15:8);
            \draw[overarcblack, very thick] (-15:12) to[out=north, in=south] (15:10);
            \draw[overarcblack, very thick] (-15:14) to[out=north, in=south] (15:12);
            \draw[overarcblack, very thick] (-15:6) to[out=north, in=south] (15:14);
            \node at (180:10) [xscale=2.5, rotate=90] {$\}$};
            \node at (180:10) [above]{\scriptsize $n$};
            \node at (0,10) [scale=.3]{$\bullet$};
            \node at (0,10) [scale=.3, above=1pt]{$\bullet$};
            \node at (0,10) [scale=.3, below=1pt]{$\bullet$};
            \node at (0,-10) [scale=.3]{$\bullet$};
            \node at (0,-10) [scale=.3, above=1pt]{$\bullet$};
            \node at (0,-10) [scale=.3, below=1pt]{$\bullet$};
            \node at (0,-15) [below=5pt] {the geometric $n$-bracelet of $\gamma$};
        \end{scope}
        \begin{scope}[xshift=45cm]
            \draw[dashed] (0,0) circle [radius=4];
            \draw[dashed] (0,0) circle [radius=16];
            \draw[very thick] (0,0) circle [radius=14];
            \draw[very thick] (0,0) circle [radius=12];
            \draw[very thick] (0,0) circle [radius=8];
            \draw[very thick] (0,0) circle [radius=6];
            \node at (180:10) [xscale=2.5, rotate=90] {$\}$};
            \node at (180:10) [above]{\scriptsize $n$};
            \node at (0,10) [scale=.3]{$\bullet$};
            \node at (0,10) [scale=.3, above=1pt]{$\bullet$};
            \node at (0,10) [scale=.3, below=1pt]{$\bullet$};
            \node at (0,-10) [scale=.3]{$\bullet$};
            \node at (0,-10) [scale=.3, above=1pt]{$\bullet$};
            \node at (0,-10) [scale=.3, below=1pt]{$\bullet$};
            \node at (0,-15) [below=5pt] {the $n$-bangle of $\gamma$};
        \end{scope}
    \end{tikzpicture}
    \caption{The middle shows a tubular neighborhood of a simple loop $\gamma$. The \emph{geometric $n$-bracelet} (resp. \emph{$n$-bangle}) along $\gamma$ is obtained by replacing it with the graph shown in the left (right) colored by a fundamental representation of $\mathfrak{sp}_4$.}
    \label{fig:bracelet}
\end{figure}

\if0 
\subsection{$\bZ_q$-forms of the skein algebra}
One of major benefits expected from the interplay between the skein and cluster algebras is a topological construction of a good basis. 
In the $\mathfrak{sl}_3$-case, the skein and cluster algebras are both $\bZ_q$-algebras. 
The graphical basis 
gives a natural $\bZ_q$-basis of the $\mathfrak{sl}_3$-skein algebra $\mathscr{S}^q_{\mathfrak{sl}_3,\Sigma}$, which is expected to induce a $\bZ_q$-basis of the quantum cluster algebra $\CA_{\mathfrak{sl}_3,\Sigma}^q$ (indeed, its classical counterpart is true as we mentioned).
In the $\mathfrak{sp}_4$-case, we also have a natural set $\mathsf{BWeb}_{\mathfrak{sp}_4,\Sigma}$ consisting of non-elliptic flat crossroad webs that gives an $\cR$-basis of $\mathscr{S}^q_{\mathfrak{sp}_4,\Sigma}$, which is also expected to give an $\cR$-basis of $\CA_{\mathfrak{sp}_4,\Sigma}^q(\cR)$ in view of \cref{introconj:coincidence}. However, some of them do not belong to the quantum cluster algebra \textcolor{red}{(simple example?)}.
As a preliminary step to solve this problem, we propose several seemingly natural $\bZ_q$-forms of our skein algebra:
\begin{itemize}
    \item We have the set $\Eweb{\Sigma}$ of \emph{elementary webs}, which are certain indecomposable elements defined in the same spirit as in \cite{IYsl3}. The \emph{elementary $\bZ_q$-form} is the $\bZ_q$-algebra $\langle \Eweb{\Sigma}\rangle_{\bZ_q} \subset \Skein{\Sigma}^q[\partial^{-1}]$ generated by $\Eweb{\Sigma}$ and the inverses of boundary webs.
    \item We introduce a subset $\Tree{\Sigma} \subset \Eweb{\Sigma}$ of \emph{tree-type elementary webs}, which is a refined class of elementary webs in a similar spirit to Fomin--Pylyavskyy \cite{FP16}. We have the corresponding \emph{tree-type $\bZ_q$-form} $\langle \Tree{\Sigma}\rangle_{\bZ_q}$.
    \item Finally, we consider the set $\SimpWil{\Sigma}$ of \emph{simple Wilson lines of type~$1$}, which is the class of elementary webs that correspond to the matrix entries of simple Wilson lines \cite{IOS} in the vector representation of $Sp_4$. 
    We have the corresponding \emph{simple Wilson line $\bZ_q$-form} $\langle \SimpWil{\Sigma}\rangle_{\bZ_q}$.
\end{itemize}
Then we have $\langle \Tree{\Sigma}\rangle_{\bZ_q} \supset \langle \Eweb{\Sigma}\rangle_{\bZ_q} \supset \langle \SimpWil{\Sigma}\rangle_{\bZ_q}$. Each of the $\cR$-algebras corresponding to them coincides with $\Skein{\Sigma}^q[\partial^{-1}]$. 
We expect the following:

\begin{introconj}
For any unpunctured marked surface with at least two special points, $\langle \Tree{\Sigma}\rangle_{\bZ_q} = \langle \Eweb{\Sigma}\rangle_{\bZ_q} = \langle \SimpWil{\Sigma}\rangle_{\bZ_q}$. 
\end{introconj}
When $\Sigma=T$ is a triangle, this conjecture is true and these $\bZ_q$-forms also coincide with the quantum (upper) cluster algebras (\cref{S=A_triangle}). 
The classical counterpart of the latter equality is proved in \cref{lem:Tree=SimpWil}. Regarding the relation to the quantum cluster algebra, we have the following general result:

\begin{introthm}[\cref{thm:S in A}]
For any connected unpunctured marked surface $\Sigma$ with at least two marked points, we have $\langle \SimpWil{\Sigma}\rangle_{\bZ_q} \subset \CA_{\mathfrak{sp}_4,\Sigma}^q$.
\end{introthm}
In the classical limit $q=1$, combining with the results in \cite{IOS}, we have the equality
\begin{align*}
    \langle \Tree{\Sigma} \rangle_{\bZ}=\langle \SimpWil{\Sigma} \rangle_{\bZ}=\CA_{\mathfrak{sp}_4,\Sigma}=\UCA_{\mathfrak{sp}_4,\Sigma}.
\end{align*}
See \cref{prop:Tree=A=U}. It will require a further research to find natural $\bZ_q$-bases of these proposed $\bZ_q$-forms.
\fi

\subsection{Characterization of cluster variables} 
Among its possible benefits, the interplay between the skein and cluster theory is expected to provide a topological model for cluster variables. In the simplest case $\mathfrak{g}=\mathfrak{sl}_2$, the cluster variables are actually in a one-to-one correspondence with the simple arcs in the $\mathfrak{sl}_2$-skein algebra. In the other cases, the mutation class $\sfs(\mathfrak{g},\Sigma)$ is typically of infinite mutation type. Namely, it contains infinitely many distinct exchange matrices (\emph{i.e.}, it includes ``infinitely complicated'' combinatorics), so it is a very hard problem to uniformly understand all the cluster variables. In the $\mathfrak{sl}_3$-case, Fomin--Pylyavskyy \cite{FP16} proposed a series of insightful conjectures on the characterization of cluster variables in terms of $\mathfrak{sl}_3$-webs. 

We are going to discuss a conjectural characterization of the cluster variables in our $\mathfrak{sp}_4$-case, in the same spirit as Fomin--Pylyavskyy. Let
\begin{itemize}
    \item $\Eweb{\Sigma} \subset \Bweb{\Sigma}$ be the set of \emph{elementary webs}; 
    \item $\Tree{\Sigma} \subset \Eweb{\Sigma}$ be the set of \emph{tree-type} elementary webs;
    \item $\CV_{\mathfrak{sp}_4,\Sigma} \subset \CA^q_{\mathfrak{sp}_4,\Sigma}$ be the set of cluster variables. 
\end{itemize}
See \cref{def:elementary-web} for a detail. 
Let $\mathrm{DT}$ denote the mapping class on the marked surface $(\Sigma,\bM)$ given by the composite
\begin{align}\label{eq:cluster_DT}
    \mathrm{DT}:=\prod_{h} \mathrm{rot}_{h},
\end{align}
where $h$ runs over the boundary components of $\Sigma$, and $\mathrm{rot}_h$ shifts by one the special points on $h$ in the positive direction with respect to the orientation of $\Sigma$. 
Then $\mathrm{DT}$ acts as algebra automorphisms on $\Skein{\Sigma}^q$ and $\CA^q_{\mathfrak{sp}_4,\Sigma}$. 
Following \cite{GS16}, we call $\mathrm{DT}$ the \emph{cluster Donaldson--Thomas transformation}. 
Then our proposal is the following:

\begin{introconj}\label{conj:tree-variable}
Regarding $\Skein{\Sigma}^{\bZ_q}$ as a subalgebra of $\CA^q_{\mathfrak{sp}_4,\Sigma}$ through the inclusion in \cref{introthm:comparison}, we expect:
\begin{align*}
    \mathsf{Tree}_{\mathfrak{sp}_4,\Sigma}=\CV_{\mathfrak{sp}_4,\Sigma}=\Eweb{\Sigma}\setminus(\Eweb{\Sigma})^{\mathrm{DT}}.
\end{align*}
Here $(\Eweb{\Sigma})^{\mathrm{DT}}$ is the set of elementary webs invariant under the cluster Donaldson--Thomas transformation.
\end{introconj}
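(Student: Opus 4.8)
The plan is to establish the two equalities in \cref{conj:tree-variable} separately, treating the characterization via $\mathrm{DT}$ first since it is the more structural of the two, and thereby reducing the whole statement to the single pair of inclusions $\Tree{\Sigma}=\CV_{\mathfrak{sp}_4,\Sigma}$. The overall shape I have in mind is a cycle
\[
\Tree{\Sigma}\subseteq \CV_{\mathfrak{sp}_4,\Sigma}\subseteq \Eweb{\Sigma}\setminus(\Eweb{\Sigma})^{\mathrm{DT}}\subseteq \Tree{\Sigma},
\]
where the last inclusion comes from a purely combinatorial classification of elementary webs, and the first two carry the genuine cluster-theoretic content.

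For the equality $\CV_{\mathfrak{sp}_4,\Sigma}=\Eweb{\Sigma}\setminus(\Eweb{\Sigma})^{\mathrm{DT}}$ I would first prove a \emph{dichotomy} for elementary webs: every $W\in\Eweb{\Sigma}$ is either tree-type, in which case it has at least one leg on a boundary special point, or it is a closed (bracelet/bangle-type) web carrying an internal cycle and no boundary legs. Granting this, the $\mathrm{DT}$-fixed locus is computed topologically. Since $\mathrm{DT}=\prod_h\mathrm{rot}_h$ rotates the special points on every boundary component by one, and since elementary webs are connected while the rotation permutes the special points freely, a web possessing a boundary leg is carried to a genuinely distinct basis web; hence no tree-type web is fixed. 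A closed web, on the other hand, meets no boundary and is untouched by the rotation, so it is $\mathrm{DT}$-invariant. Thus $(\Eweb{\Sigma})^{\mathrm{DT}}$ is exactly the set of closed elementary webs and its complement is precisely $\Tree{\Sigma}$. The only delicate point is the dichotomy itself; I expect it to follow from the normal form underlying \cref{thm:basis-web} together with \cref{def:elementary-web}, the key being to rule out an elementary web that simultaneously carries an internal cycle and a boundary leg.

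The inclusion $\Tree{\Sigma}\subseteq\CV_{\mathfrak{sp}_4,\Sigma}$ is the constructive direction. I would realize each tree-type web as a cluster variable by induction on its number of internal trivalent vertices. The base case consists of the single edges and elementary arcs occurring in the web clusters of the decorated triangulations, which are cluster variables by the construction of $\sfs_q(\mathfrak{sp}_4,\Sigma)$. For the inductive step I would use the generating set of simple Wilson lines $\SimpWil{\Sigma}$ (\cref{thm:generator-Zv-form}) together with the sticking trick (\cref{lem:sticking-trick}): a tree web is assembled from simpler tree webs by attaching a single vertex or leg, and I would match each such elementary move with a single mutation in a suitably chosen seed, producing the enlarged tree as a new cluster variable.

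The reverse inclusion $\CV_{\mathfrak{sp}_4,\Sigma}\subseteq\Tree{\Sigma}$ is the main obstacle. Every cluster variable is irreducible in $\CA^q_{\mathfrak{sp}_4,\Sigma}$, and by the quantum Laurent positivity of \cref{introthm:positivity} together with the basis property of \cref{thm:basis-web}, its expansion in $\Bweb{\Sigma}$ should collapse to a single basis web, which a leading-term argument then forces to be elementary. What remains, and what I expect to be genuinely hard, is to show that this elementary web is always tree-type, i.e. that no cluster variable is a closed cyclic web. The difficulty is that $\sfs(\mathfrak{sp}_4,\Sigma)$ is of infinite mutation type, so there are cluster variables of unbounded combinatorial complexity and no finite mutation pattern to induct over; a uniform argument presumably requires either an intrinsic extremal characterization of cluster variables among basis webs in the spirit of Fomin--Pylyavskyy \cite{FP16}, or a direct matching between maximal green sequences and tree-growing moves. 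Pending such an argument, I would verify the conjecture on the quadrilateral by exhibiting an explicit infinite family of tree webs, tracking the mutations that realize them and confirming through the $\mathrm{DT}$-dichotomy above that no cyclic web ever occurs, thereby furnishing the promised infinitely many supporting examples.
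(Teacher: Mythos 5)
This statement is \cref{conj:tree-variable}, a \emph{conjecture}: the paper offers no proof of it, only a verification in the triangle case (where \cref{thm:Eweb-T} lists the six elementary webs, all tree-type, all cluster variables, none $\mathrm{DT}$-invariant) and a gallery of supporting examples on the quadrilateral in \cref{sect:quad}. So there is no proof in the paper to compare your argument against, and your own text concedes that the inclusion $\CV_{\mathfrak{sp}_4,\Sigma}\subseteq\Tree{\Sigma}$ is left open; as written, the proposal is a programme, not a proof.

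More importantly, the part you claim to actually establish is wrong. Your proposed dichotomy --- every elementary web is either tree-type with a boundary leg or a closed web disjoint from $\partial\Sigma$ --- and the resulting computation $(\Eweb{\Sigma})^{\mathrm{DT}}=\{\text{closed webs}\}$ are refuted by \cref{ex:elementary_webs}(4): the elementary web on the quadrilateral consisting of four type~$1$ triangles at the corners joined by a $4$-cycle of type~$2$ edges meets the boundary at all four special points, is not tree-type, and \emph{is} invariant under $\mathrm{DT}$ (the $\pi/2$-rotation). On a disk every simple loop is inessential, so there are no closed elementary webs there at all, yet $(\Eweb{\Sigma})^{\mathrm{DT}}$ is nonempty; hence ``touches the boundary $\Rightarrow$ not $\mathrm{DT}$-fixed'' fails, and with it your identification of $\Eweb{\Sigma}\setminus(\Eweb{\Sigma})^{\mathrm{DT}}$ with $\Tree{\Sigma}$. (It also fails for boundary components carrying a single special point, where $\mathrm{rot}_h$ fixes the marked point.) Note that the paper's actual reason why no cluster variable is $\mathrm{DT}$-invariant is cluster-theoretic, not topological: $\mathrm{DT}$ sends each $g$-vector to its negative. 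If you want to contribute evidence for the conjecture, the productive route is the one the paper takes in \cref{sect:quad}: exhibit explicit web clusters and mutation sequences (e.g.\ the infinite Kronecker-type family of $x_i^{(k)}$'s) and check directly that the new variables are tree-type elementary webs not fixed by $\mathrm{DT}$.
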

Again, this conjecture is true when $\Sigma=T$ is a triangle. 
We remark that none of cluster variables in $\CA^q_{\mathfrak{sp}_4,\Sigma}$ are invariant under $\mathrm{DT}$, since it maps each $g$-vector to its negative. Moreover, it is clear from the construction that the elementary webs in the web cluster associated with any decorated triangulation $\bD$ are of tree type and not invariant under $\mathrm{DT}$. See \cref{ex:elementary_webs} for more examples. 
In \cref{sect:quad}, we will give examples of elementary webs on a quadrilateral that support this conjecture, including an infinite sequence. Here we summarize the relations among several sets of $\mathfrak{sp}_4$-webs that we have introduced:

\begin{equation*}
\begin{tikzcd}
    \Bweb{\Sigma}\arrow[symbol=\supset]{r}{}
    &[-1em]\Eweb{\Sigma}\arrow[symbol=\supset]{r}{}\arrow[symbol=\supset, shift right]{rd}{}
    &[-5em]\Tree{\Sigma}\arrow[symbol=\supset]{r}{}\arrow[equal, red]{d}[above right]{\text{Conj.~4}}
    &[-5em]\SimpWil{\Sigma}\\
    &&\Eweb{\Sigma}\setminus(\Eweb{\Sigma})^{\mathrm{DT}}\arrow[symbol=\supset, shift right]{ru}{}&
\end{tikzcd}
\end{equation*}
By \cref{thm:generator-Zv-form}, for each set $S$ in this diagram, the associated $\bZ_q$-subalgebra $\langle S \rangle_{\bZ_q}[\partial^{-1}]\subset \mathscr{S}^q_{\mathfrak{sp}_4,\Sigma}[\partial^{-1}]$ coincides with each other. 

\subsection*{Organization of the paper}
Below in this section, our notation on marked surfaces and their triangulations is summarized. In \cref{sect:skein}, we define the skein algebra $\mathscr{S}^v_{\mathfrak{sp}_4,\Sigma}$ and investigate its basic structures such as its $\cR$-basis, the triangle case $\Sigma=T$ and the Ore property. We continue to study its generators and Laurent positivity in \cref{sec:generator_positivity}, based on the cutting and sticking tricks. 

In \cref{sec:cluster}, we recall the general framework of the quantum cluster algebra and the construction of the classical mutation class $\sfs(\mathfrak{sp}_4,\Sigma)$. In \cref{sec:realization}, we realize the quantum mutation class $\sfs_q(\mathfrak{sp}_4,\Sigma)$ inside the skew-field of fractions of $\mathscr{S}^q_{\mathfrak{sp}_4,\Sigma}$. Utilizing the results in \cref{sect:skein,sec:generator_positivity}, we prove \cref{introthm:comparison,introthm:positivity}. 

In \cref{sect:quad}, we give a gallery of examples of web clusters on a quadrilateral, including an infinite sequence.

\subsection*{Acknowledgements}
We are grateful to the referees for carefully reading our paper.
T. I. is supported by JSPS KAKENHI Grant Number~JP20K22304, JP24K16914.
W. Y. is supported by JSPS KAKENHI Grant Numbers~JP19J00252, JP19K14528, and JP23K12972.

\subsection*{Notation on marked surfaces and their triangulations}\label{subsec:notation_marked_surface}

A \emph{marked surface} $(\Sigma,\bM)$ is a compact oriented surface $\Sigma$ with boundary equipped with a fixed non-empty finite set $\mathbb{M} \subset \partial\Sigma$ of \emph{special points}. In particular, we do not allow interior marked points (\lq\lq punctures'') in this paper. We also call $(\Sigma,\bM)$ an \emph{unpunctured marked surface} when we emphasize the absence of punctures. 
When the choice of $\bM$ is clear from the context, we simply denote a marked surface by $\Sigma$. Moreover, assume the following conditions:
\begin{enumerate}
\item Each boundary component has at least one special point.
\item $n(\Sigma):=-3\chi(\Sigma)+2|\mathbb{M}|>0$, and $\Sigma$ is not a disk with two special points (a biangle).
\end{enumerate}
These conditions ensure that the marked surface $\Sigma$ has an ideal triangulation, that is, the isotopy class of a collection $\Delta$ of simple arcs connecting special points whose interiors are mutually disjoint, which decomposes $\Sigma$ into triangles. 
The number $n(\Sigma)$ gives the number of edges of any ideal triangulation $\Delta$. 
We call a connected component of the punctured boundary $\partial^\ast \Sigma:=\partial\Sigma\setminus \mathbb{M}$ a \emph{boundary interval}.
Each boundary interval belongs to any ideal triangulation $\Delta$. We call an edge of $\Delta$ an \emph{interior edge} if it is not a boundary interval. Denote the set of edges (resp. interior edges, triangles) of $\Delta$ by $e(\Delta)$ (resp. $e_\interior(\Delta)$, $t(\Delta)$). 

More generally, we can consider an \emph{ideal cell decomposition} of $\Sigma$, which is a decomposition of $\Sigma$ into a union of polygons. When it is obtained from an ideal triangulation by removing $k$ interior edges, it is said to be \emph{of deficiency $k$}. 
In this paper, we only use an ideal cell decomposition of deficiency $0$ or $1$. The ideal cell decomposition of deficiency $1$ obtained from an ideal triangulation $\Delta$ by removing one interior edge $E$ is denoted by $(\Delta;E)$.


Finally, a \emph{decorated triangulation} is a triple $\bD=(\Delta,m_\Delta,\bs_\Delta)$, where
\begin{itemize}
    \item $\Delta$ is an ideal triangulation of $\Sigma$;
    \item $m_\Delta:t(\Delta) \to \bM$ is a choice of a vertex of each triangle;
    \item $\bs_\Delta:t(\Delta) \to \{+,-\}$ is a choice of a sign at each triangle.
\end{itemize}
The data $m_\Delta$ is indicated by the symbol $\ast$ in the figures. A typical seed in the mutation class $\sfs(\mathfrak{sp}_4,\Sigma)$ will be associated with such a decorated triangulation \cite{GS19}. 

\begin{figure}[ht]
\begin{tikzpicture}[scale=0.7]
\draw[blue] (2,2) -- (-2,2) -- (-2,-2) -- (2,-2) --cycle;
\draw[blue] (2,2) -- (-2,-2);
\filldraw[gray!30] (-2.5,-2) -- (2.5,-2) -- (2.5,-2.3) -- (-2.5,-2.3) --cycle;
\draw[thick] (-2.5,-2) -- (2.5,-2);
\foreach \x in {45,135,225,315}
\path(\x:2.8284) node [fill, circle, inner sep=1.2pt]{};
\node at (-1.8,1.8) {$\ast$};
\node at (1.8,1.6) {$\ast$};
\end{tikzpicture}
    \caption{A local picture of a datum $(\Delta,m_\Delta)$. By convention, a portion of $\partial \Sigma$ is drawn by a thick line together with a gray region indicating the ``outer side'' of $\Sigma$.}
    \label{fig:sl3_triangulation}
\end{figure}
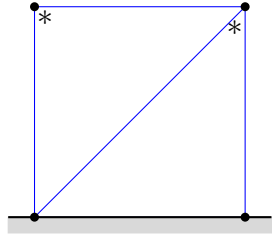

\section{The \texorpdfstring{$\mathfrak{sp}_4$}{sp4}-skein algebra \texorpdfstring{$\Skein{\Sigma}$}{S(sp4,S)} of an unpunctured marked surface}\label{sect:skein}
%

In this section, we define the $\mathfrak{sp}_4$-skein algebra $\Skein{\Sigma}=\Skein{\Sigma}^q$ for a surface $\Sigma$ with special points $\bM\subset\partial\Sigma$ by introducing the skein relation at a special point.
We also give some basic definitions and properties related to $\Skein{\Sigma}$.
In what follows, we use a ring $\cR=\cR_{q}\coloneqq\bZ[q^{\pm 1/2},1/[2]_q]$ of coefficients with a formal variable $q^{1/2}$ where a quantum integer is defined by $[n]=[n]_q\coloneqq (q^{n}-q^{-n})/(q-q^{-1})$.

\subsection{The skein relations for \texorpdfstring{$\mathfrak{sp}_4$}{sp4}}
Let us introduce the $\mathfrak{sp}_4$-webs and its skein relation introduced by Kuperberg~\cite{Kuperberg96}.
We firstly define tangled graphs for $\mathfrak{sp}_4$ on an unpunctured marked surface.

\begin{dfn}[tangled $\mathfrak{sp}_4$-graphs]\label{def:tangled-graph}
\begin{enumerate}
    \item 
    A \emph{tangled $\mathfrak{sp}_4$-graph diagram} on $\Sigma$ is a map (or its image) from a uni-trivalent graph to $\Sigma$ whose edges are colored by fundamental representations $\varpi_1$ (called \emph{type~$1$ edges}) or $\varpi_2$ (called \emph{type~$2$ edges}) of $\mathfrak{sp}_4$. Moreover:
    \begin{itemize}
        \item We assume that each triple of edges incident to a common trivalent vertex consists of two type~$1$ edges and a type~$2$ edge, and the only self-intersections in the interior of $\Sigma$ are transverse double points of edges (called \emph{internal crossings}). The image of the graph only intersects with $\partial\Sigma$ at special points. 
        \item The transverse double points of edges in the interior have over/under-passing information.
        For each $p\in\bM$, the set of half-edges incident to $p$ has a total order called the \emph{elevation} at $p$.
    \end{itemize}
    In pictures, an edge of type~$1$ (resp.~type~$2$) is described by a single (resp.~double) line. 
    Each point of the image of a tangled $\mathfrak{sp}_4$-graph diagram has a neighborhood described by
    $\mathord{
        \ \tikz[baseline=-.6ex, scale=.08]{
            \draw[dashed, fill=white] (0,0) circle [radius=7];
        }
    \ }$, in which the graph has the form
    $\mathord{
        \ \tikz[baseline=-.6ex, scale=.08]{
            \draw[dashed, fill=white] (0,0) circle [radius=7];
            \draw[webline] (-90:7) -- (90:7);
        }
    \ }$,
    $\mathord{
        \ \tikz[baseline=-.6ex, scale=.08]{
            \draw[dashed, fill=white] (0,0) circle [radius=7];
            \draw[wline] (-90:7) -- (90:7);
        }
    \ }$,
    $\mathord{
        \ \tikz[baseline=-.6ex, scale=.08]{
            \draw[dashed, fill=white] (0,0) circle [radius=7];
            \draw[wline] (0:0) -- (90:7);
            \draw[webline] (0:0) -- (225:7);
            \draw[webline] (0:0) -- (-45:7);
        }
    \ }$,
    $\mathord{
        \ \tikz[baseline=-.6ex, scale=.08]{
            \draw[dashed, fill=white] (0,0) circle [radius=7];
            \draw[webline] (225:7) -- (45:7);
            \draw[overarc] (-45:7) -- (135:7);
        }
    \ }$,
    $\mathord{
        \ \tikz[baseline=-.6ex, scale=.08]{
            \draw[dashed, fill=white] (0,0) circle [radius=7];
            \draw[wline] (225:7) -- (45:7);
            \draw[overwline] (-45:7) -- (135:7);
        }
    \ }$,
    $\mathord{
        \ \tikz[baseline=-.6ex, scale=.08]{
            \draw[dashed, fill=white] (0,0) circle [radius=7];
            \draw[wline] (225:7) -- (45:7);
            \draw[overarc] (-45:7) -- (135:7);
        }
    \ }$, or
    $\mathord{
        \ \tikz[baseline=-.6ex, scale=.08]{
            \draw[dashed, fill=white] (0,0) circle [radius=7];
            \draw[webline] (225:7) -- (45:7);
            \draw[overwline] (-45:7) -- (135:7);
        }
    \ }$ at interior points $p\in\Sigma\setminus\partial\Sigma$, and a  superposition of some of
    $\mathord{
        \ \tikz[baseline=-.6ex, scale=.08, yshift={-4cm}]{
            \coordinate (P) at (0,0);
            \draw[dashed] (10,0) arc (0:180:10cm);
            \bdryline{(-10,0)}{(10,0)}{2cm}
            \draw[fill=black] (P) circle [radius=20pt];
        \ }
    }$,
    $\mathord{
        \ \tikz[baseline=-.6ex, scale=.08, yshift={-4cm}]{
            \coordinate (P) at (0,0);
            \draw[webline] (P) -- (90:10);
            \draw[dashed] (10,0) arc (0:180:10cm);
            \bdryline{(-10,0)}{(10,0)}{2cm}
            \draw[fill=black] (P) circle [radius=20pt];
        \ }
    }$, or
    $\mathord{
        \ \tikz[baseline=-.6ex, scale=.08, yshift={-4cm}]{
            \coordinate (P) at (0,0);
            \draw[wline] (P) -- (90:10);
            \draw[dashed] (10,0) arc (0:180:10cm);
            \bdryline{(-10,0)}{(10,0)}{2cm}
            \draw[fill=black] (P) circle [radius=20pt];
        \ }
    }$ at a special point $p\in\bM$. 
    Here an edge with a gap at the intersection point is under-passing. 
    The elevation is indicated by the distance from $p$ (see \cref{fig:elevation}).
    \item Two tangled $\mathfrak{sp}_4$-graphs are \emph{equivalent} if these graphs are related by a finite sequence of the Reidemeister moves (R1$'$), (R2), (R3), (R4), (bR) (see diagrams in \cref{lem:Reidemeister}) and isotopies of $\Sigma$ relative to $\partial\Sigma$.
    We call the equivalence class of a tangled $\mathfrak{sp}_4$-graph diagram a \emph{tangled $\mathfrak{sp}_4$-graph} on $\Sigma$.
    Let $\Tang{\Sigma}$ be the set of tangled $\mathfrak{sp}_4$-graphs on $\Sigma$. It has a multiplication defined by superposition of their diagrams: 
    the product $G_1G_2$ of $G_1,G_2\in\Tang{\Sigma}$ is defined by a superposing $G_1$ on $G_2$ so that $G_1$ passes over $G_2$ at all intersection points and the half-edges of $G_1$ has higher elevation than those of $G_2$ at any $p\in\bM$.
\end{enumerate}
\end{dfn}
\begin{figure}
    \begin{tikzpicture}[scale=.2]
        \coordinate (P) at (0,0);
        \draw[webline, shorten <= .3cm] (P) -- (60:10);
        \draw[wline, shorten <= .6cm] (P) -- (120:10);
        \draw[webline, shorten <= .9cm] (P) -- (90:10);
        \draw[webline, shorten <= 1.2cm] (P) -- (45:10);
        \draw[wline, shorten <= 1.5cm] (P) -- (135:10);
        \draw[wline] (P) -- (30:10);
        \draw[dashed] (10,0) arc (0:180:10cm);
        \draw[dashed] (7.5,0) arc (0:180:7.5cm);
        \draw[dashed] (6,0) arc (0:180:6cm);
        \draw[dashed] (4.5,0) arc (0:180:4.5cm);
        \draw[dashed] (3,0) arc (0:180:3cm);
        \draw[dashed] (1.5,0) arc (0:180:1.5cm);
        \bdryline{(-10,0)}{(10,0)}{2cm}
        \draw[fill=black] (P) circle [radius=20pt];
        \node at (30:11) {\scriptsize $6$};
        \node at (45:11) {\scriptsize $2$};
        \node at (60:11) {\scriptsize $5$};
        \node at (90:11) {\scriptsize $3$};
        \node at (120:11) {\scriptsize $4$};
        \node at (135:11) {\scriptsize $1$};
    \end{tikzpicture}
    \caption{Elevation at a special point. The labeling by natural numbers represents the total order.}
    \label{fig:elevation}
\end{figure}
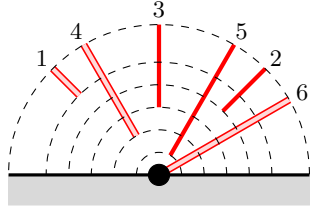

We define the following skein relations for formal linear combinations of tangled $\mathfrak{sp}_4$-graphs.
\begin{dfn}[the internal $\mathfrak{sp}_4$-skein relations~\cite{Kuperberg94,Kuperberg96}]\label{def:skeinrel}
    \begin{align}
    \mathord{
        \ \tikz[baseline=-.6ex, scale=.08]{
            \draw[dashed, fill=white] (0,0) circle [radius=7];
            \draw[webline] (0,0) circle [radius=3];
        }
    \ }
    &=-\frac{[2][6]}{[3]}
    \mathord{
        \ \tikz[baseline=-.6ex, scale=.08]{
            \draw[dashed, fill=white] (0,0) circle [radius=7];
        }
    \ }\label{rel:circle}\\
    \mathord{
        \ \tikz[baseline=-.6ex, scale=.08]{
            \draw[dashed, fill=white] (0,0) circle [radius=7];
            \draw[wline] (0,0) circle [radius=3];
        }
    \ }
    &=\frac{[5][6]}{[2][3]}
    \mathord{
        \ \tikz[baseline=-.6ex, scale=.08]{
            \draw[dashed, fill=white] (0,0) circle [radius=7];
        }
    \ }\label{rel:wcircle}\\
    \mathord{
        \ \tikz[baseline=-.6ex, scale=.08]{
            \draw[dashed, fill=white] (0,0) circle [radius=7];
            \draw[wline] (-90:3) -- (-90:7);
            \draw[webline] (0,0) circle [radius=3];
        }
    \ }
    &=0\label{rel:monogon}\\
    \mathord{
        \ \tikz[baseline=-.6ex, scale=.08]{
            \draw[dashed, fill=white] (0,0) circle [radius=7];
            \draw[wline] (0,-7) -- (0,-3);
            \draw[wline] (0,3) -- (0,7);
            \draw[webline] (0,-3) to[out=east, in=south] (3,0) to[out=north, in=east] (0,3);
            \draw[webline] (0,-3) to[out=west, in=south] (-3,0) to[out=north, in=west] (0,3);
        }
    \ }
    &=-[2]\mathord{
        \ \tikz[baseline=-.6ex, scale=.08]{
            \draw[dashed, fill=white] (0,0) circle [radius=7];
            \draw[wline] (0,-7) -- (0,7);
        }
    \ },\label{rel:bigon}\\
    \mathord{
        \ \tikz[baseline=-.6ex, scale=.08]{
            \draw[dashed, fill=white] (0,0) circle [radius=7];
            \draw[webline] (90:4) -- (210:4) -- (330:4) -- cycle;
            \draw[wline] (90:4) -- (90:7);
            \draw[wline] (210:4) -- (210:7);
            \draw[wline] (330:4) -- (330:7);
        }
    \ }
    &=0,\label{rel:trigon}\\
    \mathord{
        \ \tikz[baseline=-.6ex, scale=.08, rotate=90]{
            \draw[dashed, fill=white] (0,0) circle [radius=7];
            \draw[webline] (-45:7) to[out=north west, in=south] (3,0) to[out=north, in=south west] (45:7);
            \draw[webline] (-135:7) to[out=north east, in=south] (-3,0) to[out=north, in=south east] (135:7);
        }
    \ }
    -[2]
    \mathord{
        \ \tikz[baseline=-.6ex, scale=.08]{
            \draw[dashed, fill=white] (0,0) circle [radius=7];
            \draw[webline] (45:7) -- (90:3);
            \draw[webline] (135:7) -- (90:3);
            \draw[webline] (225:7) -- (-90:3);
            \draw[webline] (315:7) -- (-90:3);
            \draw[wline] (90:3) -- (-90:3);
            }
    \ }
    &=
    \mathord{
        \ \tikz[baseline=-.6ex, scale=.08]{
            \draw[dashed, fill=white] (0,0) circle [radius=7];
            \draw[webline] (-45:7) to[out=north west, in=south] (3,0) to[out=north, in=south west] (45:7);
            \draw[webline] (-135:7) to[out=north east, in=south] (-3,0) to[out=north, in=south east] (135:7);
        }
    \ }
    -[2]
    \mathord{
        \ \tikz[baseline=-.6ex, scale=.08, rotate=90]{
            \draw[dashed, fill=white] (0,0) circle [radius=7];
            \draw[webline] (45:7) -- (90:3);
            \draw[webline] (135:7) -- (90:3);
            \draw[webline] (225:7) -- (-90:3);
            \draw[webline] (315:7) -- (-90:3);
            \draw[wline] (90:3) -- (-90:3);
            }
    \ },\label{rel:IH}
\end{align}
\begin{align}
    \mathord{
        \ \tikz[baseline=-.6ex, scale=.08]{
            \draw[dashed, fill=white] (0,0) circle [radius=7];
            \draw[red, very thick] (-45:7) -- (135:7);
            \draw[overarc] (-135:7) -- (45:7);
            }
    \ }
    &=\frac{q^2}{[2]}
    \mathord{
        \ \tikz[baseline=-.6ex, scale=.08]{
            \draw[dashed, fill=white] (0,0) circle [radius=7];
            \draw[webline] (-45:7) to[out=north west, in=south] (3,0) to[out=north, in=south west] (45:7);
            \draw[webline] (-135:7) to[out=north east, in=south] (-3,0) to[out=north, in=south east] (135:7);
        }
    \ }
    +q^{-1}
    \mathord{
        \ \tikz[baseline=-.6ex, scale=.08, rotate=90]{
            \draw[dashed, fill=white] (0,0) circle [radius=7];
            \draw[webline] (-45:7) to[out=north west, in=south] (3,0) to[out=north, in=south west] (45:7);
            \draw[webline] (-135:7) to[out=north east, in=south] (-3,0) to[out=north, in=south east] (135:7);
        }
    \ }
    +
    \mathord{
        \ \tikz[baseline=-.6ex, scale=.08, rotate=90]{
            \draw[dashed, fill=white] (0,0) circle [radius=7];
            \draw[webline] (45:7) -- (90:3);
            \draw[webline] (135:7) -- (90:3);
            \draw[webline] (225:7) -- (-90:3);
            \draw[webline] (315:7) -- (-90:3);
            \draw[wline] (90:3) -- (-90:3);
            }
    \ },\notag\\
    &=q
    \mathord{
        \ \tikz[baseline=-.6ex, scale=.08]{
            \draw[dashed, fill=white] (0,0) circle [radius=7];
            \draw[webline] (-45:7) to[out=north west, in=south] (3,0) to[out=north, in=south west] (45:7);
            \draw[webline] (-135:7) to[out=north east, in=south] (-3,0) to[out=north, in=south east] (135:7);
        }
    \ }
    +\frac{q^{-2}}{[2]}
    \mathord{
        \ \tikz[baseline=-.6ex, scale=.08, rotate=90]{
            \draw[dashed, fill=white] (0,0) circle [radius=7];
            \draw[webline] (-45:7) to[out=north west, in=south] (3,0) to[out=north, in=south west] (45:7);
            \draw[webline] (-135:7) to[out=north east, in=south] (-3,0) to[out=north, in=south east] (135:7);
        }
    \ }
    +
    \mathord{
        \ \tikz[baseline=-.6ex, scale=.08]{
            \draw[dashed, fill=white] (0,0) circle [radius=7];
            \draw[webline] (45:7) -- (90:3);
            \draw[webline] (135:7) -- (90:3);
            \draw[webline] (225:7) -- (-90:3);
            \draw[webline] (315:7) -- (-90:3);
            \draw[wline] (90:3) -- (-90:3);
            }
    \ },\label{rel:ss-cross}\\
    \mathord{
        \ \tikz[baseline=-.6ex, scale=.08]{
            \draw[dashed, fill=white] (0,0) circle [radius=7];
            \draw[red, very thick] (-45:7) -- (135:7);
            \draw[overwline] (-135:7) -- (45:7);
            }
    \ }
    &=q\mathord{
        \ \tikz[baseline=-.6ex, scale=.08]{
            \draw[dashed, fill=white] (0,0) circle [radius=7];
            \draw[webline] (-45:7) -- (3,0);
            \draw[wline] (-135:7) -- (-3,0);
            \draw[wline] (45:7) -- (3,0);
            \draw[webline] (135:7) -- (-3,0);
            \draw[webline] (-3,0) -- (3,0);
        }
    \ }
    +q^{-1}\mathord{
        \ \tikz[baseline=-.6ex, scale=.08]{
            \draw[dashed, fill=white] (0,0) circle [radius=7];
            \draw[webline] (-45:7) -- (0,-3);
            \draw[wline] (-135:7) -- (0,-3);
            \draw[wline] (45:7) -- (0,3);
            \draw[webline] (135:7) -- (0,3);
            \draw[webline] (0,-3) -- (0,3);
        }
    \ },\label{rel:ws-cross}\\
    \mathord{
        \ \tikz[baseline=-.6ex, scale=.08]{
            \draw[dashed, fill=white] (0,0) circle [radius=7];
            \draw[wline] (-45:7) -- (135:7);
            \draw[overarc] (-135:7) -- (45:7);
            }
    \ }
    &=q\mathord{
        \ \tikz[baseline=-.6ex, scale=.08]{
            \draw[dashed, fill=white] (0,0) circle [radius=7];
            \draw[wline] (-45:7) -- (3,0);
            \draw[webline] (-135:7) -- (-3,0);
            \draw[webline] (45:7) -- (3,0);
            \draw[wline] (135:7) -- (-3,0);
            \draw[webline] (-3,0) -- (3,0);
        }
    \ }
    +q^{-1}\mathord{
        \ \tikz[baseline=-.6ex, scale=.08]{
            \draw[dashed, fill=white] (0,0) circle [radius=7];
            \draw[wline] (-45:7) -- (0,-3);
            \draw[webline] (-135:7) -- (0,-3);
            \draw[webline] (45:7) -- (0,3);
            \draw[wline] (135:7) -- (0,3);
            \draw[webline] (0,-3) -- (0,3);
        }
    \ },\label{rel:sw-cross}\\
    \mathord{
        \ \tikz[baseline=-.6ex, scale=.08]{
            \draw[dashed, fill=white] (0,0) circle [radius=7];
            \draw[wline] (-45:7) -- (135:7);
            \draw[overwline] (-135:7) -- (45:7);
            }
    \ }
    &=q^{2}\mathord{
        \ \tikz[baseline=-.6ex, scale=.08]{
            \draw[dashed, fill=white] (0,0) circle [radius=7];
            \draw[wline] (-45:7) to[out=north west, in=south] (3,0) to[out=north, in=south west] (45:7);
            \draw[wline] (-135:7) to[out=north east, in=south] (-3,0) to[out=north, in=south east] (135:7);
        }
    \ }
    +q^{-2}\mathord{
        \ \tikz[baseline=-.6ex, scale=.08, rotate=90]{
            \draw[dashed, fill=white] (0,0) circle [radius=7];
            \draw[wline] (-45:7) to[out=north west, in=south] (3,0) to[out=north, in=south west] (45:7);
            \draw[wline] (-135:7) to[out=north east, in=south] (-3,0) to[out=north, in=south east] (135:7);
        }
    \ }
    +\mathord{
        \ \tikz[baseline=-.6ex, scale=.08]{
            \draw[dashed, fill=white] (0,0) circle [radius=7];
            \draw[wline] (-45:7) -- (135:7);
            \draw[wline] (-135:7) -- (45:7);
            \draw[very thick, red, fill=white] ([shift=(-135:3)]0,0) rectangle (45:3);
        }
    \ }.\label{rel:ww-cross}
\end{align}
These relations are obtained from Kuperberg's skein relations in \cite{Kuperberg96} by replacing $-q^{\frac{1}{2}}$ with $q$ and rescaling trivanelt vertices as 
\begin{align*}
    \mathord{
        \ \tikz[baseline=-.6ex, scale=.08]{
            \draw[dashed, fill=white] (0,0) circle [radius=7];
            \draw[wline] (0:0) -- (90:7);
            \draw[webline] (0:0) -- (225:7);
            \draw[webline] (0:0) -- (-45:7);
        }
    \ }
    \leadsto
    \frac{1}{\sqrt{[2]}}
    \mathord{
        \ \tikz[baseline=-.6ex, scale=.08]{
            \draw[dashed, fill=white] (0,0) circle [radius=7];
            \draw[wline] (0:0) -- (90:7);
            \draw[webline] (0:0) -- (225:7);
            \draw[webline] (0:0) -- (-45:7);
        }
    \ }.
\end{align*}
\end{dfn}

Let us further introduce the ``clasped'' skein relations among half-edges incident to a special point.

\begin{dfn}[the clasped $\mathfrak{sp}_4$-skein relations]\label{def:bdry-skeinrel}
    \begin{align}
    &\begin{aligned}
        \mathord{
            \ \tikz[baseline=-.6ex, scale=.1, yshift=-4cm]{
                \coordinate (P) at (0,0);
                \draw[webline, shorten <=.2cm] ($(P)$) -- (135:10);
                \draw[webline] (P) -- (45:10);
                \draw[dashed] (10,0) arc (0:180:10cm);
                \bdryline{(-10,0)}{(10,0)}{2cm}
                \draw[fill=black] (P) circle [radius=20pt];
            \ }
        }
        &=q
        \mathord{
            \ \tikz[baseline=-.6ex, scale=.1, yshift=-4cm]{
                \coordinate (P) at (0,0);
                \draw[webline, shorten <=.2cm] (P) -- (45:10);
                \draw[webline] (P) -- (135:10);
                \draw[dashed] (10,0) arc (0:180:10cm);
                \bdryline{(-10,0)}{(10,0)}{2cm}
                \draw[fill=black] (P) circle [radius=20pt];
            \ }
        },&
        \mathord{
            \ \tikz[baseline=-.6ex, scale=.1, yshift=-4cm]{
                \coordinate (P) at (0,0);
                \draw[wline, shorten <=.2cm] (P) -- (135:10);
                \draw[wline] (P) -- (45:10);
                \draw[dashed] (10,0) arc (0:180:10cm);
                \bdryline{(-10,0)}{(10,0)}{2cm}
                \draw[fill=black] (P) circle [radius=20pt];
            \ }
        }
        &=q^2
        \mathord{
            \ \tikz[baseline=-.6ex, scale=.1, yshift=-4cm]{
                \coordinate (P) at (0,0);
                \draw[wline, shorten <=.2cm] (P) -- (45:10);
                \draw[wline] (P) -- (135:10);
                \draw[dashed] (10,0) arc (0:180:10cm);
                \bdryline{(-10,0)}{(10,0)}{2cm}
                \draw[fill=black] (P) circle [radius=20pt];
            \ }
        },\\
        \mathord{
            \ \tikz[baseline=-.6ex, scale=.1, yshift=-4cm]{
                \coordinate (P) at (0,0);
                \draw[wline, shorten <=.2cm] ($(P)$) -- (135:10);
                \draw[webline] (P) -- (45:10);
                \draw[dashed] (10,0) arc (0:180:10cm);
                \bdryline{(-10,0)}{(10,0)}{2cm}
                \draw[fill=black] (P) circle [radius=20pt];
            \ }
        }
        &=q
        \mathord{
            \ \tikz[baseline=-.6ex, scale=.1, yshift=-4cm]{
                \coordinate (P) at (0,0);
                \draw[webline, shorten <=.2cm] (P) -- (45:10);
                \draw[wline] (P) -- (135:10);
                \draw[dashed] (10,0) arc (0:180:10cm);
                \bdryline{(-10,0)}{(10,0)}{2cm}
                \draw[fill=black] (P) circle [radius=20pt];
            \ }
        },&
        \mathord{
            \ \tikz[baseline=-.6ex, scale=.1, yshift=-4cm]{
                \coordinate (P) at (0,0);
                \draw[webline, shorten <=.2cm] ($(P)$) -- (135:10);
                \draw[wline] (P) -- (45:10);
                \draw[dashed] (10,0) arc (0:180:10cm);
                \bdryline{(-10,0)}{(10,0)}{2cm}
                \draw[fill=black] (P) circle [radius=20pt];
            \ }
        }
        &=q
        \mathord{
            \ \tikz[baseline=-.6ex, scale=.1, yshift=-4cm]{
                \coordinate (P) at (0,0);
                \draw[wline, shorten <=.2cm] (P) -- (45:10);
                \draw[webline] (P) -- (135:10);
                \draw[dashed] (10,0) arc (0:180:10cm);
                \bdryline{(-10,0)}{(10,0)}{2cm}
                \draw[fill=black] (P) circle [radius=20pt];
            \ }
        },
    \end{aligned}\label{rel:bdry-cross}\\
    &\begin{aligned}
        \mathord{
            \ \tikz[baseline=-.6ex, scale=.1, yshift=-4cm]{
                \coordinate (P) at (0,0);
                \coordinate (R) at (45:7) {};
                \coordinate (L) at (135:7) {};
                \draw[webline] (L) -- (R);
                \draw[wline] (P) -- (L);
                \draw[wline] (R) -- (45:10);
                \draw[webline] (L) -- (135:10);
                \draw[webline] (P) -- (R);
                \draw[dashed] (10,0) arc (0:180:10cm);
                \bdryline{(-10,0)}{(10,0)}{2cm}
                \draw[fill=black] (P) circle [radius=20pt];
            \ }
        }
        &=
        \mathord{
            \ \tikz[baseline=-.6ex, scale=.1, yshift=-4cm]{
                \coordinate (P) at (0,0) {};
                \draw[webline] (P) -- (135:10);
                \draw[wline] (P) -- (45:10);
                \draw[dashed] (10,0) arc (0:180:10cm);
                \bdryline{(-10,0)}{(10,0)}{2cm}
                \draw[fill=black] (P) circle [radius=20pt];
            \ }
        },&
        \mathord{
            \ \tikz[baseline=-.6ex, scale=.1, yshift=-4cm]{
                \coordinate (P) at (0,0);
                \coordinate (R) at (45:7) {};
                \coordinate (L) at (135:7) {};
                \draw[webline] (L) -- (R);
                \draw[webline] (P) -- (L);
                \draw[webline] (R) -- (45:10);
                \draw[wline] (L) -- (135:10);
                \draw[wline] (P) -- (R);
                \draw[dashed] (10,0) arc (0:180:10cm);
                \bdryline{(-10,0)}{(10,0)}{2cm}
                \draw[fill=black] (P) circle [radius=20pt];
            \ }
        }
        &=
        \mathord{
            \ \tikz[baseline=-.6ex, scale=.1, yshift=-4cm]{
                \coordinate (P) at (0,0) {};
                \draw[wline] (P) -- (135:10);
                \draw[webline] (P) -- (45:10);
                \draw[dashed] (10,0) arc (0:180:10cm);
                \bdryline{(-10,0)}{(10,0)}{2cm}
                \draw[fill=black] (P) circle [radius=20pt];
            \ }
        },\\
        \mathord{
            \ \tikz[baseline=-.6ex, scale=.1, yshift=-4cm]{
                \coordinate (P) at (0,0);
                \coordinate (R) at (45:7) {};
                \coordinate (L) at (135:7) {};
                \draw[wline] (L) -- (R);
                \draw[webline] (P) -- (L);
                \draw[webline] (R) -- (45:10);
                \draw[webline] (L) -- (135:10);
                \draw[webline] (P) -- (R);
                \draw[dashed] (10,0) arc (0:180:10cm);
                \bdryline{(-10,0)}{(10,0)}{2cm}
                \draw[fill=black] (P) circle [radius=20pt];
            \ }
        }
        &=\frac{1}{[2]}
        \mathord{
            \ \tikz[baseline=-.6ex, scale=.1, yshift=-4cm]{
                \coordinate (P) at (0,0) {};
                \draw[webline] (P) -- (135:10);
                \draw[webline] (P) -- (45:10);
                \draw[dashed] (10,0) arc (0:180:10cm);
                \bdryline{(-10,0)}{(10,0)}{2cm}
                \draw[fill=black] (P) circle [radius=20pt];
            \ }
        },&
        \mathord{
            \ \tikz[baseline=-.6ex, scale=.1, yshift=-4cm]{
                \coordinate (P) at (0,0);
                \coordinate (R) at (45:7) {};
                \coordinate (L) at (135:7) {};
                \draw[webline] (L) -- (R);
                \draw[wline] (P) -- (L);
                \draw[webline] (R) -- (45:10);
                \draw[webline] (L) -- (135:10);
                \draw[wline] (P) -- (R);
                \draw[dashed] (10,0) arc (0:180:10cm);
                \bdryline{(-10,0)}{(10,0)}{2cm}
                \draw[fill=black] (P) circle [radius=20pt];
            \ }
        }
        &=0,      
    \end{aligned}\label{rel:bdry-tri}\\
    &\begin{aligned}
        \mathord{
            \ \tikz[baseline=-.6ex, scale=.1, yshift=-4cm]{
                \coordinate (P) at (0,0);
                \coordinate (C) at (90:7) {};
                \draw[webline] (P) to[out=north west, in=west] (C);
                \draw[webline] (P) to[out=north east, in=east] (C);
                \draw[wline] (C) -- (90:10);
                \draw[dashed] (10,0) arc (0:180:10cm);
                \bdryline{(-10,0)}{(10,0)}{2cm}
                \draw[fill=black] (P) circle [radius=20pt];
            \ }
        }
        &=0,&
        \mathord{
            \ \tikz[baseline=-.6ex, scale=.1, yshift=-4cm]{
                \coordinate (P) at (0,0);
                \coordinate (C) at (90:7) {};
                \draw[wline] (P) to[out=north west, in=west] (C);
                \draw[webline] (P) to[out=north east, in=east] (C);
                \draw[webline] (C) -- (90:10);
                \draw[dashed] (10,0) arc (0:180:10cm);
                \bdryline{(-10,0)}{(10,0)}{2cm}
                \draw[fill=black] (P) circle [radius=20pt];
            \ }
        }
        &=0,&
        \mathord{
            \ \tikz[baseline=-.6ex, scale=.1, yshift=-4cm]{
                \coordinate (P) at (0,0);
                \coordinate (C) at (90:7) {};
                \draw[webline] (P) to[out=north west, in=west] (C);
                \draw[wline] (P) to[out=north east, in=east] (C);
                \draw[webline] (C) -- (90:10);
                \draw[dashed] (10,0) arc (0:180:10cm);
                \bdryline{(-10,0)}{(10,0)}{2cm}
                \draw[fill=black] (P) circle [radius=20pt];
            \ }
        }
        &=0,            
    \end{aligned}\label{rel:bdry-bigon}\\
    &\begin{aligned}
        \mathord{
            \ \tikz[baseline=-.6ex, scale=.1, yshift=-4cm]{
                \coordinate (P) at (0,0);
                \coordinate (C) at (90:7);
                \draw[webline] (P) to[out=north west, in=west] (C);
                \draw[webline] (P) to[out=north east, in=east] (C);
                \draw[dashed] (10,0) arc (0:180:10cm);
                \bdryline{(-10,0)}{(10,0)}{2cm}
                \draw[fill=black] (P) circle [radius=20pt];
            \ }
        }
        &=0,&
        \mathord{
            \ \tikz[baseline=-.6ex, scale=.1, yshift=-4cm]{
                \coordinate (P) at (0,0) {};
                \coordinate (C) at (90:7);
                \draw[wline] (P) to[out=north west, in=west] (C);
                \draw[wline] (P) to[out=north east, in=east] (C);
                \draw[dashed] (10,0) arc (0:180:10cm);
                \bdryline{(-10,0)}{(10,0)}{2cm}
                \draw[fill=black] (P) circle [radius=20pt];
            \ }
        }
        &=0.        
    \end{aligned}\label{rel:bdry-monogon}
\end{align}
\end{dfn}

In the above definition, we have drawn half-edges with the same elevation (called \emph{simultaneous crossings}), which is defined as follows.

\begin{dfn}[simultaneous crossings]\label{def:simul-crossing}
    \begin{align}
        \mathord{
            \ \tikz[baseline=-.6ex, scale=.1, yshift=-4cm]{
                \coordinate (P) at (0,0);
                \draw[webline] (P) -- (45:10);
                \draw[webline] (P) -- (135:10);
                \draw[dashed] (10,0) arc (0:180:10cm);
                \bdryline{(-10,0)}{(10,0)}{2cm}
                \draw[fill=black] (P) circle [radius=20pt];
            \ }
        }
        &\coloneqq q^{-\frac{1}{2}}
        \mathord{
            \ \tikz[baseline=-.6ex, scale=.1, yshift=-4cm]{
                \coordinate (P) at (0,0);
                \draw[webline, shorten <=.2cm] ($(P)$) -- (135:10);
                \draw[webline] (P) -- (45:10);
                \draw[dashed] (10,0) arc (0:180:10cm);
                \bdryline{(-10,0)}{(10,0)}{2cm}
                \draw[fill=black] (P) circle [radius=20pt];
            \ }
        },\label{ss-simul-cross}\\
        \mathord{
            \ \tikz[baseline=-.6ex, scale=.1, yshift=-4cm]{
                \coordinate (P) at (0,0);
                \draw[webline] (P) -- (45:10);
                \draw[wline] (P) -- (135:10);
                \draw[dashed] (10,0) arc (0:180:10cm);
                \bdryline{(-10,0)}{(10,0)}{2cm}
                \draw[fill=black] (P) circle [radius=20pt];
            \ }
        }
        &\coloneqq q^{-\frac{1}{2}}
        \mathord{
            \ \tikz[baseline=-.6ex, scale=.1, yshift=-4cm]{
                \coordinate (P) at (0,0);
                \draw[wline, shorten <=.2cm] ($(P)$) -- (135:10);
                \draw[webline] (P) -- (45:10);
                \draw[dashed] (10,0) arc (0:180:10cm);
                \bdryline{(-10,0)}{(10,0)}{2cm}
                \draw[fill=black] (P) circle [radius=20pt];
            \ }
        },\quad
        \mathord{
            \ \tikz[baseline=-.6ex, scale=.1, yshift=-4cm]{
                \coordinate (P) at (0,0);
                \draw[wline] (P) -- (45:10);
                \draw[webline] (P) -- (135:10);
                \draw[dashed] (10,0) arc (0:180:10cm);
                \bdryline{(-10,0)}{(10,0)}{2cm}
                \draw[fill=black] (P) circle [radius=20pt];
            \ }
        }
        \coloneqq q^{-\frac{1}{2}}
        \mathord{
            \ \tikz[baseline=-.6ex, scale=.1, yshift=-4cm]{
                \coordinate (P) at (0,0);
                \draw[webline, shorten <=.2cm] ($(P)$) -- (135:10);
                \draw[wline] (P) -- (45:10);
                \draw[dashed] (10,0) arc (0:180:10cm);
                \bdryline{(-10,0)}{(10,0)}{2cm}
                \draw[fill=black] (P) circle [radius=20pt];
            \ }
        },\label{rel:sw-simul-cross}\\
        \mathord{
            \ \tikz[baseline=-.6ex, scale=.1, yshift=-4cm]{
                \coordinate (P) at (0,0);
                \draw[wline] (P) -- (45:10);
                \draw[wline] (P) -- (135:10);
                \draw[dashed] (10,0) arc (0:180:10cm);
                \bdryline{(-10,0)}{(10,0)}{2cm}
                \draw[fill=black] (P) circle [radius=20pt];
            \ }
        }
        &\coloneqq q^{-1}
        \mathord{
            \ \tikz[baseline=-.6ex, scale=.1, yshift=-4cm]{
                \coordinate (P) at (0,0);
                \draw[wline, shorten <=.2cm] ($(P)$) -- (135:10);
                \draw[wline] (P) -- (45:10);
                \draw[dashed] (10,0) arc (0:180:10cm);
                \bdryline{(-10,0)}{(10,0)}{2cm}
                \draw[fill=black] (P) circle [radius=20pt];
            \ }
        },\label{rel:ww-simul-cross}
    \end{align}
\end{dfn}

For a local diagram of a tangled $\mathfrak{sp}_4$-graph, one can show the following by using the (clasped) $\mathfrak{sp}_4$-skein relations.
\begin{lem}\label{lem:Reidemeister}
    The following Reidemeister moves are obtained by $\mathfrak{sp}_4$-skein relations.
    \begin{align*}
    &\mathord{
        \ \tikz[baseline=-.6ex, scale=.1]{
            \draw[dashed] (0,0) circle [radius=7];
            \draw[weblineblack] (3,-2) to[out=south, in=east] (2,-3) to[out=west, in=south] (-1,0) to[out=north, in=west] (2,3) to[out=east, in=north] (3,2);
            \draw[overarcblack] (0,-7) to[out=north, in=west] (2,-1) to[out=east, in=north] (3,-2);
            \draw[overarcblack] (3,2) to[out=south, in=east] (2,1) to[out=west, in=south] (0,7);
        }
    \ }
    \mathord{
        \tikz[baseline=-.6ex, scale=.1]{
            \draw[<->] (0,0)--(15,0) node[midway,above]{\text{(R1$'$)}};
        }
    } 
    \mathord{
        \ \tikz[baseline=-.6ex, scale=.1]{
            \draw[dashed] (0,0) circle [radius=7];
            \draw[weblineblack] (90:7) to (-90:7);
        }
    \ },&
    &\mathord{
        \ \tikz[baseline=-.6ex, scale=.1]{
            \draw[dashed] (0,0) circle [radius=7];
            \draw[weblineblack] (135:7) to[out=south east, in=west] (0,-2) to[out=east, in=south west](45:7);
            \draw[overarcblack] (-135:7) to[out=north east, in=west] (0,2) to[out=east, in=north west] (-45:7);
        }
    \ }
    \mathord{
        \tikz[baseline=-.6ex, scale=.1]{
            \draw[<->] (0,0)--(15,0) node[midway,above]{\text{(R2)}};
        }
    } 
    \mathord{
        \ \tikz[baseline=-.6ex, scale=.1]{
            \draw[dashed] (0,0) circle [radius=7];
            \draw[weblineblack] (135:7) to[out=south east, in=west](0,2) to[out=east, in=south west](45:7);
            \draw[weblineblack] (-135:7) to[out=north east, in=west](0,-2) to[out=east, in=north west] (-45:7);
        }
    \ },\\
    &\mathord{
        \ \tikz[baseline=-.6ex, scale=.1]{
            \draw[dashed] (0,0) circle [radius=7];
            \draw[weblineblack] (-135:7) -- (45:7);
            \draw[overarcblack] (135:7) -- (-45:7);
            \draw[overarcblack] (180:7) to[out=east, in=west](0,5) to[out=east, in=west] (0:7);
        }
    \ }
    \mathord{
        \tikz[baseline=-.6ex, scale=.1]{
            \draw[<->] (0,0)--(15,0) node[midway,above]{\text{(R3)}};
        }
    } 
    \mathord{
        \ \tikz[baseline=-.6ex, scale=.1]{
            \draw[dashed] (0,0) circle [radius=7];
            \draw[weblineblack] (-135:7) -- (45:7);
            \draw[overarcblack] (135:7) -- (-45:7);
            \draw[overarcblack] (180:7) to[out=east, in=west] (0,-5) to[out=east, in=west] (0:7);
        }
    \ },&
    &\mathord{
        \ \tikz[baseline=-.6ex, scale=.1]{
            \draw[dashed] (0,0) circle [radius=7];
            \draw[weblineblack] (0:0) -- (90:7); 
            \draw[weblineblack] (0:0) -- (210:7); 
            \draw[weblineblack] (0:0) -- (-30:7);
            \draw[overarcblack] (180:7) to[out=east, in=west] (0,5) to[out=east, in=west] (0:7);
        }
    \ }
    \mathord{
        \tikz[baseline=-.6ex, scale=.1]{
            \draw[<->] (0,0)--(15,0) node[midway,above]{\text{(R4)}};
        }
    } 
    \mathord{
        \ \tikz[baseline=-.6ex, scale=.1]{
            \draw[dashed] (0,0) circle [radius=7];
            \draw[weblineblack] (0:0) -- (90:7); 
            \draw[weblineblack] (0:0) -- (210:7); 
            \draw[weblineblack] (0:0) -- (-30:7);
            \draw[overarcblack] (180:7) to[out=east, in=west] (0,-5) to[out=east, in=west](0:7);
        }
    \ },\\
    &\mathord{
        \ \tikz[baseline=-.6ex, scale=.1, yshift=-2cm]{
            \coordinate (P) at (0,0);
            \draw[weblineblack, rounded corners, shorten <=.2cm] (P) -- (135:4) -- (60:8);
            \draw[rounded corners, overarcblack] (P) -- (45:4) -- (120:8);
            \draw[dashed] (8,0) arc (0:180:8cm);
            \bline{-8,0}{8,0}{2}
            \draw[fill=black] (P) circle [radius=20pt];
        \ }
    }\ 
    \mathord{
        \tikz[baseline=-.6ex, scale=.1]{
            \draw [<->] (0,0)--(10,0) node[midway,above]{\text{(bR)}};
        }
    }
    \mathord{
        \ \tikz[baseline=-.6ex, scale=.1, yshift=-2cm]{
            \coordinate (P) at (0,0);
            \draw[weblineblack] (P) -- (120:8);
            \draw[weblineblack, shorten <=.2cm] (P) -- (60:8);
            \draw[dashed] (8,0) arc (0:180:8cm);
            \bline{-8,0}{8,0}{2}
            \draw[fill=black] (P) circle [radius=20pt];
        \ }
    }.&&
\end{align*}

    Here the edge coloring is arbitrary under the condition in \cref{def:tangled-graph} at trivalent vertices. 
    One can also show those moves obtained by exchanging type~$1$ and type~$2$ edges for some strands, or reversing all the over/under-passing information and the elevation.
\end{lem}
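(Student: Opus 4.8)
The plan is to prove every move by \emph{resolving all crossings}: apply the crossing-resolution relations \eqref{rel:ss-cross}, \eqref{rel:ws-cross}, \eqref{rel:sw-cross}, \eqref{rel:ww-cross} (and their mirrors) to each double point, so that both sides of a proposed move become $\cR$-linear combinations of crossingless webs, and then match these planar combinations using the face-reduction relations \eqref{rel:circle}--\eqref{rel:IH}. At the outset I would record that it suffices to verify a single representative coloring of each move: exchanging type~$1$ and type~$2$ edges interchanges \eqref{rel:ss-cross} with \eqref{rel:ww-cross} and \eqref{rel:ws-cross} with \eqref{rel:sw-cross} while preserving \eqref{rel:bigon}, \eqref{rel:trigon} and \eqref{rel:IH}, and reversing all over/under information together with the elevation is a manifest symmetry of the entire relation set. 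Consequently the coefficient bookkeeping done below transports verbatim to all the variants asserted in \cref{lem:Reidemeister}.

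The move (R2) sets the template. Resolving the two opposite double points by \eqref{rel:ss-cross} and its mirror expands the left-hand side into a sum of planar webs; the term consisting of two parallel strands occurs with total coefficient $1$, while each remaining term contains either a bigon, evaluated by the factor $-[2]$ of \eqref{rel:bigon}, or an $I$-versus-$H$ pair related by \eqref{rel:IH}. The coefficients $v^{\pm 2}/[2]$, $v^{\pm 1}$, $1$ appearing in \eqref{rel:ss-cross} are calibrated exactly so that these off-diagonal contributions cancel, leaving the two parallel strands; this is the core arithmetic check. The braid move (R3) is the most laborious: all three crossings are resolved, producing many planar terms, and one verifies equality by repeatedly invoking (R2) (now available) together with \eqref{rel:bigon}, \eqref{rel:trigon} and \eqref{rel:IH}. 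Alternatively, since the interior relations \eqref{rel:circle}--\eqref{rel:ww-cross} are precisely Kuperberg's $\mathfrak{sp}_4$ spider relations \cite{Kuperberg96}, the associated braiding defines a ribbon structure, whence (R2), (R3), (R4) hold by naturality of the $R$-matrix; I would present the direct skein computation and remark on this shortcut.

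For (R4) I would resolve the two crossings where the sliding strand passes the trivalent vertex and then eliminate the resulting internal faces by the trigon relation \eqref{rel:trigon}, using \eqref{rel:monogon} where a spurious loop is created; this encodes that the vertex is an intertwiner, and the two sides then coincide. For (R1$'$) I would resolve the self-crossings of the curl, remove the resulting type~$1$ loop capped by a type~$2$ edge via the monogon relation \eqref{rel:monogon}, and evaluate any closed component by \eqref{rel:circle} or \eqref{rel:wcircle}; the calibrated crossing coefficients force the net scalar to be $1$, which is the framed (twist-balanced) content of the move. Finally, (bR) is the genuinely boundary part: here I would use the clasped relations \eqref{rel:bdry-cross} to slide half-edges across a special point, together with the simultaneous-crossing conventions \eqref{ss-simul-cross}--\eqref{rel:ww-simul-cross} to normalize elevations, so that the two boundary configurations are identified.

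The main obstacle is twofold. Computationally, (R3) requires controlling a large number of planar terms after the triple resolution, and keeping the powers of $v$ consistent across the $I/H$ rewrites is error-prone; this is where I expect to be most careful, organizing the terms by their planar isotopy type before reducing. Conceptually, the delicate point is the framing/twist bookkeeping in (R1$'$) and the compatibility of the interior resolutions with the clasped relations in (bR): one must check that the half-integer powers of $v$ introduced by the simultaneous-crossing definitions \eqref{ss-simul-cross}--\eqref{rel:ww-simul-cross} are exactly those needed for (bR) to hold on the nose, rather than up to an unwanted scalar.
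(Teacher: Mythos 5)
Your proposal is correct, but it is considerably more laborious than what the paper actually does. For the interior moves the paper's proof is a one-line citation: (R1$'$), (R2), (R3), (R4) are already established in Kuperberg's work \cite{Kuperberg94,Kuperberg96}, since the relations \eqref{rel:circle}--\eqref{rel:ww-cross} are exactly his $\mathfrak{sp}_4$ spider relations; the only thing the authors verify by hand is (bR), by a direct computation with the relations of \cref{def:skeinrel} and \cref{def:bdry-skeinrel}. You acknowledge this shortcut but elect to redo the crossing-resolution computations in full; that is legitimate and self-contained, and your organization (establish (R2) first from the calibration of the coefficients in \eqref{rel:ss-cross} against \eqref{rel:bigon} and \eqref{rel:IH}, then use it in (R3), treat (R4) via \eqref{rel:trigon} and \eqref{rel:monogon}, and observe that the doubled kink in (R1$'$) makes the two twist scalars cancel) is the standard way such a verification goes. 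What the paper's route buys is brevity and the avoidance of exactly the error-prone bookkeeping you flag for (R3); what your route buys is independence from the cited source and an explicit check that the normalizations used here (in particular the rescaled crossroad and the half-integer powers of $v$ in \eqref{ss-simul-cross}--\eqref{rel:ww-simul-cross}) have not disturbed Kuperberg's calibration. Your treatment of (bR) — resolving the crossing and discarding the terms killed by \eqref{rel:bdry-bigon} and \eqref{rel:bdry-monogon}, with \eqref{rel:bdry-cross} accounting for the elevation swap — is precisely the ``straightforward calculation'' the paper has in mind, so there is no gap; just be aware that the published proof expects you to import the interior moves rather than reprove them.
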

\begin{proof}
    The moves (R1$'$), (R2), (R3), and (R4) have been proved by Kuperberg~\cite{Kuperberg94,Kuperberg96}.
    One can prove (bR) by straightforward calculation using the skein relations in \cref{def:skeinrel,def:bdry-skeinrel}.
\end{proof}

\begin{dfn}[the (clasped) $\mathfrak{sp}_4$-skein algebra of an unpunctured marked surface]\label{def:skein-alg}
    Let $\Sigma$ be a marked surface with special points $\bM\subset\partial\Sigma$.
    The \emph{(clasped) $\mathfrak{sp}_4$-skein algebra $\Skein{\Sigma}=\mathscr{S}_{\mathfrak{sp}_4,\Sigma}^q$} is the quotient of the free $\cR$-module $\cR\Tang{\Sigma}$ modulo the $\mathfrak{sp}_4$-skein relations in \cref{def:skeinrel} and the clasped $\mathfrak{sp}_4$-skein relations in \cref{def:bdry-skeinrel}.
    The multiplication of $\Skein{\Sigma}$ is induced from $\Tang{\Sigma}$.
    
    We call an element of $\Skein{\Sigma}$ coming from $\Tang{\Sigma}$ a \emph{tangled $\mathfrak{sp}_4$-graphs in $\Skein{\Sigma}$}, and a general element (a linear combination of tangled $\mathfrak{sp}_4$-graphs) of $\Skein{\Sigma}$ an \emph{$\mathfrak{sp}_4$-web}.
    A \emph{flat $\mathfrak{sp}_4$-graph} in $\Skein{\Sigma}$ is a tangled $\mathfrak{sp}_4$-graph in $\Skein{\Sigma}$ without internal crossings, and only with simultaneous crossings at special points.
    A \emph{boundary $\mathfrak{sp}_4$-web} is a tangled $\mathfrak{sp}_4$-graph in $\Skein{\Sigma}$ represented by an arc of any type parallel to a boundary interval.
    We denote the set of boundary webs by $\partial_{\Sigma} \subset \Skein{\Sigma}$.
\end{dfn}

Any boundary web $v$-commutes with any $\mathfrak{sp}_4$-tangled graphs in $\Skein{\Sigma}$ by the clasped skein relations in \cref{def:bdry-skeinrel}.
Therefore it is easy to see that the multiplicatively closed subset 
\begin{align*}
    \mathrm{mon}(\partial_{\Sigma})\coloneqq\{q^{\frac{k}{2}}[\prod_{\alpha\in\partial_{\Sigma}}\alpha^{f(\alpha)}]\mid f\colon\partial_{\Sigma}\to\bN, k\in\bZ\}
\end{align*}
of $\Skein{\Sigma}$ becomes a left and right Ore set of $\Skein{\Sigma}$.
It guarantees the existence of the following localization of $\Skein{\Sigma}$ at $\mathrm{mon}(\partial_{\Sigma})$.
Moreover it will be shown that $\Skein{\Sigma}$ is an Ore domain in \cref{subsec:Ore}.
\begin{dfn}[the boundary-localized $\mathfrak{sp}_4$-skein algebra]\label{def:localized-skein-alg}
    The boundary-localized $\mathfrak{sp}_4$-skein algebra $\Skein{\Sigma}[\partial^{-1}]$ is the Ore localization of $\Skein{\Sigma}$ at $\mathrm{mon}(\partial_{\Sigma})$.
\end{dfn}


The clasped skein relations preserve the number of edges of each type incident to $p\in\bM$.
Hence, the $\mathfrak{sp}_4$-skein algebra has the following gradings. 

\begin{dfn}[the endpoint grading]\label{def:egrading}
    For a tangled $\mathfrak{sp}_4$-graph $G$ and $p\in\bM$, associated is the vector $\mathrm{deg}_{p}(G)=(c^1_p,c^2_p) \in \bN \times \bN$, where $c^s_p$ is the number of type~$s$ edges of $G$ incident to $p$, for $s=1,2$.
    It defines the \emph{endpoint grading} $\mathrm{deg}(G)=(\mathrm{deg}_p(G)) \in (\bN \times \bN)^{\bM}$ of $G\in\Skein{\Sigma}$ for any tangled $\mathfrak{sp}_4$-graph $G$.
\end{dfn}

\begin{dfn}[the mirror-reflection]\label{def:mirror-ref}
    The \emph{mirror-reflection} $G^{\dagger}$ of a tangled $\mathfrak{sp}_4$-graph $G$ is defined by reversing the ordering of the univalent vertices on each special point and exchanging the over-/under-passing information at each internal crossing.
    The mirror-reflection is extended to an anti-involution  $\dagger:\Skein{\Sigma} \to \Skein{\Sigma}$ by $\bZ$-linearly and by setting $(q^{\pm 1/2})^{\dagger}:=q^{\mp 1/2}$. 
\end{dfn}


\subsection{Basis webs, elementary webs, and web clusters}
In \cite{Kuperberg96}, Kuperberg introduced a $4$-valent vertex to define a basis for a certain space of $\mathfrak{sp}_4$-webs.
The following ``crossroad'' is the Kuperberg's $4$-valent vertex multiplied by $(-1)$.
\begin{dfn}[crossroads and rungs]
    A \emph{crossroad} is a $4$-valent vertex defined by 
    \begin{align*}
        \mathord{
            \ \tikz[baseline=-.6ex, scale=.08, rotate=90]{
                \draw[dashed, fill=white] (0,0) circle [radius=7];
                \draw[webline] (45:7) -- (-135:7);
                \draw[webline] (135:7) -- (-45:7);
                \draw[fill=pink, thick] (0,0) circle [radius=30pt];
            }
        \ }
        :=
        \mathord{
            \ \tikz[baseline=-.6ex, scale=.08]{
                \draw[dashed, fill=white] (0,0) circle [radius=7];
                \draw[webline] (45:7) -- (90:3);
                \draw[webline] (135:7) -- (90:3);
                \draw[webline] (225:7) -- (-90:3);
                \draw[webline] (315:7) -- (-90:3);
                \draw[wline] (90:3) -- (-90:3);
                }
        \ }
        -\frac{1}{[2]}
        \mathord{
            \ \tikz[baseline=-.6ex, scale=.08, rotate=90]{
                \draw[dashed, fill=white] (0,0) circle [radius=7];
                \draw[webline] (-45:7) to[out=north west, in=south] (3,0) to[out=north, in=south west] (45:7);
                \draw[webline] (-135:7) to[out=north east, in=south] (-3,0) to[out=north, in=south east] (135:7);
            }
        \ }
        =
        \mathord{
            \ \tikz[baseline=-.6ex, scale=.08, rotate=90]{
                \draw[dashed, fill=white] (0,0) circle [radius=7];
                \draw[webline] (45:7) -- (90:3);
                \draw[webline] (135:7) -- (90:3);
                \draw[webline] (225:7) -- (-90:3);
                \draw[webline] (315:7) -- (-90:3);
                \draw[wline] (90:3) -- (-90:3);
                }
        \ }
        -\frac{1}{[2]}
        \mathord{
            \ \tikz[baseline=-.6ex, scale=.08]{
                \draw[dashed, fill=white] (0,0) circle [radius=7];
                \draw[webline] (-45:7) to[out=north west, in=south] (3,0) to[out=north, in=south west] (45:7);
                \draw[webline] (-135:7) to[out=north east, in=south] (-3,0) to[out=north, in=south east] (135:7);
            }
        \ }.
    \end{align*}
    A \emph{rung} is a type~$2$ edge in a tangled $\mathfrak{sp}_4$-graph which does not touch special points.
    A subset $S$ of rungs in a tangled $\mathfrak{sp}_4$-graph is \emph{essential} if the tangled $\mathfrak{sp}_4$-graph obtained by removing all rungs in $S$ becomes zero in the skein algebra. 
    An $\mathfrak{sp}_4$-web in $\Skein{\Sigma}$ is a \emph{crossroad web} 
    if it can be represented by a tangled $\mathfrak{sp}_4$-graph diagram on $\Sigma$ with crossroads and no rungs.
    The \emph{crossroad web associated with an $\mathfrak{sp}_4$-web $G$} is a crossroad web obtained by formally replacing all the rungs of $G$ with crossroads.
\end{dfn}
We remark here that a simple loop or arc of either type is also a crossroad web. 

In order to define a basis of $\Skein{\Sigma}$, let us apply the confluence theory for skein modules in Sikora--Westbury~\cite{SikoraWestbury07} to our $\mathfrak{sp}_4$-skein algebra $\Skein{\Sigma}$.
Roughly speaking, the confluence theory 
is a general method to obtain a basis of the quotient of a free module spanned by graphs (in a manifold) with diagrammatic relations.
A skein module of a surface is a typical example: 
let us consider such a module obtained as the quotient of the free $\cR$-module spanned by certain diagrams on $\Sigma$ modulo certain skein relations $\langle X_i-Y_i\rangle_{i\in I}$ corresponding to ``reduction rules $\{S_i\colon X_i\leadsto Y_i\}_{i\in I}$''. In the terminology in \cite{SikoraWestbury07}, the statement is as follows:

\begin{thm}[Sikora--Westbury~\cite{SikoraWestbury07}]\label{thm:confluence}
    If the reduction rules $\{S_i\}_{i\in I}$ is terminal and locally confluent, then the set of irreducible graphs on $\Sigma$ gives a basis for the skein module.
\end{thm}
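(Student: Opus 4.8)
The plan is to deduce this from the abstract Diamond Lemma (Newman's Lemma) for linear rewriting systems, viewing the reduction rules $\{S_i\colon X_i\leadsto Y_i\}_{i\in I}$ as generators of a terminating, confluent rewriting system on the free module. Write $F:=\cR\Tang{\Sigma}$ for the free $\cR$-module on diagrams, let $R\subset F$ be the submodule generated by the relators $X_i-Y_i$, so that the skein module is $M=F/R$, and let $F_{\mathrm{irr}}\subset F$ denote the submodule spanned by the irreducible diagrams. A single reduction applied to a diagram $D$ containing a copy of some $X_i$ replaces that copy by $Y_i$, thereby altering $D$ by an element of $R$; hence every reduction preserves the class $[D]\in M$.

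First I would establish spanning. By terminality, any sequence of reductions starting from a diagram $D$ must halt at an $\cR$-linear combination of irreducible diagrams. Since each reduction step changes an element of $F$ only by a relator, this final combination represents the same class $[D]$. As the diagrams span $F$, the irreducible diagrams span $M$.

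The heart of the argument is linear independence, for which I would construct a well-defined $\cR$-linear normal form operator $N\colon F\to F_{\mathrm{irr}}$. I first apply Newman's Lemma: a terminating rewriting system is (globally) confluent as soon as it is locally confluent, the proof being Noetherian induction along the termination order, which reduces global confluence to the one-step divergences resolved by the local confluence hypothesis. Terminality then guarantees that a normal form exists for every element, and confluence guarantees it is unique and independent of the chosen reduction path. This yields the map $N$ sending each $D$ to its unique irreducible normal form, extended $\cR$-linearly. Because $X_i$ reduces in one step to $Y_i$, we get $N(X_i-Y_i)=0$, so $N$ annihilates every generator of $R$ and descends to $\overline{N}\colon M\to F_{\mathrm{irr}}$. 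As $N$ fixes each irreducible diagram, the composite $F_{\mathrm{irr}}\hookrightarrow F\twoheadrightarrow M\xrightarrow{\overline{N}}F_{\mathrm{irr}}$ is the identity, exhibiting the images of the irreducible diagrams in $M$ as a linearly independent family. Together with spanning, they form a basis.

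The main obstacle I anticipate is not the abstract Diamond Lemma, which is purely formal, but verifying that the diagrammatic reduction relation genuinely fits its hypotheses in the linear setting: namely that the reductions are compatible with the module structure, so that $N$ is linear and not merely defined on single diagrams, and that \emph{locally confluent} is checked against all overlapping configurations of reducible subdiagrams (the critical pairs), including those in which coefficients arising from several relations must be combined and seen to agree. Equally delicate is the construction of a compatible well-founded order on diagrams that strictly decreases under every reduction, so that terminality holds and Noetherian induction is available. These are precisely the conditions packaged into the hypotheses of the cited theorem; in our application they are to be verified diagrammatically for the $\mathfrak{sp}_4$-skein relations of \cref{def:skeinrel,def:bdry-skeinrel}, rather than re-proved at the abstract level.
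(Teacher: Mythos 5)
Your proposal is essentially correct, but note that the paper itself offers no proof of this statement: it is quoted as a black box from Sikora--Westbury \cite{SikoraWestbury07}, and what you have written is a reconstruction of the argument in that reference (a linear, diagrammatic version of Bergman's Diamond Lemma). The spanning half and the reduction of the problem to a well-defined normal-form operator $N\colon F\to F_{\mathrm{irr}}$ are exactly right, as is the observation that $N$ kills every relator and splits the quotient map on $F_{\mathrm{irr}}$.

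One point deserves more care than your sketch gives it: Newman's Lemma as usually stated is a theorem about abstract rewriting on a \emph{set}, whereas here a reduction replaces a single diagram inside a linear combination by a linear combination of diagrams, so the rewriting takes place on the module $F$. Termination at the level of individual diagrams (a well-founded measure strictly decreasing under each rule) does give termination on $F$ via a multiset ordering, and local confluence need only be checked on overlapping reducible configurations inside a single diagram, but the Noetherian induction establishing uniqueness of normal forms must then be run in the module, where coefficients from different reduction paths can combine and must be seen to cancel; this is precisely the content of Bergman's ``resolvability of ambiguities implies reduction-uniqueness.'' You flag this issue in your final paragraph rather than carrying it out, which is acceptable for a citation of a known theorem, but it is the only place where the argument is genuinely more than formal.
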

We roughly explain the confluence theory in the case of the our skein algebra $\Skein{\Sigma}$.
A \emph{reduction rule} $S\colon X\leadsto Y$ is a specific pair of $X$ and $Y$ in $\cR\Diag{\Sigma}$ which gives a relation $X-Y=0$ where $\Diag{\Sigma}$ is the set of isotopy classes of tangled graph diagrams.
A set of reduction rules is \emph{terminal} if all the descending paths associated with the reduction rules are finite, \emph{i.e.} there is no infinite sequence of reduction rules. 
Examples of reduction rules for $\cR\Diag{\Sigma}$ are the left-hand sides to the right-hand sides in \eqref{rel:circle}--\eqref{rel:trigon}.
We will give the precise set of reduction rules $\{S_i\}_{i\in I}$ soon below. 
A fixed set $\{S_i\colon X_i\leadsto Y_i\}_{i\in I}$ of reduction rules is \emph{locally confluent} if for a tangled $\mathfrak{sp}_4$-graph diagram $\Gamma\in\Diag{\Sigma}$ having two reduction rules $S\colon\Gamma\leadsto Z$ and $T\colon\Gamma\leadsto W$, the diagrams $Z$ and $W$ have a common descendant via sequences of reduction rules in $\{S_i\}_{i\in I}$.
An \emph{irreducible $\mathfrak{sp}_4$-graph diagram} is a $\mathfrak{sp}_4$-graph diagram which admits no reductions.
We define reduction rules for $\cR\Diag{\Sigma}$ by adding reduction rules in \cite[Section~6]{SikoraWestbury07}.
It is useful for us to replace each marked point with a small interval which we call an \emph{external clasp} (shown by a thick black line), and describe an $\mathfrak{sp}_4$-graph as a diagram with distinct ends on external clasps. 
In fact, the skein relations in \cref{def:bdry-skeinrel} correspond to relations at external clasps in the sense of  Kuperberg~\cite{Kuperberg96}.
We will use such a description using external clasps in \cref{def:reduction,prop:Bweb-T}.

\begin{dfn}[reduction rules for $\cR\Diag{\Sigma}$]\label{def:reduction}
    Let $(\Sigma,\bM)$ be an unpunctured marked surface, and fix an enumeration $\{1,2,\dots,|\bM|\} \xrightarrow{\sim} \bM$ of special points.

\begin{align*}
&\begin{alignedat}{3}
    &B_1\colon
    \mathord{
        \ \tikz[baseline=-.6ex, scale=.08, yshift=-4cm]{
            \fill[pattern=north west lines, pattern color=lightgray] (-11,0) rectangle (11,11);
            \draw[dashed, fill=white] (10,0) arc (0:180:10cm);
            \coordinate (P) at (0,0);
            \coordinate (C) at (90:7);
            \draw[webline, fill=white] (5,0) arc (0:180:5cm);
            \draw[line width=.1cm] (-10,0) -- (10,0);
        \ }
    }
    \leadsto 0,\quad&
    &B_2\colon
    \mathord{
        \ \tikz[baseline=-.6ex, scale=.08, yshift=-4cm]{
            \fill[pattern=north west lines, pattern color=lightgray] (-11,0) rectangle (11,11);
            \draw[dashed, fill=white] (10,0) arc (0:180:10cm);
            \coordinate (P) at (0,0) {};
            \coordinate (C) at (90:7);
            \draw[wline] (5,0) arc (0:180:5cm);
            \draw[line width=.1cm] (-10,0) -- (10,0);
        \ }
    }
    \leadsto 0,\quad&
    &B_3\colon
    \mathord{
        \ \tikz[baseline=-.6ex, scale=.08, yshift=-4cm]{
            \fill[pattern=north west lines, pattern color=lightgray] (-11,0) rectangle (11,11);
            \draw[dashed, fill=white] (10,0) arc (0:180:10cm);
            \coordinate (P) at (0,0);
            \coordinate (C) at (90:5);
            \draw[webline] (-5,0) -- (C) -- (5,0);
            \draw[wline] (C) -- (90:10);
            \draw[line width=.1cm] (-10,0) -- (10,0);
        \ }
    }
    \leadsto 0,\\
    &B_4\colon
    \mathord{
        \ \tikz[baseline=-.6ex, scale=.08, yshift=-4cm]{
            \fill[pattern=north west lines, pattern color=lightgray] (-11,0) rectangle (11,11);
            \draw[dashed, fill=white] (10,0) arc (0:180:10cm);
            \coordinate (P) at (0,0);
            \coordinate (C) at (90:5);
            \draw[wline] (-5,0) -- (C);
            \draw[webline] (5,0) -- (C);
            \draw[webline] (C) -- (90:10);
            \draw[line width=.1cm] (-10,0) -- (10,0);
        \ }
    }
    \leadsto 0,\quad&
    &B_5\colon
    \mathord{
        \ \tikz[baseline=-.6ex, scale=.08, yshift=-4cm]{
            \fill[pattern=north west lines, pattern color=lightgray] (-11,0) rectangle (11,11);
            \draw[dashed, fill=white] (10,0) arc (0:180:10cm);
            \coordinate (P) at (0,0);
            \coordinate (C) at (90:5);
            \draw[wline] (5,0) -- (C);
            \draw[webline] (-5,0) -- (C);
            \draw[webline] (C) -- (90:10);
            \draw[line width=.1cm] (-10,0) -- (10,0);
        \ }
    }
    \leadsto 0,\quad&
    &B_6\colon
    \mathord{
        \ \tikz[baseline=-.6ex, scale=.08, yshift=-4cm]{
            \fill[pattern=north west lines, pattern color=lightgray] (-11,0) rectangle (11,11);
            \draw[dashed, fill=white] (10,0) arc (0:180:10cm);
            \coordinate (P) at (0,0);
            \coordinate (C) at (90:5);
            \begin{scope}
                \clip (10,0) arc (0:180:10cm);
                \draw[webline, yshift=5cm] (225:10) -- (45:10);
                \draw[webline, yshift=5cm] (-45:10) -- (135:10);
                \draw[fill=pink, yshift=5cm] (0,0) circle [radius=30pt];
            \end{scope}
            \draw[line width=.1cm] (-10,0) -- (10,0);
        \ }
    }
    \leadsto 0,\\
    &B_7\colon
    \mathord{
        \ \tikz[baseline=-.6ex, scale=.08, yshift=-4cm]{
            \fill[pattern=north west lines, pattern color=lightgray] (-11,0) rectangle (11,11);
            \draw[dashed, fill=white] (10,0) arc (0:180:10cm);
            \coordinate (P) at (0,0);
            \coordinate (C) at (90:5);
            \begin{scope}
                \clip (10,0) arc (0:180:10cm);
                \draw[webline] (-5,10) -- (-5,5) -- (5,5) -- (5,10);
                \draw[wline] (5,5) -- (5,0);
                \draw[wline] (-5,5) -- (-5,0);
            \end{scope}
            \draw[line width=.1cm] (-10,0) -- (10,0);
        \ }
    }
    \leadsto 0,\quad&
    &B_8\colon
    \mathord{
        \ \tikz[baseline=-.6ex, scale=.08, yshift=-4cm]{
            \fill[pattern=north west lines, pattern color=lightgray] (-11,0) rectangle (11,11);
            \draw[dashed, fill=white] (10,0) arc (0:180:10cm);
            \coordinate (P) at (0,0);
            \coordinate (C) at (90:5);
            \begin{scope}
                \clip (10,0) arc (0:180:10cm);
                \draw[webline] (-5,10) -- (-5,0);
                \draw[webline] (5,10) -- (5,0);
                \draw[wline] (-5,5) -- (5,5);
            \end{scope}
            \draw[line width=.1cm] (-10,0) -- (10,0);
        \ }
    }
    \leadsto\frac{1}{[2]}
    \mathord{
        \ \tikz[baseline=-.6ex, scale=.08, yshift=-4cm]{
            \fill[pattern=north west lines, pattern color=lightgray] (-11,0) rectangle (11,11);
            \draw[dashed, fill=white] (10,0) arc (0:180:10cm);
            \coordinate (P) at (0,0);
            \coordinate (C) at (90:5);
            \begin{scope}
                \clip (10,0) arc (0:180:10cm);
                \draw[webline] (-5,10) -- (-5,0);
                \draw[webline] (5,10) -- (5,0);
            \end{scope}
            \draw[line width=.1cm] (-10,0) -- (10,0);
        \ }
    },
\end{alignedat}\\
&\begin{alignedat}{2}
    &B_9\colon
    \mathord{
        \ \tikz[baseline=-.6ex, scale=.08, yshift=-4cm]{
            \fill[pattern=north west lines, pattern color=lightgray] (-11,0) rectangle (11,11);
            \draw[dashed, fill=white] (10,0) arc (0:180:10cm);
            \coordinate (P) at (0,0);
            \coordinate (C) at (90:5);
            \begin{scope}
                \clip (10,0) arc (0:180:10cm);
                \draw[webline] (-5,10) -- (-5,5);
                \draw[wline] (-5,5) -- (-5,0);
                \draw[wline] (5,10) -- (5,5);
                \draw[webline] (5,5) -- (5,0);
                \draw[webline] (-5,5) -- (5,5);
            \end{scope}
            \draw[line width=.1cm] (-10,0) -- (10,0);
        \ }
    }
    \leadsto
    \mathord{
        \ \tikz[baseline=-.6ex, scale=.08, yshift=-4cm]{
            \fill[pattern=north west lines, pattern color=lightgray] (-11,0) rectangle (11,11);
            \draw[dashed, fill=white] (10,0) arc (0:180:10cm);
            \coordinate (P) at (0,0);
            \coordinate (C) at (90:5);
            \begin{scope}
                \clip (10,0) arc (0:180:10cm);
                \draw[webline] (-5,10) -- (-5,0);
                \draw[wline] (5,10) -- (5,0);
            \end{scope}
            \draw[line width=.1cm] (-10,0) -- (10,0);
        \ }
    },\quad&
    &B_{10}\colon
    \mathord{
        \ \tikz[baseline=-.6ex, scale=.08, yshift=-4cm]{
            \fill[pattern=north west lines, pattern color=lightgray] (-11,0) rectangle (11,11);
            \draw[dashed, fill=white] (10,0) arc (0:180:10cm);
            \coordinate (P) at (0,0);
            \coordinate (C) at (90:5);
            \begin{scope}
                \clip (10,0) arc (0:180:10cm);
                \draw[wline] (-5,10) -- (-5,5);
                \draw[webline] (-5,5) -- (-5,0);
                \draw[webline] (5,10) -- (5,5);
                \draw[wline] (5,5) -- (5,0);
                \draw[webline] (-5,5) -- (5,5);
            \end{scope}
            \draw[line width=.1cm] (-10,0) -- (10,0);
        \ }
    }
    \leadsto
    \mathord{
        \ \tikz[baseline=-.6ex, scale=.08, yshift=-4cm]{
            \fill[pattern=north west lines, pattern color=lightgray] (-11,0) rectangle (11,11);
            \draw[dashed, fill=white] (10,0) arc (0:180:10cm);
            \begin{scope}
                \clip (10,0) arc (0:180:10cm);
                \draw[wline] (-5,10) -- (-5,0);
                \draw[webline] (5,10) -- (5,0);
            \end{scope}
            \draw[line width=.1cm] (-10,0) -- (10,0);
        \ }
    },\\
    &B_{11}\colon
    \mathord{
        \ \tikz[baseline=-.6ex, scale=.08, yshift=-4cm]{
            \fill[pattern=north west lines, pattern color=lightgray] (-11,0) rectangle (11,11);
            \draw[dashed, fill=white] (10,0) arc (0:180:10cm);
            \begin{scope}
                \clip (10,0) arc (0:180:10cm);
                \draw[wline] (-3,0) -- (-3,5);
                \draw[webline] (-3,5) -- (10,5);
                \draw[webline] (-3,5) -- (-3,10);
                \draw[webline] (3,10) -- (3,0);
            \end{scope}
            \draw[line width=.1cm] (-10,0) -- (10,0);
            \draw[fill=pink] (3,5) circle [radius=30pt];
        \ }
    }
    \leadsto
    \mathord{
        \ \tikz[baseline=-.6ex, scale=.08, yshift=-4cm]{
            \fill[pattern=north west lines, pattern color=lightgray] (-11,0) rectangle (11,11);
            \draw[dashed, fill=white] (10,0) arc (0:180:10cm);
            \begin{scope}
                \clip (10,0) arc (0:180:10cm);
                \draw[wline] (3,0) -- (3,5);
                \draw[webline] (3,5) -- (10,5);
                \draw[webline] (3,5) -- (3,10);
                \draw[webline] (-3,10) -- (-3,0);
            \end{scope}
            \draw[line width=.1cm] (-10,0) -- (10,0);
        \ }
    },\quad&
    &B_{12}\colon
    \mathord{
        \ \tikz[baseline=-.6ex, scale=.08, yshift=-4cm]{
            \fill[pattern=north west lines, pattern color=lightgray] (-11,0) rectangle (11,11);
            \draw[dashed, fill=white] (10,0) arc (0:180:10cm);
            \begin{scope}[xscale=-1]
                \clip (10,0) arc (0:180:10cm);
                \draw[wline] (-3,0) -- (-3,5);
                \draw[webline] (-3,5) -- (10,5);
                \draw[webline] (-3,5) -- (-3,10);
                \draw[webline] (3,10) -- (3,0);
            \end{scope}
            \draw[line width=.1cm] (-10,0) -- (10,0);
            \draw[fill=pink] (-3,5) circle [radius=30pt];
        \ }
    }
    \leadsto
    \mathord{
        \ \tikz[baseline=-.6ex, scale=.08, yshift=-4cm]{
            \fill[pattern=north west lines, pattern color=lightgray] (-11,0) rectangle (11,11);
            \draw[dashed, fill=white] (10,0) arc (0:180:10cm);
            \begin{scope}[xscale=-1]
                \clip (10,0) arc (0:180:10cm);
                \draw[wline] (3,0) -- (3,5);
                \draw[webline] (3,5) -- (10,5);
                \draw[webline] (3,5) -- (3,10);
                \draw[webline] (-3,10) -- (-3,0);
            \end{scope}
            \draw[line width=.1cm] (-10,0) -- (10,0);
        \ }
    },\\
    &B_{13}\colon
    \mathord{
        \ \tikz[baseline=-.6ex, scale=.08, yshift=-4cm]{
            \fill[pattern=north west lines, pattern color=lightgray] (-12,0) rectangle (12,12);
            \draw[dashed, fill=white] (11,0) arc (0:180:11cm);
            \draw[wline] (9,0) arc (0:180:9cm);
            \draw[webline] (7,0) arc (0:180:7cm);
            \draw[dashed, pattern=north west lines, pattern color=lightgray] (5,0) arc (0:180:5cm);
            \draw[line width=.1cm] (-12,0) -- (12,0);
        \ }
    }
    \leadsto 
    \mathord{
        \ \tikz[baseline=-.6ex, scale=.08, yshift=-4cm]{
            \fill[pattern=north west lines, pattern color=lightgray] (-12,0) rectangle (12,12);
            \draw[dashed, fill=white] (11,0) arc (0:180:11cm);
            \draw[webline] (9,0) arc (0:180:9cm);
            \draw[wline] (7,0) arc (0:180:7cm);
            \draw[dashed, pattern=north west lines, pattern color=lightgray] (5,0) arc (0:180:5cm);
            \draw[line width=.1cm] (-12,0) -- (12,0);
        \ }
    },\quad&
    &B_{14}\colon
    \mathord{
        \ \tikz[baseline=-.6ex, scale=.08]{
            \fill[pattern=north west lines, pattern color=lightgray] (-7,-7) rectangle (7,7);
            \draw[dashed, fill=white] (-5,-7) rectangle (5,7);
            \coordinate (N1) at (2,7);
            \coordinate (S1) at (2,-7);
            \coordinate (N2) at (-2,7);
            \coordinate (S2) at (-2,-7);
            \draw[webline] (S1) -- (N1);
            \draw[wline] (S2) -- (N2);
            \draw[line width=.1cm] (-5,7) -- (5,7);
            \draw[line width=.1cm] (-5,-7) -- (5,-7);
            \node at (0,-7) [below]{$i$};
            \node at (0,7) [above]{$j$};
        \ }
    }
    \leadsto
    \mathord{
        \ \tikz[baseline=-.6ex, scale=.08]{
            \fill[pattern=north west lines, pattern color=lightgray] (-7,-7) rectangle (7,7);
            \draw[dashed, fill=white] (-5,-7) rectangle (5,7);
            \coordinate (N1) at (2,7);
            \coordinate (S1) at (2,-7);
            \coordinate (N2) at (-2,7);
            \coordinate (S2) at (-2,-7);
            \draw[wline] (S1) -- (N1);
            \draw[webline] (S2) -- (N2);
            \draw[line width=.1cm] (-5,7) -- (5,7);
            \draw[line width=.1cm] (-5,-7) -- (5,-7);
            \node at (0,-7) [below]{$i$};
            \node at (0,7) [above]{$j$};
        \ }
    } \text{ for $i<j$ },
\end{alignedat}
\end{align*}
\begin{align*}
    &C_{1}\colon
    \mathord{
        \ \tikz[baseline=-.6ex, scale=.08]{
            \fill[pattern=north west lines, pattern color=lightgray] (-8,-8) rectangle (8,8);
            \draw[dashed, fill=white] (0,0) circle [radius=7];
            \draw[red, very thick] (-45:7) -- (135:7);
            \draw[overarc] (-135:7) -- (45:7);
            }
    \ }
    \leadsto
    q
    \mathord{
        \ \tikz[baseline=-.6ex, scale=.08]{
            \fill[pattern=north west lines, pattern color=lightgray] (-8,-8) rectangle (8,8);
            \draw[dashed, fill=white] (0,0) circle [radius=7];
            \draw[webline] (-45:7) to[out=north west, in=south] (3,0) to[out=north, in=south west] (45:7);
            \draw[webline] (-135:7) to[out=north east, in=south] (-3,0) to[out=north, in=south east] (135:7);
        }
    \ }
    +q^{-1}
    \mathord{
        \ \tikz[baseline=-.6ex, scale=.08, rotate=90]{
            \fill[pattern=north west lines, pattern color=lightgray] (-8,-8) rectangle (8,8);
            \draw[dashed, fill=white] (0,0) circle [radius=7];
            \draw[webline] (-45:7) to[out=north west, in=south] (3,0) to[out=north, in=south west] (45:7);
            \draw[webline] (-135:7) to[out=north east, in=south] (-3,0) to[out=north, in=south east] (135:7);
        }
    \ }
    +
    \mathord{
        \ \tikz[baseline=-.6ex, scale=.08]{
            \fill[pattern=north west lines, pattern color=lightgray] (-8,-8) rectangle (8,8);
            \draw[dashed, fill=white] (0,0) circle [radius=7];
            \draw[webline] (45:7) -- (225:7);
            \draw[webline] (-45:7) -- (135:7);
            \draw[fill=pink] (0,0) circle [radius=30pt];
            }
    \ },\\
    &C_{2}\colon
    \mathord{
        \ \tikz[baseline=-.6ex, scale=.08]{
            \fill[pattern=north west lines, pattern color=lightgray] (-8,-8) rectangle (8,8);
            \draw[dashed, fill=white] (0,0) circle [radius=7];
            \draw[red, very thick] (-45:7) -- (135:7);
            \draw[overwline] (-135:7) -- (45:7);
            }
    \ }
    \leadsto q\mathord{
        \ \tikz[baseline=-.6ex, scale=.08]{
            \fill[pattern=north west lines, pattern color=lightgray] (-8,-8) rectangle (8,8);
            \draw[dashed, fill=white] (0,0) circle [radius=7];
            \draw[webline] (-45:7) -- (3,0);
            \draw[wline] (-135:7) -- (-3,0);
            \draw[wline] (45:7) -- (3,0);
            \draw[webline] (135:7) -- (-3,0);
            \draw[webline] (-3,0) -- (3,0);
        }
    \ }
    +q^{-1}\mathord{
        \ \tikz[baseline=-.6ex, scale=.08]{
            \fill[pattern=north west lines, pattern color=lightgray] (-8,-8) rectangle (8,8);
            \draw[dashed, fill=white] (0,0) circle [radius=7];
            \draw[webline] (-45:7) -- (0,-3);
            \draw[wline] (-135:7) -- (0,-3);
            \draw[wline] (45:7) -- (0,3);
            \draw[webline] (135:7) -- (0,3);
            \draw[webline] (0,-3) -- (0,3);
        }
    \ },\\
    \quad
    &C_{3}\colon
    \mathord{
        \ \tikz[baseline=-.6ex, scale=.08]{
            \fill[pattern=north west lines, pattern color=lightgray] (-8,-8) rectangle (8,8);
            \draw[dashed, fill=white] (0,0) circle [radius=7];
            \draw[wline] (-45:7) -- (135:7);
            \draw[overarc] (-135:7) -- (45:7);
            }
    \ }
    \leadsto q
    \mathord{
        \ \tikz[baseline=-.6ex, scale=.08]{
            \fill[pattern=north west lines, pattern color=lightgray] (-8,-8) rectangle (8,8);
            \draw[dashed, fill=white] (0,0) circle [radius=7];
            \draw[wline] (-45:7) -- (3,0);
            \draw[webline] (-135:7) -- (-3,0);
            \draw[webline] (45:7) -- (3,0);
            \draw[wline] (135:7) -- (-3,0);
            \draw[webline] (-3,0) -- (3,0);
        }
    \ }
    +q^{-1}
    \mathord{
        \ \tikz[baseline=-.6ex, scale=.08]{
            \fill[pattern=north west lines, pattern color=lightgray] (-8,-8) rectangle (8,8);
            \draw[dashed, fill=white] (0,0) circle [radius=7];
            \draw[wline] (-45:7) -- (0,-3);
            \draw[webline] (-135:7) -- (0,-3);
            \draw[webline] (45:7) -- (0,3);
            \draw[wline] (135:7) -- (0,3);
            \draw[webline] (0,-3) -- (0,3);
        }
    \ },\\
    \quad
    &C_{4}\colon
    \mathord{
        \ \tikz[baseline=-.6ex, scale=.08]{
            \fill[pattern=north west lines, pattern color=lightgray] (-8,-8) rectangle (8,8);
            \draw[dashed, fill=white] (0,0) circle [radius=7];
            \draw[wline] (-45:7) -- (135:7);
            \draw[overwline] (-135:7) -- (45:7);
            }
    \ }
    \leadsto q^{2}
    \mathord{
        \ \tikz[baseline=-.6ex, scale=.08]{
            \fill[pattern=north west lines, pattern color=lightgray] (-8,-8) rectangle (8,8);
            \draw[dashed, fill=white] (0,0) circle [radius=7];
            \draw[wline] (-45:7) to[out=north west, in=south] (3,0) to[out=north, in=south west] (45:7);
            \draw[wline] (-135:7) to[out=north east, in=south] (-3,0) to[out=north, in=south east] (135:7);
        }
    \ }
    +q^{-2}\mathord{
        \ \tikz[baseline=-.6ex, scale=.08, rotate=90]{
            \fill[pattern=north west lines, pattern color=lightgray] (-8,-8) rectangle (8,8);
            \draw[dashed, fill=white] (0,0) circle [radius=7];
            \draw[wline] (-45:7) to[out=north west, in=south] (3,0) to[out=north, in=south west] (45:7);
            \draw[wline] (-135:7) to[out=north east, in=south] (-3,0) to[out=north, in=south east] (135:7);
        }
    \ }
    +\mathord{
        \ \tikz[baseline=-.6ex, scale=.08]{
            \fill[pattern=north west lines, pattern color=lightgray] (-8,-8) rectangle (8,8);
            \draw[dashed, fill=white] (0,0) circle [radius=7];
            \draw[wline] (-45:7) -- (135:7);
            \draw[wline] (-135:7) -- (45:7);
            \draw[very thick, red, fill=white] ([shift=(-135:3)]0,0) rectangle (45:3);
        }
    \ }.
\end{align*}
\end{dfn}

Here for instance, the reduction rule $B_1$ is related to the first relation in \eqref{rel:bdry-monogon}. 
$B_{13}$ is a reduction rule related to arcs incident to a special point. 
It moves arcs of type $2$ towards inside of other arcs of type $1$.
The rule $B_{14}$ moves arcs of type $2$ towards the right of other arcs of type $1$, referring to the labels of the relevant special points (or external clasps).
We need reduction rules $B_{13}$ and $B_{14}$ because an overlap between two applications of $B_9$ is not locally confluent as shown below:
\[
    \mathord{
        \ \tikz[baseline=-.6ex, scale=.08]{
            \fill[pattern=north west lines, pattern color=lightgray] (-7,-7) rectangle (7,7);
            \draw[dashed, fill=white] (-5,-7) rectangle (5,7);
            \coordinate (N1) at (2,7);
            \coordinate (S1) at (2,-7);
            \coordinate (N2) at (-2,7);
            \coordinate (S2) at (-2,-7);
            \draw[wline] (S1) -- (N1);
            \draw[webline] (S2) -- (N2);
            \draw[line width=.1cm] (-5,7) -- (5,7);
            \draw[line width=.1cm] (-5,-7) -- (5,-7);
        \ }
    }
    \reflectbox{$\leadsto$}
    \mathord{
        \ \tikz[baseline=-.6ex, scale=.08]{
            \fill[pattern=north west lines, pattern color=lightgray] (-7,-7) rectangle (7,7);
            \draw[dashed, fill=white] (-5,-7) rectangle (5,7);
            \coordinate (N1) at (2,7);
            \coordinate (S1) at (2,-7);
            \coordinate (N2) at (-2,7);
            \coordinate (S2) at (-2,-7);
            \coordinate (L) at (-2,0);
            \coordinate (R) at (2,0);
            \draw[webline] (S1) -- (R);
            \draw[wline] (S2) -- (L);
            \draw[wline] (N1) -- (R);
            \draw[webline] (N2) -- (L);
            \draw[webline] (L) -- (R);
            \draw[line width=.1cm] (-5,7) -- (5,7);
            \draw[line width=.1cm] (-5,-7) -- (5,-7);
        \ }
    }
    \leadsto
    \mathord{
        \ \tikz[baseline=-.6ex, scale=.08]{
            \fill[pattern=north west lines, pattern color=lightgray] (-7,-7) rectangle (7,7);
            \draw[dashed, fill=white] (-5,-7) rectangle (5,7);
            \coordinate (N1) at (2,7);
            \coordinate (S1) at (2,-7);
            \coordinate (N2) at (-2,7);
            \coordinate (S2) at (-2,-7);
            \draw[webline] (S1) -- (N1);
            \draw[wline] (S2) -- (N2);
            \draw[line width=.1cm] (-5,7) -- (5,7);
            \draw[line width=.1cm] (-5,-7) -- (5,-7);
        \ }
    }.
\]

\begin{lem}\label{lem:sp4-confluence}
    The set $\{S_1,\ldots,S_{17},B_1,\ldots,B_{14},C_{1},\ldots,C_{4}\}$ of reduction rules introduced above is locally confluent and terminal.
\end{lem}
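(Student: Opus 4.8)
The plan is to verify the two hypotheses of \cref{thm:confluence} in turn: first that the reduction system of \cref{def:reduction} is \emph{terminal} (admits no infinite reduction chains), and then that it is \emph{locally confluent}. Terminality is established by exhibiting a complexity function $\Phi$, valued in $\bN^k$ with the lexicographic order, that strictly descends under every rule, in the sense that $\Phi(Y)<\Phi(X)$ for each diagram $Y$ occurring in the right-hand side of a rule $X\leadsto\sum_i c_i Y_i$. Since $C_1$--$C_4$ are the only rules that remove an internal crossing while every other rule acts on a crossing-free portion, the number of internal crossings is the highest-priority coordinate of $\Phi$. On crossing-free diagrams the next coordinate is a weighted vertex count $V=w_3\cdot(\#\text{trivalent vertices})+w_\times\cdot(\#\text{crossroads})$, with weights chosen (for instance $w_3=2$, $w_\times=3$) so that every crossroad-manipulating rule descends: the ``$H$-to-crossroad'' rule $S_8$ trades two trivalent vertices (and a rung) for a single crossroad, so it lowers $V$ as soon as $w_\times<2w_3$, while $S_9$--$S_{17}$ each strictly reduce the crossroad count or the trivalent-vertex count. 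The remaining coordinates record the number of internal faces (handling the circle-removal rules $S_1,S_2$ and the closed-off terms of $S_{12},S_{13},S_{16}$, which leave $V$ unchanged) and finally a positional statistic counting type-$2$ arcs lying outside of, or to the left of, type-$1$ arcs relative to the fixed enumeration of $\bM$, which is exactly what decreases under the arc-sliding rules $B_{13}$ and $B_{14}$.

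With terminality in hand, Newman's lemma reduces confluence to local confluence, which in turn reduces to checking that every \emph{critical pair} resolves: whenever two rules apply to a single diagram along overlapping subdiagrams, the two resulting elements of $\cR\Diag{\Sigma}$ must admit a common descendant. I would organize the overlaps into three families. The purely interior overlaps, among $S_1$--$S_{17}$ away from $\partial\Sigma$, are those that already occur for Kuperberg's closed $\mathfrak{sp}_4$-web space; these are checked directly from the internal skein relations \eqref{rel:circle}--\eqref{rel:trigon} and the crossroad identities, independently of the boundary. The crossing-resolution overlaps, in which one of $C_1$--$C_4$ meets another rule, are settled using the Reidemeister moves of \cref{lem:Reidemeister} together with the relations of \cref{def:skeinrel}. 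The genuinely new work lies in the boundary family: overlaps involving at least one of $B_1$--$B_{14}$, possibly together with an interior rule whose support reaches a special point. For each such overlap I would draw both one-step reductions explicitly and reduce the two outcomes to a common normal form using the clasped relations of \cref{def:bdry-skeinrel}. The overlap of two copies of $B_9$ already displayed in the text, whose failure to resolve directly is what forces the rules $B_{13}$ and $B_{14}$, is the prototype for this analysis.

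The main obstacle is the exhaustive critical-pair computation for the boundary family: crossroads near external clasps generate a sizeable list of overlaps, and each demands an explicit skein calculation to exhibit a common descendant. A secondary but genuine subtlety is calibrating the weights in $\Phi$ so that the single crossroad-\emph{creating} rule $S_8$ descends in the very coordinate in which all the crossroad-\emph{reducing} rules descend; once the inequality $0<w_\times<2w_3$ is imposed this is automatic, but it must be checked against every rule simultaneously, including that no rule ranked by a lower coordinate secretly increases a higher one. Beyond these two points the verification is finite and essentially mechanical, so the bulk of the argument is careful diagrammatic bookkeeping rather than a conceptual difficulty.
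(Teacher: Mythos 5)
Your proposal is correct and follows essentially the same route as the paper: terminality via a lexicographically ordered complexity tuple (crossings first, then a vertex/crossroad count, then components/faces, then the positional statistic for type-$2$ arcs that makes $B_{13}$, $B_{14}$ descend), and local confluence via a finite check of overlaps in the style of Sikora--Westbury, with the $B_9$--$B_9$ overlap resolved by $B_{13}$ or $B_{14}$. The only cosmetic difference is that the paper uses an unweighted total of trivalent vertices and crossroads (which already descends under $S_8$, since two trivalent vertices become one crossroad) rather than your weighted count.
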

\begin{proof}
    For a $\mathfrak{sp}_4$-graph diagram, let us consider the following quintuple $(x,v_3,v_4,c,l)\in (\bZ_{\geq 0})^5$ where $x$ is the number of crossings, $v_3$ the number of trivalent vertices, $v_4$ the number of crossroads, $c$ the number of connected components, $l$ the number of type $2$ arcs lying in the left-side or outer-side of another type $1$ arcs in a parallel manner.
    Each reduction rule decreases the quintuple with respect to the lexicographic order of $(\bZ_{\geq 0})^5$. 
    Any descending path of the reduction rules is terminal because the quintuple belongs to $(\bZ_{\geq 0})^5$ with the lexicographic order.
    One can also confirm that all ``overlaps'' are locally confluent in a similar way to \cite[Section~6]{SikoraWestbury07}.
    For example, the previous overlap between $B_9$ and itself is locally confluent thanks to $B_{13}$ or $B_{14}$.
\end{proof}

We call faces bounded by edges (resp.~ and an external clasp) appearing in the left-hand sides of $S_{1}$ to $S_{17}$ (resp.~$B_{1}$ to $B_{12}$) except for $S_8$ \emph{elliptic faces} (cf.~\cite{Kuperberg96,SikoraWestbury07})].
We remark that $S_8$ is used to eliminate rungs.

\begin{dfn}[basis webs]\label{def:basis-web}
    A \emph{basis web} is a flat crossroad web without elliptic faces.
    We denote the set of all basis webs in $\Skein{\Sigma}$ by $\Bweb{\Sigma}$.
\end{dfn}

\begin{thm}\label{thm:basis-web}
    $\Bweb{\Sigma}$ is an $\cR$-basis of $\Skein{\Sigma}$.
\end{thm}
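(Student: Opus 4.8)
The plan is to deduce the statement directly from the confluence theory of Sikora--Westbury (\cref{thm:confluence}), using the reduction system assembled in \cref{def:reduction} together with its confluence established in \cref{lem:sp4-confluence}. Concretely, I would present $\Skein{\Sigma}$ as the quotient of the free module $\cR\Diag{\Sigma}$ on isotopy classes of tangled graph diagrams by the reduction rules $\{S_1,\dots,S_{17},B_1,\dots,B_{14},C_1,\dots,C_4\}$, and then invoke \cref{thm:confluence} to conclude that the set of irreducible diagrams forms an $\cR$-basis. The task then splits into (i) verifying that this presentation really computes $\Skein{\Sigma}$, and (ii) identifying the irreducible diagrams with $\Bweb{\Sigma}$.

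For step (i) I would check that the two-sided relations $X_i-Y_i$ generated by the reduction rules coincide with the ideal generated by the internal skein relations (\cref{def:skeinrel}) and the clasped skein relations (\cref{def:bdry-skeinrel}). The rules $C_1$--$C_4$ are exactly the crossing-resolution relations \eqref{rel:ss-cross}--\eqref{rel:ww-cross}; the rules $S_1$--$S_{17}$ are the interior elliptic reductions drawn from \eqref{rel:circle}--\eqref{rel:IH} and their crossroad consequences; and $B_1$--$B_{14}$ encode the clasped relations, with the $v$-factors of \eqref{rel:bdry-cross} absorbed into the simultaneous-crossing normalization of \cref{def:simul-crossing}. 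Since each reduction lies in the skein ideal, and conversely the basic defining relations are recovered among the rules, the two ideals agree. Moreover the Reidemeister moves of \cref{lem:Reidemeister} are consequences of these skein relations, so the quotient is insensitive to whether one starts from $\Diag{\Sigma}$ or from the equivalence classes $\Tang{\Sigma}$; thus the presentation is valid.

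With the presentation in place, \cref{lem:sp4-confluence} guarantees that the reduction system is terminal and locally confluent, so by Newman's lemma every diagram has a unique normal form and \cref{thm:confluence} yields that the irreducible diagrams form an $\cR$-basis. The remaining step is the identification of irreducible diagrams with basis webs. I would argue the equivalence \emph{a diagram admits no reduction $\iff$ it is a flat crossroad web without elliptic faces} structure by structure: any internal crossing triggers one of $C_1$--$C_4$, so irreducibility forces flatness; any rung, being locally an $H$-shaped type-$2$ edge flanked by four type-$1$ half-edges, triggers $S_8$ and is therefore replaced by a crossroad, so an irreducible diagram is a crossroad web; and the absence of any elliptic face is precisely the condition that none of the face-reductions $S_1$--$S_{17},B_1$--$B_{12}$ applies. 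This matches \cref{def:basis-web} verbatim, and hence $\Bweb{\Sigma}$ is an $\cR$-basis.

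The hard part will be the bookkeeping in this last identification: one must verify that the list of elliptic faces is \emph{complete}, meaning a flat crossroad web with no such face admits no further reduction of \emph{any} type, including the boundary reorderings $B_{13},B_{14}$ and the crossroad bigon/triangle rules $S_9$--$S_{17}$, and conversely that each basis web is genuinely irreducible. This is essentially a finite case check paralleling \cite[Section~6]{SikoraWestbury07}, but it must be carried out with care for the new crossroad and clasped configurations specific to $\mathfrak{sp}_4$, since these are exactly the places where spurious reductions could hide.
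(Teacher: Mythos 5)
Your overall strategy is exactly the paper's: present $\Skein{\Sigma}$ as $\cR\Diag{\Sigma}/\langle S_i,B_j,C_k\rangle$, check that this submodule agrees with the one generated by the skein relations of \cref{def:skeinrel,def:bdry-skeinrel}, invoke \cref{thm:confluence} via \cref{lem:sp4-confluence}, and then match irreducible diagrams with $\Bweb{\Sigma}$.

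There is, however, one point in your final identification step that would fail as stated. You propose to verify the equivalence ``irreducible $\iff$ flat crossroad web without elliptic faces,'' and in particular to check that an elliptic-face-free flat crossroad web ``admits no further reduction of any type, including the boundary reorderings $B_{13},B_{14}$.'' That check cannot succeed: the left-hand sides of $B_{13}$ and $B_{14}$ contain no elliptic face (and neither do their right-hand sides), yet they are by definition reducible. So the set of irreducible diagrams is strictly smaller than the set of elliptic-face-free flat crossroad web diagrams --- it consists only of those in normal position with respect to the orderings imposed by $B_{13}$ and $B_{14}$. The correct way to close the argument, and the one the paper uses, is to observe that $B_{13}$ and $B_{14}$ have coefficient $1$ and merely reorder arcs near special points, so the two sides of each such rule represent the \emph{same element} of $\Skein{\Sigma}$. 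Consequently the bijection between irreducible diagrams and their classes in $\Skein{\Sigma}$ still lands exactly on $\Bweb{\Sigma}$ as defined in \cref{def:basis-web}, and the basis statement follows. Without this observation your step (ii) is a genuine gap, since $\Bweb{\Sigma}$ is a set of skein elements rather than of normal-form diagrams.
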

\begin{proof}
    One can apply \cref{thm:confluence} to $\cR\Diag{\Sigma}/\langle S_{i}, B_{j}, C_{k}\rangle$, thanks to \cref{lem:sp4-confluence}.
    We can confirm that the $\mathfrak{sp}_4$-webs in both sides of these reduction rules are related by skein relations in \cref{def:skeinrel,def:bdry-skeinrel}.
    In addition, the skein relations are deduced from the reduction rules.
    Thus, the $\cR$-submodule $\langle S_{i}, B_{j}, C_{k}\rangle$ is equal to the $\cR$-submodule generated by the $\mathfrak{sp}_4$-skein relations.
    It concludes that the irreducible $\mathfrak{sp}_4$-graph diagrams with respect to the reduction rules in \cref{def:reduction} give an $\cR$-basis of $\Skein{\Sigma}$.
    Note that if $\mathfrak{sp}_4$-graphs in the left hand side of $B_{13}$ or $B_{14}$ has no elliptic faces, then the right has also no elliptic faces. However, these $\mathfrak{sp}_4$-graphs determine the same element in $\Skein{\Sigma}$.
    It concludes that $\Bweb{\Sigma}$ is an $\cR$-basis of $\Skein{\Sigma}$.
\end{proof}

\begin{dfn}[elementary webs]\label{def:elementary-web}
    An \emph{elementary web} $G$ is a flat $\mathfrak{sp}_4$-graph satisfying the following:
    \begin{itemize}
        \item all subsets of rungs of $G$ are essential,
        \item $G$ is indecomposable to a product of more than two basis webs, and
        \item the associated crossroad web is a basis web.
    \end{itemize}
    It is said to be of weight $d=2$ if the number $\epsilon(\mathrm{deg}_p(G)) \in \bN$ is even for all $p \in \bM$, and $d=1$ otherwise. Here $\epsilon(c^1,c^2):=c^1+2c^2$.
    An elementary web is \emph{tree-type} if it is represented by a tangled tree $G$ (\emph{i.e.}, an $\mathfrak{sp}_4$-graph whose underlying graph is a tree) up to a power of $q^{1/2}$ such that any rung of $G$ is adjacent to a triangle 
    $\mathord{
        \ \tikz[baseline=-.6ex, scale=.1, yshift=-3cm]{
            \coordinate (P) at (0,0);
            \coordinate (Q) at (90:7);
            \coordinate (A1) at (135:7);
            \coordinate (A2) at (45:7);
            \coordinate (B1) at (120:5);
            \coordinate (B2) at (60:5);
            \draw[overarc] (P) -- (B1) -- (B2) -- (P);
            \draw[wline] (B1) -- (A1);
            \draw[wline] (B2) -- (A2);
            \draw[dashed] (7,0) arc (0:180:7cm);
            \bdryline{(-7,0)}{(7,0)}{1cm}
            \draw[fill=black] (P) circle [radius=20pt];
        \ }
    }$
    bounded by type $1$ edges. We call such a tangled tree $G$ a \emph{lift} of the tree-type elementary web.
    We denote the set of all elementary webs in $\Skein{\Sigma}$ by $\Eweb{\Sigma}$, and the subset of tree-type ones by $\Tree{\Sigma} \subset \Eweb{\Sigma}$. 
\end{dfn}

\begin{rem}
    Any elementary web is, in fact, equal to its associated crossroad web in $\Skein{\Sigma}$ by definition of the essential rung and the crossroad.
    Hence, one obtain inclusions $\Tree{\Sigma}\subset\Eweb{\Sigma}\subset\Bweb{\Sigma}$.
\end{rem}

\begin{ex}\label{ex:elementary_webs}
    Let $\Sigma$ be a quadrilateral.
    \begin{enumerate}
        \item The $\mathfrak{sp}_4$-graphs
        \ \tikz[baseline=-.6ex, scale=.1, yshift=-5cm]{
            \coordinate (A) at (0,0);
            \coordinate (B) at (0,10);
            \coordinate (C) at (10,10);
            \coordinate (D) at (10,0);
            \coordinate (AB) at ($(A)!0.5!(B)+(3,0)$);
            \coordinate (BC) at ($(B)!0.5!(C)+(0,-3)$);
            \coordinate (CD) at ($(C)!0.5!(D)+(-3,0)$);
            \coordinate (DA) at ($(D)!0.5!(A)+(0,3)$);
            \draw[webline] (A) -- (AB) -- (B);
            \draw[webline] (C) -- (CD) -- (D);
            \draw[wline] (AB) -- (CD);
            \draw[very thick] (A) -- (B) -- (C) -- (D) -- cycle;
            \filldraw (A) circle [radius=20pt];
            \filldraw (B) circle [radius=20pt];
            \filldraw (C) circle [radius=20pt];
            \filldraw (D) circle [radius=20pt];
        }\ 
        and
        \ \tikz[baseline=-.6ex, scale=.1, yshift=-5cm]{
            \coordinate (A) at (0,0);
            \coordinate (B) at (0,10);
            \coordinate (C) at (10,10);
            \coordinate (D) at (10,0);
            \coordinate (AB) at ($(A)!0.5!(B)+(3,0)$);
            \coordinate (BC) at ($(B)!0.5!(C)+(0,-3)$);
            \coordinate (CD) at ($(C)!0.5!(D)+(-3,0)$);
            \coordinate (DA) at ($(D)!0.5!(A)+(0,3)$);
            \draw[webline] (B) -- (BC) -- (C);
            \draw[webline] (D) -- (DA) -- (A);
            \draw[wline] (DA) -- (BC);
            \draw[very thick] (A) -- (B) -- (C) -- (D) -- cycle;
            \filldraw (A) circle [radius=20pt];
            \filldraw (B) circle [radius=20pt];
            \filldraw (C) circle [radius=20pt];
            \filldraw (D) circle [radius=20pt];
        }\ 
        are indecomposable.
        Their common associated crossroad web
        \ \tikz[baseline=-.6ex, scale=.1, yshift=-5cm]{
            \coordinate (A) at (0,0);
            \coordinate (B) at (0,10);
            \coordinate (C) at (10,10);
            \coordinate (D) at (10,0);
            \draw[webline] (A) -- (C);
            \draw[webline] (B) -- (D);
            \draw[fill=pink, thick] (5,5) circle [radius=20pt];
            \draw[very thick] (A) -- (B) -- (C) -- (D) -- cycle;
            \filldraw (A) circle [radius=20pt];
            \filldraw (B) circle [radius=20pt];
            \filldraw (C) circle [radius=20pt];
            \filldraw (D) circle [radius=20pt];
        }\ 
        belongs to $\Bweb{\Sigma}$.
        However, they are not elementary because their rungs are not essential.
        \item The flat $\mathfrak{sp}_4$-graph
        \ \tikz[baseline=-.6ex, scale=.1, yshift=-5cm, rotate=90]{
            \coordinate (A) at (0,0);
            \coordinate (B) at (0,10);
            \coordinate (C) at (10,10);
            \coordinate (D) at (10,0);
            \coordinate (AB) at ($(A)!0.5!(B)+(2,0)$);
            \coordinate (BC) at ($(B)!0.5!(C)+(0,-2)$);
            \coordinate (CD) at ($(C)!0.5!(D)+(-2,0)$);
            \coordinate (DA) at ($(D)!0.5!(A)+(0,2)$);
            \coordinate (CD1) at ($(C)!.3!(D)+(-2,0)$);
            \coordinate (CD2) at ($(C)!.7!(D)+(-2,0)$);
            \draw[webline] (C) -- (BC) -- (CD1) -- (C);
            \draw[webline] (D) -- (DA) -- (CD2) -- (D);
            \draw[wline] (A) -- (DA);
            \draw[wline] (B) -- (BC);
            \draw[wline] (CD1) -- (CD2);
            \draw[very thick] (A) -- (B) -- (C) -- (D) -- cycle;
            \filldraw (A) circle [radius=20pt];
            \filldraw (B) circle [radius=20pt];
            \filldraw (C) circle [radius=20pt];
            \filldraw (D) circle [radius=20pt];
        }\ 
        belongs to $\Tree{\Sigma}$ with its associated crossroad web
        \ \tikz[baseline=-.6ex, scale=.1, yshift=-5cm, rotate=90]{
            \coordinate (A) at (0,0);
            \coordinate (B) at (0,10);
            \coordinate (C) at (10,10);
            \coordinate (D) at (10,0);
            \coordinate (AB) at ($(A)!0.5!(B)+(2,0)$);
            \coordinate (BC) at ($(B)!0.5!(C)+(0,-2)$);
            \coordinate (CD) at ($(C)!0.5!(D)+(-2,0)$);
            \coordinate (DA) at ($(D)!0.5!(A)+(0,2)$);
            \draw[webline] (C) -- (BC);
            \draw[webline] (D) -- (DA);
            \draw[wline] (A) -- (DA);
            \draw[wline] (B) -- (BC);
            \draw[webline] (DA) -- (C);
            \draw[webline] (BC) -- (D);
            \draw[fill=pink, thick] ($(CD)+(-1,0)$) circle [radius=20pt];
            \draw[very thick] (A) -- (B) -- (C) -- (D) -- cycle;
            \filldraw (A) circle [radius=20pt];
            \filldraw (B) circle [radius=20pt];
            \filldraw (C) circle [radius=20pt];
            \filldraw (D) circle [radius=20pt];
        }\ 
        in $\Bweb{\Sigma}$.
        It will correspond to a cluster variable via the inclusion  $\Skein{\Sigma}^{\bZ_q}\subset\CA^q_{\mathfrak{sp}_4,\Sigma}$ given in \cref{thm:S in A}.
        \item The indecomposable $\mathfrak{sp}_4$-graph 
        \ \tikz[baseline=-.6ex, scale=.1, yshift=-5cm, rotate=90]{
            \coordinate (A) at (0,0);
            \coordinate (B) at (0,10);
            \coordinate (C) at (10,10);
            \coordinate (D) at (10,0);
            \coordinate (AB) at ($(A)!0.5!(B)+(2,0)$);
            \coordinate (BC) at ($(B)!0.5!(C)+(0,-2)$);
            \coordinate (CD) at ($(C)!0.5!(D)+(-2,0)$);
            \coordinate (DA) at ($(D)!0.5!(A)+(0,2)$);
            \draw[webline] (C) -- (CD) -- (D);
            \draw[webline] (BC) -- (DA);
            \draw[webline] (C) -- (BC);
            \draw[webline] (D) -- (DA);
            \draw[wline] (A) -- (DA);
            \draw[wline] (B) -- (BC);
            \draw[wline] ($(CD)-(3,0)$) -- (CD);
            \draw[very thick] (A) -- (B) -- (C) -- (D) -- cycle;
            \filldraw (A) circle [radius=20pt];
            \filldraw (B) circle [radius=20pt];
            \filldraw (C) circle [radius=20pt];
            \filldraw (D) circle [radius=20pt];
        }\ 
        has the same associated crossroad web as the previous example.
        However, it is not an elementary web because its rung is not essential.
        \item The elementary web
        \ \tikz[baseline=-.6ex, scale=.1]{
            \coordinate (P1) at (45:7);
            \coordinate (P2) at (135:7);
            \coordinate (P3) at (-135:7);
            \coordinate (P4) at (-45:7);
            \coordinate (P11) at (30:4);
            \coordinate (P12) at (60:4);
            \coordinate (P21) at (120:4);
            \coordinate (P22) at (150:4);
            \coordinate (P31) at (210:4);
            \coordinate (P32) at (240:4);
            \coordinate (P41) at (300:4);
            \coordinate (P42) at (330:4);
            \draw[wline] (P12) -- (P21);
            \draw[wline] (P22) -- (P31);
            \draw[wline] (P32) -- (P41);
            \draw[wline] (P42) -- (P11);
            \draw[webline] (P1) -- (P11) -- (P12) -- (P1);
            \draw[webline] (P2) -- (P21) -- (P22) -- (P2);
            \draw[webline] (P3) -- (P31) -- (P32) -- (P3);
            \draw[webline] (P4) -- (P41) -- (P42) -- (P4);
            \draw[very thick] (P1) -- (P2) -- (P3) -- (P4) -- cycle;
            \filldraw (P1) circle [radius=20pt];
            \filldraw (P2) circle [radius=20pt];
            \filldraw (P3) circle [radius=20pt];
            \filldraw (P4) circle [radius=20pt];
        }\ 
        is not tree-type, and it is invariant under the  $\pi/2$-rotations of the quadrilateral, which is the cluster Donaldson--Thomas transformation in this case: see \eqref{eq:cluster_DT}.
        \item The underlying graph of the flat $\mathfrak{sp}_4$-graph
        \ \tikz[baseline=-.6ex, scale=.1]{
            \coordinate (P1) at (45:7);
            \coordinate (P2) at (135:7);
            \coordinate (P3) at (-135:7);
            \coordinate (P4) at (-45:7);
            \coordinate (P11) at (30:4);
            \coordinate (P12) at (60:4);
            \coordinate (P21) at (120:4);
            \coordinate (P22) at (150:4);
            \coordinate (P31) at (210:4);
            \coordinate (P32) at (240:4);
            \coordinate (P41) at (300:4);
            \coordinate (P42) at (330:4);
            \draw[wline] (P12) -- (P21);
            \draw[wline] (P22) -- (P31);
            \draw[webline] (P2) -- (P21) -- (P22) -- (P2);
            \draw[webline] (P31) -- (P3);
            \draw[webline] (P31) -- (P4);
            \draw[webline] (P12) -- (P1);
            \draw[webline] (P12) -- (P4);
            \draw[very thick] (P1) -- (P2) -- (P3) -- (P4) -- cycle;
            \filldraw (P1) circle [radius=20pt];
            \filldraw (P2) circle [radius=20pt];
            \filldraw (P3) circle [radius=20pt];
            \filldraw (P4) circle [radius=20pt];
        }\ 
        is a tree.
        However, it decomposes into the product of
        \ \tikz[baseline=-.6ex, scale=.1, yshift=-5cm]{
            \coordinate (A) at (0,0);
            \coordinate (B) at (0,10);
            \coordinate (C) at (10,10);
            \coordinate (D) at (10,0);
            \draw[webline] (B) -- (D);
            \draw[very thick] (A) -- (B) -- (C) -- (D) -- cycle;
            \filldraw (A) circle [radius=20pt];
            \filldraw (B) circle [radius=20pt];
            \filldraw (C) circle [radius=20pt];
            \filldraw (D) circle [radius=20pt];
        }\ 
        and
        \ \tikz[baseline=-.6ex, scale=.1, yshift=-5cm]{
            \coordinate (A) at (0,0);
            \coordinate (B) at (0,10);
            \coordinate (C) at (10,10);
            \coordinate (D) at (10,0);
            \draw[webline] (A) -- (C);
            \draw[webline] (B) -- (D);
            \draw[fill=pink, thick] (5,5) circle [radius=20pt];
            \draw[very thick] (A) -- (B) -- (C) -- (D) -- cycle;
            \filldraw (A) circle [radius=20pt];
            \filldraw (B) circle [radius=20pt];
            \filldraw (C) circle [radius=20pt];
            \filldraw (D) circle [radius=20pt];
        }\ .
    \end{enumerate}
\end{ex}
Certain decompositions of $\mathfrak{sp}_4$-webs into products of elementary webs are obtained by the following \emph{arborization relations}, which are similar to those introduced by Fomin--Pylyavskyy~\cite{FP16} for $\mathfrak{sl}_3$-webs.
\begin{lem}[arborization relations for $\mathfrak{sp}_4$-webs]\label{lem:arborization}
    For $\mathfrak{sp}_4$-webs, we have the following relations
    \begin{align*}
        \mathord{
            \ \tikz[baseline=-.6ex, scale=.1, yshift=-4cm]{
                \coordinate (P) at (0,0);
                \coordinate (Q) at (90:10);
                \coordinate (A1) at (135:10);
                \coordinate (A2) at (45:10);
                \coordinate (B1) at (120:7);
                \coordinate (B2) at (60:7);
                \draw[webline, shorten <=.2cm] (P) -- (Q);
                \draw[overarc] (P) -- (B1) -- (B2) -- (P);
                \draw[wline] (B1) -- (A1);
                \draw[wline] (B2) -- (A2);
                \draw[dashed] (10,0) arc (0:180:10cm);
                \bdryline{(-10,0)}{(10,0)}{2cm}
                \draw[fill=black] (P) circle [radius=20pt];
            \ }
        }
        &=
        \mathord{
            \ \tikz[baseline=-.6ex, scale=.1, yshift=-4cm]{
                \coordinate (P) at (0,0);
                \coordinate (Q) at (90:10);
                \coordinate (R) at (90:7);
                \coordinate (A1) at (135:10);
                \coordinate (A2) at (45:10);
                \coordinate (B1) at (120:7);
                \coordinate (B2) at (60:7);
                \draw[webline] (B1) -- ($(R)+(-1,1)$);
                \draw[wline] ($(R)+(1,-1)$) -- ($(R)+(-1,1)$);
                \draw[webline] (P) -- (B2) -- ($(R)+(1,-1)$) -- (P);
                \draw[webline] (P) -- (B1);
                \draw[webline] ($(R)+(-1,1)$)--(Q);
                \draw[wline] (B1) -- (A1);
                \draw[wline] (B2) -- (A2);
                \draw[dashed] (10,0) arc (0:180:10cm);
                \bdryline{(-10,0)}{(10,0)}{2cm}
                \draw[fill=black] (P) circle [radius=20pt];
            \ }
        }
        =
        \mathord{
            \ \tikz[baseline=-.6ex, scale=.1, yshift=-4cm, xscale=-1]{
                \coordinate (P) at (0,0);
                \coordinate (Q) at (90:10);
                \coordinate (R) at (90:7);
                \coordinate (A1) at (135:10);
                \coordinate (A2) at (45:10);
                \coordinate (B1) at (120:7);
                \coordinate (B2) at (60:7);
                \draw[webline] (B1) -- ($(R)+(-1,1)$);
                \draw[wline] ($(R)+(1,-1)$) -- ($(R)+(-1,1)$);
                \draw[webline] (P) -- (B2) -- ($(R)+(1,-1)$) -- (P);
                \draw[webline] (P) -- (B1);
                \draw[webline] ($(R)+(-1,1)$)--(Q);
                \draw[wline] (B1) -- (A1);
                \draw[wline] (B2) -- (A2);
                \draw[dashed] (10,0) arc (0:180:10cm);
                \bdryline{(-10,0)}{(10,0)}{2cm}
                \draw[fill=black] (P) circle [radius=20pt];
            \ }
        },\\
        \mathord{
            \ \tikz[baseline=-.6ex, scale=.1, yshift=-4cm]{
                \coordinate (P) at (0,0);
                \coordinate (Q) at (90:10);
                \coordinate (A1) at (135:10);
                \coordinate (A2) at (45:10);
                \coordinate (B1) at (120:7);
                \coordinate (B2) at (60:7);
                \draw[wline, shorten <=.3cm] (P) -- (Q);
                \draw[overarc] (A1) -- (B1) -- (A2);
                \draw[wline] (P) -- (B1);
                \draw[dashed] (10,0) arc (0:180:10cm);
                \bdryline{(-10,0)}{(10,0)}{2cm}
                \draw[fill=black] (P) circle [radius=20pt];
            \ }
        }
        &=
        \mathord{
            \ \tikz[baseline=-.6ex, scale=.1, yshift=-4cm]{
                \coordinate (P) at (0,0);
                \coordinate (Q) at (90:10);
                \coordinate (R) at (90:7);
                \coordinate (A1) at (135:10);
                \coordinate (A2) at (45:10);
                \coordinate (B1) at (120:7);
                \coordinate (B2) at (60:7);
                \draw[wline] (P) -- (B1);
                \draw[wline] (P) -- (B2);
                \draw[webline] (A1) -- (B1) -- (R) -- (B2) -- (A2);
                \draw[wline] (R) -- (Q);
                \draw[dashed] (10,0) arc (0:180:10cm);
                \bdryline{(-10,0)}{(10,0)}{2cm}
                \draw[fill=black] (P) circle [radius=20pt];
            \ }
        },\\
        \mathord{
            \ \tikz[baseline=-.6ex, scale=.1, yshift=-4cm]{
                \coordinate (P) at (0,0);
                \coordinate (Q) at (90:10);
                \foreach \i in {0,1,...,6}
                {
                    \coordinate (A\i) at (\i*30:7);
                    \coordinate (P\i) at (\i*30:10);
                }
                \draw[wline, shorten <=.3cm] (P) -- (Q);
                \draw[overwline] (A2) -- (A4);
                \draw[webline] (P) -- (A2);
                \draw[wline] (P) -- (A5);
                \draw[webline] (A4) -- (A5);
                \draw[webline] (P5) -- (A5);
                \draw[webline] (P4) -- (A4);
                \draw[webline] (P2) -- (A2);
                \draw[dashed] (10,0) arc (0:180:10cm);
                \bdryline{(-10,0)}{(10,0)}{2cm}
                \draw[fill=black] (P) circle [radius=20pt];
            \ }
        }
        &=q^{-\frac{1}{2}}
        \mathord{
            \ \tikz[baseline=-.6ex, scale=.1, yshift=-4cm]{
                \coordinate (P) at (0,0);
                \coordinate (Q) at (90:10);
                \foreach \i in {0,1,...,6}
                {
                    \coordinate (A\i) at (\i*30:7);
                    \coordinate (P\i) at (\i*30:10);
                }
                \draw[wline] (P) -- (Q);
                \draw[wline] (A2) -- (A4);
                \draw[webline] (P) -- (A2);
                \draw[wline] (P) -- (A5);
                \draw[webline] (A4) -- (A5);
                \draw[webline] (P5) -- (A5);
                \draw[webline] (P4) -- (A4);
                \draw[webline] (P2) -- (A2);
                \draw[webline, fill=white] ($(A3)+(-1,0)-(0,1)$) -- ($(A3)+(0,-1)-(0,1)$) -- ($(A3)+(1,0)-(0,1)$) -- ($(A3)+(0,1)-(0,1)$) -- cycle;
                \draw[dashed] (10,0) arc (0:180:10cm);
                \bdryline{(-10,0)}{(10,0)}{2cm}
                \draw[fill=black] (P) circle [radius=20pt];
            \ }
        },
        \quad
        \mathord{
            \ \tikz[baseline=-.6ex, scale=.1, yshift=-4cm]{
                \coordinate (P) at (0,0);
                \coordinate (Q) at (90:10);
                \foreach \i in {0,1,...,6}
                {
                    \coordinate (A\i) at (\i*30:7);
                    \coordinate (P\i) at (\i*30:10);
                }
                \draw[wline, shorten <=.3cm] (P) -- (Q);
                \draw[overwline] (A2) -- (A4);
                \draw[webline] (P) -- (A4);
                \draw[wline] (P) -- (A1);
                \draw[webline] (A1) -- (A2);
                \draw[webline] (P1) -- (A1);
                \draw[webline] (P2) -- (A2);
                \draw[webline] (P4) -- (A4);
                \draw[dashed] (10,0) arc (0:180:10cm);
                \bdryline{(-10,0)}{(10,0)}{2cm}
                \draw[fill=black] (P) circle [radius=20pt];
            \ }
        }
        =q^{\frac{1}{2}}
        \mathord{
            \ \tikz[baseline=-.6ex, scale=.1, yshift=-4cm]{
                \coordinate (P) at (0,0);
                \coordinate (Q) at (90:10);
                \foreach \i in {0,1,...,6}
                {
                    \coordinate (A\i) at (\i*30:7);
                    \coordinate (P\i) at (\i*30:10);
                }
                \draw[wline] (P) -- (Q);
                \draw[wline] (A2) -- (A4);
                \draw[webline] (P) -- (A4);
                \draw[wline] (P) -- (A1);
                \draw[webline] (A1) -- (A2);
                \draw[webline] (P1) -- (A1);
                \draw[webline] (P2) -- (A2);
                \draw[webline] (P4) -- (A4);
                \draw[webline, fill=white] ($(A3)+(-1,0)-(0,1)$) -- ($(A3)+(0,-1)-(0,1)$) -- ($(A3)+(1,0)-(0,1)$) -- ($(A3)+(0,1)-(0,1)$) -- cycle;
                \draw[dashed] (10,0) arc (0:180:10cm);
                \bdryline{(-10,0)}{(10,0)}{2cm}
                \draw[fill=black] (P) circle [radius=20pt];
            \ }
        },\\
        \mathord{
            \ \tikz[baseline=-.6ex, scale=.1, yshift=-4cm]{
                \coordinate (P) at (0,0);
                \coordinate (Q) at (90:10);
                \foreach \i in {0,1,...,6}
                {
                    \coordinate (A\i) at (\i*30:7);
                    \coordinate (P\i) at (\i*30:10);
                }
                \draw[wline, shorten <=.3cm] (P) -- (Q);
                \draw[overwline] (A2) -- (A4);
                \draw[wline] (P) -- (A1);
                \draw[wline] (P) -- (A5);
                \draw[webline] (A1) -- (A2);
                \draw[webline] (A4) -- (A5);
                \draw[webline] (P5) -- (A5);
                \draw[webline] (P4) -- (A4);
                \draw[webline] (P2) -- (A2);
                \draw[webline] (P1) -- (A1);
                \draw[dashed] (10,0) arc (0:180:10cm);
                \bdryline{(-10,0)}{(10,0)}{2cm}
                \draw[fill=black] (P) circle [radius=20pt];
            \ }
        }
        &=
        \mathord{
            \ \tikz[baseline=-.6ex, scale=.1, yshift=-4cm]{
                \coordinate (P) at (0,0);
                \coordinate (Q) at (90:10);
                \foreach \i in {0,1,...,6}
                {
                    \coordinate (A\i) at (\i*30:7);
                    \coordinate (P\i) at (\i*30:10);
                }
                \draw[wline] (P) -- (Q);
                \draw[wline] (A2) -- (A4);
                \draw[wline] (P) -- (A1);
                \draw[wline] (P) -- (A5);
                \draw[webline] (A1) -- (A2);
                \draw[webline] (A4) -- (A5);
                \draw[webline] (P5) -- (A5);
                \draw[webline] (P4) -- (A4);
                \draw[webline] (P2) -- (A2);
                \draw[webline] (P1) -- (A1);
                \draw[webline, fill=white] ($(A3)+(-1,0)-(0,1)$) -- ($(A3)+(0,-1)-(0,1)$) -- ($(A3)+(1,0)-(0,1)$) -- ($(A3)+(0,1)-(0,1)$) -- cycle;
                \draw[dashed] (10,0) arc (0:180:10cm);
                \bdryline{(-10,0)}{(10,0)}{2cm}
                \draw[fill=black] (P) circle [radius=20pt];
            \ }
        },\\
        \mathord{
            \ \tikz[baseline=-.6ex, scale=.1, yshift=-4cm]{
                \coordinate (P) at (0,0);
                \coordinate (P1) at (-5,0);
                \coordinate (P2) at (5,0);
                \coordinate (Q) at (90:10);
                \coordinate (A1) at (135:10);
                \coordinate (A2) at (45:10);
                \coordinate (B1) at (120:7);
                \coordinate (B2) at (60:7);
                \draw[wline, shorten <=.3cm] (P1) -- ($(P1)+(3,5)$); 
                \draw[wline] ($(P)+(0,6)$) -- ($(P)+(0,10)$);
                \draw[webline] ($(P1)+(3,5)$) -- ($(P)+(0,6)$);
                \draw[webline] (P2) -- ($(P)+(0,6)$);
                \draw[webline] (P2) -- ($(P1)+(3,5)$);
                \draw[wline] (P1) -- ($(P)+(-1,2)$); 
                \draw[overarc] ($(P)+(-1,2)$) -- ($(P2)+(0,4)$);
                \draw[webline] (P2) -- ($(P2)+(0,4)$);
                \draw[webline] (P2) -- ($(P)+(-1,2)$);
                \draw[overarc] ($(P2)+(0,7)$) -- (120:10);
                \draw[webline] ($(P2)+(0,7)$) -- (60:10);
                \draw[wline] ($(P2)+(0,4)$) -- ($(P2)+(0,7)$);
                \draw[dashed] (10,0) arc (0:180:10cm);
                \bdryline{(-10,0)}{(10,0)}{2cm}
                \draw[fill=black] (P1) circle [radius=20pt];
                \draw[fill=black] (P2) circle [radius=20pt];
            \ }
        }
        &=q^{\frac{1}{2}}
        \mathord{
            \ \tikz[baseline=-.6ex, scale=.1, yshift=-4cm]{
                \coordinate (P) at (0,0);
                \coordinate (P1) at (-5,0);
                \coordinate (P2) at (5,0);
                \coordinate (S1) at (-2.5,6.5);
                \coordinate (S2) at (-2.5,4.5);
                \coordinate (S3) at (-.5,4.5);
                \coordinate (S4) at (-.5,6.5);
                \coordinate (T1) at (2,5);
                \coordinate (T2) at (0,3);
                \draw[wline] (S4) -- (T1);
                \draw[wline] (S3) -- (T2);
                \draw[webline] (P2) -- (-1,2) -- (T2) -- (P2);
                \draw[webline] (P2) -- (T1) -- ($(P2)+(0,4)$) -- (P2);
                \draw[wline] (P1) -- ($(P1)+(3,5)$); 
                \draw[wline] (S1) -- (-2,8);
                \draw[wline] (P1) -- (-1,2); 
                \draw[webline] ($(P2)+(0,7)$) -- (0,8);
                \draw[webline] ($(P2)+(0,7)$) -- (60:10);
                \draw[wline] ($(P2)+(0,4)$) -- ($(P2)+(0,7)$);
                \draw[webline] (-2,8) -- (0,8);
                \draw[wline] (0,8) -- (0,10);
                \draw[webline] (-2,8) -- (120:10);
                \draw[webline] (S1) -- (S2) -- (S3) -- (S4) -- cycle;
                \draw[dashed] (10,0) arc (0:180:10cm);
                \bdryline{(-10,0)}{(10,0)}{2cm}
                \draw[fill=black] (P1) circle [radius=20pt];
                \draw[fill=black] (P2) circle [radius=20pt];
            \ }
        },
        \quad
        \mathord{
            \ \tikz[baseline=-.6ex, scale=.1, yshift=-4cm, xscale=-1]{
                \coordinate (P) at (0,0);
                \coordinate (P1) at (-5,0);
                \coordinate (P2) at (5,0);
                \coordinate (Q) at (90:10);
                \coordinate (A1) at (135:10);
                \coordinate (A2) at (45:10);
                \coordinate (B1) at (120:7);
                \coordinate (B2) at (60:7);
                \draw[wline, shorten <=.3cm] (P1) -- ($(P1)+(3,5)$); 
                \draw[wline] ($(P)+(0,6)$) -- ($(P)+(0,10)$);
                \draw[webline] ($(P1)+(3,5)$) -- ($(P)+(0,6)$);
                \draw[webline] (P2) -- ($(P)+(0,6)$);
                \draw[webline] (P2) -- ($(P1)+(3,5)$);
                \draw[wline] (P1) -- ($(P)+(-1,2)$); 
                \draw[overarc] ($(P)+(-1,2)$) -- ($(P2)+(0,4)$);
                \draw[webline] (P2) -- ($(P2)+(0,4)$);
                \draw[webline] (P2) -- ($(P)+(-1,2)$);
                \draw[overarc] ($(P2)+(0,7)$) -- (120:10);
                \draw[webline] ($(P2)+(0,7)$) -- (60:10);
                \draw[wline] ($(P2)+(0,4)$) -- ($(P2)+(0,7)$);
                \draw[dashed] (10,0) arc (0:180:10cm);
                \bdryline{(-10,0)}{(10,0)}{2cm}
                \draw[fill=black] (P1) circle [radius=20pt];
                \draw[fill=black] (P2) circle [radius=20pt];
            \ }
        }
        =q^{-\frac{1}{2}}
        \mathord{
            \ \tikz[baseline=-.6ex, scale=.1, yshift=-4cm, xscale=-1]{
                \coordinate (P) at (0,0);
                \coordinate (P1) at (-5,0);
                \coordinate (P2) at (5,0);
                \coordinate (S1) at (-2.5,6.5);
                \coordinate (S2) at (-2.5,4.5);
                \coordinate (S3) at (-.5,4.5);
                \coordinate (S4) at (-.5,6.5);
                \coordinate (T1) at (2,5);
                \coordinate (T2) at (0,3);
                \draw[wline] (S4) -- (T1);
                \draw[wline] (S3) -- (T2);
                \draw[webline] (P2) -- (-1,2) -- (T2) -- (P2);
                \draw[webline] (P2) -- (T1) -- ($(P2)+(0,4)$) -- (P2);
                \draw[wline] (P1) -- ($(P1)+(3,5)$); 
                \draw[wline] (S1) -- (-2,8);
                \draw[wline] (P1) -- (-1,2); 
                \draw[webline] ($(P2)+(0,7)$) -- (0,8);
                \draw[webline] ($(P2)+(0,7)$) -- (60:10);
                \draw[wline] ($(P2)+(0,4)$) -- ($(P2)+(0,7)$);
                \draw[webline] (-2,8) -- (0,8);
                \draw[wline] (0,8) -- (0,10);
                \draw[webline] (-2,8) -- (120:10);
                \draw[webline] (S1) -- (S2) -- (S3) -- (S4) -- cycle;
                \draw[dashed] (10,0) arc (0:180:10cm);
                \bdryline{(-10,0)}{(10,0)}{2cm}
                \draw[fill=black] (P1) circle [radius=20pt];
                \draw[fill=black] (P2) circle [radius=20pt];
            \ }
        }
    \end{align*}
which we call the arborization relations. 
\end{lem}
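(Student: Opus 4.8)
The plan is to verify each arborization relation as a purely local identity in $\Skein{\Sigma}$, working inside the disk neighborhood drawn in each picture and invoking only the internal skein relations of \cref{def:skeinrel}, the clasped relations of \cref{def:bdry-skeinrel}, and the simultaneous-crossing normalizations of \cref{def:simul-crossing}. The recurring mechanism is the same in every case. First I would resolve every internal crossing appearing on the left-hand side by the appropriate resolution among \eqref{rel:ss-cross}--\eqref{rel:ww-cross} (equivalently $C_1$--$C_4$ of \cref{def:reduction}), turning the left-hand side into a linear combination of flat diagrams. Then I would use one of the ``merging'' reductions---the bigon relation \eqref{rel:bigon}, the trigon relation \eqref{rel:trigon}, the $\mathrm{IH}$-move \eqref{rel:IH}, or the crossroad definition---to break the unique cycle that sits adjacent to the special point. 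Finally I would discard the superfluous terms: each one either contains a boundary monogon annihilated by \eqref{rel:bdry-monogon} or a boundary bigon annihilated by \eqref{rel:bdry-bigon}, leaving exactly the claimed tree-type diagram.

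Concretely, for the two crossing-free-looking relations in the first two displayed lines, after resolving the single type~$1$ crossing of the triangle (resp.\ the arc) by \eqref{rel:ss-cross} I would apply \eqref{rel:IH} to the interior type~$2$ edge. This rewrites the triangle as an $H$-shaped configuration: one of the two resulting diagrams is precisely the target tree, while the other carries a type~$1$ bigon removed by \eqref{rel:bigon} and then a boundary bigon that vanishes by \eqref{rel:bdry-bigon}. The second (mirror) equality in the first line follows from the first together with the boundary-crossing relations \eqref{rel:bdry-cross}, which slide the pendant type~$2$ legs past the vertical strand at the cost of a power of $v$ that cancels against the flat normalization.

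The relations carrying the explicit factors $v^{\pm 1/2}$ and producing a rectangular clasp on the right-hand side are the genuinely type~$2$ cases. Here I would expand the over-crossing of a type~$2$ strand over a type~$2$ strand by \eqref{rel:ww-cross} (i.e.\ $C_4$), which yields a sum of three planar terms. One term reduces, through the clasped trigon relation \eqref{rel:bdry-tri}, to the target tree; the second is killed by a boundary monogon via \eqref{rel:bdry-monogon}; and the third is exactly the type~$2$ clasp box displayed on the right. The prefactors $v^{\pm 1/2}$ are then read off from the simultaneous-crossing conventions \eqref{ss-simul-cross}--\eqref{rel:ww-simul-cross}, which convert between the drawn simultaneous crossings and the elevation-ordered half-edges at the special point.

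Finally, for the two-special-point relations in the last display I would apply the single-point arborization already established at each endpoint in turn, using \eqref{rel:bdry-cross} to transport a type~$2$ strand from one special point into the region near the other before arborizing there; the boundary $v$-factors accumulated in this transport again produce the stated $v^{\pm 1/2}$. The main obstacle throughout is the bookkeeping rather than any single hard step: one must track the elevation orderings at the special points and the resulting half-integer powers of $v$ with care, and verify that every non-tree term is \emph{genuinely} annihilated by a clasped monogon/bigon/trigon relation and not merely simplified. The type~$2$ over type~$2$ resolution \eqref{rel:ww-cross} is the delicate case, since it introduces the rectangular clasp whose interaction with the boundary must be checked to reproduce exactly the clasp box on the right-hand side.
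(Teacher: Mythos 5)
Your strategy for the first five relations (resolve the internal crossings via \eqref{rel:ss-cross}--\eqref{rel:ww-cross}, merge via \eqref{rel:bigon}, \eqref{rel:trigon}, \eqref{rel:IH}, and kill the spurious terms with the clasped monogon/bigon/trigon relations \eqref{rel:bdry-monogon}, \eqref{rel:bdry-bigon}, \eqref{rel:bdry-tri}, reading off the half-integer powers of $v$ from \cref{def:simul-crossing}) is exactly the computation the paper has in mind when it says these cases are ``easily proved by the skein relations,'' so for those relations your proposal matches the intended argument.

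The divergence, and the one place where your plan is genuinely underspecified, is the final pair of two-special-point relations. The paper does \emph{not} obtain these by iterating the single-point arborizations and transporting a type~$2$ leg across via \eqref{rel:bdry-cross}; it reduces them to \cref{lem:face-vanishing}. The reason is visible on the right-hand side of the last relation: the target diagram contains an internal square face bounded by type~$1$ edges (the $S_1S_2S_3S_4$ square) with type~$2$ edges attached at two of its corners. This face is \emph{not} elliptic and is not produced or removed by any single application of the crossing resolutions or the clasped relations; matching the left-hand side to it requires the identity of \cref{lem:face-vanishing}, whose own proof goes through replacing a rung by \eqref{rel:IH}, resolving a crossing, and then invoking the first face-vanishing identity (the octagon-type relation). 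Your proposed route --- arborize at $P_1$, slide a type~$2$ strand to $P_2$, arborize again --- does not explain how this interior square face arises with coefficient exactly $v^{\pm 1/2}$ and with no extra terms; the ``transport'' step via \eqref{rel:bdry-cross} only handles half-edges that actually meet a special point, whereas the problematic face lives in the interior. To close the argument you either need to prove the analogue of \cref{lem:face-vanishing} yourself or explicitly cite it; as written, the last displayed relation is not established.
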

\begin{proof}
    One can easily prove them by the skein relations except for the final relation.
    The final relation follow from \cref{lem:face-vanishing} in below.
\end{proof}

\begin{lem}\label{lem:face-vanishing}
    \begin{align*}
        \mathord{
            \ \tikz[baseline=-.6ex, scale=.1]{
                \foreach \i in {0,1,...,7}
                {
                    \coordinate (B\i) at (45*\i:3);
                    \coordinate (A\i) at (45*\i:7);
                }
                \coordinate (P) at (0,0);
                \draw[webline] (A3) -- (B3) -- (B5) -- (B7) -- (B1) -- (A1);
                \draw[wline] (A5) -- (B5);
                \draw[wline] (A7) -- (B7);
                \draw[wline] (B1) -- (B3);
            }\ 
        }
        &=
        \mathord{
            \ \tikz[baseline=-.6ex, scale=.1]{
                \foreach \i in {0,1,...,7}
                {
                    \coordinate (B\i) at (45*\i:3);
                    \coordinate (A\i) at (45*\i:7);
                }
                \coordinate (P) at (0,0);
                \draw[webline] (A3) to[bend right] (A1);
                \draw[wline] (A5) to[bend left] (A7);
            }\ 
        }
        +\frac{1}{[2]}
        \mathord{
            \ \tikz[baseline=-.6ex, scale=.1]{
                \foreach \i in {0,1,...,7}
                {
                    \coordinate (B\i) at (45*\i:3);
                    \coordinate (A\i) at (45*\i:7);
                }
                \coordinate (P) at (0,0);
                \draw[webline] (A3) -- (B4) -- (B0) -- (A1);
                \draw[wline] (A5) -- (B4);
                \draw[wline] (A7) -- (B0);
            }\ 
        },&
        \mathord{
            \ \tikz[baseline=-.6ex, scale=.1, yshift=-5cm]{
                \foreach \i in {0,1,...,6}
                {
                    \coordinate (A\i) at (30*\i:8);
                    \coordinate (B\i) at (30*\i:14);
                }
                \coordinate (P) at (0,0);
                \draw[webline] (P) -- (A2) -- (A4) -- (P);
                \draw[overarc] (A1) -- (A5);
                \draw[webline] (P) -- (A1);
                \draw[webline] (P) -- (A5);
                \draw[wline] (A1) -- (B1);
                \draw[wline] (A2) -- (B2);
                \draw[wline] (A4) -- ($(A4)!.5!(B4)$);
                \draw[wline] (A5) -- ($(A5)!.5!(B5)$);
                \draw[webline] (B4) -- ($(A4)!.5!(B4)$) -- ($(A5)!.5!(B5)$) -- (B5);
                \bdryline{(B6)}{(B0)}{2cm}
                \draw[fill] (P) circle (20pt);
            }\ 
        }
        &=
        \mathord{
            \ \tikz[baseline=-.6ex, scale=.1, yshift=-5cm]{
                \foreach \i in {0,1,...,6}
                {
                    \coordinate (A\i) at (30*\i:8);
                    \coordinate (B\i) at (30*\i:14);
                }
                \coordinate (P) at (0,0);
                \draw[webline] (P) -- (A1) -- (A2) -- (P);
                \draw[webline] (P) -- (B4);
                \draw[webline] (P) -- (B5);
                \draw[wline] (A1) -- (B1);
                \draw[wline] (A2) -- (B2);
                \bdryline{(B6)}{(B0)}{2cm}
                \draw[fill] (P) circle (20pt);
            }\ 
        }.
    \end{align*}
\end{lem}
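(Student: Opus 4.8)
The plan is to establish both identities as \emph{local} skein relations---the first supported in an embedded disk, the second in a half-disk around the special point $P$---by rewriting each left-hand side into the right-hand side using only the internal relations of \cref{def:skeinrel}, the clasped relations of \cref{def:bdry-skeinrel}, and the defining relation of the crossroad (equivalently the reduction rule $S_8$). Since the rewriting system of \cref{lem:sp4-confluence} is terminal and locally confluent, \cref{thm:basis-web} guarantees that each side has a well-defined normal form; thus it suffices to reduce both sides to a common form, and I will do this by hand rather than appealing to normal forms abstractly.

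For the first identity I would start from the square, whose only type~$2$ edge is the top edge $B_1B_3$. Its two endpoints together with the four incident type~$1$ half-edges form an ``$I$''-shaped local picture, so the defining relation of the crossroad expresses it as a crossroad plus $\tfrac{1}{[2]}$ times the two parallel type~$1$ strands running down the two sides $B_3B_5$ and $B_1B_7$. The parallel-strand term joins the legs $A_3,A_1$ through to the bottom edge $B_5B_7$ and its type~$2$ legs $A_5,A_7$, so it is exactly the H-shaped web on the right-hand side, with coefficient $\tfrac{1}{[2]}$. For the crossroad term I would expand the crossroad the \emph{other} way, as (the ``$I$'' joining the two upper legs $A_3,A_1$ to the two lower strands) $-\tfrac1{[2]}$(two caps). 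The ``$I$'' summand now contains a type~$1$ triangle carrying a type~$2$ stub at each of its three vertices, which vanishes by the trigon relation~\eqref{rel:trigon}; the cap summand creates a type~$1$ bigon between $B_5$ and $B_7$ whose two vertices carry $A_5,A_7$, and~\eqref{rel:bigon} collapses this bigon to $-[2]$ times the single type~$2$ arc $A_5A_7$. The factors $-\tfrac1{[2]}$ and $-[2]$ cancel, leaving precisely the product term $A_1A_3\cdot A_5A_7$. Collecting the two contributions gives the identity.

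For the second identity the new ingredient is the over-arc $A_1A_5$, which crosses the two edges $PA_2$ and $PA_4$ emanating from the special point. I would first resolve these type~$1$/type~$1$ crossings with~\eqref{rel:ss-cross}, producing a small number of planar terms carrying powers of $v^{\pm1/2}$. In each term I would push the configuration into the clasp at $P$ and simplify using the clasped triangle, bigon, and monogon relations~\eqref{rel:bdry-tri}--\eqref{rel:bdry-monogon}: the nested triangles $PA_1A_5$ and $PA_2A_4$ collapse, producing the factors $\tfrac1{[2]}$ and the vanishings dictated by those relations, exactly as the trigon/bigon mechanism operated in the first identity. The rectangular middle piece $B_4$--$A_4$--$A_5$--$B_5$ (the type~$1$ path with its two type~$2$ attachments) is then reduced by the interior bigon relation~\eqref{rel:bigon} and, where a closed square is created, by the first identity of this very lemma. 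After these reductions the only surviving term is the tree-shaped web on the right, with the straight type~$1$ strands $PB_4,PB_5$ emerging once the inner triangle and the over-arc have been absorbed.

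The routine part is the first identity, which is essentially three moves (crossroad, trigon, bigon). The hard part will be the coefficient and elevation bookkeeping in the second identity: resolving the over-crossing via~\eqref{rel:ss-cross} generates several terms with powers of $v^{\pm1/2}$ and with distinct elevation data at $P$, and these must be recombined with the $\tfrac1{[2]}$'s from the clasped relations using quantum-integer identities among $[2],[3],[4]$ in order to see that all but one term cancels. As a safeguard I would independently reduce both sides to their normal forms under the rewriting system of \cref{lem:sp4-confluence}: since their endpoint gradings agree, confluence forces the two normal forms to coincide, which certifies that the hand computation has not dropped a term or mistracked a power of $v^{1/2}$.
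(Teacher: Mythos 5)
Your argument for the first identity is correct and is in substance the paper's own proof: writing the top rung as a crossroad plus $\tfrac1{[2]}$ times its turnback smoothing and then re-expanding the crossroad the other way is exactly one application of \eqref{rel:IH}, after which the trigon relation \eqref{rel:trigon} kills the flipped-rung term and the bigon relation \eqref{rel:bigon} turns the cap term into the product of the two arcs with the signs cancelling as you say.

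For the second identity, however, what you give is a plan rather than a proof, and the mechanism you describe does not match the actual configuration. The triangle $PA_2A_4$ has two type~$1$ edges at the special point, a type~$1$ opposite side, and type~$2$ legs at $A_2$ and $A_4$; this shape is \emph{not} among the clasped relations \eqref{rel:bdry-tri}--\eqref{rel:bdry-monogon}, and indeed a triangle of exactly this shape (namely $PA_1A_2$ with its two type~$2$ legs) survives intact into the right-hand side, so it cannot ``collapse, producing the factors $\tfrac1{[2]}$'' as claimed. There is also no triangle $PA_1A_5$ in the diagram before the crossings are resolved. The reduction genuinely requires first flipping one of the short type~$2$ legs (e.g.\ $A_5$--$m_5$) via \eqref{rel:IH} so that the first identity of the lemma becomes applicable, and then resolving the remaining crossing --- this is the route the paper takes --- whereas your plan defers all of this to unspecified ``coefficient and elevation bookkeeping.'' Finally, your safeguard is logically invalid: confluence guarantees that each element has a unique normal form, but agreement of endpoint gradings does not force two normal forms to coincide --- distinct basis webs can have the same endpoint grading. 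To certify the identity you must actually compute both normal forms and check that they are equal, and that computation is precisely the step you have not carried out.
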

\begin{proof}
    The first is obtained by replacing the horizontal rung with a vertical one by using \eqref{rel:IH}.
    In the left-hand side of the second equation, we replace a short edge of type~$2$ by using \eqref{rel:IH}, and resolve the left crossing.
    Then, one can apply the first equality to obtain the second equality. 
\end{proof}

If a lift $\tilde{G}$ of a tree-type elementary web $G$ only has internal crossings appearing in the arborization relations, then any tangled tree $\tilde{G}'$ related to $\tilde{G}$ by crossing changes is also a lift of $G$.
Hence we expect the following.
\begin{conj}\label{conj:unique-lift}
    Let $\tilde{G}$ and $\tilde{G}'$ be any tangled tree diagrams such that $\tilde{G}$ is related to $\tilde{G}'$ by crossing changes.
    If $\tilde{G}$ is a lift of a tree-type elementary web $G\in\Tree{\Sigma}$, then $\tilde{G}'$ is also a lift of $G$.
\end{conj}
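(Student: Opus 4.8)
The plan is to isolate the statement to a single crossing and show that a crossing change acts on the class in $\Skein{\Sigma}$ by a power of $v^{1/2}$. Being a lift of $G$ combines three requirements: that $\tilde{G}$ be a tangled tree, that $[\tilde{G}]=v^{k/2}G$ in $\Skein{\Sigma}$ for some $k$, and that every rung of $\tilde{G}$ be adjacent to a type-$1$ triangle as in \cref{def:elementary-web}. A crossing change alters only the over/under datum at internal crossings (elevation changes at a special point are governed directly by the clasped relations \eqref{rel:bdry-cross} and contribute outright powers of $v^{1/2}$), so it preserves the underlying abstract uni-trivalent graph together with its rungs and their adjacencies. Hence the first and third requirements pass automatically from $\tilde{G}$ to $\tilde{G}'$, and the whole content is the middle one. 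Inducting on the number of crossings at which $\tilde{G}$ and $\tilde{G}'$ differ, it suffices to prove the following \textbf{Key Lemma}: if $\tilde{G}$ is a lift of $G\in\Tree{\Sigma}$ and $\tilde{G}'$ differs from $\tilde{G}$ at a single internal crossing, then $[\tilde{G}']=v^{a}[\tilde{G}]$ for some $a\in\tfrac12\bZ$.

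To prove the Key Lemma I would match the chosen crossing, using planar isotopy and the tree structure around it, with the left-hand side of one of the arborization relations in \cref{lem:arborization}; applying that relation rewrites the crossing as a single flat tree---necessarily the lift's underlying flat web $G$---times a monomial $v^{\pm 1/2}$. The two over/under choices at the crossing correspond to an arborization relation and its mirror, which differ exactly by $v^{1/2}\leftrightarrow v^{-1/2}$, so the ratio of the two resulting classes is a power of $v$. Concretely, resolving the crossing through the appropriate internal relation among \eqref{rel:ss-cross}, \eqref{rel:ws-cross}, \eqref{rel:sw-cross}, \eqref{rel:ww-cross} produces a tree-preserving smoothing, a cycle-creating smoothing, and (in the equal- and mixed-type cases) a term carrying an extra rung; the cycle-creating smoothing bounds an elliptic face and vanishes under the reduction rules of \cref{def:reduction} (equivalently by \cref{lem:face-vanishing}), while the extra rung is absorbed by the adjacent type-$1$ triangle supplied by the lift condition, and in this combination the $[2]^{-1}$ denominators of \eqref{rel:ss-cross} cancel, leaving the promised monomial coefficient. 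When every crossing of $\tilde{G}$ already sits in one of the local models of \cref{lem:arborization}, this is precisely the observation recorded just before the conjecture.

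The main obstacle is the vanishing of the cycle-creating smoothing in full generality. For a transverse crossing of two edges of an abstract tree, one local smoothing keeps the graph a tree while the other produces a unique cycle, and the argument above requires that cycle to be contractible so that it reduces through elliptic faces. On a surface of positive genus or with several boundary components this cycle can a priori be an essential loop, and then the bad term survives and a single crossing change can change the class by more than a scalar. What must be shown is that a tangled-tree lift of an \emph{elementary} web never realizes such an essential cycle after smoothing---that elementariness (all rungs essential, indecomposability, and the associated crossroad web lying in $\Bweb{\Sigma}$ by \cref{thm:basis-web}) forces every lift to be arborizable, with all crossings isotopic to the models of \cref{lem:arborization}. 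I expect this arborizability to be the crux: one wants a normal-form procedure that, using only planar isotopy and \cref{lem:arborization}, drives every crossing of a lift into triangle-adjacent standard position while keeping the class of the form $v^{k/2}G$, and proving that such a procedure terminates without ever creating an essential cycle is the step that remains open and keeps the statement conjectural.
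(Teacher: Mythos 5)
The statement you were asked to prove is labeled as a conjecture in the paper, and the paper offers no proof of it; the only supporting text is the remark immediately preceding it, which observes exactly the special case you isolate: if a lift $\tilde{G}$ only has internal crossings of the local forms appearing in the arborization relations of \cref{lem:arborization}, then crossing changes preserve the property of being a lift. Your proposal is therefore not in competition with a proof in the paper, and, to your credit, you do not claim to have closed the argument --- you explicitly flag the remaining step as open.

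Your analysis of where the difficulty lies is accurate and matches the paper's stance. The reduction to a single crossing change is sound: a crossing change preserves the underlying uni-trivalent graph, its rungs, and their adjacency to type-$1$ triangles, so only the condition $[\tilde{G}']=v^{k'/2}G$ is at stake. Your Key Lemma is exactly what the paper's remark establishes in the arborizable case, and the crux you identify --- that for a general tangled-tree lift of an elementary web one must rule out the cycle-creating smoothing surviving as an essential loop (or otherwise failing to die through elliptic faces), i.e.\ one must prove that elementariness forces arborizability of every crossing --- is precisely the content that keeps the statement conjectural. Two small cautions on the partial argument: first, not every pair of arborization relations in \cref{lem:arborization} differs by $v^{1/2}\leftrightarrow v^{-1/2}$ (several have coefficient $1$ on both sides), so the ``mirror pair'' heuristic should be stated per relation rather than uniformly; second, the cancellation of the $[2]^{-1}$ denominators when the extra rung is absorbed by the adjacent triangle is asserted but would need to be checked case by case against \eqref{rel:ss-cross}--\eqref{rel:ww-cross} and \cref{lem:face-vanishing}. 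Neither caution affects your main conclusion: the statement remains a conjecture, and your account of why is consistent with the paper.
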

We remark that the above conjecture is trivial in the classical case $v=1$.
We believe that this conjecture is closely related to \cref{conj:tree-variable}. 

The following notion will be related to the clusters.
\begin{dfn}[web clusters]\label{def:web-cluster}
    A web cluster of $\Skein{\Sigma}$ is a $v$-commutative subset of $\Eweb{\Sigma}$ with cardinality $2\# e(\Delta)+2\# t(\Delta)$, see \cref{subsec:cluster_sp4}.
    We denote the set of web clusters by $\Cweb{\Sigma}$.
\end{dfn}

\begin{dfn}\label{def:web-exchange}
	For two elementary webs $G_{1},G_{2}\in\Eweb{\Sigma}$ contained in a common web cluster, define an integer $\Pi(G_1,G_2)\in\bZ$ by
	\[
		G_{1}G_{2}=q^{\Pi(G_{1},G_{2})}G_{2}G_{1}.
	\]
\end{dfn}

Below, we begin to investigate the cluster nature of our skein algebra $\Skein{\Sigma}$ by discussing the web clusters of triangles.

\subsection{\texorpdfstring{$\mathfrak{sp}_4$}{sp4}-webs in a triangle}
Let $T$ be a triangle, \emph{i.e.} a disk with three special points. In some of the figures below, each special point is ``stretched'' and shown by a thick black segment (like an \emph{external clasp} of Kuperberg). 

\begin{prop}\label{prop:Bweb-T}
    $\Bweb{T}$ consists of the $\mathfrak{sp}_4$-webs represented by the following diagram (and its rotations and reflections along the vertical axis):
    
    \begin{center}
            \begin{tikzpicture}[scale=.08]
                \coordinate (S11) at ($(135:30)+(45:15)$);
                \coordinate (S12) at ($(135:30)+(45:-20)$);
                \coordinate (S21) at ($(-90:30)+(180:20)$);
                \coordinate (S22) at ($(-90:30)+(180:-20)$);
                \coordinate (S31) at ($(45:30)+(-45:20)$);
                \coordinate (S32) at ($(45:30)+(-45:-15)$);
                \foreach \i [evaluate=\i as \j using int(mod{(\i,3)}+1)] in {1,2,3}
                    {
                        \bdryline{(S\i2)}{(S\j1)}{2cm}
                        \draw[line width=.2cm] (S\i1) -- (S\i2);
                        \fill[white] (S11) -- (S12) -- (S21) -- (S22) -- (S31) -- (S32);
                    }
                \begin{scope}
                    \clip (S11) -- (S12) -- (S21) -- (S22) -- (S31) -- (S32);
                    \draw[wline] ($(S11)!.5!(S12)$) -- +(-45:20);
                    \draw[webline] ($(S11)!.4!(S12)+(-45:20)$) -- ($(S11)!.4!(S12)+(-45:30)$);
                    \draw[webline] ($(S11)!.7!(S12)$) -- +(-45:30);
                    \draw[wline] ($(S21)!.5!(S22)$) -- +(90:30);
                    \draw[webline] ($(S31)!.4!(S32)$) -- +(-135:30);
                    \draw[webline, rounded corners] ($(S31)!.6!(S32)$) -- ++(-135:15) -- ++(135:10) --  ++(-135:15);
                    \draw[wline] ($(S11)!.06!(S12)$) -- ($(S32)!.06!(S31)$);
                    \draw[webline] ($(S11)!.3!(S12)$) -- ($(S32)!.3!(S31)$);
                    \draw[wline] ($(S21)!.2!(S22)$) -- ($(S12)!.2!(S11)$);
                    \draw[webline] ($(S21)!.06!(S22)$) -- ($(S12)!.06!(S11)$);
                    \draw[webline] ($(S31)!.2!(S32)$) -- ($(S22)!.2!(S21)$);
                    \draw[wline] ($(S31)!.06!(S32)$) -- ($(S22)!.06!(S21)$);
                \end{scope}
                \begin{scope}[yshift=5cm]
                    \begin{scope}[rotate=-135]
                        \fill[pink] (0,20) -- (20,0) -- (0,0) -- cycle;
                        \draw[wlineblack] (0,20) -- (20,0);
                        \draw (0,20) -- (0,0) -- (20,0);
                    \end{scope}
                    \begin{scope}[rotate=135, xshift=3cm]
                        \fill[pink] (0,10) -- (10,0) -- (0,0) -- cycle;
                        \draw[wlineblack] (0,10) -- (10,0);
                        \draw (0,10) -- (0,0) -- (10,0);
                    \end{scope}    
                \end{scope}
                \node[scale=0.8] at ($(S11)!.5!(S32)$) [below=17pt]{$k_1$};
                \node[scale=0.8] at ($(S11)!.5!(S32)$) [below=2pt]{$l_1$};
                \node[scale=0.8] at ($(S12)!.5!(S21)$) [right=5pt]{$k_2$};
                \node[scale=0.8] at ($(S12)!.5!(S21)$) [right=20pt]{$l_2$};
                \node[scale=0.8] at ($(S22)!.5!(S31)$) [left=5pt]{$l_3$};
                \node[scale=0.8] at ($(S22)!.5!(S31)$) [left=20pt]{$k_3$};
                \node[scale=0.8] at (0,-4) {$n_1+n_2$};
                \node[scale=0.8] at (-6,7) {$n_1$};
        \end{tikzpicture}
    \end{center}
    where $k_1,k_2,k_3,l_1,l_2,l_3,n_1,n_2\in\bZ_{\geq 0}$.
    Here an edge with a positive integer $k$ represents the $k$-parallelization of the edge, and a triangle with $n$ represents the crossroad web defined by
    \[
        \mathord{
            \ \tikz[baseline=-.6ex, scale=.08]{
                \draw[wline] (0,0) -- (-90:5);
                \draw[webline] (0,0) -- (45:15);
                \draw[webline] (0,0) -- (135:15);
                \fill[pink] (90:15) -- (15,0) -- (-15,0) -- cycle;
                \draw[wlineblack] (-15,0) -- (15,0);
                \draw (-15,0) -- (90:15) -- (15,0);
                \node[scale=.8] at (0,5) {$n$};
        \ }
        }
        \quad :=
        \mathord{
            \ \tikz[baseline=-.6ex, scale=.08]{
                \begin{scope}[xshift=0cm]
                    \draw[wline] (0,0) -- (-90:5);
                    \draw[webline] (0,0) -- (45:15);
                    \draw[webline] (0,0) -- (135:15);    
                \end{scope}
                \begin{scope}[xshift=4cm]
                    \draw[wline] (0,0) -- (-90:5);
                    \draw[webline] (0,0) -- (45:15);
                    \draw[webline] (0,0) -- (135:15);    
                \end{scope}
                \begin{scope}[xshift=15cm]
                    \draw[wline] (0,0) -- (-90:5);
                    \draw[webline] (0,0) -- (45:15);
                    \draw[webline] (0,0) -- (135:15);    
                \end{scope}
                \draw[fill=pink] ($(0,0)!.2!(45:15)$) circle [radius=20pt];
                \draw[fill=pink] ($(0,0)!.7!(45:15)$) circle [radius=20pt];
                \draw[fill=pink] ($(0,0)!.5!(135:15)+(15,0)$) circle [radius=20pt];
                \node[scale=.8] at (0,10) [red]{$\cdots$};
                \node[scale=.8] at (20,10) [red]{$\cdots$};
                \node[scale=.8] at (10,-3) [red]{$\cdots$};
                \node[scale=.8] at (-3,12) [rotate=-90, yscale=4]{$\{$};
                \node[scale=.8] at (18,12) [rotate=-90, yscale=4]{$\{$};
                \node[scale=.8] at (8,-7) [rotate=90, yscale=4]{$\{$};
                \node[scale=.8] at (-3,12) [above]{$n$};
                \node[scale=.8] at (18,12) [above]{$n$};
                \node[scale=.8] at (8,-7) [below]{$n$};
        \ }
        }.
    \]
\end{prop}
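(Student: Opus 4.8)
The plan is to identify $\Bweb{T}$ with the irreducible diagrams furnished by the confluence theorem and then classify those diagrams directly by a combinatorial curvature argument on the triangle. By \cref{def:basis-web} and \cref{thm:basis-web}, a basis web on $T$ is precisely a flat crossroad web carrying no \emph{elliptic face}, where the elliptic faces are those bounded by the left-hand sides of the reduction rules $S_1,\dots,S_{17}$ and the clasped rules $B_1,\dots,B_{12}$ of \cref{def:reduction}. Since $T$ is a disk and the web is flat (no internal crossings), it suffices to enumerate the planar crossroad webs $w$ embedded in $T$, with legs ending on the three external clasps, that contain none of the forbidden interior faces (circle, the decorated bigons, triangle, and square faces of $S_1$–$S_{17}$) and none of the forbidden boundary faces at any clasp ($B_1$–$B_{12}$). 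First I would record the rigid local constraints: at each trivalent vertex exactly one incident edge is of type~$2$; since $w$ is a crossroad web it has \emph{no rungs}, so every type-$2$ edge is incident to a clasp; and the clasped rules eliminate every small face touching a special point, so that only ``corner'' configurations of the stated form may survive near each clasp.

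Next I would run an Euler-characteristic argument. Viewing $w$ as a graph in the disk with its faces, assign to each interior face a combinatorial curvature via the usual angle-defect formula, so that the total curvature equals that of the disk and is concentrated at the three boundary corners. The non-elliptic hypothesis bounds the number of sides of every interior face from below (no mono-, bi-, tri-, or square-gons of the forbidden types), which is exactly the role the crossroad plays, following Kuperberg's treatment of the $B_2$ spider \cite{Kuperberg96}. This makes every interior face have non-positive curvature, forcing all curvature to the three corners. Consequently the interior of $w$ contains no essential interior vertex except those organized into the corner structures: between each pair of sides $w$ must reduce to a parallel family of type-$1$ and type-$2$ edges. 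Cutting along such a parallel family and inducting on the total number of edges then reduces any candidate $w$ to one supported near the three corners.

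Finally I would carry out the local classification at the corners. Using the clasped relations together with the crossroad expansion (the defining identity of a crossroad and $S_8$), one checks that the only non-elliptic ways of attaching the parallel families to a clasp are the single-/double-line patterns displayed, and that a corner carrying an $n$-fold stack of the trivalent ``triangle'' vertex is exactly the crossroad web labeled by $n$ in the statement. Bookkeeping the multiplicities of the three parallel families yields the parameters $k_1,k_2,k_3,l_1,l_2,l_3$, while the two corner stacks yield $n_1$ and $n_2$; the clause about rotations and the vertical reflection accounts for the choice of which corner carries the $n_1+n_2$ stack. The converse inclusion — that each displayed diagram is genuinely irreducible for all non-negative integer parameters — is then verified by inspecting its faces directly against the left-hand sides in \cref{def:reduction}.

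I expect the main obstacle to be the curvature step combined with the exhaustive corner analysis: one must rule out exotic non-elliptic interior faces, in particular the mixed type-$1$/type-$2$ configurations that are not immediately forbidden but that \eqref{rel:IH} can reorganize. This is the $\mathfrak{sp}_4$-analogue of Kuperberg's non-elliptic classification for the $B_2$ spider and is the technically delicate part; the remaining steps parallel the $\mathfrak{sl}_3$-case treated in \cite{IYsl3}.
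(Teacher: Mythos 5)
Your plan is essentially the paper's proof: the paper also runs Kuperberg's Gauss--Bonnet/angular-defect computation on the flat crossroad web (phrased via the dual graph rather than curvature on faces, which is the same thing), concludes that all positive defect must sit at the three faces along the boundary of the triangle while interior and clasp-adjacent faces contribute non-positively, and then classifies the admissible boundary configurations to assemble the displayed diagram. The only cosmetic difference is that the paper tabulates the defects $3\pi/2-\pi(n+n_1)/4$ explicitly and reads off the three possible distributions summing to $2\pi$, whereas you phrase the final assembly as cutting along parallel families and inducting; both land on the same corner analysis.
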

\begin{proof}
    We use a formula on the angular defect of a flat crossroad web developed in \cite{Kuperberg96}.
    For a trivalent vertex and a crossroad, we assign angles between their edges by 
    \[
        \mathord{
            \ \tikz[baseline=-.6ex, scale=.1]{
                \coordinate (C) at (0,0);
                \coordinate (L) at (135:8);
                \coordinate (R) at (45:8);
                \coordinate (S) at (-90:5);
                \pic [draw,thick,"\scriptsize $\pi/2$", angle eccentricity=1.5, angle radius=.3cm] {angle = R--C--L};
                \pic [draw,thick,"\scriptsize $3\pi/4$", angle eccentricity=2.5, angle radius=.3cm] {angle = L--C--S};
                \draw[wline] (0,0) -- (-90:5);
                \draw[webline] (0,0) -- (45:8);
                \draw[webline] (0,0) -- (135:8);
                \draw[weblineblack] ($(-90:5)+(-5,0)$) -- +(10,0);
        \ }
        } \text{ and }
        \mathord{
            \ \tikz[baseline=-.6ex, scale=.1]{
                \coordinate (C) at (0,0);
                \coordinate (NW) at (135:6);
                \coordinate (NE) at (45:6);
                \coordinate (SE) at (-45:6);
                \coordinate (SW) at (-135:6);
                \pic [draw,thick,"\scriptsize $\pi/2$", angle eccentricity=1.5, angle radius=.3cm] {angle = NE--C--NW};
                \draw[webline] (C) -- (NW);
                \draw[webline] (C) -- (NE);
                \draw[webline] (C) -- (SW);
                \draw[webline] (C) -- (SE);
                \draw[fill=pink] (C) circle [radius=20pt];
        \ }
        }.
    \]
    Let $G$ be a connected component of a basis web in a triangle whose ends are located on the three external clasps.
    We consider the dual graph $G^{\ast}$ and assign angles, as shown in green:
    \[
        \mathord{
            \ \tikz[baseline=-.6ex, scale=.1]{
                \coordinate (C) at (0,0);
                \coordinate (NW) at (135:8);
                \coordinate (NE) at (45:8);
                \coordinate (S) at (-90:5);
                \coordinate (N) at ($(90:8)+(0,-3)$);
                \coordinate (SW) at ($(-180:8)+(0,-3)$);
                \coordinate (SE) at ($(0:8)+(0,-3)$);
                \draw[wline] (C) -- (S);
                \draw[webline] (C) -- (NE);
                \draw[webline] (C) -- (NW);
                \draw[weblinegreen] (SW) -- (N) -- (SE);
                \draw[wlinegreen] (SW) -- (SE);
                \pic [draw,thick,"\scriptsize $\pi/2$", angle eccentricity=-1, angle radius=.2cm] {angle = SW--N--SE};
                \pic [draw,thick,"\scriptsize $\pi/4$", angle eccentricity=-1.5, angle radius=.2cm] {angle = SE--SW--N};
                \draw[weblineblack] ($(-90:5)+(-5,0)$) -- +(10,0);
        \ }
        } \text{ and }
        \mathord{
            \ \tikz[baseline=-.6ex, scale=.1]{
                \coordinate (C) at (0,0);
                \coordinate (NW) at (135:8);
                \coordinate (NE) at (45:8);
                \coordinate (N) at (90:8);
                \coordinate (SW) at (-135:8);
                \coordinate (SE) at (-45:8);
                \coordinate (S) at (-90:8);
                \coordinate (E) at (180:8);
                \coordinate (W) at (0:8);
                \draw[webline] (C) -- (NW);
                \draw[webline] (C) -- (NE);
                \draw[webline] (C) -- (SW);
                \draw[webline] (C) -- (SE);
                \draw[weblinegreen] (N) -- (W) -- (S) -- (E) -- cycle;
                \pic [draw,thick,"\scriptsize $\pi/2$", angle eccentricity=-1, angle radius=.2cm] {angle = E--N--W};
                \draw[fill=pink] (C) circle [radius=20pt];
        \ }
        }.
    \]
    The angular defect of $G$ is defined by
    \[
        \sum_{v^{\mathrm{int}}}(2\pi-\theta_{v^{\mathrm{int}}})+\sum_{v^{\mathrm{ex}}}(\pi-\theta_{v^{\mathrm{ex}}}).
    \]
    The summation takes over the set of all internal vertices $v^{\mathrm{int}}$ and external vertices $v^{\mathrm{ex}}$ of $G^{\ast}$, and $\theta_{v^{\mathrm{ex}}}$ (resp.~$\theta_{v^{\mathrm{ex}}}$) is the summation of angles at $v^{\mathrm{int}}$ (resp.~$v^{\mathrm{ex}}$). 
    If $v^{\mathrm{int}}$ or $v^{\mathrm{ex}}$ lies in an elliptic face, then the corresponding summand becomes negative. 
    A non-elliptic face of $G$ corresponds to an interior vertex of $G^{\ast}$ which has $n>4$ edges of type $1$, or an exterior vertex of $G^{\ast}$ which has $n_1$ edges of type $1$ and $n_2$ edges of type $2$.
    Then, $n_1$ and $n_2$ satisfy either
    \begin{enumerate}
        \item $n_1\geq 3$ and $n_2=0$, 
        \item $n_1\geq 3$ and $n_2=1$, or
        \item $n_1\geq 2$ and $n_2=2$
    \end{enumerate}
    for the vertex bounded by an external clasp, and either
    \begin{enumerate}\setcounter{enumi}{3}
        \item $n_1\geq 2$ and $n_2=0$,
        \item $n_1\geq 1$ and $n_2=1$, or
        \item $n_1\geq 1$ and $n_2=2$
    \end{enumerate}
    for the vertex bounded by a boundary interval.
    By the Gauss-Bonnet theorem, the total angular defect should be equal to $2\pi$.
    Hence, one obtains the following formula:
    \begin{align*}
        &2\pi=\sum_{n\geq 4}v_{n}^{\mathrm{int}}\left(2\pi-\frac{\pi}{2}n\right)+\sum_{n_1,n_2}v_{n_1,n_2}^{\mathrm{ex}}\left(\frac{3\pi}{2}-\frac{\pi}{4}(n+n_1)\right)+\sum_{n_1,n_2}v_{n_1,n_2}^{\mathrm{cl}}\left(\frac{3\pi}{2}-\frac{\pi}{4}(n+n_1)\right),\\
        &\sum_{n}v_n^{\mathrm{ex}}=3,
    \end{align*}
    where $v_{n}^{\mathrm{int}}$, $v_{n_1,n_2}^{\mathrm{ex}}$, and $v_{n_1,n_2}^{\mathrm{cl}}$ are the number of interior vertices with valency $n$, exterior vertices with valency $n=n_1+n_2$ bounded by boundary intervals, and exterior vertices with valency $n=n_1+n_2$ bounded by external clasps, respectively.
    In the right-hand side of the first formula, the first term related to the interior vertices is non-negative, and the summand $3\pi/2-\pi(n+n_1)/4$ in the second and the third terms are shown in \cref{tbl:deficiency}.
    The angular defect at an internal vertex belongs to $\{0,-\pi/2,-\pi,\ldots\}$.
    \begin{table}
        \begin{center}
            \begin{tabular}{|| c || c | c | c | c | c | c ||} 
                \hline
                $n_1$ & $1$ & $2$ & $3$ & $4$ & $5$ & $\cdots$\\ 
                \hline\hline
                (1) $n_2=0$ & -- & -- & $0$ & $-\pi/2$ & $-\pi$ & $\cdots$ \\ 
                \hline
                (2) $n_2=1$ & -- & -- & $-\pi/4$ & $-3\pi/4$ & $-5\pi/4$ & $\cdots$ \\
                \hline
                (3) $n_2=2$ & -- & $0$ & $-\pi/2$ & $-\pi$ & $-3\pi/2$ & $\cdots$ \\
                \hline\hline
                (4) $n_2=0$ & -- & \cellcolor{yellow} $\pi/2$ & $0$ & $-\pi/2$ & $-\pi$ & $\cdots$\\
                \hline
                (5) $n_2=1$ & \cellcolor{yellow} $3\pi/4$ & \cellcolor{yellow} $\pi/4$ & $-\pi/4$ & $-3\pi/4$ & $-5\pi/4$ & $\cdots$ \\ 
                \hline
                (6) $n_2=2$ & \cellcolor{yellow} $\pi/2$ & $0$ & $-\pi/2$ & $-\pi$ & $-3\pi/2$ & $\cdots$ \\ 
                \hline
            \end{tabular}
        \end{center}
        \caption{Entries are angular defect $3\pi/2-\pi(n+n_1)/4$ at external vertices where $n=n_1+n_2$. Yellow-colored cells have positive defects.}
        \label{tbl:deficiency}
    \end{table}
    Therefore the possibility of the total angular defect at each external clasps, boundary edges, and the interior is one of the following patterns (up to rotations and reflections):
    \begin{gather*}
        \mathord{
            \ \tikz[baseline=-.6ex, scale=.1]{
                \coordinate (S11) at ($(30:20)+(120:-10)$);
                \coordinate (S12) at ($(30:20)+(120:10)$);
                \coordinate (S21) at ($(150:20)+(60:10)$);
                \coordinate (S22) at ($(150:20)+(60:-10)$);
                \coordinate (S31) at ($(270:20)+(0:-10)$);
                \coordinate (S32) at ($(270:20)+(0:10)$);
                \foreach \i [evaluate=\i as \j using int(mod{(\i,3)}+1)] in {1,2,3}
                    {
                        \bdryline{(S\i2)}{(S\j1)}{2cm}
                        \draw[line width=.2cm] (S\i1) -- (S\i2);
                        \fill[white] (S11) -- (S12) -- (S21) -- (S22) -- (S31) -- (S32);
                    }
                \begin{scope}
                    \clip (S11) -- (S12) -- (S21) -- (S22) -- (S31) -- (S32);
                    \draw[webline] ($(S11)!.2!(S12)$) -- +(210:12);
                    \draw[webline] ($(S11)!.8!(S12)$) -- +(210:12);
                    \draw[wline] ($(S21)!.2!(S22)$) -- +(-30:12);
                    \draw[webline] ($(S21)!.8!(S22)$) -- +(-30:12);
                    \draw[wline] ($(S31)!.2!(S32)$) -- +(90:12);
                    \draw[wline] ($(S31)!.8!(S32)$) -- +(90:12);
                \end{scope}
                \draw[fill=pink] (0,0) circle [radius=10cm]; 
                \node at ($(S11)!.5!(S12)$) [below left]{$0$};
                \node at ($(S21)!.5!(S22)$) [below right]{$-\frac{\pi}{4}$};
                \node at ($(S31)!.5!(S32)$) [above]{$0$};
                \node at ($(S11)!.5!(S32)$) [above left]{$\frac{3\pi}{4}$};
                \node at ($(S12)!.5!(S21)$) [below]{$\frac{3\pi}{4}$};
                \node at ($(S22)!.5!(S31)$) [above right]{$\frac{3\pi}{4}$};
                \node at (0,0) {$0$};
        \ }
        },
        \mathord{
            \ \tikz[baseline=-.6ex, scale=.1]{
                \coordinate (S11) at ($(30:20)+(120:-10)$);
                \coordinate (S12) at ($(30:20)+(120:10)$);
                \coordinate (S21) at ($(150:20)+(60:10)$);
                \coordinate (S22) at ($(150:20)+(60:-10)$);
                \coordinate (S31) at ($(270:20)+(0:-10)$);
                \coordinate (S32) at ($(270:20)+(0:10)$);
                \foreach \i [evaluate=\i as \j using int(mod{(\i,3)}+1)] in {1,2,3}
                    {
                        \bdryline{(S\i2)}{(S\j1)}{2cm}
                        \draw[line width=.2cm] (S\i1) -- (S\i2);
                        \fill[white] (S11) -- (S12) -- (S21) -- (S22) -- (S31) -- (S32);
                    }
                \begin{scope}
                    \clip (S11) -- (S12) -- (S21) -- (S22) -- (S31) -- (S32);
                    \draw[webline] ($(S11)!.2!(S12)$) -- +(210:12);
                    \draw[webline] ($(S11)!.8!(S12)$) -- +(210:12);
                    \draw[webline] ($(S21)!.2!(S22)$) -- +(-30:12);
                    \draw[webline] ($(S21)!.8!(S22)$) -- +(-30:12);
                    \draw[wline] ($(S31)!.2!(S32)$) -- +(90:12);
                    \draw[wline] ($(S31)!.8!(S32)$) -- +(90:12);
                \end{scope}
                \draw[fill=pink] (0,0) circle [radius=10cm]; 
                \node at ($(S11)!.5!(S12)$) [below left]{$0$};
                \node at ($(S21)!.5!(S22)$) [below right]{$0$};
                \node at ($(S31)!.5!(S32)$) [above]{$0$};
                \node at ($(S11)!.5!(S32)$) [above left]{$\frac{3\pi}{4}$};
                \node at ($(S12)!.5!(S21)$) [below]{$\frac{\pi}{2}$};
                \node at ($(S22)!.5!(S31)$) [above right]{$\frac{3\pi}{4}$};
                \node at (0,0) {$0$};
        \ }
        },
        \mathord{
            \ \tikz[baseline=-.6ex, scale=.1]{
                \coordinate (S11) at ($(30:20)+(120:-10)$);
                \coordinate (S12) at ($(30:20)+(120:10)$);
                \coordinate (S21) at ($(150:20)+(60:10)$);
                \coordinate (S22) at ($(150:20)+(60:-10)$);
                \coordinate (S31) at ($(270:20)+(0:-10)$);
                \coordinate (S32) at ($(270:20)+(0:10)$);
                \foreach \i [evaluate=\i as \j using int(mod{(\i,3)}+1)] in {1,2,3}
                    {
                        \bdryline{(S\i2)}{(S\j1)}{2cm}
                        \draw[line width=.2cm] (S\i1) -- (S\i2);
                        \fill[white] (S11) -- (S12) -- (S21) -- (S22) -- (S31) -- (S32);
                    }
                \begin{scope}
                    \clip (S11) -- (S12) -- (S21) -- (S22) -- (S31) -- (S32);
                    \draw[webline] ($(S11)!.2!(S12)$) -- +(210:12);
                    \draw[webline] ($(S11)!.8!(S12)$) -- +(210:12);
                    \draw[wline] ($(S21)!.2!(S22)$) -- +(-30:12);
                    \draw[wline] ($(S21)!.8!(S22)$) -- +(-30:12);
                    \draw[wline] ($(S31)!.2!(S32)$) -- +(90:12);
                    \draw[wline] ($(S31)!.8!(S32)$) -- +(90:12);
                \end{scope}
                \draw[fill=pink] (0,0) circle [radius=10cm]; 
                \node at ($(S11)!.5!(S12)$) [below left]{$0$};
                \node at ($(S21)!.5!(S22)$) [below right]{$0$};
                \node at ($(S31)!.5!(S32)$) [above]{$0$};
                \node at ($(S11)!.5!(S32)$) [above left]{$\frac{3\pi}{4}$};
                \node at ($(S12)!.5!(S21)$) [below]{$\frac{3\pi}{4}$};
                \node at ($(S22)!.5!(S31)$) [above right]{$\frac{\pi}{2}$};
                \node at (0,0) {$0$};
        \ }
        }.
    \end{gather*}
    The faces of $G$ corresponding to the vertices of $G^{\ast}$ with the above angular defects are
    \[
        \mathord{
            \ \tikz[baseline=-.6ex, scale=.1]{
                \coordinate (C) at (0,0);
                \coordinate (NW) at (135:8);
                \coordinate (NE) at (45:8);
                \coordinate (S) at (-90:5);
                \coordinate (N) at ($(90:8)+(0,-3)$);
                \coordinate (SW) at ($(-180:8)+(0,-3)$);
                \coordinate (SE) at ($(0:8)+(0,-3)$);
                \coordinate (C2) at ([xshift=8cm]C);
                \coordinate (NW2) at ([xshift=8cm]NW);
                \coordinate (NE2) at ([xshift=8cm]NE);
                \coordinate (S2) at ([xshift=8cm]S);
                \coordinate (N2) at ([xshift=8cm]N);
                \coordinate (SW2) at ([xshift=8cm]SW);
                \coordinate (SE2) at ([xshift=8cm]SE);
                \begin{scope}
                    \draw[wline] (C) -- (S);
                    \draw[webline] (C) -- (NE);
                    \draw[webline] (C) -- (NW);
                \end{scope}
                \begin{scope}
                    \draw[wline] (C2) -- (S2);
                    \draw[webline] (C2) -- (NE2);
                    \draw[webline] (C2) -- (NW2);
                \end{scope}
                \draw[fill=pink] ($(C)!.7!(NE)$) circle [radius=20pt];
                \draw[line width=.1cm] ($(-90:5)+(-5,0)$) -- +(18,0);
                \node at ($(S)+(4,0)$)[above]{\scriptsize $0$};
        \ }
        },
        \mathord{
            \ \tikz[baseline=-.6ex, scale=.1]{
                \coordinate (C) at (0,0);
                \coordinate (NW) at (135:8);
                \coordinate (NE) at (45:8);
                \coordinate (S) at (-90:5);
                \coordinate (N) at ($(90:8)+(0,-3)$);
                \coordinate (SW) at ($(-180:8)+(0,-3)$);
                \coordinate (SE) at ($(0:8)+(0,-3)$);
                \coordinate (C2) at ([xshift=8cm]C);
                \coordinate (NW2) at ([xshift=8cm]NW);
                \coordinate (NE2) at ([xshift=8cm]NE);
                \coordinate (S2) at ([xshift=8cm]S);
                \coordinate (N2) at ([xshift=8cm]N);
                \coordinate (SW2) at ([xshift=8cm]SW);
                \coordinate (SE2) at ([xshift=8cm]SE);
                \begin{scope}
                    \draw[webline] (S) -- (N);
                    \draw[webline] (S2) -- (N2);
                    \draw[webline] ($(C)+(-5,0)$) -- ($(C)+(13,0)$);
                \end{scope}
                \draw[fill=pink] (C) circle [radius=20pt];
                \draw[fill=pink] (C2) circle [radius=20pt];
                \draw[line width=.1cm] ($(-90:5)+(-5,0)$) -- +(18,0);
                \node at ($(S)+(4,0)$)[above]{\scriptsize $0$};
        \ }
        },
        \mathord{
            \ \tikz[baseline=-.6ex, scale=.1]{
                \coordinate (C) at (0,0);
                \coordinate (NW) at (135:8);
                \coordinate (NE) at (45:8);
                \coordinate (S) at (-90:5);
                \coordinate (N) at ($(90:8)+(0,-3)$);
                \coordinate (SW) at ($(-180:8)+(0,-3)$);
                \coordinate (SE) at ($(0:8)+(0,-3)$);
                \coordinate (C2) at ([xshift=8cm]C);
                \coordinate (NW2) at ([xshift=8cm]NW);
                \coordinate (NE2) at ([xshift=8cm]NE);
                \coordinate (S2) at ([xshift=8cm]S);
                \coordinate (N2) at ([xshift=8cm]N);
                \coordinate (SW2) at ([xshift=8cm]SW);
                \coordinate (SE2) at ([xshift=8cm]SE);
                \begin{scope}
                    \draw[webline] ($(C)+(-5,0)$) -- (C) -- (N);
                    \draw[wline] (S) -- (C);
                    \draw[webline] ($(C)!.5!(N)+(-5,0)$) -- ($(C)!.5!(N)+(13,0)$);
                    \draw[webline] (S2) -- (N2);
                \end{scope}
                \draw[fill=pink] ($(C)!.5!(N)$) circle [radius=20pt];
                \draw[fill=pink] ($(C2)!.5!(N2)$) circle [radius=20pt];
                \draw[line width=.1cm] ($(-90:5)+(-5,0)$) -- +(18,0);
                \node at ($(S)+(4,0)$)[above]{\scriptsize $-\frac{\pi}{4}$};
        \ }
        },
        \mathord{
            \ \tikz[baseline=-.6ex, scale=.1]{
                \coordinate (C) at (0,0);
                \coordinate (NW) at (135:8);
                \coordinate (NE) at (45:8);
                \coordinate (S) at (-90:5);
                \coordinate (N) at ($(90:8)+(0,-3)$);
                \coordinate (SW) at ($(-180:8)+(0,-3)$);
                \coordinate (SE) at ($(0:8)+(0,-3)$);
                \coordinate (C2) at ([xshift=8cm]C);
                \coordinate (NW2) at ([xshift=8cm]NW);
                \coordinate (NE2) at ([xshift=8cm]NE);
                \coordinate (S2) at ([xshift=8cm]S);
                \coordinate (N2) at ([xshift=8cm]N);
                \coordinate (SW2) at ([xshift=8cm]SW);
                \coordinate (SE2) at ([xshift=8cm]SE);
                \begin{scope}
                    \draw[webline] ($(S)+(-3,0)$) -- (NE);
                    \draw[webline] ($(S2)+(3,0)$) -- (NW2);
                \end{scope}
                \draw[fill=pink] ($(S)!.5!(S2)+(0,8.2)$) circle [radius=20pt];
                \draw[line width=.1cm] ($(S)+(-5,0)$) -- (S);
                \draw[line width=.1cm] ($(S2)+(5,0)$) -- (S2);
                \bdryline{(S)}{(S2)}{2cm}
                \node at ($(S)+(4,0)$)[above]{\scriptsize $\frac{\pi}{2}$};
        \ }
        },
        \mathord{
            \ \tikz[baseline=-.6ex, scale=.1]{
                \coordinate (C) at (0,0);
                \coordinate (NW) at (135:8);
                \coordinate (NE) at (45:8);
                \coordinate (S) at (-90:5);
                \coordinate (N) at ($(90:8)+(0,-3)$);
                \coordinate (SW) at ($(-180:8)+(0,-3)$);
                \coordinate (SE) at ($(0:8)+(0,-3)$);
                \coordinate (C2) at ([xshift=8cm]C);
                \coordinate (NW2) at ([xshift=8cm]NW);
                \coordinate (NE2) at ([xshift=8cm]NE);
                \coordinate (S2) at ([xshift=8cm]S);
                \coordinate (N2) at ([xshift=8cm]N);
                \coordinate (SW2) at ([xshift=8cm]SW);
                \coordinate (SE2) at ([xshift=8cm]SE);
                \begin{scope}
                    \draw[webline] ($(C)+(0,2)$) -- (NW);
                    \draw[webline] ($(C2)+(0,2)$) -- (NE2);
                    \draw[wline] ($(C)+(0,2)$) -- (S);
                    \draw[wline] ($(C2)+(0,2)$) -- (S2);
                    \draw[webline] ($(C)+(0,2)$) -- ($(C2)+(0,2)$);
                \end{scope}
                \draw[line width=.1cm] ($(S)+(-5,0)$) -- ($(S)+(1,0)$);
                \draw[line width=.1cm] ($(S2)+(5,0)$) -- ($(S2)+(-1,0)$);
                \bdryline{($(S)+(1,0)$)}{($(S2)+(-1,0)$)}{2cm}
                \node at ($(S)!.5!(S2)$)[above]{\scriptsize $\frac{\pi}{2}$};
        \ }
        },
        \mathord{
            \ \tikz[baseline=-.6ex, scale=.1]{
                \coordinate (C) at (0,0);
                \coordinate (NW) at (135:8);
                \coordinate (NE) at (45:8);
                \coordinate (S) at (-90:5);
                \coordinate (N) at ($(90:8)+(0,-3)$);
                \coordinate (SW) at ($(-180:8)+(0,-3)$);
                \coordinate (SE) at ($(0:8)+(0,-3)$);
                \coordinate (C2) at ([xshift=8cm]C);
                \coordinate (NW2) at ([xshift=8cm]NW);
                \coordinate (NE2) at ([xshift=8cm]NE);
                \coordinate (S2) at ([xshift=8cm]S);
                \coordinate (N2) at ([xshift=8cm]N);
                \coordinate (SW2) at ([xshift=8cm]SW);
                \coordinate (SE2) at ([xshift=8cm]SE);
                \coordinate (CC) at ($(S)!.5!(S2)+(0,7)$);
                \coordinate (NCC) at ($(S)!.5!(S2)+(0,10)$);
                \begin{scope}
                    \draw[webline] ($(S)+(-3,0)$) -- (CC);
                    \draw[wline] ($(S2)+(3,0)$) -- (CC);
                    \draw[webline] (CC) -- (NCC);
                \end{scope}
                \draw[line width=.1cm] ($(S)+(-5,0)$) -- (S);
                \draw[line width=.1cm] ($(S2)+(5,0)$) -- (S2);
                \bdryline{(S)}{(S2)}{2cm}
                \node at ($(S)+(4,0)$)[above]{\scriptsize $\frac{3\pi}{4}$};
        \ }
        },
        \mathord{
            \ \tikz[baseline=-.6ex, scale=.1]{
                \draw[webline] (-3,-5) -- +(0,10);
                \draw[webline] (3,-5) -- +(0,10);
                \draw[webline] (-5,3) -- +(10,0);
                \draw[webline] (-5,-3) -- +(10,0);
                \draw[fill=pink] (3,3) circle [radius=20pt];
                \draw[fill=pink] (-3,3) circle [radius=20pt];
                \draw[fill=pink] (3,-3) circle [radius=20pt];
                \draw[fill=pink] (-3,-3) circle [radius=20pt];
                \node at (0,0){\scriptsize $0$};
        \ }
        }.
    \]
    As a consequence, we obtain the diagrams in the statement.
    The second and the third diagrams are the cases of $n_1=0$ and $n_2=0$, respectively.
\end{proof}

\begin{thm}\label{thm:Eweb-T}
    $\Eweb{T}$ consists of the boundary webs in $\partial_T$ and the following $6$ webs:
    \begin{align*}
        \mathord{
            \ \tikz[baseline=-.6ex, scale=.1]{
                \coordinate (A) at (90:8);
                \coordinate (B) at (210:8);
                \coordinate (C) at (-30:8);
                \coordinate (P) at (0,0);
                \draw[wline] (P) -- (A);
                \draw[webline] (P) -- (B);
                \draw[webline] (P) -- (C);
                \draw[very thick] (A) -- (B) -- (C) -- cycle;
                \filldraw (A) circle [radius=20pt];
                \filldraw (B) circle [radius=20pt];
                \filldraw (C) circle [radius=20pt];
            }\ 
        },
        \mathord{
            \ \tikz[baseline=-.6ex, scale=.1]{
                \coordinate (A) at (90:8);
                \coordinate (B) at (210:8);
                \coordinate (C) at (-30:8);
                \coordinate (P1) at ($(A)!.8!(B)+(3,0)$);
                \coordinate (P2) at ($(A)!.8!(C)+(-3,0)$);
                \coordinate (P) at (0,0);
                \draw[wline] (P1) -- (B);
                \draw[wline] (P2) -- (C);
                \draw[webline] (A) -- (P1) -- (P2) -- (A);
                \draw[very thick] (A) -- (B) -- (C) -- cycle;
                \filldraw (A) circle [radius=20pt];
                \filldraw (B) circle [radius=20pt];
                \filldraw (C) circle [radius=20pt];
            }\ 
        },
        \mathord{
            \ \tikz[baseline=-.6ex, scale=.1, rotate=120]{
                \coordinate (A) at (90:8);
                \coordinate (B) at (210:8);
                \coordinate (C) at (-30:8);
                \coordinate (P) at (0,0);
                \draw[wline] (P) -- (A);
                \draw[webline] (P) -- (B);
                \draw[webline] (P) -- (C);
                \draw[very thick] (A) -- (B) -- (C) -- cycle;
                \filldraw (A) circle [radius=20pt];
                \filldraw (B) circle [radius=20pt];
                \filldraw (C) circle [radius=20pt];
            }\ 
        },
        \mathord{
            \ \tikz[baseline=-.6ex, scale=.1, rotate=120]{
                \coordinate (A) at (90:8);
                \coordinate (B) at (210:8);
                \coordinate (C) at (-30:8);
                \coordinate (P1) at ($(A)!.8!(B)+(3,0)$);
                \coordinate (P2) at ($(A)!.8!(C)+(-3,0)$);
                \coordinate (P) at (0,0);
                \draw[wline] (P1) -- (B);
                \draw[wline] (P2) -- (C);
                \draw[webline] (A) -- (P1) -- (P2) -- (A);
                \draw[very thick] (A) -- (B) -- (C) -- cycle;
                \filldraw (A) circle [radius=20pt];
                \filldraw (B) circle [radius=20pt];
                \filldraw (C) circle [radius=20pt];
            }\ 
        },
        \mathord{
            \ \tikz[baseline=-.6ex, scale=.1, rotate=-120]{
                \coordinate (A) at (90:8);
                \coordinate (B) at (210:8);
                \coordinate (C) at (-30:8);
                \coordinate (P) at (0,0);
                \draw[wline] (P) -- (A);
                \draw[webline] (P) -- (B);
                \draw[webline] (P) -- (C);
                \draw[very thick] (A) -- (B) -- (C) -- cycle;
                \filldraw (A) circle [radius=20pt];
                \filldraw (B) circle [radius=20pt];
                \filldraw (C) circle [radius=20pt];
            }\ 
        },
        \mathord{
            \ \tikz[baseline=-.6ex, scale=.1, rotate=-120]{
                \coordinate (A) at (90:8);
                \coordinate (B) at (210:8);
                \coordinate (C) at (-30:8);
                \coordinate (P1) at ($(A)!.8!(B)+(3,0)$);
                \coordinate (P2) at ($(A)!.8!(C)+(-3,0)$);
                \coordinate (P) at (0,0);
                \draw[wline] (P1) -- (B);
                \draw[wline] (P2) -- (C);
                \draw[webline] (A) -- (P1) -- (P2) -- (A);
                \draw[very thick] (A) -- (B) -- (C) -- cycle;
                \filldraw (A) circle [radius=20pt];
                \filldraw (B) circle [radius=20pt];
                \filldraw (C) circle [radius=20pt];
            }\ 
        },
    \end{align*}
    and a $\bZ_q$-subalgebra generated by these webs coincides with $\bZ_q\Bweb{T}$.
\end{thm}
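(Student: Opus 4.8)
The plan is to exploit the complete normal form for $\Bweb{T}$ established in \cref{prop:Bweb-T} and to sort out which of those basis webs are indecomposable in the sense of \cref{def:elementary-web}. First I would check that each of the twelve candidates — the six boundary arcs making up $\partial_T$ together with the six displayed corner webs — is indeed elementary. For a boundary arc this is immediate: it carries no rungs and equals its own associated crossroad web, so the conditions on essential rungs and on the crossroad web are vacuous or trivial, and it is a basis web since a single arc bounds no elliptic faces. For each of the six corner webs I would verify directly that it has no non-essential rungs, that the crossroad substitution yields a flat crossroad web with no elliptic faces (hence an element of $\Bweb{T}$ by \cref{def:basis-web}), and that it admits no nontrivial factorization into basis webs. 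The last point is clean because the endpoint grading of \cref{def:egrading} is additive under the superposition product, and each corner web has an endpoint-degree vector that cannot be split as a sum of two nonzero degree vectors realized by basis webs (for instance, the $\mathsf{Y}$-web has a single type~$2$ half-edge concentrated at one vertex, which cannot come from a type~$2$ boundary arc joining two vertices).

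Next I would show that no other element of $\Bweb{T}$ is elementary. Writing a general basis web in the form of \cref{prop:Bweb-T} with parameters $k_1,k_2,k_3,l_1,l_2,l_3,n_1,n_2\in\bZ_{\geq0}$, I would read off an explicit factorization into the twelve elementary webs: the $k_i$ and $l_i$ parallel boundary strands peel off as products of type~$2$ and type~$1$ boundary arcs, and the corner gadget of weight $n_1+n_2$ decomposes as a product of $n_1$ copies of the first corner web and $n_2$ copies of the second. Whenever $\sum_i(k_i+l_i)+n_1+n_2>1$ this exhibits at least two nontrivial factors, so the web is decomposable and not elementary; when the sum equals $1$ one recovers exactly the twelve listed webs. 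The arborization relations of \cref{lem:arborization}, together with \cref{lem:face-vanishing}, are precisely what make these factorizations hold in $\Skein{T}$, by rewriting the stacked product of corner webs as the flat gadget up to a power of $v^{1/2}$ and lower-order corrections.

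For the algebra assertion I would prove both inclusions between $\langle\Eweb{T}\rangle_{\bZ_v}$ and $\bZ_v\Bweb{T}$. The inclusion $\langle\Eweb{T}\rangle_{\bZ_v}\subseteq\bZ_v\Bweb{T}$ reduces to checking that a product of basis webs re-expands, through the reduction rules of \cref{def:reduction} and the skein relations, with coefficients in $\bZ_v$ rather than in the larger ring $\cR$. The reverse inclusion is proved by induction: the endpoint grading is additive under multiplication and is preserved by every skein relation, so a basis web of degree $d$ is the leading term of a product of elementary webs whose degrees sum to $d$, the remaining terms all having degree $d$ but being strictly smaller in a secondary order (say the number of crossroads in the associated crossroad web, equivalently the parameter $n_2$). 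Descending induction in this secondary order, with the empty web as base case, then places every basis web in the subalgebra.

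The main obstacle I anticipate is controlling the denominator $1/[2]$. Several of the relevant moves — the very definition of the crossroad, the clasped trigon relation \eqref{rel:bdry-tri}, and the reduction rule $S_8$ — introduce factors of $1/[2]$, yet the claim is that the subalgebra generated over $\bZ_v$ by elementary webs recovers all of $\bZ_v\Bweb{T}$ with no such denominators. Verifying that these $1/[2]$ contributions cancel in each product of elementary webs — equivalently, that the $\bZ_v$-span of basis webs is multiplicatively closed inside the triangle — is the technical heart of the argument, and is where I would focus a careful case analysis over the finitely many local configurations arising when two corner webs, or a corner web and a boundary arc, are superposed.
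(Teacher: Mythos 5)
Your proposal follows essentially the same route as the paper's proof: indecomposability of the six corner webs (and the boundary arcs) is read off from the endpoint degrees at the special points, and every basis web in the normal form of \cref{prop:Bweb-T} is factored as $\prod_{i=1}^{3}(e_i^{(1)})^{k_i}(e_i^{(2)})^{l_i}x^{n_1}y^{n_2}$ using the arborization relations of \cref{lem:arborization}. The only substantive difference is that the difficulties you anticipate --- the descending induction in a secondary order and the cancellation of $1/[2]$ denominators --- do not actually arise, because the arborization relations (together with \cref{lem:face-vanishing}) yield an \emph{exact monomial identity}: the superposition product of the listed webs equals the basis web up to a single power of $v^{1/2}$, with no lower-order correction terms and hence no denominators to control.
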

\begin{proof}
    These webs are indecomposable because of the degrees at special points.
    One can see that a basis web decomposes into the above pieces by using the arborization relations in \cref{lem:arborization}.
    In fact, a flat crossroad web represented by the diagram in \cref{prop:Bweb-T} is decomposed into
    \[
        \prod_{i=1}^{3}(e_{i}^{(1)})^{k_i}(e_{i}^{(2)})^{l_i}x^{n_1}y^{n_2}
    \]
    up to multiplication by $v$, where $e_{i}^{(1)}$ (resp.~$e_{i}^{(2)}$) are arcs corresponding to labels $k_{i}$ (resp.~$l_{i}$) and
    \begin{align*}
        x
        =
        \mathord{
            \ \tikz[baseline=-.6ex, scale=.08, rotate=-60]{
                \coordinate (A) at (90:8);
                \coordinate (B) at (210:8);
                \coordinate (C) at (-30:8);
                \coordinate (P1) at ($(A)!.8!(B)+(3,0)$);
                \coordinate (P2) at ($(A)!.8!(C)+(-3,0)$);
                \coordinate (P) at (0,0);
                \draw[wline] (P1) -- (B);
                \draw[wline] (P2) -- (C);
                \draw[webline] (A) -- (P1) -- (P2) -- (A);
                \draw[very thick] (A) -- (B) -- (C) -- cycle;
                \filldraw (A) circle [radius=20pt];
                \filldraw (B) circle [radius=20pt];
                \filldraw (C) circle [radius=20pt];
            }\ 
        },
        y
        =
        \mathord{
            \ \tikz[baseline=-.6ex, scale=.08, rotate=180]{
                \coordinate (A) at (90:8);
                \coordinate (B) at (210:8);
                \coordinate (C) at (-30:8);
                \coordinate (P) at (0,0);
                \draw[wline] (P) -- (A);
                \draw[webline] (P) -- (B);
                \draw[webline] (P) -- (C);
                \draw[very thick] (A) -- (B) -- (C) -- cycle;
                \filldraw (A) circle [radius=20pt];
                \filldraw (B) circle [radius=20pt];
                \filldraw (C) circle [radius=20pt];
            }\ 
        }.
    \end{align*}
\end{proof}

\begin{cor}\label{cor:Cweb-T}
    $\Cweb{T}$ consists of the following $6$ subsets of $\Eweb{T}$:
    \begin{align*}
        &\left\{
            \mathord{
                \ \tikz[baseline=-.6ex, scale=.08, rotate=120]{
                    \coordinate (A) at (90:8);
                    \coordinate (B) at (210:8);
                    \coordinate (C) at (-30:8);
                    \coordinate (P) at (0,0);
                    \draw[wline] (P) -- (A);
                    \draw[webline] (P) -- (B);
                    \draw[webline] (P) -- (C);
                    \draw[very thick] (A) -- (B) -- (C) -- cycle;
                    \filldraw (A) circle [radius=20pt];
                    \filldraw (B) circle [radius=20pt];
                    \filldraw (C) circle [radius=20pt];
                }\ 
            },
            \mathord{
                \ \tikz[baseline=-.6ex, scale=.08, rotate=-120]{
                    \coordinate (A) at (90:8);
                    \coordinate (B) at (210:8);
                    \coordinate (C) at (-30:8);
                    \coordinate (P1) at ($(A)!.8!(B)+(3,0)$);
                    \coordinate (P2) at ($(A)!.8!(C)+(-3,0)$);
                    \coordinate (P) at (0,0);
                    \draw[wline] (P1) -- (B);
                    \draw[wline] (P2) -- (C);
                    \draw[webline] (A) -- (P1) -- (P2) -- (A);
                    \draw[very thick] (A) -- (B) -- (C) -- cycle;
                    \filldraw (A) circle [radius=20pt];
                    \filldraw (B) circle [radius=20pt];
                    \filldraw (C) circle [radius=20pt];
                }\ 
            }
        \right\}\cup \partial_{T},&
        &\left\{
            \mathord{
                \ \tikz[baseline=-.6ex, scale=.08]{
                    \coordinate (A) at (90:8);
                    \coordinate (B) at (210:8);
                    \coordinate (C) at (-30:8);
                    \coordinate (P) at (0,0);
                    \draw[wline] (P) -- (A);
                    \draw[webline] (P) -- (B);
                    \draw[webline] (P) -- (C);
                    \draw[very thick] (A) -- (B) -- (C) -- cycle;
                    \filldraw (A) circle [radius=20pt];
                    \filldraw (B) circle [radius=20pt];
                    \filldraw (C) circle [radius=20pt];
                }\ 
            },
            \mathord{
                \ \tikz[baseline=-.6ex, scale=.08, rotate=120]{
                    \coordinate (A) at (90:8);
                    \coordinate (B) at (210:8);
                    \coordinate (C) at (-30:8);
                    \coordinate (P1) at ($(A)!.8!(B)+(3,0)$);
                    \coordinate (P2) at ($(A)!.8!(C)+(-3,0)$);
                    \coordinate (P) at (0,0);
                    \draw[wline] (P1) -- (B);
                    \draw[wline] (P2) -- (C);
                    \draw[webline] (A) -- (P1) -- (P2) -- (A);
                    \draw[very thick] (A) -- (B) -- (C) -- cycle;
                    \filldraw (A) circle [radius=20pt];
                    \filldraw (B) circle [radius=20pt];
                    \filldraw (C) circle [radius=20pt];
                }\ 
            }
        \right\}\cup \partial_{T},&
        &\left\{
            \mathord{
                \ \tikz[baseline=-.6ex, scale=.08, rotate=-120]{
                    \coordinate (A) at (90:8);
                    \coordinate (B) at (210:8);
                    \coordinate (C) at (-30:8);
                    \coordinate (P) at (0,0);
                    \draw[wline] (P) -- (A);
                    \draw[webline] (P) -- (B);
                    \draw[webline] (P) -- (C);
                    \draw[very thick] (A) -- (B) -- (C) -- cycle;
                    \filldraw (A) circle [radius=20pt];
                    \filldraw (B) circle [radius=20pt];
                    \filldraw (C) circle [radius=20pt];
                }\ 
            },
            \mathord{
                \ \tikz[baseline=-.6ex, scale=.08]{
                    \coordinate (A) at (90:8);
                    \coordinate (B) at (210:8);
                    \coordinate (C) at (-30:8);
                    \coordinate (P1) at ($(A)!.8!(B)+(3,0)$);
                    \coordinate (P2) at ($(A)!.8!(C)+(-3,0)$);
                    \coordinate (P) at (0,0);
                    \draw[wline] (P1) -- (B);
                    \draw[wline] (P2) -- (C);
                    \draw[webline] (A) -- (P1) -- (P2) -- (A);
                    \draw[very thick] (A) -- (B) -- (C) -- cycle;
                    \filldraw (A) circle [radius=20pt];
                    \filldraw (B) circle [radius=20pt];
                    \filldraw (C) circle [radius=20pt];
                }\ 
            }
        \right\}\cup \partial_{T}\\
        &\left\{
            \mathord{
                \ \tikz[baseline=-.6ex, scale=.08, rotate=120]{
                    \coordinate (A) at (90:8);
                    \coordinate (B) at (210:8);
                    \coordinate (C) at (-30:8);
                    \coordinate (P) at (0,0);
                    \draw[wline] (P) -- (A);
                    \draw[webline] (P) -- (B);
                    \draw[webline] (P) -- (C);
                    \draw[very thick] (A) -- (B) -- (C) -- cycle;
                    \filldraw (A) circle [radius=20pt];
                    \filldraw (B) circle [radius=20pt];
                    \filldraw (C) circle [radius=20pt];
                }\ 
            },
            \mathord{
                \ \tikz[baseline=-.6ex, scale=.08]{
                    \coordinate (A) at (90:8);
                    \coordinate (B) at (210:8);
                    \coordinate (C) at (-30:8);
                    \coordinate (P1) at ($(A)!.8!(B)+(3,0)$);
                    \coordinate (P2) at ($(A)!.8!(C)+(-3,0)$);
                    \coordinate (P) at (0,0);
                    \draw[wline] (P1) -- (B);
                    \draw[wline] (P2) -- (C);
                    \draw[webline] (A) -- (P1) -- (P2) -- (A);
                    \draw[very thick] (A) -- (B) -- (C) -- cycle;
                    \filldraw (A) circle [radius=20pt];
                    \filldraw (B) circle [radius=20pt];
                    \filldraw (C) circle [radius=20pt];
                }\ 
            }
        \right\}\cup \partial_{T},&
        &\left\{
            \mathord{
                \ \tikz[baseline=-.6ex, scale=.08]{
                    \coordinate (A) at (90:8);
                    \coordinate (B) at (210:8);
                    \coordinate (C) at (-30:8);
                    \coordinate (P) at (0,0);
                    \draw[wline] (P) -- (A);
                    \draw[webline] (P) -- (B);
                    \draw[webline] (P) -- (C);
                    \draw[very thick] (A) -- (B) -- (C) -- cycle;
                    \filldraw (A) circle [radius=20pt];
                    \filldraw (B) circle [radius=20pt];
                    \filldraw (C) circle [radius=20pt];
                }\ 
            },
            \mathord{
                \ \tikz[baseline=-.6ex, scale=.08, rotate=-120]{
                    \coordinate (A) at (90:8);
                    \coordinate (B) at (210:8);
                    \coordinate (C) at (-30:8);
                    \coordinate (P1) at ($(A)!.8!(B)+(3,0)$);
                    \coordinate (P2) at ($(A)!.8!(C)+(-3,0)$);
                    \coordinate (P) at (0,0);
                    \draw[wline] (P1) -- (B);
                    \draw[wline] (P2) -- (C);
                    \draw[webline] (A) -- (P1) -- (P2) -- (A);
                    \draw[very thick] (A) -- (B) -- (C) -- cycle;
                    \filldraw (A) circle [radius=20pt];
                    \filldraw (B) circle [radius=20pt];
                    \filldraw (C) circle [radius=20pt];
                }\ 
            }
        \right\}\cup \partial_{T},&
        &\left\{
            \mathord{
                \ \tikz[baseline=-.6ex, scale=.08, rotate=-120]{
                    \coordinate (A) at (90:8);
                    \coordinate (B) at (210:8);
                    \coordinate (C) at (-30:8);
                    \coordinate (P) at (0,0);
                    \draw[wline] (P) -- (A);
                    \draw[webline] (P) -- (B);
                    \draw[webline] (P) -- (C);
                    \draw[very thick] (A) -- (B) -- (C) -- cycle;
                    \filldraw (A) circle [radius=20pt];
                    \filldraw (B) circle [radius=20pt];
                    \filldraw (C) circle [radius=20pt];
                }\ 
            },
            \mathord{
                \ \tikz[baseline=-.6ex, scale=.08, rotate=120]{
                    \coordinate (A) at (90:8);
                    \coordinate (B) at (210:8);
                    \coordinate (C) at (-30:8);
                    \coordinate (P1) at ($(A)!.8!(B)+(3,0)$);
                    \coordinate (P2) at ($(A)!.8!(C)+(-3,0)$);
                    \coordinate (P) at (0,0);
                    \draw[wline] (P1) -- (B);
                    \draw[wline] (P2) -- (C);
                    \draw[webline] (A) -- (P1) -- (P2) -- (A);
                    \draw[very thick] (A) -- (B) -- (C) -- cycle;
                    \filldraw (A) circle [radius=20pt];
                    \filldraw (B) circle [radius=20pt];
                    \filldraw (C) circle [radius=20pt];
                }\ 
            }
        \right\}\cup \partial_{T}
    \end{align*}
\end{cor}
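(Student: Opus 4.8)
The plan is to reduce the statement to a computation of $v$-commutativity among the six non-boundary elementary webs of \cref{thm:Eweb-T}, and then to show that the resulting ``commutation graph'' is a hexagon. Write $y_i$ (resp.\ $x_j$) for the tripod-type web with its single type~$2$ leg at the $i$-th special point (resp.\ the triangle-type web with its two type~$1$ legs at the $j$-th special point), for $i,j\in\bZ/3$, so that $\Eweb{T}=\partial_T\sqcup\{x_0,x_1,x_2,y_0,y_1,y_2\}$. First I would record the two numerics that pin down the size of a web cluster: a triangle carries the trivial ideal triangulation (three boundary intervals, no interior edge), whence $\#I_{\mathfrak{sp}_4}(T)=8$ (six frozen indices, two per boundary interval, together with the two mutable indices of the single triangle), while $|\partial_T|=6$ (a type~$1$ and a type~$2$ boundary arc on each interval). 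Since every boundary web $v$-commutes with every $\mathfrak{sp}_4$-web by the clasped relations \eqref{rel:bdry-cross}, the set $\partial_T$ is $v$-commutative and central; hence every web cluster has the form $\partial_T\cup\{G_1,G_2\}$ with $\{G_1,G_2\}$ a $v$-commuting pair of interior webs, once one knows that no three interior webs pairwise $v$-commute. Both of these points will fall out of the commutation-graph computation below.

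The heart of the proof is to decide, for each unordered pair of interior webs, whether the two orders of the product differ by a power of $v$ (equivalently, whether the superposition resolves to a single basis web). I would organize this using the available symmetries: the cluster Donaldson--Thomas transformation $\mathrm{DT}$ is here the $2\pi/3$-rotation, acting by $y_i\mapsto y_{i+1}$, $x_j\mapsto x_{j+1}$; combined with the mirror-reflection $\dagger$ (\cref{def:mirror-ref}) and a reflection of $T$, this cuts the fifteen pairs down to the representatives $\{y_0,x_0\}$, $\{y_0,x_1\}$, $\{y_0,y_1\}$ and $\{x_0,x_1\}$. For each representative I would superpose the two diagrams, resolve the internal crossings by \eqref{rel:ss-cross}--\eqref{rel:ww-cross}, reorder at the special points by \eqref{rel:bdry-cross}, and simplify using the arborization relations of \cref{lem:arborization}. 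The claim to be extracted is that $\{y_i,x_j\}$ $v$-commutes exactly when $i\neq j$, whereas the like-type pairs $\{y_i,y_j\}$, $\{x_i,x_j\}$ (for $i\neq j$) and the aligned pairs $\{y_i,x_i\}$ do \emph{not} $v$-commute: in each of the latter cases a type~$1$/type~$2$ crossing is resolved by \eqref{rel:sw-cross}--\eqref{rel:ws-cross} into genuinely two terms, yielding a $\bZ_v$-linear combination of two distinct basis webs rather than a monomial multiple of the reversed product.

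Granting this, the commuting pairs are precisely the six pairs $\{y_i,x_j\}_{i\neq j}$, and these are exactly the edges of the single $6$-cycle $x_1\,y_0\,x_2\,y_1\,x_0\,y_2$; in particular each interior web lies in exactly two commuting pairs, the commutation graph has no triangle (so at most two interior webs can be mutually $v$-commuting), and every web cluster is $\partial_T$ together with one of these six edges. Writing these out recovers exactly the six subsets in the statement, so $\Cweb{T}$ is as claimed. As an independent check — and as a way to obtain the six clusters without the forward computation — one may note that the mutable part of $\CA^q_{\mathfrak{sp}_4,T}$ is the rank-two cluster algebra of type $C_2$, whose exchange graph is a hexagon with six cluster variables; under the inclusion $\Skein{T}^{\bZ_q}\subset\CA^q_{\mathfrak{sp}_4,T}$ these six variables are realized by $\{x_j,y_i\}$, and the six seeds are the web clusters attached to the six decorated triangulations $(m_\Delta,\bs_\Delta)$ of $T$. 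I expect the main obstacle to be the converse direction, namely the explicit skein resolutions certifying that the nine excluded pairs fail to $v$-commute; the symmetry reduction is what makes this tractable, and the only genuinely delicate case is the aligned pair $\{y_i,x_i\}$, where one must verify that the resolution truly lands on two distinct basis webs.
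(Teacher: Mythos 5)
Your proposal is correct in outline and, for the direction the paper actually writes down, it coincides with the paper's argument: the paper's entire proof is the single sentence that the $v$-commutativity of each of the six listed sets can be confirmed by the arborization relations of \cref{lem:arborization}. Where you go further is in making the exhaustiveness explicit: the count $\#I_{\mathfrak{sp}_4}(T)=8$ versus $|\partial_T|=6$, the centrality of $\partial_T$, and the observation that the commutation graph on the six interior webs of \cref{thm:Eweb-T} is $K_{3,3}$ minus a perfect matching, i.e.\ a triangle-free $6$-cycle, so that every $8$-element $v$-commutative subset must be $\partial_T$ together with one of its six edges. The paper leaves all of this implicit, so your plan is the more complete one. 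Two caveats. First, the negative half of your commutation table is asserted rather than computed; note, though, that for the like-type pairs it is witnessed for free by the exchange relations the paper later verifies in the proof of \cref{thm:q-mutation-equivalence} (e.g.\ $e_1e_1'=q^{-1/2}([e_2e_3]+q[e_4e_5e_7])$ exhibits the product of two distinct tripods as a sum of two distinct cluster monomials, which cannot be $v^{k}$ times the reversed product), so the only pair genuinely requiring a fresh skein computation is the aligned pair $\{y_i,x_i\}$, exactly as you flag. Second, your ``independent check'' via the type $C_2$ hexagon should be kept strictly as a sanity check: this corollary is an input to the construction of the quantum seeds in Section 5, so invoking the inclusion $\Skein{T}^{\bZ_q}\subset\CA^q_{\mathfrak{sp}_4,T}$ or the classification of its clusters to prove it would be circular at this point in the paper.
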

\begin{proof}
    For each set in above, one can confirm the $v$-commutativity by the arborization relations in \cref{lem:arborization}.
\end{proof}

\subsection{\texorpdfstring{$\Skein{\Sigma}$}{S(sp4,S)} is an Ore domain}\label{subsec:Ore}
In this section, we are going to see that $\Skein{\Sigma}$ is embedded into its skew-field of fractions $\mathrm{Frac}\Skein{\Sigma}$. This is needed in studying the relationship between the localized $\mathfrak{sp}_4$-skein algebra $\Skein{\Sigma}[\partial^{-1}]$ and the quantum cluster algebra $\CA^q_{\mathfrak{sp}_4,\Sigma}$, where the latter will be naturally constructed in $\mathrm{Frac}\Skein{\Sigma}$.

\begin{prop}\label{prop:Ore_triangle}
    For a triangle $T$, the boundary-localized skein algebra $\Skein{T}[\partial^{-1}]$ is an Ore domain.
\end{prop}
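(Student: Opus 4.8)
The plan is to prove the stronger statement that $\Skein{T}$ itself is a Noetherian domain, and then to descend to the localization. The main tool is an associated-graded argument: I want to equip $\Skein{T}$ with an algebra filtration whose associated graded ring $\gr\Skein{T}$ is a \emph{quantum affine space} (a $v$-commutative polynomial $\cR$-algebra), which is a Noetherian domain. The starting data are the explicit basis $\Bweb{T}$ of \cref{prop:Bweb-T}, whose elements are indexed by tuples $\mathbf{a}=(k_1,k_2,k_3,l_1,l_2,l_3,n_1,n_2)\in\bZ_{\geq 0}^{8}$, together with \cref{thm:Eweb-T}, which says that each basis web is, up to a power of $v^{1/2}$, the monomial $\prod_{i=1}^{3}(e_i^{(1)})^{k_i}(e_i^{(2)})^{l_i}\,x^{n_1}y^{n_2}$ in the eight elementary generators. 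Since $\cR=\bZ[v^{\pm 1/2},1/[2]_v]$ is a commutative Noetherian domain, a quantum affine space over $\cR$ is an iterated Ore extension, hence a Noetherian domain.

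The crux is a triangularity property of the multiplication of basis webs. Writing $b_{\mathbf a}$ for the basis web indexed by $\mathbf a$, I would superimpose the diagrams of $b_{\mathbf a}$ and $b_{\mathbf b}$ and resolve all resulting internal and simultaneous crossings using the skein relations of \cref{def:skeinrel,def:bdry-skeinrel}, aiming to show
\[
    b_{\mathbf a}\cdot b_{\mathbf b}=v^{c(\mathbf a,\mathbf b)}\,b_{\mathbf a+\mathbf b}+(\text{lower-order terms}),
\]
where the leading term is the ``straightening'' resolution that preserves every strand and so produces the concatenated web with index $\mathbf a+\mathbf b$, and the remaining terms are basis webs of strictly smaller degree in a suitable filtration. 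For the filtration I would use the endpoint grading of \cref{def:egrading} (which is preserved exactly) as the primary grading, refined by a secondary weight --- for instance by the corner parameters $n_1+n_2$ or by the total number of trivalent vertices --- chosen so that each correction term arising when a crossing resolves into a cap--cup or into a rung drops strictly. The antisymmetric form $c$ encoding the $v$-commutation exponents is read off from the web-exchange integers $\Pi$ of \cref{def:web-exchange}.

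Granting the triangularity, $\gr\Skein{T}$ inherits $\{b_{\mathbf a}\}_{\mathbf a\in\bZ_{\geq 0}^8}$ as an $\cR$-basis with product $b_{\mathbf a}b_{\mathbf b}=v^{c(\mathbf a,\mathbf b)}b_{\mathbf a+\mathbf b}$, i.e. it is the quantum affine space on eight generators determined by $c$, and is therefore a Noetherian domain. The standard filtered-algebra principle then applies: if $\gr A$ is a domain then so is $A$, and if $\gr A$ is Noetherian then so is $A$. Hence $\Skein{T}$ is a Noetherian domain. Now the multiplicatively closed set $\mathrm{mon}(\partial_T)$ is already known to be a two-sided Ore set in $\Skein{T}$ (as noted just before \cref{def:localized-skein-alg}), and its elements are regular since $\Skein{T}$ is a domain. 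The Ore localization of a Noetherian domain at such a set is again a Noetherian domain, and every Noetherian domain is an Ore domain; therefore $\Skein{T}[\partial^{-1}]$ is an Ore domain.

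The main obstacle is the second step: verifying the leading-term/triangularity structure of the multiplication uniformly in $\mathbf a,\mathbf b$. One must select a single filtration for which simultaneously the top resolution always reproduces the concatenated basis web and every correction term produced by a skein resolution strictly decreases the filtration degree. Arranging this so that it controls both the interior crossing relations \eqref{rel:ss-cross}--\eqref{rel:ww-cross} and the clasped relations of \cref{def:bdry-skeinrel} at all three special points at once is the delicate part; once the filtration is pinned down, the remaining verifications are the routine case checks of resolving a superposed pair of diagrams.
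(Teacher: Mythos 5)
Your overall scaffolding (Noetherian domain $\Rightarrow$ Ore domain, localization at the Ore set $\mathrm{mon}(\partial_T)$ preserves this) is fine, but the core of your argument rests on a misreading of \cref{prop:Bweb-T} that I do not see how to repair. The basis $\Bweb{T}$ is \emph{not} indexed by a single free commutative monoid $\bZ_{\geq 0}^{8}$: the proposition describes the diagram ``and its rotations and reflections,'' so the parameters $(n_1,n_2)$ live in six different copies of $\bZ_{\geq 0}^{2}$, one for each choice of the pair $(x,y)$ among the \emph{six} interior elementary webs of \cref{thm:Eweb-T} (equivalently, one for each of the six web clusters of \cref{cor:Cweb-T}), glued along the locus $n_1n_2=0$. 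If you fix one pair $(x,y)$, the monomials in your eight generators only exhaust one of these six cones and do not span $\Skein{T}$; if you do not, there is no well-defined sum $\mathbf a+\mathbf b$ when $\mathbf a$ and $\mathbf b$ lie in different cones, so the asserted leading term $b_{\mathbf a+\mathbf b}$ does not exist. Concretely, for two tripod webs $e_1,e_1'$ from adjacent clusters one has $e_1e_1'=q^{-1/2}([e_2e_3]+q[e_4e_5e_7])$ (see the proof of \cref{thm:q-mutation-equivalence}): both terms on the right are basis webs of the \emph{same} endpoint degree, and neither is a concatenation of the indices of $e_1$ and $e_1'$. Even if a secondary weight (number of trivalent vertices, say) selects one of the two terms, the resulting leading-term multiplication sends a product of two weight-one interior generators to a weight-two interior generator times a boundary arc; that is the multiplication of a semigroup algebra over the cluster fan of type $C_2$, not of a quantum affine space on eight generators. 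So the claimed structure of $\gr\Skein{T}$ is wrong, and the domain/Noetherian conclusion does not follow without a genuinely new argument about that semigroup algebra.

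For comparison, the paper's proof is entirely different and much shorter: it invokes the identification $\Skein{T}[\partial^{-1}]=\CA^q_{\mathfrak{sp}_4,T}=\UCA^q_{\mathfrak{sp}_4,T}$ of \cref{S=A_triangle} (available because the six interior elementary webs of \cref{thm:Eweb-T} are exactly the quantum cluster variables and the exchange type is acyclic), and then uses that a quantum cluster algebra is sandwiched between the quantum affine space on one cluster and the corresponding quantum torus, which yields the Ore property. This is essentially the same sandwiching that the paper later applies, via \cite[Proposition~2.2]{LY20} and \cref{thm:Cweb-exp}, to get \cref{thm:Ore_general} for general surfaces. If you want to avoid the cluster-algebra input, the viable elementary route is to prove directly that $\Skein{T}$ is a domain sandwiched between the quantum affine space generated by a single web cluster and its quantum torus (using \cref{thm:Cweb-exp} for the triangle), rather than to seek a filtration with polynomial associated graded.
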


This proposition follows from the isomorphism between $\Skein{T}[\partial^{-1}]$ and the quantum cluster algebra $\CA^q_{\mathfrak{sp}_4,T}$ (\cref{S=A_triangle}).
In what follows, we prove that $\Skein{\Sigma}[\partial^{-1}]$ is an Ore domain for any unpunctured marked surface, assuming that the triangle case is established. 
The proof is based on a relation to the \emph{reduced stated $\mathfrak{sp}_4$-skein algebra} and its splitting property.
The stated $\mathfrak{sp}_4$-skein algebra $\mathscr{S}_{\mathfrak{sp}_4}^{\mathsf{st}}(\Sigma)$ and its reduced version $\mathscr{S}^{\mathsf{st}}_{\mathfrak{sp}_4}(\Sigma)_{\mathrm{rd}}$ is introduced in \cite{IYstmk}.
In that paper, we construct an isomorphism
\[
    \Phi_{\Sigma}\colon\mathscr{S}^{\mathsf{st}}_{\mathfrak{sp}_4}(\Sigma)_{\mathrm{rd}}\to\Skein{\Sigma}[\partial^{-1}]
\]
and an injective homomorphism
\[
    \theta_{\alpha}^{\mathrm{rd}}\colon\mathscr{S}^{\mathsf{st}}_{\mathfrak{sp}_4}(\Sigma)_{\mathrm{rd}}\xhookrightarrow[]{}\mathscr{S}^{\mathsf{st}}_{\mathfrak{sp}_4}(\Sigma')_{\mathrm{rd}}
\]
called the \emph{splitting homomorphism} (cf.~\cite{TTQLe18} for $\mathfrak{sl}_2$, \cite{Higgins20} for $\mathfrak{sl}_3$, \cite{LS22} for $\mathfrak{sl}_n$),
where a marked surface $\Sigma$ is obtained from another marked surface $\Sigma'$ by gluing two distinct boundary intervals, which project to a common ideal arc $\alpha$ in $\Sigma$. 
We remark that if $\Sigma'$ consists of two connected components $\Sigma_1$ and $\Sigma_2$, then $\mathscr{S}^{\mathsf{st},\mathrm{rd}}_{\mathfrak{sp}_4}(\Sigma')\cong\mathscr{S}^{\mathsf{st}}_{\mathfrak{sp}_4}(\Sigma_1)_{\mathrm{rd}}\otimes\mathscr{S}^{\mathsf{st}}_{\mathfrak{sp}_4}(\Sigma_2)_{\mathrm{rd}}$.

\begin{thm}\label{thm:Ore_general}
    For any unpunctured marked surface $\Sigma$, the skein algebra $\Skein{\Sigma}$ is an Ore domain. Moreover, $\mathrm{Frac}\Skein{\Sigma}$ is isomorphic to the skew-field of fractions of a quantum torus.
\end{thm}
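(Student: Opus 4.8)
The plan is to reduce the whole statement to the triangle case by cutting along an ideal triangulation, using the splitting homomorphism for the reduced stated skein algebra, and then to recognize the resulting tensor product of triangle pieces as a subalgebra of a single quantum torus, from which the Ore property and the shape of the fraction field can be pulled back. First I would pass to the localized algebra: since $\mathrm{mon}(\partial_{\Sigma})$ is a (two-sided) Ore set, $\Skein{\Sigma}$ and $\Skein{\Sigma}[\partial^{-1}]$ have the same skew-field of fractions, and by clearing denominators in the central multiplicative set $\cR\setminus\{0\}$ together with $\mathrm{mon}(\partial_{\Sigma})$ one checks that $\Skein{\Sigma}$ is an Ore domain as soon as $\Skein{\Sigma}[\partial^{-1}]$ is. Via the isomorphism $\Phi_{\Sigma}$ it therefore suffices to treat $\mathscr{S}^{\mathsf{st}}_{\mathfrak{sp}_4}(\Sigma)_{\mathrm{rd}}$. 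Fixing an ideal triangulation $\Delta$ with triangles $T_1,\dots,T_k$ and interior edges $E_1,\dots,E_m$, I would cut along the $E_j$ one at a time, composing the injective splitting homomorphisms $\theta_{\alpha}^{\mathrm{rd}}$ (and invoking the tensor decomposition whenever a cut disconnects the surface) to obtain an injective algebra homomorphism
\[
    \iota\colon \mathscr{S}^{\mathsf{st}}_{\mathfrak{sp}_4}(\Sigma)_{\mathrm{rd}} \hookrightarrow \bigotimes_{i=1}^{k} \mathscr{S}^{\mathsf{st}}_{\mathfrak{sp}_4}(T_i)_{\mathrm{rd}}.
\]

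Next I would analyze the target. By $\Phi_{T_i}$ together with \cref{prop:Ore_triangle} and \cref{S=A_triangle}, each factor is isomorphic to the quantum cluster algebra $\CA^q_{\mathfrak{sp}_4,T_i}$, which by the quantum Laurent phenomenon embeds into the quantum torus $\mathcal{T}_i$ attached to any of its seeds. Tensoring over $\bZ_v$, the product $\bigotimes_i \mathcal{T}_i=:\mathcal{T}$ is again a quantum torus (coordinates coming from distinct triangles commute), so $\iota$ realizes $\mathscr{S}^{\mathsf{st}}_{\mathfrak{sp}_4}(\Sigma)_{\mathrm{rd}}$ as a subalgebra of the quantum torus $\mathcal{T}$; in particular it is a domain. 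To deduce the Ore property I would extend scalars to the field $\mathbb{F}:=\bQ(v^{1/2})$: then $\mathcal{T}\otimes_{\bZ_v}\mathbb{F}$ is a quantum torus over a field and has finite Gelfand--Kirillov dimension, whence every subalgebra is a domain of finite GK-dimension and is therefore an Ore domain by the standard subexponential-growth criterion. The Ore condition then descends from $\mathbb{F}$ back to $\cR$ (and to $\bZ_v$) because $\cR\setminus\{0\}$ is central, proving the first assertion.

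Finally, for the ``moreover'' I would identify $\mathrm{Frac}\,\Skein{\Sigma}$ with $\mathrm{Frac}\,\mathcal{T}$. The idea is to exhibit inside $\mathscr{S}^{\mathsf{st}}_{\mathfrak{sp}_4}(\Sigma)_{\mathrm{rd}}\cong\Skein{\Sigma}[\partial^{-1}]$ a web cluster, i.e.\ a sub--quantum torus $\mathcal{T}'$ obtained by localizing at its cluster variables, whose rank equals that of $\mathcal{T}$ --- each interior edge contributes the same number of coordinates on both sides once the gluing relations defining the reduced algebra are accounted for --- and then to verify that the induced map $\mathrm{Frac}\,\mathcal{T}'\to\mathrm{Frac}\,\mathcal{T}$ is onto. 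This would give $\mathrm{Frac}\,\Skein{\Sigma}=\mathrm{Frac}\,\Skein{\Sigma}[\partial^{-1}]\cong\mathrm{Frac}\,\mathcal{T}'=\mathrm{Frac}\,\mathcal{T}$, the skew-field of a quantum torus.

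The hard part will be exactly this last step: while injectivity of $\iota$ is clean, proving that the splitting embedding loses no transcendence --- equivalently, that the rank of a web cluster of $\Sigma$ matches $\sum_i \operatorname{rank}\mathcal{T}_i$ and that the commutation (ensemble/Poisson) form on the glued edge-coordinates remains nondegenerate --- is where the real content lies, and I would expect to need a careful bookkeeping of how the doubled edge-variables on the cut surface collapse under the reduced-stated relations.
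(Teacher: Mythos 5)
Your first half matches the paper's argument: composing the splitting homomorphisms with $\Phi_\Sigma^{-1}$ and $\bigotimes_T\Phi_T$ embeds $\Skein{\Sigma}$ into $\bigotimes_{T\in t(\Delta)}\Skein{T}[\partial^{-1}]$, which together with \cref{prop:Ore_triangle} shows $\Skein{\Sigma}$ is a domain. Your GK-dimension route to the Ore property is a workable alternative (the descent from $\bQ(v^{1/2})$ back to $\cR$ is unproblematic since $\Skein{\Sigma}$ is $\cR$-free by \cref{thm:basis-web}), but it is not what the paper does, and it does not by itself deliver the second assertion.

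The genuine gap is in your plan for the ``moreover'' part. The claim that a web cluster $\mathcal{T}'$ of $\Sigma$ has the same rank as $\mathcal{T}=\bigotimes_i\mathcal{T}_i$ is false: cutting along an interior edge doubles the edge vertices, so already for a quadrilateral the surface quiver has $14$ vertices while the two triangles contribute $8+8=16$. Hence the induced map $\mathrm{Frac}\,\mathcal{T}'\to\mathrm{Frac}\,\mathcal{T}$ cannot be onto, and the surjectivity you propose to ``verify'' does not hold; the image of the splitting embedding generates a proper sub-skew-field of $\mathrm{Frac}\,\mathcal{T}$. The paper sidesteps this entirely by never comparing fraction fields with the tensor-product torus: it invokes \cref{thm:Cweb-exp}, which states that for any tangled $\mathfrak{sp}_4$-graph $G$ there is a monomial $J$ in the web cluster $\cC_{\bD}$ such that $GJ$ is a polynomial in $\cC_{\bD}$. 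Combined with the domain property already obtained, this is exactly the hypothesis of \cite[Proposition~2.2]{LY20}, which yields in one stroke both that $\Skein{\Sigma}$ is an Ore domain and that $\mathrm{Frac}\,\Skein{\Sigma}$ is the skew-field of fractions of the quantum torus generated by $\cC_{\bD}$. To repair your argument, replace the rank-matching and surjectivity step by this expansion property of the intrinsic web cluster of $\Sigma$; that is where the real content of the second assertion lies.
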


\begin{proof}
    Given an ideal triangulation $\Delta=\{\alpha_1,\alpha_2,\dots,\alpha_n\}$ of $\Sigma$, one can decompose $\mathscr{S}^{\mathsf{st}}_{\mathfrak{sp}_4}(\Sigma)_{\mathrm{rd}}$ into $\mathscr{S}^{\mathsf{st}}_{\mathfrak{sp}_4}(\sqcup_{T\in t(\Delta)}T)_{\mathrm{rd}}\cong\bigotimes_{T\in t(\Delta)}\mathscr{S}^{\mathsf{st}}_{\mathfrak{sp}_4}(T)_{\mathrm{rd}}$ via the composite $\theta_{\Delta}^{\mathrm{rd}}:=\theta_{\alpha_n}^{\mathrm{rd}}\circ\cdots\circ\theta_{\alpha_2}^{\mathrm{rd}}\circ\theta_{\alpha_1}^{\mathrm{rd}}$ of splitting homomorphisms. Then we get the following sequence of homomorphisms:
    \[
        \Skein{\Sigma}\xhookrightarrow[]{}\Skein{\Sigma}[\partial^{-1}]\xrightarrow[\Phi^{-1}_{\Sigma}]{\cong}\mathscr{S}^{\mathsf{st}}_{\mathfrak{sp}_4}(\Sigma)_{\mathrm{rd}}\xhookrightarrow[\theta_{\Delta}^{\mathrm{rd}}]{}\bigotimes_{T\in t(\Delta)}\mathscr{S}^{\mathsf{st}}_{\mathfrak{sp}_4}(T)_{\mathrm{rd}}\xrightarrow[\bigotimes_{T}\Phi_{T}]{\cong}\bigotimes_{T\in t(\Delta)}\Skein{T}[\partial^{-1}].
    \]
    The first localization map becomes inclusion because every boundary web is a non-zero divisor.
    The right-most algebra is an Ore domain by \cref{prop:Ore_triangle}. 
    Then we have an embedding of $\Skein{\Sigma}$ into an Ore domain, which implies that $\Skein{\Sigma}$ is a domain. Then we can apply \cite[Proposition~2.2]{LY20} to the domain $\Skein{\Sigma}$ and the quantum plane $\langle\cC_{\bD}\rangle_{\text{alg}}$ generated by a web cluster $\cC_{\bD}$ as in \cref{thm:Cweb-exp}, where the hypothesis there is satisfied thanks to \cref{thm:Cweb-exp}. It concludes that $\Skein{\Sigma}$ is an Ore domain, and $\mathrm{Frac}\Skein{\Sigma}$ is isomorphic to the skew-field of fraction of the quantum torus generated by the web cluster $\cC_{\bD}$.
\end{proof}

\begin{cor}\label{cor:embed_into_frac}
    We have inclusions $\Skein{\Sigma}\subset\Skein{\Sigma}[\partial^{-1}]\subset\mathrm{Frac}\Skein{\Sigma}$.
\end{cor}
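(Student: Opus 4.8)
The plan is to deduce the corollary formally from \cref{thm:Ore_general}, which already supplies the essential input that $\Skein{\Sigma}$ is an Ore domain with skew-field of fractions $\mathrm{Frac}\Skein{\Sigma}$. The two asserted inclusions are then standard facts about Ore localizations, and the only points requiring care are the injectivity of each localization map and the observation that the boundary monomials are non-zero-divisors.

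For the first inclusion $\Skein{\Sigma}\subset\Skein{\Sigma}[\partial^{-1}]$, I would recall that by \cref{def:localized-skein-alg} the algebra $\Skein{\Sigma}[\partial^{-1}]$ is the Ore localization of $\Skein{\Sigma}$ at the multiplicative set $\mathrm{mon}(\partial_{\Sigma})$, which was already observed to be a two-sided Ore set. The canonical localization map $\Skein{\Sigma}\to\Skein{\Sigma}[\partial^{-1}]$ is injective precisely when every element of $\mathrm{mon}(\partial_{\Sigma})$ is a non-zero-divisor. Since $\Skein{\Sigma}$ is a domain by \cref{thm:Ore_general}, and each boundary monomial is a nonzero element of $\Skein{\Sigma}$ (each boundary web is a nonzero basis web by \cref{thm:basis-web}, and a finite product of such webs remains nonzero in a domain, up to the unit $v^{k/2}$), every such element is automatically a non-zero-divisor. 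Hence the first map is injective.

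For the second inclusion, I would invoke the universal property of Ore localization. The defining embedding $\iota\colon\Skein{\Sigma}\hookrightarrow\mathrm{Frac}\Skein{\Sigma}$ sends every nonzero element---in particular every element of $\mathrm{mon}(\partial_{\Sigma})$---to an invertible element of the division ring $\mathrm{Frac}\Skein{\Sigma}$. By the universal property, $\iota$ factors uniquely through the localization, producing an algebra homomorphism $\Skein{\Sigma}[\partial^{-1}]\to\mathrm{Frac}\Skein{\Sigma}$. To verify injectivity I would write a general element in the form $xs^{-1}$ with $x\in\Skein{\Sigma}$ and $s\in\mathrm{mon}(\partial_{\Sigma})$; if its image vanishes in the field, then multiplying by the invertible element $s$ forces $x=0$, whence $xs^{-1}=0$ already in $\Skein{\Sigma}[\partial^{-1}]$.

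Since the composite of these two maps agrees with $\iota$, the three algebras sit inside one another as claimed. The entire content is carried by \cref{thm:Ore_general}, so there is no genuine obstacle here; the only point one must not overlook is that the boundary monomials are nonzero and hence non-zero-divisors, which is immediate from the domain property together with the basis description of \cref{thm:basis-web}.
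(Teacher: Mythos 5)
Your proposal is correct and matches the paper's intent: the corollary is stated without proof as an immediate consequence of \cref{thm:Ore_general}, and your deduction (boundary monomials are non-zero-divisors since $\Skein{\Sigma}$ is a domain, so the localization map is injective; the universal property of Ore localization then factors the embedding into $\mathrm{Frac}\Skein{\Sigma}$ injectively) is exactly the standard argument being invoked.
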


\section{Generators and Laurent positivity for \texorpdfstring{$\mathfrak{sp}_4$}{sp4}-skein algebras}\label{sec:generator_positivity}
\subsection{Cutting and Sticking tricks}
Let us introduce two fundamental lemmas on ``the sticking trick'' and ``the cutting trick'', which will turn out to be useful techniques to investigate the relation between our skein algebra and the quantum cluster algebra.
In~\cite{IYsl3}, the authors showed that the sticking trick could be used to expand a $\mathfrak{sl}_3$-web into a Laurent polynomial in the boundary-localized $\mathfrak{sl}_3$-skein algebra.
The cutting trick was used to prove the positivity of the coefficients for cluster expansions of elevation-preserving $\mathfrak{sl}_3$-webs in the same paper.
These tricks will play a similar role in the current case of $\mathfrak{sp}_4$.

\begin{lem}[The cutting trick]\label{lem:cutteing-trick}
    \begin{align}
        \mathord{
            \ \tikz[baseline=-.6ex, scale=.08]{
            \coordinate (A) at (-10,0);
            \coordinate (B) at (10,0);
            \coordinate (NW) at ($(A)+(0,7)$);
            \coordinate (SW) at ($(A)+(0,-7)$);
            \coordinate (NE) at ($(B)+(0,7)$);
            \coordinate (SE) at ($(B)+(0,-7)$);
            \coordinate (T1) at ($(A)!0.5!(B)+(0,7)$);
            \coordinate (T2) at ($(A)!0.5!(B)+(0,-7)$);
            \draw[very thick] (NW) -- (SW);
            \draw[very thick] (NE) -- (SE);
            \draw[wline] (B) to[bend right] (A);
            \draw[webline] (B) to[bend left] (A);
            \draw[overarc] (0,-10) -- (0,10);
            \bdryline{(NE)}{(SE)}{-2cm}
            \bdryline{(NW)}{(SW)}{2cm}
            \draw[fill] (A) circle (20pt);
            \draw[fill] (B) circle (20pt);
            }\ 
        }
        &=q^2
        \mathord{
            \ \tikz[baseline=-.6ex, scale=.08]{
                \coordinate (A) at (-10,0);
                \coordinate (B) at (10,0);
                \coordinate (NW) at ($(A)+(0,7)$);
                \coordinate (SW) at ($(A)+(0,-7)$);
                \coordinate (NE) at ($(B)+(0,7)$);
                \coordinate (SE) at ($(B)+(0,-7)$);
                \coordinate (T1) at ($(A)!0.5!(B)+(0,7)$);
                \coordinate (T2) at ($(A)!0.5!(B)+(0,-7)$);
                \draw[very thick] (NW) -- (SW);
                \draw[very thick] (NE) -- (SE);
                \draw[wline] (A) -- (B);
                \draw[webline, rounded corners] (B) -- (T1) -- ($(T1)+(0,3)$);
                \draw[webline, rounded corners] (A) -- (T2) -- ($(T2)+(0,-3)$);
                \bdryline{(NE)}{(SE)}{-2cm}
                \bdryline{(NW)}{(SW)}{2cm}
                \draw[fill] (A) circle (20pt);
                \draw[fill] (B) circle (20pt);
            }\ 
        }
        +q
        \mathord{
            \ \tikz[baseline=-.6ex, scale=.08]{
                \coordinate (A) at (-10,0);
                \coordinate (B) at (10,0);
                \coordinate (NW) at ($(A)+(0,7)$);
                \coordinate (SW) at ($(A)+(0,-7)$);
                \coordinate (NE) at ($(B)+(0,7)$);
                \coordinate (SE) at ($(B)+(0,-7)$);
                \coordinate (T1) at ($(A)!0.5!(B)+(0,3)$);
                \coordinate (T2) at ($(A)!0.5!(B)+(0,-3)$);
                \draw[very thick] (NW) -- (SW);
                \draw[very thick] (NE) -- (SE);
                \draw[webline] (A) -- (T1);
                \draw[wline] (B) -- (T1);
                \draw[wline] (A) -- (T2);
                \draw[webline] (B) -- (T2);
                \draw[webline] (T1) -- ($(T1)+(0,7)$);
                \draw[webline] (T2) -- ($(T2)+(0,-7)$);
                \bdryline{(NE)}{(SE)}{-2cm}
                \bdryline{(NW)}{(SW)}{2cm}
                \draw[fill] (A) circle (20pt);
                \draw[fill] (B) circle (20pt);
            }\ 
        }
        +q^{-1}
        \mathord{
            \ \tikz[baseline=-.6ex, scale=.08]{
                \coordinate (A) at (-10,0);
                \coordinate (B) at (10,0);
                \coordinate (NW) at ($(A)+(0,7)$);
                \coordinate (SW) at ($(A)+(0,-7)$);
                \coordinate (NE) at ($(B)+(0,7)$);
                \coordinate (SE) at ($(B)+(0,-7)$);
                \coordinate (T1) at ($(A)!0.5!(B)+(0,3)$);
                \coordinate (T2) at ($(A)!0.5!(B)+(0,-3)$);
                \draw[very thick] (NW) -- (SW);
                \draw[very thick] (NE) -- (SE);
                \draw[wline] (A) -- (T1);
                \draw[webline] (B) -- (T1);
                \draw[webline] (A) -- (T2);
                \draw[wline] (B) -- (T2);
                \draw[webline] (T1) -- ($(T1)+(0,7)$);
                \draw[webline] (T2) -- ($(T2)+(0,-7)$);
                \bdryline{(NE)}{(SE)}{-2cm}
                \bdryline{(NW)}{(SW)}{2cm}
                \draw[fill] (A) circle (20pt);
                \draw[fill] (B) circle (20pt);
            }\ 
        }
        +q^{-2}
        \mathord{
            \ \tikz[baseline=-.6ex, scale=.08]{
                \coordinate (A) at (-10,0);
                \coordinate (B) at (10,0);
                \coordinate (NW) at ($(A)+(0,7)$);
                \coordinate (SW) at ($(A)+(0,-7)$);
                \coordinate (NE) at ($(B)+(0,7)$);
                \coordinate (SE) at ($(B)+(0,-7)$);
                \coordinate (T1) at ($(A)!0.5!(B)+(0,7)$);
                \coordinate (T2) at ($(A)!0.5!(B)+(0,-7)$);
                \draw[very thick] (NW) -- (SW);
                \draw[very thick] (NE) -- (SE);
                \draw[wline] (A) -- (B);
                \draw[webline, rounded corners] (A) -- (T1) -- ($(T1)+(0,3)$);
                \draw[webline, rounded corners] (B) -- (T2) -- ($(T2)+(0,-3)$);
                \bdryline{(NE)}{(SE)}{-2cm}
                \bdryline{(NW)}{(SW)}{2cm}
                \draw[fill] (A) circle (20pt);
                \draw[fill] (B) circle (20pt);
            }\ 
        }\label{eq:s-cutting}\\
        \mathord{
            \ \tikz[baseline=-.6ex, scale=.08]{
            \coordinate (A) at (-10,0);
            \coordinate (B) at (10,0);
            \coordinate (NW) at ($(A)+(0,7)$);
            \coordinate (SW) at ($(A)+(0,-7)$);
            \coordinate (NE) at ($(B)+(0,7)$);
            \coordinate (SE) at ($(B)+(0,-7)$);
            \coordinate (T1) at ($(A)!0.5!(B)+(0,7)$);
            \coordinate (T2) at ($(A)!0.5!(B)+(0,-7)$);
            \draw[very thick] (NW) -- (SW);
            \draw[very thick] (NE) -- (SE);
            \draw[wline] (B) to[bend right] (A);
            \draw[webline] (B) to[bend left=15] (A);
            \draw[webline] (B) to[bend left=45] (A);
            \draw[overwline] (0,-10) -- (0,10);
            \bdryline{(NE)}{(SE)}{-2cm}
            \bdryline{(NW)}{(SW)}{2cm}
        \draw[fill] (A) circle (20pt);
            \draw[fill] (B) circle (20pt);
            }\ 
        }
        &=q^{4}
        \mathord{
            \ \tikz[baseline=-.6ex, scale=.08]{
                \coordinate (A) at (-10,0);
                \coordinate (B) at (10,0);
                \coordinate (NW) at ($(A)+(0,7)$);
                \coordinate (SW) at ($(A)+(0,-7)$);
                \coordinate (NE) at ($(B)+(0,7)$);
                \coordinate (SE) at ($(B)+(0,-7)$);
                \coordinate (T1) at ($(A)!0.5!(B)+(0,7)$);
                \coordinate (T2) at ($(A)!0.5!(B)+(0,-7)$);
                \draw[very thick] (NW) -- (SW);
                \draw[very thick] (NE) -- (SE);
                \draw[webline] (A) to[bend right=15] (B);
                \draw[webline] (A) to[bend left=15] (B);
                \draw[wline, rounded corners] (B) -- (T1) -- ($(T1)+(0,3)$);
                \draw[wline, rounded corners] (A) -- (T2) -- ($(T2)+(0,-3)$);
                \bdryline{(NE)}{(SE)}{-2cm}
                \bdryline{(NW)}{(SW)}{2cm}
                \draw[fill] (A) circle (20pt);
                \draw[fill] (B) circle (20pt);
            }
        }
        +q^{2}
        \mathord{
            \ \tikz[baseline=-.6ex, scale=.08]{
                \coordinate (A) at (-10,0);
                \coordinate (B) at (10,0);
                \coordinate (NW) at ($(A)+(0,7)$);
                \coordinate (SW) at ($(A)+(0,-7)$);
                \coordinate (NE) at ($(B)+(0,7)$);
                \coordinate (SE) at ($(B)+(0,-7)$);
                \coordinate (T1) at ($(A)!0.5!(B)+(0,4)$);
                \coordinate (T1east) at ($(A)!0.7!(B)+(0,2)$);
                \coordinate (T1west) at ($(A)!0.3!(B)+(0,2)$);
                \coordinate (T2) at ($(A)!0.5!(B)+(0,-4)$);
                \coordinate (T2east) at ($(A)!0.7!(B)+(0,-2)$);
                \coordinate (T2west) at ($(A)!0.3!(B)+(0,-2)$);
                \draw[very thick] (NW) -- (SW);
                \draw[very thick] (NE) -- (SE);
                \draw[wline] (A) -- (T1west);
                \draw[webline] (B) -- (T1west);
                \draw[webline] (B) -- (T1);
                \draw[webline] (T1) -- (T1west);
                \draw[wline] (B) -- (T2east);
                \draw[webline] (A) -- (T2east);
                \draw[webline] (A) -- (T2);
                \draw[webline] (T2) -- (T2east);
                \draw[wline] (T1) -- ($(T1)+(0,7)$);
                \draw[wline] (T2) -- ($(T2)+(0,-7)$);
                \bdryline{(NE)}{(SE)}{-2cm}
                \bdryline{(NW)}{(SW)}{2cm}
                \draw[fill] (A) circle (20pt);
                \draw[fill] (B) circle (20pt);
            }
        }\notag\\
        &\quad+[2]
        \mathord{
            \ \tikz[baseline=-.6ex, scale=.08]{
                \coordinate (A) at (-10,0);
                \coordinate (B) at (10,0);
                \coordinate (NW) at ($(A)+(0,7)$);
                \coordinate (SW) at ($(A)+(0,-7)$);
                \coordinate (NE) at ($(B)+(0,7)$);
                \coordinate (SE) at ($(B)+(0,-7)$);
                \coordinate (T1) at ($(A)!0.5!(B)+(0,3)$);
                \coordinate (T2) at ($(A)!0.5!(B)+(0,-3)$);
                \draw[very thick] (NW) -- (SW);
                \draw[very thick] (NE) -- (SE);
                \draw[wline] (A) -- (B);
                \draw[webline] (A) -- (T1);
                \draw[webline] (B) -- (T1);
                \draw[webline] (A) -- (T2);
                \draw[webline] (B) -- (T2);
                \draw[wline] (T1) -- ($(T1)+(0,7)$);
                \draw[wline] (T2) -- ($(T2)+(0,-7)$);
                \bdryline{(NE)}{(SE)}{-2cm}
                \bdryline{(NW)}{(SW)}{2cm}
                \draw[fill] (A) circle (20pt);
                \draw[fill] (B) circle (20pt);
            }
        }
        +q^{-2}
        \mathord{
            \ \tikz[baseline=-.6ex, scale=.08]{
                \coordinate (A) at (-10,0);
                \coordinate (B) at (10,0);
                \coordinate (NW) at ($(A)+(0,7)$);
                \coordinate (SW) at ($(A)+(0,-7)$);
                \coordinate (NE) at ($(B)+(0,7)$);
                \coordinate (SE) at ($(B)+(0,-7)$);
                \coordinate (T1) at ($(A)!0.5!(B)+(0,4)$);
                \coordinate (T1east) at ($(A)!0.7!(B)+(0,2)$);
                \coordinate (T1west) at ($(A)!0.3!(B)+(0,2)$);
                \coordinate (T2) at ($(A)!0.5!(B)+(0,-4)$);
                \coordinate (T2east) at ($(A)!0.7!(B)+(0,-2)$);
                \coordinate (T2west) at ($(A)!0.3!(B)+(0,-2)$);
                \draw[very thick] (NW) -- (SW);
                \draw[very thick] (NE) -- (SE);
                \draw[wline] (B) -- (T1east);
                \draw[webline] (A) -- (T1east);
                \draw[webline] (A) -- (T1);
                \draw[webline] (T1) -- (T1east);
                \draw[wline] (A) -- (T2west);
                \draw[webline] (B) -- (T2west);
                \draw[webline] (B) -- (T2);
                \draw[webline] (T2) -- (T2west);
                \draw[wline] (T1) -- ($(T1)+(0,7)$);
                \draw[wline] (T2) -- ($(T2)+(0,-7)$);
                \bdryline{(NE)}{(SE)}{-2cm}
                \bdryline{(NW)}{(SW)}{2cm}
                \draw[fill] (A) circle (20pt);
                \draw[fill] (B) circle (20pt);
            }
        }
        +q^{-4}
        \mathord{
            \ \tikz[baseline=-.6ex, scale=.08]{
                \coordinate (A) at (-10,0);
                \coordinate (B) at (10,0);
                \coordinate (NW) at ($(A)+(0,7)$);
                \coordinate (SW) at ($(A)+(0,-7)$);
                \coordinate (NE) at ($(B)+(0,7)$);
                \coordinate (SE) at ($(B)+(0,-7)$);
                \coordinate (T1) at ($(A)!0.5!(B)+(0,7)$);
                \coordinate (T2) at ($(A)!0.5!(B)+(0,-7)$);
                \draw[very thick] (NW) -- (SW);
                \draw[very thick] (NE) -- (SE);
                \draw[webline] (A) to[bend right=15] (B);
                \draw[webline] (A) to[bend left=15] (B);
                \draw[wline, rounded corners] (A) -- (T1) -- ($(T1)+(0,3)$);
                \draw[wline, rounded corners] (B) -- (T2) -- ($(T2)+(0,-3)$);
                \bdryline{(NE)}{(SE)}{-2cm}
                \bdryline{(NW)}{(SW)}{2cm}
                \draw[fill] (A) circle (20pt);
                \draw[fill] (B) circle (20pt);
            }
        }\label{eq:w-cutting}
    \end{align}
\end{lem}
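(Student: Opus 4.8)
The plan is to treat both identities as purely local relations in $\Skein{\Sigma}$ between a diagram carrying internal crossings (the over-passing vertical strand) and crossing-free diagrams, and to verify them by a direct diagrammatic computation. The mechanism is always the same: resolve every internal crossing on the left-hand side by the appropriate crossing-resolution relation among \eqref{rel:ss-cross}, \eqref{rel:ws-cross}, \eqref{rel:sw-cross}, \eqref{rel:ww-cross}, and then reduce each resulting planar web to one of the pieces on the right using the internal relations \eqref{rel:bigon}, \eqref{rel:trigon}, \eqref{rel:IH} together with the clasped boundary relations of \cref{def:bdry-skeinrel}. The ``smoothing'' terms of the crossing relations are precisely what reroutes a strand that previously passed straight through towards the special points $A$ and $B$, while the ``$H$''-terms account for the trivalent-vertex diagrams in the expansions.

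For \eqref{eq:s-cutting} the over-passing strand is of type $1$ and meets exactly two arcs, one of type $1$ and one of type $2$. I would first resolve the type $1$/type $1$ crossing by \eqref{rel:ss-cross} and the type $1$/type $2$ crossing by \eqref{rel:sw-cross}, expanding the left-hand side into a linear combination of planar webs. Each such web either already coincides with one of the four target diagrams, or contains an internal bigon that I remove via \eqref{rel:bigon} (rewriting any stray rung through the crossroad definition or \eqref{rel:IH} as needed). Finally I would bring the half-edges at $A$ and $B$ into a standard elevation order using \eqref{rel:bdry-cross} and collect the accumulated powers of $v$, which should assemble into the coefficients $v^{2}, v, v^{-1}, v^{-2}$.

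Equation \eqref{eq:w-cutting} is handled by the same scheme but is genuinely heavier, and this is where the main obstacle lies. Now the over-passing strand is of type $2$ and meets three arcs (one of type $2$ and two of type $1$), so one applies \eqref{rel:ww-cross} once and \eqref{rel:ws-cross} twice. The relation \eqref{rel:ww-cross} produces the boxed clasp term, which must be expanded further, and the three resolutions together generate many intermediate webs; the appearance of the non-monomial coefficient $[2]$ on the right is the signal that several of these terms must be combined before the answer stabilizes. The delicate part is therefore the bookkeeping: tracking the half-integer powers of $v$ coming from the simultaneous-crossing conventions of \cref{def:simul-crossing} and from \eqref{rel:bdry-cross}, and verifying the cancellations that collapse the expansion to the six stated terms. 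I would organize this by first reducing everything to flat webs, then fixing a uniform elevation order at $A$ and $B$, and only then reading off the coefficients.
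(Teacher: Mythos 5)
Your proposal is correct and follows exactly the route the paper intends: its entire proof of this lemma is the single sentence ``Apply skein relations to crossings,'' and your plan — resolving the crossings via \eqref{rel:ss-cross}, \eqref{rel:sw-cross}, \eqref{rel:ws-cross}, \eqref{rel:ww-cross} and then reducing the resulting planar webs with the internal and clasped relations while tracking powers of $v$ — is just that computation spelled out, with the relations correctly matched to the crossing types in each equation.
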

\begin{proof}
    Apply skein relations to crossings.
\end{proof}
\begin{lem}[The sticking trick]\label{lem:sticking-trick}
    \begin{align}
        &\begin{aligned}
            \mathord{
                \ \tikz[baseline=-.6ex, scale=.08, yshift=-5cm]{
                    \coordinate (A) at (-10,0);
                    \coordinate (B) at (10,0);
                    \coordinate (T11) at ($(A)!0.2!(B)+(0,15)$);
                    \coordinate (T21) at ($(A)!0.2!(B)+(0,10)$);
                    \coordinate (T12) at ($(A)!0.8!(B)+(0,15)$);
                    \coordinate (T22) at ($(A)!0.8!(B)+(0,10)$);
                    \draw[wline] (A) to[bend left=60] (B);
                    \draw[webline] (A) to[bend left=20] (B);
                    \draw[overarc, rounded corners] (T11) -- (T21) -- (T22) -- (T12);
                    \bdryline{(A)}{(B)}{2cm}
                    \draw[fill] (A) circle (20pt);
                    \draw[fill] (B) circle (20pt);
                }
            \ }
            &=q
            \mathord{
                \ \tikz[baseline=-.6ex, scale=.08, yshift=-5cm]{
                    \coordinate (A) at (-10,0);
                    \coordinate (B) at (10,0);
                    \coordinate (T11) at ($(A)!0.2!(B)+(0,15)$);
                    \coordinate (T21) at ($(A)!0.2!(B)+(0,10)$);
                    \coordinate (T12) at ($(A)!0.8!(B)+(0,15)$);
                    \coordinate (T22) at ($(A)!0.8!(B)+(0,10)$);
                    \draw[overarc] (T11) to[out=south, in=north] (B);
                    \draw[overarc] (T12) to[out=south, in=north] (A);
                    \draw[wline, shorten <=.2cm, shorten >=.2cm] (A) to[bend left] (B);
                    \bdryline{(A)}{(B)}{2cm}
                    \draw[fill] (A) circle (20pt);
                    \draw[fill] (B) circle (20pt);
                }\ 
            }
            -q^2
            \mathord{
                \ \tikz[baseline=-.6ex, scale=.08, yshift=-5cm]{
                    \coordinate (A) at (-10,0);
                    \coordinate (B) at (10,0);
                    \coordinate (T11) at ($(A)!0.2!(B)+(0,15)$);
                    \coordinate (T21) at ($(A)!0.2!(B)+(0,7)$);
                    \coordinate (T12) at ($(A)!0.8!(B)+(0,15)$);
                    \coordinate (T22) at ($(A)!0.8!(B)+(0,7)$);
                    \draw[webline] (T11) to[out=south, in=north] (T22);
                    \draw[overarc] (T12) to[out=south, in=north] (T21);
                    \draw[webline, shorten <=.3cm] (A) -- (T22);
                    \draw[overarc] (B) -- (T21);
                    \draw[wline] (A) -- (T21);
                    \draw[wline, shorten <=.3cm] (B) -- (T22);
                    \bdryline{(A)}{(B)}{2cm}
                    \draw[fill] (A) circle (20pt);
                    \draw[fill] (B) circle (20pt);
                }\ 
            }
            +q^3
            \mathord{
                \ \tikz[baseline=-.6ex, scale=.08, yshift=-5cm]{
                    \coordinate (A) at (-10,0);
                    \coordinate (B) at (10,0);
                    \coordinate (T11) at ($(A)!0.2!(B)+(0,15)$);
                    \coordinate (T21) at ($(A)!0.2!(B)+(0,7)$);
                    \coordinate (T12) at ($(A)!0.8!(B)+(0,15)$);
                    \coordinate (T22) at ($(A)!0.8!(B)+(0,7)$);
                    \draw[webline] (T11) -- (T21);
                    \draw[webline] (T12) -- (T22);
                    \draw[webline, shorten <=.3cm] (B) -- (T21);
                    \draw[overarc] (A) -- (T22);
                    \draw[wline, shorten <=.3cm] (A) -- (T21);
                    \draw[wline] (B) -- (T22);
                    \bdryline{(A)}{(B)}{2cm}
                    \draw[fill] (A) circle (20pt);
                    \draw[fill] (B) circle (20pt);
                }\ 
            }
            -q^4
            \mathord{
                \ \tikz[baseline=-.6ex, scale=.08, yshift=-5cm]{
                    \coordinate (A) at (-10,0);
                    \coordinate (B) at (10,0);
                    \coordinate (T11) at ($(A)!0.2!(B)+(0,15)$);
                    \coordinate (T21) at ($(A)!0.2!(B)+(0,10)$);
                    \coordinate (T12) at ($(A)!0.8!(B)+(0,15)$);
                    \coordinate (T22) at ($(A)!0.8!(B)+(0,10)$);
                    \draw[webline] (T11) to[out=south, in=north] (A);
                    \draw[webline] (T12) to[out=south, in=north] (B);
                    \draw[wline, shorten <=.2cm, shorten >=.2cm] (A) to[bend left] (B);
                    \bdryline{(A)}{(B)}{2cm}
                    \draw[fill] (A) circle (20pt);
                    \draw[fill] (B) circle (20pt);
                }\ 
            },
        \end{aligned}\label{eq:stick-1}\\
        &\begin{aligned}
            \mathord{
                \ \tikz[baseline=-.6ex, scale=.08, yshift=-5cm]{
                    \coordinate (A) at (-10,0);
                    \coordinate (B) at (10,0);
                    \coordinate (T11) at ($(A)!0.2!(B)+(0,15)$);
                    \coordinate (T21) at ($(A)!0.2!(B)+(0,10)$);
                    \coordinate (T12) at ($(A)!0.8!(B)+(0,15)$);
                    \coordinate (T22) at ($(A)!0.8!(B)+(0,10)$);
                    \draw[wline] (A) to[bend left=80] (B);
                    \draw[webline] (A) to[bend left=20] (B);
                    \draw[webline] (A) to[bend left=40] (B);
                    \draw[wline, rounded corners] (T11) -- (T21) -- (T22) -- (T12);
                    \bdryline{(A)}{(B)}{2cm}
                    \draw[fill] (A) circle (20pt);
                    \draw[fill] (B) circle (20pt);
                }
            \ }
            &=q^{2}
            \mathord{
                \ \tikz[baseline=-.6ex, scale=.08, yshift=-5cm]{
                    \coordinate (A) at (-10,0);
                    \coordinate (B) at (10,0);
                    \coordinate (T11) at ($(A)!0.2!(B)+(0,15)$);
                    \coordinate (T21) at ($(A)!0.2!(B)+(0,10)$);
                    \coordinate (T12) at ($(A)!0.8!(B)+(0,15)$);
                    \coordinate (T22) at ($(A)!0.8!(B)+(0,10)$);
                    \draw[overwline] (T11) to[out=south, in=north] (B);
                    \draw[overwline] (T12) to[out=south, in=north] (A);
                    \draw[webline, shorten <=.2cm, shorten >=.2cm] (A) to[bend left=20] (B);
                    \draw[webline, shorten <=.2cm, shorten >=.2cm] (A) to[bend left=40] (B);
                    \bdryline{(A)}{(B)}{2cm}
                    \draw[fill] (A) circle (20pt);
                    \draw[fill] (B) circle (20pt);
                }\ 
            }
            -q^4
            \mathord{
                \ \tikz[baseline=-.6ex, scale=.08, yshift=-5cm]{
                    \coordinate (A) at (-10,0);
                    \coordinate (B) at (10,0);
                    \coordinate (T11) at ($(A)!0.2!(B)+(0,15)$);
                    \coordinate (T21) at ($(A)!0.1!(B)+(0,10)$);
                    \coordinate (T31) at ($(A)!0.3!(B)+(0,7)$);
                    \coordinate (T12) at ($(A)!0.8!(B)+(0,15)$);
                    \coordinate (T22) at ($(A)!0.9!(B)+(0,10)$);
                    \coordinate (T32) at ($(A)!0.7!(B)+(0,7)$);
                    \draw[wline] (T11) to[out=south, in=north west] (T22);
                    \draw[overwline] (T12) to[out=south, in=north east] (T21);
                    \draw[webline] (T21) -- (T31);
                    \draw[wline, shorten <=.3cm] (A) -- (T32);
                    \draw[overwline] (B) -- (T31);
                    \draw[webline] (T22) -- (T32);
                    \draw[webline, shorten <=.3cm] (B) -- (T22);
                    \draw[webline, shorten <=.3cm] (B) -- (T32);
                    \draw[webline] (A) -- (T21);
                    \draw[webline] (A) -- (T31);
                    \bdryline{(A)}{(B)}{2cm}
                    \draw[fill] (A) circle (20pt);
                    \draw[fill] (B) circle (20pt);
                }\ 
            }\\
            &\quad +q^4[2]
            \mathord{
                \ \tikz[baseline=-.6ex, scale=.08, yshift=-5cm]{
                    \coordinate (A) at (-10,0);
                    \coordinate (B) at (10,0);
                    \coordinate (T11) at ($(A)!0.2!(B)+(0,15)$);
                    \coordinate (T21) at ($(A)!0.2!(B)+(0,10)$);
                    \coordinate (T12) at ($(A)!0.8!(B)+(0,15)$);
                    \coordinate (T22) at ($(A)!0.8!(B)+(0,10)$);
                    \draw[wline] (T11) -- (T21);
                    \draw[wline] (T12) -- (T22);
                    \draw[overarc, shorten <=.2cm] (B) -- (T21);
                    \draw[overarc] (A) -- (T22);
                    \draw[webline, shorten <=.2cm] (A) -- (T21);
                    \draw[webline] (B) -- (T22);
                    \draw[wline, shorten <=.5cm, shorten >=.5cm] (A) to[bend left=15] (B);
                    \bdryline{(A)}{(B)}{2cm}
                    \draw[fill] (A) circle (20pt);
                    \draw[fill] (B) circle (20pt);
                }\ 
            }
            -q^{4}
            \mathord{
                \ \tikz[baseline=-.6ex, scale=.08, yshift=-5cm]{
                    \coordinate (A) at (-10,0);
                    \coordinate (B) at (10,0);
                    \coordinate (T11) at ($(A)!0.2!(B)+(0,15)$);
                    \coordinate (T21) at ($(A)!0.1!(B)+(0,10)$);
                    \coordinate (T31) at ($(A)!0.3!(B)+(0,8)$);
                    \coordinate (T12) at ($(A)!0.8!(B)+(0,15)$);
                    \coordinate (T22) at ($(A)!0.9!(B)+(0,10)$);
                    \coordinate (T32) at ($(A)!0.7!(B)+(0,8)$);
                    \draw[wline] (T11) to[out=south, in=north] (T21);
                    \draw[wline] (T12) to[out=south, in=north] (T22);
                    \draw[webline] (T21) -- (T31);
                    \draw[overwline, shorten <=.3cm] (B) -- (T31);
                    \draw[webline] (T22) -- (T32);
                    \draw[webline] (B) -- (T22);
                    \draw[webline] (B) -- (T32);
                    \draw[overwline] (A) -- (T32);
                    \draw[webline, shorten <=.3cm] (A) -- (T21);
                    \draw[webline, shorten <=.3cm] (A) -- (T31);
                    \bdryline{(A)}{(B)}{2cm}
                    \draw[fill] (A) circle (20pt);
                    \draw[fill] (B) circle (20pt);
                }\ 
            }
            +q^{7}
            \mathord{
                \ \tikz[baseline=-.6ex, scale=.08, yshift=-5cm]{
                    \coordinate (A) at (-10,0);
                    \coordinate (B) at (10,0);
                    \coordinate (T11) at ($(A)!0.2!(B)+(0,15)$);
                    \coordinate (T21) at ($(A)!0.2!(B)+(0,10)$);
                    \coordinate (T12) at ($(A)!0.8!(B)+(0,15)$);
                    \coordinate (T22) at ($(A)!0.8!(B)+(0,10)$);
                    \draw[wline] (T11) to[out=south, in=north] (A);
                    \draw[wline] (T12) to[out=south, in=north] (B);
                    \draw[webline, shorten <=.2cm, shorten >=.2cm] (A) to[bend left=20] (B);
                    \draw[webline, shorten <=.2cm, shorten >=.2cm] (A) to[bend left=40] (B);
                    \bdryline{(A)}{(B)}{2cm}
                    \draw[fill] (A) circle (20pt);
                    \draw[fill] (B) circle (20pt);
                }\ 
            }.
        \end{aligned}\label{eq:stick-2}
    \end{align}
\end{lem}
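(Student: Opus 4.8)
The plan is to prove both identities by the same direct computation in $\Skein{\Sigma}$ that was used for the cutting trick (\cref{lem:cutteing-trick}): first manufacture honest internal crossings by isotoping the over-passing cap toward the boundary interval, then resolve every crossing against the boundary using the skein relations. For \eqref{eq:stick-1} I would first isotope the over-passing type~$1$ cap $T_{11}$–$T_{21}$–$T_{22}$–$T_{12}$ downward so that it crosses, transversally and with consistent over/under information, the two boundary-parallel arcs joining $A$ and $B$ (the type~$2$ arc drawn high and the type~$1$ arc drawn low). Each side of the cap then produces one crossing of a type~$1$ strand over a type~$2$ strand and one of a type~$1$ strand over a type~$1$ strand.

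Next I would resolve these crossings using the internal crossing relations of \cref{def:skeinrel}: the type~$1$-over-type~$1$ crossing via \eqref{rel:ss-cross} and the type~$1$-over-type~$2$ crossing via \eqref{rel:sw-cross}. Each resolution replaces a crossing by a sum of planar pieces carrying powers $v^{\pm 1}$, so expanding the two crossings yields a linear combination of planar diagrams in which the former cap now terminates in half-edges approaching the boundary interval. I would then simplify each resulting local picture with the clasped relations of \cref{def:bdry-skeinrel}: the vanishing relations \eqref{rel:bdry-tri}, \eqref{rel:bdry-bigon}, \eqref{rel:bdry-monogon} annihilate every term containing a trivalent vertex, bigon, or monogon meeting the boundary, while the elevation-braiding relations \eqref{rel:bdry-cross} together with the simultaneous-crossing conventions \eqref{ss-simul-cross}, \eqref{rel:sw-simul-cross}, \eqref{rel:ww-simul-cross} reorder the surviving half-edges at $A$ and $B$ into the standard elevation and thereby produce the explicit coefficients $v,-v^2,v^3,-v^4$. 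Collecting the four surviving terms gives \eqref{eq:stick-1}.

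For \eqref{eq:stick-2} the mechanism is identical with the edge types exchanged: the cap is now a type~$2$ strand passing over a bundle of one type~$2$ and two type~$1$ arcs, so the crossings are resolved by \eqref{rel:ww-cross} (type~$2$ over type~$2$) and \eqref{rel:ws-cross} (type~$2$ over type~$1$). Here the clasp-box summand of \eqref{rel:ww-cross}, after being pushed to the boundary, must be rewritten using the internal relations (in particular \eqref{rel:bigon}) and the clasped relations, and it is the combination of these contributions that reorganizes into the single $v^4[2]$-weighted diagram rather than surviving as separate terms. Matching the remaining pieces then yields the five terms of \eqref{eq:stick-2}.

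The hard part will be the bookkeeping rather than any conceptual difficulty: one must track the over/under data through each isotopy, select the correct resolution (hence the correct power of $v^{\pm 1}$ and sign) at every crossing, and verify that the numerous intermediate terms collapse—via the vanishing clasped relations—to exactly the four (respectively five) displayed terms with the stated coefficients. The most delicate verification is the emergence of the coefficient $[2]$ and the clasp-box diagram in \eqref{eq:stick-2}, where one must confirm that the several contributions coming from the third summand of \eqref{rel:ww-cross} combine correctly after simplification near the external clasp.
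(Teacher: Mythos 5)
Your overall strategy---push the over-passing cap down so that it genuinely crosses the boundary-parallel arcs, resolve the resulting crossings with \eqref{rel:ss-cross} and \eqref{rel:sw-cross} (resp.\ \eqref{rel:ww-cross} and \eqref{rel:ws-cross} for \eqref{eq:stick-2}), and then simplify near the external clasp---is sound, and it is in substance what the paper does: the paper merely packages the crossing resolutions as an application of the already-proved cutting trick (\cref{lem:cutteing-trick}), obtained by gluing the two boundary intervals of that lemma and bending the middle line into the returning arc.

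There is, however, one concrete misattribution in your sketch that would derail the bookkeeping if followed literally. You claim that the coefficients $v,-v^{2},v^{3},-v^{4}$ in \eqref{eq:stick-1} are produced by the elevation relations \eqref{rel:bdry-cross} together with the simultaneous-crossing conventions at $A$ and $B$. Those relations only ever contribute monomials $v^{k/2}$ with coefficient $+1$, and the clasped relations \eqref{rel:bdry-tri}--\eqref{rel:bdry-monogon} have coefficients in $\{0,1,1/[2]\}$; none of them can produce a minus sign. (Also, \eqref{rel:bdry-tri} does not annihilate boundary trivalent vertices: three of its four cases reduce to nonzero diagrams, and indeed several surviving terms of \eqref{eq:stick-1} still carry such vertices.) The signs actually arise at the \emph{turnaround of the cap}, away from the boundary: after resolving the crossings on one leg, the remaining leg passes over a cup or over a trivalent vertex attached to the bent strand, and straightening these local pictures requires (R1$'$)-type and bigon-type reductions. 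This is precisely the content of the four auxiliary identities recorded in the paper's proof, with coefficients $-v^{2}$, $-v^{-1}$, $v^{-4}$, $v^{-4}$; they are where the minus signs and part of the $v$-exponents come from. Your plan needs this extra ingredient to produce the stated coefficients; once it is added, the computation goes through as you describe, including the emergence of the $v^{4}[2]$ term in \eqref{eq:stick-2} from the clasp-box summand of \eqref{rel:ww-cross} combined with \eqref{rel:bigon}.
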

\begin{proof}
    These formulas are derived from the cutting trick.
    In \cref{lem:cutteing-trick}, glue the bottom ends of the left and right boundary intervals and consider the returning arc obtained by bending the middle line.
    Then the statement follows by a straightforward calculation.
    The following formulas are useful in the calculation:
    \begin{align*}
        \mathord{
            \ \tikz[baseline=-.6ex, scale=.08, yshift=-5cm]{
                \coordinate (A) at (-8,0);
                \coordinate (B) at (8,0);
                \coordinate (NW) at ($(A)!0.2!(B)+(0,15)$);
                \coordinate (NW2) at ($(A)!0.3!(B)+(0,7)$);
                \coordinate (NE) at ($(B)!0.2!(A)+(0,15)$);
                \coordinate (NE2) at ($(B)!0.3!(A)+(0,7)$);
                \draw[wline] (NW2) to[out=north west, in=south west] (NE);
                \draw[overwline] (NE2) to[out=north east, in=south east] (NW);
                \draw[webline] (NW2) -- (NE2);
                \draw[webline] (0,0) -- (NE2);
                \draw[webline] (0,0) -- (NW2);
                \bdryline{(A)}{(B)}{2cm}
                \draw[fill] (0,0) circle (20pt);
            }\ 
        }
        &=-q^{2}
        \mathord{
            \ \tikz[baseline=-.6ex, scale=.08, yshift=-5cm]{
                \coordinate (A) at (-8,0);
                \coordinate (B) at (8,0);
                \coordinate (NW) at ($(A)!0.2!(B)+(0,15)$);
                \coordinate (NW2) at ($(A)!0.3!(B)+(0,7)$);
                \coordinate (NE) at ($(B)!0.2!(A)+(0,15)$);
                \coordinate (NE2) at ($(B)!0.3!(A)+(0,7)$);
                \draw[wline] (NW2) -- (NW);
                \draw[wline] (NE2) -- (NE);
                \draw[webline] (NW2) -- (NE2);
                \draw[webline] (0,0) -- (NE2);
                \draw[webline] (0,0) -- (NW2);
                \bdryline{(A)}{(B)}{2cm}
                \draw[fill] (0,0) circle (20pt);
            }\ 
        },
        &
        \mathord{
            \ \tikz[baseline=-.6ex, scale=.08, yshift=-5cm]{
                \coordinate (A) at (-8,0);
                \coordinate (B) at (8,0);
                \coordinate (NW) at ($(A)!0.2!(B)+(0,15)$);
                \coordinate (NW2) at ($(A)!0.3!(B)+(0,7)$);
                \coordinate (NE) at ($(B)!0.2!(A)+(0,15)$);
                \coordinate (NE2) at ($(B)!0.3!(A)+(0,7)$);
                \draw[webline] (0,5) to[out=east, in=east] (NW);
                \draw[overarc] (0,5) to[out=west, in=west] (NE);
                \draw[wline] (0,0) -- (0,5);
            }\ 
        }
        &=-q^{-1}
        \mathord{
            \ \tikz[baseline=-.6ex, scale=.08, yshift=-5cm]{
                \coordinate (A) at (-8,0);
                \coordinate (B) at (8,0);
                \coordinate (NW) at ($(A)!0.2!(B)+(0,15)$);
                \coordinate (NW2) at ($(A)!0.3!(B)+(0,7)$);
                \coordinate (NE) at ($(B)!0.2!(A)+(0,15)$);
                \coordinate (NE2) at ($(B)!0.3!(A)+(0,7)$);
                \draw[webline] (0,5) -- (NW);
                \draw[webline] (0,5) -- (NE);
                \draw[wline] (0,0) -- (0,5);
            }\ 
        },
        &
        \mathord{
            \ \tikz[baseline=-.6ex, scale=.08, yshift=-5cm]{
                \coordinate (A) at (-8,0);
                \coordinate (B) at (8,0);
                \coordinate (NW) at ($(A)!0.2!(B)+(0,15)$);
                \coordinate (NW2) at ($(A)!0.3!(B)+(0,7)$);
                \coordinate (NE) at ($(B)!0.2!(A)+(0,15)$);
                \coordinate (NE2) at ($(B)!0.3!(A)+(0,7)$);
                \draw[webline] (0,5) to[out=east, in=east] (NW);
                \draw[overwline] (0,5) to[out=west, in=west] (NE);
                \draw[webline] (0,0) -- (0,5);
            }\ 
        }
        &=q^{-4}
        \mathord{
            \ \tikz[baseline=-.6ex, scale=.08, yshift=-5cm]{
                \coordinate (A) at (-8,0);
                \coordinate (B) at (8,0);
                \coordinate (NW) at ($(A)!0.2!(B)+(0,15)$);
                \coordinate (NW2) at ($(A)!0.3!(B)+(0,7)$);
                \coordinate (NE) at ($(B)!0.2!(A)+(0,15)$);
                \coordinate (NE2) at ($(B)!0.3!(A)+(0,7)$);
                \draw[webline] (0,5) -- (NW);
                \draw[wline] (0,5) -- (NE);
                \draw[webline] (0,0) -- (0,5);
            }\ 
        },
        &
        \mathord{
            \ \tikz[baseline=-.6ex, scale=.08, yshift=-5cm]{
                \coordinate (A) at (-8,0);
                \coordinate (B) at (8,0);
                \coordinate (NW) at ($(A)!0.2!(B)+(0,15)$);
                \coordinate (NW2) at ($(A)!0.3!(B)+(0,7)$);
                \coordinate (NE) at ($(B)!0.2!(A)+(0,15)$);
                \coordinate (NE2) at ($(B)!0.3!(A)+(0,7)$);
                \draw[wline] (0,5) to[out=east, in=east] (NW);
                \draw[overarc] (0,5) to[out=west, in=west] (NE);
                \draw[webline] (0,0) -- (0,5);
            }\ 
        }
        &=q^{-4}
        \mathord{
            \ \tikz[baseline=-.6ex, scale=.08, yshift=-5cm]{
                \coordinate (A) at (-8,0);
                \coordinate (B) at (8,0);
                \coordinate (NW) at ($(A)!0.2!(B)+(0,15)$);
                \coordinate (NW2) at ($(A)!0.3!(B)+(0,7)$);
                \coordinate (NE) at ($(B)!0.2!(A)+(0,15)$);
                \coordinate (NE2) at ($(B)!0.3!(A)+(0,7)$);
                \draw[wline] (0,5) -- (NW);
                \draw[webline] (0,5) -- (NE);
                \draw[webline] (0,0) -- (0,5);
            }\ 
        }.
    \end{align*}
\end{proof}

\begin{rem}
    The sticking trick in this form corresponds to the sticking trick in the stated $\mathfrak{sp}_4$-skein algebra via an isomorphism constructed in \cite{IYstmk}.
\end{rem}

\subsection{A generating set of the \texorpdfstring{$\mathfrak{sp}_4$}{sp4}-skein algebra}
Roughly speaking, in order to construct an inclusion of the boundary-localized $\mathfrak{sp}_4$-skein algebra 
into the quantum cluster algebra 
we need to write an $\mathfrak{sp}_4$-web as a (not necessarily positive) polynomial of $\mathfrak{sp}_4$-webs that correspond to cluster variables.
For this, let us first prepare some classes of $\mathfrak{sp}_4$-graph diagrams that give rise to nice generating sets of $\Skein{\Sigma}$ and $\Skein{\Sigma}[\partial^{-1}]$.

\begin{dfn}\label{def:diagram}
    A \emph{tangled loop (resp.~arc) diagram} on $\Sigma$ is a tangled $\mathfrak{sp}_4$-graph diagram given by a map from a circle (resp. interval) of type~$1$ or type~$2$.
    A \emph{tangled loop (resp.~arc) diagram with legs} on $\Sigma$ is a connected tangled $\mathfrak{sp}_4$-graph diagram on $\Sigma$ obtained by connecting a tangled loop (resp.~arc) diagram of type~$1$ to special points by attaching several edges of type~$2$ (called \emph{legs}).
    A \emph{tangled triad diagram} on $\Sigma$ is a tangled arc diagram of type~$1$ with only one leg.
    We say that a tangled $\mathfrak{sp}_4$-graph diagram is \emph{simple} if it has no internal crossings.
    
    A tangled arc diagram on $\Sigma$ is said to be \emph{descending} if one passes every self-crossing points through an over-pass first, following some fixed orientation.
    A tangled loop diagram on $\Sigma$ with a basepoint is said to be \emph{descending} if an oriented tangled arc starting from the basepoint satisfies the descending property. 
    A tangled loop or arc diagram with legs on $\Sigma$ is said to be \emph{descending} if its legs have no internal crossings, and the diagram obtained by removing the legs is descending. 
\end{dfn}

\begin{dfn}\label{def:web-set}
    \begin{enumerate}
        \item We denote by $\Desc{\Sigma}$ the set of all 
        the descending loop/arc diagrams of type~$1$ with or without legs. (Here notice that it is possible that their ends share a common special point.)
        \item A \emph{stated end} of type~$1$ means an end of either types
        $
        \mathord{
            \ \tikz[baseline=-.6ex, scale=.05, yshift=-5cm]{
                \coordinate (A1) at (-10,0);
                \coordinate (A2) at (10,0);
                \coordinate (C0) at ($(A1)!0.5!(A2)+(0,12)$);
                \coordinate (B0) at ($(A1)!0.5!(A2)+(0,5)$);
                \draw[webline] (C0) -- (B0);
                \draw[wline] (A1) -- (B0);
                \draw[webline] (A2) -- (B0);
                \bdryline{(A1)}{(A2)}{2cm}
                \draw[fill] (A1) circle (30pt);
                \draw[fill] (A2) circle (30pt);
            }\ 
        }
        $
        ,
        $
        \mathord{
            \ \tikz[baseline=-.6ex, scale=.05, yshift=-5cm]{
                \coordinate (A1) at (-10,0);
                \coordinate (A2) at (10,0);
                \coordinate (C0) at ($(A1)!0.5!(A2)+(0,12)$);
                \coordinate (B0) at ($(A1)!0.5!(A2)+(0,5)$);
                \draw[webline] (C0) -- (B0);
                \draw[webline] (A1) -- (B0);
                \draw[wline] (A2) -- (B0);
                \bdryline{(A1)}{(A2)}{2cm}
                \draw[fill] (A1) circle (30pt);
                \draw[fill] (A2) circle (30pt);
            }\ 
        }
        $
        , or
        $
        \mathord{
            \ \tikz[baseline=-.6ex, scale=.05, yshift=-5cm]{
                \coordinate (A0) at (0,0);
                \coordinate (A1) at (-10,0);
                \coordinate (A2) at (10,0);
                \coordinate (C0) at ($(A1)!0.5!(A2)+(0,12)$);
                \coordinate (B0) at ($(A1)!0.5!(A2)+(0,5)$);
                \draw[webline] (C0) -- (A0);
                \bdryline{(A1)}{(A2)}{2cm}
                \draw[fill] (A0) circle (30pt);
            }\ 
        }
        $.
        \item The set $\DescWil{\Sigma}$ of \emph{descending Wilson lines of type $1$} consists of all descending $\mathfrak{sp}_4$-graphs obtained by connecting two stated ends on distinct boundary intervals by a descending tangled arc of type $1$.
        \item The set $\SimpWil{\Sigma}$ of \emph{simple Wilson lines of type $1$} consists of all simple $\mathfrak{sp}_4$-graphs obtained by connecting two stated ends on distinct boundary intervals by a simple arc of type $1$.
\end{enumerate}
\end{dfn}

\begin{rem}\label{rem:Wilson_line}
A simple Wilson line of type $1$ corresponds to a matrix entry of a simple Wilson line (\cite{IOS}) in the representation $V(\varpi_1)$, up to boundary webs. See also \cite{IYstmk}. 
\end{rem}
We are going to discuss a generating set of the $\mathfrak{sp}_4$-skein algebra. 
For example, $\Skein{T}$ is generated $\SimpWil{T}$ by \cref{thm:Eweb-T} for a triangle $T$.

\begin{prop}\label{prop:desc-gen}
    For any unpunctured marked surface $\Sigma$, any basis web is expressed as a polynomial in elements of $\Desc{\Sigma}$, simple loops and arcs of type $2$ in $\Skein{\Sigma}$ with coefficients in $\bZ_q$.
    In particular, $\Desc{\Sigma}$ generates $\Skein{\Sigma}$ as an $\cR$-algebra.
\end{prop}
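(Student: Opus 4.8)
The plan is to reduce to basis webs and then induct on the number of crossroads. By \cref{thm:basis-web}, $\Bweb{\Sigma}$ is an $\cR$-basis of $\Skein{\Sigma}$, so for the first assertion it suffices to express each basis web $G$ -- a flat crossroad web without elliptic faces (\cref{def:basis-web}) -- as a $\bZ_v$-polynomial in descending loop/arc diagrams of type $1$ with legs and in simple type $2$ loops/arcs. I set $\nu(G)$ to be the number of crossroads of $G$ and argue by induction on $\nu(G)$. In the base case $\nu(G)=0$, the type $1$ part of $G$ is an embedded (crossingless) collection of loops and arcs, each carrying its type $2$ legs, while the remaining type $2$ edges form standalone simple loops and arcs. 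Since all these pieces are pairwise disjoint in $\Sigma$, lifting them to distinct elevations realizes $G$ as a product of such pieces, up to a power of $v$ coming from the clasped relations \eqref{rel:bdry-cross} needed to reorder elevations at shared special points. Each type $1$ component with its legs is a simple -- hence descending -- loop/arc diagram with legs, i.e.\ an element of $\Desc{\Sigma}$, and the rest are type $2$ loops/arcs, so $G$ is a monomial in the desired generators.

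For the inductive step I would view the type $1$ part of $G$ as an immersion of loops and arcs whose only crossings in $\Sigma$ are the crossroads, where two through-strands meet. Taking each through-strand, together with its legs, as a \emph{descending} diagram $S_i\in\Desc{\Sigma}$ whose planar shadow is that strand, and keeping the standalone type $2$ loops/arcs $L_j$, I form the product $P=S_1\cdots S_m L_1\cdots L_k$ in $\Skein{\Sigma}$. Because the pieces are disjoint in $\Sigma$ away from the crossroads, the superposition defining $P$ produces internal crossings \emph{exactly} over the crossroad locations of $G$, and no others; in particular all of them are type $1$/type $1$ crossings. Resolving each such crossing by \eqref{rel:ss-cross} and rewriting the resulting $H$-piece through the defining identity of the crossroad, every crossing expands as $(\text{crossroad})+(\text{turnbacks})$ with the crossroad appearing with coefficient $1$. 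Choosing the crossroad at all crossings recovers $G$ with an invertible coefficient $v^{a}$, whereas any other choice resolves at least one crossing into a turnback, yielding a flat diagram with strictly fewer crossings available to become crossroads. Expanding these remaining diagrams in $\Bweb{\Sigma}$ via the reduction rules of \cref{def:reduction}, which do not increase the complexity measure of \cref{lem:sp4-confluence}, produces basis webs of strictly smaller $\nu$. Thus $G=v^{-a}P-v^{-a}(\text{terms of smaller }\nu)$, and the induction hypothesis completes the first assertion.

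Finally, for the ``in particular'' statement I must remove the type $2$ loops/arcs from the generating set at the cost of enlarging the coefficient ring to $\cR$. Using the bigon relation \eqref{rel:bigon} together with the cutting and sticking tricks (\cref{lem:cutteing-trick,lem:sticking-trick}), a product of two descending type $1$ arcs (resp.\ loops) expands into the corresponding simple type $2$ arc (resp.\ loop) plus type $1$ and crossroad terms; since $1/[2]\in\cR$, I can solve for the type $2$ piece and thereby exhibit every simple type $2$ loop/arc as an element of the $\cR$-subalgebra generated by $\Desc{\Sigma}$. Combined with the first assertion, this shows that $\Desc{\Sigma}$ generates $\Skein{\Sigma}$ as an $\cR$-algebra.

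The hard part will be the bookkeeping in the inductive step: verifying that stacking the descending strands introduces crossings only over the crossroads, so that $P$ really has $G$ as its leading term, and that each turnback resolution, after reduction to the basis, strictly lowers the crossroad count, so that the induction is well-founded. Tracking the global scalar $v^{a}$ and confirming that no spurious type $1$/type $2$ crossings arise at the special points -- a point handled by \eqref{rel:bdry-cross} and the simultaneous-crossing conventions -- is the remaining delicate ingredient.
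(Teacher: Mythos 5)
Your argument for the first assertion is essentially the paper's proof in a different packaging: the paper peels off one descending strand at a time, writing $G=XG'+(\text{lower order})$ with $X\in\Desc{\Sigma}$ via the identity $\text{crossroad}=\text{crossing}-v(\text{turnback})-v^{-1}(\text{turnback})$ and inducting on the number of crossroads plus crossings, whereas you extract all through-strands at once and identify $G$ as the leading (all-crossroads) term of the product $P=S_1\cdots S_mL_1\cdots L_k$. Both versions rest on the same two facts, and in both the delicate point is the same: you assert that the reduction of the turnback terms to $\Bweb{\Sigma}$ does not increase $\nu$ because it does not increase ``the complexity measure of \cref{lem:sp4-confluence}'', but that measure is lexicographic and its second entry is the number of trivalent vertices \emph{plus} crossroads, so it does not by itself bound $\nu$ (the rule $S_8$, for instance, strictly decreases that entry while creating a crossroad). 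What saves the argument is that your intermediate diagrams are flat and rung-free, and for such diagrams the only rules that create crossroads ($S_8$, $C_1$) never fire and all applicable rules have coefficients in $\bZ_v$; you should say this explicitly rather than appeal to the confluence quintuple.

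There is, however, a genuine error in your derivation of the ``in particular'' statement. You claim that a product of two descending type~$1$ arcs (resp.\ loops) expands into the corresponding simple type~$2$ arc (resp.\ loop). It does not: the product of two parallel copies of a simple type~$1$ arc (resp.\ loop) is, up to a power of $v$ coming from the clasped relations, the $2$-parallel arc (resp.\ the $2$-bangle), which is itself a basis web and hence irreducible --- its basis expansion is a single term and contains no type~$2$ component. The correct mechanism is \eqref{rel:bigon} used in the other direction. For a type~$2$ arc between $p$ and $q$, inserting a type~$1$ bigon in its middle shows that the arc equals $-[2]^{-1}$ times a contractible type~$1$ loop carrying two type~$2$ legs ending at $p$ and $q$; that loop-with-legs is simple, hence lies in $\Desc{\Sigma}$, and this is exactly where the factor $[2]^{-1}$ forces the second assertion to be over $\cR$ rather than $\bZ_v$. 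For a type~$2$ loop along $\gamma$ one instead resolves the single crossing of the descending type~$1$ loop double-covering $\gamma$ via \eqref{rel:ss-cross}: one of the two $H$-terms has its rung parallel to $\gamma$ and reduces by \eqref{rel:bigon} to $-[2]$ times the type~$2$ loop, while the remaining terms are the $2$-bangle and a scalar, so the type~$2$ loop again lies in the $\cR$-algebra generated by $\Desc{\Sigma}$. With this replacement your proof closes; as written, the last step fails.
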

\begin{proof}
    Let us define the \emph{complexity} $|G|$ of a crossroad diagram $G$ to be the number of crossroads and internal crossings of $G$, where a \emph{crossroad diagram} is a tangled $\mathfrak{sp}_4$-graph with $4$-valent vertices corresponding to crossroads and internal crossings and no rungs.
    Note that if a connected crossroad diagram $G$ has the complexity $|G|=0$, then $G$ is a simple loop or arc diagram with or without legs.
    We claim that a connected flat crossroad diagram $G$ is written as a $\bZ_q$-linear combination of $XG'$ and flat crossroad diagrams with complexity less than $|G|$, where $X\in\Desc{\Sigma}$ and $G'$ is a flat crossroad diagram with $|G'|<|G|$.
    Then by induction on the complexity, one can show that any basis web is written as a polynomial in $\Desc{\Sigma}$ over $\bZ_q$ because a basis web is represented as a product of connected flat crossroad diagrams.
    It concludes that $\Desc{\Sigma}$ generates $\Skein{\Sigma}$ as an $\cR$-algebra.
    
    Let $G$ be any connected flat crossroad diagram on $\Sigma$, and choose its basepoint on a type~$1$ edge.
    Here if $G$ has a type~$1$ edge incident to a special point, then we take the basepoint on such an edge.    
    We start from the basepoint, and arriving at a crossroad, replace it with an internal crossing by:
    \begin{align}
        \mathord{
            \ \tikz[baseline=-.6ex, scale=.08, rotate=90]{
                \draw[dashed, fill=white] (0,0) circle [radius=7];
                \draw[webline] (45:7) -- (-135:7);
                \draw[webline] (135:7) -- (-45:7);
                \draw[fill=pink, thick] (0,0) circle [radius=30pt];
            }
        \ }
        =
        \mathord{
            \ \tikz[baseline=-.6ex, scale=.08]{
                \draw[dashed, fill=white] (0,0) circle [radius=7];
                \draw[red, very thick] (-45:7) -- (135:7);
                \draw[overarc] (-135:7) -- (45:7);
                }
        \ }
        -q
        \mathord{
            \ \tikz[baseline=-.6ex, scale=.08]{
                \draw[dashed, fill=white] (0,0) circle [radius=7];
                \draw[webline] (-45:7) to[out=north west, in=south] (3,0) to[out=north, in=south west] (45:7);
                \draw[webline] (-135:7) to[out=north east, in=south] (-3,0) to[out=north, in=south east] (135:7);
            }
        \ }
        -q^{-1}
        \mathord{
            \ \tikz[baseline=-.6ex, scale=.08, rotate=90]{
                \draw[dashed, fill=white] (0,0) circle [radius=7];
                \draw[webline] (-45:7) to[out=north west, in=south] (3,0) to[out=north, in=south west] (45:7);
                \draw[webline] (-135:7) to[out=north east, in=south] (-3,0) to[out=north, in=south east] (135:7);
            }
        \ }.\label{eq:crossroad-to-crossing}
    \end{align}

    Here we choose the crossing in such a way that our chosen path becomes an over-passing subarc.
    This operation produces two extra diagrams with complexity less than $|G|-1$: see the lower two diagrams in the middle column in \cref{fig:desc-gen}.
    We repeat this procedure until we return to the basepoint or arrive at a special point.
    See diagrams in the top route in \cref{fig:desc-gen}.
    If one arrives at a type~$2$ edge (incident to a special point) in this procedure, turn at the vertex there and go to the other type~$1$ edge.
    
    By construction, our trail $X$ becomes a descending loop or an arc diagram with/without legs, and another component is a flat crossroad diagram $G'$ with $|G'|<|G|$ since $X$ removes at least one crossroad from $G$.
    Although crossroad diagrams other than $XG'$ arise in this procedure, they have complexity less than $|G|$.
    These redundant crossroad diagrams become a $\bZ_q$-linear combination of $\Bweb{\Sigma}$ by the reduction rules in \cref{def:reduction} and \eqref{eq:crossroad-to-crossing}, since the reduction rules related to crossroads and \eqref{eq:crossroad-to-crossing} all have coefficients in $\bZ_q$ and do not increase the complexity.
    Consequently, we can decompose $G$ into $XG'+\sum_{i}\lambda_iG_i$ such that $X\in\Desc{\Sigma}$, $G'$ and $G_i$ are flat crossroad diagrams with $|G'|<|G|$ and $|G_i|<|G|$. Thus the first assertion is proved.
    
    We can exclude simple loops and arcs of type~$2$ from generators of $\Skein{\Sigma}$ as $\cR$-algebra. Indeed, one can produce a bigon by deforming a simple loop of type~$2$ using \eqref{rel:bigon} and apply it to the algorithm above (notice that this produces a factor $[2]^{-1}$). In a similar way, we consider a simple arc of type $2$ as a trivial loop with two legs. Thus the second assertion is proved.
\end{proof}

\begin{figure}
    \begin{tikzcd}
        \begin{tikzpicture}[scale=.1, yshift=-2cm]
            \foreach \i in {0,1,...,4}
            \foreach \j in {0,1,...,4}
            {
                \coordinate (P\i\j) at (4*\i,3*\j);
            }
            \draw[webline] (P00) -- (P22);
            \draw[webline] (P20) -- (P02);
            \draw[webline] (P22) -- (P44);
            \draw[webline] (P42) -- (P24);
            \draw[fill=pink, thick] (P11) circle [radius=20pt];
            \draw[fill=pink, thick] (P33) circle [radius=20pt];
            \node at (P20) [below]{\scriptsize $G$};
        \end{tikzpicture}
        \arrow{r}{}\arrow{rd}{}\arrow{rdd}{}
        &
        \begin{tikzpicture}[scale=.1, yshift=-2cm]
            \foreach \i in {0,1,...,4}
            \foreach \j in {0,1,...,4}
            {
                \coordinate (P\i\j) at (4*\i,3*\j);
            }
            \draw[webline] (P20) -- (P02);
            \draw[overarc] (P00) -- (P22);
            \draw[webline] (P22) -- (P44);
            \draw[webline] (P42) -- (P24);
            \draw[fill=pink, thick] (P33) circle [radius=20pt];
            \node at (P20) [below]{\scriptsize \text{complexity} $|G|$};
        \end{tikzpicture}
        \arrow{r}{}
        &
        \begin{tikzpicture}[scale=.1, yshift=-2cm]
            \foreach \i in {0,1,...,4}
            \foreach \j in {0,1,...,4}
            {
                \coordinate (P\i\j) at (4*\i,3*\j);
            }
            \draw[webline] (P20) -- (P02);
            \draw[webline] (P42) -- (P24);
            \draw[overarc] (P00) -- (P22);
            \draw[overarc] (P22) -- (P44);
            \node at (P20) [below]{\scriptsize \text{complexity} $|G|$};
        \end{tikzpicture}
        \\[-2em]
        &
        \begin{tikzpicture}[scale=.1]
            \foreach \i in {0,1,...,4}
            \foreach \j in {0,1,...,4}
            {
                \coordinate (P\i\j) at (4*\i,3*\j);
            }
            \draw[webline] (P00) to[out=north east, in=north west] (P20);
            \draw[webline] (P02) to[out=south east, in=south west] (P22);
            \draw[webline] (P22) -- (P44);
            \draw[webline] (P42) -- (P24);
            \draw[fill=pink, thick] (P33) circle [radius=20pt];
            \node at (P20) [below]{\scriptsize \text{complexity} $|G|-1$};
        \end{tikzpicture}
        &
        \begin{tikzpicture}[scale=.1]
            \foreach \i in {0,1,...,4}
            \foreach \j in {0,1,...,4}
            {
                \coordinate (P\i\j) at (4*\i,3*\j);
            }
            \draw[webline] (P20) -- (P02);
            \draw[overarc] (P00) -- (P22);
            \draw[webline] (P22) to[out=north east, in=south east] (P24);
            \draw[webline] (P42) to[out=north west, in=south west] (P44);
            \node at (P20) [below]{\scriptsize \text{complexity} $|G|-1$};
        \end{tikzpicture}
        \\[-2em]
        &
        \begin{tikzpicture}[scale=.1]
            \foreach \i in {0,1,...,4}
            \foreach \j in {0,1,...,4}
            {
                \coordinate (P\i\j) at (4*\i,3*\j);
            }
            \draw[webline] (P00) to[out=north east, in=north west] (P20);
            \draw[webline] (P02) to[out=south east, in=south west] (P22);
            \draw[webline] (P22) -- (P44);
            \draw[webline] (P42) -- (P24);
            \draw[fill=pink, thick] (P33) circle [radius=20pt];
            \node at (P20) [below]{\scriptsize \text{complexity} $|G|-1$};
        \end{tikzpicture}
        &
        \begin{tikzpicture}[scale=.1]
            \foreach \i in {0,1,...,4}
            \foreach \j in {0,1,...,4}
            {
                \coordinate (P\i\j) at (4*\i,3*\j);
            }
            \draw[webline] (P20) -- (P02);
            \draw[overarc] (P00) -- (P22);
            \draw[webline] (P22) to[out=north east, in=north west] (P42);
            \draw[webline] (P24) to[out=south east, in=south west] (P44);
            \node at (P20) [below]{\scriptsize \text{complexity} $|G|-1$};
        \end{tikzpicture}
    \end{tikzcd}
    \caption{The top route removes a descending loop or arc $X$ from a flat crossroad diagram $G$. The other resulting graphs are described as a $\bZ_q$-linear combination of crossroad diagrams with complexity less than $|G|$.}
    \label{fig:desc-gen}
\end{figure}
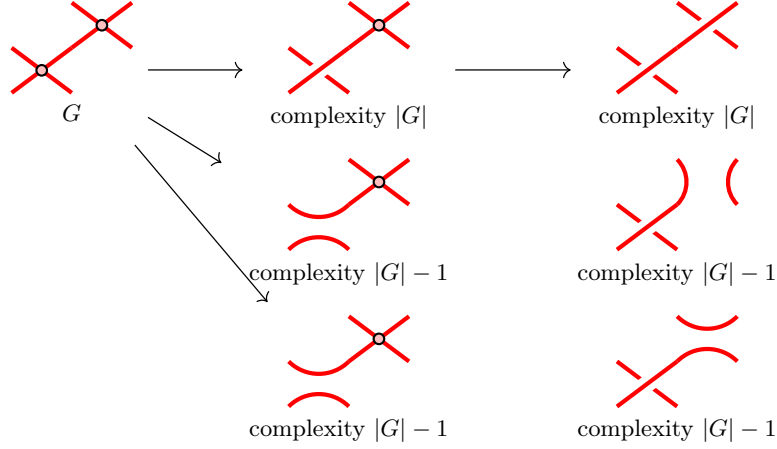

\begin{rem}\label{rem:basept}
        In the proof of \cref{prop:desc-gen}, we can choose a basepoint of a descending loop diagram (with legs) on an arbitrary subarc.
        Indeed, if $G$ and $G'$ are two descending loop diagrams with the same projection to $\Sigma$ such that their basepoints are incident to each other across a single subarc, then these two descending loop diagrams are related by
        \begin{align}
            \mathord{
                \ \tikz[baseline=-.6ex, scale=.08]{
                    \draw[dashed, fill=white] (0,0) circle [radius=7];
                    \draw[red, very thick] (-45:7) -- (135:7);
                    \draw[overarc] (-135:7) -- (45:7);
                    }
            \ }
            -
            \mathord{
                \ \tikz[baseline=-.6ex, scale=.08, rotate=90]{
                    \draw[dashed, fill=white] (0,0) circle [radius=7];
                    \draw[red, very thick] (-45:7) -- (135:7);
                    \draw[overarc] (-135:7) -- (45:7);
                    }
            \ }
            =(q-q^{-1})\left(
            \mathord{
                \ \tikz[baseline=-.6ex, scale=.08]{
                    \draw[dashed, fill=white] (0,0) circle [radius=7];
                    \draw[webline] (-45:7) to[out=north west, in=south] (3,0) to[out=north, in=south west] (45:7);
                    \draw[webline] (-135:7) to[out=north east, in=south] (-3,0) to[out=north, in=south east] (135:7);
                }
            \ }
            -
            \mathord{
                \ \tikz[baseline=-.6ex, scale=.08, rotate=90]{
                    \draw[dashed, fill=white] (0,0) circle [radius=7];
                    \draw[webline] (-45:7) to[out=north west, in=south] (3,0) to[out=north, in=south west] (45:7);
                    \draw[webline] (-135:7) to[out=north east, in=south] (-3,0) to[out=north, in=south east] (135:7);
                }
            \ }
            \right).\label{rel:crossing-change-11}
        \end{align}
        Hence, the complexities of tangled graph diagrams appearing in the difference between $G$ and $G'$ are less than $|G|$.
        In a similar way to the proof in \cref{prop:desc-gen}, the generator $G$ can be replaced by $G'$.
\end{rem}

One can obtain simple generators of the boundary-localized skein algebra $\Skein{\Sigma}[\partial^{-1}]$ by refining descending generators in $\Desc{\Sigma}$.
\begin{thm}\label{thm:localized-gen}
    Let $\Sigma$ be a connected unpunctured marked surface with at least two special points. 
    Then, any element in $\Desc{\Sigma}$ and simple loops/arcs of type~$2$ are expressed as polynomials in $\SimpWil{\Sigma}\cup\partial_{\Sigma}^{-1}$ with coefficients in $\bZ_q$, where $\partial_{\Sigma}^{-1}$ is the set of inverses of boundary webs in $\Skein{\Sigma}[\partial^{-1}]$.
    In particular, $\SimpWil{\Sigma}\cup\partial_{\Sigma}^{-1}$ is a generating set of $\Skein{\Sigma}[\partial^{-1}]$ as $\cR$-algebra. 
\end{thm}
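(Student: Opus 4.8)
The plan is to refine the generating set furnished by \cref{prop:desc-gen}. That proposition already writes every basis web as a $\bZ_v$-polynomial in $\Desc{\Sigma}$ together with the simple loops and arcs of type~$2$, so it suffices to rewrite each of these generators as a $\bZ_v$-polynomial in $\SimpWil{\Sigma}\cup\partial_{\Sigma}^{-1}$; the final ``In particular'' clause is then immediate. The engine is the sticking trick (\cref{lem:sticking-trick}): it trades a strand that \emph{returns} near a boundary interval for a $\bZ_v$-combination of diagrams in which the returning strand is reattached to the two special points bounding that interval, the discrepancy being measured by boundary-parallel arcs of type~$1$ and type~$2$, i.e.\ by boundary webs, which are invertible in $\Skein{\Sigma}[\partial^{-1}]$.

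First I would eliminate the type~$2$ generators. Reading the bigon relation \eqref{rel:bigon} backwards replaces a type~$2$ strand by a bigon of two type~$1$ strands at the cost of a factor $1/[2]$, turning a simple loop or arc of type~$2$ into a type~$1$ graph; passing to a descending representative as in \cref{prop:desc-gen} places it in the $\bZ_v$-span of $\Desc{\Sigma}$. Hence it is enough to treat a descending loop or arc of type~$1$ with legs, where the legs are crossing-free type~$2$ edges incident to special points --- precisely the data that will be absorbed into the type~$2$ halves of the stated ends of simple Wilson lines, after splitting multi-legged diagrams by the arborization relations of \cref{lem:arborization} when necessary.

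The main step is an induction on the complexity (the number of internal crossings) of a descending diagram $X\in\Desc{\Sigma}$. In the base case, a simple descending arc of type~$1$ connecting two distinct boundary intervals is, after recording its stated ends, already an element of $\SimpWil{\Sigma}$; since $\Sigma$ is connected and, having at least two special points, possesses at least two boundary intervals, I can always route the arcs produced by the reduction between distinct intervals, at worst correcting by boundary webs, and this is exactly where the hypothesis on the number of special points is used. For the inductive step on an arc I would isotope $X$ so that its outermost excursion runs antiparallel to a boundary interval and forms a returning arc over it, then apply \eqref{eq:stick-1} or \eqref{eq:stick-2}; this sticks the excursion onto the two bounding special points and expresses $X$ as a $\bZ_v$-combination of products, each formed from simple Wilson lines and a descending diagram of strictly smaller complexity, up to inverse boundary webs. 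A descending \emph{loop} is first opened: drag a portion of it parallel to a boundary interval, pull it over to form a returning arc, and apply \cref{lem:sticking-trick} to cut the loop into an arc attached to the boundary, reducing to the arc case. The descending hypothesis guarantees that the over/under information at each reattachment matches the left-hand sides of \eqref{eq:stick-1}--\eqref{eq:stick-2}.

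The main obstacle I anticipate is the bookkeeping that makes this induction terminate: one must verify that every auxiliary diagram produced by the sticking trick --- in particular the middle terms of \eqref{eq:stick-2}, which carry extra trivalent vertices and short type~$2$ rungs --- reduces back into $\bZ_v\Bweb{\Sigma}$ with complexity strictly below $|X|$, mirroring the complexity argument in \cref{prop:desc-gen}, and that opening a loop never leaves both new ends on a single boundary interval without a boundary-web correction. Granting this, induction on complexity expresses every element of $\Desc{\Sigma}$, and therefore every simple loop/arc of type~$2$, as a $\bZ_v$-polynomial in $\SimpWil{\Sigma}\cup\partial_{\Sigma}^{-1}$; together with \cref{prop:desc-gen} this yields that $\SimpWil{\Sigma}\cup\partial_{\Sigma}^{-1}$ generates $\Skein{\Sigma}[\partial^{-1}]$ as an $\cR$-algebra.
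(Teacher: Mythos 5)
Your overall strategy (sticking trick plus an induction on crossings, opening loops into arcs, and invoking the second boundary interval at the end) is the same as the paper's, but there is one step that genuinely fails. To eliminate the type~$2$ generators you propose to read the bigon relation \eqref{rel:bigon} backwards, which costs a factor of $1/[2]$. That factor lies in $\cR$ but not in $\bZ_v$, so this route only proves the statement with coefficients in $\cR$; it destroys exactly the integrality claim that is the point of the theorem (and that is needed later for $\Skein{\Sigma}^{\bZ_v}[\partial^{-1}]=\langle \SimpWil{\Sigma}\rangle_{\bZ_v}[\partial^{-1}]$ in \cref{thm:generator-Zv-form}). The paper avoids this by expanding a descending type~$2$ arc directly through an H-web and one crossing resolution, i.e.\ an identity of the form ``type~$2$ arc $=$ (type~$1$ arc with two legs) $-\,v^{-1}$(product of two triads)'', whose coefficients are $\pm v^{\pm 1}$; the simple type~$2$ loop is then handled by sticking it to a boundary interval via \eqref{eq:stick-2} and rewriting the middle terms by similar $\bZ_v$-identities. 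You need some such $[2]$-free reduction of the type~$2$ objects for the first sentence of the theorem to hold as stated.

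Two smaller points. First, you explicitly defer the termination bookkeeping (``Granting this\dots''), but this is where the actual work lies: the paper organizes it by first converting everything into descending Wilson lines with stated ends on a fixed boundary interval and then inducting on the number of self-crossings $n$ of such a Wilson line, showing each element of $S_n$ is a $\bZ_v$-combination of products of elements of $S_{<n}$ with simple Wilson lines; your ``complexity of internal crossings'' induction would need an analogous precise decreasing invariant for the middle terms of \eqref{eq:stick-2}, which carry extra trivalent vertices. Second, your base case glosses over the possibility that both stated ends of a produced arc land on the same boundary interval; the paper resolves this by one further application of the sticking trick to a \emph{different} boundary interval, and this is precisely where the hypothesis of at least two special points is consumed, so that step should be made explicit rather than folded into ``at worst correcting by boundary webs.''
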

\begin{proof}
    We are going to use the sticking tricks to further expand an element in $\Desc{\Sigma}$ or a simple loop/arc of type~$2$. 
    
    Firstly, by using the following relation, we can expand a discending arc of type~$2$ to a linear combination of tangles type~$1$ with or without legs and inverses of boundary intervals:
    \begin{align*}
        \mathord{
            \ \tikz[baseline=-.6ex, scale=.1]{
                \foreach \i in {0,1,...,8}
                \foreach \j in {0,1,...,8}
                {
                    \coordinate (P\i\j) at (\i*2-8,\j-4);
                }
                \draw[wline] (P01) -- (P87);
                \draw[webline] (P01) to[bend right=60] (P07);
                \draw[webline] (P81) to[bend left=60] (P87);
                \bdryline{(P00)}{(P08)}{-2cm}
                \bdryline{(P80)}{(P88)}{2cm}
                \draw[fill] (P01) circle (20pt);
                \draw[fill] (P07) circle (20pt);
                \draw[fill] (P81) circle (20pt);
                \draw[fill] (P87) circle (20pt);
            }\ 
        }
        =
        \mathord{
            \ \tikz[baseline=-.6ex, scale=.1]{
                \foreach \i in {0,1,...,8}
                \foreach \j in {0,1,...,8}
                {
                    \coordinate (P\i\j) at (\i*2-8,\j-4);
                }
                \draw[webline] (P14) -- (P74);
                \draw[overarc] (P01) to[out=east, in=south west] (P44) to[out=north east, in=west] (P87);
                \draw[webline] (P07) -- (P14);
                \draw[wline] (P01) -- (P14);
                \draw[webline] (P81) -- (P74);
                \draw[wline] (P87) -- (P74);
                \bdryline{(P00)}{(P08)}{-2cm}
                \bdryline{(P80)}{(P88)}{2cm}
                \draw[fill] (P01) circle (20pt);
                \draw[fill] (P07) circle (20pt);
                \draw[fill] (P81) circle (20pt);
                \draw[fill] (P87) circle (20pt);
            }\ 
        }
        -q^{-1}
        \mathord{
            \ \tikz[baseline=-.6ex, scale=.1]{
                \foreach \i in {0,1,...,8}
                \foreach \j in {0,1,...,8}
                {
                    \coordinate (P\i\j) at (\i*2-8,\j-4);
                }
                \draw[webline] (P07) -- (P14);
                \draw[wline] (P01) -- (P14);
                \draw[webline] (P81) -- (P74);
                \draw[wline] (P87) -- (P74);
                \draw[webline] (P01) -- (P74);
                \draw[webline] (P14) -- (P87);
                \bdryline{(P00)}{(P08)}{-2cm}
                \bdryline{(P80)}{(P88)}{2cm}
                \draw[fill] (P01) circle (20pt);
                \draw[fill] (P07) circle (20pt);
                \draw[fill] (P81) circle (20pt);
                \draw[fill] (P87) circle (20pt);
            }\ 
        }.
    \end{align*}
    
    Secondly, for a simple loop of type~$2$, stick it to a boundary interval $E$ by \eqref{eq:stick-2}. Then the resulting $\mathfrak{sp}_4$-graphs in the first and final terms in \eqref{eq:stick-2} are descending arcs of type~$2$, which we have discussed. 
    The second and forth terms in \eqref{eq:stick-2} can be replaced with tangles of type $1$ with or without legs by
    \begin{align*}
        \mathord{
            \ \tikz[baseline=-.6ex, scale=.1]{
                \foreach \i in {0,1,...,8}
                \foreach \j in {0,1,...,8}
                {
                    \coordinate (P\i\j) at (\i*2-8,\j-4);
                }
                \draw[wline] (P34) -- (P54);
                \draw[wline] (P07) -- (P14);
                \draw[webline] (P01) -- (P14);
                \draw[webline] (P01) -- (P34);
                \draw[webline] (P14) -- (P34);
                \draw[wline] (P87) -- (P74);
                \draw[webline] (P81) -- (P74);
                \draw[webline] (P81) -- (P54);
                \draw[webline] (P74) -- (P54);
                \bdryline{(P00)}{(P08)}{-2cm}
                \bdryline{(P80)}{(P88)}{2cm}
                \draw[fill] (P01) circle (20pt);
                \draw[fill] (P07) circle (20pt);
                \draw[fill] (P81) circle (20pt);
                \draw[fill] (P87) circle (20pt);
            }\ 
        }
        &=
        \mathord{
            \ \tikz[baseline=-.6ex, scale=.1]{
                \foreach \i in {0,1,...,8}
                \foreach \j in {0,1,...,8}
                {
                    \coordinate (P\i\j) at (\i*2-8,\j-4);
                }
                \draw[wline] (P87) -- (P74);
                \draw[webline] (P81) -- (P74);
                \draw[webline] (P81) -- (P14);
                \draw[wline] (P07) -- (P14);
                \draw[overarc] (P01) -- (P74);
                \draw[webline] (P01) -- (P14);
                \bdryline{(P00)}{(P08)}{-2cm}
                \bdryline{(P80)}{(P88)}{2cm}
                \draw[fill] (P01) circle (20pt);
                \draw[fill] (P07) circle (20pt);
                \draw[fill] (P81) circle (20pt);
                \draw[fill] (P87) circle (20pt);
            }\ 
        }
        -q^{-1}
        \mathord{
            \ \tikz[baseline=-.6ex, scale=.1]{
                \foreach \i in {0,1,...,8}
                \foreach \j in {0,1,...,8}
                {
                    \coordinate (P\i\j) at (\i*2-8,\j-4);
                }
                \draw[webline] (P14) -- (P74);
                \draw[wline] (P07) -- (P14);
                \draw[webline] (P01) -- (P14);
                \draw[wline] (P87) -- (P74);
                \draw[webline] (P81) -- (P74);
                \draw[webline] (P01) -- (P81);
                \bdryline{(P00)}{(P08)}{-2cm}
                \bdryline{(P80)}{(P88)}{2cm}
                \draw[fill] (P01) circle (20pt);
                \draw[fill] (P07) circle (20pt);
                \draw[fill] (P81) circle (20pt);
                \draw[fill] (P87) circle (20pt);
            }\ 
        }, \text{or}\\
        \mathord{
            \ \tikz[baseline=-.6ex, scale=.1]{
                \foreach \i in {0,1,...,8}
                \foreach \j in {0,1,...,8}
                {
                    \coordinate (P\i\j) at (\i*2-8,\j-4);
                }
                \draw[wline] (P34) -- (P54);
                \draw[wline] (P07) -- (P14);
                \draw[webline] (P01) -- (P14);
                \draw[webline] (P01) -- (P34);
                \draw[webline] (P14) -- (P34);
                \draw[wline] (P81) -- (P74);
                \draw[webline] (P87) -- (P74);
                \draw[webline] (P87) -- (P54);
                \draw[webline] (P74) -- (P54);
                \bdryline{(P00)}{(P08)}{-2cm}
                \bdryline{(P80)}{(P88)}{2cm}
                \draw[fill] (P01) circle (20pt);
                \draw[fill] (P07) circle (20pt);
                \draw[fill] (P81) circle (20pt);
                \draw[fill] (P87) circle (20pt);
            }\ 
        }
        &=
        \mathord{
            \ \tikz[baseline=-.6ex, scale=.1]{
                \foreach \i in {0,1,...,8}
                \foreach \j in {0,1,...,8}
                {
                    \coordinate (P\i\j) at (\i*2-8,\j-4);
                }
                \draw[webline] (P14) -- (P74);
                \draw[overarc] (P01) to[out=east, in=south west] (P44) to[out=north east, in=west] (P87);
                \draw[wline] (P07) -- (P14);
                \draw[webline] (P01) -- (P14);
                \draw[wline] (P81) -- (P74);
                \draw[webline] (P87) -- (P74);
                \bdryline{(P00)}{(P08)}{-2cm}
                \bdryline{(P80)}{(P88)}{2cm}
                \draw[fill] (P01) circle (20pt);
                \draw[fill] (P07) circle (20pt);
                \draw[fill] (P81) circle (20pt);
                \draw[fill] (P87) circle (20pt);
            }\ 
        }
        -q^{-1}
        \mathord{
            \ \tikz[baseline=-.6ex, scale=.1]{
                \foreach \i in {0,1,...,8}
                \foreach \j in {0,1,...,8}
                {
                    \coordinate (P\i\j) at (\i*2-8,\j-4);
                }
                \draw[wline] (P07) -- (P14);
                \draw[webline] (P01) -- (P14);
                \draw[wline] (P81) -- (P74);
                \draw[webline] (P87) -- (P74);
                \draw[webline] (P01) -- (P74);
                \draw[webline] (P14) -- (P87);
                \bdryline{(P00)}{(P08)}{-2cm}
                \bdryline{(P80)}{(P88)}{2cm}
                \draw[fill] (P01) circle (20pt);
                \draw[fill] (P07) circle (20pt);
                \draw[fill] (P81) circle (20pt);
                \draw[fill] (P87) circle (20pt);
            }\ 
        }.
    \end{align*}
    The third term has a coefficient $[2]$ thus it expressed as 
    \begin{align*}
        [2]
        \mathord{
            \ \tikz[baseline=-.6ex, scale=.1]{
                \foreach \i in {0,1,...,8}
                \foreach \j in {0,1,...,8}
                {
                    \coordinate (P\i\j) at (\i*2-8,\j-4);
                }
                \draw[webline] (P07) -- (P24);
                \draw[webline] (P01) -- (P24);
                \draw[wline] (P24) -- (P64);
                \draw[webline] (P81) -- (P64);
                \draw[webline] (P87) -- (P64);
                \bdryline{(P00)}{(P08)}{-2cm}
                \bdryline{(P80)}{(P88)}{2cm}
                \draw[fill] (P01) circle (20pt);
                \draw[fill] (P07) circle (20pt);
                \draw[fill] (P81) circle (20pt);
                \draw[fill] (P87) circle (20pt);
            }\ 
        }
        &=[2]
        \mathord{
            \ \tikz[baseline=-.6ex, scale=.1]{
                \foreach \i in {0,1,...,8}
                \foreach \j in {0,1,...,8}
                {
                    \coordinate (P\i\j) at (\i*2-8,\j-4);
                }
                \draw[webline] (P81) -- (P07);
                \draw[overarc] (P01) -- (P87);
                \bdryline{(P00)}{(P08)}{-2cm}
                \bdryline{(P80)}{(P88)}{2cm}
                \draw[fill] (P01) circle (20pt);
                \draw[fill] (P07) circle (20pt);
                \draw[fill] (P81) circle (20pt);
                \draw[fill] (P87) circle (20pt);
            }\ 
        }
        -q^{-2}
        \mathord{
            \ \tikz[baseline=-.6ex, scale=.1]{
                \foreach \i in {0,1,...,8}
                \foreach \j in {0,1,...,8}
                {
                    \coordinate (P\i\j) at (\i*2-8,\j-4);
                }
                \draw[webline] (P81) to[out=west, in=west] (P87);
                \draw[webline] (P01) to[out=east, in=east] (P07);
                \bdryline{(P00)}{(P08)}{-2cm}
                \bdryline{(P80)}{(P88)}{2cm}
                \draw[fill] (P01) circle (20pt);
                \draw[fill] (P07) circle (20pt);
                \draw[fill] (P81) circle (20pt);
                \draw[fill] (P87) circle (20pt);
            }\ 
        }
        -q
        \mathord{
            \ \tikz[baseline=-.6ex, scale=.1]{
                \foreach \i in {0,1,...,8}
                \foreach \j in {0,1,...,8}
                {
                    \coordinate (P\i\j) at (\i*2-8,\j-4);
                }
                \draw[webline] (P87) -- (P07);
                \draw[webline] (P01) -- (P81);
                \bdryline{(P00)}{(P08)}{-2cm}
                \bdryline{(P80)}{(P88)}{2cm}
                \draw[fill] (P01) circle (20pt);
                \draw[fill] (P07) circle (20pt);
                \draw[fill] (P81) circle (20pt);
                \draw[fill] (P87) circle (20pt);
            }\ 
        }.
    \end{align*}
    We can make these resulting tangles of type $1$ with or without legs into polynomials of descending arc of type $1$ with or without legs by crossing chenges \cref{rel:crossing-change-11}.
    
    Let us fix a boundary interval $E$ and denote the corresponding boundary web of type~$s$ by $e_{s}$ for $s=1,2$.
    Given a descending loop diagram $G$ with or without legs, we can take its basepoint on a subarc $\gamma$ between two crossing points of $G$ such that $\gamma$ and $E$ share the same face in $\Sigma\setminus G$ by \cref{rem:basept}.
    Then, one can apply the sticking trick  \eqref{eq:stick-1} to $\gamma$ by multiplying $e_1e_2$ to $G$ to expand it into a $\bZ_q$-linear combination of descending arc diagrams with/without legs. 
    Then it remains to consider a descending arc diagram $G$ with/without legs.
    For each leg of $G$, one can apply the sticking trick \eqref{eq:stick-1} as follows:
    \begin{align*}
        \mathord{
            \ \tikz[baseline=-.6ex, scale=.08, yshift=-5cm]{
                \coordinate (A1) at (-10,0);
                \coordinate (A2) at (10,0);
                \coordinate (B0) at (-10,10);
                \coordinate (C0) at (-13,15);
                \coordinate (C1) at ($(A1)!0.2!(A2)+(0,15)$);
                \coordinate (C2) at ($(A1)!0.8!(A2)+(0,15)$);
                \coordinate (B1) at ($(A1)!0.2!(A2)+(0,10)$);
                \coordinate (B2) at ($(A1)!0.8!(A2)+(0,10)$);
                \draw[wline] (A1) to[bend left=60] (A2);
                \draw[webline] (A1) to[bend left=20] (A2);
                \draw[wline, shorten >=.3cm] (B0) -- (A1);
                \draw[webline, rounded corners] (C0) -- (B0) -- (B2) -- (C2);
                \bdryline{(A1)}{(A2)}{2cm}
                \draw[fill] (A1) circle (20pt);
                \draw[fill] (A2) circle (20pt);
                \node at (-2,10) [above]{\scriptsize $\longleftarrow$};
            }
        \ }
        &=q
        \mathord{
            \ \tikz[baseline=-.6ex, scale=.08, yshift=-5cm]{
                \coordinate (A1) at (-10,0);
                \coordinate (A2) at (10,0);
                \coordinate (C1) at ($(A1)!0.2!(A2)+(0,15)$);
                \coordinate (B1) at ($(A1)!0.2!(A2)+(0,10)$);
                \coordinate (C2) at ($(A1)!0.8!(A2)+(0,15)$);
                \coordinate (B2) at ($(A1)!0.8!(A2)+(0,10)$);
                \draw[overarc] (C0) -- (B0) to[out=east, in=north] (A2);
                \draw[overarc] (C2) to[out=south, in=north] (A1);
                \draw[wline, shorten <=.2cm, shorten >=.2cm] (A1) to[bend left] (A2);
                \draw[wline, shorten >=.4cm] (B0) -- (A1);
                \bdryline{(A1)}{(A2)}{2cm}
                \draw[fill] (A1) circle (20pt);
                \draw[fill] (A2) circle (20pt);
            }\ 
        }
        -q^2
        \mathord{
            \ \tikz[baseline=-.6ex, scale=.08, yshift=-5cm]{
                \coordinate (A1) at (-10,0);
                \coordinate (A2) at (10,0);
                \coordinate (C1) at ($(A1)!0.2!(A2)+(0,15)$);
                \coordinate (B1) at ($(A1)!0.2!(A2)+(0,7)$);
                \coordinate (C2) at ($(A1)!0.8!(A2)+(0,15)$);
                \coordinate (B2) at ($(A1)!0.8!(A2)+(0,7)$);
                \draw[webline] (C0) -- (B0) to[out=east, in=north] (B2);
                \draw[overarc] (C2) to[out=south, in=north] (B1);
                \draw[webline, shorten <=.3cm] (A1) -- (B2);
                \draw[overarc] (A2) -- (B1);
                \draw[wline] (A1) -- (B1);
                \draw[wline, shorten <=.3cm] (A2) -- (B2);
                \draw[wline, shorten >=.4cm] (B0) -- (A1);
                \bdryline{(A1)}{(A2)}{2cm}
                \draw[fill] (A1) circle (20pt);
                \draw[fill] (A2) circle (20pt);
            }\ 
        }
        +q^3
        \mathord{
            \ \tikz[baseline=-.6ex, scale=.08, yshift=-5cm]{
                \coordinate (A1) at (-10,0);
                \coordinate (A2) at (10,0);
                \coordinate (C1) at ($(A1)!0.2!(A2)+(0,15)$);
                \coordinate (B1) at ($(A1)!0.2!(A2)+(0,7)$);
                \coordinate (C2) at ($(A1)!0.8!(A2)+(0,15)$);
                \coordinate (B2) at ($(A1)!0.8!(A2)+(0,7)$);
                \draw[webline] (C0) -- (B0) -- (B1);
                \draw[webline] (C2) -- (B2);
                \draw[webline, shorten <=.3cm] (A2) -- (B1);
                \draw[overarc] (A1) -- (B2);
                \draw[wline, shorten <=.2cm] (A1) -- (B1);
                \draw[wline] (A2) -- (B2);
                \draw[wline, shorten >=.4cm] (B0) -- (A1);
                \bdryline{(A1)}{(A2)}{2cm}
                \draw[fill] (A1) circle (20pt);
                \draw[fill] (B) circle (20pt);
            }\ 
        }
        -q^4
        \mathord{
            \ \tikz[baseline=-.6ex, scale=.08, yshift=-5cm]{
                \coordinate (A1) at (-10,0);
                \coordinate (B) at (10,0);
                \coordinate (C1) at ($(A1)!0.2!(B)+(0,15)$);
                \coordinate (B1) at ($(A1)!0.2!(B)+(0,10)$);
                \coordinate (C2) at ($(A1)!0.8!(B)+(0,15)$);
                \coordinate (B2) at ($(A1)!0.8!(B)+(0,10)$);
                \draw[webline] (C0) -- (B0) to[out=east, in=north east] (A1);
                \draw[webline] (C2) to[out=south, in=north] (B);
                \draw[wline, shorten <=.2cm, shorten >=.2cm] (A1) to[bend left] (B);
                \draw[wline, shorten >=.4cm] (B0) -- (A1);
                \bdryline{(A1)}{(B)}{2cm}
                \draw[fill] (A1) circle (20pt);
                \draw[fill] (B) circle (20pt);
            }\ 
        }\\
        &=q^3
        \mathord{
            \ \tikz[baseline=-.6ex, scale=.08, yshift=-5cm]{
                \coordinate (A1) at (-10,0);
                \coordinate (A2) at (10,0);
                \coordinate (C1) at ($(A1)!0.2!(A2)+(0,15)$);
                \coordinate (B1) at ($(A1)!0.2!(A2)+(0,10)$);
                \coordinate (C2) at ($(A1)!0.8!(A2)+(0,15)$);
                \coordinate (B2) at ($(A1)!0.8!(A2)+(0,10)$);
                \draw[overarc] (C0) -- (B0) to[out=east, in=north] (A2);
                \draw[overarc] (C2) to[out=south, in=north] (A1);
                \draw[wline, shorten <=.4cm, shorten >=.2cm] (A1) to[bend left] (A2);
                \draw[wline, shorten >=.2cm] (B0) to[bend right] (A1);
                \bdryline{(A1)}{(A2)}{2cm}
                \draw[fill] (A1) circle (20pt);
                \draw[fill] (A2) circle (20pt);
            }\ 
        }
        -q^3
        \mathord{
            \ \tikz[baseline=-.6ex, scale=.08, yshift=-5cm]{
                \coordinate (A1) at (-10,0);
                \coordinate (A2) at (10,0);
                \coordinate (C1) at ($(A1)!0.2!(A2)+(0,15)$);
                \coordinate (B1) at ($(A1)!0.2!(A2)+(0,7)$);
                \coordinate (C2) at ($(A1)!0.8!(A2)+(0,15)$);
                \coordinate (B2) at ($(A1)!0.8!(A2)+(0,7)$);
                \draw[webline, shorten >=.2cm] (C0) -- (A1);
                \draw[overarc] (C2) to[out=south, in=north] (B1);
                \draw[webline] (A2) to[out=north, in=east] (B1);
                \draw[wline] (A1) -- (B1);
                \draw[wline, shorten <=.3cm, shorten >=.4cm] (A2) to[bend right] (A1);
                \bdryline{(A1)}{(A2)}{2cm}
                \draw[fill] (A1) circle (20pt);
                \draw[fill] (A2) circle (20pt);
            }\ 
        },
    \end{align*}
    where $G$ is descending along the arrow in the first diagram.
    Observe that the resulting diagrams have stated ends on boundary intervals.
    In other words, $G$ is expanded into a polynomial in elements of $\DescWil{\Sigma}$, (\cref{def:web-set} (3)) up to boundary webs.
    Let us denote by $S_{n} \subset \DescWil{\Sigma}$ the subset of descending Wilson lines whose minimal diagram has $n$ self-crossing points.
    We remark that $S_{0}=\SimpWil{\Sigma}$.
    
    We then claim that any tangled graph in $S_{n}$ can be written as a $\bZ_q$-linear combination of the product of elements in $S_{<n}:=\cup_{i<n} S_{i}$ and simple Wilson lines of type~$1$.
    For a tangled graph $G\in S_{n}$, we can describe a neighborhood of the subarc from an end to the first crossing of $G$ by forgetting arcs that sit in the lower layer as follows:
    \[
        \mathord{
            \ \tikz[baseline=-.6ex, scale=.08, yshift=-5cm]{
                \coordinate (A) at (0,0);
                \coordinate (A1) at (-10,0);
                \coordinate (A2) at (10,0);
                \coordinate (C) at ($(A1)!0.5!(A2)+(0,15)$);
                \coordinate (C1) at ($(A1)+(0,15)$);
                \coordinate (C2) at ($(A2)+(0,15)$);
                \coordinate (B) at ($(A1)!0.5!(A2)+(0,5)$);
                \coordinate (B1) at ($(A1)!0.2!(A2)+(0,5)$);
                \coordinate (B2) at ($(A1)!0.8!(A2)+(0,5)$);
                \draw[wline] (A1) -- (B);
                \draw[webline] (A2) -- (B);
                \draw[webline] (C) to[out=north, in=north] (C2);
                \draw[webline] (C2) to[out=south, in=south east] (C1);
                \draw[overarc] (C) -- (B);
                \draw[fill=gray] ($(C)!.5!(C2)+(0,-1)$) circle [radius=2];
                \bdryline{(A1)}{(A2)}{2cm}
                \draw[fill] (A1) circle (30pt);
                \draw[fill] (A2) circle (30pt);
            }\ 
        },
        \mathord{
            \ \tikz[baseline=-.6ex, scale=.08, yshift=-5cm]{
                \coordinate (A) at (0,0);
                \coordinate (A1) at (-10,0);
                \coordinate (A2) at (10,0);
                \coordinate (C) at ($(A1)!0.5!(A2)+(0,15)$);
                \coordinate (C1) at ($(A1)+(0,15)$);
                \coordinate (C2) at ($(A2)+(0,15)$);
                \coordinate (B) at ($(A1)!0.5!(A2)+(0,5)$);
                \coordinate (B1) at ($(A1)!0.2!(A2)+(0,5)$);
                \coordinate (B2) at ($(A1)!0.8!(A2)+(0,5)$);
                \draw[webline] (A1) -- (B);
                \draw[wline] (A2) -- (B);
                \draw[webline] (C) to[out=north, in=north] (C2);
                \draw[webline] (C2) to[out=south, in=south east] (C1);
                \draw[overarc] (C) -- (B);
                \draw[fill=gray] ($(C)!.5!(C2)+(0,-1)$) circle [radius=2];
                \bdryline{(A1)}{(A2)}{2cm}
                \draw[fill] (A1) circle (30pt);
                \draw[fill] (A2) circle (30pt);
            }\ 
        },
        \mathord{
            \ \tikz[baseline=-.6ex, scale=.08, yshift=-5cm]{
                \coordinate (A) at (0,0);
                \coordinate (A1) at (-10,0);
                \coordinate (A2) at (10,0);
                \coordinate (C) at ($(A1)!0.5!(A2)+(0,15)$);
                \coordinate (C1) at ($(A1)+(0,15)$);
                \coordinate (C2) at ($(A2)+(0,15)$);
                \coordinate (B) at ($(A1)!0.5!(A2)+(0,5)$);
                \coordinate (B1) at ($(A1)!0.2!(A2)+(0,5)$);
                \coordinate (B2) at ($(A1)!0.8!(A2)+(0,5)$);
                \draw[webline] (A1) to[out=north, in=south] (B);
                \draw[webline] (C) to[out=north, in=north] (C2);
                \draw[webline] (C2) to[out=south, in=south east] (C1);
                \draw[overarc] (C) -- (B);
                \draw[fill=gray] ($(C)!.5!(C2)+(0,-1)$) circle [radius=2];
                \bdryline{(A1)}{(A2)}{2cm}
                \draw[fill] (A1) circle (30pt);
                \draw[fill] (A2) circle (30pt);
            }\ 
        }\text{ and the left-turn version.}  
    \]
    Apply the sticking trick to the under-passing subarc of the first crossing point of $G$.
    One can see that it becomes $G=\sum_{i} a_i\gamma_i G_i$ where $G_i\in S_{<n}$ and
    \[
        \gamma_i=
        \mathord{
            \ \tikz[baseline=-.6ex, scale=.08, yshift=-5cm]{
                \coordinate (A) at (0,0);
                \coordinate (A1) at (-10,0);
                \coordinate (A2) at (10,0);
                \coordinate (C) at ($(A1)!0.5!(A2)+(0,15)$);
                \coordinate (C1) at ($(A1)+(0,15)$);
                \coordinate (C2) at ($(A2)+(0,15)$);
                \coordinate (B) at ($(A1)!0.5!(A2)+(0,5)$);
                \coordinate (B1) at ($(A1)!0.2!(A2)+(0,5)$);
                \coordinate (B2) at ($(A1)!0.8!(A2)+(0,5)$);
                \draw[wline] (A1) -- (B);
                \draw[webline] (A2) -- (B);
                \draw[webline] (C) to[out=north, in=north] (C2);
                \draw[webline] (C2) to[out=south, in=north] (A);
                \draw[fill=gray] ($(C)!.5!(C2)+(0,-1)$) circle [radius=2];
                \draw[fill=red!30] ($(A1)+(-2,0)$) rectangle ($(A2)+(2,10)$);
                \draw[fill=blue!40] (A1) rectangle ($(A2)+(0,5)$);
                \draw[overarc] (C) -- ($(A)+(0,5)$);
                \bdryline{(A1)}{(A2)}{2cm}
                \draw[fill] (A1) circle (30pt);
                \draw[fill] (A2) circle (30pt);
            }\ 
        } 
    \]
    where the filled area is one of the above ends.
    Finally, one can apply the sticking trick to the top subarc of $\gamma_i$ and the other boundary interval (which exists since $\Sigma$ has at least two special points), and the resulting graphs become simple Wilson line of type $1$.
    By induction on $n$, we conclude that $G$ is expressed as a $\bZ_q$-linear combination of $\SimpWil{\Sigma}$. Observe that all the coefficients appearing in the above argument are in $\bZ_q$.
\end{proof}

\begin{rem}
See \cite[Corollary 3.20 and Proposition 2.1]{IOS} for a closely related statement. 
\end{rem}

\subsection{The \texorpdfstring{$\bZ_{q}$}{Zq}-form of the \texorpdfstring{$\mathfrak{sp}_4$}{sp4}-skein algebra}
To compare the $\mathfrak{sp}_4$-skein algebra $\Skein{\Sigma}[\partial^{-1}]$ with the quantum cluster algebra with its natural coefficient ring $\bZ_q=\bZ[q^{\pm 1/2}]$, we introduce the following notion.
\begin{dfn}\label{def:Zv-form}
     The $\bZ_q$-span $\Skein{\Sigma}^{\bZ_q}:=\bZ_q\Bweb{\Sigma} \subset \Skein{\Sigma}$ of the basis webs on $\Sigma$ is called the \emph{$\bZ_q$-form of the $\mathfrak{sp}_4$-skein algebra $\Skein{\Sigma}$}.
\end{dfn}

The following lemma implies that the $\bZ_q$-form of $\Skein{\Sigma}$ is a $\bZ_q$-subalgebra of $\Skein{\Sigma}$, namely, the product $G_1G_2$ lies in $\Skein{\Sigma}^{\bZ_q}$ for any basis webs $G_1,G_2 \in \Bweb{\Sigma}$.
\begin{lem}\label{lem:crossroads-in-Zv}
    Any crossroad web belongs to $\Skein{\Sigma}^{\bZ_q}$.
\end{lem}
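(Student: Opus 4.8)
The plan is to run the reduction algorithm of \cref{def:reduction} and track coefficients. By \cref{thm:basis-web} together with the confluence and termination established in \cref{lem:sp4-confluence}, every element of $\Skein{\Sigma}$ has a well-defined normal form as an $\cR$-linear combination of basis webs, obtained by iteratively applying the reduction rules $\{S_i,B_j,C_k\}$. Since $\Skein{\Sigma}^{\bZ_v}=\bZ_v\Bweb{\Sigma}$, to show that a crossroad web $W$ lies in $\Skein{\Sigma}^{\bZ_v}$ it suffices to prove that, starting from a diagram representing $W$ and reducing to normal form, every reduction step applied carries a coefficient in $\bZ_v=\bZ[v^{\pm 1/2}]$ rather than in the larger ring $\cR=\bZ[v^{\pm 1/2},1/[2]]$. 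I would argue this by induction on the complexity quintuple used in \cref{lem:sp4-confluence}, the base case being an irreducible (hence basis) web.

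The heart of the matter is a coefficient computation. I would first record that all the reduction rules acting natively on crossroad diagrams have coefficients in $\bZ_v$: the crossing-resolutions $C_1$--$C_4$ use only $v^{\pm1},v^{\pm2}$ and $1$; the crossroad elliptic-face rules $S_9$--$S_{17}$ and their boundary analogues $B_6,B_{11},B_{12}$ involve only the quantum-integer ratios $[6]/[3]=v^3+v^{-3}$, $[4]/[2]=v^2+v^{-2}$, and $[6]/([2][3])=v^2-1+v^{-2}$, each of which is a genuine Laurent polynomial so that no factor $1/[2]$ survives; and the single-type loop/bigon/trigon rules $S_1$--$S_4,S_6$ (with the integral coefficients $[2][6]/[3]=[2](v^3+v^{-3})$ and $[5][6]/([2][3])=[5](v^2-1+v^{-2})$) together with the vanishing boundary rules $B_1$--$B_7,B_9$--$B_{14}$ are likewise in $\bZ_v$. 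The only rules that genuinely introduce a denominator $[2]$ are $S_5$ (coefficient $-[5]/[2]$), $S_7$ ($-[3]/[2]$), $S_8$ ($1/[2]$), and $B_8$ ($1/[2]$).

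The whole proof therefore reduces to the structural claim that these four ``bad'' rules are never invoked when reducing a crossroad web. Each of $S_5,S_7,S_8,B_8$ has a left-hand side containing a genuine type~$2$ \emph{rung} bounding a mixed bigon or trigon, or running parallel to the boundary. Since a crossroad web contains no rungs by definition, I would show that no rung is ever created during the reduction: resolving a type~$1$/type~$1$ crossing by $C_1$ produces only a crossroad and type~$1$ arcs, resolving a type~$2$/type~$2$ crossing by $C_4$ produces arcs and a clasp, and the mixed resolutions $C_2,C_3$ merely reconnect the pre-existing type~$2$ strands through a type~$1$ bar. Consequently, if one keeps every crossroad intact and reduces with the crossroad-native rules $S_9$--$S_{17}$ --- instead of expanding a crossroad back into $\text{(H)}-\tfrac1{[2]}(\text{arcs})$ --- the mixed-bubble configurations targeted by $S_5,S_7$ and the rung-$H$ of $S_8$ simply do not occur. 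I expect this closure statement to be the main obstacle: it requires a local case analysis matching each elliptic face that can appear in an intermediate rung-free crossroad diagram to one of the $\bZ_v$-coefficient rules, and I anticipate needing the IH- and arborization-type moves of \cref{lem:arborization} and \cref{lem:face-vanishing} (themselves with $\bZ_v$ coefficients) to reroute a type~$2$ strand out of any incipient mixed bubble before the reduction is applied. Once this is in place, the induction closes and every crossroad web is a $\bZ_v$-combination of basis webs.
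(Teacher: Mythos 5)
Your overall strategy (induct on a complexity measure, track coefficients through the reduction algorithm, and isolate the rules whose coefficients involve a genuine denominator $[2]$) is the same as the paper's, and your identification of $S_5$, $S_7$, $S_8$, $B_8$ as the only problematic rules is correct. But the structural claim on which your whole argument rests --- that no rung is ever created while reducing a crossroad web, so these rules are never invoked --- is false, and this is exactly the point where the real work lies. A crossroad web may contain a type~$2$ \emph{loop} component with internal crossings. When you resolve a mixed crossing on such a loop via $C_2$ or $C_3$, the type~$2$ strand is cut at the crossing into edges terminating at the newly created trivalent vertices; following the loop around, these pieces do not touch any special point, so they are rungs. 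The same happens with the clasp term of $C_4$ at a type~$2$/type~$2$ crossing, whose expansion contains a type~$1$ quadrilateral with four type~$2$ corners that cannot be reduced without invoking the $1/[2]$-rules. There is no way to ``reroute'' these strands out of the mixed bubbles: the configurations genuinely occur.

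The paper's proof embraces this rather than avoiding it. The resolution $G=vX+v^{-1}Y$ produces two diagrams whose rungs are eliminated by (essentially) the rule $S_8$, and the crucial observation is that the two branches contribute $\tfrac{v}{[2]}Z$ and $\tfrac{v^{-1}}{[2]}Z$ to a \emph{common} turnback diagram $Z$, which sum to $\tfrac{v+v^{-1}}{[2]}Z=Z$; in the type~$2$/type~$2$ case the stray $\tfrac{1}{[2]}$ coefficients are likewise killed by subsequent bigon reductions carrying a factor $-[2]$. So the correct mechanism is cancellation of the $[2]$-denominators in pairs, not their absence. Without this cancellation argument your induction does not close, since the intermediate diagrams produced by your own reduction steps would only be shown to lie in $\cR\Bweb{\Sigma}$ rather than in $\bZ_v\Bweb{\Sigma}$.
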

\begin{proof}
    Let $G$ be an $\mathfrak{sp}_4$-graph diagram with crossroads whose legs have no internal crossings.
    We know that $G$ reduces to a $\cR$-linear combination of $\Bweb{\Sigma}$ by the reduction rules in \cref{def:reduction}.
    Let us show that the coefficients in this expansion lie in $\bZ_q$ by induction on the \emph{weighted crossing number} $c(G)$ defined as 
    $
    c(G):=
    \#\mathord{
        \ \tikz[baseline=-.6ex, scale=.08]{
            \draw[dashed, fill=white] (0,0) circle [radius=5];
            \draw[webline] (-45:5) -- (135:5);
            \draw[overarc] (-135:5) -- (45:5);
            }
    \ }
    +2
    \#\mathord{
        \ \tikz[baseline=-.6ex, scale=.08]{
            \draw[dashed, fill=white] (0,0) circle [radius=5];
            \draw[webline] (-45:5) -- (135:5);
            \draw[overwline] (-135:5) -- (45:5);
            }
    \ }
    +2
    \#\mathord{
        \ \tikz[baseline=-.6ex, scale=.08]{
            \draw[dashed, fill=white] (0,0) circle [radius=5];
            \draw[wline] (-45:5) -- (135:5);
            \draw[overarc] (-135:5) -- (45:5);
            }
    \ }
    +4\#
    \mathord{
        \ \tikz[baseline=-.6ex, scale=.08]{
            \draw[dashed, fill=white] (0,0) circle [radius=5];
            \draw[wline] (-45:5) -- (135:5);
            \draw[overwline] (-135:5) -- (45:5);
            }
    \ }
    $.
    We remark that $c(G)$ is invariant under the Reidemeister moves:
    \begin{align}\label{eq:R4-preserve}
    \mathord{
        \ \tikz[baseline=-.6ex, scale=.1]{
            \draw[dashed, fill=white] (0,0) circle [radius=5];
            \draw[wline] (90:5) -- (0,-3);
            \draw[webline] (-60:5) -- (0,-3);
            \draw[webline] (-120:5) -- (0,-3);
            \draw[overarc] (0:5) to[bend right] (180:5);
            }
    \ }
    \leftrightarrow
    \mathord{
        \ \tikz[baseline=-.6ex, scale=.1]{
            \draw[dashed, fill=white] (0,0) circle [radius=5];
            \draw[wline] (90:5) -- (0,3);
            \draw[webline] (-60:5) -- (0,3);
            \draw[webline] (-120:5) -- (0,3);
            \draw[overarc] (0:5) to[bend left] (180:5);
            }
    \ },\quad
    \mathord{
        \ \tikz[baseline=-.6ex, scale=.1]{
            \draw[dashed, fill=white] (0,0) circle [radius=5];
            \draw[webline] (0:5) to[bend right] (180:5);
            \draw[overwline] (90:5) -- (0,-3);
            \draw[webline] (-60:5) -- (0,-3);
            \draw[webline] (-120:5) -- (0,-3);
            }
    \ }
    \leftrightarrow
    \mathord{
        \ \tikz[baseline=-.6ex, scale=.1]{
            \draw[dashed, fill=white] (0,0) circle [radius=5];
            \draw[webline] (0:5) to[bend left] (180:5);
            \draw[overarc] (-60:5) -- (0,3);
            \draw[overarc] (-120:5) -- (0,3);
            \draw[webline] (-60:5) -- (0,3);
            \draw[wline] (90:5) -- (0,3);
            }
    \ }.
    \end{align}
    If $G$ has no internal crossings, then it belongs to $\Skein{\Sigma}^{\bZ_q}$ since those among the reduction rules $S_i$ and $B_i$ in \cref{def:reduction} used in this procedure have coefficients in $\bZ_q$, thanks to the absence of rungs in $G$. 
    Here note that $[4]/[2]=q^2+q^{-2},[6]/([2][3])=q^{2}-1+q^{-2} \in \bZ_q$.

    Let us consider the cases where $G$ has an internal crossing (i) 
    $
    \mathord{
        \ \tikz[baseline=-.6ex, scale=.08]{
            \draw[dashed, fill=white] (0,0) circle [radius=5];
            \draw[webline] (-45:5) -- (135:5);
            \draw[overarc] (-135:5) -- (45:5);
            }
    \ }
    $,
    (ii)
    $
    \mathord{
        \ \tikz[baseline=-.6ex, scale=.08]{
            \draw[dashed, fill=white] (0,0) circle [radius=5];
            \draw[webline] (-45:5) -- (135:5);
            \draw[overwline] (-135:5) -- (45:5);
            }
    \ }
    $,
    $
    \mathord{
        \ \tikz[baseline=-.6ex, scale=.08]{
            \draw[dashed, fill=white] (0,0) circle [radius=5];
            \draw[wline] (-45:5) -- (135:5);
            \draw[overarc] (-135:5) -- (45:5);
            }
    \ }
    $,
    or
    (iii)
    $
    \mathord{
        \ \tikz[baseline=-.6ex, scale=.08]{
            \draw[dashed, fill=white] (0,0) circle [radius=5];
            \draw[wline] (-45:5) -- (135:5);
            \draw[overwline] (-135:5) -- (45:5);
            }
    \ }
    $.
    We apply the reduction rule $C_1$ to the crossing (i), which expands $G$ into a $\bZ_q$-linear combination of crossroad webs with weighted crossing numbers less than $c(G)$.
    Since $G$ has no rungs, the edge of type~$2$ in (ii) must be a tangled loop or an arc component.
    In the latter case, 
    $G$ is expanded as $G=qX+q^{-1}Y$ by the reduction rule $C_2$ or $C_3$, where $X$ and $Y$ are crossroad webs with $c(X)<c(G)$ and $c(Y)<c(G)$. Observe that the resulting edges of type~$2$ are legs. 
    In the former case, we expand similarly as $G=qX+q^{-1}Y$, where 
    $X$ and $Y$ have rungs.
    These rungs are replaced by crossroads as follows:
    \begin{align*}
        \mathord{
            \ \tikz[baseline=-.6ex, scale=.1]{
            \coordinate (P1) at (-6,6);
            \coordinate (P2) at (-6,-6);
            \coordinate (P3) at (6,-6);
            \coordinate (P4) at (6,6);
            \coordinate (Q1) at (0,6);
            \coordinate (Q2) at (-6,0);
            \coordinate (Q3) at (0,-6);
            \coordinate (Q4) at (6,0);
            \draw[webline] (Q2) -- (Q4);
            \draw[overwline] (Q1) -- (Q3);
            \draw[dashed] (P1) -- (P2) -- (P3) -- (P4) -- cycle;
            \draw[webline, dotted] (Q1) -- ($(Q1)+(0,2)$) -- ($(P4)+(2,2)$) -- ($(P3)+(2,-2)$) -- ($(Q3)+(0,-2)$) -- (Q3);
            }\ 
        }
        &=q
        \mathord{
            \ \tikz[baseline=-.6ex, scale=.1]{
            \coordinate (P1) at (-6,6);
            \coordinate (P2) at (-6,-6);
            \coordinate (P3) at (6,-6);
            \coordinate (P4) at (6,6);
            \coordinate (Q1) at (0,6);
            \coordinate (Q2) at (-6,0);
            \coordinate (Q3) at (0,-6);
            \coordinate (Q4) at (6,0);
            \draw[webline] (Q2) -- (Q4);
            \draw[wline] (Q1) -- (2,0);
            \draw[wline] (-2,0) -- (Q3);
            \draw[dashed] (P1) -- (P2) -- (P3) -- (P4) -- cycle;
            }\ 
        }
        +q^{-1}
        \mathord{
            \ \tikz[baseline=-.6ex, scale=.1]{
            \coordinate (P1) at (-6,6);
            \coordinate (P2) at (-6,-6);
            \coordinate (P3) at (6,-6);
            \coordinate (P4) at (6,6);
            \coordinate (Q1) at (0,6);
            \coordinate (Q2) at (-6,0);
            \coordinate (Q3) at (0,-6);
            \coordinate (Q4) at (6,0);
            \draw[webline] (Q2) -- (Q4);
            \draw[wline] (Q1) -- (-2,0);
            \draw[wline] (2,0) -- (Q3);
            \draw[dashed] (P1) -- (P2) -- (P3) -- (P4) -- cycle;
            }\ 
        }
        =q
        \mathord{
            \ \tikz[baseline=-.6ex, scale=.1]{
            \coordinate (P1) at (-6,6);
            \coordinate (P2) at (-6,-6);
            \coordinate (P3) at (6,-6);
            \coordinate (P4) at (6,6);
            \coordinate (Q1) at (0,6);
            \coordinate (Q2) at (-6,0);
            \coordinate (Q3) at (0,-6);
            \coordinate (Q4) at (6,0);
            \draw[webline, rounded corners] (Q2) -- (0,0) -- ($(Q1)-(1,0)$);
            \draw[webline, rounded corners] ($(Q1)+(1,0)$) -- (2,0) -- (Q4);
            \draw[webline, rounded corners] ($(Q3)-(1,0)$) -- (-3,-2) -- (-1,-2) -- ($(Q3)+(1,0)$);
            \draw[wline] (-2,0) -- (-2,-2);
            \draw[dashed] (P1) -- (P2) -- (P3) -- (P4) -- cycle;
            }\ 
        }
        +q^{-1}
        \mathord{
            \ \tikz[baseline=-.6ex, scale=.1, xscale=-1]{
            \coordinate (P1) at (-6,6);
            \coordinate (P2) at (-6,-6);
            \coordinate (P3) at (6,-6);
            \coordinate (P4) at (6,6);
            \coordinate (Q1) at (0,6);
            \coordinate (Q2) at (-6,0);
            \coordinate (Q3) at (0,-6);
            \coordinate (Q4) at (6,0);
            \draw[webline, rounded corners] (Q2) -- (0,0) -- ($(Q1)-(1,0)$);
            \draw[webline, rounded corners] ($(Q1)+(1,0)$) -- (2,0) -- (Q4);
            \draw[webline, rounded corners] ($(Q3)-(1,0)$) -- (-3,-2) -- (-1,-2) -- ($(Q3)+(1,0)$);
            \draw[wline] (-2,0) -- (-2,-2);
            \draw[dashed] (P1) -- (P2) -- (P3) -- (P4) -- cycle;
            }\ 
        }\\
        &=q
        \mathord{
            \ \tikz[baseline=-.6ex, scale=.1]{
            \coordinate (P1) at (-6,6);
            \coordinate (P2) at (-6,-6);
            \coordinate (P3) at (6,-6);
            \coordinate (P4) at (6,6);
            \coordinate (Q1) at (0,6);
            \coordinate (Q2) at (-6,0);
            \coordinate (Q3) at (0,-6);
            \coordinate (Q4) at (6,0);
            \draw[webline, rounded corners] (Q2) -- (0,0) -- ($(Q3)+(1,0)$);
            \draw[webline, rounded corners] ($(Q1)+(1,0)$) -- (2,0) -- (Q4);
            \draw[webline, rounded corners] ($(Q3)+(-1,0)$) -- (-1,0) -- ($(Q1)+(-1,0)$);
            \draw[fill=pink, thick] (-1,0) circle [radius=20pt];
            \draw[dashed] (P1) -- (P2) -- (P3) -- (P4) -- cycle;
            }\ 
        }
        +q^{-1}
        \mathord{
            \ \tikz[baseline=-.6ex, scale=.1, xscale=-1]{
            \coordinate (P1) at (-6,6);
            \coordinate (P2) at (-6,-6);
            \coordinate (P3) at (6,-6);
            \coordinate (P4) at (6,6);
            \coordinate (Q1) at (0,6);
            \coordinate (Q2) at (-6,0);
            \coordinate (Q3) at (0,-6);
            \coordinate (Q4) at (6,0);
            \draw[webline, rounded corners] (Q2) -- (0,0) -- ($(Q3)+(1,0)$);
            \draw[webline, rounded corners] ($(Q1)+(1,0)$) -- (2,0) -- (Q4);
            \draw[webline, rounded corners] ($(Q3)+(-1,0)$) -- (-1,0) -- ($(Q1)+(-1,0)$);
            \draw[fill=pink, thick] (-1,0) circle [radius=20pt];
            \draw[dashed] (P1) -- (P2) -- (P3) -- (P4) -- cycle;
            }\ 
        }
        +\frac{q}{[2]}
        \mathord{
            \ \tikz[baseline=-.6ex, scale=.1]{
            \coordinate (P1) at (-6,6);
            \coordinate (P2) at (-6,-6);
            \coordinate (P3) at (6,-6);
            \coordinate (P4) at (6,6);
            \coordinate (Q1) at (0,6);
            \coordinate (Q2) at (-6,0);
            \coordinate (Q3) at (0,-6);
            \coordinate (Q4) at (6,0);
            \draw[webline, rounded corners] (Q2) -- (0,0) -- ($(Q1)-(1,0)$);
            \draw[webline, rounded corners] ($(Q1)+(1,0)$) -- (2,0) -- (Q4);
            \draw[webline, rounded corners] ($(Q3)-(1,0)$) -- (-3,-2) -- (-1,-2) -- ($(Q3)+(1,0)$);
            \draw[dashed] (P1) -- (P2) -- (P3) -- (P4) -- cycle;
            }\ 
        }
        +\frac{q^{-1}}{[2]}
        \mathord{
            \ \tikz[baseline=-.6ex, scale=.1, xscale=-1]{
            \coordinate (P1) at (-6,6);
            \coordinate (P2) at (-6,-6);
            \coordinate (P3) at (6,-6);
            \coordinate (P4) at (6,6);
            \coordinate (Q1) at (0,6);
            \coordinate (Q2) at (-6,0);
            \coordinate (Q3) at (0,-6);
            \coordinate (Q4) at (6,0);
            \draw[webline, rounded corners] (Q2) -- (0,0) -- ($(Q1)-(1,0)$);
            \draw[webline, rounded corners] ($(Q1)+(1,0)$) -- (2,0) -- (Q4);
            \draw[webline, rounded corners] ($(Q3)-(1,0)$) -- (-3,-2) -- (-1,-2) -- ($(Q3)+(1,0)$);
            \draw[dashed] (P1) -- (P2) -- (P3) -- (P4) -- cycle;
            }\ 
        }\\
        &=q
        \mathord{
            \ \tikz[baseline=-.6ex, scale=.1]{
            \coordinate (P1) at (-6,6);
            \coordinate (P2) at (-6,-6);
            \coordinate (P3) at (6,-6);
            \coordinate (P4) at (6,6);
            \coordinate (Q1) at (0,6);
            \coordinate (Q2) at (-6,0);
            \coordinate (Q3) at (0,-6);
            \coordinate (Q4) at (6,0);
            \draw[webline, rounded corners] (Q2) -- (0,0) -- ($(Q3)+(1,0)$);
            \draw[webline, rounded corners] ($(Q1)+(1,0)$) -- (2,0) -- (Q4);
            \draw[webline, rounded corners] ($(Q3)+(-1,0)$) -- (-1,0) -- ($(Q1)+(-1,0)$);
            \draw[fill=pink, thick] (-1,0) circle [radius=20pt];
            \draw[dashed] (P1) -- (P2) -- (P3) -- (P4) -- cycle;
            }\ 
        }
        +q^{-1}
        \mathord{
            \ \tikz[baseline=-.6ex, scale=.1, xscale=-1]{
            \coordinate (P1) at (-6,6);
            \coordinate (P2) at (-6,-6);
            \coordinate (P3) at (6,-6);
            \coordinate (P4) at (6,6);
            \coordinate (Q1) at (0,6);
            \coordinate (Q2) at (-6,0);
            \coordinate (Q3) at (0,-6);
            \coordinate (Q4) at (6,0);
            \draw[webline, rounded corners] (Q2) -- (0,0) -- ($(Q3)+(1,0)$);
            \draw[webline, rounded corners] ($(Q1)+(1,0)$) -- (2,0) -- (Q4);
            \draw[webline, rounded corners] ($(Q3)+(-1,0)$) -- (-1,0) -- ($(Q1)+(-1,0)$);
            \draw[fill=pink, thick] (-1,0) circle [radius=20pt];
            \draw[dashed] (P1) -- (P2) -- (P3) -- (P4) -- cycle;
            }\ 
        }
        +
        \mathord{
            \ \tikz[baseline=-.6ex, scale=.1]{
            \coordinate (P1) at (-6,6);
            \coordinate (P2) at (-6,-6);
            \coordinate (P3) at (6,-6);
            \coordinate (P4) at (6,6);
            \coordinate (Q1) at (0,6);
            \coordinate (Q2) at (-6,0);
            \coordinate (Q3) at (0,-6);
            \coordinate (Q4) at (6,0);
            \draw[webline] (Q2) -- (Q4);
            \draw[dashed] (P1) -- (P2) -- (P3) -- (P4) -- cycle;
            }\ 
        }.
    \end{align*}
    Here we only use the Reidemeister move \eqref{eq:R4-preserve} in the second equation. The resulting crossroad webs have weighted intersection numbers less than $c(G)$.
    Let us consider an internal crossing (iii) of two tangled loops of type~$2$.
    We apply the skein relation \eqref{rel:ww-cross} to the internal crossing of $G$:
    \begin{align*}
        \mathord{
            \ \tikz[baseline=-.6ex, scale=.1]{
            \coordinate (P1) at (-6,6);
            \coordinate (P2) at (-6,-6);
            \coordinate (P3) at (6,-6);
            \coordinate (P4) at (6,6);
            \coordinate (Q1) at (0,6);
            \coordinate (Q2) at (-6,0);
            \coordinate (Q3) at (0,-6);
            \coordinate (Q4) at (6,0);
            \draw[wline] (Q2) -- (Q4);
            \draw[overwline] (Q1) -- (Q3);
            \draw[dashed] (P1) -- (P2) -- (P3) -- (P4) -- cycle;
            }\ 
        }
        &=q^2
        \mathord{
            \ \tikz[baseline=-.6ex, scale=.1]{
            \coordinate (P1) at (-6,6);
            \coordinate (P2) at (-6,-6);
            \coordinate (P3) at (6,-6);
            \coordinate (P4) at (6,6);
            \coordinate (Q1) at (0,6);
            \coordinate (Q2) at (-6,0);
            \coordinate (Q3) at (0,-6);
            \coordinate (Q4) at (6,0);
            \draw[wline] (Q2) to[out=east, in=north] (Q3);
            \draw[wline] (Q4) to[out=west, in=south] (Q1);
            \draw[dashed] (P1) -- (P2) -- (P3) -- (P4) -- cycle;
            }\ 
        }
        +q^{-2}
        \mathord{
            \ \tikz[baseline=-.6ex, scale=.1, xscale=-1]{
            \coordinate (P1) at (-6,6);
            \coordinate (P2) at (-6,-6);
            \coordinate (P3) at (6,-6);
            \coordinate (P4) at (6,6);
            \coordinate (Q1) at (0,6);
            \coordinate (Q2) at (-6,0);
            \coordinate (Q3) at (0,-6);
            \coordinate (Q4) at (6,0);
            \draw[wline] (Q2) to[out=east, in=north] (Q3);
            \draw[wline] (Q4) to[out=west, in=south] (Q1);
            \draw[dashed] (P1) -- (P2) -- (P3) -- (P4) -- cycle;
            }\ 
        }
        +
        \mathord{
            \ \tikz[baseline=-.6ex, scale=.1, xscale=-1]{
            \coordinate (P1) at (-6,6);
            \coordinate (P2) at (-6,-6);
            \coordinate (P3) at (6,-6);
            \coordinate (P4) at (6,6);
            \coordinate (Q1) at (0,6);
            \coordinate (Q2) at (-6,0);
            \coordinate (Q3) at (0,-6);
            \coordinate (Q4) at (6,0);
            \draw[wline] (Q1) -- (0,3);
            \draw[wline] (Q2) -- (-3,0);
            \draw[wline] (Q3) -- (0,-3);
            \draw[wline] (Q4) -- (3,0);
            \draw[webline] (0,3) -- (-3,0) -- (0,-3) -- (3,0) -- cycle;
            \draw[dashed] (P1) -- (P2) -- (P3) -- (P4) -- cycle;
            }\ 
        }.
    \end{align*}
    The first and second $\mathfrak{sp}_4$-graphs are crossroad webs with the weighted crossing number less than $c(G)$.
    We are going to deform the third $\mathfrak{sp}_4$-graph into a $\bZ_q$-linear combination of crossroad webs.
    If the internal crossing is formed by different loop components, then:
    \begin{align*}
        \mathord{
            \ \tikz[baseline=-.6ex, scale=.1]{
            \coordinate (P1) at (-6,6);
            \coordinate (P2) at (-6,-6);
            \coordinate (P3) at (6,-6);
            \coordinate (P4) at (6,6);
            \coordinate (Q1) at (0,6);
            \coordinate (Q2) at (-6,0);
            \coordinate (Q3) at (0,-6);
            \coordinate (Q4) at (6,0);
            \draw[wline] (Q1) -- (0,3);
            \draw[wline] (Q2) -- (-3,0);
            \draw[wline] (Q3) -- (0,-3);
            \draw[wline] (Q4) -- (3,0);
            \draw[webline] (0,3) -- (-3,0) -- (0,-3) -- (3,0) -- cycle;
            \draw[dashed] (P1) -- (P2) -- (P3) -- (P4) -- cycle;
            \draw[webline, dotted] (Q1) -- ($(Q1)+(0,2)$) -- ($(P4)+(2,2)$) -- ($(P3)+(2,-2)$) -- ($(Q3)+(0,-2)$) -- (Q3);
            \draw[webline, dotted] (Q2) -- ($(Q2)+(-4,0)$) -- ($(P2)+(-4,-4)$) -- ($(P3)+(4,-4)$) -- ($(Q4)+(4,0)$) -- (Q4);
            }\ 
        }
        &=
        \mathord{
            \ \tikz[baseline=-.6ex, scale=.1]{
            \coordinate (P1) at (-6,6);
            \coordinate (P2) at (-6,-6);
            \coordinate (P3) at (6,-6);
            \coordinate (P4) at (6,6);
            \coordinate (Q1) at (0,6);
            \coordinate (Q2) at (-6,0);
            \coordinate (Q3) at (0,-6);
            \coordinate (Q4) at (6,0);
            \draw[wline] (Q2) -- (-2,0);
            \draw[wline] (0,-4) -- (0,-2);
            \draw[wline] (Q4) -- (2,0);
            \draw[webline] ($(Q1)+(-2,0)$) -- (-2,0) -- (0,-2) -- (2,0) -- ($(Q1)+(2,0)$);
            \draw[webline] ($(Q3)+(-2,0)$) -- (0,-4) -- ($(Q3)+(2,0)$);
            \draw[dashed] (P1) -- (P2) -- (P3) -- (P4) -- cycle;
            }\ 
        }
        =
        \mathord{
            \ \tikz[baseline=-.6ex, scale=.1]{
            \coordinate (P1) at (-6,6);
            \coordinate (P2) at (-6,-6);
            \coordinate (P3) at (6,-6);
            \coordinate (P4) at (6,6);
            \coordinate (Q1) at (0,6);
            \coordinate (Q2) at (-6,0);
            \coordinate (Q3) at (0,-6);
            \coordinate (Q4) at (6,0);
            \draw[wline] (Q2) -- (-2,0);
            \draw[wline] (Q4) -- (2,0);
            \draw[webline] (-2,0) -- ($(Q1)+(-2,0)$);
            \draw[webline] (2,0) -- ($(Q1)+(2,0)$);
            \draw[webline] ($(Q3)+(-2,0)$) -- (0,-3) -- (2,0);
            \draw[webline] ($(Q3)+(2,0)$) -- (0,-3) -- (-2,0);
            \draw[dashed] (P1) -- (P2) -- (P3) -- (P4) -- cycle;
            \draw[fill=pink, thick] (0,-3) circle [radius=20pt];
            }\ 
        }
        +\frac{1}{[2]}
        \mathord{
            \ \tikz[baseline=-.6ex, scale=.1]{
            \coordinate (P1) at (-6,6);
            \coordinate (P2) at (-6,-6);
            \coordinate (P3) at (6,-6);
            \coordinate (P4) at (6,6);
            \coordinate (Q1) at (0,6);
            \coordinate (Q2) at (-6,0);
            \coordinate (Q3) at (0,-6);
            \coordinate (Q4) at (6,0);
            \draw[wline] (Q2) -- (-2,0);
            \draw[wline] (Q4) -- (2,0);
            \draw[webline] ($(Q1)+(-2,0)$) -- (-2,0) to[out=south, in=south] (2,0) -- ($(Q1)+(2,0)$);
            \draw[webline, rounded corners] ($(Q3)+(-2,0)$) -- (0,-4) -- ($(Q3)+(2,0)$);
            \draw[dashed] (P1) -- (P2) -- (P3) -- (P4) -- cycle;
            }\ 
        }\\
        &=
        \mathord{
            \ \tikz[baseline=-.6ex, scale=.1]{
            \coordinate (P1) at (-6,6);
            \coordinate (P2) at (-6,-6);
            \coordinate (P3) at (6,-6);
            \coordinate (P4) at (6,6);
            \coordinate (Q1) at (0,6);
            \coordinate (Q2) at (-6,0);
            \coordinate (Q3) at (0,-6);
            \coordinate (Q4) at (6,0);
            \draw[wline] (Q2) -- (-2,0);
            \draw[wline] (Q4) -- (2,0);
            \draw[webline] (-2,0) -- ($(Q1)+(-2,0)$);
            \draw[webline] (2,0) -- ($(Q1)+(2,0)$);
            \draw[webline] ($(Q3)+(-2,0)$) -- (0,-3) -- (2,0);
            \draw[webline] ($(Q3)+(2,0)$) -- (0,-3) -- (-2,0);
            \draw[dashed] (P1) -- (P2) -- (P3) -- (P4) -- cycle;
            \draw[fill=pink, thick] (0,-3) circle [radius=20pt];
            }\ 
        }
        -
        \mathord{
            \ \tikz[baseline=-.6ex, scale=.1]{
            \coordinate (P1) at (-6,6);
            \coordinate (P2) at (-6,-6);
            \coordinate (P3) at (6,-6);
            \coordinate (P4) at (6,6);
            \coordinate (Q1) at (0,6);
            \coordinate (Q2) at (-6,0);
            \coordinate (Q3) at (0,-6);
            \coordinate (Q4) at (6,0);
            \draw[wline] (Q2) -- (Q4);
            \draw[dashed] (P1) -- (P2) -- (P3) -- (P4) -- cycle;
            }\ 
        }
        =
        \mathord{
            \ \tikz[baseline=-.6ex, scale=.1]{
            \coordinate (P1) at (-6,6);
            \coordinate (P2) at (-6,-6);
            \coordinate (P3) at (6,-6);
            \coordinate (P4) at (6,6);
            \coordinate (Q1) at (0,6);
            \coordinate (Q2) at (-6,0);
            \coordinate (Q3) at (0,-6);
            \coordinate (Q4) at (6,0);
            \draw[wline] (-4,0) -- (-2,0);
            \draw[webline] (-2,0) -- ($(Q1)+(-2,0)$);
            \draw[webline] ($(Q4)+(0,2)$) to[out=west, in=south] ($(Q1)+(2,0)$);
            \draw[webline] ($(Q3)+(-2,0)$) -- (0,-3) to[out=north east, in=west] ($(Q4)+(0,-2)$);
            \draw[webline] ($(Q3)+(2,0)$) -- (0,-3) -- (-2,0);
            \draw[webline] ($(Q2)+(0,-2)$) -- (-4,0) -- ($(Q2)+(0,2)$);
            \draw[dashed] (P1) -- (P2) -- (P3) -- (P4) -- cycle;
            \draw[fill=pink, thick] (0,-3) circle [radius=20pt];
            }\ 
        }
        -
        \mathord{
            \ \tikz[baseline=-.6ex, scale=.1, xscale=-1]{
            \coordinate (P1) at (-6,6);
            \coordinate (P2) at (-6,-6);
            \coordinate (P3) at (6,-6);
            \coordinate (P4) at (6,6);
            \coordinate (Q1) at (0,6);
            \coordinate (Q2) at (-6,0);
            \coordinate (Q3) at (0,-6);
            \coordinate (Q4) at (6,0);
            \draw[wline] (Q2) -- (Q4);
            \draw[dashed] (P1) -- (P2) -- (P3) -- (P4) -- cycle;
            }\ 
        }\\
        &=
        \mathord{
            \ \tikz[baseline=-.6ex, scale=.1]{
            \coordinate (P1) at (-6,6);
            \coordinate (P2) at (-6,-6);
            \coordinate (P3) at (6,-6);
            \coordinate (P4) at (6,6);
            \coordinate (Q1) at (0,6);
            \coordinate (Q2) at (-6,0);
            \coordinate (Q3) at (0,-6);
            \coordinate (Q4) at (6,0);
            \draw[webline] ($(Q4)+(0,2)$) to[out=west, in=south] ($(Q1)+(2,0)$);
            \draw[webline] ($(Q3)+(-2,0)$) -- (0,-3) to[out=north east, in=west] ($(Q4)+(0,-2)$);
            \draw[webline] ($(Q3)+(2,0)$) -- (0,-3) -- (-3,0) -- ($(Q2)+(0,2)$);
            \draw[webline] ($(Q2)+(0,-2)$) -- (-3,0) to[out=north east, in=south] ($(Q1)+(-2,0)$);
            \draw[dashed] (P1) -- (P2) -- (P3) -- (P4) -- cycle;
            \draw[fill=pink, thick] (0,-3) circle [radius=20pt];
            \draw[fill=pink, thick] (-3,0) circle [radius=20pt];
            }\ 
        }
        +\frac{1}{[2]}
        \mathord{
            \ \tikz[baseline=-.6ex, scale=.1]{
            \coordinate (P1) at (-6,6);
            \coordinate (P2) at (-6,-6);
            \coordinate (P3) at (6,-6);
            \coordinate (P4) at (6,6);
            \coordinate (Q1) at (0,6);
            \coordinate (Q2) at (-6,0);
            \coordinate (Q3) at (0,-6);
            \coordinate (Q4) at (6,0);
            \draw[webline] (-2,0) -- ($(Q1)+(-2,0)$);
            \draw[webline] ($(Q4)+(0,2)$) to[out=west, in=south] ($(Q1)+(2,0)$);
            \draw[webline] ($(Q3)+(-2,0)$) -- (0,-3) to[out=north east, in=west] ($(Q4)+(0,-2)$);
            \draw[webline] ($(Q3)+(2,0)$) -- (0,-3) -- (-2,0);
            \draw[webline, rounded corners] ($(Q2)+(0,-2)$) -- (-4,0) -- ($(Q2)+(0,2)$);
            \draw[dashed] (P1) -- (P2) -- (P3) -- (P4) -- cycle;
            \draw[fill=pink, thick] (0,-3) circle [radius=20pt];
            }\ 
        }
        -
        \mathord{
            \ \tikz[baseline=-.6ex, scale=.1, xscale=-1]{
            \coordinate (P1) at (-6,6);
            \coordinate (P2) at (-6,-6);
            \coordinate (P3) at (6,-6);
            \coordinate (P4) at (6,6);
            \coordinate (Q1) at (0,6);
            \coordinate (Q2) at (-6,0);
            \coordinate (Q3) at (0,-6);
            \coordinate (Q4) at (6,0);
            \draw[wline] (Q2) -- (Q4);
            \draw[dashed] (P1) -- (P2) -- (P3) -- (P4) -- cycle;
            }\ 
        }\\
        &=
        \mathord{
            \ \tikz[baseline=-.6ex, scale=.1]{
            \coordinate (P1) at (-6,6);
            \coordinate (P2) at (-6,-6);
            \coordinate (P3) at (6,-6);
            \coordinate (P4) at (6,6);
            \coordinate (Q1) at (0,6);
            \coordinate (Q2) at (-6,0);
            \coordinate (Q3) at (0,-6);
            \coordinate (Q4) at (6,0);
            \draw[webline] ($(Q4)+(0,2)$) to[out=west, in=south] ($(Q1)+(2,0)$);
            \draw[webline] ($(Q3)+(-2,0)$) -- (0,-3) to[out=north east, in=west] ($(Q4)+(0,-2)$);
            \draw[webline] ($(Q3)+(2,0)$) -- (0,-3) -- (-3,0) -- ($(Q2)+(0,2)$);
            \draw[webline] ($(Q2)+(0,-2)$) -- (-3,0) to[out=north east, in=south] ($(Q1)+(-2,0)$);
            \draw[dashed] (P1) -- (P2) -- (P3) -- (P4) -- cycle;
            \draw[fill=pink, thick] (0,-3) circle [radius=20pt];
            \draw[fill=pink, thick] (-3,0) circle [radius=20pt];
            }\ 
        }
        -
        \mathord{
            \ \tikz[baseline=-.6ex, scale=.1]{
            \coordinate (P1) at (-6,6);
            \coordinate (P2) at (-6,-6);
            \coordinate (P3) at (6,-6);
            \coordinate (P4) at (6,6);
            \coordinate (Q1) at (0,6);
            \coordinate (Q2) at (-6,0);
            \coordinate (Q3) at (0,-6);
            \coordinate (Q4) at (6,0);
            \draw[wline] (Q1) -- (Q3);
            \draw[dashed] (P1) -- (P2) -- (P3) -- (P4) -- cycle;
            }\ 
        }
        -
        \mathord{
            \ \tikz[baseline=-.6ex, scale=.1, xscale=-1]{
            \coordinate (P1) at (-6,6);
            \coordinate (P2) at (-6,-6);
            \coordinate (P3) at (6,-6);
            \coordinate (P4) at (6,6);
            \coordinate (Q1) at (0,6);
            \coordinate (Q2) at (-6,0);
            \coordinate (Q3) at (0,-6);
            \coordinate (Q4) at (6,0);
            \draw[wline] (Q2) -- (Q4);
            \draw[dashed] (P1) -- (P2) -- (P3) -- (P4) -- cycle;
            }\ 
        }.
    \end{align*}
    If it is a self-crossing, then:
    \begin{align*}
        \mathord{
            \ \tikz[baseline=-.6ex, scale=.1]{
            \coordinate (P1) at (-6,6);
            \coordinate (P2) at (-6,-6);
            \coordinate (P3) at (6,-6);
            \coordinate (P4) at (6,6);
            \coordinate (Q1) at (0,6);
            \coordinate (Q2) at (-6,0);
            \coordinate (Q3) at (0,-6);
            \coordinate (Q4) at (6,0);
            \draw[wline] (Q1) -- (0,3);
            \draw[wline] (Q2) -- (-3,0);
            \draw[wline] (Q3) -- (0,-3);
            \draw[wline] (Q4) -- (3,0);
            \draw[webline] (0,3) -- (-3,0) -- (0,-3) -- (3,0) -- cycle;
            \draw[dashed] (P1) -- (P2) -- (P3) -- (P4) -- cycle;
            \draw[webline, dotted] (Q1) -- ($(Q1)+(0,2)$) -- ($(P4)+(2,2)$) -- ($(Q4)+(2,0)$) -- (Q4);
            \draw[webline, dotted] (Q2) -- ($(Q2)+(-2,0)$) -- ($(P2)+(-2,-2)$) -- ($(Q3)+(0,-2)$) -- (Q3);
            }\ 
        }
        &=
        \mathord{
            \ \tikz[baseline=-.6ex, scale=.1]{
            \coordinate (P1) at (-6,6);
            \coordinate (P2) at (-6,-6);
            \coordinate (P3) at (6,-6);
            \coordinate (P4) at (6,6);
            \coordinate (Q1) at (0,6);
            \coordinate (Q2) at (-6,0);
            \coordinate (Q3) at (0,-6);
            \coordinate (Q4) at (6,0);
            \draw[wline] (Q2) -- (-2,0);
            \draw[wline] (Q3) -- (0,-2);
            \draw[wline] (4,0) -- (2,0);
            \draw[webline] ($(Q1)+(-2,0)$) -- (-2,0) -- (0,-2) -- (2,0) -- ($(Q1)+(2,0)$);
            \draw[webline] ($(Q4)+(0,-2)$) -- (4,0) -- ($(Q4)+(0,2)$);
            \draw[dashed] (P1) -- (P2) -- (P3) -- (P4) -- cycle;
            }\ 
        }
        =
        \mathord{
            \ \tikz[baseline=-.6ex, scale=.1]{
            \coordinate (P1) at (-6,6);
            \coordinate (P2) at (-6,-6);
            \coordinate (P3) at (6,-6);
            \coordinate (P4) at (6,6);
            \coordinate (Q1) at (0,6);
            \coordinate (Q2) at (-6,0);
            \coordinate (Q3) at (0,-6);
            \coordinate (Q4) at (6,0);
            \draw[wline] (Q2) -- (-2,0);
            \draw[wline] (Q3) -- (0,-2);
            \draw[webline] (-2,0) -- ($(Q1)+(-2,0)$);
            \draw[webline] ($(Q4)+(0,-2)$) -- (3,0) to[out=north west, in=south] ($(Q1)+(2,0)$);
            \draw[webline] ($(Q4)+(0,2)$) -- (3,0) -- (0,-2);
            \draw[webline] (-2,0) -- (0,-2);
            \draw[dashed] (P1) -- (P2) -- (P3) -- (P4) -- cycle;
            \draw[fill=pink, thick] (3,0) circle [radius=20pt];
            }\ 
        }
        +\frac{1}{[2]}
        \mathord{
            \ \tikz[baseline=-.6ex, scale=.1]{
            \coordinate (P1) at (-6,6);
            \coordinate (P2) at (-6,-6);
            \coordinate (P3) at (6,-6);
            \coordinate (P4) at (6,6);
            \coordinate (Q1) at (0,6);
            \coordinate (Q2) at (-6,0);
            \coordinate (Q3) at (0,-6);
            \coordinate (Q4) at (6,0);
            \draw[wline] (Q2) -- (-2,0);
            \draw[wline] (Q3) -- (0,-2);
            \draw[webline] ($(Q1)+(-2,0)$) -- (-2,0) -- (0,-2) -- (2,0) -- ($(Q1)+(2,0)$);
            \draw[webline] ($(Q4)+(0,-2)$) to[out=west, in=west] ($(Q4)+(0,2)$);
            \draw[dashed] (P1) -- (P2) -- (P3) -- (P4) -- cycle;
            }\ 
        }\\
        &=
        \mathord{
            \ \tikz[baseline=-.6ex, scale=.1]{
            \coordinate (P1) at (-6,6);
            \coordinate (P2) at (-6,-6);
            \coordinate (P3) at (6,-6);
            \coordinate (P4) at (6,6);
            \coordinate (Q1) at (0,6);
            \coordinate (Q2) at (-6,0);
            \coordinate (Q3) at (0,-6);
            \coordinate (Q4) at (6,0);
            \draw[wline] (Q2) -- (-2,0);
            \draw[wline] (Q3) -- (0,-2);
            \draw[webline] (-2,0) -- ($(Q1)+(-2,0)$);
            \draw[webline] ($(Q4)+(0,-2)$) -- (3,0) to[out=north west, in=south] ($(Q1)+(2,0)$);
            \draw[webline] ($(Q4)+(0,2)$) -- (3,0) -- (0,-2);
            \draw[webline] (-2,0) -- (0,-2);
            \draw[dashed] (P1) -- (P2) -- (P3) -- (P4) -- cycle;
            \draw[fill=pink, thick] (3,0) circle [radius=20pt];
            }\ 
        }
        -
        \mathord{
            \ \tikz[baseline=-.6ex, scale=.1]{
            \coordinate (P1) at (-6,6);
            \coordinate (P2) at (-6,-6);
            \coordinate (P3) at (6,-6);
            \coordinate (P4) at (6,6);
            \coordinate (Q1) at (0,6);
            \coordinate (Q2) at (-6,0);
            \coordinate (Q3) at (0,-6);
            \coordinate (Q4) at (6,0);
            \draw[wline] (Q2) to[out=east, in=north] (Q3);
            \draw[dashed] (P1) -- (P2) -- (P3) -- (P4) -- cycle;
            }\ 
        }\\
        &=
        \mathord{
            \ \tikz[baseline=-.6ex, scale=.1]{
            \coordinate (P1) at (-6,6);
            \coordinate (P2) at (-6,-6);
            \coordinate (P3) at (6,-6);
            \coordinate (P4) at (6,6);
            \coordinate (Q1) at (0,6);
            \coordinate (Q2) at (-6,0);
            \coordinate (Q3) at (0,-6);
            \coordinate (Q4) at (6,0);
            \draw[webline] ($(Q4)+(0,-2)$) -- (3,0) to[out=north west, in=south] ($(Q1)+(2,0)$);
            \draw[webline] ($(Q4)+(0,2)$) -- (3,0) to[out=south west, in=north] ($(Q3)+(2,0)$);
            \draw[webline] ($(Q2)+(0,-2)$) -- (-3,0) to[out=north east, in=south] ($(Q1)+(-2,0)$);
            \draw[webline] ($(Q2)+(0,2)$) -- (-3,0) to[out=south east, in=north] ($(Q3)+(-2,0)$);
            \draw[dashed] (P1) -- (P2) -- (P3) -- (P4) -- cycle;
            \draw[fill=pink, thick] (3,0) circle [radius=20pt];
            \draw[fill=pink, thick] (-3,0) circle [radius=20pt];
            }\ 
        }
        +\frac{1}{[2]}
        \mathord{
            \ \tikz[baseline=-.6ex, scale=.1]{
            \coordinate (P1) at (-6,6);
            \coordinate (P2) at (-6,-6);
            \coordinate (P3) at (6,-6);
            \coordinate (P4) at (6,6);
            \coordinate (Q1) at (0,6);
            \coordinate (Q2) at (-6,0);
            \coordinate (Q3) at (0,-6);
            \coordinate (Q4) at (6,0);
            \draw[webline] ($(Q4)+(0,-2)$) -- (3,0) to[out=north west, in=south] ($(Q1)+(2,0)$);
            \draw[webline] ($(Q4)+(0,2)$) -- (3,0) -- (0,-2) to[out=west, in=south] ($(Q1)+(-2,0)$);
            \draw[dashed] (P1) -- (P2) -- (P3) -- (P4) -- cycle;
            \draw[fill=pink, thick] (3,0) circle [radius=20pt];
            }\ 
        }
        -
        \mathord{
            \ \tikz[baseline=-.6ex, scale=.1]{
            \coordinate (P1) at (-6,6);
            \coordinate (P2) at (-6,-6);
            \coordinate (P3) at (6,-6);
            \coordinate (P4) at (6,6);
            \coordinate (Q1) at (0,6);
            \coordinate (Q2) at (-6,0);
            \coordinate (Q3) at (0,-6);
            \coordinate (Q4) at (6,0);
            \draw[wline] (Q2) to[out=east, in=north] (Q3);
            \draw[dashed] (P1) -- (P2) -- (P3) -- (P4) -- cycle;
            }\ 
        }\\
        &=
        \mathord{
            \ \tikz[baseline=-.6ex, scale=.1]{
            \coordinate (P1) at (-6,6);
            \coordinate (P2) at (-6,-6);
            \coordinate (P3) at (6,-6);
            \coordinate (P4) at (6,6);
            \coordinate (Q1) at (0,6);
            \coordinate (Q2) at (-6,0);
            \coordinate (Q3) at (0,-6);
            \coordinate (Q4) at (6,0);
            \draw[webline] ($(Q4)+(0,-2)$) -- (3,0) to[out=north west, in=south] ($(Q1)+(2,0)$);
            \draw[webline] ($(Q4)+(0,2)$) -- (3,0) to[out=south west, in=north] ($(Q3)+(2,0)$);
            \draw[webline] ($(Q2)+(0,-2)$) -- (-3,0) to[out=north east, in=south] ($(Q1)+(-2,0)$);
            \draw[webline] ($(Q2)+(0,2)$) -- (-3,0) to[out=south east, in=north] ($(Q3)+(-2,0)$);
            \draw[dashed] (P1) -- (P2) -- (P3) -- (P4) -- cycle;
            \draw[fill=pink, thick] (3,0) circle [radius=20pt];
            \draw[fill=pink, thick] (-3,0) circle [radius=20pt];
            }\ 
        }
        -
        \mathord{
            \ \tikz[baseline=-.6ex, scale=.1]{
            \coordinate (P1) at (-6,6);
            \coordinate (P2) at (-6,-6);
            \coordinate (P3) at (6,-6);
            \coordinate (P4) at (6,6);
            \coordinate (Q1) at (0,6);
            \coordinate (Q2) at (-6,0);
            \coordinate (Q3) at (0,-6);
            \coordinate (Q4) at (6,0);
            \draw[wline] (Q4) to[out=west, in=south] (Q1);
            \draw[dashed] (P1) -- (P2) -- (P3) -- (P4) -- cycle;
            }\ 
        }
        -
        \mathord{
            \ \tikz[baseline=-.6ex, scale=.1]{
            \coordinate (P1) at (-6,6);
            \coordinate (P2) at (-6,-6);
            \coordinate (P3) at (6,-6);
            \coordinate (P4) at (6,6);
            \coordinate (Q1) at (0,6);
            \coordinate (Q2) at (-6,0);
            \coordinate (Q3) at (0,-6);
            \coordinate (Q4) at (6,0);
            \draw[wline] (Q2) to[out=east, in=north] (Q3);
            \draw[dashed] (P1) -- (P2) -- (P3) -- (P4) -- cycle;
            }\ 
        }.
    \end{align*}
    In both cases, the resulting crossroad webs have the weighted crossing number less than $c(G)$, and their coefficients lie in $\bZ_q$.
    One can prove the same assertion for the cases where one or two of them are arcs rather than loops. 
    Then we conclude that $G$ belongs to $\Skein{\Sigma}^{\bZ_q}$ by induction on $c(G)$.
\end{proof}

For a subset $S \subset \Skein{\Sigma}$, let us denote by $\langle S \rangle_{\bZ_q}[\partial^{-1}]$ a $\bZ_q$-subalgebra generated by $S$ and boundary webs $\partial_{\Sigma}\cup\partial_{\Sigma}^{-1}$ in $\Skein{\Sigma}^{\bZ_q}[\partial^{-1}]$.

\begin{thm}\label{thm:generator-Zv-form}
    $\Skein{\Sigma}^{\bZ_q}[\partial^{-1}]=\langle \Desc{\Sigma} \rangle_{\bZ_q}[\partial^{-1}]=\langle \SimpWil{\Sigma} \rangle_{\bZ_q}[\partial^{-1}]$.
\end{thm}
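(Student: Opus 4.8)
The plan is to prove the two equalities simultaneously by exhibiting the cycle of inclusions
\[
\Skein{\Sigma}^{\bZ_v}[\partial^{-1}]
\ \subseteq\ \langle \SimpWil{\Sigma}\rangle_{\bZ_v}[\partial^{-1}]
\ \subseteq\ \langle \Desc{\Sigma}\rangle_{\bZ_v}[\partial^{-1}]
\ \subseteq\ \Skein{\Sigma}^{\bZ_v}[\partial^{-1}],
\]
after which all three algebras coincide. Two of these inclusions are essentially formal, and the single substantive one simply packages the content already established in \cref{prop:desc-gen} and \cref{thm:localized-gen}.

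For the middle inclusion, I would first observe that $\SimpWil{\Sigma}\subseteq\Desc{\Sigma}$: a simple Wilson line of type~$1$ is, by \cref{def:web-set}, a crossing-free arc of type~$1$ whose stated ends may carry type-$2$ legs, and a crossing-free arc is vacuously descending, so it is a descending arc diagram of type~$1$ with legs. Passing to generated subalgebras then gives $\langle \SimpWil{\Sigma}\rangle_{\bZ_v}[\partial^{-1}]\subseteq\langle \Desc{\Sigma}\rangle_{\bZ_v}[\partial^{-1}]$. For the third inclusion it suffices to check $\Desc{\Sigma}\subseteq\Skein{\Sigma}^{\bZ_v}$, because $\Skein{\Sigma}^{\bZ_v}$ is a $\bZ_v$-subalgebra (\cref{lem:crossroads-in-Zv}) whose boundary localization contains $\partial_{\Sigma}\cup\partial_{\Sigma}^{-1}$ by construction, hence contains the whole generated subalgebra. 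An element of $\Desc{\Sigma}$ is a tangled graph whose only internal crossings are of type~$1$/type~$1$, its type-$2$ legs being crossing-free; resolving each such crossing by the inverse of \eqref{eq:crossroad-to-crossing} (equivalently, by the rule $C_1$) expresses the diagram as a $\bZ_v$-linear combination of crossroad webs, the coefficients all lying among $v,v^{-1},1$. Each crossroad web belongs to $\Skein{\Sigma}^{\bZ_v}$ by \cref{lem:crossroads-in-Zv}, so $\Desc{\Sigma}\subseteq\Skein{\Sigma}^{\bZ_v}$ as required.

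The key inclusion is $\Skein{\Sigma}^{\bZ_v}[\partial^{-1}]\subseteq\langle \SimpWil{\Sigma}\rangle_{\bZ_v}[\partial^{-1}]$. Since $\Bweb{\Sigma}$ is a $\bZ_v$-basis of $\Skein{\Sigma}^{\bZ_v}$ (\cref{thm:basis-web}, \cref{def:Zv-form}), it is enough to place every basis web inside $\langle \SimpWil{\Sigma}\rangle_{\bZ_v}[\partial^{-1}]$. By \cref{prop:desc-gen} each basis web is a polynomial, with coefficients in $\bZ_v$, in the elements of $\Desc{\Sigma}$ together with simple loops and arcs of type~$2$; by \cref{thm:localized-gen} each of these generators is in turn a polynomial, again with coefficients in $\bZ_v$, in $\SimpWil{\Sigma}\cup\partial_{\Sigma}^{-1}$. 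Substituting the second expansion into the first keeps all coefficients in $\bZ_v$ and realizes every basis web in $\langle \SimpWil{\Sigma}\rangle_{\bZ_v}[\partial^{-1}]$, which yields the inclusion and closes the cycle.

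The main point requiring care is the bookkeeping of coefficients: the ever-present danger is the appearance of $[2]^{-1}\notin\bZ_v$ when resolving type-$1$ crossings or contracting type-$2$ loops. This is precisely why the argument must be routed through crossroads and descending diagrams rather than through \eqref{rel:ss-cross} directly, and why \cref{lem:crossroads-in-Zv}, \cref{prop:desc-gen} and \cref{thm:localized-gen} are all formulated with $\bZ_v$-coefficients; taking them as black boxes makes every substitution in the cycle automatically $\bZ_v$-linear, so no genuinely new computation is needed here. I would also flag that, since the cycle passes through \cref{thm:localized-gen} (invoked to rewrite simple loops and arcs of type~$2$ in terms of $\SimpWil{\Sigma}\cup\partial_{\Sigma}^{-1}$), both equalities inherit its standing hypothesis that $\Sigma$ be connected with at least two special points.
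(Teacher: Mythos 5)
Your proof is correct and follows essentially the same route as the paper: both arguments close the same cycle of inclusions among $\Skein{\Sigma}^{\bZ_v}[\partial^{-1}]$, $\langle \Desc{\Sigma}\rangle_{\bZ_v}[\partial^{-1}]$ and $\langle \SimpWil{\Sigma}\rangle_{\bZ_v}[\partial^{-1}]$ using exactly \cref{prop:desc-gen}, \cref{thm:localized-gen} and \cref{lem:crossroads-in-Zv}, the only differences being that you traverse the cycle in the opposite order and close it via $\SimpWil{\Sigma}\subseteq\Desc{\Sigma}\subseteq\Skein{\Sigma}^{\bZ_v}$ rather than the paper's more direct $\SimpWil{\Sigma}\subseteq\Bweb{\Sigma}$. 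Your remark that the statement inherits the hypothesis of \cref{thm:localized-gen} (connected, at least two special points) is also consistent with the paper's own reliance on that result.
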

\begin{proof}
    It is obvious that $\langle \SimpWil{\Sigma} \rangle_{\bZ_q}[\partial^{-1}]\subset\Skein{\Sigma}^{\bZ_q}[\partial^{-1}]=\bZ_q\Bweb{\Sigma}$ because $\SimpWil{\Sigma}\subset\Bweb{\Sigma}$.
    The inclusion $\Skein{\Sigma}^{\bZ_q}[\partial^{-1}]\subset\langle \Desc{\Sigma} \rangle_{\bZ_q}[\partial^{-1}]$ follows from \cref{prop:desc-gen} together with the argument on simple loops and arcs of type $2$ discussed in the proof of \cref{thm:localized-gen}.
    Moreover, 
    \cref{thm:localized-gen} tells us that $\langle\Desc{\Sigma} \rangle_{\bZ_q}[\partial^{-1}]\subset\langle \SimpWil{\Sigma} \rangle_{\bZ_q}[\partial^{-1}]$.
    Consequently, we obtain a sequence of inclusions
    \[
        \Skein{\Sigma}^{\bZ_q}[\partial^{-1}]\subset\langle \Desc{\Sigma} \rangle_{\bZ_q}[\partial^{-1}]\subset\langle \SimpWil{\Sigma} \rangle_{\bZ_q}[\partial^{-1}]\subset\Skein{\Sigma}^{\bZ_q}[\partial^{-1}],
    \]
    which concludes the assertion.
\end{proof}

\subsection{Laurent expressions and positivity}
The notion of ``elevation-preserving'' webs has been introduced in \cite{IYsl3} for tangled $\mathfrak{sl}_3$-webs.
An elevation-preserving web is shown to have a Laurent expression with positive coefficients, based on the cutting trick in \cref{lem:cutteing-trick}.
Let us discuss the $\mathfrak{sp}_4$-case here.

We first explain the expansion of a tangled $\mathfrak{sp}_4$-graph based on the cutting trick in \cref{lem:cutteing-trick}.
For an ideal arc $E$ connecting two special points $p$ and $q$, denote the type~$s$ simple $\mathfrak{sp}_4$-arc parallel to $E$ by $e_s$ for $s=1,2$.

Let us consider a tangled $\mathfrak{sp}_4$-graph diagram $G$ such that $E\setminus\{p,q\}$ has $n_{s}$ transverse intersection points with type~$s$ edges of $G$ for $s=1,2$.
Then, one can easily see that $G(e_1e_2)^{n_1}(e_1e_2^2)^{n_2}$ is expanded into an $\bZ_q$-linear combination of tangled graph diagrams with no intersection  with $E\setminus\{p,q\}$.
Applying this procedure to each edge of an ideal triangulation $\Delta$ of $\Sigma$, we can expand a tangled graph $G$ into a $\bZ_q$-linear combination of $\mathfrak{sp}_4$-webs contained in triangles of $\Delta$.
Recall that any such $\mathfrak{sp}_4$-web contained in triangles is expressed as a polynomial in $\cup_{T\in t(\Delta)}\Eweb{T}$ with coefficients in $\cR$ by \cref{thm:Eweb-T}.
The coefficients can be restricted to $\bZ_q$ if $G$ is a crossroad web because of \cref{lem:crossroads-in-Zv}.
Moreover, it turns out that one can write $G$ by only using the elementary webs associated with a fixed decorated triangulation:

\begin{thm}[web cluster expansions]\label{thm:Cweb-exp}
    Let $\Delta$ be an ideal triangulation of an unpunctured marked surface $\Sigma$.
    Choosing a web cluster $\cC_{T}\in\Cweb{T}$ for each $T\in t(\Delta)$, let $\cC_\Delta:=\cup_{T\in t(\Delta)}\cC_T$ be the corresponding web cluster of $\Skein{\Sigma}$.
    For any tangled $\mathfrak{sp}_4$-graph (resp.~crossroad web) $G$ in $\Skein{\Sigma}$ (resp.~$\Skein{\Sigma}^{\bZ_q}$), there exists a monomial $J$ in $\cC_{\Delta}$ such that $GJ$ is a polynomial in $\cC_{\Delta}$ with coefficients in $\cR$ (resp.~$\bZ_q$).
\end{thm}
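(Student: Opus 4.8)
The plan is to prove the statement in three stages: first \emph{cut} $G$ across the interior edges of $\Delta$ to reduce to webs supported inside the triangles, then \emph{decompose} each triangle piece into elementary webs via \cref{thm:Eweb-T}, and finally \emph{convert} those elementary webs into the chosen web cluster $\cC_T$ by absorbing a monomial. Throughout, $v$-commutativity of the elements of $\cC_\Delta$ will be used to reorder factors at the cost of units $v^{k/2}\in\cR$ (resp. $\bZ_v$).

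For the first stage I would invoke the cutting procedure described immediately before the statement. Fix an interior edge $E\in e_\interior(\Delta)$ joining special points $p,q$, and let $e_1,e_2$ be the simple arcs of types $1,2$ parallel to $E$; these are boundary webs of the two adjacent triangles, hence lie in $\cC_\Delta$. If $G$ meets $E\setminus\{p,q\}$ transversally in $n_s$ points along type-$s$ edges, then \cref{lem:cutteing-trick}, applied at each intersection with the returning arcs pushed into the two adjacent triangles, writes $G\cdot(e_1e_2)^{n_1}(e_1e_2^2)^{n_2}$ as a $\bZ_v$-linear combination of diagrams disjoint from the interior of $E$. Applying this to every edge of $\Delta$ yields
\[
    G\cdot J_1=\sum_i c_i\,G_i,\qquad c_i\in\bZ_v,
\]
where $J_1$ is a monomial in the edge-parallel arcs (hence in $\cC_\Delta$) and each $G_i$ is disjoint from the interiors of all interior edges, so that $G_i=\prod_{T\in t(\Delta)}G_{i,T}$ with $G_{i,T}\in\Skein{T}$.

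For the second stage, \cref{thm:Eweb-T} expresses each $G_{i,T}$ as a polynomial in $\Eweb{T}$ with coefficients in $\cR$; when $G$, and hence every $G_{i,T}$, is a crossroad web, \cref{lem:crossroads-in-Zv} keeps these coefficients in $\bZ_v$. The third and most delicate stage is to replace the elementary webs occurring here by the fixed pair $\cC_T$. The six elementary webs of a triangle listed in \cref{thm:Eweb-T} form a finite-type rank-two exchange pattern whose clusters are exactly the six sets of \cref{cor:Cweb-T}; the associated exchange relations, each expressing a product of two elementary webs as a sum of two monomials in one of these clusters, follow from the arborization relations of \cref{lem:arborization} together with the skein relations. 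Walking along the bounded chain of mutations connecting the cluster appearing in the decomposition of $G_{i,T}$ to the target $\cC_T$, each exchange relation lets me trade the variables of one cluster, times a monomial, for a polynomial in the neighbouring cluster; composing these, I obtain a monomial $J_{2,i}$ in the variables of $\cC_T$ with $G_{i,T}J_{2,i}\in\cR[\cC_T]$ (resp. $\bZ_v[\cC_T]$).

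Finally I would assemble the pieces. Letting $J_2$ be the monomial whose exponent in each variable of $\cC_\Delta$ is the maximum of the corresponding exponents of the $J_{2,i}$, the quotient $J_2/J_{2,i}$ is again a monomial in $\cC_\Delta$, and
\[
    G\cdot J_1J_2=\sum_i c_i\,(G_iJ_{2,i})\,(J_2/J_{2,i})
\]
is a polynomial in $\cC_\Delta$ with coefficients in $\cR$ (resp. $\bZ_v$). Taking $J:=J_1J_2$, adjusted by a power of $v$ to be an honest monomial, proves the claim. The main obstacle I anticipate is the third stage: one must verify the full set of triangle exchange relations with the correct coefficient ring and confirm that the absorption can always be arranged by a single monomial without leaving the polynomial ring. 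This is where the finite-type combinatorics of \cref{cor:Cweb-T} and the precise form of \cref{lem:arborization} have to be used carefully, in particular to ensure that no factors of $[2]^{-1}$ survive in the crossroad case, which is exactly what \cref{lem:crossroads-in-Zv} is designed to guarantee.
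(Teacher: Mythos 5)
Your proposal follows essentially the same route as the paper's proof: cut $G$ along the edges of $\Delta$ via the cutting trick (absorbing a monomial of edge-parallel arcs), decompose each triangle piece into elementary webs via \cref{thm:Eweb-T} with \cref{lem:crossroads-in-Zv} controlling the coefficient ring, and then convert to the chosen cluster $\cC_T$ using the exchange-type skein relations among the six elementary webs of a triangle — the paper carries out your third stage by explicitly verifying the four relations \eqref{eq:skein_rel_triangle}--\eqref{eq:skein_rel_rot_2}, which are exactly the mutation/exchange relations you describe. The argument is correct; your more careful assembly of the global monomial $J$ from the local ones is a detail the paper leaves implicit.
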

\begin{proof}
    From the above discussion, it suffices to see that any elementary web in a triangle $T$ becomes a polynomial in $\cC_{T}$ after multiplying a monomial in $\cC_{T}$.
    Let us consider the web cluster $\cC_{T}$ consisting of
    $
    \mathord{
        \ \tikz[baseline=-.6ex, scale=.07, rotate=120]{
            \coordinate (A) at (90:8);
            \coordinate (B) at (210:8);
            \coordinate (C) at (-30:8);
            \coordinate (P) at (0,0);
            \draw[wline] (P) -- (A);
            \draw[webline] (P) -- (B);
            \draw[webline] (P) -- (C);
            \draw[blue] (A) -- (B) -- (C) -- cycle;
            \filldraw (A) circle [radius=20pt];
            \filldraw (B) circle [radius=20pt];
            \filldraw (C) circle [radius=20pt];
        }\ 
    }$,
    $
    \mathord{
        \ \tikz[baseline=-.6ex, scale=.07, rotate=-120]{
            \coordinate (A) at (90:8);
            \coordinate (B) at (210:8);
            \coordinate (C) at (-30:8);
            \coordinate (P1) at ($(A)!.8!(B)+(3,0)$);
            \coordinate (P2) at ($(A)!.8!(C)+(-3,0)$);
            \coordinate (P) at (0,0);
            \draw[wline] (P1) -- (B);
            \draw[wline] (P2) -- (C);
            \draw[webline] (A) -- (P1) -- (P2) -- (A);
            \draw[blue] (A) -- (B) -- (C) -- cycle;
            \filldraw (A) circle [radius=20pt];
            \filldraw (B) circle [radius=20pt];
            \filldraw (C) circle [radius=20pt];
        }\ 
    }$ and the $\mathfrak{sp}_4$-webs along the edges of $T$.
    The following are verified straightforwardly by using the skein relations:
    \begin{align}
        \mathord{
            \ \tikz[baseline=-.6ex, scale=.1, rotate=120]{
                \coordinate (A) at (90:8);
                \coordinate (B) at (210:8);
                \coordinate (C) at (-30:8);
                \coordinate (P1) at ($(A)!.8!(B)+(3,0)$);
                \coordinate (P2) at ($(A)!.8!(C)+(-3,0)$);
                \coordinate (P) at (0,0);
                \draw[blue] (A) -- (B) -- (C) -- cycle;
                \draw[wline] (P1) -- (B);
                \draw[wline] (P2) -- (C);
                \draw[webline] (A) -- (P1) -- (P2) -- (A);
                \filldraw (A) circle [radius=20pt];
                \filldraw (B) circle [radius=20pt];
                \filldraw (C) circle [radius=20pt];
            }\ 
        }
        \mathord{
            \ \tikz[baseline=-.6ex, scale=.1, rotate=120]{
                \coordinate (A) at (90:8);
                \coordinate (B) at (210:8);
                \coordinate (C) at (-30:8);
                \coordinate (P) at (0,0);
                \draw[wline] (P) -- (A);
                \draw[webline] (P) -- (B);
                \draw[webline] (P) -- (C);
                \draw[blue] (A) -- (B) -- (C) -- cycle;
                \filldraw (A) circle [radius=20pt];
                \filldraw (B) circle [radius=20pt];
                \filldraw (C) circle [radius=20pt];
            }\ 
        }
        &=q
        \mathord{
            \ \tikz[baseline=-.6ex, scale=.1]{
                \coordinate (A) at (90:8);
                \coordinate (B) at (210:8);
                \coordinate (C) at (-30:8);
                \coordinate (P) at (0,0);
                \draw[blue] (A) -- (B) -- (C) -- cycle;
                \draw[wline] (P) -- (A);
                \draw[webline] (P) -- (B);
                \draw[webline] (P) -- (C);
                \draw[webline] (B) to[bend left=15] (A);
                \draw[wline] (B) to[bend right=15] (C);
                \filldraw (A) circle [radius=20pt];
                \filldraw (B) circle [radius=20pt];
                \filldraw (C) circle [radius=20pt];
            }\ 
        }
        +q^{-1}
        \mathord{
            \ \tikz[baseline=-.6ex, scale=.1, rotate=-120]{
                \coordinate (A) at (90:8);
                \coordinate (B) at (210:8);
                \coordinate (C) at (-30:8);
                \coordinate (P) at (0,0);
                \draw[blue] (A) -- (B) -- (C) -- cycle;
                \draw[wline] (P) -- (A);
                \draw[webline] (P) -- (B);
                \draw[webline] (P) -- (C);
                \draw[webline] (C) to[bend right=15] (A);
                \draw[wline] (B) to[bend right=15] (C);
                \filldraw (A) circle [radius=20pt];
                \filldraw (B) circle [radius=20pt];
                \filldraw (C) circle [radius=20pt];
            }\ 
        },\label{eq:skein_rel_triangle}\\
        \mathord{
            \ \tikz[baseline=-.6ex, scale=.1]{
                \coordinate (A) at (90:8);
                \coordinate (B) at (210:8);
                \coordinate (C) at (-30:8);
                \coordinate (P) at (0,0);
                \draw[wline] (P) -- (A);
                \draw[webline] (P) -- (B);
                \draw[webline] (P) -- (C);
                \draw[blue] (A) -- (B) -- (C) -- cycle;
                \filldraw (A) circle [radius=20pt];
                \filldraw (B) circle [radius=20pt];
                \filldraw (C) circle [radius=20pt];
            }\ 
        }
        \mathord{
            \ \tikz[baseline=-.6ex, scale=.1, rotate=120]{
                \coordinate (A) at (90:8);
                \coordinate (B) at (210:8);
                \coordinate (C) at (-30:8);
                \coordinate (P) at (0,0);
                \draw[wline] (P) -- (A);
                \draw[webline] (P) -- (B);
                \draw[webline] (P) -- (C);
                \draw[blue] (A) -- (B) -- (C) -- cycle;
                \filldraw (A) circle [radius=20pt];
                \filldraw (B) circle [radius=20pt];
                \filldraw (C) circle [radius=20pt];
            }\ 
        }
        &=q^{-\frac{1}{2}}
        \mathord{
            \ \tikz[baseline=-.6ex, scale=.1]{
                \coordinate (A) at (90:8);
                \coordinate (B) at (210:8);
                \coordinate (C) at (-30:8);
                \coordinate (P) at (0,0);
                \draw[blue] (A) -- (B) -- (C) -- cycle;
                \draw[wline] (A) to[bend left=15] (B);
                \draw[webline] (B) to[bend left=15] (C);
                \draw[webline] (A) to[bend right=15] (C);
                \filldraw (A) circle [radius=20pt];
                \filldraw (B) circle [radius=20pt];
                \filldraw (C) circle [radius=20pt];
            }\ 
        }
        +q^{\frac{1}{2}}
        \mathord{
            \ \tikz[baseline=-.6ex, scale=.1, rotate=-120]{
                \coordinate (A) at (90:8);
                \coordinate (B) at (210:8);
                \coordinate (C) at (-30:8);
                \coordinate (P1) at ($(A)!.7!(B)+(3,0)$);
                \coordinate (P2) at ($(A)!.7!(C)+(-3,0)$);
                \coordinate (P) at (0,0);
                \draw[blue] (A) -- (B) -- (C) -- cycle;
                \draw[wline] (P1) -- (B);
                \draw[wline] (P2) -- (C);
                \draw[webline] (B) to[bend right=15] (C);
                \draw[webline] (A) -- (P1) -- (P2) -- (A);
                \filldraw (A) circle [radius=20pt];
                \filldraw (B) circle [radius=20pt];
                \filldraw (C) circle [radius=20pt];
            }\ 
        },\\
        \mathord{
            \ \tikz[baseline=-.6ex, scale=.1, rotate=-120]{
                \coordinate (A) at (90:8);
                \coordinate (B) at (210:8);
                \coordinate (C) at (-30:8);
                \coordinate (P) at (0,0);
                \draw[wline] (P) -- (A);
                \draw[webline] (P) -- (B);
                \draw[webline] (P) -- (C);
                \draw[blue] (A) -- (B) -- (C) -- cycle;
                \filldraw (A) circle [radius=20pt];
                \filldraw (B) circle [radius=20pt];
                \filldraw (C) circle [radius=20pt];
            }\ 
        }
        \mathord{
            \ \tikz[baseline=-.6ex, scale=.1, rotate=120]{
                \coordinate (A) at (90:8);
                \coordinate (B) at (210:8);
                \coordinate (C) at (-30:8);
                \coordinate (P) at (0,0);
                \draw[wline] (P) -- (A);
                \draw[webline] (P) -- (B);
                \draw[webline] (P) -- (C);
                \draw[blue] (A) -- (B) -- (C) -- cycle;
                \filldraw (A) circle [radius=20pt];
                \filldraw (B) circle [radius=20pt];
                \filldraw (C) circle [radius=20pt];
            }\ 
        }
        &=q^{\frac{1}{2}}
        \mathord{
            \ \tikz[baseline=-.6ex, scale=.1, rotate=120]{
                \coordinate (A) at (90:8);
                \coordinate (B) at (210:8);
                \coordinate (C) at (-30:8);
                \coordinate (P) at (0,0);
                \draw[blue] (A) -- (B) -- (C) -- cycle;
                \draw[wline] (A) to[bend left=15] (B);
                \draw[webline] (B) to[bend left=15] (C);
                \draw[webline] (A) to[bend right=15] (C);
                \filldraw (A) circle [radius=20pt];
                \filldraw (B) circle [radius=20pt];
                \filldraw (C) circle [radius=20pt];
            }\ 
        }
        +q^{-\frac{1}{2}}
        \mathord{
            \ \tikz[baseline=-.6ex, scale=.1]{
                \coordinate (A) at (90:8);
                \coordinate (B) at (210:8);
                \coordinate (C) at (-30:8);
                \coordinate (P1) at ($(A)!.7!(B)+(3,0)$);
                \coordinate (P2) at ($(A)!.7!(C)+(-3,0)$);
                \coordinate (P) at (0,0);
                \draw[blue] (A) -- (B) -- (C) -- cycle;
                \draw[wline] (P1) -- (B);
                \draw[wline] (P2) -- (C);
                \draw[webline] (B) to[bend right=15] (C);
                \draw[webline] (A) -- (P1) -- (P2) -- (A);
                \filldraw (A) circle [radius=20pt];
                \filldraw (B) circle [radius=20pt];
                \filldraw (C) circle [radius=20pt];
            }\ 
        },\label{eq:skein_rel_rot_1}\\
        \mathord{
            \ \tikz[baseline=-.6ex, scale=.1]{
                \coordinate (A) at (90:8);
                \coordinate (B) at (210:8);
                \coordinate (C) at (-30:8);
                \coordinate (P1) at ($(A)!.8!(B)+(3,0)$);
                \coordinate (P2) at ($(A)!.8!(C)+(-3,0)$);
                \coordinate (P) at (0,0);
                \draw[wline] (P1) -- (B);
                \draw[wline] (P2) -- (C);
                \draw[webline] (A) -- (P1) -- (P2) -- (A);
                \draw[blue] (A) -- (B) -- (C) -- cycle;
                \filldraw (A) circle [radius=20pt];
                \filldraw (B) circle [radius=20pt];
                \filldraw (C) circle [radius=20pt];
            }\ 
        }
        \mathord{
            \ \tikz[baseline=-.6ex, scale=.1, rotate=-120]{
                \coordinate (A) at (90:8);
                \coordinate (B) at (210:8);
                \coordinate (C) at (-30:8);
                \coordinate (P1) at ($(A)!.8!(B)+(3,0)$);
                \coordinate (P2) at ($(A)!.8!(C)+(-3,0)$);
                \coordinate (P) at (0,0);
                \draw[wline] (P1) -- (B);
                \draw[wline] (P2) -- (C);
                \draw[webline] (A) -- (P1) -- (P2) -- (A);
                \draw[blue] (A) -- (B) -- (C) -- cycle;
                \filldraw (A) circle [radius=20pt];
                \filldraw (B) circle [radius=20pt];
                \filldraw (C) circle [radius=20pt];
            }\ 
        }
        &=q
        \mathord{
            \ \tikz[baseline=-.6ex, scale=.1]{
                \coordinate (A) at (90:8);
                \coordinate (B) at (210:8);
                \coordinate (C) at (-30:8);
                \coordinate (P) at (0,0);
                \draw[blue] (A) -- (B) -- (C) -- cycle;
                \draw[wline] (A) to[bend left=15] (B);
                \draw[wline] (B) to[bend left=15] (C);
                \draw[webline] (A) to[bend right=15] (C);
                \draw[webline] (A) to[bend left=20] (C);
                \filldraw (A) circle [radius=20pt];
                \filldraw (B) circle [radius=20pt];
                \filldraw (C) circle [radius=20pt];
            }\ 
        }
        +q^{-1}
        \mathord{
            \ \tikz[baseline=-.6ex, scale=.1]{
                \coordinate (A) at (90:8);
                \coordinate (B) at (210:8);
                \coordinate (C) at (-30:8);
                \coordinate (P) at (0,0);
                \coordinate (P1) at ($(P)+(150:3)$);
                \coordinate (P2) at ($(P)+(-90:3)$);
                \draw[blue] (A) -- (B) -- (C) -- cycle;
                \draw[wline] (P2) -- (B);
                \draw[webline] (P2) -- (A);
                \draw[overarc] (P1) -- (C);
                \draw[wline] (P1) -- (B);
                \draw[webline] (P1) -- (A);
                \draw[webline] (P2) -- (C);
                \draw[wline] (A) to[bend left=20] (C);
                \filldraw (A) circle [radius=20pt];
                \filldraw (B) circle [radius=20pt];
                \filldraw (C) circle [radius=20pt];
            }\ 
        }.\label{eq:skein_rel_rot_2}
    \end{align}
    Using these relations, we see that each elementary web can be expanded into a polynomial in $\cC_T$. Thus the assertion is proved.
\end{proof}
One may notice that the coefficients in \eqref{eq:s-cutting},\eqref{eq:w-cutting}, \eqref{eq:skein_rel_triangle}--\eqref{eq:skein_rel_rot_2} are all positive, namely in the monoid $\bZ_{+}[q^{\pm 1/2}] \subset \bZ[q^{\pm 1/2}]$ of Laurent polynomials with non-negative coefficients. Globally, however, it is possible that we have to use skein relations which produce negative signs (such as \eqref{rel:bigon}) in expanding a tangled $\mathfrak{sp}_4$-graph. 
Let us introduce a class of tangled $\mathfrak{sp}_4$-graphs whose expansions are ensured to have only positive coefficients.

\begin{figure}
	\begin{tikzpicture}[scale=.1]
		\coordinate (P1) at (90:15);
		\coordinate (P2) at (180:20);
		\coordinate (P3) at (-90:15);
		\coordinate (P4) at (0:20);
		\draw[blue] (P1) -- (P2) -- (P3) -- (P4) -- (P1) -- cycle;
		\draw[blue] (P1) -- (P3);
		\node at ($(P2)!.5!(P4)+(0,5)$) [right]{\scriptsize $E$};
		\node at ($(P2)!.3!(P4)$) {$T$};
		\node at ($(P4)!.3!(P2)$) {$T'$};
		\node at (P3) [below=10pt]{$\Delta$};
	\end{tikzpicture}
	\hspace{1em}
	\begin{tikzpicture}[scale=.1]
		\coordinate (P1) at (90:15);
		\coordinate (P2) at (180:20);
		\coordinate (P3) at (-90:15);
		\coordinate (P4) at (0:20);
		\draw[blue!50] (P1) -- (P2) -- (P3) -- (P4) -- (P1) -- cycle;
		\draw[blue!50] (P1) -- (P3);
		\draw[blue, thick] (P1) to[bend right=20] (P3);
		\draw[blue, thick] (P1) to[bend left=20] (P3);
		\draw[blue, thick] (P1) to[bend right=20] (P2);
		\draw[blue, thick] (P1) to[bend left=20] (P2);
		\draw[blue, thick, rounded corners] (P3) to[bend right=20] (P4);
		\draw[blue, thick, rounded corners] (P3) to[bend left=20] (P4);
		\draw[blue, thick, rounded corners] (P2) to[bend right=20] (P3);
		\draw[blue, thick, rounded corners] (P2) to[bend left=20] (P3);
		\draw[blue, thick, rounded corners] (P4) to[bend right=20] (P1);
		\draw[blue, thick, rounded corners] (P4) to[bend left=20] (P1);
		\node at (0,0) {\scriptsize $B_{E}$};
		\node at (-0.2,1) [above left]{\scriptsize $E_T$};
		\node at (0,1) [above right]{\scriptsize $E_{T'}$};
		\node at ($(P2)!.25!(P4)$) {$T$};
		\node at ($(P4)!.25!(P2)$) {$T'$};
		\node at (P3) [below=10pt]{$\Delta^{\mathrm{split}}$};
	\end{tikzpicture}
	\caption{The split triangulation $\Delta^{\mathrm{split}}$ associated with $\Delta$. Here $B_{E}$ is the biangle bounded by the two edges $E_T$ and $E_{T'}$.}
	\label{fig:split-tri}
\end{figure}
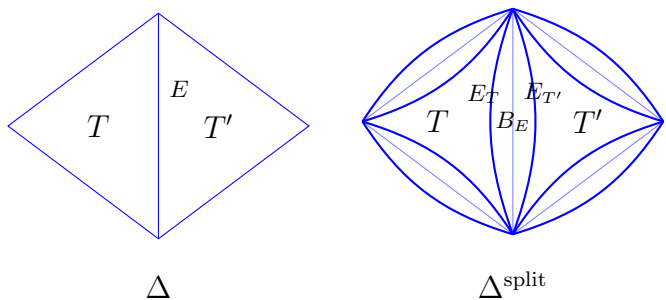

For an ideal triangulation $\Delta$, the associated \emph{splitting triangulation $\Delta^{\mathrm{split}}$} is obtained by replacing each edge $E$ of $\Delta$ with two parallel edges, as shown in \cref{fig:split-tri}.
The set of connected components of $\Sigma\setminus\Delta^{\mathrm{split}}$ is divided into two subsets: the set $t(\Delta^{\mathrm{split}})$ of triangles and the set $b(\Delta^{\mathrm{split}})$ of biangles.
We denote a triangle in $t(\Delta^{\mathrm{split}})$ by the same symbol as the corresponding triangle in $t(\Delta)$, while the biangle corresponding to an edge $E\in e(\Delta)$ is denoted by $B_{E}\in b(\Delta^{\mathrm{split}})$. 
For an edge $E\in e(\Delta)$ and a triangle $T\in t(\Delta)$ adjacent to $E$, let $E_{T}\in e(\Delta^{\mathrm{split}})$ denote the edge shared by $B_{E}$ and $T$.
\begin{dfn}[elevation-preserving $\mathfrak{sp}_{4}$-webs]\label{def:elevation-pres}
    A tangled $\mathfrak{sp}_4$-graph is said to be \emph{elevation-preserving with respect to $\Delta$} if it is represented by a diagram $G$ on $\Sigma$ satisfying the following conditions:
    \begin{itemize}
        \item For $B\in b(\Delta^{\mathrm{split}})$ corresponding to an interior edge, $G\cap B$ consists of a braid between the two sides of $B$ obtained by a superposition of arcs of either type. For $B\in b(\Delta^{\mathrm{split}})$ corresponding to a boundary interval, $G \cap B$ consists of a braid between the interior side and special points. For any pair of strands $\beta_1$ and $\beta_2$ in the braid that intersects each other, one of them is required to have a higher elevation than the other throughout the way: we denote by $\beta_1<\beta_2$ if $\beta_2$ passes over $\beta_1$.
        \item For any $T\in t(\Delta^{\mathrm{split}})$, $G\cap T$ consists of a superposition of the diagrams in
        \[
            \left\{
            \mathord{
                \ \tikz[baseline=-.6ex, scale=.07]{
                    \coordinate (A) at (90:8);
                    \coordinate (B) at (210:8);
                    \coordinate (C) at (-30:8);
                    \coordinate (AB) at ($(A)!.5!(B)$);
                    \coordinate (BC) at ($(B)!.5!(C)$);
                    \coordinate (CA) at ($(C)!.5!(A)$);
                    \coordinate (P) at (0,0);
                    \draw[blue] (A) -- (B) -- (C) -- cycle;
                    \begin{scope}
                        \clip (A) -- (B) -- (C) -- cycle;
                        \draw[wline] (P) -- (AB);
                        \draw[webline] (P) -- (BC);
                        \draw[webline] (P) -- (CA);                        
                    \end{scope}
                    \filldraw (A) circle [radius=20pt];
                    \filldraw (B) circle [radius=20pt];
                    \filldraw (C) circle [radius=20pt];
                }\ 
            },
            \mathord{
                \ \tikz[baseline=-.6ex, scale=.07]{
                    \coordinate (A) at (90:8);
                    \coordinate (B) at (210:8);
                    \coordinate (C) at (-30:8);
                    \coordinate (AB) at ($(A)!.5!(B)$);
                    \coordinate (BC) at ($(B)!.5!(C)$);
                    \coordinate (CA) at ($(C)!.5!(A)$);
                    \coordinate (P) at (0,0);
                    \draw[blue] (A) -- (B) -- (C) -- cycle;
                    \begin{scope}
                        \clip (A) -- (B) -- (C) -- cycle;
                        \draw[webline] (AB) to[bend right] (CA);
                    \end{scope}
                    \filldraw (A) circle [radius=20pt];
                    \filldraw (B) circle [radius=20pt];
                    \filldraw (C) circle [radius=20pt];
                }\ 
            },
            \mathord{
                \ \tikz[baseline=-.6ex, scale=.07]{
                    \coordinate (A) at (90:8);
                    \coordinate (B) at (210:8);
                    \coordinate (C) at (-30:8);
                    \coordinate (AB) at ($(A)!.5!(B)$);
                    \coordinate (BC) at ($(B)!.5!(C)$);
                    \coordinate (CA) at ($(C)!.5!(A)$);
                    \coordinate (P) at (0,0);
                    \draw[blue] (A) -- (B) -- (C) -- cycle;
                    \begin{scope}
                        \clip (A) -- (B) -- (C) -- cycle;
                        \draw[wline] (AB) to[bend right] (CA);
                    \end{scope}
                    \filldraw (A) circle [radius=20pt];
                    \filldraw (B) circle [radius=20pt];
                    \filldraw (C) circle [radius=20pt];
                }\ 
            },
            \text{ and their rotations}\right\}.
        \]
        Similarly, the relative elevations between any two components are required to be fixed throughout the way if they have a crossing.
        \item For any $T\in t(\Delta^{\mathrm{split}})$ and its adjacent biangle $B\in b(\Delta^{\mathrm{split}})$, two components $G_1,G_2$ of the diagram in $T$ satisfy $G_1<G_2$ if $\beta_1<\beta_2$, where $\beta_i$ are strands of the braid in $B$ connected to $G_i$ for $i=1,2$.
    \end{itemize}
\end{dfn}

\begin{ex}\label{ex:Desc}
    Elements in $\Desc{\Sigma}$, especially geometric $n$-bracelets and $n$-bangles in \cref{fig:bracelet}, are elevation-preserving for any ideal triangulation $\Delta$. Note that the legs of a descending loop or arc can avoid intersecting with the edges of $\Delta$.
\end{ex}

\begin{thm}\label{thm:positivity-web}
    Let $\Sigma$ be any unpunctured marked surface, $\Delta$ its ideal triangulation, and $\cC_{\bD}$ the web cluster associated with a decorated triangulation $\bD$ over $\Delta$.
    For any elevation-preserving $\mathfrak{sp}_4$-web $G$ with respect to $\Delta$, there exists a monomial $J$ in $\cC_{\bD}$ such that $[2]^{k}GJ$ is a polynomial in $\cC_{\bD}$ with coefficients in $\bZ_{+}[q^{\pm 1/2}]$, where $k$ is the total number of intersection points between type $2$ edges of $G$ and all the edges of the triangulation.
\end{thm}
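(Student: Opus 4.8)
The plan is to follow the strategy of the $\mathfrak{sl}_3$-case in \cite{IYsl3}: expand $G$ by cutting along the edges of $\Delta$, and show that the elevation-preserving hypothesis forces every resolution used to carry a coefficient in $\bZ_+[v^{\pm 1/2}]$, up to a controlled power of $[2]$. First I would pass to the split triangulation $\Delta^{\mathrm{split}}$ of \cref{fig:split-tri} and fix a representative diagram of $G$, recording its decomposition into the biangle braids (of type~$1$ and type~$2$ strands with globally consistent elevations, by the first bullet of \cref{def:elevation-pres}) and the triangle pieces (superpositions of the three standard diagrams, by the second bullet). For each edge $E$ with $n_s^E$ transverse intersections with type~$s$ strands, I would multiply by the edge monomial $(e_1e_2)^{n_1^E}(e_1e_2^2)^{n_2^E}$ and apply the cutting trick \cref{lem:cutteing-trick}. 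The essential observation is that the coefficients in \eqref{eq:s-cutting} and \eqref{eq:w-cutting} all lie in $\bZ_+[v^{\pm 1/2}]$, since $[2]=v+v^{-1}$ is positive; and that the fixed relative elevations in each biangle let the strands meeting $E$ be resolved one at a time in the prescribed order, so that no type~$2$ bigon (which would invoke the negative relation \eqref{rel:bigon}) and no elliptic face is ever created. Performed edge by edge, this writes $G\cdot J_1$, for a monomial $J_1$ in edge arcs, as a $\bZ_+[v^{\pm 1/2}]$-combination of diagrams each supported in the triangles of $\Delta^{\mathrm{split}}$.

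Next I would express each triangle-supported diagram in the fixed web cluster $\cC_{\bD}$. The third bullet of \cref{def:elevation-pres}, matching the biangle elevations to the relative heights of the triangle components, guarantees that the pieces in each triangle are stacked compatibly, so that reordering and recombining them into the generators $x,y$ of \cref{thm:Eweb-T} uses only the triangle relations \eqref{eq:skein_rel_triangle}--\eqref{eq:skein_rel_rot_2} and the arborization relations of \cref{lem:arborization}, whose coefficients again lie in $\bZ_+[v^{\pm 1/2}]$. After further multiplication by edge arcs, absorbed into the final monomial $J$, this realizes the triangle parts as monomials in $\cC_{\bD}$, exactly as in the proof of \cref{thm:Cweb-exp} but now with positive coefficients. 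The sole source of a denominator is the tree-type resolution (via \eqref{rel:IH} and the crossroad identity) of a type~$2$ rung produced by the $[2]$-term of \eqref{eq:w-cutting}; each such rung is created at a single type~$2$ crossing with $\Delta$ and costs at most one factor $[2]^{-1}$. As there are exactly $k$ type~$2$ crossings, multiplying by $[2]^{k}$ clears all denominators and preserves positivity, yielding $[2]^{k}GJ\in\bZ_+[v^{\pm 1/2}]\,\cC_{\bD}$.

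The hard part will be the bookkeeping of the second step: proving rigorously that the elevation-preserving condition excludes every negative-coefficient move. I must verify that the globally consistent elevation ordering really does let each crossing with $E$ be resolved strand-by-strand without ever producing a configuration whose reduction introduces a minus sign, the principal culprit being the type~$2$ bigon relation \eqref{rel:bigon}, and that this elevation compatibility is preserved at the biangle--triangle interface under each application of \eqref{eq:s-cutting}/\eqref{eq:w-cutting}, so that the triangle parts remain superpositions of the standard pieces and stay expandable by the positive relations alone. A secondary, quantitative point is to confirm that the incurred power of $[2]^{-1}$ is at most $k$ and not larger, i.e.\ that each type~$2$ strand contributes its denominator only once and that the subsequent arborization and cluster-expansion steps are denominator-free. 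Both points are local and follow the $\mathfrak{sl}_3$ template of \cite{IYsl3}, but the presence of two edge types and the $[2]$-valued circle and bigon relations makes the casework genuinely heavier.
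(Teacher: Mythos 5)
Your proposal follows essentially the same route as the paper's proof: cut along the split triangulation via \cref{lem:cutteing-trick} in the order dictated by the elevations, verify that every resulting biangle/triangle gluing pattern has coefficients in $\bZ_{+}[v^{\pm 1/2}]$, and track the factors $[2]^{-1}$, which arise only from the ends created by cutting type~$2$ strands (the pieces $Y_3$ in the paper's notation, whose gluings inside triangles produce the $1/[2]$'s). The step you flag as hard is handled in the paper not by an a priori argument that no negative-coefficient relation is ever invoked, but by the explicit finite enumeration of all gluing patterns $B(X_i,X_j)$, $B(Y_i,Y_j)$, $T(X_i,X_j,\emptyset)$, $T(Y_i,Y_j,\emptyset)$, $T(Y_i,X_j,X_k)$, each computed directly.
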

\begin{proof}
    Let $G$ be an elevation-preserving web with respect to an ideal triangulation $\Delta$.
    We first expand $G$ into $\mathfrak{sp}_4$-webs in biangles and triangles in $\Delta^{\mathrm{split}}$ by the cutting trick in \cref{lem:cutteing-trick}, only producing positive coefficients by the discussion above. 
    Here notice that the expansion of $G$ can be obtained by expanding $G\cap T$ and $G\cap B$ for each $T\in t(\Delta^{\mathrm{split}})$ and $B\in b(\Delta^{\mathrm{split}})$ in the order from the webs in lower elevation to the higher, thanks to the elevation-preserving property. 
    Then it remains to see how the local pieces shown below are glued in biangles $B$ or triangles $T$:
    \begin{align*}
        X_1&:=\mathord{
            \ \tikz[baseline=-.6ex, scale=.07, yshift=-5cm]{
                \coordinate (A1) at (-10,0);
                \coordinate (A2) at (10,0);
                \coordinate (C0) at ($(A1)!0.5!(A2)+(0,12)$);
                \coordinate (B0) at ($(A1)!0.5!(A2)+(0,5)$);
                \draw[dashed] (A1) -- (C0) -- (A2);
                \draw[webline, rounded corners] (C0) -- (B0) -- (A2);
                \draw[blue, thick] (A1) -- (A2);
                \draw[fill] (A1) circle (30pt);
                \draw[fill] (A2) circle (30pt);
            }\ 
        },&
        X_2&:=\mathord{
            \ \tikz[baseline=-.6ex, scale=.07, yshift=-5cm]{
                \coordinate (A1) at (-10,0);
                \coordinate (A2) at (10,0);
                \coordinate (C0) at ($(A1)!0.5!(A2)+(0,12)$);
                \coordinate (B0) at ($(A1)!0.5!(A2)+(0,5)$);
                \draw[dashed] (A1) -- (C0) -- (A2);
                \draw[webline] (C0) -- (B0);
                \draw[webline] (A1) -- (B0);
                \draw[wline] (A2) -- (B0);
                \draw[blue, thick] (A1) -- (A2);
                \draw[fill] (A1) circle (30pt);
                \draw[fill] (A2) circle (30pt);
            }\ 
        },&
        X_3&:=\mathord{
            \ \tikz[baseline=-.6ex, scale=.07, yshift=-5cm]{
                \coordinate (A1) at (-10,0);
                \coordinate (A2) at (10,0);
                \coordinate (C0) at ($(A1)!0.5!(A2)+(0,12)$);
                \coordinate (B0) at ($(A1)!0.5!(A2)+(0,5)$);
                \draw[dashed] (A1) -- (C0) -- (A2);
                \draw[webline] (C0) -- (B0);
                \draw[wline] (A1) -- (B0);
                \draw[webline] (A2) -- (B0);
                \draw[blue, thick] (A1) -- (A2);
                \draw[fill] (A1) circle (30pt);
                \draw[fill] (A2) circle (30pt);
            }\ 
        },&
        X_4&:=\mathord{
            \ \tikz[baseline=-.6ex, scale=.07, yshift=-5cm]{
                \coordinate (A1) at (-10,0);
                \coordinate (A2) at (10,0);
                \coordinate (C0) at ($(A1)!0.5!(A2)+(0,12)$);
                \coordinate (B0) at ($(A1)!0.5!(A2)+(0,5)$);
                \draw[dashed] (A1) -- (C0) -- (A2);
                \draw[webline, rounded corners] (C0) -- (B0) -- (A1);
                \draw[blue, thick] (A1) -- (A2);
                \draw[fill] (A1) circle (30pt);
                \draw[fill] (A2) circle (30pt);
            }\ 
        },&&\\
        Y_1&:=\mathord{
            \ \tikz[baseline=-.6ex, scale=.07, yshift=-5cm]{
                \coordinate (A1) at (-10,0);
                \coordinate (A2) at (10,0);
                \coordinate (C0) at ($(A1)!0.5!(A2)+(0,12)$);
                \coordinate (B0) at ($(A1)!0.5!(A2)+(0,5)$);
                \draw[dashed] (A1) -- (C0) -- (A2);
                \draw[wline, rounded corners] (C0) -- (B0) -- (A2);
                \draw[blue, thick] (A1) -- (A2);
                \draw[fill] (A1) circle (30pt);
                \draw[fill] (A2) circle (30pt);
            }\ 
        },&
        Y_2&:=\mathord{
            \ \tikz[baseline=-.6ex, scale=.07, yshift=-5cm]{
                \coordinate (A1) at (-10,0);
                \coordinate (A2) at (10,0);
                \coordinate (C0) at ($(A1)!0.5!(A2)+(0,12)$);
                \coordinate (B1) at ($(A1)!0.5!(A2)+(0,3)$);
                \coordinate (B2) at ($(A1)!0.5!(A2)+(0,8)$);
                \draw[dashed] (A1) -- (C0) -- (A2);
                \draw[wline] (C0) -- (B2);
                \draw[wline] (A1) -- (B1);
                \draw[webline] (A2) -- (B1) -- (B2) -- cycle;
                \draw[blue, thick] (A1) -- (A2);
                \draw[fill] (A1) circle (30pt);
                \draw[fill] (A2) circle (30pt);
            }\ 
        },&
        Y_3&:=\mathord{
            \ \tikz[baseline=-.6ex, scale=.07, yshift=-5cm]{
                \coordinate (A1) at (-10,0);
                \coordinate (A2) at (10,0);
                \coordinate (C0) at ($(A1)!0.5!(A2)+(0,12)$);
                \coordinate (B0) at ($(A1)!0.5!(A2)+(0,5)$);
                \draw[dashed] (A1) -- (C0) -- (A2);
                \draw[wline] (C0) -- (B0);
                \draw[webline] (A1) -- (B0);
                \draw[webline] (A2) -- (B0);
                \draw[blue, thick] (A1) -- (A2);
                \draw[fill] (A1) circle (30pt);
                \draw[fill] (A2) circle (30pt);
            }\ 
        },&
        Y_4&:=\mathord{
            \ \tikz[baseline=-.6ex, scale=.07, yshift=-5cm]{
                \coordinate (A1) at (-10,0);
                \coordinate (A2) at (10,0);
                \coordinate (C0) at ($(A1)!0.5!(A2)+(0,12)$);
                \coordinate (B1) at ($(A1)!0.5!(A2)+(0,3)$);
                \coordinate (B2) at ($(A1)!0.5!(A2)+(0,8)$);
                \draw[dashed] (A1) -- (C0) -- (A2);
                \draw[wline] (C0) -- (B2);
                \draw[wline] (A2) -- (B1);
                \draw[webline] (A1) -- (B1) -- (B2) -- cycle;
                \draw[blue, thick] (A1) -- (A2);
                \draw[fill] (A1) circle (30pt);
                \draw[fill] (A2) circle (30pt);
            }\ 
        },&
        Y_5&:=\mathord{
            \ \tikz[baseline=-.6ex, scale=.07, yshift=-5cm]{
                \coordinate (A1) at (-10,0);
                \coordinate (A2) at (10,0);
                \coordinate (C0) at ($(A1)!0.5!(A2)+(0,12)$);
                \coordinate (B0) at ($(A1)!0.5!(A2)+(0,5)$);
                \draw[dashed] (A1) -- (C0) -- (A2);
                \draw[wline, rounded corners] (C0) -- (B0) -- (A1);
                \draw[blue, thick] (A1) -- (A2);
                \draw[fill] (A1) circle (30pt);
                \draw[fill] (A2) circle (30pt);
            }\ .
        }
    \end{align*}
    The gluing patterns are
    \begin{align*}
        B(X_i,X_j)&:=
        \mathord{
            \ \tikz[baseline=-.6ex, scale=.08]{
                \coordinate (A1) at (-12,0);
                \coordinate (A2) at (12,0);
                \coordinate (P) at (0,0);
                \draw[dashed] (A1) -- (A2);
                \draw[blue, thick] (A1) to[bend left=60] (A2);
                \draw[blue, thick] (A1) to[bend right=60] (A2);
                \node at ($(P)-(0,3)$) [rotate=0]{\small $X_i$};
                \node at ($(P)+(0,3)$) [rotate=180]{\small $X_j$};
                \draw[fill] (A1) circle (30pt);
                \draw[fill] (A2) circle (30pt);
            }\ 
        },&
        B(Y_i,Y_j)&:=
        \mathord{
            \ \tikz[baseline=-.6ex, scale=.08]{
                \coordinate (A1) at (-12,0);
                \coordinate (A2) at (12,0);
                \coordinate (P) at (0,0);
                \draw[dashed] (A1) -- (A2);
                \draw[blue, thick] (A1) to[bend left=60] (A2);
                \draw[blue, thick] (A1) to[bend right=60] (A2);
                \node at ($(P)-(0,3)$) [rotate=0]{\small $Y_i$};
                \node at ($(P)+(0,3)$) [rotate=180]{\small $Y_j$};
                \draw[fill] (A1) circle (30pt);
                \draw[fill] (A2) circle (30pt);
            }\ 
        },&&\\
        T(X_i,X_j,\emptyset)&:=
        \mathord{
            \ \tikz[baseline=-.6ex, scale=.08]{
                \coordinate (A) at (90:15);
                \coordinate (B) at (210:15);
                \coordinate (C) at (330:15);
                \coordinate (AB) at ($(A)!.5!(B)$);
                \coordinate (BC) at ($(B)!.5!(C)$);
                \coordinate (CA) at ($(C)!.5!(A)$);
                \coordinate (P) at (0,0);
                \draw[dashed] (P) -- (A);
                \draw[dashed] (P) -- (B);
                \draw[dashed] (P) -- (C);
                \draw[blue, thick] (A) -- (B) -- (C) -- cycle;
                \node at ($(BC)!.4!(P)$) [rotate=0]{\small $X_i$};
                \node at ($(CA)!.4!(P)$) [rotate=120]{\small $X_j$};
                \node at ($(AB)!.4!(P)$) [rotate=240]{};
                \draw[fill] (A) circle (30pt);
                \draw[fill] (B) circle (30pt);
                \draw[fill] (C) circle (30pt);
            }\ 
        },&
        T(Y_i,Y_j,\emptyset)&:=
        \mathord{
            \ \tikz[baseline=-.6ex, scale=.08]{
                \coordinate (A) at (90:15);
                \coordinate (B) at (210:15);
                \coordinate (C) at (330:15);
                \coordinate (AB) at ($(A)!.5!(B)$);
                \coordinate (BC) at ($(B)!.5!(C)$);
                \coordinate (CA) at ($(C)!.5!(A)$);
                \coordinate (P) at (0,0);
                \draw[dashed] (P) -- (A);
                \draw[dashed] (P) -- (B);
                \draw[dashed] (P) -- (C);
                \draw[blue, thick] (A) -- (B) -- (C) -- cycle;
                \node at ($(BC)!.4!(P)$) [rotate=0]{\small $Y_i$};
                \node at ($(CA)!.4!(P)$) [rotate=120]{\small $Y_j$};
                \node at ($(AB)!.4!(P)$) [rotate=240]{};
                \draw[fill] (A) circle (30pt);
                \draw[fill] (B) circle (30pt);
                \draw[fill] (C) circle (30pt);
            }\ 
        },&
        T(Y_i,X_j,X_k)&:=
        \mathord{
            \ \tikz[baseline=-.6ex, scale=.08]{
                \coordinate (A) at (90:15);
                \coordinate (B) at (210:15);
                \coordinate (C) at (330:15);
                \coordinate (AB) at ($(A)!.5!(B)$);
                \coordinate (BC) at ($(B)!.5!(C)$);
                \coordinate (CA) at ($(C)!.5!(A)$);
                \coordinate (P) at (0,0);
                \draw[dashed] (P) -- (A);
                \draw[dashed] (P) -- (B);
                \draw[dashed] (P) -- (C);
                \draw[blue, thick] (A) -- (B) -- (C) -- cycle;
                \node at ($(BC)!.4!(P)$) [rotate=0]{\small $Y_i$};
                \node at ($(CA)!.4!(P)$) [rotate=120]{\small $X_j$};
                \node at ($(AB)!.4!(P)$) [rotate=240]{\small $X_k$};
                \draw[fill] (A) circle (30pt);
                \draw[fill] (B) circle (30pt);
                \draw[fill] (C) circle (30pt);
            }\ 
        }
    \end{align*}
    up to symmetries of the bigon or the triangle.
    In the case of bigon, it is easy to see that
    \begin{align*}
        &B(X_1,X_1)=B(X_4,X_4)=
        \mathord{
            \ \tikz[baseline=-.6ex, scale=.06]{
                \coordinate (A1) at (-12,0);
                \coordinate (A2) at (12,0);
                \coordinate (P) at (0,0);
                \draw[blue, thick] (A1) to[bend left=60] (A2);
                \draw[blue, thick] (A1) to[bend right=60] (A2);
                \draw[webline] (A1) -- (A2);
                \draw[fill] (A1) circle (30pt);
                \draw[fill] (A2) circle (30pt);
            }\ 
        },&
        &B(X_2,X_2)=B(X_3,X_3)=
        \mathord{
            \ \tikz[baseline=-.6ex, scale=.06]{
                \coordinate (A1) at (-12,0);
                \coordinate (A2) at (12,0);
                \coordinate (P) at (0,0);
                \draw[blue, thick] (A1) to[bend left=60] (A2);
                \draw[blue, thick] (A1) to[bend right=60] (A2);
                \draw[webline] (A1) to[bend right=15] (A2);
                \draw[wline] (A1) to[bend left=15] (A2);
                \draw[fill] (A1) circle (30pt);
                \draw[fill] (A2) circle (30pt);
            }\ 
        },\\
        &B(Y_1,Y_1)=B(Y_4,Y_4)=
        \mathord{
            \ \tikz[baseline=-.6ex, scale=.06]{
                \coordinate (A1) at (-12,0);
                \coordinate (A2) at (12,0);
                \coordinate (P) at (0,0);
                \draw[blue, thick] (A1) to[bend left=60] (A2);
                \draw[blue, thick] (A1) to[bend right=60] (A2);
                \draw[wline] (A1) -- (A2);
                \draw[fill] (A1) circle (30pt);
                \draw[fill] (A2) circle (30pt);
            }\ 
        },&
        &B(Y_2,Y_2)=B(Y_3,Y_3)=
        \mathord{
            \ \tikz[baseline=-.6ex, scale=.06]{
                \coordinate (A1) at (-12,0);
                \coordinate (A2) at (12,0);
                \coordinate (P) at (0,0);
                \draw[blue, thick] (A1) to[bend left=60] (A2);
                \draw[blue, thick] (A1) to[bend right=60] (A2);
                \draw[webline] (A1) to[bend right=10] (A2);
                \draw[webline] (A1) to[bend right=30] (A2);
                \draw[wline] (A1) to[bend left=15] (A2);
                \draw[fill] (A1) circle (30pt);
                \draw[fill] (A2) circle (30pt);
            }\ 
        },
    \end{align*}
    and otherwise zero.
    
    In the case of triangle, it is easy to confirm that 
    \begin{align*}
        T(X_i,X_j,\emptyset)=T(Y_i,Y_j,\emptyset)=0
    \end{align*}
    for $i<j$, and 
    \begin{align*}
        T(Y_{i},X_{1},X_{4})=T(Y_{1},X_{3},X_{k})=T(Y_{1},X_{4},X_{k})=T(Y_{2},X_{4},X_{k})=0
    \end{align*}
    for any $i\in\{1,\dots,5\}$ and $k\in\{1,\dots,4\}$.
    One can also see that $X_i$ (resp.~$Y_i$) coincides with $X_{5-i}$ (resp.~$Y_{6-i}$).
    Thus, $T(X_{5-j},X_{5-i},\emptyset)$, $T(Y_{6-j},Y_{6-i},\emptyset)$, and $T(Y_{6-i},X_{5-j},X_{5-k})$ are obtained from computations of $T(X_{i},X_{j},\emptyset)$, $T(Y_{i},Y_{j},\emptyset)$, and $T(Y_{i},X_{j},X_{k})$, respectively.
    Then it remains to compute the following patterns, which are verified by straightforward calculation:


    Observe that all the coefficients are in $\bZ^{+}[q^{\pm 1/2}]$, up to the appearance of $[2]^{-1}$ related to $Y_3$.
    We remark that $Y_3$ appears only when we apply the cutting lemma to a type $2$ edges. 
    Hence, we see that $[2]^{k}G$ is expanded as a polynomial in $\cup_{T\in t(\Delta)}\Eweb{T}$ with coefficients in $\bZ^{+}[q^{\pm 1/2}]$, where $k$ is as in the statement.
    Finally, we can further expand the polynomial in $\cup_{T\in t(\Delta)}\Eweb{T}$ into a polynomial in $\cC_{\Delta}$ with positive coefficients in the same way as in the proof of \cref{thm:Cweb-exp}. 
\end{proof}
\section{Quantum cluster algebras}\label{sec:cluster}

\subsection{Quantum cluster algebra}\label{subsub:CA}
Recall that for a skew-symmetric form $\Pi$ on a lattice $L$, the associated \emph{based quantum torus} is the associative $\bZ_q$-algebra $T_\Pi$ such that
\begin{itemize}
    \item $T_\Pi$ has a free $\bZ_q$-basis $M^\alpha$ parametrized by $\alpha \in L$, and 
    \item the product of these basis elements is given by $M^\alpha\cdot M^\beta = q^{\Pi(\alpha,\beta)/2} M^{\alpha+\beta}$.
\end{itemize}
Fix a datum $(I,I_\uf,D)$, where $I=\{1,\dots,N\}$ is a finite set of indices and $I_\uf \subset I$ is a subset; $D=\mathrm{diag}(d_i \mid i \in I)$ is a positive integral diagonal matrix. The indices in $I_\uf$ are called \emph{unfrozen indices}, while those in the complement $I_\f:=I \setminus I_\uf$ are called \emph{frozen indices}. 
A \emph{quantum seed} in $\cF$ is a quadruple $(B,\Pi,\accentset{\circ}{\Lambda},M)$, where
\begin{itemize}
    \item $B=(b_{ij})_{i,j \in I}$ is a matrix with half-integral entries such that $DB$ is skew-symmetric, and $b_{ij} \in \bZ$ unless $(i,j) \in I_\f \times I_\f$;
    \item $\Pi=(\pi_{ij})_{i,j \in I}$ is a skew-symmetric matrix with integral entries satisfying the \emph{compatibility relation}
    \begin{align*}
        \sum_{k \in I} b_{ki}\pi_{kj} =\delta_{ij}\widehat{d}_j
    \end{align*}
    for all $i \in I_\uf$ and $j \in I$, where $\widehat{d}_i$ is a positive integer for $i \in I_\uf$. 
    \item $\accentset{\circ}{\Lambda}=\bigoplus_{i \in I}\bZ\sff_i$ is a lattice, on which the matrix $\Pi$ defines a skew-symmetric form by $\Pi(\sff_i,\sff_j):=\pi_{ij}$;
    \item $M: \accentset{\circ}{\Lambda} \to \cF\setminus \{0\}$ is a function such that 
    \begin{align*}
        M(\alpha)M(\beta) = q^{\Pi(\alpha,\beta)/2} M(\alpha+\beta)
    \end{align*}
    for $\alpha,\beta \in \accentset{\circ}{\Lambda}$, 
    and the $\bZ_q$-span of $M(\accentset{\circ}{\Lambda}) \subset \cF$ is the based quantum torus of the form $\Pi$ whose skew-field of fractions coincides with $\cF$. 
\end{itemize}
We call $B$ the \emph{exchange matrix}, $\Pi$ the \emph{compatibility matrix}, and $M$ the \emph{toric frame} of the quantum seed. When no confusion can occur, we omit the lattice $\accentset{\circ}{\Lambda}$ from the notation and call the triple $(B,\Pi,M)$ a quantum seed. 
The compatibility relation can be written as
\begin{align}\label{eq:compatibility_definition}
    \varepsilon\Pi = B^{\mathsf{T}} \Pi = (\widehat{D},0),
\end{align}
where $\widehat{D}:=\mathrm{diag}(\widehat{d}_i\mid i \in I_\uf)$ and $0$ denotes the $I_\uf \times I_\f$-zero matrix. By \cite[Lemma 4.4]{BZ}, a toric frame $M$ is uniquely determined by the values $A_i:=M(\sff_i)$, which we call the \emph{(quantum) cluster variable}, on the basis vectors $\sff_i$ for $i \in I$. Indeed, 
we have
\begin{align}\label{eq:extension_toric_frame}
    M\bigg(\sum_{i \in I} x_i\sff_i\bigg) = q^{\frac{1}{2}\sum_{l < k}x_k x_l \pi_{kl} } A_1^{x_1}\dots A_N^{x_N}
\end{align}
for all $(x_1,\dots,x_N) \in \bZ^N$.
Note that both sides are invariant under permutations of indices. 
Elements of the form $M(\alpha)$ for $\alpha \in \accentset{\circ}{\Lambda}$ are called \emph{cluster monomials}. 

Given a quantum seed $(B,\Pi,M)$ in $\cF$ and an unfrozen index $k \in I_\uf$, the \emph{quantum seed mutation} produces a new quantum seed $(B',\Pi',M')=\mu_k(B,\Pi,M)$ according to the following rule. Let $E_{k,\epsilon}= (e_{ij})_{i,j \in I}$ and $F_{k,\epsilon}= (f_{ij})_{i,j \in I}$ be the matrices defined by
\begin{align*}
    e_{ij}:=\begin{cases}
    \delta_{ij} & \mbox{if $j \neq k$},\\
    -1 & \mbox{if $i=k=j$},\\
    [-\epsilon b_{ik}]_+ & \mbox{if $i\neq k=j$},
    \end{cases}
\end{align*}
and 
\begin{align*}
    f_{ij}:=\begin{cases}
    \delta_{ij} & \mbox{if $i \neq k$},\\
    -1 & \mbox{if $i=k=j$},\\
    [\epsilon b_{kj}]_+ & \mbox{if $i=k \neq j$},
    \end{cases}
\end{align*}
respectively for $\epsilon \in \{+,-\}$. Then we define
\begin{align}
    B' &= E_{k,\epsilon} B F_{k,\epsilon}, \label{eq:matrix-mutation}\\
    \Pi' &= E_{k,\epsilon}^{\mathsf{T}} \Pi E_{k,\epsilon}, \label{eq:compatible-mutation}\\
    M'(\sff_i') &= \begin{cases}
    M(\sff_i) & \mbox{if $i \neq k$}, \\
    M(-\sff_k + \sum_{j \in I} [b_{jk}]_+ \sff_j) + M(-\sff_k + \sum_{j \in I} [-b_{jk}]_+ \sff_j) & \mbox{if $i=k$}.
    \end{cases}\label{eq:q-mutation}
\end{align}
Here $(\sff_i)_{i \in I}$ and $(\sff'_i)_{i \in I}$ denote the basis vectors of the underlying lattices. The relation \eqref{eq:q-mutation} is called the \emph{quantum exchange relation}. It can be verified that \eqref{eq:matrix-mutation} and \eqref{eq:compatible-mutation} do not depend on the sign $\epsilon$ \cite[Proposition 3.4]{BZ}. The mutation rule \eqref{eq:matrix-mutation} is the same one as the well-known matrix mutation formula.
In this paper, the quantum exchange relation is often used in the following form:

\begin{lem}
The quantum exchange relation \eqref{eq:q-mutation} is equivalent to
\begin{align}\label{eq:q-mutation_asymmetric}
    A_k A'_k = q^{\frac{1}{2}\sum_{j \in I}[b_{jk}]_+\pi_{kj} } \left( M\bigg(\sum_{j \in I} [b_{jk}]_+ \sff_j\bigg) + q^{\frac{1}{2}\widehat{d}_k}M\bigg(\sum_{j \in I} [-b_{jk}]_+ \sff_j\bigg) \right)
\end{align}
and $A'_i = A_i$ for $i \neq k$, 
where $A_j:=M(\sff_j)$ and $A'_j:=M'(\sff'_j)$ for $j \in I$.
\end{lem}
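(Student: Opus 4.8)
The plan is to derive the asymmetric relation \eqref{eq:q-mutation_asymmetric} directly from the quantum exchange relation \eqref{eq:q-mutation} by left-multiplying the mutated cluster variable $A'_k = M'(\sff'_k)$ by $A_k = M(\sff_k)$, and then to observe that the entire computation is reversible because $A_k$ is a unit in the based quantum torus. The auxiliary assertion $A'_i = A_i$ for $i \neq k$ requires no work, being literally the first line of \eqref{eq:q-mutation}, so only the index $i = k$ carries content.

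First I would apply the defining multiplicativity $M(\alpha)M(\beta) = q^{\Pi(\alpha,\beta)/2}M(\alpha+\beta)$ of the toric frame to each of the two summands produced by $M(\sff_k)\,M'(\sff'_k)$. For the first summand this gives
\[
    M(\sff_k)\,M\Big(-\sff_k + \textstyle\sum_{j} [b_{jk}]_+ \sff_j\Big) = q^{\frac{1}{2}\Pi(\sff_k,\,-\sff_k + \sum_j [b_{jk}]_+\sff_j)}\,M\Big(\textstyle\sum_j [b_{jk}]_+\sff_j\Big),
\]
and the exponent collapses to $\tfrac12\sum_j [b_{jk}]_+\pi_{kj}$ using $\Pi(\sff_k,\sff_k)=0$ and $\Pi(\sff_k,\sff_j)=\pi_{kj}$. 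The second summand is treated identically and yields the prefactor $q^{\frac12\sum_j [-b_{jk}]_+\pi_{kj}}$ in front of $M\big(\sum_j [-b_{jk}]_+\sff_j\big)$.

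The key step is to reconcile these two prefactors, i.e.\ to show that
\[
    \textstyle\sum_j [-b_{jk}]_+\pi_{kj} = \sum_j [b_{jk}]_+\pi_{kj} + \widehat{d}_k.
\]
Since $[b_{jk}]_+ - [-b_{jk}]_+ = b_{jk}$, this reduces to the single identity $\sum_j b_{jk}\pi_{kj} = -\widehat{d}_k$. I would obtain it from the compatibility relation $\sum_{\ell} b_{\ell i}\pi_{\ell j} = \delta_{ij}\widehat{d}_j$ by specializing $i = j = k$ (legitimate because $k \in I_\uf$), which gives $\sum_j b_{jk}\pi_{jk} = \widehat{d}_k$, followed by the skew-symmetry $\pi_{kj} = -\pi_{jk}$ of $\Pi$. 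Factoring $q^{\frac12\sum_j[b_{jk}]_+\pi_{kj}}$ out of both terms and absorbing the leftover $q^{\frac12\widehat{d}_k}$ into the second one reproduces \eqref{eq:q-mutation_asymmetric} verbatim.

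I do not expect a genuine obstacle: the argument is short bookkeeping, and the only place demanding care is performing the correct index specialization in the compatibility relation in tandem with the sign flip coming from skew-symmetry. The converse implication needs no separate treatment, since every manipulation above is an equality of elements of the quantum torus and $M(\sff_k)$ is invertible, so dividing by $A_k$ recovers \eqref{eq:q-mutation} from \eqref{eq:q-mutation_asymmetric}.
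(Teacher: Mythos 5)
Your proof is correct, and it is exactly the routine verification that the paper omits (the lemma is stated without proof as a straightforward consequence of the definitions): expand $A_kA'_k$ term by term using the toric-frame multiplicativity, and reconcile the two $q$-prefactors via the compatibility relation at $i=j=k$ together with the skew-symmetry of $\Pi$. The reversibility argument via invertibility of $M(\sff_k)$ in the quantum torus correctly handles the claimed equivalence, so there is nothing to add.
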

The verification of the following lemma is also straightforward:

\begin{lem}\label{lem:compatibility_check}
Let $(B,\Pi,M)$ be a quantum seed in $\cF$, $k \in I_\uf$, and consider the exchange matrix $B' := E_{k,\epsilon} B F_{k,\epsilon}$ and the toric frame $M'$ determined by \eqref{eq:q-mutation}. Let $\Pi'=(\pi'_{ij})_{i,j \in I}$ be the skew-symmetric matrix associated with $M'$, which is uniquely determined by the condition 
\begin{align*}
    A'_i A'_j = q^{\pi'_{ij}} A'_j A'_i
\end{align*}
for $i,j \in I$ with $A'_i:=M'(\sff'_i)$. Then the pair $(B',\Pi')$ satisfies the compatibility relation.
\end{lem}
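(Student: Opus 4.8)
The plan is to reduce the statement to the fact, due to Berenstein--Zelevinsky, that the pair $(E_{k,\epsilon}BF_{k,\epsilon},\, E_{k,\epsilon}^{\mathsf{T}}\Pi E_{k,\epsilon})$ satisfies the compatibility relation. Since $B'=E_{k,\epsilon}BF_{k,\epsilon}$ by hypothesis, it suffices to show that the matrix $\Pi'=(\pi'_{ij})$ defined \emph{intrinsically} by the commutation relations $A'_iA'_j=q^{\pi'_{ij}}A'_jA'_i$ of the mutated cluster variables in fact coincides with the matrix $E_{k,\epsilon}^{\mathsf{T}}\Pi E_{k,\epsilon}$ produced by the formula \eqref{eq:compatible-mutation}. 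Thus the first step is to compute the entries $\pi'_{ij}$ directly from the quantum exchange relation \eqref{eq:q-mutation}, and to match them against those of $E_{k,\epsilon}^{\mathsf{T}}\Pi E_{k,\epsilon}$.

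For $i,j\neq k$ one has $A'_i=A_i=M(\sff_i)$ and $A'_j=M(\sff_j)$, so $\pi'_{ij}=\pi_{ij}$, which agrees with $(E_{k,\epsilon}^{\mathsf{T}}\Pi E_{k,\epsilon})_{ij}$ because the corresponding columns of $E_{k,\epsilon}$ are the standard basis vectors $\sff_i,\sff_j$. The only nontrivial case is $\pi'_{kj}$ with $j\neq k$. Writing $A'_k=M(\alpha_+)+M(\alpha_-)$ with $\alpha_{\pm}:=-\sff_k+\sum_{i\in I}[\pm b_{ik}]_+\sff_i$, and using the toric-frame identity $M(\alpha)M(\sff_j)=q^{\Pi(\alpha,\sff_j)}M(\sff_j)M(\alpha)$, each of the two monomials $q$-commutes with $A'_j=M(\sff_j)$, and their commutation exponents differ by $\Pi(\alpha_+-\alpha_-,\sff_j)=\sum_i b_{ik}\pi_{ij}$. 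Here is where the compatibility of the input seed enters essentially: since $k\in I_\uf$, the compatibility relation \eqref{eq:compatibility_definition} gives $\sum_i b_{ik}\pi_{ij}=\delta_{kj}\widehat{d}_k=0$ for $j\neq k$. Hence both terms commute with $A'_j$ by the \emph{same} factor, so $A'_k$ genuinely $q$-commutes with $A'_j$, the integer $\pi'_{kj}=\Pi(\alpha_+,\sff_j)=-\pi_{kj}+\sum_i[b_{ik}]_+\pi_{ij}$ is well-defined, and (reading off the $k$-th column of $E_{k,\epsilon}$, whose $i$-th entry is $[-\epsilon b_{ik}]_+$ for $i\neq k$ and $-1$ for $i=k$) it equals $(E_{k,\epsilon}^{\mathsf{T}}\Pi E_{k,\epsilon})_{kj}$. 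The agreement of the $\epsilon=+$ and $\epsilon=-$ expressions is precisely the vanishing just noted, reproducing the $\epsilon$-independence of \eqref{eq:compatible-mutation}.

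Having identified $\Pi'=E_{k,\epsilon}^{\mathsf{T}}\Pi E_{k,\epsilon}$, I would conclude either by citing \cite[Proposition~3.4]{BZ}, or by the short direct computation of $(B')^{\mathsf{T}}\Pi'=(E_{k,\epsilon}BF_{k,\epsilon})^{\mathsf{T}}(E_{k,\epsilon}^{\mathsf{T}}\Pi E_{k,\epsilon})$, feeding in $B^{\mathsf{T}}\Pi=(\widehat{D},0)$ together with the standard identities relating $E_{k,\epsilon}$, $F_{k,\epsilon}$, and $B$, to obtain $(B')^{\mathsf{T}}\Pi'=(\widehat{D},0)$ with the same $\widehat{D}$. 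I expect the main obstacle to be the bookkeeping with the truncations $[\,\cdot\,]_+$ and the sign $\epsilon$ in the verification $\pi'_{kj}=(E_{k,\epsilon}^{\mathsf{T}}\Pi E_{k,\epsilon})_{kj}$, and in particular making transparent that it is exactly the compatibility of the original seed that guarantees $A'_k$ has a well-defined commutation factor, so that $\Pi'$ exists in the first place; once this identification is secured, the remaining matrix algebra is routine.
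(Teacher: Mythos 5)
Your proof is correct, and it supplies exactly the verification that the paper declares ``straightforward'' and omits: you identify the intrinsically defined $\Pi'$ with the Berenstein--Zelevinsky mutation $E_{k,\epsilon}^{\mathsf{T}}\Pi E_{k,\epsilon}$ of \eqref{eq:compatible-mutation} and then invoke \cite[Proposition~3.4]{BZ} (or the equivalent matrix computation), which is the intended route. Your observation that the compatibility of the input seed is precisely what makes $A'_k$ genuinely $q$-commute with the other variables --- so that $\pi'_{kj}$ exists at all, and the $\epsilon=\pm$ expressions agree --- is the one non-routine point, and you have it right.
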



Let 
\begin{align*}
    \mathfrak{S}_I^{\mathrm{cl}}:= \{ \sigma \in \mathfrak{S}_I \mid \sigma(I_\uf) = I_\uf,~d_{\sigma^{-1}(i)}=d_i~\mbox{for all $i \in I$}\}
\end{align*}
denote the group of permutations that do not mix the unfrozen/frozen indices and that preserve the weights. For $\sigma \in \mathfrak{S}_I^\mathrm{cl}$, a new quantum seed $(B',\Pi',M')=\sigma(B,\Pi,M)$ is defined by 
\begin{align*}
    b'_{ij} = b_{\sigma^{-1}(i),\sigma^{-1}(j)}, \quad \pi'_{ij} = \pi_{\sigma^{-1}(i),\sigma^{-1}(j)}, \quad A'_i = A_{\sigma^{-1}(i)}.
\end{align*}
An $\mathfrak{S}_{I}^\mathrm{cl}$-orbit of quantum seeds is called a \emph{quantum unlabeled seed}. 
Two quantum seeds in $\cF$ are said to be \emph{mutation-equivalent} if they are transformed to each other by a finite sequence of seed mutations and permutations. An equivalence class of quantum seeds is called a \emph{mutation class}. The relations among the quantum seeds in a given mutation class $\sfs_q$ can be encoded in the \emph{(labeled) exchange graphs}:
\begin{dfn}
The \emph{labeled exchange graph} is a graph $\bExch_{\sfs}$ with vertices $v$ corresponding to the quantum seeds $\sfs_q^{(v)}$ in $\sfs_q$, together with labeled edges of the following two types:
\begin{itemize}
    \item edges of the form $v \overbar{k} v'$ whenever the quantum seeds $\sfs_q^{(v)}$ and $\sfs_q^{(v')}$ are related by the mutation $\mu_k$ for $k \in I_\uf$;
    \item edges of the form $v \overbar{\sigma} v'$ whenever the quantum seeds $\sfs_q^{(v)}$ and $\sfs_q^{(v')}$ are related by the transposition $\sigma=(j\ k)$ in $\mathfrak{S}_I^\mathrm{cl}$.
\end{itemize}
The \emph{exchange graph} is a graph $\Exch_\sfs$ with vertices $\omega$ corresponding to the quantum unlabeled seeds $\sfs_q^{(\omega)}$ in $\sfs_q$, together with (unlabeled, horizontal) edges corresponding to the mutations. There is a graph projection $\pi_\sfs: \bExch_\sfs \to \Exch_\sfs$. 
\end{dfn}
When no confusion can occur, we simply denote a vertex of the labeled exchange graph by $v \in \bExch_\sfs$ instead of $v \in V(\bExch_\sfs)$, and similarly for the exchange graph. We remark that the (labeled) exchange graph depend only on the mutation class of the underlying exchange matrices. The absense of $q$ in the notation will be explained in \cref{rem:classical_CA}. 

\begin{rem}[labelings]\label{rem:labelings}
For a vertex $\omega \in \Exch_\sfs$, picking up a lift $v \in \pi_\sfs^{-1}(\omega)$ amounts to fixing a labeling of the unlabeled seed $\sfs_q^{(\omega)}$. More generally, $\pi_\sfs$ can be trivialized over any subgraph $\Gamma \subset \Exch_{\sfs}$ only with square faces (corresponding to the commutativity $\mu_i\mu_j=\mu_j\mu_i$ for $b_{ij}=0$). Such a graph $\Gamma$ can be uniquely lifted to a subgraph $\widetilde{\Gamma} \subset \bExch_{\sfs}$ if one fixes a labeling at a single vertex.
\end{rem}

To each vertex $v \in \bExch_{\sfs}$, associated is a based quantum torus 
\begin{align*}
    T_{(v)}=\mathrm{span}_{\bZ_q} M^{(v)}(\accentset{\circ}{\Lambda}^{(v)}) \subset \cF.
\end{align*}
We also have the unlabeled version $T_{(\omega)}=\mathrm{span}_{\bZ_q} M^{(\omega)}(\accentset{\circ}{\Lambda}^{(\omega)})$ for $\omega \in \Exch_\sfs$, where the basis of $\Lambda^{(\omega)}$ is unordered. The unlabeled collection $\mathbf{A}_{(\omega)}:=\{A_i^{(v)}\}_{i \in I}$ with $v \in \pi_\sfs^{-1}(\omega)$ is called a \emph{quantum cluster}. We also have the subcollection $\mathbf{A}_{(\omega)}^{\f}:=\{A_i^{(v)}\}_{i \in I_\f}$ of frozen variables. 

\begin{dfn}
The \emph{quantum cluster algebra} associated with a mutation class $\sfs_q$ of quantum seeds is the $\bZ_q$-subalgebra $\CA_{\sfs_q} \subset \cF$ generated by the union of the quantum clusters $\mathbf{A}_{(\omega)}$ and the inverses of frozen variables in $\mathbf{A}_{(\omega)}^{\f}$ for $\omega \in \Exch_{\sfs}$. 
The \emph{quantum upper cluster algebra} is defined to be
\begin{align*}
    \UCA_{\sfs_q} :=\bigcap_{\omega \in \Exch_{\sfs}} T_{(\omega)} \subset \cF.
\end{align*}
\end{dfn}
For each vertex $\omega \in \Exch_\sfs$, the \emph{upper bound} at $\omega$ is defined to be 
\begin{align*}
    \UCA_{\sfs_q}(\omega):=T_{(\omega)}\cap \bigcap_{\omega'} T_{(\omega')},
\end{align*}
where $\omega' \in \Exch_\bs$ runs over the vertices adjacent to $\omega$. 

\begin{thm}[Quantum upper bound theorem {\cite[Theorem 5.1]{BZ}}]\label{thm:q-Laurent}
For any vertices $\omega,\omega' \in \Exch_\sfs$, we have $\UCA_{\sfs_q}(\omega) = \UCA_{\sfs_q}(\omega')$. In particular, we have
\begin{align*}
    \UCA_{\sfs_q} = \UCA_{\sfs_q}(\omega)
\end{align*}
for any $\omega \in \Exch_\sfs$.\footnote{We remark here that the coprimality condition required in the classical setting (\cite[Corollary 1.7]{BFZ}) is automatically satisfied in the quantum setting, since the existence of the compatibility matrix forces the exchange matrix to be full-rank.}
\end{thm}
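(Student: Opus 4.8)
The plan is to follow Berenstein--Zelevinsky, whose argument is the quantum counterpart of the classical upper bound theorem of Fomin--Zelevinsky \cite{BFZ}. Since the exchange graph $\Exch_\sfs$ is connected, proving $\UCA_{\sfs_q}(\omega) = \UCA_{\sfs_q}(\omega')$ for all pairs reduces to the case in which $\omega'=\mu_j\omega$ is obtained by a single mutation at an unfrozen index $j \in I_\uf$. Thus the entire content is the \emph{mutation invariance} of the upper bound. Granting this, the displayed consequence follows formally: the inclusion $\UCA_{\sfs_q}=\bigcap_{\omega''}T_{(\omega'')} \subset \UCA_{\sfs_q}(\omega)$ is automatic because the global intersection ranges over $\omega$ together with its neighbors; conversely, invariance gives $\UCA_{\sfs_q}(\omega)=\UCA_{\sfs_q}(\omega'')\subset T_{(\omega'')}$ for every vertex $\omega''$, whence $\UCA_{\sfs_q}(\omega)\subset \bigcap_{\omega''} T_{(\omega'')}=\UCA_{\sfs_q}$.

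First I would establish the quantum analogue of the pairwise description of adjacent quantum tori (the counterpart of \cite[Proposition~1.8]{BFZ}). Concretely, for a single unfrozen direction $k$ I would show that an element lying in both $T_{(\omega)}$ and $T_{(\mu_k\omega)}$ is a \emph{polynomial}, rather than merely a Laurent polynomial, in the two exchanged cluster variables $A_k^{(\omega)}$ and $A_k^{(\mu_k\omega)}=:A_k'$ over the quantum Laurent ring generated by the remaining variables $\{A_i^{(\omega)}\}_{i\neq k}$:
\[
    T_{(\omega)} \cap T_{(\mu_k\omega)} = \big(\text{quantum Laurent ring in } \{A_i^{(\omega)}\}_{i\neq k}\big)\big[A_k^{(\omega)},\,A_k'\big].
\]
The governing identity is the quantum exchange relation \eqref{eq:q-mutation_asymmetric}, which writes $A_k^{(\omega)}A_k'$ as a sum of two $q$-monomials in the other variables; the relevant commutation factors are made explicit by the skew-symmetry of $\Pi$ together with the compatibility relation \eqref{eq:compatibility_definition}. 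This step is a normal-form computation inside the ambient quantum torus $T_\Pi$ and its $\mu_k$-mutated partner, with the only extra work over the classical case being careful bookkeeping of the $q^{1/2}$-powers.

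The main obstacle, exactly as classically, is the propagation of this pairwise statement to the full upper bound: one must show that the intersection controlling $\UCA_{\sfs_q}(\omega)$ is unchanged when $\omega$ is replaced by $\mu_j\omega$. This is the ``rank-two'' comparison, a symmetric computation handling the exchange relations in two directions simultaneously, and it is here that the \emph{full-rank} hypothesis on the exchange matrix is essential. As recorded in the footnote to the statement, the existence of a compatibility matrix $\Pi$ satisfying $B^{\mathsf{T}}\Pi=(\widehat{D},0)$ with $\widehat{D}$ invertible forces the principal part of $B$ to have full rank, so the quantum setting supplies the coprimality condition that must be imposed by hand in \cite{BFZ}. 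I would carry out the comparison by writing a candidate element of the upper bound in its normal forms relative to the two adjacent clusters, using the previous step to constrain the exponents of $A_j^{(\omega)},A_j'$ and $A_k^{(\omega)},A_k'$, and verifying that these constraints are symmetric under the mutation $\mu_j$. Connectedness of $\Exch_\sfs$ then yields the invariance, and the formal inclusions above give $\UCA_{\sfs_q}=\UCA_{\sfs_q}(\omega)$.
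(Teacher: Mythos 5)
The paper gives no proof of this statement: it is imported verbatim from Berenstein--Zelevinsky \cite[Theorem 5.1]{BZ}, with only the footnote observing that the compatibility relation makes the coprimality hypothesis of the classical argument automatic. Your sketch is a faithful outline of the BZ proof --- reduction to adjacent seeds via connectedness, the polynomial-ring description of $T_{(\omega)}\cap T_{(\mu_k\omega)}$ coming from the quantum exchange relation, and the two-direction (``rank two'') comparison --- though note one small imprecision: the compatibility relation $B^{\mathsf{T}}\Pi=(\widehat{D},0)$ forces the full extended exchange matrix (the $I\times I_\uf$ block), not its principal $I_\uf\times I_\uf$ part, to have full column rank, and it is in that form that the coprimality of the exchange binomials is obtained.
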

It in particular implies the inclusion $\CA_{\sfs_q} \subset \UCA_{\sfs_q}$, which is called the \emph{quantum Laurent phenomenon}. Again we remark that the quantum (upper) cluster algebra depends only on the mutation class of the compatibility pairs $(B,\Pi)$, up to automorphisms of the ambient skew-field. In other words, the choice of toric frames determines the way of realization of these algebras in some skew-field.

\begin{rem}[cluster algebras]\label{rem:classical_CA}
Let $\overline{\cF}$ be a field isomorphic to the field of rational functions on $N$ variables with complex coefficients. 
A seed in $\overline{\cF}$ is a pair $(B,\mathbf{A})$, where $B$ is an exchange matrix as above, and $\mathbf{A}=(A_i)_{i \in I}$ is a tuple of algebraically independent elements of $\overline{\cF}$. The mutation of a seed for $k \in I_\uf$ is defined by the matrix mutation rule \eqref{eq:matrix-mutation} and 
\begin{align*}
    A'=\begin{cases}
    \displaystyle{ A_k^{-1} \left(\prod_{j \in I}A_j^{[ b_{jk}]_+} + \prod_{j \in I}A_j^{[-b_{jk}]_+}\right)} & \mbox{if $i=k$}, \\
    A_i & \mbox{if $i \neq k$},
    \end{cases} 
\end{align*}
which is the classical counterpart of \eqref{eq:q-mutation}. A mutation class $\sfs$ of seeds is similarly defined, and the \emph{cluster algebra} is the $\bC$-subalgebra $\CA_\sfs \subset \overline{\cF}$ generated by all the cluster variables in $\sfs$. The \emph{upper cluster algebra} is defined to be the intersection $\UCA_\sfs:=\bigcap_{\omega \in \Exch_\sfs}\bC[\mathbf{A}^{(\omega)}] \subset \overline{\cF}$ of Laurent polynomial rings. 

When a mutation class $\sfs_q$ of quantum seeds and a mutation class $\sfs$ of seeds contain the same mutation class of exchange matrices, there is a graph isomorphism between the labeled exchange graph $\bExch_{\sfs}$ associated with $\sfs_q$ and a similar graph parametrizing the seeds in $\sfs$. In particular the quantum cluster variables in $\sfs_q$ bijectively corresponds to the cluster variables in $\sfs$. 
We say that $\sfs_q$ (resp. $\CA_{\sfs_q}$) is a \emph{quantization} of $\sfs$ (resp. $\CA_\sfs$). 
\end{rem}

\paragraph{\textbf{Bar-involution}}
For each $\omega \in \Exch_{\sfs}$, define a $\bZ$-linear
involution $\dagger: T_{(\omega)} \to T_{(\omega)}$ by
\begin{align*}
    (q^{r/2}M^{(\omega)}(\alpha))^\dagger := q^{-r/2}M^{(\omega)}(\alpha)
\end{align*}
for $r \in \bZ$ and $\alpha \in \accentset{\circ}{\Lambda}{}^{(\omega)}$. Then $\dagger$ preserves the subalgebras $\CA_{\sfs_q} \subset \UCA_{\sfs_q} \subset T_{(\omega)}$, and the induced involution does not depend on the choice of $\omega$ \cite[Proposition 6.2]{BZ}. Following \cite{BZ}, we call this anti-involution $\dagger: \UCA_{\sfs_q} \to \UCA_{\sfs_q}$ the \emph{bar-involution}. Each quantum cluster variable is invariant under the bar-involution.

\bigskip
\paragraph{\textbf{Exchange matrices and weighted quivers}}
It is useful to represent an exchange matrix $B=(b_{ij})_{i,j \in I}$ by a weighted quiver $Q$. 
For the correspondence, we use the convention in \cite{IIO21}. 
Let us define the \emph{Fock--Goncharov exchange matrix} $\varepsilon=(\varepsilon_{ij})_{i,j \in I}$ by $\varepsilon_{ij}:=b_{ji}$. 
Then the weighted quiver $Q$ corresponding to $B$ has vertices parametrized by the set $I$, and each vertex $i \in I$ is assigned the integer weight $d_i$. The structure matrix $\sigma=(\sigma_{ij})_{i,j \in I}$ of $Q$, whose $(i,j)$-entry indicates the number of arrows from $i$ to $j$ minus the number of arrows from $j$ to $i$, is defined to be
\begin{align*}
    \sigma_{ij}:=d_i^{-1}\varepsilon_{ij}\gcd(d_i,d_j).
\end{align*}
In figures, we draw $n$ dashed arrows from $i$ to $j$ if $\sigma_{ij}=n/2$ for $n \in \bZ$, where a pair of dashed arrows is replaced with a solid arrow. In this paper, we only deal with the weighted quivers whose vertices have weights $1$ or $2$. A vertex of weight $1$ (resp. $2$) is shown by a small circle (resp. a doubled circle).
\begin{ex}
For the matrices $B=\begin{pmatrix} 0 & -1 \\ 2 & 0\end{pmatrix}$ and $D=\mathrm{diag}(2,1)$, the matrix $DB=\begin{pmatrix} 0 & -2 \\ 2 & 0\end{pmatrix}$ is skew-symmetric. 
We have $\varepsilon=B^\mathsf{T}=\begin{pmatrix} 0 & 2 \\ -1 & 0\end{pmatrix}$ and $\sigma=\begin{pmatrix} 0 & 1 \\ -1 & 0\end{pmatrix}$. The corresponding weighted quiver is given by
\begin{align*}
    \tikz[>=latex]{\dnode{0,0}{black} node[above]{$1$}; \draw(2,0) circle(2pt) node[above]{$2$}; \qstarrow{0,0}{2,0};}.
\end{align*}
\end{ex}

The following lemma is useful to compute the mutations in terms of the weighted quivers:
\begin{lem}[{\cite[Lemma 2.3]{IIO21}}]\label{lem:weighted_mutation}
Assume that the weights $d_i$ take only two values: $\{d_i\}_{i \in I}=\{1,d\}$ for some integer $d\geq 2$. 
Then for $B'=\mu_k B$, the corresponding mutation of
the structure matrix $\sigma' = \mu_k\sigma$ is given by
\begin{align*}
  \sigma_{ij}' = 
  \begin{cases}
    -\sigma_{ij} & i=k \text{ or } j=k, \\
    \sigma_{ij} + ([\sigma_{ik}]_+ [\sigma_{kj}]_+ - [-\sigma_{ik}]_+ [-\sigma_{kj}]_+)\alpha_{ij}^k & \text{otherwise},
  \end{cases} 
\end{align*}
where 
\begin{align*}
    \alpha_{ij}^k = \begin{cases}
    d & \mbox{if $d_i=d_j \neq d_k$},\\
    1 & \mbox{otherwise}.
    \end{cases}
\end{align*}
\end{lem}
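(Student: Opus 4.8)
The plan is to reduce the statement to the standard matrix mutation formula for $B$ together with the two changes of variables $\varepsilon_{ij}=b_{ji}$ and $\sigma_{ij}=d_i^{-1}\varepsilon_{ij}\gcd(d_i,d_j)$. Since $B'=E_{k,\epsilon}BF_{k,\epsilon}$ agrees with the well-known matrix mutation rule, for $i,j\neq k$ one has $b'_{ij}=b_{ij}+[b_{ik}]_+[b_{kj}]_+-[-b_{ik}]_+[-b_{kj}]_+$, and $b'_{ij}=-b_{ij}$ when $i=k$ or $j=k$; this is independent of $\epsilon$ as recorded after \eqref{eq:compatible-mutation}. Taking the transpose $\varepsilon=B^{\mathsf T}$, I would first check that $\varepsilon$ obeys a rule of exactly the same shape, namely $\varepsilon'_{ij}=\varepsilon_{ij}+[\varepsilon_{ik}]_+[\varepsilon_{kj}]_+-[-\varepsilon_{ik}]_+[-\varepsilon_{kj}]_+$ for $i,j\neq k$ and $\varepsilon'_{ij}=-\varepsilon_{ij}$ otherwise, which follows by relabeling indices in the formula for $b'$.

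Then I substitute the definition of $\sigma$. The diagonal weights $d_i$ are untouched by mutation, so for $i=k$ or $j=k$ the sign flip $\varepsilon'_{ij}=-\varepsilon_{ij}$ immediately gives $\sigma'_{ij}=-\sigma_{ij}$, settling the first branch. For $i,j\neq k$ I would write $\varepsilon_{ik}=\sigma_{ik}\,d_i/\gcd(d_i,d_k)$ and $\varepsilon_{kj}=\sigma_{kj}\,d_k/\gcd(d_k,d_j)$, and use that the scaling factors $d_i/\gcd(d_i,d_k)$ and $d_k/\gcd(d_k,d_j)$ are strictly positive, so they pull out of the brackets: $[\varepsilon_{ik}]_+=[\sigma_{ik}]_+\,d_i/\gcd(d_i,d_k)$, and similarly for the others and for the negative parts. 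Multiplying by $d_i^{-1}\gcd(d_i,d_j)$ and collecting, the bilinear term acquires the single scalar factor $\alpha=\gcd(d_i,d_j)\,d_k\big/\big(\gcd(d_i,d_k)\gcd(d_k,d_j)\big)$, giving $\sigma'_{ij}=\sigma_{ij}+\big([\sigma_{ik}]_+[\sigma_{kj}]_+-[-\sigma_{ik}]_+[-\sigma_{kj}]_+\big)\alpha$.

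It then remains to identify $\alpha$ with $\alpha_{ij}^k$, which is where the hypothesis $\{d_i\}_{i\in I}=\{1,d\}$ enters. I would run the short case analysis on the triple $(d_i,d_j,d_k)\in\{1,d\}^3$: when $d_i=d_j\neq d_k$ (the two configurations $(1,1,d)$ and $(d,d,1)$) one computes $\alpha=d$, while in every remaining configuration — all three weights equal, or $d_i\neq d_j$ with either value of $d_k$ — the gcd's cancel to give $\alpha=1$, exactly matching the definition of $\alpha_{ij}^k$. The computation is essentially bookkeeping; the only points demanding care, and thus the main obstacle, are keeping the transpose and sign conventions consistent across $B$, $\varepsilon$, and $\sigma$, and confirming that the factorization $[cx]_+=c[x]_+$ is legitimate (it is, precisely because $c=d_i/\gcd(d_i,d_k)>0$). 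As a sanity check I would verify the orientation of arrows against the worked example preceding the statement.
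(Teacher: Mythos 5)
Your proposal is correct. The paper gives no proof of this lemma at all --- it is quoted verbatim from \cite[Lemma 2.3]{IIO21} --- so there is nothing to compare against except the standard derivation, which is exactly what you carry out. Every step checks: the transposed matrix $\varepsilon=B^{\mathsf T}$ does satisfy the mutation rule of the same shape (the bilinear term is symmetric under swapping the two factors), the factorization $[cx]_+=c[x]_+$ is legitimate since $c=d_i/\gcd(d_i,d_k)>0$, and the resulting scalar $\alpha=\gcd(d_i,d_j)\,d_k/(\gcd(d_i,d_k)\gcd(d_k,d_j))$ evaluates to $d$ precisely in the two configurations $(d_i,d_j,d_k)=(1,1,d)$ and $(d,d,1)$ and to $1$ in the remaining six, matching $\alpha_{ij}^k$.
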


\subsection{The cluster algebras related to the moduli space \texorpdfstring{$\A_{Sp_4,\Sigma}$}{A(sp4,S)}}\label{subsec:cluster_sp4}

Let $\Sigma$ be a marked surface. Although the classical construction presented here works for any marked surface, we assume that $\Sigma$ is unpunctured for simplicity. 

Recall that a decorated triangulation $\bD=(\Delta,m_\Delta,\bs_\Delta)$ consists of an ideal triangulation $\Delta$ of $\Sigma$, together with a choice of a vertex $m_\Delta(T)$ and a sign $\bs_\Delta(T)$ for each triangle $T \in t(\Delta)$. 
Given a decorated triangulation $\bD$, we define a weighted quiver $Q^{\bD}$ as follows. Let $Q_{m,+}$ and $Q_{m,-}$ be the weighted quivers on a triangle shown in the left and right of \cref{fig:quiver_triangle}, respectively. Here notice that these weighed quivers are not symmetric for the rotations of the triangle, and depend on a chosen special point $m$. By convention, $Q_{m,\pm}$ are considered up to isotopy on $T$ which preserves each edge set-wisely. In particular, we are allowed to move an interior vertex inside the triangle; move and swap the two vertices on a common edge. 

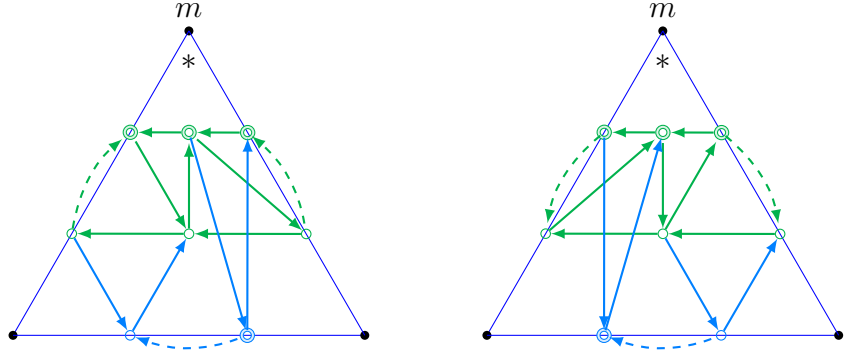
\begin{figure}[ht]
\begin{tikzpicture}[scale=0.9]
    \foreach \i in {90,210,330}
    {
    \markedpt{\i:3};
    \draw[blue] (\i:3) -- (\i+120:3);
    }
\quiverplusC{90:3}{210:3}{330:3};
{\color{mygreen}
\uniarrow{x122}{x121}{dashed,shorten >=4pt, shorten <=2pt,bend left=20}
\uniarrow{x311}{x312}{dashed,shorten >=2pt, shorten <=4pt,bend right=20}
}
\uniarrow{x232}{x231}{myblue,dashed,shorten >=2pt, shorten <=4pt,bend left=20}
\draw(90:3.3) node{$m$};

\begin{scope}[xshift=7cm]
    \foreach \i in {90,210,330}
    {
    \markedpt{\i:3};
    \draw[blue] (\i:3) -- (\i+120:3);
    }
\quiverminusC{90:3}{210:3}{330:3};
{\color{mygreen}
\uniarrow{x121}{x122}{dashed,shorten >=4pt, shorten <=2pt,bend right=20}
\uniarrow{x312}{x311}{dashed,shorten >=4pt, shorten <=2pt,bend left=20}
}
\uniarrow{x232}{x231}{myblue,dashed,shorten >=2pt, shorten <=4pt,bend left=20}
\draw(90:3.3) node{$m$};
\end{scope}
\end{tikzpicture}
    \caption{The quivers $Q_{(\Delta,m,+)}$ (left) and $Q_{(\Delta,m,-)}$ (right) placed on a triangle with a fixed special point $m$. Here the vertices on the opposite side of $m$ and the arrows incident to them are colored blue for visibility.}
    \label{fig:quiver_triangle}
\end{figure}

For each triangle $T \in t(\Delta)$, we draw the quiver $Q_{m_\Delta(T),\bs_\Delta(T)}$, and glue them via the \emph{amalgamation} procedure \cite{FG06} to get a weighted quiver $Q^{\bD}$ drawn on $\Sigma$. Here two vertices on a common interior edge with the same weight are identified; opposite half-arrows cancel together, and parallel half-arrows combine to give a solid arrow. Some examples are shown in \cref{fig:amalgamation}. By convention, $Q^{\bD}$ is considered up to isotopy on $\Sigma$ which preserves each boundary interval set-wisely. 

The vertex set of $Q^{\bD}$ is denoted by $I(\Delta)=I_{\mathfrak{sp}_4}(\Delta)$. Note that $\# I(\Delta) = 2\# e(\Delta)+2 \# t(\Delta)$, which is the number appeared in \cref{def:web-cluster}. 
It admits the following decompositions according to the properties of vertices:
\begin{itemize}
    \item Let $I^{\mathrm{edge}}(\Delta)$ (resp. $I^{\mathrm{tri}}(\Delta)$) denote the subset of vertices on edges (resp. faces of triangles), so that $I(\Delta)=I^{\mathrm{edge}}(\Delta) \sqcup I^{\mathrm{tri}}(\Delta)$.
    \item Let $I(\Delta)_\f \subset I^\mathrm{edge}(\Delta)$ be the subset of the vertices on $\partial\Sigma$, and $I(\Delta)_\uf$ its complement, so that $I(\Delta)=I(\Delta)_\uf \sqcup I(\Delta)_\f$.
    \item Let $I_s(\Delta) \subset I(\Delta)$ be the subset of vertices of weight $s \in \{1,2\}$, so that $I(\Delta)=I_1(\Delta) \sqcup I_2(\Delta)$.
\end{itemize}
Let $i_s(E) \in I(\Delta)$ denote the unique vertex on an edge $E \in e(\Delta)$ with weight $s \in \{1,2\}$. Similarly, let $i_s(T) \in I(\Delta)$ denote the unique vertex on a triangle $T \in t(\Delta)$ with weight $s$.

\begin{figure}[ht]
\begin{tikzpicture}[scale=0.9]
\draw[blue] (0,0) -- (4,0) -- (4,4) -- (0,4) --cycle;
\draw[blue] (4,0) -- (0,4);
\markedpt{0,0};\markedpt{4,0};\markedpt{0,4};\markedpt{4,4};
\quiverplusC{0,4}{0,0}{4,0}
\uniarrow{x122}{x121}{mygreen,dashed,shorten >=4pt, shorten <=2pt,bend left=15}
\uniarrow{x232}{x231}{myblue,dashed,shorten >=2pt, shorten <=4pt,bend left=15}
\quiverminusC{0,4}{4,0}{4,4}
\uniarrow{x312}{x311}{mygreen,dashed,shorten >=2pt, shorten <=4pt,bend left=15}
\uniarrow{x232}{x231}{myblue,dashed,shorten >=4pt, shorten <=2pt,bend left=15}

\draw[blue] (-2.5,1.5) --++(1,0) --++(0,1)--++(-1,0) --cycle;
\draw[blue] (-2.5,2.5) --++(1,-1);
\node[scale=0.9] at (-2.4,2.2) {$\ast$};
\node[scale=0.9] at (-2.2,2.4) {$\ast$};
\node[scale=0.8] at (-2.2,1.8) {$+$};
\node[scale=0.8] at (-1.9,2.2) {$-$};
\draw[thick,double distance=0.15em] (-1,2) -- (-0.5,2);

\begin{scope}[xshift=9cm]
\draw[blue] (0,0) -- (4,0) -- (4,4) -- (0,4) --cycle;
\draw[blue] (4,0) -- (0,4);
\markedpt{0,0};\markedpt{4,0};\markedpt{0,4};\markedpt{4,4};
\quiverplusC{0,4}{0,0}{4,0}
\uniarrow{x122}{x121}{mygreen,dashed,shorten >=4pt, shorten <=2pt,bend left=15}
\uniarrow{x232}{x231}{myblue,dashed,shorten >=2pt, shorten <=4pt,bend left=15}
\uniarrow{x311}{x312}{mygreen,shorten >=4pt, shorten <=4pt,bend right=15}
\quiverminusC{4,4}{0,4}{4,0}
\uniarrow{x122}{x121}{mygreen,dashed,shorten >=2pt, shorten <=4pt,bend left=15}
\uniarrow{x311}{x312}{mygreen,dashed,shorten >=2pt, shorten <=4pt,bend right=15}

\draw[blue] (-2.5,1.5) --++(1,0) --++(0,1)--++(-1,0) --cycle;
\draw[blue] (-2.5,2.5) --++(1,-1);
\node[scale=0.9] at (-2.4,2.2) {$\ast$};
\node[scale=0.9] at (-1.6,2.4) {$\ast$};
\node[scale=0.8] at (-2.2,1.8) {$+$};
\node[scale=0.8] at (-1.9,2.2) {$-$};
\draw[thick,double distance=0.15em] (-1,2) -- (-0.5,2);
\end{scope}
\end{tikzpicture}
    \caption{Two examples of weighted quivers associated with decorated triangulations of a quadrilateral. Here the pair of dashed arrows along the diagonal edge is canceled in the first example, while it is combined to a solid arrow in the second example.}
    \label{fig:amalgamation}
\end{figure}
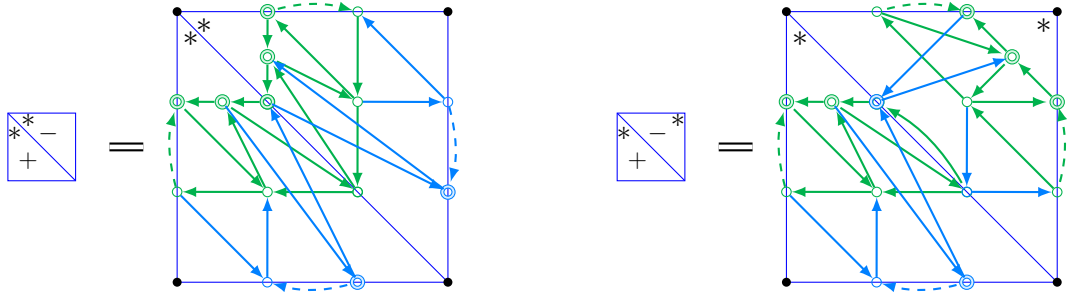

\begin{thm}[cf.~Goncharov--Shen {\cite[Section 8.5]{GS19}}]\label{thm:classical_mutation_equivalence}
For any two decorated triangulations $\bD$, $\bD'$ of $\Sigma$, the two weighted quivers $Q^{\bD}$, $Q^{\bD'}$ are mutation-equivalent. 
\end{thm}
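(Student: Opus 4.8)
The plan is to reduce the global statement to a finite, local set of moves connecting any two decorated triangulations, and then verify that each such local move is realized by a sequence of quiver mutations. This is the standard strategy for results of this type, mirroring the classical $\mathfrak{sl}_3$-case in \cite{IYsl3} and the general framework of Goncharov--Shen \cite{GS19}.

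\emph{First,} I would observe that a decorated triangulation $\bD=(\Delta,m_\Delta,\bs_\Delta)$ carries three independent pieces of data, and that any two such data can be connected by elementary moves of three kinds: (i) changing the marked vertex $m_\Delta(T)$ of a single triangle $T$; (ii) changing the sign $\bs_\Delta(T)$ of a single triangle $T$; and (iii) performing a flip of a single interior edge $E$ of the underlying ideal triangulation $\Delta$ (while adjusting the decorations on the two triangles involved). Since the flip graph on ideal triangulations of a fixed unpunctured marked surface is connected (a classical fact), and the decoration data range over finite sets attached to each triangle, it suffices to realize each of (i), (ii), (iii) by mutations at the level of the associated weighted quivers $Q^{\bD}$. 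Because the quiver $Q^{\bD}$ is built by amalgamation \cite{FG06} from the local pieces $Q_{m,\pm}$ of \cref{fig:quiver_triangle}, each local move affects $Q^{\bD}$ only in a bounded neighborhood (one or two triangles), so the verification is purely local and finite.

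\emph{Next,} for moves (i) and (ii) I would identify explicitly the mutation sequence at the interior and edge vertices of the affected triangle that transforms $Q_{m,\pm}$ into $Q_{m',\pm'}$; the computations are carried out using \cref{lem:weighted_mutation}, which gives the structure-matrix mutation rule for the two-weight case $\{1,2\}$ directly in terms of the quiver. For the sign change (ii), one expects a short sequence of mutations at the two interior vertices $i_1(T), i_2(T)$; comparing the left and right quivers in \cref{fig:quiver_triangle} suggests the precise sequence. For the marked-vertex change (i), the relevant mutations should again be supported on the triangle $T$, possibly together with its edge vertices, and one checks that the amalgamated arrows across shared edges are restored correctly. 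The flip move (iii) is the most involved: one must compare $Q^{\bD}$ and $Q^{\bD'}$ across the quadrilateral formed by the two triangles adjacent to $E$ (as in \cref{fig:amalgamation}), and exhibit a mutation sequence—supported on the vertices lying on $E$ and in the two triangles—implementing the flip.

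\emph{The main obstacle} will be the flip move (iii). Unlike the simply-laced case, here the amalgamation along the flipped edge can either cancel a pair of dashed half-arrows or combine them into a solid arrow (contrast the two examples in \cref{fig:amalgamation}), so the local quiver near $E$ depends subtly on the decorations $m_\Delta, \bs_\Delta$ of both adjacent triangles. I would handle this by first using moves (i) and (ii)—already established—to normalize the decorations of the two triangles sharing $E$ into a convenient standard form, thereby reducing (iii) to a single canonical configuration. In that normalized configuration the flip is then realized by an explicit, short mutation sequence, whose correctness is checked entry-by-entry via \cref{lem:weighted_mutation}. A secondary subtlety is bookkeeping: since $Q^{\bD}$ is taken up to isotopy preserving each boundary interval and permitting the interior vertices to move freely within triangles, one must confirm that the mutation sequences respect the identification conventions and that frozen (boundary) vertices are never mutated. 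Once each of (i), (ii), (iii) is verified locally, connectivity of the flip graph and finiteness of the decoration data assemble these into the global mutation-equivalence of $Q^{\bD}$ and $Q^{\bD'}$.
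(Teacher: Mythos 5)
Your proposal follows essentially the same route as the paper's proof: the paper first shows all six decorated triangulations of a single triangle are mutation-equivalent (via mutations at the two interior face vertices, where $\mu_1\mu_2$ realizes a rotation of the marked vertex and $\mu_1\mu_2\mu_1$ a sign change), then uses this to normalize the decorations on the two triangles adjacent to a flipped edge before exhibiting an explicit $2+4+2$ mutation sequence realizing the flip, and concludes by connectivity of the flip graph. Your plan is correct; the only minor refinement is that the decoration moves (i) and (ii) need mutations only at the face vertices, never at the edge vertices.
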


\begin{proof}
Here we give an explicit mutation equivalence. Let us first consider the case $\Sigma=T$. In this case, we have six decorated triangulations. The associated weighted quivers are related as shown in \cref{fig:exch_triangle}. Thus the assertion for the triangle case is proved. We remark here that $\mu_1\mu_2$ and $\mu_2\mu_1$ amount to the rotations of the distinguished vertex, and $\mu_1\mu_2\mu_1=\mu_2\mu_1\mu_2$ amounts to the change of sign.

In the general case, a consequence of the previous paragraph is that the weighted quivers $Q^{\bD}$ and $Q^{\bD'}$ are mutation-equivalent if the underlying triangulations of $\bD$ and $\bD'$ are the same. It remains to consider the flips of ideal triangulations. Again by the previous paragraph, we can choose the decorations as shown in the left-most and right-most pictures in \cref{fig:flip_sequence}. Then it is easily verified that the flip can be realized by $2+4+2$ mutations as shown there. Since any two ideal triangulations are transformed into each other by a finite sequence of flips, the assertion is proved. 
\end{proof}

\begin{figure}
\begin{tikzpicture}[scale=0.8]
\begin{scope}[xshift=3cm]
    \foreach \i in {90,210,330}
    {
    \markedpt{\i:1};
    \draw[blue] (\i:1) -- (\i+120:1);
    }
\node at (0,0) {$+$};
\node at (210:0.8) {$\ast$};
\end{scope}

\begin{scope}[xshift=1.5cm, yshift=1.5*1.732cm]
    \foreach \i in {90,210,330}
    {
    \markedpt{\i:1};
    \draw[blue] (\i:1) -- (\i+120:1);
    }
\node at (0,0) {$-$};
\node at (330:0.8) {$\ast$};
\end{scope}

\begin{scope}[xshift=-1.5cm, yshift=1.5*1.732cm]
    \foreach \i in {90,210,330}
    {
    \markedpt{\i:1};
    \draw[blue] (\i:1) -- (\i+120:1);
    }
\node at (0,0) {$+$};
\node at (90:0.8) {$\ast$};
\end{scope}

\begin{scope}[xshift=-3cm]
    \foreach \i in {90,210,330}
    {
    \markedpt{\i:1};
    \draw[blue] (\i:1) -- (\i+120:1);
    }
\node at (0,0) {$-$};
\node at (210:0.8) {$\ast$};
\end{scope}

\begin{scope}[xshift=-1.5cm, yshift=-1.5*1.732cm]
    \foreach \i in {90,210,330}
    {
    \markedpt{\i:1};
    \draw[blue] (\i:1) -- (\i+120:1);
    }
\node at (0,0) {$+$};
\node at (330:0.8) {$\ast$};
\end{scope}

\begin{scope}[xshift=1.5cm, yshift=-1.5*1.732cm]
    \foreach \i in {90,210,330}
    {
    \markedpt{\i:1};
    \draw[blue] (\i:1) -- (\i+120:1);
    }
\node at (0,0) {$-$};
\node at (90:0.8) {$\ast$};
\end{scope}

\foreach \k in {0,60,120,180,240,300}
\draw[thick] (\k+15:5.5) -- (\k+45:5.5);
\foreach \k in {30,150,270}
\node at (\k:5.8) {$\mu_1$};
\foreach \k in {90,210,-30}
\node at (\k:5.8) {$\mu_2$};
\begin{scope}[xshift=6cm]
    \foreach \i in {90,210,330}
    {
    \markedpt{\i:2};
    \draw[blue] (\i:2) -- (\i+120:2);
    }
\quiverplus{210:2}{330:2}{90:2};
\end{scope}

\begin{scope}[xshift=3cm, yshift=3*1.732cm]
    \foreach \i in {90,210,330}
    {
    \markedpt{\i:2};
    \draw[blue] (\i:2) -- (\i+120:2);
    }
\quiverminus{330:2}{90:2}{210:2};
\end{scope}

\begin{scope}[xshift=-3cm, yshift=3*1.732cm]
    \foreach \i in {90,210,330}
    {
    \markedpt{\i:2};
    \draw[blue] (\i:2) -- (\i+120:2);
    }
\quiverplus{90:2}{210:2}{330:2};
\end{scope}

\begin{scope}[xshift=-6cm]
    \foreach \i in {90,210,330}
    {
    \markedpt{\i:2};
    \draw[blue] (\i:2) -- (\i+120:2);
    }
\quiverminus{210:2}{330:2}{90:2};
\end{scope}

\begin{scope}[xshift=-3cm, yshift=-3*1.732cm]
    \foreach \i in {90,210,330}
    {
    \markedpt{\i:2};
    \draw[blue] (\i:2) -- (\i+120:2);
    }
\quiverplus{330:2}{90:2}{210:2};
\end{scope}

\begin{scope}[xshift=3cm, yshift=-3*1.732cm]
    \foreach \i in {90,210,330}
    {
    \markedpt{\i:2};
    \draw[blue] (\i:2) -- (\i+120:2);
    }
\quiverminus{90:2}{210:2}{330:2};
\end{scope}

\end{tikzpicture}
    \caption{The exchange graph $\Exch_{\sfs(\mathfrak{sp}_4,T)}$ for a triangle $T$. Here $\mu_d$ denotes the mutation at the unique unfrozen vertex with weight $d \in \{1,2\}$.}
    \label{fig:exch_triangle}
\end{figure}
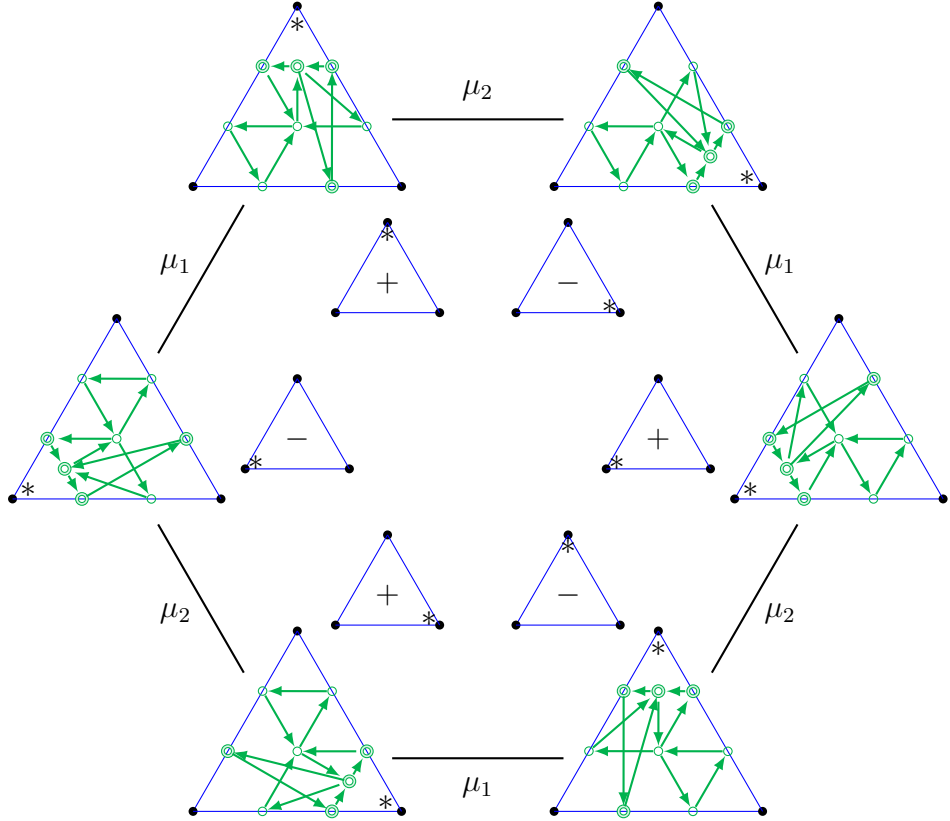

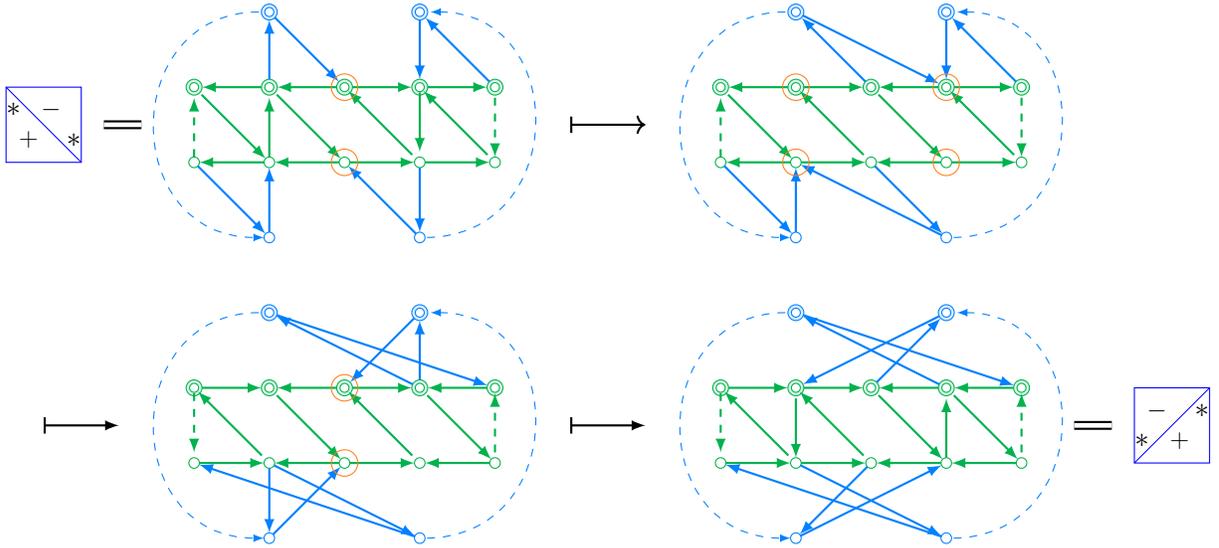
\begin{figure}[ht]
\begin{tikzpicture}
\quiversquare{0,0}{4,0}{4,3}{0,3};
\begin{scope}[>=latex]
{\color{mygreen}
\qsarrow{v21}{v20};
\qsarrow{v22}{v21};
\qsarrow{v22}{v23};
\qsarrow{v23}{v24};
\qarrow{v11}{v10};
\qarrow{v12}{v11};
\qarrow{v12}{v13};
\qarrow{v13}{v14};
\qstarrow{v20}{v11};
\qsharrow{v11}{v21};
\qstarrow{v21}{v12};
\qstarrow{v14}{v23};
\qsharrow{v23}{v13};
\qstarrow{v13}{v22};
\qshdarrow{v10}{v20};
\qstdarrow{v24}{v14};
}
{\color{myblue}
\qarrow{v10}{yl};
\qarrow{yl}{v11};
\qarrow{v13}{yr};
\qarrow{yr}{v12};
\qsarrow{v21}{zl};
\qsarrow{zl}{v22};
\qsarrow{v24}{zr};
\qsarrow{zr}{v23};
\draw[dashed,->,shorten >=2pt,shorten <=4pt] (zl) ..controls ++(-2,0) and ($(yl)+(-2,0)$).. (yl);
\draw[dashed,<-,shorten >=2pt,shorten <=4pt] (zr) ..controls ++(2,0) and ($(yr)+(2,0)$).. (yr);
}
\draw[myorange] (v12) circle(5pt);
\draw[myorange] (v22) circle(5pt);
\end{scope}
\draw[thick,|->] (5,1.5) -- (6,1.5);
\draw[blue] (-2.5,1) --++(1,0) --++(0,1)--++(-1,0) --cycle;
\draw[blue] (-2.5,2) --++(1,-1);
\node[scale=0.9] at (-2.4,1.7) {$\ast$};
\node[scale=0.9] at (-1.6,1.3) {$\ast$};
\node[scale=0.8] at (-2.2,1.3) {$+$};
\node[scale=0.8] at (-1.9,1.7) {$-$};
\draw[thick,double distance=0.15em] (-1.2,1.5) -- (-0.7,1.5);

\begin{scope}[xshift=7cm,>=latex]
\quiversquare{0,0}{4,0}{4,3}{0,3};
{\color{mygreen}
\qsarrow{v21}{v20};
\qsarrow{v21}{v22};
\qsarrow{v23}{v22};
\qsarrow{v23}{v24};
\qarrow{v11}{v10};
\qarrow{v11}{v12};
\qarrow{v13}{v12};
\qarrow{v13}{v14};
\qstarrow{v20}{v11};
\qstarrow{v12}{v21};
\qstarrow{v14}{v23};
\qstarrow{v22}{v13};
\qshdarrow{v10}{v20};
\qstdarrow{v24}{v14};
}
{\color{myblue}
\qarrow{v10}{yl};
\qarrow{yl}{v11};
\qarrow{v12}{yr};
\qarrow{yr}{v11};
\qsarrow{v22}{zl};
\qsarrow{v24}{zr};
\qsarrow{zr}{v23};
\qsarrow{zl}{v23};
\draw[dashed,->,shorten >=2pt,shorten <=4pt] (zl) ..controls ++(-2,0) and ($(yl)+(-2,0)$).. (yl);
\draw[dashed,<-,shorten >=2pt,shorten <=4pt] (zr) ..controls ++(2,0) and ($(yr)+(2,0)$).. (yr);
}
\draw[myorange] (v11) circle(5pt);
\draw[myorange] (v21) circle(5pt);
\draw[myorange] (v13) circle(5pt);
\draw[myorange] (v23) circle(5pt);
\end{scope}

\begin{scope}[yshift=-4cm,>=latex]
\quiversquare{0,0}{4,0}{4,3}{0,3};
{\color{mygreen}
\qsarrow{v20}{v21};
\qsarrow{v22}{v21};
\qsarrow{v22}{v23};
\qsarrow{v24}{v23};
\qarrow{v10}{v11};
\qarrow{v12}{v11};
\qarrow{v12}{v13};
\qarrow{v14}{v13};
\qstarrow{v11}{v20};
\qstarrow{v21}{v12};
\qstarrow{v23}{v14};
\qstarrow{v13}{v22};
\qshdarrow{v20}{v10};
\qstdarrow{v14}{v24};
}
{\color{myblue}
\qarrow{v11}{yl};
\qarrow{yl}{v12};
\qarrow{v11}{yr};
\qarrow{yr}{v10};
\qsarrow{v23}{zr};
\qsarrow{zr}{v22};
\qsarrow{v23}{zl};
\qsarrow{zl}{v24};
\draw[dashed,->,shorten >=2pt,shorten <=4pt] (zl) ..controls ++(-2,0) and ($(yl)+(-2,0)$).. (yl);
\draw[dashed,<-,shorten >=2pt,shorten <=4pt] (zr) ..controls ++(2,0) and ($(yr)+(2,0)$).. (yr);
}
\draw[myorange] (v12) circle(5pt);
\draw[myorange] (v22) circle(5pt);
\draw[thick,|->] (5,1.5) -- (6,1.5);
\draw[thick,|->] (-2,1.5) -- (-1,1.5);
\end{scope}

\begin{scope}[xshift=7cm,yshift=-4cm,>=latex]
\quiversquare{0,0}{4,0}{4,3}{0,3};
{\color{mygreen}
\qsarrow{v20}{v21};
\qsarrow{v21}{v22};
\qsarrow{v23}{v22};
\qsarrow{v24}{v23};
\qarrow{v10}{v11};
\qarrow{v11}{v12};
\qarrow{v13}{v12};
\qarrow{v14}{v13};
\qstarrow{v11}{v20};
\qstarrow{v12}{v21};
\qstarrow{v23}{v14};
\qstarrow{v22}{v13};
\qshdarrow{v20}{v10};
\qstdarrow{v14}{v24};
\qstarrow{v21}{v11};
\qsharrow{v13}{v23};
}
{\color{myblue}
\qarrow{v12}{yl};
\qarrow{yl}{v13};
\qarrow{v11}{yr};
\qarrow{yr}{v10};
\qsarrow{v22}{zr};
\qsarrow{v23}{zl};
\qsarrow{zl}{v24};
\qsarrow{zr}{v21};
\draw[dashed,->,shorten >=2pt,shorten <=4pt] (zl) ..controls ++(-2,0) and ($(yl)+(-2,0)$).. (yl);
\draw[dashed,<-,shorten >=2pt,shorten <=4pt] (zr) ..controls ++(2,0) and ($(yr)+(2,0)$).. (yr);
}
\draw[blue] (5.5,1) --++(1,0) --++(0,1)--++(-1,0) --cycle;
\draw[blue] (6.5,2) --++(-1,-1);
\node[scale=0.9] at (5.6,1.3) {$\ast$};
\node[scale=0.9] at (6.4,1.7) {$\ast$};
\node[scale=0.8] at (5.8,1.7) {$-$};
\node[scale=0.8] at (6.1,1.3) {$+$};
\draw[thick,double distance=0.15em] (5.2,1.5) -- (4.7,1.5);
\end{scope}
\end{tikzpicture}
    \caption{Mutation sequences that realize a flip of triangulation. Here we forget the boundary framing of the weighted quivers so that most parts are drawn planar. The vertices at which we perform mutations are shown in orange circles. Notice that the mutations at two vertices without arrows between them commute with each other.}
    \label{fig:flip_sequence}
\end{figure}
For the construction of a seed $(Q^{\bD},\mathbf{A}^{\bD})$ in the field $\mathcal{K}(\A_{Sp_4,\Sigma})$ of rational functions on the moduli space $\A_{Sp_4,\Sigma}$, see \cite{GS19} or \cref{rem:GS geometry} below. 

By \cref{thm:classical_mutation_equivalence} (and \cref{rem:GS geometry}), the mutation class $\sfs(\mathfrak{sp}_4,\Sigma)$ of the weighted quivers $Q^{\bD}$ (or the seeds $(Q^{\bD},\mathbf{A}^{\bD})$) defines a canonical cluster algebra $\CA_{\sfs(\mathfrak{sp}_4,\Sigma)}$ (inside the field $\mathcal{K}(\A_{Sp_4,\Sigma})$). We will identify a decorated triangulation $\bD$ with the corresponding vertex of the exchange graph $\Exch_{\sfs(\mathfrak{sp}_4,\Sigma)}$. 
Let us simplify the notation as 
\begin{align*}
    \CA_{\mathfrak{sp}_4,\Sigma}:=\CA_{\sfs(\mathfrak{sp}_4,\Sigma)},\quad \UCA_{\mathfrak{sp}_4,\Sigma}:=\UCA_{\sfs(\mathfrak{sp}_4,\Sigma)}, \mbox{ and }\Exch_{\mathfrak{sp}_4,\Sigma}:=\Exch_{\sfs(\mathfrak{sp}_4,\Sigma)}
\end{align*}

A \emph{decorated cell decomposition (with deficiency $1$)} consists of the following data:
\begin{itemize}
    \item An ideal cell decomposition $(\Delta;E)$ of deficiency $1$ with the unique quadrilateral $Q_E$ having $E$ as a diagonal.
    \item A decoration $(m(T),\bs(T))$ for each triangle of $(\Delta;E)$.
    \item A choice of a weighed quiver among those appearing in the sequence \cref{fig:flip_sequence}. 
\end{itemize}
In particular, a decorated triangulation is a decorated cell decomposition. 

\begin{dfn}
Define the \emph{surface subgraph} to be the subgraph $\Exch'_{\mathfrak{sp}_4,\Sigma} \subset \Exch_{\mathfrak{sp}_4,\Sigma}$ such that 
\begin{itemize}
    \item the vertices are the seeds corresponding to the decorated cell decompositions;
    \item the edges are mutations appearing in \cref{fig:exch_triangle} or \cref{fig:flip_sequence}.
\end{itemize}
The exchange matrix $B^{(\omega)}$ for any vertex $\omega \in \Exch'_{\mathfrak{sp}_4,\Sigma}$ is determined by the corresponding weighted quiver $Q^{(\omega)}$. 
\end{dfn}
By the proof of \cref{thm:classical_mutation_equivalence}, the surface subgraph is a connected graph on which the associated seeds are explicitly understood. 

Here is a remark on the labeling. If we fix a labeling $\ell: I(\Delta) \xrightarrow{\sim} \{1,\dots,N\}$, then by \cref{rem:labelings}, 
the part of the exchange graph shown in \cref{fig:exch_triangle} or the part corresponding to \cref{fig:flip_sequence} can be lifted to a subgraph of $\bExch_{\mathfrak{sp}_4,\Sigma}$. We call the pair $(\bD,\ell)$ a \emph{labeled decorated triangulation}. 
In other words, the index set $I(\Delta)$ can be commonly used for the seeds assigned to these parts when we locally discuss rotations in a triangle or a single flip in a quadrilateral. 

\begin{rem}\label{rem:GS geometry}
Here are remarks on the geometry of the moduli space $\A_{Sp_4,\Sigma}$ of decorated twisted $Sp_4$-local systems on $\Sigma$ \cite{FG03,GS19} behind the constructions above. Let $G:=Sp_4$ and $\mathfrak{g}:=\mathfrak{sp}_4$.
\begin{enumerate}
    \item For $\Sigma=T$, the moduli space $\A_{G,T}$ can be identified with the configuration space $\Conf_3 \A_G$ of three decorated flags, where $\A_G:=G/U^+$. Such an identification 
    \begin{align*}
        f_m:\A_{G,T} \xrightarrow{\sim} \Conf_3 \A_G
    \end{align*}
    is determined by a choice of distinguished special point $m$. Moreover, a reduced word $\bs$ of the longest element $w_0 \in W(\mathfrak{g})$ determines a birational chart on $\Conf_3 \A_G$. We have only two reduced words $\bs_+:=(1,2,1,2)$, $\bs_-:=(2,1,2,1)$ in the $\mathfrak{sp}_4$-case. Let $\mathbf{A}_+$, $\mathbf{A}_-$ denote the corresponding charts (tuples of regular functions). Given a datum $(m,\epsilon)$, we get a birational chart
    \begin{align*}
        \mathbf{A}_{m,\epsilon}:=f_m^\ast \mathbf{A}_\epsilon: \A_{G,T} \to (\bC^\ast)^N,
    \end{align*}
    which gives rise to a seed $(Q_{m,\epsilon},\mathbf{A}_{m,\epsilon})$ in the field $\mathcal{K}(\A_{G,T})$ of rational functions. 
    \item Given a decorated triangulation $\bD$ of $\Sigma$, these coordinates collectively give a birational chart $\mathbf{A}^{\bD}$ on $\A_{G,\Sigma}$ via restrictions to the triangle moduli spaces, which gives rise to a seed $(Q^{\bD},\mathbf{A}^{\bD})$ in the field $\mathcal{K}(\A_{G,\Sigma})$. It is known that these seeds are mutation-equivalent to each other \cite[Section 8.5]{GS19}, where the equivalence corresponding to a flip is described by transformations of \emph{double reduced words}. The mutation sequences in \cref{fig:flip_sequence} corresponds to specific transformations between the double reduced words 
    \begin{align*}
        (1,2,1,2,\overline{1},\overline{2},\overline{1},\overline{2})\quad \mbox{and} \quad (\overline{1},\overline{2},\overline{1},\overline{2},1,2,1,2)
    \end{align*}
    of $(w_0,w_0)$. 
    \item We have $\CA_{\mathfrak{sp}_4,\Sigma}=\UCA_{\mathfrak{sp}_4,\Sigma}=\cO(\A_{Sp_4,\Sigma}^\times)$ if $\Sigma$ has at least two special points \cite{IOS}, where $\A_{Sp_4,\Sigma}^\times \subset \A_{Sp_4,\Sigma}$ is the open subspace obtained by imposing the genericity on consecutive pairs of decorated flags, and $\cO(\A_{Sp_4,\Sigma}^\times)$ denotes the $\bC$-algebra of regular functions on that space.
\end{enumerate}
\end{rem}

\paragraph{\textbf{Ensemble grading of cluster algebras.}}
Recall from \cite[Section 2.3]{FG09} that there is a natural $H_\A$-action on the cluster variety $\A_\sfs$. Here $H_\A$ is a split algebraic torus. Let $X^\ast(H_\A)$ be its character lattice. It induces $X^\ast(H_\A)$-gradings of the (upper) cluster algebras $\CA_\sfs \subset \UCA_\sfs=\cO(\A_\sfs)$, which can be lifted to gradings of any quantization $\CA_{\sfs_q} \subset \UCA_{\sfs_q}$ such that each (quantum) cluster variable is homogeneous. We call these gradings the \emph{ensemble gradings}, and denote by $\mathrm{deg}_\mathrm{cl}(A_i^{(\omega)}) \in X^\ast(H_\A)$ the degree of a (quantum) cluster variable $A_i^{(\omega)}$. 
See also \cite[Lemma-Definition 4.7]{IYsl3}. 

In the case $\sfs=\sfs(\mathfrak{g},\Sigma)$ related to the moduli space $\A_{G,\Sigma}$, the torus $H_\A$ covers the product $H^{\bM}$ of Cartan subgroups $H \subset G$, one for each special point. Thus the grading lattice $X^\ast(H_\A)$ contains the direct sum $\bigoplus_{\bM} \mathsf{P}$ of weight lattices. 
For the triangle case $\Sigma=T$, we have the Peter--Weyl isomorphism
\begin{align*}
    \cO(\A_{G,T})=\cO(\Conf_3 \A_G) \cong \bigoplus_{\lambda,\mu,\nu} (V(\lambda)\otimes V(\mu) \otimes V(\nu))^G,
\end{align*}
where $\lambda,\mu,\nu \in \mathsf{P}_+$ are dominant integral weights. The ensemble degree is nothing but the triple $(\lambda,\mu,\nu) \in \mathsf{P}_+^{\oplus 3}$ in this case. The ensemble degrees of the cluster variables arising from Goncharov--Shen's construction can be computed from \cite[Section 6.3]{GS19}, which we show in \cref{fig:grading}. In particular, the ensemble degree of each cluster variable associated with any decorated triangulation belongs to the submonoid $\bigoplus_{\bM} \mathsf{P}_+$ of dominant integral weights. 

Actually, the ensemble grading is one of our basic tools to find appropriate web clusters in the skein algebra. See \cref{lem:comparison_grading}.

\begin{figure}[ht]
\begin{tikzpicture}

    \foreach \i in {90,210,330}
    {
    \markedpt{\i:3};
    \draw[blue] (\i:3) -- (\i+120:3);
    }
\quiverplusC{90:3}{210:3}{330:3};
{\color{mygreen}
\uniarrow{x122}{x121}{dashed,shorten >=4pt, shorten <=2pt,bend left=20}
\uniarrow{x311}{x312}{dashed,shorten >=2pt, shorten <=4pt,bend right=20}
}
\uniarrow{x232}{x231}{myblue,dashed,shorten >=2pt, shorten <=4pt,bend left=20}
\draw(90:3.3) node{$\lambda$};
\draw(210:3.3) node{$\mu$};
\draw(-30:3.3) node{$\nu$};
\node[red,scale=0.9] at (0,-0.5) {$(\varpi_1,\varpi_2,\varpi_1)$};
\node[red,scale=0.9] at (0,2) {$(\varpi_2,\varpi_2,2\varpi_1)$};
\node[red,scale=0.9,right=0.2em] at (x311) {$(\varpi_1,0,\varpi_1)$};
\node[red,scale=0.9,right=0.2em] at (x312) {$(\varpi_2,0,\varpi_2)$};
\node[red,scale=0.9,left=0.2em] at (x121) {$(\varpi_2,\varpi_2,0)$};
\node[red,scale=0.9,left=0.2em] at (x122) {$(\varpi_1,\varpi_1,0)$};
\node[red,scale=0.9,below=0.2em] at ($(x231)+(-0.5,0)$) {$(0,\varpi_1,\varpi_1)$};
\node[red,scale=0.9,below=0.2em] at ($(x232)+(0.5,0)$) {$(0,\varpi_2,\varpi_2)$};

\begin{scope}[xshift=8cm]
    \foreach \i in {90,210,330}
    {
    \markedpt{\i:3};
    \draw[blue] (\i:3) -- (\i+120:3);
    }
\quiverminusC{90:3}{210:3}{330:3};
{\color{mygreen}
\uniarrow{x121}{x122}{dashed,shorten >=4pt, shorten <=2pt,bend right=20}
\uniarrow{x312}{x311}{dashed,shorten >=4pt, shorten <=2pt,bend left=20}
}
\uniarrow{x232}{x231}{myblue,dashed,shorten >=2pt, shorten <=4pt,bend left=20}
\draw(90:3.3) node{$\lambda$};
\draw(210:3.3) node{$\mu$};
\draw(-30:3.3) node{$\nu$};
\node[red,scale=0.9] at (0,-0.5) {$(\varpi_1,\varpi_1,\varpi_2)$};
\node[red,scale=0.9] at (0,2) {$(\varpi_2,2\varpi_1,\varpi_2)$};
\node[red,scale=0.9,right=0.2em] at (x311) {$(\varpi_1,0,\varpi_1)$};
\node[red,scale=0.9,right=0.2em] at (x312) {$(\varpi_2,0,\varpi_2)$};
\node[red,scale=0.9,left=0.2em] at (x121) {$(\varpi_2,\varpi_2,0)$};
\node[red,scale=0.9,left=0.2em] at (x122) {$(\varpi_1,\varpi_1,0)$};
\node[red,scale=0.9,below=0.2em] at ($(x231)+(-0.5,0)$) {$(0,\varpi_2,\varpi_2)$};
\node[red,scale=0.9,below=0.2em] at ($(x232)+(0.5,0)$) {$(0,\varpi_1,\varpi_1)$};
\end{scope}
\end{tikzpicture}
    \caption{The ensemble degrees of cluster variables on a triangle.}
    \label{fig:grading}
\end{figure}
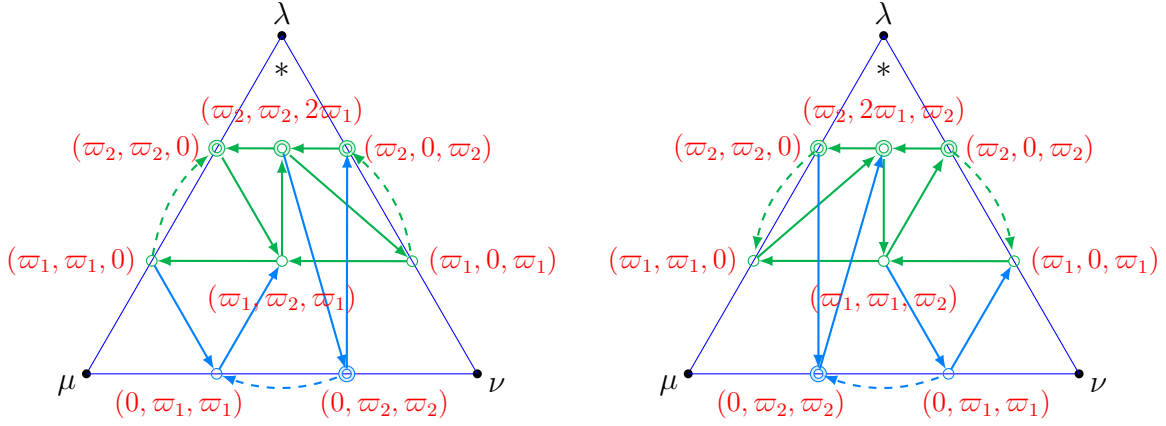
\section{Quantum cluster algebras and skein algebras}\label{sec:realization}

\subsection{Realization of the quantum cluster algebra inside the fraction of the skein algebra}\label{sect:correspondance}
In this section, we construct a mutation class $\sfs_q(\mathfrak{sp}_4,\Sigma)$ of quantum seeds in the skew-field $\mathrm{Frac}\Skein{\Sigma}^q$ of fractions of the skein algebra $\Skein{\Sigma}^q$, which defines a quantum cluster algebra $\CA^q_{\mathfrak{sp}_4,\Sigma} \subset \mathrm{Frac}\Skein{\Sigma}^q$.
In what follows, we identify the quantum parameters as $q=v$. 

For any vertex $\omega \in \Exch'_{\mathfrak{sp}_4,\Sigma}$ of the surface subgraph, we are going to define a quantum seed $(B^{(\omega)},\Pi^{(\omega)},M^{(\omega)})$ in $\mathrm{Frac}\Skein{\Sigma}^q$. The exchange matrix $B^{(\omega)}$ is the one already defined in \cref{subsec:cluster_sp4}. 
In order to define the remaining data, we consider a web cluster  $C_{(\omega)}=\{e^{(\omega)}_i \mid i \in I(\omega)\}$ defined as follows (recall \cref{def:elementary-web}). 
First consider the case where $\omega=\bD$ is a decorated triangulation. 
\begin{itemize}
    \item For each edge $E$ and $s \in \{1,2\}$, let $e_{i_s(E)}^{\bD}$ be the elementary web of weight $s$ given by the type $s$ arc along $E$.
    \item For each triangle $T$ and $s \in \{1,2\}$, let $e_{i_s(T)}^{\bD}$ be the elementary web of weight $s$ given in \cref{fig:web_cluster_triangle}, depending on the decoration data $(m_T,\bs_T)$ on $T$. 
\end{itemize}
See also \cref{fig:exch_tri_web}. 
By \cref{cor:Cweb-T}, these elementary webs indeed form a web cluster in $\mathscr{S}_{\mathfrak{sp}_4,\Sigma}^q$. 
If $\omega$ is a decorated cell decomposition over $(\Delta;E)$, then $C_{(\omega)}=\{e^{(\omega)}_i \mid i \in I(\omega)\}$ is defined as follows.
\begin{itemize}
    \item The assignment of an elementary web $e_i^{(\omega)}$ for a vertex $i \in I(\omega)$ is the same as before unless $i$ is contained in the interior of the unique quadrilateral $Q_E$.
    \item Recalling that the weighted quiver on $Q_E$ is one of those appearing in the mutation sequence in \cref{fig:flip_sequence}, let us assign the elementary webs $e_i^{(\omega)}$ to the six interior vertices as shown in \cref{fig:flip_sequence_web}. Here they are auxiliary labeled so that one of weight $s=1$ (resp. $s=2$) is labeled by an odd (resp. even) number. 
\end{itemize}
Then it is not hard to verify that they form a web cluster. Notice that in each of these cases, the weight of the elementary web $e_i^{(\omega)}$ in the sense of \cref{def:elementary-web} coincides with the weight $s$ of the corresponding vertex of the weighted quiver.

\begin{figure}
\begin{tikzpicture}
    \foreach \i in {90,210,330}
    {
        \markedpt{\i:2};
        \draw[blue] (\i:2) -- (\i+120:2);
    }
    \draw(90:2.3) node{$m_T$};
    \draw(90:1.7) node{$\ast$};
    \draw(0,0) node[scale=1.2]{$+$};
    \begin{scope}[xshift=7cm]
        \foreach \i in {90,210,330}
        {
        \markedpt{\i:2};
        \draw[blue] (\i:2) -- (\i+120:2);
        }
    \draw(90:2.3) node{$m_T$};
    \draw(90:1.7) node{$\ast$};
    \draw(0,0) node[scale=1.2]{$-$};
    \end{scope}
    \node[draw,rectangle,rounded corners=0.7em,inner sep=0.6em,scale=0.9] at (0,-3.5) {\tikz{
	\begin{scope}[yshift=0.5cm]
	\draw[wline] (0,0) -- (210:1);
	\draw[webline] (0,0) -- (90:1);
	\draw[webline] (0,0) -- (-30:1);
	\foreach \i in {90,210,330}
	{
	\markedpt{\i:1};\draw[blue] (\i:1) -- (\i+120:1);
	}
	\node[inner sep=0] at (0,-1) {$e_{i_1(T)}^{\bD}$};
	\end{scope}
	\begin{scope}[xshift=2.5cm,yshift=0.5cm]
	\draw[wline] (90:0.5) -- (90:1);
	\draw[wline] (210:0.5) -- (210:1);
	\draw[webline] (90:0.5) -- (210:0.5);
	\draw[webline] (90:0.5) -- (-30:1); 
	\draw[webline] (210:0.5) -- (-30:1);
	\foreach \i in {90,210,330}
	{
	\markedpt{\i:1};\draw[blue] (\i:1) -- (\i+120:1);
	}
    \node[inner sep=0] at (0,-1) {$e_{i_2(T)}^{\bD}$};
	\end{scope}
	}};
    \node[draw,rectangle,rounded corners=0.7em,inner sep=0.6em,scale=0.9] at (7,-3.5) {\tikz{
	\begin{scope}[yshift=0.5cm]
	\draw[wline] (0,0) -- (-30:1);
	\draw[webline] (0,0) -- (90:1);
	\draw[webline] (0,0) -- (210:1);
	\foreach \i in {90,210,330}
	{
	\markedpt{\i:1};
    \draw[blue] (\i:1) -- (\i+120:1);
	}
	\node[inner sep=0] at (0,-1) {$e_{i_1(T)}^{\bD}$};
	\end{scope}
	\begin{scope}[xshift=2.5cm,yshift=0.5cm]
	\draw[wline] (-30:0.5) -- (-30:1);
	\draw[wline] (90:0.5) -- (90:1);
	\draw[webline] (-30:0.5) -- (90:0.5);
	\draw[webline] (-30:0.5) -- (210:1); 
	\draw[webline] (90:0.5) -- (210:1);
	\foreach \i in {90,210,330}
	{
	\markedpt{\i:1};
    \draw[blue] (\i:1) -- (\i+120:1);
	}
	\node[inner sep=0] at (0,-1) {$e_{i_2(T)}^{\bD}$};
	\end{scope}
	}};
\end{tikzpicture}
    \caption{Assignment of a web cluster to a decorated triangle $(T,m_T,\bs_T)$.}
    \label{fig:web_cluster_triangle}
\end{figure}
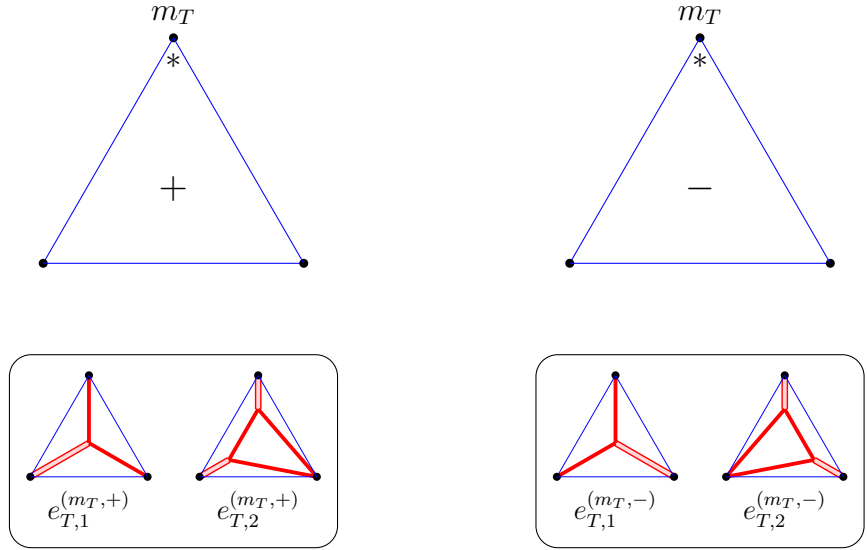

Define the compatibility matrix  $\Pi^{(\omega)}=(\pi^{(\omega)}_{ij})_{i,j \in I(\omega)}$ by
\begin{align*}
    \pi^{(\omega)}_{ij}:= \Pi( e^{(\omega)}_i,e^{(\omega)}_j ).
\end{align*}
Here recall \cref{def:web-exchange}. 
Then $\Pi^{(\omega)}$ is evidently skew-symmetric.

We first verify that their degrees are correct. We identify the monoid $\mathsf{P}_+$ of dominant integral weights with $\bN \times \bN$ via 
\begin{align}\label{eq:lattice_isomorphism}
    \mathsf{P}_+ \xrightarrow{\sim} \bN \times \bN,\quad c_1\varpi_1 + c_2\varpi_2 \mapsto (c_1,c_2).
\end{align}

\begin{lem}\label{lem:comparison_grading}
For any decorated triangulation $\bD$ and $i \in I(\Delta)$, the endpoint degree of the elementary web $e_i^{\bD}$ coincides with the ensemble degree of the corresponding cluster variable $A_i^{\bD}$ via \eqref{eq:lattice_isomorphism}. Namely, we have
\begin{align*}
    \mathbf{\mathrm{deg}}_\mathrm{cl}(A_i^{\bD}) = \mathrm{deg}(e_i^{\bD}).
\end{align*}
\end{lem}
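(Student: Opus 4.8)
The plan is to reduce the asserted equality to a finite, explicit verification on a single triangle and then propagate it to a general decorated triangulation by the locality of both gradings. The key observation is that both sides are determined by purely local data at the special points. On the skein side, the endpoint degree $\mathrm{deg}_p(e_i^{\bD})=(c^1_p,c^2_p)$ only records the number of type $1$ and type $2$ edges of $e_i^{\bD}$ incident to $p$ (\cref{def:egrading}); since each elementary web $e_{i_s(T)}^{\bD}$ associated with an interior vertex is supported in the single triangle $T$, and each edge web $e_{i_s(E)}^{\bD}$ is an arc supported along the single edge $E$, its endpoint degree is nonzero only at the two or three special points of that edge or triangle and is read off directly from \cref{fig:web_cluster_triangle}. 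On the cluster side, the component of the ensemble degree in $\bigoplus_{\bM}\mathsf{P}$ records, at each special point $p$, the weight under the copy of the Cartan subgroup attached to $p$ (see \cref{rem:GS geometry} and \cite[Section 6.3]{GS19}); this weight is also determined by the local configuration. Under the identification \eqref{eq:lattice_isomorphism}, a type $s$ edge corresponds to the fundamental weight $\varpi_s$, so both gradings live in the same lattice and it suffices to match them locally.

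Next I would carry out the triangle case $\Sigma=T$ by direct inspection. For each of the interior webs $e_{T,1}^{(m_T,\pm)}$, $e_{T,2}^{(m_T,\pm)}$ of \cref{fig:web_cluster_triangle} and each type $s$ arc along the three edges, one counts the incident half-edges at the three vertices labeled $(\lambda,\mu,\nu)$ and compares with the entries of \cref{fig:grading}. For example, $e_{T,1}^{(m_T,+)}$ has a single type $1$ edge at $m_T$, a type $2$ edge at the next vertex, and a type $1$ edge at the last, giving endpoint degree $(\varpi_1,\varpi_2,\varpi_1)$, which agrees with the ensemble degree at the interior weight-$1$ vertex of $Q_{m,+}$; likewise $e_{T,2}^{(m_T,+)}$ gives $(\varpi_2,\varpi_2,2\varpi_1)$, and the type $s$ edge arcs give $(\varpi_s,\varpi_s,0)$ together with their rotations. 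Running through all entries of \cref{fig:grading} against \cref{thm:Eweb-T} and \cref{fig:web_cluster_triangle}, and repeating for the $Q_{m,-}$ quiver, establishes the lemma for $\Sigma=T$.

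Finally, for a general decorated triangulation $\bD$ the statement follows by additivity over the special points. An interior vertex web lies inside one triangle and an edge web is determined by one edge, so its endpoint degree equals the corresponding triangle or edge contribution computed above; by the same locality of the $H_\A$-grading, the matching ensemble degree equals the triangle or edge value, an edge variable shared by two triangles contributing $\varpi_s$ at each of its two endpoints in both. Assembling the local contributions special point by special point yields $\mathbf{\mathrm{deg}}_\mathrm{cl}(A_i^{\bD})=\mathrm{deg}(e_i^{\bD})$.

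The step I expect to be the main obstacle is not the triangle bookkeeping, which is routine, but the justification that the ensemble grading is genuinely local at the special points and compatible with amalgamation, i.e.\ that the weight of a shared edge variable at a special point acquires no extra contribution when triangles are glued along interior edges. Making this precise requires invoking the description of the $H_\A$-action and of Goncharov--Shen's coordinates recalled in \cref{rem:GS geometry}, and verifying that the amalgamation procedure identifies edge variables without altering their weights.
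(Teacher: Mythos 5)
Your proposal is correct and follows essentially the same route as the paper, whose entire proof is the one-line instruction to compare \cref{fig:grading} with \cref{fig:web_cluster_triangle}; your explicit counts (e.g.\ $(\varpi_1,\varpi_2,\varpi_1)$ for $e_{T,1}^{(m_T,+)}$ and $(\varpi_2,\varpi_2,2\varpi_1)$ for $e_{T,2}^{(m_T,+)}$) match the entries there. Your added discussion of locality of both gradings and compatibility of the ensemble degree with amalgamation is a sound elaboration of what the paper leaves implicit in \cref{rem:GS geometry}, not a deviation from its argument.
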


\begin{proof}
It is verified by comparing \cref{fig:grading} with \cref{fig:web_cluster_triangle}. 
\end{proof}
Then the coincidence of the two degrees for the (web) clusters associated to general $\omega \in \Exch'_{\mathfrak{sp}_4,\Sigma}$ will follow from the mutation-equivalence shown in \cref{thm:q-mutation-equivalence} below.

\begin{prop}\label{prop:compatibility}
For any decorated triangulation $\bD=(\Delta,m_\Delta,\bs_\Delta)$ with $\bs_\Delta(T)={+}$ for all $T \in t(\Delta)$, the pair $(B^{\bD},\Pi^{\bD})$ satisfies the compatibility relation
\begin{align*}
    (B^{\bD})^{\mathsf{T}}\Pi^{\bD} = (2D, 0).
\end{align*}
Here $D=\mathrm{diag}(d_i \mid i \in I(\bD))$ is the diagonal matrix encoding the weights of vertices.
\end{prop}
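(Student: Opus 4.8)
The plan is to exploit the fact that both $B^{\bD}$ and $\Pi^{\bD}$ are assembled from purely local data on the triangles of $\Delta$ and then glued by amalgamation, so that it suffices to establish the compatibility relation on a single triangle and then check that amalgamation preserves it. Since we assume $\bs_\Delta(T)={+}$ for every $T\in t(\Delta)$, every triangle carries the same local weighted quiver $Q_{m_T,+}$ (\cref{fig:quiver_triangle}) and the same local web cluster (\cref{fig:web_cluster_triangle}), so the local verification has to be carried out only once.

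First I would do the triangle computation. For a single triangle $T$ the index set $I(\bD)$ consists of the six frozen edge vertices $i_s(E)$ (with $s\in\{1,2\}$, $E$ an edge) together with the two unfrozen interior vertices $i_1(T),i_2(T)$ of weights $1$ and $2$; the compatibility relation only constrains the two rows $i\in\{i_1(T),i_2(T)\}$. I would record the endpoint degrees $\mathrm{deg}(e_i^{\bD})$ of the eight elementary webs, which are exactly the data read off from \cref{lem:comparison_grading} and \cref{fig:grading}, and then compute the entries $\pi^{T}_{ij}=\Pi(e_i^{\bD},e_j^{\bD})$ directly from \cref{def:web-exchange}. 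Because all of the $e_i^{\bD}$ are simple arcs or tripods inside $T$, their pairwise $v$-commutation exponents come solely from reordering the half-edges at the three shared special points, governed by the clasped relations of \cref{def:bdry-skeinrel} and the simultaneous-crossing conventions of \cref{def:simul-crossing}. With $\Pi^{T}$ and $B^{T}$ (equivalently $Q_{m_T,+}$) in hand, evaluating $(B^{T})^{\mathsf{T}}\Pi^{T}$ is a finite matrix multiplication; I expect the two unfrozen rows to yield $\widehat{d}_{i_1(T)}=2$ and $\widehat{d}_{i_2(T)}=4$, i.e.\ $\widehat{d}_i=2d_i$, with the off-diagonal and frozen-column entries vanishing.

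Next I would pass from a triangle to a general $\Delta$ by amalgamation. The exchange matrix $B^{\bD}$ is the amalgamation of the $B^{T}$ (opposite half-arrows cancel, parallel ones combine), while $\Pi^{\bD}$ is additive over triangles because the web-commutation exponent $\Pi(\cdot,\cdot)$ localizes at the special points and splits as a sum of the contributions seen inside each incident triangle. Since $\sum_k b_{ki}\pi_{kj}$ is bilinear in $(B,\Pi)$, its value for any index $i$ decomposes as a sum of the local triangle contributions computed in the previous step: for a boundary (frozen) edge vertex no constraint is needed, and for an interior edge vertex $i_s(E)$, which becomes unfrozen after gluing, the two adjacent triangles each contribute and must combine to give exactly $2d_i\delta_{ij}$. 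This is essentially the statement that amalgamation preserves a compatible pair $(\varepsilon,\Pi)$ in the sense of Fock--Goncharov; I would either invoke this fact directly or verify it by the same local bookkeeping as in the triangle case.

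The main obstacle I anticipate is precisely this interior-edge step: one must check that the two triangle-side contributions to $\sum_k b_{ki_s(E)}\pi_{kj}$ add up to the correct diagonal value $2d_{i_s(E)}\delta_{i_s(E),j}$ without introducing spurious off-diagonal terms, in particular that contributions from webs crossing $E$ and from the vertices of the other triangle cancel, and that the factor $2$ (rather than $1$) is reproduced. The factor of $2$ ultimately reflects the half-integral $v^{1/2}$-exponents appearing in the simultaneous-crossing normalization of \cref{def:simul-crossing}, together with the factor $\tfrac12$ in the toric-frame convention $M(\alpha)M(\beta)=q^{\Pi(\alpha,\beta)/2}M(\alpha+\beta)$; keeping track of these half-integers consistently across the amalgamation is the delicate point. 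As a cross-check I would compare the resulting $\Pi^{\bD}$ with the compatibility partner of the Goncharov--Shen exchange matrix for $\A_{Sp_4,\Sigma}$ (\cref{rem:GS geometry}), which is known to form a compatible pair.
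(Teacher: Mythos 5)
Your single-triangle computation is fine and agrees with what the paper does: for the two face vertices $i_1(T),i_2(T)$ one checks $\widehat{d}_{i_1(T)}=2$, $\widehat{d}_{i_2(T)}=4$ by a finite computation with the webs of \cref{fig:web_cluster_triangle}. The genuine gap is in your treatment of the interior edges. On a single triangle the edge vertices are frozen, so the triangle-level compatibility relation places \emph{no} constraint on their rows; after gluing, the vertices $i_s(E)$ on an interior edge $E$ become unfrozen and their rows are exactly what must be verified. There is no general ``amalgamation preserves compatible pairs'' theorem you can invoke for this: the row $\sum_k b_{k,i_s(E)}\pi_{kj}$ involves the cross-triangle entries $\Pi(e_k,e_j)$ with $e_k$ and $e_j$ the face webs of the \emph{two different} triangles adjacent to $E$, and these share special points (the endpoints of $E$), so they do not $v$-commute and their pairing cannot be read off from single-triangle data. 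Consequently your ``additivity over triangles'' does not reduce the defrosted rows to quantities you have already computed; the verification is irreducibly a computation on the quadrilateral $Q_E$. Moreover, even with all signs $+$, the face webs and the arrows of $Q^{\bD}$ on $Q_E$ depend on the positions of the distinguished vertices $m_\Delta(T),m_\Delta(T')$, so this quadrilateral check splits into several genuinely different configurations (the paper's cases (I)--(V)), none of which your proposal carries out. The closing cross-check against the Goncharov--Shen pair of \cref{rem:GS geometry} is not a substitute: identifying $\Pi^{\bD}$ with that partner is essentially equivalent to the statement being proved.

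For comparison, the paper's mechanism for localizing the problem is different from yours and is what makes the reduction clean: for each unfrozen $i$ it forms the Weyl-ordered monomial $X_i=\bigl[\prod_k e_k^{\varepsilon_{ik}}\bigr]$, so that $(\varepsilon\Pi)_{ij}=\Pi(X_i,e_j)$; since $X_i$ has ensemble degree $0$ by construction, \cref{lem:comparison_grading} gives that its endpoint degree vanishes at every special point, whence $\Pi(X_i,e_j)=0$ for every $e_j$ not supported in the triangle (for face vertices) or the quadrilateral $Q_E$ (for edge vertices). This kills all far-away columns at once and reduces the proof to the explicit triangle computation you did plus the five quadrilateral computations you are missing. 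If you want to salvage your amalgamation framing, you would have to compute the full frozen rows $\Pi(X_i^T,e_j)$ of each triangle seed (not just the unfrozen ones) \emph{and} the cross-triangle pairings at the glued edge, at which point you have simply redone the quadrilateral case.
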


\begin{figure}[ht]
\begin{tikzpicture}[scale=0.9]
\foreach \i in {90,210,330}
    {
    \markedpt{\i:3};
    \draw[blue] (\i:3) -- (\i+120:3);
    }
\quiverplusC{90:3}{210:3}{330:3};
{\color{mygreen}
\uniarrow{x122}{x121}{dashed,shorten >=4pt, shorten <=2pt,bend left=20}
\uniarrow{x311}{x312}{dashed,shorten >=2pt, shorten <=4pt,bend right=20}
}
\uniarrow{x232}{x231}{myblue,dashed,shorten >=2pt, shorten <=4pt,bend left=20}
\draw (x121) node[above left]{$4$};
\draw (x122) node[above left]{$3$};
\draw (x231) node[below]{$5$};
\draw (x232) node[below]{$6$};
\draw (x311) node[above right]{$7$};
\draw (x312) node[above right]{$8$};
\draw (G1) node[below right]{$1$};
\draw (G2) node[above=0.2em]{$2$};

{\begin{scope}[xshift=8cm,yshift=0.5cm]
\begin{scope}
\draw[wline] (0,0) -- (210:1);
	\draw[webline] (0,0) -- (90:1);
	\draw[webline] (0,0) -- (-30:1);
	\foreach \i in {90,210,330}
	{
	\markedpt{\i:1};\draw[blue] (\i:1) -- (\i+120:1);
	}
    \node at (0,-1) {$e_1$};
\end{scope}
\begin{scope}[yshift=2.5cm]
	\draw[wline] (90:0.5) -- (90:1);
	\draw[wline] (210:0.5) -- (210:1);
	\draw[webline] (90:0.5) -- (210:0.5);
	\draw[webline] (90:0.5) -- (-30:1); 
	\draw[webline] (210:0.5) -- (-30:1);
	\foreach \i in {90,210,330}
	{
	\markedpt{\i:1};\draw[blue] (\i:1) -- (\i+120:1);
	}
	\node at (0,-1) {$e_2$};
\end{scope}
	
\begin{scope}[xshift=-2.5cm]
	\draw[webline] (210:1) to[bend right=15] (90:1);
	\foreach \i in {90,210,330}
	{
	\markedpt{\i:1};\draw[blue] (\i:1) -- (\i+120:1);
	}
    \node at (0,-1) {$e_3$};
\end{scope}
	
\begin{scope}[xshift=-2.5cm,yshift=2.5cm]
	\draw[wline] (210:1) to[bend right=15] (90:1);
	\foreach \i in {90,210,330}
	{
	\markedpt{\i:1};\draw[blue] (\i:1) -- (\i+120:1);
	}
    \node at (0,-1) {$e_4$};
\end{scope}
	
\begin{scope}[xshift=2.5cm]
	\draw[webline] (90:1) to[bend right=15] (-30:1);
	\foreach \i in {90,210,330}
	{
	\markedpt{\i:1};\draw[blue] (\i:1) -- (\i+120:1);
	}
    \node at (0,-1) {$e_7$};
\end{scope}
	
\begin{scope}[xshift=2.5cm,yshift=2.5cm]
	\draw[wline] (90:1) to[bend right=15] (-30:1);
	\foreach \i in {90,210,330}
	{
	\markedpt{\i:1};\draw[blue] (\i:1) -- (\i+120:1);
	}
    \node at (0,-1) {$e_8$};
\end{scope}
	
\begin{scope}[xshift=1.25cm,yshift=-2.5cm]
	\draw[wline] (-30:1) to[bend right=15] (210:1);
	\foreach \i in {90,210,330}
	{
	\markedpt{\i:1};\draw[blue] (\i:1) -- (\i+120:1);
	}
    \node at (0,-1) {$e_6$};
\end{scope}
	
\begin{scope}[xshift=-1.25cm,yshift=-2.5cm]
	\draw[webline] (-30:1) to[bend right=15] (210:1);
	\foreach \i in {90,210,330}
	{
	\markedpt{\i:1};\draw[blue] (\i:1) -- (\i+120:1);
	}
    \node at (0,-1) {$e_5$};
	\end{scope}
\end{scope}}
\end{tikzpicture}
    \caption{Labeling of quiver vertices and the corresponding elementary webs on a triangle.}
    \label{fig:labeling_triangle_case}
\end{figure}
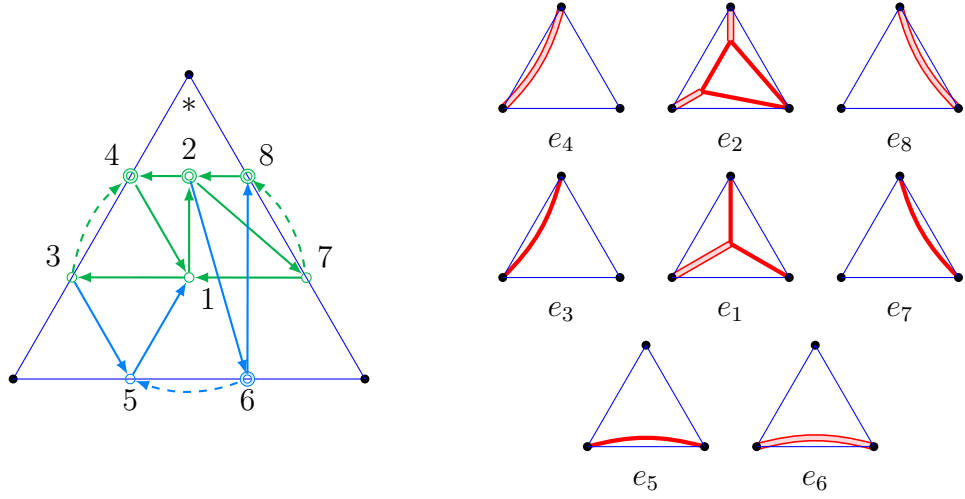

\begin{figure}[ht]
\begin{tikzpicture}[scale=0.9]
\draw[blue] (0,0) -- (4,0) -- (4,4) -- (0,4) --cycle;
\draw[blue] (4,0) -- (0,4);
\foreach \i in {0,4} \foreach \j in {0,4} \markedpt{\i,\j};
{\color{mygreen}
\begin{scope}[scale=4/3]
\dnode{1,0}{mygreen} node[black,below=0.2em]{$10$};
\dnode{0,2}{mygreen} node[black,left=0.2em]{$8$};
\dnode{3,1}{mygreen} node[black,right=0.2em]{$12$};
\dnode{2,3}{mygreen} node[black,above=0.2em]{$14$};
\dnode{1,2}{mygreen} node[black,right=0.2em]{$4$};
\dnode{0.5,1.5}{mygreen} node[black,below=0.2em]{$2$};
\dnode{1.5,2.5}{mygreen} node[black,right=0.2em]{$6$};
\draw(2,0) circle(2pt) node[black,below=0.2em]{$9$};
\draw(0,1) circle(2pt) node[black,left=0.2em]{$7$};
\draw(3,2) circle(2pt) node[black,right=0.2em]{$11$};
\draw(1,3) circle(2pt) node[black,above=0.2em]{$13$};
\draw(2,1) circle(2pt) node[black,left=0.2em]{$3$};
\draw(1.5,0.5) circle(2pt) node[black,left=0.2em]{$1$};
\draw(2.5,1.5) circle(2pt) node[black,above=0.2em]{$5$};
\end{scope}
}
\begin{scope}[xshift=6cm]
\draw[webline] (0,1.5) to (1.5,0);
\draw[blue] (0,0) -- (1.5,0) -- (1.5,1.5) -- (0,1.5) --cycle;
\foreach \i in {0,1.5} \foreach \j in {0,1.5} \markedpt{\i,\j};
\node at (0.75,-0.5) {$e_3$};
\end{scope}

\begin{scope}[xshift=6cm,yshift=2.5cm]
\draw[wline] (0,1.5) -- (1.5,0);
\draw[blue] (0,0) -- (1.5,0) -- (1.5,1.5) -- (0,1.5) --cycle;
\foreach \i in {0,1.5} \foreach \j in {0,1.5} \markedpt{\i,\j};
\node at (0.75,-0.5) {$e_4$};
\end{scope}

\begin{scope}[xshift=8.5cm]
\draw[webline] (0,0) to[bend right=15] (0,1.5);
\draw[blue] (0,0) -- (1.5,0) -- (1.5,1.5) -- (0,1.5) --cycle;
\foreach \i in {0,1.5} \foreach \j in {0,1.5} \markedpt{\i,\j};
\node at (0.75,-0.5) {$e_7$};
\end{scope}

\begin{scope}[xshift=8.5cm,yshift=2.5cm]
\draw[wline] (0,0) to[bend right=15] (0,1.5);
\draw[blue] (0,0) -- (1.5,0) -- (1.5,1.5) -- (0,1.5) --cycle;
\foreach \i in {0,1.5} \foreach \j in {0,1.5} \markedpt{\i,\j};
\node at (0.75,-0.5) {$e_8$};
\end{scope}

\begin{scope}[xshift=11cm]
\draw[webline] (1.5,0) to[bend right=15] (0,0);
\draw[blue] (0,0) -- (1.5,0) -- (1.5,1.5) -- (0,1.5) --cycle;
\foreach \i in {0,1.5} \foreach \j in {0,1.5} \markedpt{\i,\j};
\node at (0.75,-0.5) {$e_9$};
\end{scope}

\begin{scope}[xshift=11cm,yshift=2.5cm]
\draw[wline] (1.5,0) to[bend right=15] (0,0);
\draw[blue] (0,0) -- (1.5,0) -- (1.5,1.5) -- (0,1.5) --cycle;
\foreach \i in {0,1.5} \foreach \j in {0,1.5} \markedpt{\i,\j};
\node at (0.75,-0.5) {$e_{10}$};
\end{scope}

\begin{scope}[xshift=13.5cm]
\draw[webline] (1.5,0) to[bend left=15] (1.5,1.5);
\draw[blue] (0,0) -- (1.5,0) -- (1.5,1.5) -- (0,1.5) --cycle;
\foreach \i in {0,1.5} \foreach \j in {0,1.5} \markedpt{\i,\j};
\node at (0.75,-0.5) {$e_{11}$};
\end{scope}

\begin{scope}[xshift=13.5cm,yshift=2.5cm]
\draw[wline] (1.5,0) to[bend left=15] (1.5,1.5);
\draw[blue] (0,0) -- (1.5,0) -- (1.5,1.5) -- (0,1.5) --cycle;
\foreach \i in {0,1.5} \foreach \j in {0,1.5} \markedpt{\i,\j};
\node at (0.75,-0.5) {$e_{12}$};
\end{scope}

\begin{scope}[xshift=16cm]
\draw[webline] (0,1.5) to[bend right=15] (1.5,1.5);
\draw[blue] (0,0) -- (1.5,0) -- (1.5,1.5) -- (0,1.5) --cycle;
\foreach \i in {0,1.5} \foreach \j in {0,1.5} \markedpt{\i,\j};
\node at (0.75,-0.5) {$e_{13}$};
\end{scope}

\begin{scope}[xshift=16cm,yshift=2.5cm]
\draw[wline] (0,1.5) to[bend right=15] (1.5,1.5);
\draw[blue] (0,0) -- (1.5,0) -- (1.5,1.5) -- (0,1.5) --cycle;
\foreach \i in {0,1.5} \foreach \j in {0,1.5} \markedpt{\i,\j};
\node at (0.75,-0.5) {$e_{14}$};
\end{scope}
\end{tikzpicture}
    \caption{Labeling of quiver vertices and the corresponding elementary webs. }
    \label{fig:labeling_square_case}
\end{figure}

\begin{proof}
During the proof, we fix a decorated triangulation $\bD$ and omit the superscript $\bD$. 
Let $\varepsilon:=B^{\mathsf{T}}$ denote the Fock--Goncharov exchange matrix associated with $\bD$. 
For $i \in I(\bD)_\uf$ and $j \in I(\bD)$, we are going to compute $(\varepsilon\Pi)_{ij}=\sum_{k \in I(\bD)} \varepsilon_{ik}\pi_{kj}$. 
For a fixed $i \in I(\bD)_\uf$, let us consider the Weyl-ordered Laurent monomial
\begin{align*}
    X_i:= \left[\prod_{k \in I(\bD)} e_k^{\varepsilon_{ik}}\right].
\end{align*}
Then we have
\begin{align*}
    (\varepsilon\Pi)_{ij}=\sum_{k \in I(\bD)} \varepsilon_{ik}\pi_{kj} = \sum_{k \in I(\bD)} \varepsilon_{ik}\Pi(e_k,e_j) = \Pi(X_i, e_j).
\end{align*}
From the very definition, we see that the ensemble degree of $X_i$ is $0$. Then by \cref{lem:comparison_grading}, its endpoint degree is also $0$. 
On the other hand, observe that 
\begin{itemize}
    \item $X_i$ only involves the elementary webs on the triangle $T$ if $i=i_s(T) \in I^\mathrm{tri}(\bD)$, and 
    \item $X_i$ only involves the elementary webs on the quadrilateral $Q_E$ with diagonal $E$ if $i=i_s(E) \in I^\mathrm{edge}(\bD)$.
\end{itemize}
From these facts, we immediately get $(\varepsilon\Pi)_{ij}=\Pi(X_i,e_j)=0$ for the vertices $j$ not lying in the interior of $T$ if $i=i_s(T)$, and for $j$ not lying in the interior of $Q_E$ if $i=i_s(E)$. Thus the problem is reduced to the triangle and quadrilateral cases.

\begin{description}
\item[Triangle case]
Let us label the vertices of the weighted quiver and the corresponding elementary webs as shown in \cref{fig:labeling_triangle_case}. Then we have
\begin{align*}
    \pi_{21}&=1, \\
    \pi_{13}&=\pi_{15}=\pi_{17}=\pi_{18}=0, \quad \pi_{14}=1, \quad \pi_{16}=-1, \\
    \pi_{23}&=\pi_{24}=\pi_{26}=\pi_{28}=0, \quad \pi_{25}=1, \quad \pi_{27}=-1, \\
    X_1 &= \left[ \frac{e_2 e_3}{e_4 e_5 e_7} \right], 
    X_2 = \left[ \frac{e_4 e_6 e_7^2}{e_1^2 e_8} \right].
\end{align*}
Then one can easily verify that $\Pi(X_1,e_2)=\Pi(X_2,e_1)=0$, $\Pi(X_1,e_1)=2$ and $\Pi(X_2,e_2)=4$. Thus we have $(\varepsilon\Pi)_{ij}=2d_i\delta_{ij}$ for $i=i_s(T)$. 
\item[Quadrilateral case]
Up to symmetry, we need to consider the following five cases:
\begin{align*}
\begin{tikzpicture}[scale=0.8]
\draw[blue] (0,0) -- (2,0) -- (2,2) -- (0,2) --cycle;
\draw[blue] (2,0) -- (0,2);
\node at (0.5,0.5) {$+$};
\node at (1.5,1.5) {$+$};
\node at (0.1,1.7) {$\ast$};
\node at (1.9,0.3) {$\ast$};
\node at (1,-0.5) {(I)};
\begin{scope}[xshift=3cm]
\draw[blue] (0,0) -- (2,0) -- (2,2) -- (0,2) --cycle;
\draw[blue] (2,0) -- (0,2);
\node at (0.5,0.5) {$+$};
\node at (1.5,1.5) {$+$};
\node at (0.1,1.7) {$\ast$};
\node at (0.3,1.9) {$\ast$};
\node at (1,-0.5) {(II)};
\end{scope}
\begin{scope}[xshift=6cm]
\draw[blue] (0,0) -- (2,0) -- (2,2) -- (0,2) --cycle;
\draw[blue] (2,0) -- (0,2);
\node at (0.5,0.5) {$+$};
\node at (1.5,1.5) {$+$};
\node at (0.15,0.15) {$\ast$};
\node at (2-0.15,2-0.15) {$\ast$};
\node at (1,-0.5) {(III)};
\end{scope}
\begin{scope}[xshift=9cm]
\draw[blue] (0,0) -- (2,0) -- (2,2) -- (0,2) --cycle;
\draw[blue] (2,0) -- (0,2);
\node at (0.5,0.5) {$+$};
\node at (1.5,1.5) {$+$};
\node at (0.1,1.7) {$\ast$};
\node at (2-0.15,2-0.15) {$\ast$};
\node at (1,-0.5) {(IV)};
\end{scope}
\begin{scope}[xshift=12cm]
\draw[blue] (0,0) -- (2,0) -- (2,2) -- (0,2) --cycle;
\draw[blue] (2,0) -- (0,2);
\node at (0.5,0.5) {$+$};
\node at (1.5,1.5) {$+$};
\node at (1.7,0.1) {$\ast$};
\node at (2-0.15,2-0.15) {$\ast$};
\node at (1,-0.5) {(V)};
\end{scope}
\end{tikzpicture}
\end{align*}
We use the common labeling as shown in \cref{fig:labeling_square_case}. The structure of the quiver depends on the cases (I)--(V), as well as the elementary webs $e_1,e_2,e_5,e_6$. In particular, the Laurent monomials $X_3,X_4$ depend on the cases. They are listed as follows:
    \begin{align*}
        \mbox{(I):}\quad 
        e_1 &= \tikz[scale=0.9,baseline=0.5cm]{\smallsq{\triv{B}{D}{A}}}\ , \quad 
        e_2 = \tikz[scale=0.9,baseline=0.5cm]{\smallsq{\draw[wline] (0,0) -- (0.15*1.2,0.25*1.2);
        \draw[wline] (0,1.2) -- (0.15*1.2,0.75*1.2);
        \draw[webline] (0.15*1.2,0.75*1.2) --(0.15*1.2,0.25*1.2);
        \draw[webline] (1.2,0) --(0.15*1.2,0.25*1.2);
        \draw[webline] (1.2,0) --(0.15*1.2,0.75*1.2);}}\ , \quad 
        e_5 = \tikz[scale=0.9,baseline=-0.5cm,rotate=180]{\smallsq{\triv{B}{D}{A}}}\ , \quad 
        e_6 = \tikz[scale=0.9,baseline=-0.5cm,rotate=180]{\smallsq{\draw[wline] (0,0) -- (0.15*1.2,0.25*1.2);
        \draw[wline] (0,1.2) -- (0.15*1.2,0.75*1.2);
        \draw[webline] (0.15*1.2,0.75*1.2) --(0.15*1.2,0.25*1.2);
        \draw[webline] (1.2,0) --(0.15*1.2,0.25*1.2);
        \draw[webline] (1.2,0) --(0.15*1.2,0.75*1.2);}}\ ,\\
        X_3 &= \left[ \frac{e_1 e_4 e_5}{e_2 e_6} \right], \quad
        X_4 = \left[ \frac{e_2 e_6}{e_3^2 e_{10} e_{14}} \right]. \\[2mm]
        \mbox{(II):}\quad 
        e_1 &= \tikz[scale=0.9,baseline=0.5cm]{\smallsq{\triv{B}{D}{A}}}\ , \quad 
        e_2 = \tikz[scale=0.9,baseline=0.5cm]{\smallsq{\draw[wline] (0,0) -- (0.15*1.2,0.25*1.2);
        \draw[wline] (0,1.2) -- (0.15*1.2,0.75*1.2);
        \draw[webline] (0.15*1.2,0.75*1.2) --(0.15*1.2,0.25*1.2);
        \draw[webline] (1.2,0) --(0.15*1.2,0.25*1.2);
        \draw[webline] (1.2,0) --(0.15*1.2,0.75*1.2);}}\ , \quad 
        e_5 = \tikz[scale=0.9,baseline=0.5cm]{\smallsq{\triv{B}{C}{D}}}\ , \quad 
        e_6 = \tikz[scale=0.9,baseline=0.5cm]{\smallsq{\draw[wline] (0,1.2) -- (0.5*1.2,0.75*1.2);
        \draw[wline] (1.2,0) -- (0.75*1.2,0.5*1.2);
        \draw[webline] (0.5*1.2,0.75*1.2) -- (0.75*1.2,0.5*1.2);
        \draw[webline] (1.2,1.2) -- (0.75*1.2,0.5*1.2);
        \draw[webline] (0.5*1.2,0.75*1.2) -- (1.2,1.2);}}\ ,\\
        X_3 &= \left[ \frac{e_1 e_4 e_{11}}{e_2 e_5} \right], \quad
        X_4 = \left[ \frac{e_2 e_5^2}{e_3^2 e_{6} e_{10}} \right]. \\[2mm]
        \mbox{(III):}\quad 
        e_1 &= \tikz[scale=0.9,baseline=0.5cm]{\smallsq{\triv{B}{A}{D}}}\ , \quad 
        e_2 = \tikz[scale=0.9,baseline=0.5cm,rotate=90]{\smallsq{\draw[wline] (0,0) -- (0.15*1.2,0.25*1.2);
        \draw[wline] (0,1.2) -- (0.15*1.2,0.75*1.2);
        \draw[webline] (0.15*1.2,0.75*1.2) --(0.15*1.2,0.25*1.2);
        \draw[webline] (1.2,1.2) --(0.15*1.2,0.25*1.2);
        \draw[webline] (1.2,1.2) --(0.15*1.2,0.75*1.2);}}\ , \quad 
        e_5 = \tikz[scale=0.9,baseline=0.5cm]{\smallsq{\triv{D}{C}{B}}}\ , \quad 
        e_6 = \tikz[scale=0.9,baseline=-0.5cm,rotate=-90]{\smallsq{\draw[wline] (0,0) -- (0.15*1.2,0.25*1.2);
        \draw[wline] (0,1.2) -- (0.15*1.2,0.75*1.2);
        \draw[webline] (0.15*1.2,0.75*1.2) --(0.15*1.2,0.25*1.2);
        \draw[webline] (1.2,1.2) --(0.15*1.2,0.25*1.2);
        \draw[webline] (1.2,1.2) --(0.15*1.2,0.75*1.2);}}\ ,\\
        X_3 &= \left[ \frac{e_1 e_5}{e_4 e_9 e_{13}} \right], \quad
        X_4 = \left[ \frac{e_3^2 e_8 e_{12}}{e_2 e_{6}} \right]. \\[2mm]
        \mbox{(IV):}\quad 
        e_1 &= \tikz[scale=0.9,baseline=0.5cm]{\smallsq{\triv{D}{A}{B}}}\ , \quad 
        e_2 = \tikz[scale=0.9,baseline=-0.5cm,rotate=180]{\smallsq{\draw[wline] (0,1.2) -- (0.5*1.2,0.75*1.2);
        \draw[wline] (1.2,0) -- (0.75*1.2,0.5*1.2);
        \draw[webline] (0.5*1.2,0.75*1.2) -- (0.75*1.2,0.5*1.2);
        \draw[webline] (1.2,1.2) -- (0.75*1.2,0.5*1.2);
        \draw[webline] (0.5*1.2,0.75*1.2) -- (1.2,1.2);}}\ , \quad 
        e_5 = \tikz[scale=0.9,baseline=0.5cm]{\smallsq{\triv{D}{C}{B}}}\ , \quad 
        e_6 = \tikz[scale=0.9,baseline=-0.5cm,rotate=-90]{\smallsq{\draw[wline] (0,0) -- (0.15*1.2,0.25*1.2);
        \draw[wline] (0,1.2) -- (0.15*1.2,0.75*1.2);
        \draw[webline] (0.15*1.2,0.75*1.2) --(0.15*1.2,0.25*1.2);
        \draw[webline] (1.2,1.2) --(0.15*1.2,0.25*1.2);
        \draw[webline] (1.2,1.2) --(0.15*1.2,0.75*1.2);}}\ ,\\
        X_3 &= \left[ \frac{e_5 e_7}{e_1 e_{13}} \right], \quad
        X_4 = \left[ \frac{e_1^2 e_{12}}{e_2 e_{6}} \right]. \\[2mm]
        \mbox{(V):}\quad 
        e_1 &= \tikz[scale=0.9,baseline=0.5cm]{\smallsq{\triv{B}{D}{A}}}\ , \quad 
        e_2 = \tikz[scale=0.9,baseline=0.5cm]{\smallsq{\draw[wline] (0,0) -- (0.15*1.2,0.25*1.2);
        \draw[wline] (0,1.2) -- (0.15*1.2,0.75*1.2);
        \draw[webline] (0.15*1.2,0.75*1.2) --(0.15*1.2,0.25*1.2);
        \draw[webline] (1.2,0) --(0.15*1.2,0.25*1.2);
        \draw[webline] (1.2,0) --(0.15*1.2,0.75*1.2);}}\ , \quad 
        e_5 = \tikz[scale=0.9,baseline=0.5cm]{\smallsq{\triv{D}{C}{B}}}\ , \quad 
        e_6 = \tikz[scale=0.9,baseline=-0.5cm,rotate=-90]{\smallsq{\draw[wline] (0,0) -- (0.15*1.2,0.25*1.2);
        \draw[wline] (0,1.2) -- (0.15*1.2,0.75*1.2);
        \draw[webline] (0.15*1.2,0.75*1.2) --(0.15*1.2,0.25*1.2);
        \draw[webline] (1.2,1.2) --(0.15*1.2,0.25*1.2);
        \draw[webline] (1.2,1.2) --(0.15*1.2,0.75*1.2);}}\ ,\\
        X_3 &= \left[ \frac{e_1 e_5}{e_2 e_{13}} \right], \quad
        X_4 = \left[ \frac{e_2 e_{12}}{e_6 e_{10}} \right]. 
    \end{align*}
We have $\pi_{21}=\pi_{65}=1$ in each case. 
The other $q$-commutation relations among the variables appearing above can be easily computed from the clasped skein relations (\cref{def:bdry-skeinrel}). For instance, $\pi_{34}=0$, $\pi_{39}=1$, $\pi_{4,10}=2$. Then it is not hard to verify that $\Pi(X_i,e_j)=2d_i \delta_{ij}$ holds for $i=3,4$ and $j=1,2,3,4,5,6$ in each case. 
\end{description}
\end{proof}

The check of the compatibility relation for a general  $\omega \in \Exch'_{\mathfrak{sp}_4,\Sigma}$ is postponed until the proof of \cref{thm:q-mutation-equivalence} below. 
For any vertex $\omega \in \Exch'_{\mathfrak{sp}_4,\Sigma}$, define a toric frame
\begin{align*}
    M^{(\omega)}: \accentset{\circ}{\Lambda}{}^{(\omega)} \to \mathrm{Frac}\Skein{\Sigma}^q
\end{align*}
by the formula
\begin{align*}
    M^{(\omega)}\bigg(\sum_{i \in I} x_i\sff^{(\omega)}_i\bigg):=\left[\prod_{i \in I}(e^{(\omega)}_{i})^{x_i}\right],
\end{align*}
where we use the Weyl ordering (\cref{def:simul-crossing}). Observe that this is the same extension rule as \eqref{eq:extension_toric_frame}. As a consequence, we get: 

\begin{lem}\label{prop:q-seed condition}
For any vertex $\omega \in \Exch'_{\mathfrak{sp}_4,\Sigma}$, the pair $(\Pi^{(\omega)},M^{(\omega)})$ satisfies
\begin{align*}
    M^{(\omega)}(\alpha)M^{(\omega)}(\beta) = q^{\Pi^{(\omega)}(\alpha,\beta)/2} M^{(\omega)}(\alpha+\beta)
\end{align*}
for $\alpha,\beta \in \accentset{\circ}{\Lambda}{}^{(\omega)}$. 
\end{lem}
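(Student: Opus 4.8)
The plan is to recognize this as the standard statement that the Weyl-ordered Laurent monomials in a $v$-commuting family generate a based quantum torus, so that the only genuine inputs are the $v$-commutativity of the web cluster $C_{(\omega)}$ and the invertibility of its members. First I would record that $C_{(\omega)}=\{e_i^{(\omega)}\}_{i\in I(\omega)}$ is a web cluster, hence a $v$-commutative subset of $\Eweb{\Sigma}$ by definition, so that $e_i^{(\omega)}e_j^{(\omega)}=v^{\pi^{(\omega)}_{ij}}e_j^{(\omega)}e_i^{(\omega)}$ with $\pi^{(\omega)}_{ij}=\Pi(e_i^{(\omega)},e_j^{(\omega)})$ (\cref{def:web-exchange}); this also yields the skew-symmetry $\pi^{(\omega)}_{ij}=-\pi^{(\omega)}_{ji}$ at once. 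Since $\Skein{\Sigma}^q$ is an Ore domain (\cref{thm:Ore_general}), its skew-field of fractions contains the inverses of the nonzero basis webs $e_i^{(\omega)}\in\Bweb{\Sigma}$, so every Laurent monomial $\prod_i (e_i^{(\omega)})^{x_i}$ with $x_i\in\bZ$ is defined, and the toric frame $M^{(\omega)}$ makes sense on all of $\accentset{\circ}{\Lambda}{}^{(\omega)}=\bigoplus_i\bZ\sff_i^{(\omega)}$.

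The core of the argument is to unwind the Weyl ordering. Fixing the labeling by $I(\omega)$, the simultaneous-crossing prescription of \cref{def:simul-crossing} amounts, on a $v$-commuting family, to the explicit normalization
\[
    M^{(\omega)}\Big(\sum_i x_i\sff_i^{(\omega)}\Big) = v^{-\frac{1}{2}\sum_{k<l}x_kx_l\,\pi^{(\omega)}_{kl}}\prod_i (e_i^{(\omega)})^{x_i},
\]
where the product is taken in increasing order of the index; the half-integer $v$-powers produced at the shared special points are precisely $-\tfrac12\pi^{(\omega)}_{kl}$ for each pair, so that the monomial is invariant under reordering of its factors. Writing $\alpha=\sum_i x_i\sff_i^{(\omega)}$ and $\beta=\sum_i y_i\sff_i^{(\omega)}$, I would compute $M^{(\omega)}(\alpha)M^{(\omega)}(\beta)$ by moving each block $(e_j^{(\omega)})^{y_j}$ to the left past the $(e_i^{(\omega)})^{x_i}$ with $i>j$, which contributes the factor $v^{\sum_{i>j}x_iy_j\,\pi^{(\omega)}_{ij}}$ and reassembles $\prod_i(e_i^{(\omega)})^{x_i+y_i}$.

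It then remains to compare the accumulated exponent with $\tfrac12\Pi^{(\omega)}(\alpha,\beta)=\tfrac12\sum_{k,l}x_ky_l\,\pi^{(\omega)}_{kl}$ together with the symmetrization exponents carried by $M^{(\omega)}(\alpha)$, $M^{(\omega)}(\beta)$, and $M^{(\omega)}(\alpha+\beta)$. A short manipulation, splitting each double sum into its $k<l$ and $k>l$ parts and applying the skew-symmetry $\pi^{(\omega)}_{kl}=-\pi^{(\omega)}_{lk}$, collapses all the cross terms and shows that the total exponent is exactly $\tfrac12\Pi^{(\omega)}(\alpha,\beta)$. This yields the claimed identity $M^{(\omega)}(\alpha)M^{(\omega)}(\beta)=q^{\Pi^{(\omega)}(\alpha,\beta)/2}M^{(\omega)}(\alpha+\beta)$, recalling $q=v$.

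I expect no conceptual obstacle: once $v$-commutativity is in hand, the statement is a formal property of quantum tori, mirroring the abstract extension rule \eqref{eq:extension_toric_frame}. The only points demanding care are bookkeeping ones, namely verifying that the diagram-level Weyl ordering of \cref{def:simul-crossing} genuinely coincides with the algebraic symmetrization displayed above (so that $M^{(\omega)}$ is truly order-independent), and keeping the signs straight in the final exponent comparison. Both are routine given the skew-symmetry of $\Pi^{(\omega)}$.
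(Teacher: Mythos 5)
Your argument is correct and is essentially the route the paper takes: the paper defines $M^{(\omega)}$ by the Weyl-ordered monomials and records the lemma as an immediate consequence of the $v$-commutativity of the web cluster together with the extension rule \eqref{eq:extension_toric_frame}, which is exactly the bookkeeping you carry out. Your normalization $v^{-\frac{1}{2}\sum_{k<l}x_kx_l\pi^{(\omega)}_{kl}}$ agrees with the paper's $q^{\frac{1}{2}\sum_{l<k}x_kx_l\pi_{kl}}$ by skew-symmetry, so the exponent comparison goes through as you describe.
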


\begin{thm}\label{thm:q-mutation-equivalence}
For any vertex $\omega \in \Exch'_{\mathfrak{sp}_4,\Sigma}$, the triple  $(B^{(\omega)},\Pi^{(\omega)},M^{(\omega)})$ is a quantum seed in $\mathrm{Frac}\Skein{\Sigma}^q$. These quantum seeds are mutation-equivalent to each other. 
\end{thm}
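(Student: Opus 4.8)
The plan is to propagate the quantum seed property along the surface subgraph $\Exch'_{\mathfrak{sp}_4,\Sigma}$, which is connected by the proof of \cref{thm:classical_mutation_equivalence}, starting from a single base vertex and exploiting the fact that abstract quantum seed mutation (Section \ref{subsub:CA}) sends quantum seeds to quantum seeds. First I would fix the base vertex $\omega_0=\bD_0$ given by a decorated triangulation with $\bs_{\Delta}(T)={+}$ for all $T\in t(\Delta)$, and check directly that $(B^{(\omega_0)},\Pi^{(\omega_0)},M^{(\omega_0)})$ is a quantum seed. All three required conditions are already available: the compatibility relation $(B^{\bD_0})^{\mathsf{T}}\Pi^{\bD_0}=(2D,0)$ is \cref{prop:compatibility} (so that $\widehat{d}_i=2d_i$); the toric frame relation $M(\alpha)M(\beta)=q^{\Pi(\alpha,\beta)/2}M(\alpha+\beta)$ is \cref{prop:q-seed condition}; and the assertion that $\mathrm{span}_{\bZ_q}M^{(\omega_0)}(\accentset{\circ}{\Lambda}{}^{(\omega_0)})$ is a based quantum torus whose skew-field of fractions is $\mathrm{Frac}\Skein{\Sigma}^q$ follows from \cref{thm:Ore_general} together with the web cluster expansion of \cref{thm:Cweb-exp}.

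Next I would show that for each mutation edge $\omega\ \overbar{k}\ \omega'$ of the surface subgraph, the abstract mutation $\mu_k$ carries the constructed triple at $\omega$ to the constructed triple at $\omega'$. There are three matchings. The exchange matrices satisfy $B^{(\omega')}=E_{k,\epsilon}B^{(\omega)}F_{k,\epsilon}$ by the classical quiver mutations recorded in \cref{fig:exch_triangle} and \cref{fig:flip_sequence}, which is precisely \cref{thm:classical_mutation_equivalence}. For the toric frames, since $e_i^{(\omega')}=e_i^{(\omega)}$ for $i\neq k$, it suffices to verify that the new cluster variable $e_k^{(\omega')}$ equals the element $M'(\sff'_k)$ prescribed by the quantum exchange relation \eqref{eq:q-mutation_asymmetric}; this is a skein-theoretic identity among elementary webs. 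For the triangle rotations and sign changes, these identities are exactly the skein relations \eqref{eq:skein_rel_triangle}--\eqref{eq:skein_rel_rot_2} computed in the proof of \cref{thm:Cweb-exp}, read as exchange relations with $\widehat{d}_k=2d_k$. For the flip I would carry out the analogous computation for each of the $2+4+2$ mutations in \cref{fig:flip_sequence}, expanding the relevant product of elementary webs by the clasped skein relations of \cref{def:bdry-skeinrel} inside the quadrilateral. Once the exchange matrix and the toric frame are matched, the compatibility matrix matches automatically: by \cref{lem:compatibility_check} the mutated pair $(B^{(\omega')},\Pi')$ satisfies the compatibility relation, while $\Pi'$ is by definition the matrix of $q$-commutation exponents among the $e_i^{(\omega')}$, hence coincides with $\Pi^{(\omega')}$.

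I would then conclude by induction on the graph distance from $\omega_0$. If the constructed triple at $\omega$ is a quantum seed and $\omega\ \overbar{k}\ \omega'$ is an edge, then $\mu_k$ applied to that quantum seed is again a quantum seed by the general framework of Section \ref{subsub:CA} (the toric frame and quantum torus conditions are inherited through mutation), and the previous paragraph identifies this abstract mutation with $(B^{(\omega')},\Pi^{(\omega')},M^{(\omega')})$. Thus the latter is a quantum seed and is mutation-equivalent to the one at $\omega$. Since $\Exch'_{\mathfrak{sp}_4,\Sigma}$ is connected, this simultaneously establishes both assertions of the theorem.

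The hard part will be the skein computation for the flip. Unlike the triangle case, the intermediate vertices of the mutation sequence in \cref{fig:flip_sequence} are deficiency-one cell decompositions whose elementary webs (\cref{fig:flip_sequence_web}) straddle the biangle $B_E$, so one must track the clasped relations and the Weyl-ordering conventions (\cref{def:simul-crossing}) carefully through all eight mutations to confirm that each product of elementary webs reproduces exactly the two-term right-hand side of \eqref{eq:q-mutation_asymmetric}, with the correct powers of $v^{1/2}$ and the factor $q^{\widehat{d}_k/2}=v^{d_k}$.
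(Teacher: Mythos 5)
Your proposal is correct and follows essentially the same route as the paper: establish the quantum seed property at an all-plus decorated triangulation via \cref{prop:compatibility}, \cref{prop:q-seed condition} and the Ore/fraction results, verify that the quantum exchange relation \eqref{eq:q-mutation_asymmetric} is realized by skein relations for the triangle rotations (via \eqref{eq:skein_rel_rot_1}--\eqref{eq:skein_rel_rot_2}) and for the $2+4+2$ flip sequence, and then propagate the compatibility relation to all vertices of the connected surface subgraph using \cref{lem:compatibility_check}. The only content you defer—the explicit check of the eight exchange relations in the quadrilateral—is exactly the computation the paper carries out, so there is no structural difference.
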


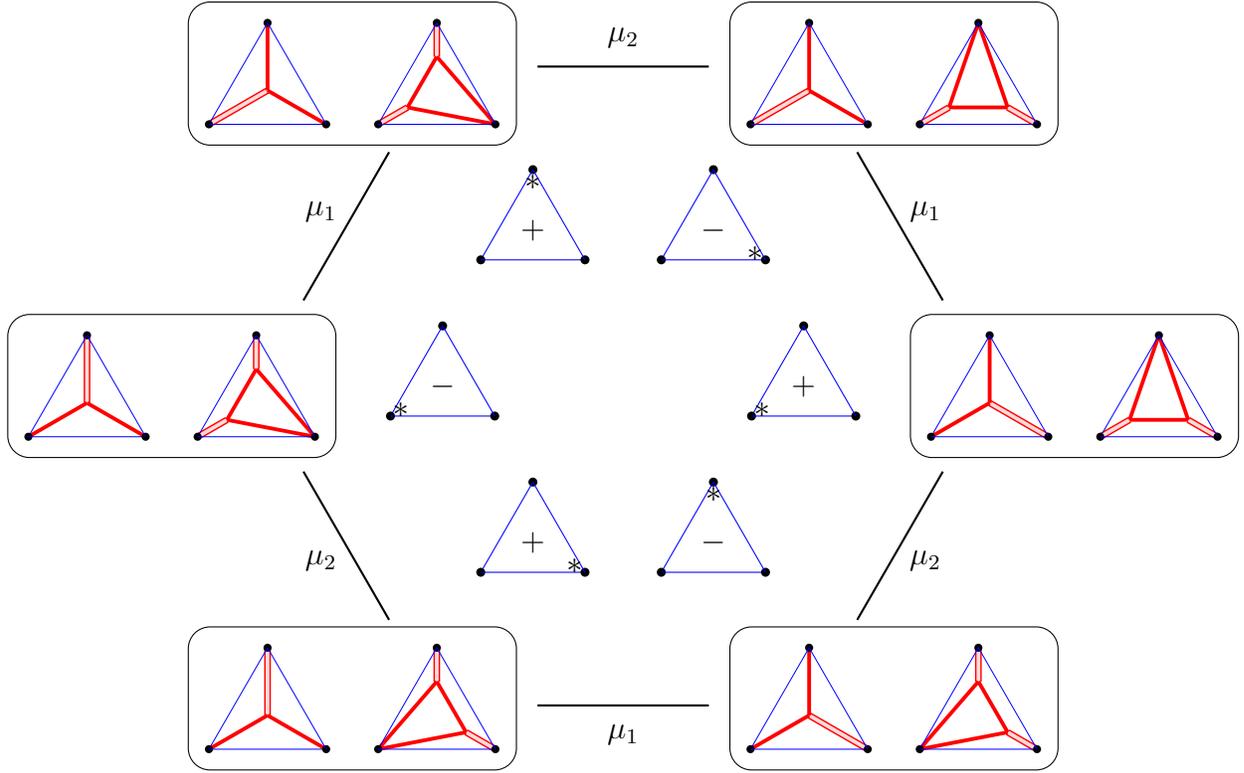
\begin{figure}[ht]
\begin{tikzpicture}[scale=0.8]
\begin{scope}[xshift=3cm]
    \foreach \i in {90,210,330}
    {
    \markedpt{\i:1};
    \draw[blue] (\i:1) -- (\i+120:1);
    }
    \node at (0,0) {$+$};
    \node at (210:0.8) {$\ast$};
\end{scope}
\begin{scope}[xshift=1.5cm, yshift=1.5*1.732cm]
    \foreach \i in {90,210,330}
    {
    \markedpt{\i:1};
    \draw[blue] (\i:1) -- (\i+120:1);
    }
    \node at (0,0) {$-$};
    \node at (330:0.8) {$\ast$};
\end{scope}
\begin{scope}[xshift=-1.5cm, yshift=1.5*1.732cm]
    \foreach \i in {90,210,330}
    {
    \markedpt{\i:1};
    \draw[blue] (\i:1) -- (\i+120:1);
    }
    \node at (0,0) {$+$};
    \node at (90:0.8) {$\ast$};
\end{scope}
\begin{scope}[xshift=-3cm]
    \foreach \i in {90,210,330}
    {
    \markedpt{\i:1};
    \draw[blue] (\i:1) -- (\i+120:1);
    }
    \node at (0,0) {$-$};
    \node at (210:0.8) {$\ast$};
\end{scope}
\begin{scope}[xshift=-1.5cm, yshift=-1.5*1.732cm]
    \foreach \i in {90,210,330}
    {
    \markedpt{\i:1};
    \draw[blue] (\i:1) -- (\i+120:1);
    }
    \node at (0,0) {$+$};
    \node at (330:0.8) {$\ast$};
\end{scope}
\begin{scope}[xshift=1.5cm, yshift=-1.5*1.732cm]
    \foreach \i in {90,210,330}
    {
    \markedpt{\i:1};
    \draw[blue] (\i:1) -- (\i+120:1);
    }
    \node at (0,0) {$-$};
    \node at (90:0.8) {$\ast$};
\end{scope}
\foreach \k in {0,60,120,180,240,300}
\draw[thick] (\k+15:5.5) -- (\k+45:5.5);
\foreach \k in {30,150,270}
\node at (\k:5.8) {$\mu_1$};
\foreach \k in {90,210,-30}
\node at (\k:5.8) {$\mu_2$};
\node[draw,rectangle,rounded corners=0.7em,inner sep=0.6em,scale=0.9] at (7.5,0) {\tikz{
    \begin{scope}[yshift=0.5cm]
	\draw[wline] (0,0) -- (-30:1);
	\draw[webline] (0,0) -- (90:1);
	\draw[webline] (0,0) -- (210:1);
	\foreach \i in {90,210,330}
	{
	\markedpt{\i:1};
    \draw[blue] (\i:1) -- (\i+120:1);
	}
	\end{scope}
	\begin{scope}[xshift=2.5cm,yshift=0.5cm]
	\draw[wline] (-30:0.5) -- (-30:1);
	\draw[wline] (210:0.5) -- (210:1);
	\draw[webline] (-30:0.5) -- (210:0.5);
	\draw[webline] (-30:0.5) -- (90:1); 
	\draw[webline] (210:0.5) -- (90:1);
	\foreach \i in {90,210,330}
	{
	\markedpt{\i:1};
    \draw[blue] (\i:1) -- (\i+120:1);
	}
	\end{scope}
	}};
\node[draw,rectangle,rounded corners=0.7em,inner sep=0.6em,scale=0.9] at (4.5,3*1.732) {\tikz{
	\begin{scope}[yshift=0.5cm]
	\draw[wline] (0,0) -- (210:1);
	\draw[webline] (0,0) -- (90:1);
	\draw[webline] (0,0) -- (-30:1);
	\foreach \i in {90,210,330}
	{
	\markedpt{\i:1};
    \draw[blue] (\i:1) -- (\i+120:1);
	}
	\end{scope}
	\begin{scope}[xshift=2.5cm,yshift=0.5cm]
	\draw[wline] (-30:0.5) -- (-30:1);
	\draw[wline] (210:0.5) -- (210:1);
	\draw[webline] (-30:0.5) -- (210:0.5);
	\draw[webline] (-30:0.5) -- (90:1); 
	\draw[webline] (210:0.5) -- (90:1);
	\foreach \i in {90,210,330}
	{
	\markedpt{\i:1};
    \draw[blue] (\i:1) -- (\i+120:1);
	}
	\end{scope}
	}};
\node[draw,rectangle,rounded corners=0.7em,inner sep=0.6em,scale=0.9] at (-4.5,3*1.732) {\tikz{
	\begin{scope}[yshift=0.5cm]
	\draw[wline] (0,0) -- (210:1);
	\draw[webline] (0,0) -- (90:1);
	\draw[webline] (0,0) -- (-30:1);
	\foreach \i in {90,210,330}
	{
	\markedpt{\i:1};\draw[blue] (\i:1) -- (\i+120:1);
	}
	\end{scope}
	\begin{scope}[xshift=2.5cm,yshift=0.5cm]
	\draw[wline] (90:0.5) -- (90:1);
	\draw[wline] (210:0.5) -- (210:1);
	\draw[webline] (90:0.5) -- (210:0.5);
	\draw[webline] (90:0.5) -- (-30:1); 
	\draw[webline] (210:0.5) -- (-30:1);
	\foreach \i in {90,210,330}
	{
	\markedpt{\i:1};\draw[blue] (\i:1) -- (\i+120:1);
	}
	\end{scope}
	}};
\node[draw,rectangle,rounded corners=0.7em,inner sep=0.6em,scale=0.9] at (-7.5,0) {\tikz{
	\begin{scope}[yshift=0.5cm]
	\draw[wline] (0,0) -- (90:1);
	\draw[webline] (0,0) -- (-30:1);
	\draw[webline] (0,0) -- (210:1);
	\foreach \i in {90,210,330}
	{
	\markedpt{\i:1};
    \draw[blue] (\i:1) -- (\i+120:1);
	}
	\end{scope}
	\begin{scope}[xshift=2.5cm,yshift=0.5cm]
	\draw[wline] (90:0.5) -- (90:1);
	\draw[wline] (210:0.5) -- (210:1);
	\draw[webline] (90:0.5) -- (210:0.5);
	\draw[webline] (90:0.5) -- (-30:1); 
	\draw[webline] (210:0.5) -- (-30:1);
	\foreach \i in {90,210,330}
	{
	\markedpt{\i:1};
    \draw[blue] (\i:1) -- (\i+120:1);
	}
	\end{scope}
	}};
\node[draw,rectangle,rounded corners=0.7em,inner sep=0.6em,scale=0.9] at (-4.5,-3*1.732) {\tikz{
	\begin{scope}[yshift=0.5cm]
	\draw[wline] (0,0) -- (90:1);
	\draw[webline] (0,0) -- (-30:1);
	\draw[webline] (0,0) -- (210:1);
	\foreach \i in {90,210,330}
	{
	\markedpt{\i:1};
    \draw[blue] (\i:1) -- (\i+120:1);
	}
	\end{scope}
	\begin{scope}[xshift=2.5cm,yshift=0.5cm]
	\draw[wline] (-30:0.5) -- (-30:1);
	\draw[wline] (90:0.5) -- (90:1);
	\draw[webline] (-30:0.5) -- (90:0.5);
	\draw[webline] (-30:0.5) -- (210:1); 
	\draw[webline] (90:0.5) -- (210:1);
	\foreach \i in {90,210,330}
	{
	\markedpt{\i:1};
    \draw[blue] (\i:1) -- (\i+120:1);
	}
	\end{scope}
	}};
\node[draw,rectangle,rounded corners=0.7em,inner sep=0.6em,scale=0.9] at (4.5,-3*1.732) {\tikz{
	\begin{scope}[yshift=0.5cm]
	\draw[wline] (0,0) -- (-30:1);
	\draw[webline] (0,0) -- (90:1);
	\draw[webline] (0,0) -- (210:1);
	\foreach \i in {90,210,330}
	{
	\markedpt{\i:1};
    \draw[blue] (\i:1) -- (\i+120:1);
	}
	\end{scope}
	\begin{scope}[xshift=2.5cm,yshift=0.5cm]
	\draw[wline] (-30:0.5) -- (-30:1);
	\draw[wline] (90:0.5) -- (90:1);
	\draw[webline] (-30:0.5) -- (90:0.5);
	\draw[webline] (-30:0.5) -- (210:1); 
	\draw[webline] (90:0.5) -- (210:1);
	\foreach \i in {90,210,330}
	{
	\markedpt{\i:1};
    \draw[blue] (\i:1) -- (\i+120:1);
	}
	\end{scope}
	}};
\end{tikzpicture}
    \caption{The six web clusters for a triangle $T$. Here $\mu_d$ denotes the mutation at the unique unfrozen vertex with weight $s \in \{1,2\}$. The frozen variables (boundary webs) are omitted. Compare with \cref{fig:exch_triangle}.}
    \label{fig:exch_tri_web}
\end{figure}

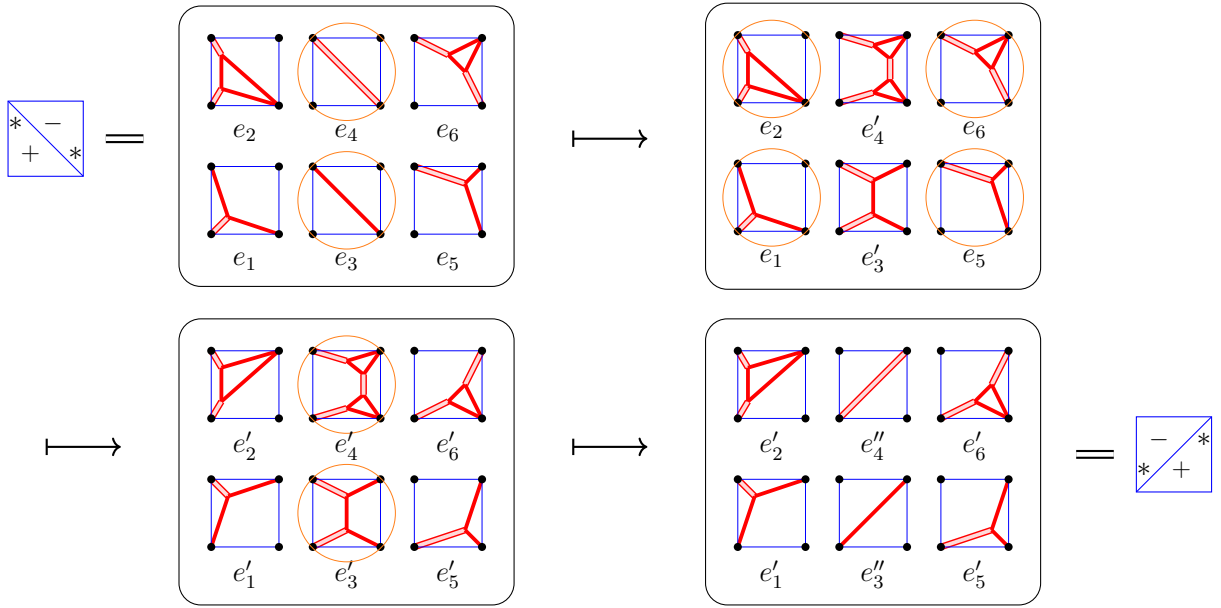
\begin{figure}[ht]
\begin{tikzpicture}
\node[draw,rectangle,rounded corners=0.7em,inner sep=0.6em,scale=0.9] at (2,1.4) {\tikz{
\draw[white] (0.5,0.5) circle(0.72cm);
\draw[wline] (0,0) -- (0.25,0.25);
\draw[webline] (1,0) --(0.25,0.25);
\draw[webline] (0,1) --(0.25,0.25);
\draw[blue,rounded corners=0] (0,0) -- (1,0) -- (1,1) -- (0,1)--cycle;
\node[inner sep=0] at (0.5,-0.4) {$e_1$};
\foreach \i in {0,1} \foreach \j in {0,1} \markedpt{\i,\j};
\begin{scope}[xshift=1.5cm]
\draw[webline] (1,0) -- (0,1);
\draw[blue,rounded corners=0] (0,0) -- (1,0) -- (1,1) -- (0,1)--cycle;
\foreach \i in {0,1} \foreach \j in {0,1} \markedpt{\i,\j};
\draw[myorange] (0.5,0.5) circle(0.72cm);
\node[inner sep=0] at (0.5,-0.4) {$e_3$};
\end{scope}
\begin{scope}[xshift=3cm]
\draw[wline] (0,1) -- (0.75,0.75);
\draw[webline] (1,0) -- (0.75,0.75);
\draw[webline] (1,1) -- (0.75,0.75);
\draw[blue,rounded corners=0] (0,0) -- (1,0) -- (1,1) -- (0,1)--cycle;
\foreach \i in {0,1} \foreach \j in {0,1} \markedpt{\i,\j};
\node[inner sep=0] at (0.5,-0.4) {$e_5$};
\end{scope}
\begin{scope}[yshift=1.9cm]
\draw[wline] (0,0) -- (0.15,0.25);
\draw[wline] (0,1) -- (0.15,0.75);
\draw[webline] (0.15,0.75) --(0.15,0.25);
\draw[webline] (1,0) --(0.15,0.25);
\draw[webline] (1,0) --(0.15,0.75);
\draw[blue,rounded corners=0] (0,0) -- (1,0) -- (1,1) -- (0,1)--cycle;
\foreach \i in {0,1} \foreach \j in {0,1} \markedpt{\i,\j};
\node[inner sep=0] at (0.5,-0.4) {$e_2$};
\end{scope}
\begin{scope}[xshift=1.5cm,yshift=1.9cm]
\draw[wline] (1,0) -- (0,1);
\draw[blue,rounded corners=0] (0,0) -- (1,0) -- (1,1) -- (0,1)--cycle;
\foreach \i in {0,1} \foreach \j in {0,1} \markedpt{\i,\j};
\draw[myorange] (0.5,0.5) circle(0.72cm);
\node[inner sep=0] at (0.5,-0.4) {$e_4$};
\end{scope}
\begin{scope}[xshift=3cm,yshift=1.9cm]
\draw[white] (0.5,0.5) circle(0.72cm);
\draw[wline] (0,1) -- (0.5,0.75);
\draw[wline] (1,0) -- (0.75,0.5);
\draw[webline] (0.5,0.75) -- (0.75,0.5);
\draw[webline] (1,1) -- (0.75,0.5);
\draw[webline] (0.5,0.75) -- (1,1);
\draw[blue,rounded corners=0] (0,0) -- (1,0) -- (1,1) -- (0,1)--cycle;
\foreach \i in {0,1} \foreach \j in {0,1} \markedpt{\i,\j};
\node[inner sep=0] at (0.5,-0.4) {$e_6$};
\end{scope}
}};
\draw[thick,|->] (5,1.5) -- (6,1.5);
\draw[blue] (-2.5,1) --++(1,0) --++(0,1)--++(-1,0) --cycle;
\draw[blue] (-2.5,2) --++(1,-1);
\node[scale=0.9] at (-2.4,1.7) {$\ast$};
\node[scale=0.9] at (-1.6,1.3) {$\ast$};
\node[scale=0.8] at (-2.2,1.3) {$+$};
\node[scale=0.8] at (-1.9,1.7) {$-$};
\draw[thick,double distance=0.15em] (-1.2,1.5) -- (-0.7,1.5);

\node[draw,rectangle,rounded corners=0.7em,inner sep=0.6em,scale=0.9] at (9,1.4) {\tikz{
\draw[wline] (0,0) -- (0.25,0.25);
\draw[webline] (1,0) --(0.25,0.25);
\draw[webline] (0,1) --(0.25,0.25);
\draw[blue,rounded corners=0] (0,0) -- (1,0) -- (1,1) -- (0,1)--cycle;
\foreach \i in {0,1} \foreach \j in {0,1} \markedpt{\i,\j};
\draw[myorange] (0.5,0.5) circle(0.72cm);
\node[inner sep=0] at (0.5,-0.4) {$e_1$};
\begin{scope}[xshift=1.5cm]
\draw[wline] (0,1) -- (0.5,0.75);
\draw[wline] (0,0) -- (0.5,0.25);
\draw[webline] (1,0) -- (0.5,0.25);
\draw[webline] (1,1) -- (0.5,0.75);
\draw[webline] (0.5,0.25) -- (0.5,0.75);
\draw[blue,rounded corners=0] (0,0) -- (1,0) -- (1,1) -- (0,1)--cycle;
\foreach \i in {0,1} \foreach \j in {0,1} \markedpt{\i,\j};
\node[inner sep=0] at (0.5,-0.4) {$e'_3$};
\end{scope}
\begin{scope}[xshift=3cm]
\draw[wline] (0,1) -- (0.75,0.75);
\draw[webline] (1,0) -- (0.75,0.75);
\draw[webline] (1,1) -- (0.75,0.75);
\draw[blue,rounded corners=0] (0,0) -- (1,0) -- (1,1) -- (0,1)--cycle;
\foreach \i in {0,1} \foreach \j in {0,1} \markedpt{\i,\j};
\draw[myorange] (0.5,0.5) circle(0.72cm);
\node[inner sep=0] at (0.5,-0.4) {$e_5$};
\end{scope}
\begin{scope}[yshift=1.9cm]
\draw[wline] (0,0) -- (0.15,0.25);
\draw[wline] (0,1) -- (0.15,0.75);
\draw[webline] (0.15,0.75) --(0.15,0.25);
\draw[webline] (1,0) --(0.15,0.25);
\draw[webline] (1,0) --(0.15,0.75);
\draw[blue,rounded corners=0] (0,0) -- (1,0) -- (1,1) -- (0,1)--cycle;
\foreach \i in {0,1} \foreach \j in {0,1} \markedpt{\i,\j};
\draw[myorange] (0.5,0.5) circle(0.72cm);
\node[inner sep=0] at (0.5,-0.4) {$e_2$};
\end{scope}
\begin{scope}[xshift=1.5cm,yshift=1.9cm]
\draw[wline] (0,0) -- (0.5,0.15);
\draw[wline] (0,1) -- (0.5,0.85);
\draw[webline] (0.5,0.85) -- (1,1);
\draw[webline] (0.5,0.85) -- (0.75,0.65);
\draw[webline] (0.75,0.65) -- (1,1);
\draw[webline] (0.5,0.15) -- (1,0);
\draw[webline] (0.5,0.15) -- (0.75,0.35);
\draw[webline] (0.75,0.35) -- (1,0);
\draw[wline] (0.75,0.35) -- (0.75,0.65);
\draw[blue,rounded corners=0] (0,0) -- (1,0) -- (1,1) -- (0,1)--cycle;
\foreach \i in {0,1} \foreach \j in {0,1} \markedpt{\i,\j};
\node[inner sep=0] at (0.5,-0.4) {$e'_4$};
\end{scope}
\begin{scope}[xshift=3cm,yshift=1.9cm]
\draw[wline] (0,1) -- (0.5,0.75);
\draw[wline] (1,0) -- (0.75,0.5);
\draw[webline] (0.5,0.75) -- (0.75,0.5);
\draw[webline] (1,1) -- (0.75,0.5);
\draw[webline] (0.5,0.75) -- (1,1);
\draw[blue,rounded corners=0] (0,0) -- (1,0) -- (1,1) -- (0,1)--cycle;
\foreach \i in {0,1} \foreach \j in {0,1} \markedpt{\i,\j};
\draw[myorange] (0.5,0.5) circle(0.72cm);
\node[inner sep=0] at (0.5,-0.4) {$e_6$};
\end{scope}
}};

\node[draw,rectangle,rounded corners=0.7em,inner sep=0.6em,scale=0.9] at (2,-2.8) {\tikz{
\draw[white] (0.5,0.5) circle(0.72cm);
\draw[wline] (0,1) -- (0.25,0.75);
\draw[webline] (1,1) --(0.25,0.75);
\draw[webline] (0,0) --(0.25,0.75);
\draw[blue,rounded corners=0] (0,0) -- (1,0) -- (1,1) -- (0,1)--cycle;
\foreach \i in {0,1} \foreach \j in {0,1} \markedpt{\i,\j};
\node[inner sep=0] at (0.5,-0.4) {$e'_1$};
\begin{scope}[xshift=1.5cm]
\draw[wline] (0,1) -- (0.5,0.75);
\draw[wline] (0,0) -- (0.5,0.25);
\draw[webline] (1,0) -- (0.5,0.25);
\draw[webline] (1,1) -- (0.5,0.75);
\draw[webline] (0.5,0.25) -- (0.5,0.75);
\draw[blue,rounded corners=0] (0,0) -- (1,0) -- (1,1) -- (0,1)--cycle;
\foreach \i in {0,1} \foreach \j in {0,1} \markedpt{\i,\j};
\node[inner sep=0] at (0.5,-0.4) {$e'_3$};
\draw[myorange] (0.5,0.5) circle(0.72cm);
\end{scope}
\begin{scope}[xshift=3cm]
\draw[wline] (0,0) -- (0.75,0.25);
\draw[webline] (1,0) -- (0.75,0.25);
\draw[webline] (1,1) -- (0.75,0.25);
\draw[blue,rounded corners=0] (0,0) -- (1,0) -- (1,1) -- (0,1)--cycle;
\foreach \i in {0,1} \foreach \j in {0,1} \markedpt{\i,\j};
\node[inner sep=0] at (0.5,-0.4) {$e'_5$};
\end{scope}
\begin{scope}[yshift=1.9cm]
\draw[wline] (0,0) -- (0.15,0.25);
\draw[wline] (0,1) -- (0.15,0.75);
\draw[webline] (0.15,0.75) --(0.15,0.25);
\draw[webline] (1,1) --(0.15,0.25);
\draw[webline] (1,1) --(0.15,0.75);
\draw[blue,rounded corners=0] (0,0) -- (1,0) -- (1,1) -- (0,1)--cycle;
\foreach \i in {0,1} \foreach \j in {0,1} \markedpt{\i,\j};
\node[inner sep=0] at (0.5,-0.4) {$e'_2$};
\end{scope}
\begin{scope}[xshift=1.5cm,yshift=1.9cm]
\draw[wline] (0,0) -- (0.5,0.15);
\draw[wline] (0,1) -- (0.5,0.85);
\draw[webline] (0.5,0.85) -- (1,1);
\draw[webline] (0.5,0.85) -- (0.75,0.65);
\draw[webline] (0.75,0.65) -- (1,1);
\draw[webline] (0.5,0.15) -- (1,0);
\draw[webline] (0.5,0.15) -- (0.75,0.35);
\draw[webline] (0.75,0.35) -- (1,0);
\draw[wline] (0.75,0.35) -- (0.75,0.65);
\draw[blue,rounded corners=0] (0,0) -- (1,0) -- (1,1) -- (0,1)--cycle;
\foreach \i in {0,1} \foreach \j in {0,1} \markedpt{\i,\j};
\draw[myorange] (0.5,0.5) circle(0.72cm);
\node[inner sep=0] at (0.5,-0.4) {$e'_4$};
\end{scope}
\begin{scope}[xshift=3cm,yshift=1.9cm]
\draw[white] (0.5,0.5) circle(0.72cm);
\draw[wline] (1,1) -- (0.75,0.5);
\draw[wline] (0,0) -- (0.5,0.25);
\draw[webline] (0.5,0.25) -- (0.75,0.5);
\draw[webline] (1,0) -- (0.75,0.5);
\draw[webline] (0.5,0.25) -- (1,0);
\draw[blue,rounded corners=0] (0,0) -- (1,0) -- (1,1) -- (0,1)--cycle;
\foreach \i in {0,1} \foreach \j in {0,1} \markedpt{\i,\j};
\node[inner sep=0] at (0.5,-0.4) {$e'_6$};
\end{scope}
}};

\draw[thick,|->] (5,-2.6) -- (6,-2.6);
\draw[thick,|->] (-2,-2.6) -- (-1,-2.6);

\node[draw,rectangle,rounded corners=0.7em,inner sep=0.6em,scale=0.9] at (9,-2.8) {\tikz{
\draw[white] (0.5,0.5) circle(0.72cm);
\draw[wline] (0,1) -- (0.25,0.75);
\draw[webline] (1,1) --(0.25,0.75);
\draw[webline] (0,0) --(0.25,0.75);
\draw[blue,rounded corners=0] (0,0) -- (1,0) -- (1,1) -- (0,1)--cycle;
\foreach \i in {0,1} \foreach \j in {0,1} \markedpt{\i,\j};
\node[inner sep=0] at (0.5,-0.4) {$e'_1$};
\begin{scope}[xshift=1.5cm]
\draw[webline] (0,0) -- (1,1);
\draw[blue,rounded corners=0] (0,0) -- (1,0) -- (1,1) -- (0,1)--cycle;
\foreach \i in {0,1} \foreach \j in {0,1} \markedpt{\i,\j};
\node[inner sep=0] at (0.5,-0.4) {$e''_3$};
\end{scope}
\begin{scope}[xshift=3cm]
\draw[wline] (0,0) -- (0.75,0.25);
\draw[webline] (1,0) -- (0.75,0.25);
\draw[webline] (1,1) -- (0.75,0.25);
\draw[blue,rounded corners=0] (0,0) -- (1,0) -- (1,1) -- (0,1)--cycle;
\foreach \i in {0,1} \foreach \j in {0,1} \markedpt{\i,\j};
\node[inner sep=0] at (0.5,-0.4) {$e'_5$};
\end{scope}
\begin{scope}[yshift=1.9cm]
\draw[wline] (0,0) -- (0.15,0.25);
\draw[wline] (0,1) -- (0.15,0.75);
\draw[webline] (0.15,0.75) --(0.15,0.25);
\draw[webline] (1,1) --(0.15,0.25);
\draw[webline] (1,1) --(0.15,0.75);
\draw[blue,rounded corners=0] (0,0) -- (1,0) -- (1,1) -- (0,1)--cycle;
\foreach \i in {0,1} \foreach \j in {0,1} \markedpt{\i,\j};
\node[inner sep=0] at (0.5,-0.4) {$e'_2$};
\end{scope}
\begin{scope}[xshift=1.5cm,yshift=1.9cm]
\draw[wline] (0,0) -- (1,1);
\draw[blue,rounded corners=0] (0,0) -- (1,0) -- (1,1) -- (0,1)--cycle;
\foreach \i in {0,1} \foreach \j in {0,1} \markedpt{\i,\j};
\node[inner sep=0] at (0.5,-0.4) {$e''_4$};
\end{scope}
\begin{scope}[xshift=3cm,yshift=1.9cm]
\draw[white] (0.5,0.5) circle(0.72cm);
\draw[wline] (1,1) -- (0.75,0.5);
\draw[wline] (0,0) -- (0.5,0.25);
\draw[webline] (0.5,0.25) -- (0.75,0.5);
\draw[webline] (1,0) -- (0.75,0.5);
\draw[webline] (0.5,0.25) -- (1,0);
\draw[blue,rounded corners=0] (0,0) -- (1,0) -- (1,1) -- (0,1)--cycle;
\foreach \i in {0,1} \foreach \j in {0,1} \markedpt{\i,\j};
\node[inner sep=0] at (0.5,-0.4) {$e'_6$};
\end{scope}
}};
\begin{scope}[xshift=7cm,yshift=-4.2cm]
\draw[blue] (5.5,1) --++(1,0) --++(0,1)--++(-1,0) --cycle;
\draw[blue] (6.5,2) --++(-1,-1);
\node[scale=0.9] at (5.6,1.3) {$\ast$};
\node[scale=0.9] at (6.4,1.7) {$\ast$};
\node[scale=0.8] at (5.8,1.7) {$-$};
\node[scale=0.8] at (6.1,1.3) {$+$};
\draw[thick,double distance=0.15em] (5.2,1.5) -- (4.7,1.5);
\end{scope}
\end{tikzpicture}
    \caption{The web clusters along a flip. Compare with \cref{fig:flip_sequence}. Here in each (web) cluster, the six elementary webs are assigned to the vertices of the weighted quiver at the corresponding position.}
    \label{fig:flip_sequence_web}
\end{figure}

\begin{proof}
By \cref{prop:compatibility} and \cref{prop:q-seed condition}, the triple $(B^{\bD},\Pi^{\bD},M^{\bD})$ associated with a decorated triangulation $\omega=\bD$ is a quantum seed. Here the condition $\mathrm{Frac}T_\bD=\mathrm{Frac} \Skein{\Sigma}^q$ follows from \cref{cor:embed_into_frac}. 
We have also seen that the exchange matrices $B^{(\omega)}$ are related to each other by matrix mutations. 

We are going to first show that the toric frames $M^{(\omega)}$ are related to each other by the quantum exchange relations \eqref{eq:q-mutation}. Since the surface subgraph is connected, we only need to consider the mutation sequences lying in the exchange graph $\Exch_{\sfs(\mathfrak{sp}_4,T)}$ for a triangle $T$ and those giving rise to a flip of triangulation. See \cref{fig:exch_tri_web,fig:flip_sequence_web}, respectively. Therefore the computation reduces to the triangle and quadrilateral cases:
\begin{description}
\item[Triangle case] 
Let us consider the web cluster and the quiver shown at the top left in \cref{fig:exch_triangle,fig:exch_tri_web}, respectively, and the mutations $\mu_1,\mu_2$ from there. We also use the labeling in \cref{fig:labeling_triangle_case}. 
From the quantum exchange relation \eqref{eq:q-mutation_asymmetric}, we expect the relations
\begin{align*}
    e_1 e'_1 &= q^{-1/2}([e_2 e_3] + q [e_4 e_5 e_7]), \\
    e_2 e'_2 &= q^{-1} ([e_4 e_6 e_7^2] + q^2[e_1^2 e_8])
\end{align*}
to hold inside the skein algebra $\Skein{\Sigma}^q$.
Indeed, the first (resp.~second) one is exactly the skein relation \eqref{eq:skein_rel_rot_1} being rotated by $120^\circ$ (resp.~\eqref{eq:skein_rel_rot_2} with mirror reflection). 
By symmetry, it implies that all of the six quantum clusters on $T$ are mutation-equivalent to each other. 

\item[Quadrilateral case (a flip)] Let us use the labeling in \cref{fig:labeling_square_case}. Then the mutation sequence under consideration is written as $(\mu_3\mu_4)(\mu_1\mu_2\mu_5\mu_6)(\mu_3\mu_4)$. Here each of the parenthesized subsequence does not depend on the order of mutations. In the first parenthesized step $\mu_3\mu_4$, we expect:
\begin{align*}
    e_3 e'_3 &= q^0 ([e_1 e_5] + q [e_2 e_{11}]), \\
    e_4 e'_4 &= q^0 ([e_2 e_6] + q^2 [e_5^2 e_{10}])
\end{align*}
inside the skein algebra $\Skein{\Sigma}^q$. 
In the second parenthesized step $\mu_1\mu_2\mu_5\mu_6$, 
\begin{align*}
    e_1 e'_1 &= q^0 ([e'_3 e_7] + q^1 [e_8 e_9 e_{10}]), \\
    e_2 e'_2 &= q^0 ([e'_4 e_8] + q^2 (e'_3)^2), \\
    e_5 e'_5 &= q^0 ([e'_3 e_{13}] + q e'_4), \\
    e_6 e'_6 &= q^0 ([e'_4 e_{14}] + q^2 [e_{10} e_{12} e_{13}^2]).
\end{align*}
Here we used $e_1 e'_3=[e_1 e'_3]$, $e_2 e'_4 =[e_2 e'_4]$, $e_5 e'_3 =[e_5 e'_3]$, $e_6 e'_4 = [e_6 e'_4]$ to obtain $q^0$. It is not hard to verify that these relations are indeed satisfied by the elementary webs shown in \cref{fig:flip_sequence_web}. 
The check for the third step $\mu_3\mu_4$ follows from the first step by turning the topside down. 
\end{description}
Thus the toric frames $M^{(\omega)}$ are related to each other by the quantum exchange relations. Then it follows from \cref{prop:q-seed condition,lem:compatibility_check} that the pair $(B^{(\omega)},\Pi^{(\omega)})$ satisfies the compatibility relation for all $\omega \in \Exch'_{\mathfrak{sp}_4,\Sigma}$. The assertion is proved. 
\end{proof}

It follows from the above theorem that there exists a canonical mutation class $\sfs_q(\mathfrak{sp}_4,\Sigma)$ containing the quantum seeds $(B^{(\omega)},\Pi^{(\omega)},M^{(\omega)})$ associated with the vertices $\omega \in \Exch'_{\mathfrak{sp}_4,\Sigma}$, and we get the following algebras over $\bZ_q$:
\begin{align*}
    \CA^q_{\mathfrak{sp}_4,\Sigma} \subset \UCA^q_{\mathfrak{sp}_4,\Sigma} \subset \mathrm{Frac}\Skein{\Sigma}^q.
\end{align*}
Here we write $\CA^q_{\mathfrak{sp}_4,\Sigma}:=\CA_{\sfs_q(\mathfrak{sp}_4,\Sigma)}$ and $\UCA^q_{\mathfrak{sp}_4,\Sigma}:=\UCA_{\sfs_q(\mathfrak{sp}_4,\Sigma)}$ by simplifying the notation. 
By construction, we already know that the cluster variables associated with the vertices $\omega \in \Exch'_{\mathfrak{sp}_4,\Sigma}$ in the surface subgraph are realized in $\Skein{\Sigma}^{\bZ_q}[\partial^{-1}]$. 


\subsection{Inclusion \texorpdfstring{$\Skein{\Sigma}^{\bZ_q}[\partial^{-1}] \subseteq \CA^q_{\mathfrak{sp}_4,\Sigma}$}{S(sp4,S) in CA(sp4,S)} via the sticking trick}\label{subsec:S in A}

\begin{prop}\label{S=A_triangle}
When $\Sigma=T$ is a triangle, we have 
\begin{align*}
    \Skein{T}^{\bZ_q}[\partial^{-1}]=\CA^q_{\mathfrak{sp}_4,T}=\UCA^q_{\mathfrak{sp}_4,T}.
\end{align*}
\end{prop}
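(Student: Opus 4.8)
The plan is to prove the two equalities separately, after first making the fraction field $\mathrm{Frac}\,\Skein{T}^q$ in which $\CA^q_{\mathfrak{sp}_4,T}$ lives available. For the latter I would observe that $\Skein{T}$ is a \emph{quantum affine space}: by \cref{prop:Bweb-T} the basis webs $\Bweb{T}$ are parametrized by $(k_1,k_2,k_3,l_1,l_2,l_3,n_1,n_2)\in\bZ_{\geq 0}^8$, and the decomposition in the proof of \cref{thm:Eweb-T} identifies each basis web with an ordered monomial in the eight members of a fixed web cluster $\cC_T\in\Cweb{T}$ (the six boundary arcs of types $1,2$ together with the two nonfrozen elementary webs $x,y$). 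Since the members of a web cluster pairwise $q$-commute (\cref{cor:Cweb-T}) and boundary webs $q$-commute with everything, $\Skein{T}^{\bZ_q}$ is the $q$-commutative polynomial ring on these eight generators; in particular it is a Noetherian domain, its localization at the boundary webs is a quantum torus, and $\Skein{T}^{\bZ_q}[\partial^{-1}]$ and $\mathrm{Frac}\,\Skein{T}^q$ are thereby defined (this also yields \cref{prop:Ore_triangle}).

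Next I would prove $\Skein{T}^{\bZ_q}[\partial^{-1}]=\CA^q_{\mathfrak{sp}_4,T}$ by a double inclusion. Because $T$ carries no interior edges, its exchange graph is the finite hexagon $\Exch_{\sfs(\mathfrak{sp}_4,T)}$ of \cref{fig:exch_triangle}; the six decorated triangulations already close up into this hexagon under $\mu_1,\mu_2$, so it coincides with the surface subgraph and exhausts the mutation class. Consequently every cluster variable of $\CA^q_{\mathfrak{sp}_4,T}$ is one of the six elementary webs realized in \cref{fig:exch_tri_web} (via \cref{thm:q-mutation-equivalence}), and every frozen variable is a boundary web. All of these, and the inverses of boundary webs, lie in $\Skein{T}^{\bZ_q}[\partial^{-1}]$, which is a $\bZ_q$-subalgebra by \cref{lem:crossroads-in-Zv}; this gives $\CA^q_{\mathfrak{sp}_4,T}\subseteq\Skein{T}^{\bZ_q}[\partial^{-1}]$. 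Conversely, \cref{thm:Eweb-T} asserts that the six elementary webs together with $\partial_T$ generate $\bZ_q\Bweb{T}=\Skein{T}^{\bZ_q}$ as a $\bZ_q$-algebra; since each such generator is a cluster variable or a frozen variable, and the inverses of the frozen ones are adjoined in $\CA^q_{\mathfrak{sp}_4,T}$, we obtain $\Skein{T}^{\bZ_q}[\partial^{-1}]\subseteq\CA^q_{\mathfrak{sp}_4,T}$ (alternatively one may invoke \cref{thm:generator-Zv-form} to reduce to $\SimpWil{T}$).

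Finally, for $\CA^q_{\mathfrak{sp}_4,T}=\UCA^q_{\mathfrak{sp}_4,T}$ the inclusion $\subseteq$ is the quantum Laurent phenomenon (\cref{thm:q-Laurent}). For the reverse inclusion I would use that the principal (nonfrozen) part of each quantum seed on $T$ is the rank-two, acyclic exchange matrix of Cartan type $C_2$ carried by the single arrow between the two interior vertices. By \cref{thm:q-Laurent}, $\UCA^q_{\mathfrak{sp}_4,T}$ equals the upper bound $T_{(\bD)}\cap T_{(\mu_1\bD)}\cap T_{(\mu_2\bD)}$ at any seed $\bD$, a finite intersection of three explicit quantum tori, and the standard acyclic/rank-two argument of Berenstein--Zelevinsky shows that this intersection is generated by the cluster variables and frozen inverses, hence equals $\CA^q_{\mathfrak{sp}_4,T}$.

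The main obstacle is this last step: establishing $\UCA^q\subseteq\CA^q$ is the only non-formal point, the other inclusions being bookkeeping built on \cref{thm:Eweb-T} and the construction of the web clusters. Concretely, one must verify that an element Laurent in all three adjacent clusters has non-negative exponents in the two nonfrozen elementary webs; this is exactly the rank-two upper-bound computation, which I would carry out directly from the three quantum exchange relations recorded in the proof of \cref{thm:q-mutation-equivalence} for the triangle, or else cite the acyclic quantum upper-bound theorem. A secondary point to keep straight is the logical order: the domain property must be derived from the quantum affine space structure \emph{before} invoking $\mathrm{Frac}\,\Skein{T}^q$, so as not to make the argument circular with \cref{prop:Ore_triangle}.
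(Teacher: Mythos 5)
Your treatment of the two equalities is essentially the paper's: identify the six elementary webs of \cref{thm:Eweb-T} with the six quantum cluster variables realized in \cref{fig:exch_tri_web}, use the generation statement of \cref{thm:Eweb-T} (equivalently \cref{thm:generator-Zv-form} via $\SimpWil{T}$) to get $\Skein{T}^{\bZ_q}[\partial^{-1}]=\CA^q_{\mathfrak{sp}_4,T}$, and invoke acyclicity of the exchange type for $\CA^q_{\mathfrak{sp}_4,T}=\UCA^q_{\mathfrak{sp}_4,T}$ (the paper cites \cite[Proposition 8.17]{Muller16} where you cite the Berenstein--Zelevinsky acyclic upper-bound argument; either suffices, and redoing the rank-two computation by hand is unnecessary).

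One claim in your preliminary step is false, however: $\Skein{T}^{\bZ_q}$ is \emph{not} the $q$-commutative polynomial ring on the eight members of a single web cluster. \cref{prop:Bweb-T} describes $\Bweb{T}$ only up to rotations and reflections, so the decomposition in the proof of \cref{thm:Eweb-T} expresses each basis web as a monomial in the boundary arcs together with \emph{one of the six rotated/reflected pairs} $(x,y)$; the basis webs therefore form six overlapping cones of cluster monomials, one for each web cluster of \cref{cor:Cweb-T}, exactly as expected for a finite-type cluster structure. A basis web built from a rotated copy of $x$ or $y$ is a cluster variable of an adjacent seed, not a monomial in your fixed cluster, and the localization $\Skein{T}^{\bZ_q}[\partial^{-1}]$ is not a quantum torus. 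This does not damage your double-inclusion argument, which only uses the (correct) generation statement, but it does invalidate your proposed route to the domain and Ore properties. The repair is the mechanism the paper uses elsewhere: by \cref{thm:Cweb-exp}, every element of $\Skein{T}$ becomes, after multiplication by a suitable monomial in a fixed web cluster, a polynomial in that cluster, so $\Skein{T}$ embeds into the quantum torus generated by the cluster and is a domain, and the Ore property follows as in the proof of \cref{thm:Ore_general}; alternatively, the paper deduces \cref{prop:Ore_triangle} a posteriori from the isomorphism of $\Skein{T}[\partial^{-1}]$ with the (abstractly defined) quantum cluster algebra established in the present proposition.
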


\begin{proof}
In this case, the six elementary webs in $\Skein{T}^q$ listed in \cref{thm:Eweb-T} exactly correspond to the six quantum cluster variables in $\CA^q_{\mathfrak{sp}_4,T}$. Moreover, observe that all the six elementary webs are simple Wilson lines. 
Hence 
\begin{align*}
    \CA^q_{\mathfrak{sp}_4,T} = \langle \Eweb{T} \rangle_{\bZ_q}[\partial^{-1}] = \langle \Desc{T} \rangle_{\bZ_q}[\partial^{-1}] =\Skein{T}^{\bZ_q}[\partial^{-1}].
\end{align*}
Since the mutation class $\sfs_q(\mathfrak{sp}_4,T)$ is of acyclic exchange type, we have $\CA^q_{\mathfrak{sp}_4,T} = \UCA^q_{\mathfrak{sp}_4,T}$ (\cite[Proposition 8.17]{Muller16}). Thus the assertion is proved. 
\end{proof}

\begin{thm}\label{thm:S in A}
For a connected unpunctured marked surface $\Sigma$ with at least two special points, we have an inclusion
\begin{align*}
    \Skein{\Sigma}^{\bZ_q}[\partial^{-1}]\subset \CA^q_{\mathfrak{sp}_4,\Sigma}.
\end{align*}
\end{thm}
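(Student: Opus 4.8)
The plan is to reduce the statement to the generators furnished by \cref{thm:generator-Zv-form} and then to recognize each such generator inside $\CA^q_{\mathfrak{sp}_4,\Sigma}$. By \cref{thm:generator-Zv-form}, the algebra $\Skein{\Sigma}^{\bZ_q}[\partial^{-1}]$ is generated over $\bZ_q$ by $\SimpWil{\Sigma}$ together with the boundary webs $\partial_\Sigma$ and their inverses $\partial_\Sigma^{-1}$. The boundary webs are precisely the frozen cluster variables of the mutation class $\sfs_q(\mathfrak{sp}_4,\Sigma)$, namely the type-$s$ arcs $e^{\bD}_{i_s(E)}$ along boundary intervals $E$; hence both $\partial_\Sigma$ and $\partial_\Sigma^{-1}$ lie in $\CA^q_{\mathfrak{sp}_4,\Sigma}$ by the very definition of the quantum cluster algebra, in which the inverses of frozen variables are adjoined. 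Consequently it suffices to prove the inclusion $\SimpWil{\Sigma}\subset \CA^q_{\mathfrak{sp}_4,\Sigma}$.

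So I would fix a simple Wilson line $W$, represented by a simple type-$1$ arc $\gamma$ joining two stated ends on distinct boundary intervals $E_1,E_2$, and realize $W$ as a cluster variable up to boundary webs. First I would perform a topological construction: push $\gamma$ off the boundary to an ideal arc $\alpha$ connecting a special point $p_1$ of $E_1$ to a special point $p_2$ of $E_2$. Since $E_1\neq E_2$, the arc $\alpha$ is not boundary-parallel and is therefore a genuine interior edge; using that $\Sigma$ is triangulable (guaranteed by the standing assumptions once it has at least two special points), I would complete $\alpha$ to an ideal triangulation $\Delta$ and pick any decorated triangulation $\bD$ over $\Delta$. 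Then the type-$1$ edge web $a:=e^{\bD}_{i_1(\alpha)}$ along $\alpha$ is, by the construction of the web cluster $\cC_{\bD}$ recalled before \cref{lem:comparison_grading}, the elementary web attached to the unfrozen vertex $i_1(\alpha)$; as $\omega=\bD$ is a vertex of $\Exch'_{\mathfrak{sp}_4,\Sigma}$, \cref{thm:q-mutation-equivalence} identifies $a=M^{\bD}(\sff_{i_1(\alpha)})$ with a quantum cluster variable, so $a\in\CA^q_{\mathfrak{sp}_4,\Sigma}$.

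The heart of the argument is then the comparison of $W$ with $a$. Because $\gamma$ runs parallel to $\alpha$, the only discrepancy between $W$ and $a$ occurs in collar neighborhoods of $E_1$ and $E_2$, where $W$ carries its stated end (a trivalent vertex with a type-$1$ and a type-$2$ leg to the two special points of $E_i$, or a bare type-$1$ half-edge). Applying the clasped skein relations of \cref{def:bdry-skeinrel}---in particular the boundary crossing and triangle relations \eqref{rel:bdry-cross} and \eqref{rel:bdry-tri}---together with the arborization relations of \cref{lem:arborization}, I would resolve each stated end into the single type-$1$ half-edge terminating at the appropriate special point, at the cost of a monomial $b$ in the boundary webs incident to $E_1$ and $E_2$ and a power of $q^{1/2}$. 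This is the skein-theoretic incarnation of \cite[Proposition 4.12]{IOS}, compatible with \cref{rem:Wilson_line}. The outcome is an identity $W=q^{k/2}\,b\cdot a$ in $\Skein{\Sigma}^q$ with $b$ a Laurent monomial in $\partial_\Sigma$; since $a\in\CA^q_{\mathfrak{sp}_4,\Sigma}$ and $b,b^{-1}\in\CA^q_{\mathfrak{sp}_4,\Sigma}$, we obtain $W\in\CA^q_{\mathfrak{sp}_4,\Sigma}$, completing the proof. (The triangle case of this comparison is already settled by \cref{S=A_triangle}, where the six elementary webs are simultaneously the simple Wilson lines and the cluster variables.)

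I expect the main obstacle to be exactly this last comparison: making precise, for each of the four weight-types of stated end of $V(\varpi_1)$, which boundary-web monomial $b$ and which power of $q^{1/2}$ arise, and verifying that the underlying cluster variable is in every case the edge web $a$ independent of these choices. This is where the $q$-bookkeeping of the clasped relations is delicate, and where the sticking trick (\cref{lem:sticking-trick}) is the natural device: sticking $W$ to the boundary intervals $E_1,E_2$ converts the stated ends into boundary data in a controlled way, localizing the entire identity to the two collars. Once this local computation is organized, the verification is routine but must be carried out case by case.
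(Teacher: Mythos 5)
Your reduction to $\SimpWil{\Sigma}$ via \cref{thm:generator-Zv-form}, and the observation that boundary webs and their inverses are frozen variables and hence lie in $\CA^q_{\mathfrak{sp}_4,\Sigma}$, both match the paper. The gap is in your central claim that a simple Wilson line $W$ with a nontrivial stated end can be rewritten as $W=q^{k/2}\,b\cdot a$, where $a$ is the bare type-$1$ edge web along a parallel ideal arc and $b$ is a Laurent monomial in boundary webs. This identity is false. The clasped skein relations preserve the endpoint degree at every special point (the paper states this explicitly just before \cref{def:egrading}), so no skein manipulation can remove the type-$2$ leg of a stated end of the form $\mathord{\ \tikz[baseline=-.6ex, scale=.05, yshift=-5cm]{\coordinate (A1) at (-10,0);\coordinate (A2) at (10,0);\coordinate (B0) at (0,5);\draw[webline] (0,12) -- (B0);\draw[wline] (A1) -- (B0);\draw[webline] (A2) -- (B0);\bdryline{(A1)}{(A2)}{2cm}\draw[fill] (A1) circle (30pt);\draw[fill] (A2) circle (30pt);}\ }$. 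Nor can multiplication by boundary webs account for the discrepancy: such a stated end contributes $\varpi_2$ at one special point of $E$ and $\varpi_1$ at the other, while each boundary web contributes the \emph{same} fundamental weight at \emph{both} endpoints of its interval, and the resulting linear system (an alternating sum around the boundary component) has no solution contributing $\varpi_2$ at a single point. Concretely, in \cref{fig:flip_sequence_web} the webs $e_1$ and $e_3$ are distinct cluster variables in a common cluster, corresponding to distinct basis vectors $\sff_1\neq \sff_3+\sum_{j\in I_\f}c_j\sff_j$ of the lattice $\accentset{\circ}{\Lambda}$; they therefore cannot differ by a frozen Laurent monomial, yet your identity would force exactly that.

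The correct route (and the one the paper takes) is not to reduce the stated ends away but to recognize each simple Wilson line \emph{itself} as a cluster variable. One chooses an embedded quadrilateral $Q$ containing $W$ with its stated ends on opposite sides (allowing one side to collapse so that $Q$ is a triangle), and then locates $W$ among the explicit cluster variables appearing in the web clusters of the flip sequence on $Q$, namely $e_1$, $e_3$, $e_1'$, $e_3'$ of \cref{fig:flip_sequence_web}; the one remaining stated-end pattern, the type-$1$ arc with trivalent stated ends at both extremities pointing the same way, is found in a cluster adjacent to the decorated triangulation (III) of the proof of \cref{prop:compatibility}. Your sticking-trick idea would express $W$ as a $\bZ_q$-linear \emph{combination} of products of cluster variables and boundary webs (which is how the paper proves the weaker inclusion into $\UCA^q_{\mathfrak{sp}_4,\Sigma}$ in \cref{thm:comparison_skein_cluster}), but a linear combination does not exhibit $W$ as a cluster variable, which is what membership in the (non-upper) cluster algebra requires here.
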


\begin{proof}
By \cref{thm:generator-Zv-form}, it suffices to prove that each element in $\SimpWil{\Sigma}$, namely each simple Wilson line $\alpha$ of type $1$, is a cluster variable. From the definition, there exists a quadrilateral $Q$ that contains $\alpha$ such that its interior is embedded, and the stated ends of $\alpha$ lie on opposite sides of $Q$. Here we allow one of the other side is collapsed so that $Q$ is in fact a triangle in $\Sigma$. 
Then we find that $\alpha$ appears as a cluster variable in $\CA^q_{\mathfrak{sp}_4,Q}$ from \cref{fig:flip_sequence_web} (the variables $e_1$, $e_3$, $e'_1$ and $e'_3$), except for the pattern $\tikz[baseline=0.5cm,scale=0.85]{\draw[wline] (0,1) -- (0.5,0.75);
\draw[wline] (1,0) -- (0.5,0.25);
\draw[webline] (0,0) -- (0.5,0.25);
\draw[webline] (1,1) -- (0.5,0.75);
\draw[webline] (0.5,0.25) -- (0.5,0.75);
\draw[blue,rounded corners=0] (0,0) -- (1,0) -- (1,1) -- (0,1)--cycle;
\foreach \i in {0,1} \foreach \j in {0,1} \markedpt{\i,\j};}$. This one is also found in a cluster adjacent to the decorated triangulation (III) in the proof of \cref{prop:compatibility}. See \cref{sect:quad}. 
The assertion is proved.
\end{proof}

\paragraph{\textbf{Classical limit.}}
In the classical limit $q=1$, by \cref{rem:GS geometry} (3), we have $\CA_{\mathfrak{sp}_4,\Sigma}=\UCA_{\mathfrak{sp}_4,\Sigma}=\cO(\A_{Sp_4,\Sigma}^\times)$ if $\Sigma$ has at least two marked points,
and they are generated by the matrix entries of simple Wilson lines in the vector representation $V(\varpi_1)$ of $Sp_4$ \cite[Theorem 1, Corollary 3.20 and Proposition 2.1]{IOS}. Moreover, they correspond to elements in $\SimpWil{\Sigma}$ (recall \cref{rem:Wilson_line}). Combining with \cref{thm:generator-Zv-form}, we get the following:

\begin{thm}\label{prop:Tree=A=U}
In the classical limit $q=1$, we have
\begin{align*}
    \mathscr{S}_{\mathfrak{sp}_4,\Sigma}^1[\partial^{-1}]=\CA_{\mathfrak{sp}_4,\Sigma}=\UCA_{\mathfrak{sp}_4,\Sigma}.
\end{align*}
Here $\mathscr{S}_{\mathfrak{sp}_4,\Sigma}^1 :=\mathscr{S}_{\mathfrak{sp}_4,\Sigma}\otimes_{\cR} \bC$ is the specialization of the $\mathfrak{sp}_4$-skein algebra, where the homomorphism $\cR \to \bC$ is given by $q \mapsto 1$.
\end{thm}

\subsection{Inclusion \texorpdfstring{$\Skein{\Sigma}^{\bZ_q}[\partial^{-1}] \subseteq \UCA^q_{\mathfrak{sp}_4,\Sigma}$}{S(sp4,S) in U(sp4,S)} via the cutting trick, quantum Laurent positivity}\label{subsec:S in U}

Here we give a direct way to compute the inclusion $\Skein{\Sigma}^{\bZ_q}[\partial^{-1}]\subset \UCA^q_{\mathfrak{sp}_4,\Sigma}$ based on the cluster expansion result (\cref{thm:Cweb-exp}). 

\begin{thm}\label{thm:comparison_skein_cluster}
For any unpunctured marked surface $\Sigma$, we have an
inclusion
\begin{align*}
    \Skein{\Sigma}^{\bZ_q}[\partial^{-1}] \subseteq \UCA^q_{\mathfrak{sp}_4,\Sigma}.
\end{align*}
\end{thm}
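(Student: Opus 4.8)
The plan is to realize every element of $\Skein{\Sigma}^{\bZ_q}[\partial^{-1}]$ as a Laurent polynomial in the web cluster attached to each decorated triangulation, and then to promote ``Laurent in every triangulation cluster'' to ``membership in the upper cluster algebra'' by means of the quantum upper bound theorem (\cref{thm:q-Laurent}). By \cref{cor:embed_into_frac} all of this happens inside the ambient skew-field $\mathrm{Frac}\Skein{\Sigma}^q$, in which the quantum tori $T_{(\omega)}$ and $\UCA^q_{\mathfrak{sp}_4,\Sigma}$ are realized through \cref{thm:q-mutation-equivalence}.

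First I would fix a decorated triangulation $\bD$ with underlying triangulation $\Delta$ and its web cluster $\cC_{\bD}$. Since $\Skein{\Sigma}^{\bZ_q}=\bZ_q\Bweb{\Sigma}$ (\cref{def:Zv-form}), it suffices to locate each basis web $G\in\Bweb{\Sigma}$. As $G$ is in particular a crossroad web, \cref{thm:Cweb-exp} furnishes a cluster monomial $J$ in $\cC_{\bD}$ with $GJ$ a polynomial in $\cC_{\bD}$ having coefficients in $\bZ_q$, whence $G\in T_{(\bD)}$. Because the boundary webs are exactly the frozen variables of $\cC_{\bD}$ and are therefore invertible in $T_{(\bD)}$, this already yields $\Skein{\Sigma}^{\bZ_q}[\partial^{-1}]\subseteq T_{(\bD)}$, and $\bD$ being arbitrary, the same inclusion holds for every decorated triangulation.

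Next I would invoke the quantum upper bound theorem at the vertex $\bD\in\Exch_{\mathfrak{sp}_4,\Sigma}$, which gives $\UCA^q_{\mathfrak{sp}_4,\Sigma}=T_{(\bD)}\cap\bigcap_{\omega\sim\bD}T_{(\omega)}$ and thereby reduces the problem to a check at $\bD$ and its immediate neighbours alone. The neighbours obtained by mutating at a triangle vertex are again decorated triangulations, already covered above. The remaining neighbours are obtained by mutating at an interior edge vertex; here one must confirm that each such single mutation again lands in $\Exch'_{\mathfrak{sp}_4,\Sigma}$, i.e.\ corresponds to a decorated cell decomposition of deficiency $1$, which is read off from the flip sequence of \cref{fig:flip_sequence} and the commuting pair of edge mutations therein. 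Granting this, each such neighbour $\omega$ carries the explicit web cluster $C_{(\omega)}$ of \cref{fig:flip_sequence_web}, so the theorem is reduced to proving $\Skein{\Sigma}^{\bZ_q}[\partial^{-1}]\subseteq T_{(\omega)}$ for every $\omega\in\Exch'_{\mathfrak{sp}_4,\Sigma}$, that is, to extending the web cluster expansion of \cref{thm:Cweb-exp} from triangulations to deficiency-$1$ decompositions.

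The hard part will be exactly this extension. For a deficiency-$1$ decomposition over $(\Delta;E)$ I would cut a given web $G$ along all edges of the cell decomposition by the cutting trick (\cref{lem:cutteing-trick}), reducing $G$ to a $\bZ_q$-combination of webs confined to the genuine triangles and to the single quadrilateral $Q_E$; the triangle pieces expand into $C_{(\omega)}$ precisely as in the proof of \cref{thm:Cweb-exp} via \cref{thm:Eweb-T}, so the genuine new content is a quadrilateral analogue of that result: every elementary web on $Q_E$ must become a $\bZ_q$-Laurent polynomial in the six elementary webs of \cref{fig:flip_sequence_web} after multiplication by a suitable cluster monomial. I expect this to follow from the skein relations already recorded in \eqref{eq:skein_rel_triangle}--\eqref{eq:skein_rel_rot_2}, together with the arborization relations (\cref{lem:arborization}) and \cref{lem:crossroads-in-Zv} to keep all coefficients in $\bZ_q$. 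Once this quadrilateral expansion is in hand, every neighbour $\omega$ of $\bD$ satisfies $\Skein{\Sigma}^{\bZ_q}[\partial^{-1}]\subseteq T_{(\omega)}$, and the upper bound theorem delivers $\Skein{\Sigma}^{\bZ_q}[\partial^{-1}]\subseteq\UCA^q_{\mathfrak{sp}_4,\Sigma}$.
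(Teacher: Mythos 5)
Your reduction --- Laurent in the web cluster of every decorated triangulation via \cref{thm:Cweb-exp}, then the quantum upper bound theorem (\cref{thm:q-Laurent}) at a single vertex $\bD$, then the observation that every neighbour of $\bD$ is a decorated cell decomposition of deficiency at most $1$ --- is exactly the paper's. The gap is in the final step, which you correctly isolate as the hard part but do not close. You propose to prove a quadrilateral analogue of \cref{thm:Cweb-exp} by direct skein computation, citing \eqref{eq:skein_rel_triangle}--\eqref{eq:skein_rel_rot_2}; but those identities live on a single triangle, whereas the web clusters occurring in the middle of a flip sequence (\cref{fig:flip_sequence_web}) contain elementary webs supported on the whole quadrilateral (e.g.\ the hexagonal web $e'_4$), and expanding an arbitrary elementary web of $Q_E$ in such a cluster is a genuinely nontrivial computation --- essentially what \cref{sect:quad} explores case by case. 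As written, ``I expect this to follow'' is where the proof stops.

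The paper closes this gap without any new expansion computation: since $\Sigma$ is unpunctured, each triangle and the quadrilateral $Q_E$ has at least two special points, so \cref{thm:S in A} (whose proof rests on the sticking trick and the generation of the $\bZ_q$-form by simple Wilson lines, \cref{thm:generator-Zv-form}) already gives $\Skein{Q_E}^{\bZ_q}[\partial^{-1}]\subset\CA^q_{\mathfrak{sp}_4,Q_E}\subset\UCA^q_{\mathfrak{sp}_4,Q_E}$, and likewise for each triangle. The quantum Laurent phenomenon for the quadrilateral then guarantees a $\bZ_q$-Laurent expansion of each piece produced by the cutting trick in \emph{any} cluster of $Q_E$, in particular in the cluster $\mathcal{C}_{Q_E}$ appearing in the flip sequence. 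If you replace your expected-but-unproved quadrilateral expansion with this citation, your argument becomes complete and coincides with the paper's.
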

We remark that this theorem also follows from the quantum Laurent phenomenon  (\cref{thm:q-Laurent}) and \cref{thm:S in A} if we assume that $\Sigma$ has at least two special points.

\begin{proof}
Let $\bD \in \Exch_{\mathfrak{sp}_4,\Sigma}$ be a decorated triangulation. 
Thanks to \cref{thm:q-Laurent}, it suffices to check the inclusion $\Skein{\Sigma}^{\bZ_q}[\partial^{-1}] \subset \UCA^q_{\mathfrak{sp}_4,\Sigma}(\bD)$ into the upper bound associated with $\bD$. It is a direct consequence of \cref{thm:Cweb-exp} that any element in $\Skein{\Sigma}^{\bZ_q}[\partial^{-1}]$ has a Laurent expression with coefficients in $\bZ_q$ in the web cluster $\mathcal{C}_{\bD}$. Thus $\Skein{\Sigma}^{\bZ_q}[\partial^{-1}] \subset T_{\bD}$. 

Observe that all the vertices of $\Exch_{\mathfrak{sp}_4,\Sigma}$ adjacent to $\bD$ are decorated cell decompositions. Indeed, the mutation at a face vertex preserves the underlying triangulation, and the mutation at an edge vertex lies in a flip sequence. 
Therefore it suffices to consider an ideal cell decomposition $(\Delta;E)$ and verify that
any element $x \in \Skein{\Sigma}^{\bZ_q}$ has a Laurent expression in each web cluster of the form 
\begin{align*}
    \mathcal{C}_{(\Delta;E)}:=\bigcup_{T \in t(\Delta),~T \nsupseteq E} \mathcal{C}_T \cup \mathcal{C}_{Q_E},
\end{align*}
where $Q_E$ denotes the unique quadrilateral containing $E$, $\mathcal{C}_T$ (resp. $\mathcal{C}_{Q_E}$) is a web cluster in $\Skein{T}^q$ (resp. $\Skein{Q_E}^q$). 

By the cutting trick (\cref{lem:cutteing-trick}), there exists a monomial $J_{(\Delta;E)}$ of arcs along $e_{\interior}(\Delta)\setminus \{E\}$ of either types such that the product $x\cdot J_{(\Delta;E)}$ is expanded into a linear combination
of products of webs contained in $\Skein{T}^{\bZ_q}[\partial^{-1}]$ or $\Skein{Q_E}^{\bZ_q}[\partial^{-1}]$ (see \cref{ex:cutting} for an example). Here each term in the expansion belongs to the $\bZ_q$-form by \cref{lem:crossroads-in-Zv}. 
With a notice that each triangle $T$ or the quadrilateral $Q_E$ have at least two special points (since $\Sigma$ has no punctures), we have inclusions 
\begin{align*}
    &\Skein{T}^{\bZ_q}[\partial^{-1}] \subset \CA^q_{\mathfrak{sp}_4,T} \subset \UCA^q_{\mathfrak{sp}_4,T}, \\
    &\Skein{Q_E}^{\bZ_q}[\partial^{-1}] \subset \CA^q_{\mathfrak{sp}_4,Q_E} \subset \UCA^q_{\mathfrak{sp}_4,Q_E}
\end{align*}
by \cref{thm:S in A}. Then it follows that each web appearing in the expression of $x\cdot J_{(\Delta;E)}$ has a Laurent expansion with coefficients in $\bZ_q$ in the web cluster $\mathcal{C}_{(\Delta;E)}$. Therefore
so does $x$. 
Thus we get the inclusion $\Skein{\Sigma}^{\bZ_q}[\partial^{-1}] \subset \UCA^q_{\mathfrak{sp}_4,\Sigma}(\bD) = \UCA^q_{\mathfrak{sp}_4,\Sigma}$, as desired.
\end{proof}

\begin{ex}\label{ex:cutting}
Let us consider the following web $W$ and its Laurent expression in the cluster $\cC_{(\Delta;E)}$:
\begin{align*}
\begin{tikzpicture}[scale=0.1]
    \coordinate (P0) at (90:20);
        \coordinate (P1) at (162:20);
        \coordinate (P2) at (234:20);
        \coordinate (P3) at (306:20);
        \coordinate (P4) at (378:20);
        \coordinate (Q0) at (90:14);
        \coordinate (Q1) at (162:14);
        \coordinate (Q2) at (234:14);
        \coordinate (Q3) at (306:14);
        \coordinate (Q4) at (378:14);
        \draw[blue,dashed, thick] (P0) -- (P2);
        \draw[blue] (P0) -- (P3);
        \draw[overarc] (P0) -- (Q1) -- (Q2) -- (Q4) -- (P0);
        \draw[wline] (P1) -- (Q1);
        \draw[wline] (P2) -- (Q2);
        \draw[wline] (P4) -- (Q4);
        \draw[fill] (P0) circle (20pt);
        \draw[fill] (P1) circle (20pt);
        \draw[fill] (P2) circle (20pt);
        \draw[fill] (P3) circle (20pt);
        \draw[fill] (P4) circle (20pt);
        \draw[blue] (P0) -- (P1) -- (P2) -- (P3) -- (P4) -- cycle;
        \draw[red] (P1) node[below left]{$W$};
        \draw[blue] (P3) node[right=2em]{$(\Delta;E)$};
        \draw[blue] (P3) node[above=3em]{$T$};
        \draw[blue] ($(P0)!0.5!(P2)$) node[right]{$E$};
        \draw[blue] ($(P3)!0.5!(P2)$) node[above]{$Q_E$};
\end{tikzpicture}
\end{align*}
By using the cutting trick (\cref{lem:cutteing-trick}), we get
\begin{align*}
    \tikz[baseline=-.6ex, scale=.1]
    {
        \coordinate (P0) at (90:10);
        \coordinate (P1) at (162:10);
        \coordinate (P2) at (234:10);
        \coordinate (P3) at (306:10);
        \coordinate (P4) at (378:10);
        \coordinate (Q0) at (90:7);
        \coordinate (Q1) at (162:7);
        \coordinate (Q2) at (234:7);
        \coordinate (Q3) at (306:7);
        \coordinate (Q4) at (378:7);
        \draw[wline] (P0) -- (P3);
        \draw[webline] (P0) to[bend right] (P3);
        \draw[overarc] (P0) -- (Q1) -- (Q2) -- (Q4) -- (P0);
        \draw[wline] (P1) -- (Q1);
        \draw[wline] (P2) -- (Q2);
        \draw[wline] (P4) -- (Q4);
        \draw[fill] (P0) circle (20pt);
        \draw[fill] (P1) circle (20pt);
        \draw[fill] (P2) circle (20pt);
        \draw[fill] (P3) circle (20pt);
        \draw[fill] (P4) circle (20pt);
        \draw[blue] (P0) -- (P1) -- (P2) -- (P3) -- (P4) -- cycle;
    }
    &
    =q^2
    \ \tikz[baseline=-.6ex, scale=.1]
    {
        \coordinate (P0) at (90:10);
        \coordinate (P1) at (162:10);
        \coordinate (P2) at (234:10);
        \coordinate (P3) at (306:10);
        \coordinate (P4) at (378:10);
        \coordinate (Q0) at (90:7);
        \coordinate (Q1) at (162:7);
        \coordinate (Q2) at (234:7);
        \coordinate (Q3) at (306:7);
        \coordinate (Q4) at (378:7);
        \draw[wline] (P0) -- (P3);
        \draw[webline] (P0) -- (Q1) -- (Q2) -- (P0);
        \draw[webline] (P3) -- (Q4) -- (P0);
        \draw[wline] (P1) -- (Q1);
        \draw[wline] (P2) -- (Q2);
        \draw[wline] (P4) -- (Q4);
        \draw[fill] (P0) circle (20pt);
        \draw[fill] (P1) circle (20pt);
        \draw[fill] (P2) circle (20pt);
        \draw[fill] (P3) circle (20pt);
        \draw[fill] (P4) circle (20pt);
        \draw[blue] (P0) -- (P1) -- (P2) -- (P3) -- (P4) -- cycle;
    }\ 
    +q
    \ \tikz[baseline=-.6ex, scale=.1]
    {
        \coordinate (P0) at (90:10);
        \coordinate (P1) at (162:10);
        \coordinate (P2) at (234:10);
        \coordinate (P3) at (306:10);
        \coordinate (P4) at (378:10);
        \coordinate (Q0) at (90:7);
        \coordinate (Q1) at (162:7);
        \coordinate (Q2) at (234:7);
        \coordinate (Q3) at (306:7);
        \coordinate (Q4) at (378:7);
        \draw[webline] (P0) -- (Q1) -- (Q2) -- ($(Q3)+(-2,2)$) -- (P3);
        \draw[webline] (P0) -- ($(Q3)+(2,2)$) -- (Q4) -- (P0);
        \draw[wline] (P0) -- ($(Q3)+(-2,2)$);
        \draw[webline] ($(Q3)+(-2,2)$) --  (P3);
        \draw[wline] (P1) -- (Q1);
        \draw[wline] (P2) -- (Q2);
        \draw[wline] (P3) -- ($(Q3)+(2,2)$); 
        \draw[wline] (P4) -- (Q4);
        \draw[fill] (P0) circle (20pt);
        \draw[fill] (P1) circle (20pt);
        \draw[fill] (P2) circle (20pt);
        \draw[fill] (P3) circle (20pt);
        \draw[fill] (P4) circle (20pt);
        \draw[blue] (P0) -- (P1) -- (P2) -- (P3) -- (P4) -- cycle;
    }
    +q^{-1}
    \ \tikz[baseline=-.6ex, scale=.1]
    {
        \coordinate (P0) at (90:10);
        \coordinate (P1) at (162:10);
        \coordinate (P2) at (234:10);
        \coordinate (P3) at (306:10);
        \coordinate (P4) at (378:10);
        \coordinate (Q0) at (90:7);
        \coordinate (Q1) at (162:7);
        \coordinate (Q2) at (234:7);
        \coordinate (Q3) at (306:7);
        \coordinate (Q4) at (378:7);
        \draw[webline] (P0) -- (Q1) -- (Q2) -- ($(Q3)+(-2,2)$) -- (P0);
        \draw[wline] (P3) -- ($(Q3)+(-2,2)$);
        \draw[wline] (P0) -- ($(Q3)+(2,2)$); 
        \draw[webline] (P3) -- ($(Q3)+(2,2)$) -- (Q4) -- (P0);
        \draw[wline] (P1) -- (Q1);
        \draw[wline] (P2) -- (Q2);
        \draw[wline] (P4) -- (Q4);
        \draw[fill] (P0) circle (20pt);
        \draw[fill] (P1) circle (20pt);
        \draw[fill] (P2) circle (20pt);
        \draw[fill] (P3) circle (20pt);
        \draw[fill] (P4) circle (20pt);
        \draw[blue] (P0) -- (P1) -- (P2) -- (P3) -- (P4) -- cycle;
    }
    +q^{-2}
    \ \tikz[baseline=-.6ex, scale=.1]
    {
        \coordinate (P0) at (90:10);
        \coordinate (P1) at (162:10);
        \coordinate (P2) at (234:10);
        \coordinate (P3) at (306:10);
        \coordinate (P4) at (378:10);
        \coordinate (Q0) at (90:7);
        \coordinate (Q1) at (162:7);
        \coordinate (Q2) at (234:7);
        \coordinate (Q3) at (306:7);
        \coordinate (Q4) at (378:7);
        \draw[wline] (P0) to[bend right] (P3);
        \draw[webline] (P0) -- (Q1) -- (Q2) -- (P3);
        \draw[webline] (P0) to[bend right] (Q4) -- (P0);
        \draw[wline] (P1) -- (Q1);
        \draw[wline] (P2) -- (Q2);
        \draw[wline] (P4) -- (Q4);
        \draw[fill] (P0) circle (20pt);
        \draw[fill] (P1) circle (20pt);
        \draw[fill] (P2) circle (20pt);
        \draw[fill] (P3) circle (20pt);
        \draw[fill] (P4) circle (20pt);
        \draw[blue] (P0) -- (P1) -- (P2) -- (P3) -- (P4) -- cycle;
    }\ \\
    &=q^2
    \ \tikz[baseline=-.6ex, scale=.1]
    {
        \coordinate (P0) at (90:10);
        \coordinate (P1) at (162:10);
        \coordinate (P2) at (234:10);
        \coordinate (P3) at (306:10);
        \coordinate (P4) at (378:10);
        \coordinate (Q0) at (90:7);
        \coordinate (Q1) at (162:7);
        \coordinate (Q2) at (234:7);
        \coordinate (Q3) at (306:7);
        \coordinate (Q4) at (378:7);
        \draw[wline] (P0) -- (P3);
        \draw[webline] (P0) -- (Q1) -- (Q2) -- (P0);
        \draw[webline] (P3) -- (Q4) -- (P0);
        \draw[wline] (P1) -- (Q1);
        \draw[wline] (P2) -- (Q2);
        \draw[wline] (P4) -- (Q4);
        \draw[fill] (P0) circle (20pt);
        \draw[fill] (P1) circle (20pt);
        \draw[fill] (P2) circle (20pt);
        \draw[fill] (P3) circle (20pt);
        \draw[fill] (P4) circle (20pt);
        \draw[blue] (P0) -- (P1) -- (P2) -- (P3) -- (P4) -- cycle;
    }\ 
    +q
    \ \tikz[baseline=-.6ex, scale=.1]
    {
        \coordinate (P0) at (90:10);
        \coordinate (P1) at (162:10);
        \coordinate (P2) at (234:10);
        \coordinate (P3) at (306:10);
        \coordinate (P4) at (378:10);
        \coordinate (Q0) at (90:7);
        \coordinate (Q1) at (162:7);
        \coordinate (Q2) at (234:7);
        \coordinate (Q3) at (306:7);
        \coordinate (Q4) at (378:7);
        \draw[webline] (P0) -- (Q1) -- (Q2) -- ($(Q3)+(-2,2)$) -- (P3);
        \draw[webline] (P0) -- ($(Q3)+(2,2)$) -- (Q4) -- (P0);
        \draw[wline] (P0) -- ($(Q3)+(-2,2)$);
        \draw[webline] ($(Q3)+(-2,2)$) --  (P3);
        \draw[wline] (P1) -- (Q1);
        \draw[wline] (P2) -- (Q2);
        \draw[wline] (P3) -- ($(Q3)+(2,2)$); 
        \draw[wline] (P4) -- (Q4);
        \draw[fill] (P0) circle (20pt);
        \draw[fill] (P1) circle (20pt);
        \draw[fill] (P2) circle (20pt);
        \draw[fill] (P3) circle (20pt);
        \draw[fill] (P4) circle (20pt);
        \draw[blue] (P0) -- (P1) -- (P2) -- (P3) -- (P4) -- cycle;
    }
    +q^{-1}
    \ \tikz[baseline=-.6ex, scale=.1]
    {
        \coordinate (P0) at (90:10);
        \coordinate (P1) at (162:10);
        \coordinate (P2) at (234:10);
        \coordinate (P3) at (306:10);
        \coordinate (P4) at (378:10);
        \coordinate (Q0) at (90:7);
        \coordinate (Q1) at (162:7);
        \coordinate (Q2) at (234:7);
        \coordinate (Q3) at (306:7);
        \coordinate (Q4) at (378:7);
        \draw[webline] (P0) -- (Q1) -- (Q2) -- ($(Q3)+(-2,2)$) -- (P0);
        \draw[wline] (P3) -- ($(Q3)+(-2,2)$);
        \draw[wline] (P1) -- (Q1);
        \draw[wline] (P2) -- (Q2);
        \draw[wline] (P4) to[bend left] (P0);
        \draw[webline] (P0) --  (P3);
        \draw[fill] (P0) circle (20pt);
        \draw[fill] (P1) circle (20pt);
        \draw[fill] (P2) circle (20pt);
        \draw[fill] (P3) circle (20pt);
        \draw[fill] (P4) circle (20pt);
        \draw[blue] (P0) -- (P1) -- (P2) -- (P3) -- (P4) -- cycle;
    }\ .
\end{align*}
Observe that each term belongs to the $\bZ_q$-forms of the skein algebras of $T$ and $Q_E$. Hence it can be written as a Laurent polynomial in the web cluster $\cC_{(\Delta;E)}$. 
\end{ex}

Now we complete the proof of \cref{introthm:comparison}.

\begin{proof}[Proof of \cref{introthm:comparison}]
The algebra inclusions are already proved in \cref{thm:S in A,thm:comparison_skein_cluster}. The mapping class group equivariance is clear from the construction (see \cite{IYsl3} for more explanation). 
What remains are the comparison of gradings and anti-involutions. Thanks to the web cluster expansion result (\cref{thm:Cweb-exp}), it suffices to compare these structures on the quantum torus $T_\bD$ generated by the elementary webs associated with a single decorated triangulation $\bD$. Then the coincidence of the mirror-reflection and the bar-involution is clear from their definitions, and the coincidence of the endpoint grading and the ensemble grading has been shown in \cref{lem:comparison_grading}. 
\end{proof}

\begin{dfn}\label{def:GS-univ}
An element of $\UCA^q_{\mathfrak{sp}_4,\Sigma}\otimes \cR$ is called a \emph{quantum GS-universally positive Laurent polynomial over $\bZ_q$ (resp. $\cR$)} if it admits an expression as a Laurent polynomial in the cluster $\mathbf{A}^{\bD}$ associated with 
any decorated triangulation $\bD$ of $\Sigma$ with coefficients in $\bZ_+[q^{\pm 1/2}]$ (resp. $\cR_+=\bZ_+[q^{\pm 1/2},1/[2]_q]$). 
\end{dfn}
Here ``GS'' stands for Goncharov--Shen. The quantum GS-universal positivity over $\bZ_q$ is weaker than the quantum universal positivity \cite{FG09}, which asks the positivity of Laurent expression for \emph{any} cluster. 
The following is a rephrasing of \cref{thm:positivity-web}:

\begin{thm}[Quantum Laurent positivity of webs]\label{thm:positivity_cluster}
Any elevation-preserving web with respect to an ideal triangulation $\Delta$ is expressed as a Laurent polynomial with coefficients in $\cR_+$ in the quantum cluster associated with any decorated triangulation $\bD=(\Delta,m_\Delta,\bs_\Delta)$ over $\Delta$. In particular, by \cref{ex:Desc}, the following webs are quantum GS-universally positive Laurent polynomials
\begin{itemize}
    \item over $\bZ_q$: elements in $\Desc{\Sigma}$;
    \item over $\cR$: the geometric bracelets or the bangles
    (\cref{fig:bracelet}) of type $2$ along simple loops. 
\end{itemize}
\end{thm}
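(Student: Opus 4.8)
The plan is to deduce the theorem directly from \cref{thm:positivity-web}, transferring its web-cluster positivity to the quantum cluster $\mathbf{A}^{\bD}$ and then reading off the two ``in particular'' clauses from \cref{ex:Desc}. The crucial observation is that, by the construction of the toric frame in \cref{thm:q-mutation-equivalence}, the elementary webs forming the web cluster $\cC_{\bD}$ are precisely the quantum cluster variables $\mathbf{A}^{\bD}=\{e_i^{\bD}\}$; hence a polynomial expression in $\cC_{\bD}$ is literally a polynomial expression in $\mathbf{A}^{\bD}$, and an expansion in one cluster is an expansion in the other. Thus the genuine analytic content is entirely contained in \cref{thm:positivity-web}, and what remains is purely formal bookkeeping.

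First I would fix a decorated triangulation $\bD$ over $\Delta$ and let $G$ be an elevation-preserving web. By \cref{thm:positivity-web} there exist a monomial $J$ in $\cC_{\bD}$ and an integer $k\ge 0$, equal to the number of intersection points between the type~$2$ edges of $G$ and the edges of $\Delta$, such that $[2]^{k}GJ$ is a polynomial in $\cC_{\bD}$ with coefficients in $\bZ_{+}[q^{\pm 1/2}]$. Working inside the quantum torus $T_{\bD}$, in which every cluster variable is invertible, I would then divide by $[2]^{k}$ and by $J$. Division by $[2]^{k}$ multiplies each coefficient by $(1/[2]_q)^{k}$, and since $1/[2]_q$ is by definition a positive generator of $\cR_+=\bZ_+[q^{\pm 1/2},1/[2]_q]$, the resulting coefficients remain in $\cR_+$. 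Division by $J$ is multiplication by the cluster monomial $J^{-1}$, which by the toric-frame relation $M(\alpha)M(\beta)=q^{\Pi^{\bD}(\alpha,\beta)/2}M(\alpha+\beta)$ merely shifts the $M$-basis while multiplying each coefficient by a monomial in $q$; as such monomials lie in $\bZ_+[q^{\pm 1/2}]$, coefficient positivity is preserved. Hence $G$ is a Laurent polynomial in $\mathbf{A}^{\bD}$ with coefficients in $\cR_+$, which is the first assertion, valid for every $\bD$ over $\Delta$.

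For the two refinements I would appeal to \cref{ex:Desc}, which guarantees that the elements of $\Desc{\Sigma}$ and the type~$2$ geometric bracelets and bangles along simple loops are elevation-preserving with respect to every ideal triangulation, hence with respect to every decorated triangulation $\bD$. For $G\in\Desc{\Sigma}$ the underlying loop or arc has type~$1$ and, as noted in \cref{ex:Desc}, its legs may be isotoped off all edges of $\Delta$; thus $k=0$, the preceding expansion already has coefficients in $\bZ_+[q^{\pm 1/2}]$, and $G$ is quantum GS-universally positive over $\bZ_q$ in the sense of \cref{def:GS-univ}. For a type~$2$ bracelet or bangle the type~$2$ strands genuinely cross $\Delta$, so $k>0$ and only $\cR_+$-positivity is guaranteed, yielding GS-universal positivity over $\cR$. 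The only step requiring care is the implicit claim that the factors of $1/[2]_q$ are the sole source of denominators; but this is exactly what the proof of \cref{thm:positivity-web} already establishes, where $[2]^{-1}$ arises solely from the gluing pattern $Y_3$ attached to cut type~$2$ edges. Consequently there is no further obstacle, and the main difficulty of the statement has been discharged upstream.
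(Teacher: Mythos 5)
Your proposal is correct and matches the paper's approach: the paper explicitly introduces this theorem as ``a rephrasing of'' \cref{thm:positivity-web}, so the entire content is the transfer you describe, namely identifying the web cluster $\cC_{\bD}$ with the quantum cluster $\mathbf{A}^{\bD}$ via the toric frame, dividing by $[2]^{k}$ and the cluster monomial $J$ inside the quantum torus $T_{\bD}$ (which only introduces factors in $\cR_+$ and powers of $q^{1/2}$), and invoking \cref{ex:Desc} for the two refinements. The bookkeeping you supply, including the observation that $k=0$ for elements of $\Desc{\Sigma}$ because their legs avoid the edges of $\Delta$, is exactly what the paper leaves implicit.
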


\section{Gallery of web clusters in a quadrilateral}\label{sect:quad}
In this section, we present examples of web clusters in a quadrilateral that support \cref{conj:tree-variable}, and exchange relations among them.
Indecomposability of elementary webs in this section follows from the injective homomorphism constructed in \cref{thm:S in A} and the indecomposability of cluster variables \cite{GLS13}.
Let $\Sigma$ be a quadrilateral, which is a disk with four special points.
We use the labeling in \cref{fig:labeling_square_case} for boundary webs.

In the following presentation of web clusters of $\Skein{\Sigma}$, we display the $6$ elementary webs other than boundary webs as follows.
\begin{center}
    \begin{tikzpicture}[scale=.8]
        \begin{scope}
            \node[scale=.8] at (0,0)
            {
                \ \tikz[baseline=-.6ex, scale=.05]{
                    \foreach \i in {1,2,...,6}
                    \foreach \j in {1,2,...,6}
                    {
                        \coordinate (P\i\j) at (5*\j,-5*\i);
                    }
                    \coordinate (C) at ($(P11)!.5!(P66)$);
                    \node[scale=1.2] at (C) {$e_1$};
                    \draw[thick] (P11) -- (P61) -- (P66) -- (P16) -- cycle;
                    \foreach \i in {1,6}
                    \foreach \j in {1,6}
                    {
                        \draw[fill] (P\i\j) circle (30pt);
                    }
                }\ 
            };
            \node[scale=.8] at (1.5,0)
            {
                \ \tikz[baseline=-.6ex, scale=.05]{
                    \foreach \i in {1,2,...,6}
                    \foreach \j in {1,2,...,6}
                    {
                        \coordinate (P\i\j) at (5*\j,-5*\i);
                    }
                    \coordinate (C) at ($(P11)!.5!(P66)$);
                    \node[scale=1.2] at (C) {$e_3$};
                    \draw[thick] (P11) -- (P61) -- (P66) -- (P16) -- cycle;
                    \foreach \i in {1,6}
                    \foreach \j in {1,6}
                    {
                        \draw[fill] (P\i\j) circle (30pt);
                    }
                }\ 
            };
            \node[scale=.8] at (3,0)
            {
                \ \tikz[baseline=-.6ex, scale=.05]{
                    \foreach \i in {1,2,...,6}
                    \foreach \j in {1,2,...,6}
                    {
                        \coordinate (P\i\j) at (5*\j,-5*\i);
                    }
                    \coordinate (C) at ($(P11)!.5!(P66)$);
                    \node[scale=1.2] at (C) {$e_5$};
                    \draw[thick] (P11) -- (P61) -- (P66) -- (P16) -- cycle;
                    \foreach \i in {1,6}
                    \foreach \j in {1,6}
                    {
                        \draw[fill] (P\i\j) circle (30pt);
                    }
                }\ 
            };
            \node[scale=.8] at (0,1.5)
            {
                \ \tikz[baseline=-.6ex, scale=.05]{
                    \foreach \i in {1,2,...,6}
                    \foreach \j in {1,2,...,6}
                    {
                        \coordinate (P\i\j) at (5*\j,-5*\i);
                    }
                    \coordinate (C) at ($(P11)!.5!(P66)$);
                    \node[scale=1.2] at (C) {$e_2$};
                    \draw[thick] (P11) -- (P61) -- (P66) -- (P16) -- cycle;
                    \foreach \i in {1,6}
                    \foreach \j in {1,6}
                    {
                        \draw[fill] (P\i\j) circle (30pt);
                    }
                }\ 
            };
            \node[scale=.8] at (1.5,1.5)
            {
                \ \tikz[baseline=-.6ex, scale=.05]{
                    \foreach \i in {1,2,...,6}
                    \foreach \j in {1,2,...,6}
                    {
                        \coordinate (P\i\j) at (5*\j,-5*\i);
                    }
                    \coordinate (C) at ($(P11)!.5!(P66)$);
                    \node[scale=1.2] at (C) {$e_4$};
                    \draw[thick] (P11) -- (P61) -- (P66) -- (P16) -- cycle;
                    \foreach \i in {1,6}
                    \foreach \j in {1,6}
                    {
                        \draw[fill] (P\i\j) circle (30pt);
                    }
                }\ 
            };
            \node[scale=.8] at (3,1.5)
            {
                \ \tikz[baseline=-.6ex, scale=.05]{
                    \foreach \i in {1,2,...,6}
                    \foreach \j in {1,2,...,6}
                    {
                        \coordinate (P\i\j) at (5*\j,-5*\i);
                    }
                    \coordinate (C) at ($(P11)!.5!(P66)$);
                    \node[scale=1.2] at (C) {$e_6$};
                    \draw[thick] (P11) -- (P61) -- (P66) -- (P16) -- cycle;
                    \foreach \i in {1,6}
                    \foreach \j in {1,6}
                    {
                        \draw[fill] (P\i\j) circle (30pt);
                    }
                }\ 
            };
        \end{scope}
    \end{tikzpicture}    
\end{center}
Moreover, in the presentation of quantum exchange/skein relation upon each mutation $\mu_k$, non-primed variables $e_i$, $i=1,\dots,6$ denote the elementary webs in the web cluster before that mutation and the primed one $e'_k$ denotes the new elementary web appearing after the mutation. 

\subsection{Web clusters adjacent to (I)--(V)}
Here are web clusters adjacent to the decorated triangulations (I)--(V) in the proof of \cref{thm:comparison_skein_cluster}.
New elementary webs that do not come from any decorated triangulations appear in the replacement of $e_3$ or $e_4$ in each case, which are easily seen to be tree-type. Let us denote the mutation replacing $e_3$ (resp. $e_4$) starting from the web cluster $(N)$ by $\mu_3^{(N)}$ (resp. $\mu_4^{(N)}$) for $N \in \{\mathrm{I,II,III,IV,V}\}$.

\begin{tikzcd}
    
\end{tikzcd}

We have the following skein relations among them:
\begin{align*}
    \mu_3^{\mathrm{(I)}}\colon e_3e_3'&=q[e_2e_6]+[e_1e_5e_4],&\mu_4^{\mathrm{(I)}}\colon e_4e_4'&=[e_2e_6]+q^2[e_1e_5e_4],\\
    \mu_3^{\mathrm{(II)}}\colon e_3e_3'&=q^{\frac{1}{2}}[e_2e_6]+q^{-\frac{1}{2}}[e_1e_4e_{11}],& \mu_4^{\mathrm{(II)}}\colon e_4e_4'&=q^{-1}[e_2e_5^2]+q[e_3^2e_6e_{10}],\\
    \mu_3^{\mathrm{(III)}}\colon e_3e_3'&=[e_{1}e_{5}]+q[e_{4}e_{9}e_{13}],& \mu_4^{\mathrm{(III)}}\colon e_4e_4'&=[e_2e_6]+q^{-2}[e_3^2e_{8}e_{12}],\\
    \mu_3^{\mathrm{(IV)}}\colon e_3e_3'&=q^{-\frac{1}{2}}[e_5e_7]+q^{\frac{1}{2}}[e_1e_{13}],& \mu_4^{\mathrm{(IV)}}\colon e_4e_4'&=[e_2e_6]+q^2[e_{1}^{2}e_{12}]\\
    \mu_3^{\mathrm{(V)}}\colon e_3e_3'&=[e_1e_5]+q[e_{2}e_{13}],& \mu_4^{\mathrm{(V)}}\colon e_4e_4'&=q[e_6e_{10}]+q^{-1}[e_{2}e_{12}].
\end{align*}
They are exactly the quantum exchange relations induced by $\mu_i^{(N)}$. 

\subsection{Other tree-type elementary webs}
Observe that the examples of elementary webs in the previous section are all made from the patterns

\end{tikzcd}

The skein/quantum exchange relations among them are:
\begin{align*}
    \mu_1\colon e_1e_1'&=[e_6e_7e_9]+q^{-1}[e_3e_4],&\mu_3\colon e_3e_3'&=[e_6e_9e_{13}]+q^{-1}[e_1e_5],\\
    \mu_4\colon e_4e_4'&=q^{-1}[e_{1}^2e_{2}]+q[e_{6}e_{7}^2e_{9}^2e_{12}],&\mu_6\colon e_6e_6'&=q^{-1}[e_3e_4e_{11}e_{13}]+q[e_1^2e_{5}^2].
\end{align*}

\subsection{An infinite sequence of web clusters}
We construct an infinite sequence of web clusters starting from the web cluster associated with the decorated triangulation
$
    \mathord{
        \ \tikz[baseline=.5em, scale=.1]{
            \draw[thick] (0,0) rectangle (6,6);
            \draw[blue] (0,6) -- (6,0);
            \node at (1.5,2.5) {\scriptsize ${+}$};
            \node at (4,4) {\scriptsize ${-}$};
            \node at (3.5,1) {\scriptsize $\ast$};
            \node at (5,2) {\scriptsize $\ast$};
            \fill (0,0) circle (15pt);
            \fill (6,0) circle (15pt);
            \fill (6,6) circle (15pt);
            \fill (0,6) circle (15pt);
        }\ 
    }.
$
We denote by $p_0,p_1,p_2,p_3$ the special points in $\Sigma$ in the counterclockwise order from the upper left corner.
We represent an elementary web in $\Sigma$ by a diagram in a band with infinitely many linearly ordered special points labeled by $p_0, p_1, p_2, p_3$ on its bottom side. See \cref{fig:covering}. 
The band is viewed as an infinite cyclic covering of $\Sigma$, and the actual elementary web is obtained through the projection.
At a crossing of the projection of the diagram, we draw the $\mathfrak{sp}_4$-web in such a way that the right arc in the band becomes over-passing.
If all the crossings in the projected diagram are arborizable or at special points, then we can uniquely determine the flat $\mathfrak{sp}_4$-web representing this projected diagram.
\begin{figure}
    \begin{tikzpicture}
        \node at (0,0)
        {
            \ \tikz[baseline=-.6ex, scale=.1]{
                \foreach \i in {0,1,...,10}
                {
                    \coordinate (A\i) at (5*\i,0);
                    \coordinate (B\i) at (5*\i,15);
                }
                \draw[dashed] ($(B0)+(-5,0)$) -- ($(B10)+(5,0)$);
                \draw[dashed, thick] ($(A0)+(-5,0)$) -- ($(A10)+(5,0)$);
                \bdryline{(A0)}{(A10)}{2cm}
                \foreach \j [evaluate=\j as \k using {int(mod(\j-1,4))}] in {1,2,...,9}
                {
                    \draw[dotted] (A\j) -- (B\j);
                    \fill (A\j) circle (20pt);
                    \node at (A\j) [below]{\scriptsize $p_{\k}$};
                }
            }\ 
        };
        \node at (5,0)
        {
            \ \tikz[baseline=-.6ex, scale=.1]{
                \draw[thick, ->] (0,0) -- (10,0);
                \node at (5,0)[below]{\small projection};
            }\ 
        };
        \node at (8,0)
        {
            \ \tikz[baseline=-.6ex, scale=.1]{
                \foreach \i in {0,0.5,...,4}
                {
                    \coordinate (P\i) at (90*\i+135:10);
                }
                \foreach \j/\k in {0/1,1/2,2/3,3/4}
                {
                    \bdryline{(P\j)}{(P\k)}{2cm}
                }
                \foreach \j in {0,1,2,3}
                {
                    \draw[dotted] (P\j) -- (0,0);
                    \node at ($(0,0)!1.4!(P\j)$) {\scriptsize $p_{\j}$};
                    \fill (P\j) circle (20pt);
                }
            }\ 
        };
    \end{tikzpicture}
    \caption{The bottom side corresponds to the boundary of $\Sigma$ and the top side to the center of $\Sigma$.}
    \label{fig:covering}
\end{figure}
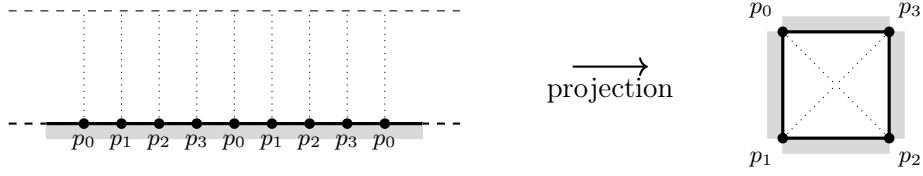

For $i \in \bZ$ and $k\in\bZ_{\geq 0}$, let us consider the elementary webs $x_{i}^{(k)}$ and $y_{i}^{(k)}$ given by
\begin{align*}
    x_{i}^{(k)}&:=
    \mathord{
        \ \tikz[baseline=.5em, scale=.1]{
            \foreach \i in {0,1,...,10}
            {
                \coordinate (A\i) at (8*\i,0);
                \coordinate (B\i) at (8*\i,10);
            }
            \draw[webline] (A3) -- ($(A3)+(-2,6)$) -- ($(A3)+(2,6)$) -- (A3);
            \draw[webline] (A4) -- ($(A4)+(-2,6)$) -- ($(A4)+(2,6)$) -- (A4);
            \draw[webline] (A8) -- ($(A8)+(-2,6)$) -- ($(A8)+(2,6)$) -- (A8);
            \draw[webline] (A1) -- ($(A2)+(2,6)$) -- (A2);
            \draw[wline] ($(A3)+(-2,6)$) -- ($(A2)+(2,6)$);
            \draw[wline] ($(A4)+(-2,6)$) -- ($(A3)+(2,6)$);
            \draw[wline] ($(A5)+(-2,6)$) -- ($(A4)+(2,6)$);
            \draw[webline, dashed] ($(A5)+(-2,6)$) -- ($(A7)+(2,6)$);
            \draw[wline] ($(A8)+(-2,6)$) -- ($(A7)+(2,6)$);
            \draw[wline, rounded corners] (A9) -- ($(A9)+(-2,6)$) -- ($(A8)+(2,6)$);
            \draw[dashed] ($(B0)+(-5,0)$) -- ($(B10)+(5,0)$);
            \draw[dashed, thick] ($(A0)+(-5,0)$) -- ($(A10)+(5,0)$);
            \bdryline{(A0)}{(A10)}{2cm}
            \foreach \j in {1,2,3,4,8,9}
            {
                \fill (A\j) circle (20pt);
            }
            \node at (A1) [below]{\scriptsize $p_i$};
            \node at (A2) [below]{\scriptsize $p_{i+1}$};
            \node at (A9) [below]{\scriptsize $p_{i+k+2}$};
        }\ 
    },\\[1em]
    y_{i}^{(k)}&:=
    \mathord{
        \ \tikz[baseline=.5em, scale=.1]{
            \foreach \i in {0,1,...,10}
            {
                \coordinate (A\i) at (8*\i,0);
                \coordinate (B\i) at (8*\i,10);
            }
            \draw[webline] (A2) -- ($(A2)+(-2,6)$) -- ($(A2)+(2,6)$) -- (A2);
            \draw[webline] (A3) -- ($(A3)+(-2,6)$) -- ($(A3)+(2,6)$) -- (A3);
            \draw[webline] (A4) -- ($(A4)+(-2,6)$) -- ($(A4)+(2,6)$) -- (A4);
            \draw[webline] (A8) -- ($(A8)+(-2,6)$) -- ($(A8)+(2,6)$) -- (A8);
            \draw[wline] ($(A3)+(-2,6)$) -- ($(A2)+(2,6)$);
            \draw[wline] ($(A4)+(-2,6)$) -- ($(A3)+(2,6)$);
            \draw[wline] ($(A5)+(-2,6)$) -- ($(A4)+(2,6)$);
            \draw[webline, dashed] ($(A5)+(-2,6)$) -- ($(A7)+(2,6)$);
            \draw[wline] ($(A8)+(-2,6)$) -- ($(A7)+(2,6)$);
            \draw[wline, rounded corners] ($(A2)+(-2,6)$) -- ($(A1)+(2,6)$) -- (A1);
            \draw[wline, rounded corners] (A9) -- ($(A9)+(-2,6)$) -- ($(A8)+(2,6)$);
            \draw[dashed] ($(B0)+(-5,0)$) -- ($(B10)+(5,0)$);
            \draw[dashed, thick] ($(A0)+(-5,0)$) -- ($(A10)+(5,0)$);
            \bdryline{(A0)}{(A10)}{2cm}
            \foreach \j in {1,2,3,4,8,9}
            {
                \fill (A\j) circle (20pt);
            }
            \node at (A1) [below]{\scriptsize $p_i$};
            \node at (A2) [below]{\scriptsize $p_{i+1}$};
            \node at (A9) [below]{\scriptsize $p_{i+k+1}$};
        }\ 
    },
\end{align*}
where subscripts $i$ for $p$, $x$, and $y$ are considered modulo $4$.
We denote by $e_{i,j}$ the simple arc of type~$1$ between the special points $p_i$ and $p_j$ for any distinct integers $i$ and $j$. 
\begin{prop}\label{prop:infty-web-clusters}
    For $i,k \in \bZ_{\geq 0}$, we have the relation
        \begin{align}
            \eta_{i}^{(k)}\colon x_{i}^{(k+4)}x_{i}^{(k)}=q^{m_k}\bigg[ e_{i,i+2}e_{i,i+1}e_{i+1,i+2}\bigg(\prod_{j=i}^{k-1}(e_{j+2,j+3})^{2}\bigg)y_{i+k+2}^{(3)}\bigg]+q^{n_k}(x_{i+2}^{(k+2)})^2, \label{eq:infty-exch}
        \end{align}
    for some $m_k,n_k \in \frac{1}{2}\bZ$.
    Moreover, the collections $\cC_{k}:=\{x_{2k+2}^{(2k)},x_{2k}^{(2k+2)},y_{0}^{(1)},y_{0}^{(3)},y_{2}^{(1)},e_{0,3}\}\cup\partial_{\Sigma}$ are web clusters for any $k \in \bZ_{\geq 0}$.
\end{prop}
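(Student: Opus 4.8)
The plan is to treat the two assertions in turn: first establish the exchange relations $\eta_i^{(k)}$ by a direct skein computation, and then use them to exhibit the collections $\cC_k$ as the successive web clusters along an infinite mutation sequence emanating from the web cluster of the initial decorated triangulation. Once this is done, membership in $\Eweb{\Sigma}$ and the $v$-commutativity of each $\cC_k$ will be automatic consequences of the cluster structure, and the cardinality count is immediate.

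For the relation $\eta_i^{(k)}$, I would superpose the two staircase webs $x_i^{(k+4)}$ and $x_i^{(k)}$ (so that the former passes over the latter at every crossing, per the multiplication on $\Tang{\Sigma}$) and resolve the resulting internal crossings by the skein relations in \cref{def:skeinrel}, together with the clasped relations in \cref{def:bdry-skeinrel} near the special points. The key point is the self-similar structure of $x_i^{(k)}$: away from the two ends, the two diagrams run parallel through identical ``staircase'' blocks, so after applying the arborization relations of \cref{lem:arborization} the resolution localizes to neighbourhoods of the endpoints, where one meets exactly the local patterns treated in the cutting and sticking tricks (\cref{lem:cutteing-trick,lem:sticking-trick}). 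All but two of the resulting terms contain an elliptic face (a monogon or bigon incident to a special point) and vanish by \cref{def:bdry-skeinrel}; the two surviving terms are the flat webs displayed on the right-hand side of \eqref{eq:infty-exch}, and bookkeeping the half-integer powers of $v$ produced along the way yields the exponents $m_k,n_k$. I would not record their explicit values, since only their existence is needed.

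For the cluster statement, I would first identify $\cC_0=\{x_2^{(0)},x_0^{(2)},y_0^{(1)},y_0^{(3)},y_2^{(1)},e_{0,3}\}\cup\partial_\Sigma$ with the web cluster attached to the initial decorated triangulation, by reading off \cref{fig:web_cluster_triangle} on each of its two triangles together with the arcs along the five edges; this is a web cluster by construction (cf.\ \cref{thm:q-mutation-equivalence}). Then I would run an induction on $k$: using that all subscripts are taken modulo $4$, the relation $\eta_{2k+2}^{(2k)}$ reads $x_{2k+2}^{(2k+4)}x_{2k+2}^{(2k)}=(\text{monomial})+v^{n}(x_{2k}^{(2k+2)})^2$, which is precisely the quantum exchange relation replacing $x_{2k+2}^{(2k)}$ by $x_{2k+2}^{(2k+4)}$ while keeping the other five webs fixed; since $x_{2k+4}^{(2k+2)}=x_{2k}^{(2k+2)}$ modulo $4$, this single mutation carries $\cC_k$ to $\cC_{k+1}$ (one checks the base case $k=0$ directly). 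By \cref{thm:q-mutation-equivalence} each $\cC_k$ is therefore the web cluster of a quantum seed in $\sfs_q(\mathfrak{sp}_4,\Sigma)$. In particular each of its members is, up to boundary webs, a quantum cluster variable, hence indecomposable by \cite{GLS13} through the inclusion of \cref{thm:S in A}, and so lies in $\Eweb{\Sigma}$; the members $v$-commute because they belong to a common quantum seed; and $\#\cC_k=6+|\partial_\Sigma|=6+8=14=\#I_{\mathfrak{sp}_4}(\Delta)$. This verifies the definition of a web cluster.

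The main obstacle I expect is the skein computation of $\eta_i^{(k)}$: one must verify that the superposition of the two staircases resolves, uniformly in $k$, into exactly the two stated terms with no residual webs, which requires checking that every other resolution branch creates an elliptic face and dies, and then tracking the accumulated powers of $v^{1/2}$ through the clasped relations. A secondary, bookkeeping-level obstacle is confirming that the two monomials on the right-hand side of $\eta_{2k+2}^{(2k)}$ are genuinely the two cluster monomials dictated by the exchange matrix at $\cC_k$, i.e.\ that the skein relation and the quantum exchange relation agree on the nose; this is what licenses the inductive identification $\cC_{k+1}=\mu(\cC_k)$.
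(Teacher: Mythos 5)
Your treatment of the exchange relation $\eta_i^{(k)}$ follows essentially the same route as the paper: reduce to $i=0$ by the shift symmetry, superpose the two staircases, observe that all crossings except the one near the ends are arborizable or at special points (so their over/under data can be discarded up to powers of $v^{1/2}$), resolve the remaining crossing into exactly two terms, and identify each term with the claimed flat web using \cref{lem:face-vanishing} and changes of lift. The paper likewise declines to record $m_k,n_k$ explicitly. So on this half your plan is sound and matches the actual argument, though the claim that ``all but two terms die by creating an elliptic face'' is optimistic as stated: the paper instead resolves a single crossing into two terms from the outset and then massages each term, rather than generating many terms and killing most of them.

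The concrete gap is in your base case for the cluster statement. The collection $\cC_0=\{x_2^{(0)},x_0^{(2)},y_0^{(1)},y_0^{(3)},y_2^{(1)},e_{0,3}\}\cup\partial_\Sigma$ is \emph{not} the web cluster of the initial decorated triangulation read off from \cref{fig:web_cluster_triangle}: the webs $y_0^{(3)}$ (three triangles joined by a cycle of three type~$2$ edges) and $x_0^{(2)}$ do not appear in any triangle-by-triangle assignment. As the paper's diagram shows, $\cC_0$ is reached from that initial web cluster only after two further mutations, $\mu_4$ with $e_4e_4'=v[e_2e_5]+v^{-1}[e_1^2e_6]$ and then $\mu_1$ with $e_1e_1'=v[e_5^2e_7e_9]+[e_3e_4]$, and neither of these is an instance of $\eta_i^{(k)}$, so each needs its own skein verification before your induction can start. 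Relatedly, your induction step leans on $\eta_{2k+2}^{(2k)}$ being \emph{on the nose} the quantum exchange relation of the seed at $\cC_k$ (correct cluster monomials and correct $q$-powers); you flag this but do not discharge it, and the paper itself only verifies it for the first steps via the Kronecker sub-quiver after the proof. The paper sidesteps part of this burden by checking the defining properties of a web cluster directly --- $v$-commutativity of $\cC_k$ via the arborization relations of \cref{lem:arborization}, indecomposability via the cluster-variable correspondence stated in the section preamble, and the cardinality count --- rather than deriving them wholesale from membership in a quantum seed. Your route is viable, but only after the base case is repaired and the seed identification is actually certified.
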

\begin{proof}
    It suffices to show \eqref{eq:infty-exch} for $i=0$, since $x_{i}^{(k)}$ is obtained by shifting $x_{0}^{(k)}$ to the right by $i$ special points.
    In the calculation below, we tacitly exchange over- and under-passing arcs when a crossing arborizes at special points, discarding the explicit description of the exponents of $q$ appearing in such exchanges.
    \begin{align*}
        &x_{0}^{(k+4)}x_{0}^{(k)}
        =q^{\bullet}
        \mathord{
            \ \tikz[baseline=.5em, scale=.1]{
                \foreach \i in {0,1,...,14}
                {
                    \coordinate (A\i) at (10*\i,0);
                    \coordinate (B\i) at (10*\i,8);
                }
                \begin{scope}
                    \draw[webline, dashed] ($(B5)+(-2,6)$) -- ($(B5)+(2,6)$);
                    \foreach \j in {3,4,6}
                    {
                        \draw[webline] (A\j) -- ($(B\j)+(-2,6)$) -- ($(B\j)+(2,6)$) -- (A\j);
                    }
                    \foreach \i [evaluate=\i as \j using {int(\i-1)}] in {4,5,6}
                    {
                        \draw[wline] ($(B\i)+(-2,6)$) -- ($(B\j)+(2,6)$);
                    }
                    \draw[wline] ($(B3)+(-2,6)$) -- (B2);
                    \draw[webline] (A1) -- (B2);
                    \draw[webline] (A2) -- (B2);
                    \draw[wline, rounded corners] (A7) -- ($(B7)+(-2,6)$) -- ($(B6)+(2,6)$);
                \end{scope}
                \begin{scope}
                    \draw[webline, dashed] ($(A5)+(-2,6)$) -- ($(A9)+(2,6)$);
                    \foreach \j in {3,4,6,7,8,9,10}
                    {
                        \draw[overarc] ($(A\j)+(-3,6)$) -- ($(A\j)+(3,6)$);
                        \draw[webline] (A\j) -- ($(A\j)+(-3,6)$);
                        \draw[webline] ($(A\j)+(3,6)$) -- (A\j);
                    }
                    \foreach \i [evaluate=\i as \j using {int(\i-1)}] in {4,5,6,7,8,9,10}
                    {
                        \draw[wline] ($(A\i)+(-3,6)$) -- ($(A\j)+(3,6)$);
                    }
                    \draw[overarc] (B1) -- ($(A2)!.2!(B1)$);
                    \draw[webline] (A2) -- ($(A2)!.5!(B1)$);
                    \draw[overwline, rounded corners] (B1) -- ($(B1)+(0,6)$) -- ($(B2)+(0,6)$) -- ($(A3)+(-3,6)$);
                    \draw[webline] (A1) -- (B1);
                    \draw[wline, rounded corners] (A11) -- ($(A11)+(-3,6)$) -- ($(A10)+(3,6)$);
                \end{scope}
                \bdryline{(A0)}{(A12)}{2cm}
                \foreach \j in {1,2,3,4,6,7,8,9,10,11}
                {
                    \fill (A\j) circle (20pt);
                }
                \node at (A1) [below]{\scriptsize $p_{0}$};
                \node at (A2) [below]{\scriptsize $p_{1}$};
                \node at (A3) [below]{\scriptsize $p_{2}$};
                \node at (A4) [below]{\scriptsize $p_{3}$};
                \node at (A7) [below]{\scriptsize $p_{k+2}$};
                \node at (A8) [below]{\scriptsize $p_{k+3}$};
                \node at (A9) [below]{\scriptsize $p_{k+4}$};
                \node at (A10) [below]{\scriptsize $p_{k+5}$};
                \node at (A11) [below]{\scriptsize $p_{k+6}$};
            }\ 
        }\\
        &=q^{\bullet}q
        \mathord{
            \ \tikz[baseline=.5em, scale=.1]{
                \foreach \i in {0,1,...,14}
                {
                    \coordinate (A\i) at (10*\i,0);
                    \coordinate (B\i) at (10*\i,8);
                }
                \begin{scope}
                    \draw[webline, dashed] ($(B5)+(-2,6)$) -- ($(B5)+(2,6)$);
                    \foreach \j in {3,4,6}
                    {
                        \draw[webline] (A\j) -- ($(B\j)+(-2,6)$) -- ($(B\j)+(2,6)$) -- (A\j);
                    }
                    \foreach \i [evaluate=\i as \j using {int(\i-1)}] in {4,5,6}
                    {
                        \draw[wline] ($(B\i)+(-2,6)$) -- ($(B\j)+(2,6)$);
                    }
                    \draw[wline] ($(B3)+(-2,6)$) -- (B2);
                    \draw[webline] (A2) -- (B2);
                    \draw[wline, rounded corners] (A7) -- ($(B7)+(-2,6)$) -- ($(B6)+(2,6)$);
                \end{scope}
                \begin{scope}
                    \draw[webline, dashed] ($(A5)+(-2,6)$) -- ($(A9)+(2,6)$);
                    \foreach \j in {3,4,6,7,8,9,10}
                    {
                        \draw[overarc] ($(A\j)+(-3,6)$) -- ($(A\j)+(3,6)$);
                        \draw[webline] (A\j) -- ($(A\j)+(-3,6)$);
                        \draw[webline] ($(A\j)+(3,6)$) -- (A\j);
                    }
                    \foreach \i [evaluate=\i as \j using {int(\i-1)}] in {4,5,6,7,8,9,10}
                    {
                        \draw[wline] ($(A\i)+(-3,6)$) -- ($(A\j)+(3,6)$);
                    }
                    \draw[overwline, rounded corners] (B1) -- ($(B1)+(0,6)$) -- ($(B2)+(0,6)$) -- ($(A3)+(-3,6)$);
                    \draw[webline]  (B1) -- (B2);
                    \draw[webline]  (A1) to[bend left] (A2);
                    \draw[webline] (A1) -- (B1);
                    \draw[wline, rounded corners] (A11) -- ($(A11)+(-3,6)$) -- ($(A10)+(3,6)$);
                \end{scope}
                \bdryline{(A0)}{(A12)}{2cm}
                \foreach \j in {1,2,3,4,6,7,8,9,10,11}
                {
                    \fill (A\j) circle (20pt);
                }
                \node at (A1) [below]{\scriptsize $p_{0}$};
                \node at (A2) [below]{\scriptsize $p_{1}$};
                \node at (A3) [below]{\scriptsize $p_{2}$};
                \node at (A4) [below]{\scriptsize $p_{3}$};
                \node at (A7) [below]{\scriptsize $p_{k+2}$};
                \node at (A8) [below]{\scriptsize $p_{k+3}$};
                \node at (A9) [below]{\scriptsize $p_{k+4}$};
                \node at (A10) [below]{\scriptsize $p_{k+5}$};
                \node at (A11) [below]{\scriptsize $p_{k+6}$};
            }\ 
        }\\
        &\quad+q^{\bullet}
        \mathord{
            \ \tikz[baseline=.5em, scale=.1]{
                \foreach \i in {0,1,...,14}
                {
                    \coordinate (A\i) at (10*\i,0);
                    \coordinate (B\i) at (10*\i,8);
                }
                \begin{scope}
                    \draw[webline, dashed] ($(B5)+(-2,6)$) -- ($(B5)+(2,6)$);
                    \foreach \j in {3,4,6}
                    {
                        \draw[webline] (A\j) -- ($(B\j)+(-2,6)$) -- ($(B\j)+(2,6)$) -- (A\j);
                    }
                    \foreach \i [evaluate=\i as \j using {int(\i-1)}] in {4,5,6}
                    {
                        \draw[wline] ($(B\i)+(-2,6)$) -- ($(B\j)+(2,6)$);
                    }
                    \draw[wline] ($(B3)+(-2,6)$) -- (B2);
                    \draw[webline] (A2) -- (B2);
                    \draw[wline, rounded corners] (A7) -- ($(B7)+(-2,6)$) -- ($(B6)+(2,6)$);
                \end{scope}
                \begin{scope}
                    \draw[webline, dashed] ($(A5)+(-2,6)$) -- ($(A9)+(2,6)$);
                    \foreach \j in {3,4,6,7,8,9,10}
                    {
                        \draw[overarc] ($(A\j)+(-3,6)$) -- ($(A\j)+(3,6)$);
                        \draw[webline] (A\j) -- ($(A\j)+(-3,6)$);
                        \draw[webline] ($(A\j)+(3,6)$) -- (A\j);
                    }
                    \foreach \i [evaluate=\i as \j using {int(\i-1)}] in {4,5,6,7,8,9,10}
                    {
                        \draw[wline] ($(A\i)+(-3,6)$) -- ($(A\j)+(3,6)$);
                    }
                    \draw[webline] (A1) -- ($(A1)!.5!(B2)+(-2,0)$) -- (B1);
                    \draw[webline] (A2) -- ($(A2)!.5!(B1)+(2,0)$) -- (B2);
                    \draw[wline] ($(A1)!.5!(B2)+(-2,0)$) -- ($(A2)!.5!(B1)+(2,0)$);
                    \draw[overwline, rounded corners] (B1) -- ($(B1)+(0,6)$) -- ($(B2)+(0,6)$) -- ($(A3)+(-3,6)$);
                    \draw[webline] (A1) -- (B1);
                    \draw[wline, rounded corners] (A11) -- ($(A11)+(-3,6)$) -- ($(A10)+(3,6)$);
                \end{scope}
                \bdryline{(A0)}{(A12)}{2cm}
                \foreach \j in {1,2,3,4,6,7,8,9,10,11}
                {
                    \fill (A\j) circle (20pt);
                }
                \node at (A1) [below]{\scriptsize $p_{0}$};
                \node at (A2) [below]{\scriptsize $p_{1}$};
                \node at (A3) [below]{\scriptsize $p_{2}$};
                \node at (A4) [below]{\scriptsize $p_{3}$};
                \node at (A7) [below]{\scriptsize $p_{k+2}$};
                \node at (A8) [below]{\scriptsize $p_{k+3}$};
                \node at (A9) [below]{\scriptsize $p_{k+4}$};
                \node at (A10) [below]{\scriptsize $p_{k+5}$};
                \node at (A11) [below]{\scriptsize $p_{k+6}$};
            }\ .
        }
    \end{align*}
    Applying \cref{lem:face-vanishing} to the first term, we get:
    \begin{align*}
        &\mathord{
            \ \tikz[baseline=.5em, scale=.1]{
                \foreach \i in {0,1,...,14}
                {
                    \coordinate (A\i) at (10*\i,0);
                    \coordinate (B\i) at (10*\i,8);
                }
                \begin{scope}
                    \draw[webline, dashed] ($(B5)+(-2,6)$) -- ($(B5)+(2,6)$);
                    \foreach \j in {3,4,6}
                    {
                        \draw[webline] (A\j) -- ($(B\j)+(-2,6)$) -- ($(B\j)+(2,6)$) -- (A\j);
                    }
                    \foreach \i [evaluate=\i as \j using {int(\i-1)}] in {4,5,6}
                    {
                        \draw[wline] ($(B\i)+(-2,6)$) -- ($(B\j)+(2,6)$);
                    }
                    \draw[wline] ($(B3)+(-2,6)$) -- (B2);
                    \draw[webline] (A2) -- (B2);
                    \draw[wline, rounded corners] (A7) -- ($(B7)+(-2,6)$) -- ($(B6)+(2,6)$);
                \end{scope}
                \begin{scope}
                    \draw[webline, dashed] ($(A5)+(-2,6)$) -- ($(A9)+(2,6)$);
                    \foreach \j in {3,4,6,7,8,9,10}
                    {
                        \draw[overarc] ($(A\j)+(-3,6)$) -- ($(A\j)+(3,6)$);
                        \draw[webline] (A\j) -- ($(A\j)+(-3,6)$);
                        \draw[webline] ($(A\j)+(3,6)$) -- (A\j);
                    }
                    \foreach \i [evaluate=\i as \j using {int(\i-1)}] in {4,5,6,7,8,9,10}
                    {
                        \draw[wline] ($(A\i)+(-3,6)$) -- ($(A\j)+(3,6)$);
                    }
                    \draw[overwline, rounded corners] (B1) -- ($(B1)+(0,6)$) -- ($(B2)+(0,6)$) -- ($(A3)+(-3,6)$);
                    \draw[webline]  (B1) -- (B2);
                    \draw[webline]  (A1) to[bend left] (A2);
                    \draw[webline] (A1) -- (B1);
                    \draw[wline, rounded corners] (A11) -- ($(A11)+(-3,6)$) -- ($(A10)+(3,6)$);
                \end{scope}
                \bdryline{(A0)}{(A12)}{2cm}
                \foreach \j in {1,2,3,4,6,7,8,9,10,11}
                {
                    \fill (A\j) circle (20pt);
                }
                \node at (A1) [below]{\scriptsize $p_{0}$};
                \node at (A2) [below]{\scriptsize $p_{1}$};
                \node at (A3) [below]{\scriptsize $p_{2}$};
                \node at (A4) [below]{\scriptsize $p_{3}$};
                \node at (A7) [below]{\scriptsize $p_{k+2}$};
                \node at (A8) [below]{\scriptsize $p_{k+3}$};
                \node at (A9) [below]{\scriptsize $p_{k+4}$};
                \node at (A10) [below]{\scriptsize $p_{k+5}$};
                \node at (A11) [below]{\scriptsize $p_{k+6}$};
            }\ 
        }\\
       &=q^{\bullet}
        \mathord{
            \ \tikz[baseline=.5em, scale=.1]{
                \foreach \i in {0,1,...,14}
                {
                    \coordinate (A\i) at (10*\i,0);
                    \coordinate (B\i) at (10*\i,8);
                }
                \begin{scope}
                    \foreach \j in {7,8,9,10}
                    {
                        \draw[overarc] ($(A\j)+(-3,6)$) -- ($(A\j)+(3,6)$);
                        \draw[webline] (A\j) -- ($(A\j)+(-3,6)$);
                        \draw[webline] ($(A\j)+(3,6)$) -- (A\j);
                    }
                    \foreach \i [evaluate=\i as \j using {int(\i-1)}] in {7,8,9,10}
                    {
                        \draw[wline] ($(A\i)+(-3,6)$) -- ($(A\j)+(3,6)$);
                    }
                    \draw[webline]  (A1) to[bend left] (A2);
                    \draw[webline]  (A2) to[bend left] (A3);
                    \draw[webline]  (A1) to[bend left=2cm] (A3);
                    \draw[webline]  (A3) to[out=north, in=west] ($(A3)!.5!(A4)+(0,3)$) to[out=east, in=north] (A4);
                    \draw[webline]  (A3) to[out=north, in=west] ($(A3)!.5!(A4)+(0,5)$) to[out=east, in=north] (A4);
                    \draw[webline]  (A4) to[out=north, in=west] ($(A4)!.5!(A5)+(0,3)$);
                    \draw[webline]  (A4) to[out=north, in=west] ($(A4)!.5!(A5)+(0,5)$);
                    \draw[webline]  ($(A5)!.5!(A6)+(0,3)$) to[out=east, in=north] (A6);
                    \draw[webline]  ($(A5)!.5!(A6)+(0,5)$) to[out=east, in=north] (A6);
                    \node at ($(A5)+(0,5)$) [red]{$\dots$};
                    \node at ($(A5)+(0,3)$) [red]{$\dots$};
                    \draw[wline, rounded corners] (A11) -- ($(A11)+(-3,6)$) -- ($(A10)+(3,6)$);
                    \draw[webline] (A6) -- ($(A6)+(3,6)$);
                    \draw[webline] ($(A6)+(3,6)$) -- ($(A6)+(3,3)$);
                    \draw[webline] ($(A6)+(3,3)$) -- (A6);
                    \draw[wline] ($(A6)+(3,3)$) -- (A7);
                \end{scope}
                \bdryline{(A0)}{(A12)}{2cm}
                \foreach \j in {1,2,3,4,6,7,8,9,10,11}
                {
                    \fill (A\j) circle (20pt);
                }
                \node at (A1) [below]{\scriptsize $p_{0}$};
                \node at (A2) [below]{\scriptsize $p_{1}$};
                \node at (A3) [below]{\scriptsize $p_{2}$};
                \node at (A4) [below]{\scriptsize $p_{3}$};
                \node at (A7) [below]{\scriptsize $p_{k+2}$};
                \node at (A8) [below]{\scriptsize $p_{k+3}$};
                \node at (A9) [below]{\scriptsize $p_{k+4}$};
                \node at (A10) [below]{\scriptsize $p_{k+5}$};
                \node at (A11) [below]{\scriptsize $p_{k+6}$};
            }\ 
        }\\
        &=q^{\bullet}
        \mathord{
            \ \tikz[baseline=.5em, scale=.1]{
                \foreach \i in {0,1,...,14}
                {
                    \coordinate (A\i) at (10*\i,0);
                    \coordinate (B\i) at (10*\i,8);
                }
                \begin{scope}
                    \foreach \j in {8,9,10}
                    {
                        \draw[overarc] ($(A\j)+(-3,6)$) -- ($(A\j)+(3,6)$);
                        \draw[webline] (A\j) -- ($(A\j)+(-3,6)$);
                        \draw[webline] ($(A\j)+(3,6)$) -- (A\j);
                    }
                    \foreach \i [evaluate=\i as \j using {int(\i-1)}] in {9,10}
                    {
                        \draw[wline] ($(A\i)+(-3,6)$) -- ($(A\j)+(3,6)$);
                    }
                    \draw[wline, rounded corners] ($(A8)+(-3,6)$) -- ($(A7)+(3,6)$) -- (A7);
                    \draw[webline]  (A1) to[bend left] (A2);
                    \draw[webline]  (A2) to[bend left] (A3);
                    \draw[webline]  (A1) to[bend left=2cm] (A3);
                    \draw[webline]  (A3) to[out=north, in=west] ($(A3)!.5!(A4)+(0,3)$) to[out=east, in=north] (A4);
                    \draw[webline]  (A3) to[out=north, in=west] ($(A3)!.5!(A4)+(0,5)$) to[out=east, in=north] (A4);
                    \draw[webline]  (A4) to[out=north, in=west] ($(A4)!.5!(A5)+(0,3)$);
                    \draw[webline]  (A4) to[out=north, in=west] ($(A4)!.5!(A5)+(0,5)$);
                    \draw[webline]  ($(A5)!.5!(A6)+(0,3)$) to[out=east, in=north] (A6);
                    \draw[webline]  ($(A5)!.5!(A6)+(0,5)$) to[out=east, in=north] (A6);
                    \node at ($(A5)+(0,5)$) [red]{$\dots$};
                    \node at ($(A5)+(0,3)$) [red]{$\dots$};
                    \draw[webline]  (A6) to[out=north, in=west] ($(A6)!.5!(A7)+(0,3)$) to[out=east, in=north] (A7);
                    \draw[webline]  (A6) to[out=north, in=west] ($(A6)!.5!(A7)+(0,5)$) to[out=east, in=north] (A7);
                    \draw[wline, rounded corners] (A11) -- ($(A11)+(-3,6)$) -- ($(A10)+(3,6)$);
                \end{scope}
                \bdryline{(A0)}{(A12)}{2cm}
                \foreach \j in {1,2,3,4,6,7,8,9,10,11}
                {
                    \fill (A\j) circle (20pt);
                }
                \node at (A1) [below]{\scriptsize $p_{0}$};
                \node at (A2) [below]{\scriptsize $p_{1}$};
                \node at (A3) [below]{\scriptsize $p_{2}$};
                \node at (A4) [below]{\scriptsize $p_{3}$};
                \node at (A7) [below]{\scriptsize $p_{k+2}$};
                \node at (A8) [below]{\scriptsize $p_{k+3}$};
                \node at (A9) [below]{\scriptsize $p_{k+4}$};
                \node at (A10) [below]{\scriptsize $p_{k+5}$};
                \node at (A11) [below]{\scriptsize $p_{k+6}$};
            }\ 
        }.
    \end{align*}
    We can take another lift of $\mathfrak{sp}_4$-web in the second term as
    \begin{align*}
        \mathord{
            \ \tikz[baseline=.5em, scale=.1]{
                \foreach \i in {0,1,...,14}
                {
                    \coordinate (A\i) at (10*\i,0);
                    \coordinate (B\i) at (10*\i,8);
                }
                \begin{scope}
                    \draw[webline, dashed] ($(B5)+(-2,6)$) -- ($(B5)+(2,6)$);
                    \foreach \j in {3,4,6,7,8,9,10}
                    {
                        \draw[webline] (A\j) -- ($(B\j)+(-2,6)$) -- ($(B\j)+(2,6)$) -- (A\j);
                    }
                    \foreach \i [evaluate=\i as \j using {int(\i-1)}] in {4,5,6,7,8,9,10}
                    {
                        \draw[wline] ($(B\i)+(-2,6)$) -- ($(B\j)+(2,6)$);
                    }
                    \draw[wline, rounded corners] (A11) -- ($(B11)+(-2,6)$) -- ($(B10)+(2,6)$);
                \end{scope}
                \begin{scope}
                    \draw[webline, dashed] ($(A5)+(-2,6)$) -- ($(A9)+(2,6)$);
                    \foreach \j in {1,2,3,4,6,7,8,9,10}
                    {
                        \draw[overarc] ($(A\j)+(-3,6)$) -- ($(A\j)+(3,6)$);
                        \draw[webline] (A\j) -- ($(A\j)+(-3,6)$);
                        \draw[webline] ($(A\j)+(3,6)$) -- (A\j);
                    }
                    \foreach \i [evaluate=\i as \j using {int(\i-1)}] in {2,3,...,10}
                    {
                        \draw[wline] ($(A\i)+(-3,6)$) -- ($(A\j)+(3,6)$);
                    }
                    \draw[wline, rounded corners] ($(A1)+(-3,6)$) -- ($(B1)+(-3,6)$) -- ($(B3)+(-2,6)$);
                    \draw[wline, rounded corners] (A11) -- ($(A11)+(-3,6)$) -- ($(A10)+(3,6)$);
                \end{scope}
                \bdryline{(A0)}{(A12)}{2cm}
                \foreach \j in {1,2,3,4,6,7,8,9,10,11}
                {
                    \fill (A\j) circle (20pt);
                }
                \node at (A1) [below]{\scriptsize $p_{2}$};
                \node at (A2) [below]{\scriptsize $p_{3}$};
                \node at (A3) [below]{\scriptsize $p_{4}$};
                \node at (A4) [below]{\scriptsize $p_{5}$};
                \node at (A7) [below]{\scriptsize $p_{k+2}$};
                \node at (A8) [below]{\scriptsize $p_{k+3}$};
                \node at (A9) [below]{\scriptsize $p_{k+4}$};
                \node at (A10) [below]{\scriptsize $p_{k+5}$};
                \node at (A11) [below]{\scriptsize $p_{k+6}$};
            }\ .
        }
    \end{align*}
    Furthermore, one can confirm the above $\mathfrak{sp}_4$-web is the same as
    \begin{align*}
        \mathord{
            \ \tikz[baseline=.5em, scale=.1]{
                \foreach \i in {0,1,...,14}
                {
                    \coordinate (A\i) at (10*\i,0);
                    \coordinate (B\i) at (10*\i,8);
                }
                \begin{scope}
                    \draw[webline, dashed] ($(B5)+(-2,6)$) -- ($(B5)+(2,6)$);
                    \foreach \j in {3,4,6,7,8,9,10}
                    {
                        \draw[webline] (A\j) -- ($(B\j)+(-2,6)$) -- ($(B\j)+(2,6)$) -- (A\j);
                    }
                    \foreach \i [evaluate=\i as \j using {int(\i-1)}] in {4,5,6,7,8,9,10}
                    {
                        \draw[wline] ($(B\i)+(-2,6)$) -- ($(B\j)+(2,6)$);
                    }
                    \draw[wline, rounded corners] (A11) -- ($(B11)+(-2,6)$) -- ($(B10)+(2,6)$);
                \end{scope}
                \begin{scope}
                    \draw[webline, dashed] ($(A5)+(-2,6)$) -- ($(A9)+(2,6)$);
                    \foreach \j in {3,4,6,7,8,9,10}
                    {
                        \draw[overarc] ($(A\j)+(-3,6)$) -- ($(A\j)+(3,6)$);
                        \draw[webline] (A\j) -- ($(A\j)+(-3,6)$);
                        \draw[webline] ($(A\j)+(3,6)$) -- (A\j);
                    }
                    \foreach \i [evaluate=\i as \j using {int(\i-1)}] in {3,4,...,10}
                    {
                        \draw[wline] ($(A\i)+(-3,6)$) -- ($(A\j)+(3,6)$);
                    }
                    \draw[wline, rounded corners] ($(A1)+(0,6)$) -- ($(B1)+(0,6)$) -- ($(B3)+(-2,6)$);
                    \draw[wline, rounded corners] (A11) -- ($(A11)+(-3,6)$) -- ($(A10)+(3,6)$);
                    \draw[webline] (A2) -- ($(A1)+(0,6)$);
                    \draw[overarc] (A1) -- ($(A2)+(3,6)$);
                    \draw[webline] (A1) -- ($(A1)+(0,6)$);
                    \draw[webline] (A2) -- ($(A2)+(3,6)$);
                \end{scope}
                \bdryline{(A0)}{(A12)}{2cm}
                \foreach \j in {1,2,3,4,6,7,8,9,10,11}
                {
                    \fill (A\j) circle (20pt);
                }
                \node at (A1) [below]{\scriptsize $p_{2}$};
                \node at (A2) [below]{\scriptsize $p_{3}$};
                \node at (A3) [below]{\scriptsize $p_{4}$};
                \node at (A4) [below]{\scriptsize $p_{5}$};
                \node at (A7) [below]{\scriptsize $p_{k+2}$};
                \node at (A8) [below]{\scriptsize $p_{k+3}$};
                \node at (A9) [below]{\scriptsize $p_{k+4}$};
                \node at (A10) [below]{\scriptsize $p_{k+5}$};
                \node at (A11) [below]{\scriptsize $p_{k+6}$};
            }\ 
        }
    \end{align*}
    again by \cref*{lem:face-vanishing}.
    Hence,
    \begin{align*}
        x_{0}^{(k+4)}x_{0}^{(k)}=q^{\bullet}\bigg[ e_{0,2}e_{0,1}e_{1,2}\left(\prod_{j=2}^{k+1}(e_{j,j+1})^2\right)y_{k+2}^{(3)}\bigg]+q^{\bullet}(x_{2}^{(k+2)})^2.
    \end{align*}
    Firstly, it is easy to see that $\mathfrak{sp}_4$-webs in $\cC_k$ are $v$-commutative by the arborization relations in \cref{lem:arborization}, and that $\cC_{k}$ is related to $\cC_{k+1}$ by $\eta_{2k+2}^{(2k)}$ in \eqref{eq:infty-exch}.
\end{proof}
The mutation sequence among the web clusters $\cC_k$ are the following:
    
    \begin{tikzpicture}
        \begin{scope}[xshift=6.5cm, yshift=0cm]
            \draw[->, thick] (-2,.5) -- (2,-.5);
            \node at ($(-2,.5)!.3!(2,-.5)$) [above right]{$\mu_{4}\colon e_4e_4'=q[e_2e_5]+q^{-1}[e_1^2e_6]$};
        \end{scope}
        \begin{scope}[xshift=6.5cm, yshift=-2.5cm]
            \draw[->, thick] (2,.5) -- (-2,-.5);
            \node at ($(2,.5)!.3!(-2,-.5)$) [above left]{$\mu_{1}\colon e_1e_1'=q[e_5^2e_7e_9]+[e_3e_4]$};
        \end{scope}
        \begin{scope}[xshift=6.5cm, yshift=-5cm]
            \node (C0) at (-2,.5) {};
            \node (C1) at (0,-1) {$\cC_1$};
            \node (C2) at (2,-1) {$\cC_2$};
            \node (C3) at (4,-1) {$\cC_3$};
            \node (C4) at (6,-1) {$\cdots$};
            \node at (6,-.7) {$\cdots$};
            \node at ($(C0)!.3!(C1)$) [above right]{\footnotesize $\mu_{5}=\eta_2^{(0)}$};
            \node at ($(C1)!.5!(C2)$) [above]{\footnotesize $\mu_{1}=\eta_4^{(2)}$};
            \node at ($(C2)!.5!(C3)$) [above]{\footnotesize $\mu_{5}=\eta_6^{(4)}$};
            \node at ($(C3)!.5!(C4)$) [above]{\footnotesize $\mu_{1}=\eta_8^{(6)}$};
            \draw[->] (C0) edge (C1) (C1) edge (C2) (C2) edge (C3) (C3) edge (C4);
        \end{scope}
        \begin{scope}
            \node at (0,0)
            {
                \ \tikz[baseline=-.6ex, scale=.05]{
                    \foreach \i in {1,2,...,6}
                    \foreach \j in {1,2,...,6}
                    {
                        \coordinate (P\i\j) at (5*\j,-5*\i);
                    }
                    \coordinate (C) at ($(P11)!.5!(P66)$);
                    \draw[wline] (P52) -- (P11);
                    \draw[webline] (P52) -- (P61);
                    \draw[webline] (P52) -- (P66);
                    \draw[thick] (P11) -- (P61) -- (P66) -- (P16) -- cycle;
                    \foreach \i in {1,6}
                    \foreach \j in {1,6}
                    {
                        \draw[fill] (P\i\j) circle (30pt);
                    }
                }\ 
            };
            \node at (0,1.5)
            {
                \ \tikz[baseline=-.6ex, scale=.05]{
                    \foreach \i in {1,2,...,6}
                    \foreach \j in {1,2,...,6}
                    {
                        \coordinate (P\i\j) at (5*\j,-5*\i);
                    }
                    \coordinate (C) at ($(P11)!.5!(P66)$);
                    \draw[wline] (P11) -- (P32);
                    \draw[wline] (P66) -- (P54);
                    \draw[webline] (P61) -- (P32) -- (P54) -- (P61);
                    \draw[thick] (P11) -- (P61) -- (P66) -- (P16) -- cycle;
                    \foreach \i in {1,6}
                    \foreach \j in {1,6}
                    {
                        \draw[fill] (P\i\j) circle (30pt);
                    }
                }\ 
            };
            \node at (1.5,0)
            {
                \ \tikz[baseline=-.6ex, scale=.05]{
                    \foreach \i in {1,2,...,6}
                    \foreach \j in {1,2,...,6}
                    {
                        \coordinate (P\i\j) at (5*\j,-5*\i);
                    }
                    \coordinate (C) at ($(P11)!.5!(P66)$);
                    \draw[webline] (P11) -- (P66);
                    \draw[thick] (P11) -- (P61) -- (P66) -- (P16) -- cycle;
                    \foreach \i in {1,6}
                    \foreach \j in {1,6}
                    {
                        \draw[fill] (P\i\j) circle (30pt);
                    }
                }\ 
            };
            \node at (1.5,-1.2)
            {
                \ \tikz[baseline=.5em, scale=.1]{
                    \draw[thick] (0,0) rectangle (6,6);
                    \draw[blue] (0,6) -- (6,0);
                    \node at (1.5,2.5) {\scriptsize ${+}$};
                    \node at (4,4) {\scriptsize ${-}$};
                    \node at (3.5,1) {\scriptsize $\ast$};
                    \node at (5,2) {\scriptsize $\ast$};
                    \fill (0,0) circle (15pt);
                    \fill (6,0) circle (15pt);
                    \fill (6,6) circle (15pt);
                    \fill (0,6) circle (15pt);
                }\ 
            };
            \node at (1.5,1.5)
            {
                \ \tikz[baseline=-.6ex, scale=.05]{
                    \foreach \i in {1,2,...,6}
                    \foreach \j in {1,2,...,6}
                    {
                        \coordinate (P\i\j) at (5*\j,-5*\i);
                    }
                    \coordinate (C) at ($(P11)!.5!(P66)$);
                    \draw[wline] (P11) -- (P66);
                    \draw[thick] (P11) -- (P61) -- (P66) -- (P16) -- cycle;
                    \foreach \i in {1,6}
                    \foreach \j in {1,6}
                    {
                        \draw[fill] (P\i\j) circle (30pt);
                    }
                }\ 
            };
            \node at (3,0)
            {
                \ \tikz[baseline=-.6ex, scale=.05]{
                    \foreach \i in {1,2,...,6}
                    \foreach \j in {1,2,...,6}
                    {
                        \coordinate (P\i\j) at (5*\j,-5*\i);
                    }
                    \coordinate (C) at ($(P11)!.5!(P66)$);
                    \node at (C) [gray]{\scriptsize $x_{2}^{(0)}$};
                    \draw[wline] (P11) -- (P25);
                    \draw[webline] (P66) -- (P25) -- (P16);
                    \draw[thick] (P11) -- (P61) -- (P66) -- (P16) -- cycle;
                    \foreach \i in {1,6}
                    \foreach \j in {1,6}
                    {
                        \draw[fill] (P\i\j) circle (30pt);
                    }
                }\ 
            };
            \node at (3,1.5)
            {
                \ \tikz[baseline=-.6ex, scale=.05]{
                    \foreach \i in {1,2,...,6}
                    \foreach \j in {1,2,...,6}
                    {
                        \coordinate (P\i\j) at (5*\j,-5*\i);
                    }
                    \coordinate (C) at ($(P11)!.5!(P66)$);
                    \draw[wline] (P11) -- (P23);
                    \draw[wline] (P66) -- (P45);
                    \draw[webline] (P16) -- (P23) -- (P45) -- (P16);
                    \draw[thick] (P11) -- (P61) -- (P66) -- (P16) -- cycle;
                    \foreach \i in {1,6}
                    \foreach \j in {1,6}
                    {
                        \draw[fill] (P\i\j) circle (30pt);
                    }
                }\ 
            };
        \end{scope}
        \begin{scope}[xshift=10cm, yshift=-2.5cm]
            \node at (0,0)
            {
                \ \tikz[baseline=-.6ex, scale=.05]{
                    \foreach \i in {1,2,...,6}
                    \foreach \j in {1,2,...,6}
                    {
                        \coordinate (P\i\j) at (5*\j,-5*\i);
                    }
                    \coordinate (C) at ($(P11)!.5!(P66)$);
                    \draw[wline] (P52) -- (P11);
                    \draw[webline] (P52) -- (P61);
                    \draw[webline] (P52) -- (P66);
                    \draw[thick] (P11) -- (P61) -- (P66) -- (P16) -- cycle;
                    \foreach \i in {1,6}
                    \foreach \j in {1,6}
                    {
                        \draw[fill] (P\i\j) circle (30pt);
                    }
                }\ 
            };
            \node at (0,1.5)
            {
                \ \tikz[baseline=-.6ex, scale=.05]{
                    \foreach \i in {1,2,...,6}
                    \foreach \j in {1,2,...,6}
                    {
                        \coordinate (P\i\j) at (5*\j,-5*\i);
                    }
                    \coordinate (C) at ($(P11)!.5!(P66)$);
                    \draw[wline] (P11) -- (P32);
                    \draw[wline] (P66) -- (P54);
                    \draw[webline] (P61) -- (P32) -- (P54) -- (P61);
                    \draw[thick] (P11) -- (P61) -- (P66) -- (P16) -- cycle;
                    \foreach \i in {1,6}
                    \foreach \j in {1,6}
                    {
                        \draw[fill] (P\i\j) circle (30pt);
                    }
                }\ 
            };
            \node at (1.5,0)
            {
                \ \tikz[baseline=-.6ex, scale=.05]{
                    \foreach \i in {1,2,...,6}
                    \foreach \j in {1,2,...,6}
                    {
                        \coordinate (P\i\j) at (5*\j,-5*\i);
                    }
                    \coordinate (C) at ($(P11)!.5!(P66)$);
                    \draw[webline] (P11) -- (P66);
                    \draw[thick] (P11) -- (P61) -- (P66) -- (P16) -- cycle;
                    \foreach \i in {1,6}
                    \foreach \j in {1,6}
                    {
                        \draw[fill] (P\i\j) circle (30pt);
                    }
                }\ 
            };
            \node at (1.5,-1.2){$\cC$};
            \node at (1.5,1.5)
            {
                \ \tikz[baseline=-.6ex, scale=.05]{
                    \foreach \i in {1,2,...,6}
                    \foreach \j in {1,2,...,6}
                    {
                        \coordinate (P\i\j) at (5*\j,-5*\i);
                    }
                    \coordinate (C) at ($(P11)!.5!(P66)$);
                    \node at (C) [gray]{\scriptsize $y_{0}^{(3)}$};
                    \draw[wline] (P11) -- (P42);
                    \draw[wline] (P53) -- (P54);
                    \draw[wline] (P45) -- (P35);
                    \draw[wline] (P24) -- (P11);
                    \draw[webline] (P61) -- (P42) -- (P53) -- (P61);
                    \draw[webline] (P66) -- (P54) -- (P45) -- (P66);
                    \draw[webline] (P16) -- (P35) -- (P24) -- (P16);
                    \draw[thick] (P11) -- (P61) -- (P66) -- (P16) -- cycle;
                    \foreach \i in {1,6}
                    \foreach \j in {1,6}
                    {
                        \draw[fill] (P\i\j) circle (30pt);
                    }
                }\ 
            };
            \node at (3,0)
            {
                \ \tikz[baseline=-.6ex, scale=.05]{
                    \foreach \i in {1,2,...,6}
                    \foreach \j in {1,2,...,6}
                    {
                        \coordinate (P\i\j) at (5*\j,-5*\i);
                    }
                    \coordinate (C) at ($(P11)!.5!(P66)$);
                    \node at (C) [gray]{\scriptsize $x_{2}^{(0)}$};
                    \draw[wline] (P11) -- (P25);
                    \draw[webline] (P66) -- (P25) -- (P16);
                    \draw[thick] (P11) -- (P61) -- (P66) -- (P16) -- cycle;
                    \foreach \i in {1,6}
                    \foreach \j in {1,6}
                    {
                        \draw[fill] (P\i\j) circle (30pt);
                    }
                }\ 
            };
            \node at (3,1.5)
            {
                \ \tikz[baseline=-.6ex, scale=.05]{
                    \foreach \i in {1,2,...,6}
                    \foreach \j in {1,2,...,6}
                    {
                        \coordinate (P\i\j) at (5*\j,-5*\i);
                    }
                    \coordinate (C) at ($(P11)!.5!(P66)$);
                    \draw[wline] (P11) -- (P23);
                    \draw[wline] (P66) -- (P45);
                    \draw[webline] (P16) -- (P23) -- (P45) -- (P16);
                    \draw[thick] (P11) -- (P61) -- (P66) -- (P16) -- cycle;
                    \foreach \i in {1,6}
                    \foreach \j in {1,6}
                    {
                        \draw[fill] (P\i\j) circle (30pt);
                    }
                }\ 
            };
        \end{scope}
        \begin{scope}[xshift=0cm, yshift=-5cm]
            \node at (0,0)
            {
                \ \tikz[baseline=-.6ex, scale=.05]{
                    \foreach \i in {1,2,...,6}
                    \foreach \j in {1,2,...,6}
                    {
                        \coordinate (P\i\j) at (5*\j,-5*\i);
                    }
                    \coordinate (C) at ($(P11)!.5!(P66)$);
                    \node at (C) [gray]{\scriptsize $x_{0}^{(2)}$};
                    \draw[wline] (P53) -- (P54);
                    \draw[wline] (P45) -- (P35);
                    \draw[wline] (P24) -- (P11);
                    \draw[webline] (P61) -- (P53) -- (P11);
                    \draw[webline] (P66) -- (P54) -- (P45) -- (P66);
                    \draw[webline] (P16) -- (P35) -- (P24) -- (P16);
                    \draw[thick] (P11) -- (P61) -- (P66) -- (P16) -- cycle;
                    \foreach \i in {1,6}
                    \foreach \j in {1,6}
                    {
                        \draw[fill] (P\i\j) circle (30pt);
                    }
                }\ 
            };
            \node at (0,1.5)
            {
                \ \tikz[baseline=-.6ex, scale=.05]{
                    \foreach \i in {1,2,...,6}
                    \foreach \j in {1,2,...,6}
                    {
                        \coordinate (P\i\j) at (5*\j,-5*\i);
                    }
                    \coordinate (C) at ($(P11)!.5!(P66)$);
                    \draw[wline] (P11) -- (P32);
                    \draw[wline] (P66) -- (P54);
                    \draw[webline] (P61) -- (P32) -- (P54) -- (P61);
                    \draw[thick] (P11) -- (P61) -- (P66) -- (P16) -- cycle;
                    \foreach \i in {1,6}
                    \foreach \j in {1,6}
                    {
                        \draw[fill] (P\i\j) circle (30pt);
                    }
                }\ 
            };
            \node at (1.5,0)
            {
                \ \tikz[baseline=-.6ex, scale=.05]{
                    \foreach \i in {1,2,...,6}
                    \foreach \j in {1,2,...,6}
                    {
                        \coordinate (P\i\j) at (5*\j,-5*\i);
                    }
                    \coordinate (C) at ($(P11)!.5!(P66)$);
                    \draw[webline] (P11) -- (P66);
                    \draw[thick] (P11) -- (P61) -- (P66) -- (P16) -- cycle;
                    \foreach \i in {1,6}
                    \foreach \j in {1,6}
                    {
                        \draw[fill] (P\i\j) circle (30pt);
                    }
                }\ 
            };
            \node at (1.5,-1.2){$\cC_{0}$};
            \node at (1.5,1.5)
            {
                \ \tikz[baseline=-.6ex, scale=.05]{
                    \foreach \i in {1,2,...,6}
                    \foreach \j in {1,2,...,6}
                    {
                        \coordinate (P\i\j) at (5*\j,-5*\i);
                    }
                    \coordinate (C) at ($(P11)!.5!(P66)$);
                    \node at (C) [gray]{\scriptsize $y_{0}^{(3)}$};
                    \draw[wline] (P11) -- (P42);
                    \draw[wline] (P53) -- (P54);
                    \draw[wline] (P45) -- (P35);
                    \draw[wline] (P24) -- (P11);
                    \draw[webline] (P61) -- (P42) -- (P53) -- (P61);
                    \draw[webline] (P66) -- (P54) -- (P45) -- (P66);
                    \draw[webline] (P16) -- (P35) -- (P24) -- (P16);
                    \draw[thick] (P11) -- (P61) -- (P66) -- (P16) -- cycle;
                    \foreach \i in {1,6}
                    \foreach \j in {1,6}
                    {
                        \draw[fill] (P\i\j) circle (30pt);
                    }
                }\ 
            };
            \node at (3,0)
            {
                \ \tikz[baseline=-.6ex, scale=.05]{
                    \foreach \i in {1,2,...,6}
                    \foreach \j in {1,2,...,6}
                    {
                        \coordinate (P\i\j) at (5*\j,-5*\i);
                    }
                    \coordinate (C) at ($(P11)!.5!(P66)$);
                    \node at (C) [gray]{\scriptsize $x_{2}^{(0)}$};
                    \draw[wline] (P11) -- (P25);
                    \draw[webline] (P66) -- (P25) -- (P16);
                    \draw[thick] (P11) -- (P61) -- (P66) -- (P16) -- cycle;
                    \foreach \i in {1,6}
                    \foreach \j in {1,6}
                    {
                        \draw[fill] (P\i\j) circle (30pt);
                    }
                }\ 
            };
            \node at (3,1.5)
            {
                \ \tikz[baseline=-.6ex, scale=.05]{
                    \foreach \i in {1,2,...,6}
                    \foreach \j in {1,2,...,6}
                    {
                        \coordinate (P\i\j) at (5*\j,-5*\i);
                    }
                    \coordinate (C) at ($(P11)!.5!(P66)$);
                    \draw[wline] (P11) -- (P23);
                    \draw[wline] (P66) -- (P45);
                    \draw[webline] (P16) -- (P23) -- (P45) -- (P16);
                    \draw[thick] (P11) -- (P61) -- (P66) -- (P16) -- cycle;
                    \foreach \i in {1,6}
                    \foreach \j in {1,6}
                    {
                        \draw[fill] (P\i\j) circle (30pt);
                    }
                }\ 
            };
        \end{scope}
    \end{tikzpicture}

One can verify that the relations $\eta_i^{(k)}$ are indeed quantum exchange relations. The quiver mutation for the first step $\mu_4$ is given by
\begin{align*}
\begin{tikzpicture}[scale=0.9]
[>=latex]
\pgfmathsetmacro{\r}{1.2};
{\color{mygreen}
\draw(\r,\r) circle(2pt) coordinate(x1) node[red,scale=0.8,right=0.2em]{$1$};
\draw(2*\r,2*\r) circle(2pt) coordinate(x3) node[red,scale=0.8,below=0.2em]{$3$};
\draw(3*\r,2*\r) circle(2pt) coordinate(x5) node[red,scale=0.8,below=0.2em]{$5$};
\dnode{0,2*\r}{mygreen} circle(2pt) coordinate(x2) node[red,scale=0.8,left=0.2em]{$2$};
\dnode{\r,3*\r}{mygreen} circle(2pt) coordinate(x4) node[red,scale=0.8,above=0.2em]{$4$};
\dnode{3*\r,3*\r}{mygreen} circle(2pt) coordinate(x6) node[red,scale=0.8,above right=0.2em]{$6$};
}
{\color{myblue}
\draw(-\r,3*\r) circle(2pt) coordinate(x7) node[red,scale=0.8,left=0.2em]{$7$};
\draw(\r,0) circle(2pt) coordinate(x9) node[red,scale=0.8,below=0.2em]{$9$};
\draw(4*\r,2*\r) circle(2pt) coordinate(x11) node[red,scale=0.8,right=0.2em]{$11$};
\draw(2*\r,4*\r) circle(2pt) coordinate(x13) node[red,scale=0.8,above=0.2em]{$13$};
\dnode{-\r,\r}{myblue} circle(2pt) coordinate(x8) node[red,scale=0.8,left=0.2em]{$8$};
\dnode{0,0}{myblue} circle(2pt) coordinate(x10) node[red,scale=0.8,below=0.2em]{$10$};
\dnode{4*\r,3*\r}{myblue} circle(2pt) coordinate(x12) node[red,scale=0.8,right=0.2em]{$12$};
\dnode{3*\r,4*\r}{myblue} circle(2pt) coordinate(x14) node[red,scale=0.8,above=0.2em]{$14$};
}
{\color{myblue}
\qarrow{x9}{x1}
\qarrow{x2}{x9}
\qarrow{x10}{x2}
\qarrow{x2}{x8}
\qarrow{x8}{x10}
\uniarrow{x7}{x1}{shorten <=2pt,shorten >=4pt,bend left=25}
\qarrow{x3}{x7}
\qarrow{x2}{x8}
\qarrow{x5}{x13}
\qarrow{x13}{x3}
\qarrow{x12}{x14}
\qarrow{x14}{x6}
\qarrow{x5}{x11}
\qarrow{x11}{x6}
\qarrow{x6}{x12}
\uniarrow{x9}{x10}{dashed,shorten <=2pt,shorten >=4pt}
\uniarrow{x8}{x7}{dashed,shorten <=4pt,shorten >=2pt}
\uniarrow{x12}{x11}{dashed,shorten <=4pt,shorten >=2pt}
\uniarrow{x13}{x14}{dashed,shorten <=2pt,shorten >=4pt}
}
{\color{mygreen}
\qarrow{x1}{x3}
\qarrow{x3}{x5}
\qsarrow{x2}{x4}
\qsarrow{x4}{x6}
\qstarrow{x6}{x5}
\qsharrow{x5}{x4}
\qstarrow{x4}{x1}
\qsharrow{x1}{x2}
}
\draw[->,thick] (5*\r,2*\r) --node[midway,above]{$\mu_4$} (6*\r,2*\r);
\begin{scope}[xshift=8*\r cm]
{\color{mygreen}
\node[inner sep=1.5pt] (c1) at (\r,\r) {};
\node[inner sep=1.5pt] (c5) at (3*\r,2*\r) {};
\draw(\r,\r) circle(2pt) coordinate(x1) node[red,scale=0.8,below right=0.2em]{$1$};
\draw(2*\r,2*\r) circle(2pt) coordinate(x3) node[red,scale=0.8,below=0.2em]{$3$};
\draw(3*\r,2*\r) circle(2pt) coordinate(x5) node[red,scale=0.8,below=0.2em]{$5$};
\dnode{0,2*\r}{mygreen} circle(2pt) coordinate(x2) node[red,scale=0.8,left=0.2em]{$2$};
\dnode{\r,3*\r}{mygreen} circle(2pt) coordinate(x4) node[red,scale=0.8,above right=0.2em]{$4$};
\dnode{3*\r,3*\r}{mygreen} circle(2pt) coordinate(x6) node[red,scale=0.8,above right=0.2em]{$6$};
}
{\color{myblue}
\draw(-\r,3*\r) circle(2pt) coordinate(x7) node[red,scale=0.8,left=0.2em]{$7$};
\draw(\r,0) circle(2pt) coordinate(x9) node[red,scale=0.8,below=0.2em]{$9$};
\draw(4*\r,2*\r) circle(2pt) coordinate(x11) node[red,scale=0.8,right=0.2em]{$11$};
\draw(2*\r,4*\r) circle(2pt) coordinate(x13) node[red,scale=0.8,above=0.2em]{$13$};
\dnode{-\r,\r}{myblue} circle(2pt) coordinate(x8) node[red,scale=0.8,left=0.2em]{$8$};
\dnode{0,0}{myblue} circle(2pt) coordinate(x10) node[red,scale=0.8,below=0.2em]{$10$};
\dnode{4*\r,3*\r}{myblue} circle(2pt) coordinate(x12) node[red,scale=0.8,right=0.2em]{$12$};
\dnode{3*\r,4*\r}{myblue} circle(2pt) coordinate(x14) node[red,scale=0.8,above=0.2em]{$14$};
}
{\color{myblue}
\qarrow{x9}{x1}
\qarrow{x2}{x9}
\qarrow{x10}{x2}
\qarrow{x2}{x8}
\qarrow{x8}{x10}
\uniarrow{x7}{x1}{shorten <=2pt,shorten >=4pt,bend left=25}
\qarrow{x3}{x7}
\qarrow{x2}{x8}
\qarrow{x5}{x13}
\qarrow{x13}{x3}
\qarrow{x12}{x14}
\qarrow{x14}{x6}
\qarrow{x5}{x11}
\qarrow{x11}{x6}
\qarrow{x6}{x12}
\uniarrow{x9}{x10}{dashed,shorten <=2pt,shorten >=4pt}
\uniarrow{x8}{x7}{dashed,shorten <=4pt,shorten >=2pt}
\uniarrow{x12}{x11}{dashed,shorten <=4pt,shorten >=2pt}
\uniarrow{x13}{x14}{dashed,shorten <=2pt,shorten >=4pt}
}
{\color{mygreen}
\qarrow{x1}{x3}
\qarrow{x3}{x5}
\qsarrow{x4}{x2}
\qsarrow{x6}{x4}
\qsharrow{x4}{x5}
\qstarrow{x1}{x4}
\uniarrow{x2}{x6}{shorten <=4pt,shorten >=4pt,bend left=65}
\uniarrow{c5.210}{c1.30}{shorten <=2pt,shorten >=2pt,bend left=15}
\uniarrow{c5.300}{c1.-30}{shorten <=2pt,shorten >=2pt,bend left=20}
}
\end{scope}
\end{tikzpicture}
\end{align*}
Then we find the Kronecker sub-quiver $\tikz{
\node[inner sep=1.5pt] (c1) at (0,0) {};
\node[inner sep=1.5pt] (c2) at (1,0) {};
\draw(0,0) circle(2pt) node[red,above=0.2em,scale=0.8]{$5$};
\draw(1,0) circle(2pt) node[red,above=0.2em,scale=0.8]{$1$};
\qarrow{c1.40}{c2.140}
\qarrow{c1.-40}{c2.-140}}$. It is well-known that the mutation sequences $(\mu_5\mu_1)^n$ for $n\geq 0$ on such a Kronecker quiver are non-trivial, and produce distinct quantum clusters. 
In this way, we obtain an infinite sequence of (web) clusters. Observe that $x_i^{(k)}$'s are all tree-type elementary webs which are not invariant under $\mathrm{DT}$.

\begin{figure}[ht]
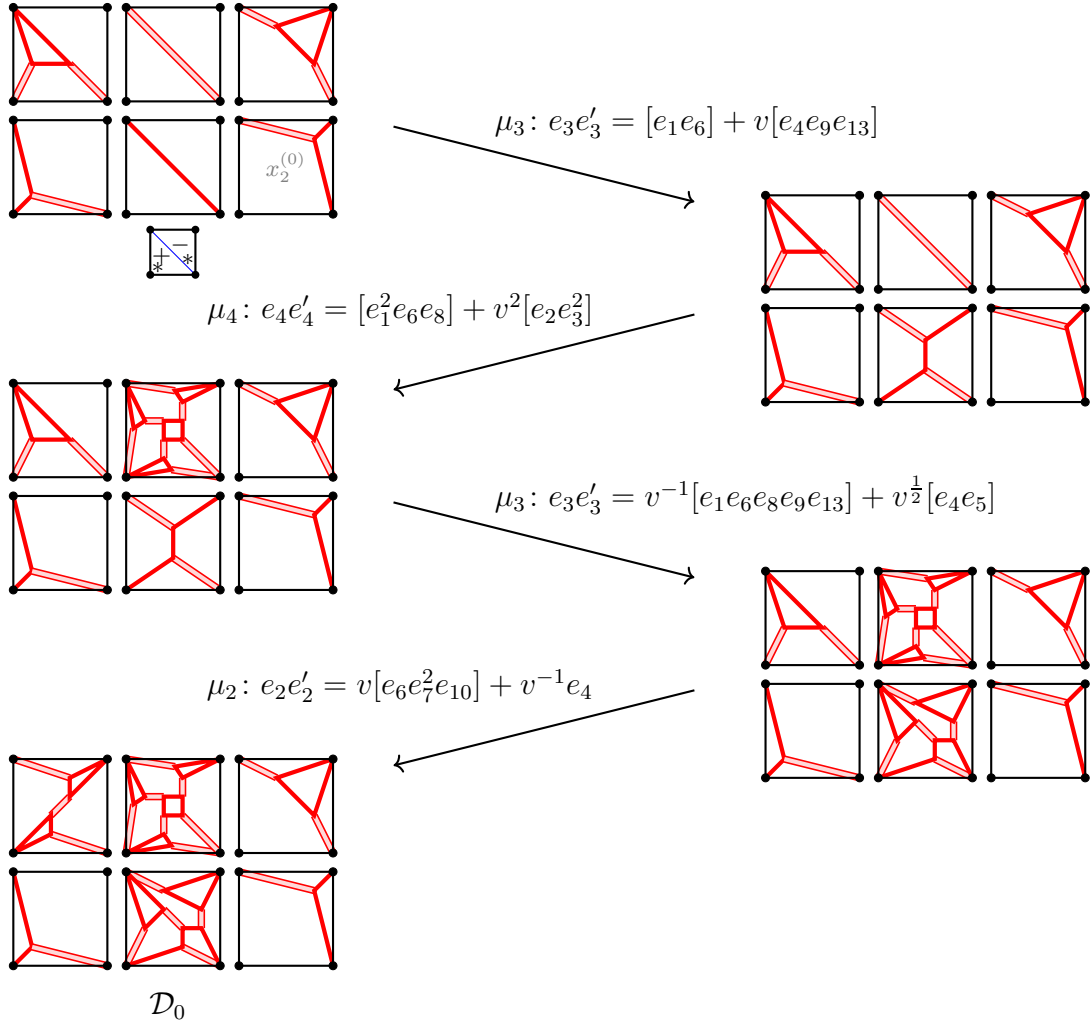


    \caption{Web clusters along a weight $2$ sequence.}
    \label{fig:infinite_weight2}
\end{figure}

\begin{rem} 
For a Kronecker quiver over vertices $\{i,j\}$ with a common weight and $2$ arrows between them, it is known that the sequence of $g$-vectors along the mutation sequence $(\mu_i\mu_j)^n$ has linear growth, and its recurrence relation stabilizes to a linear one (see, for instance, \cite[Proposition 6.6]{IK-2}). Hence the sequence $\mathrm{deg}_m \in X^*(H_\A)$ of ensemble degrees of the cluster variables appearing alternately on the two vertices along this mutation sequence, which are projections of $g$-vectors, also has linear growth. Let us define $\mathrm{deg}_\infty:=\lim_{m \to \infty}\mathrm{deg}_m/m$, which we call the \emph{asymptotic ensemble degree} of this mutation sequence. 

For the above mutation sequence $(\mu_5\mu_1)^n$ of (web) clusters, the elementary webs/cluster variables have weight $1$ and the asymptotic ensemble degree can be easily seen to be 
\begin{align}\label{eq:degree_weight1}
    \mathrm{deg}_\infty=(2\varpi_1,2\varpi_1,2\varpi_1,2\varpi_1)\in \mathsf{P}_+^{\oplus 4}.
\end{align}
In a similar analysis, we can find a Kronecker sub-quiver over the weight $2$ vertices $\{4,6\}$, and hence an infinite sequence of (web) clusters of weight $2$ starting from the cluster $
    \cD_0:=\mu_2\mu_3\mu_4\mu_3\bigg(\mathord{
        \ \tikz[baseline=.5em, scale=.1]{
            \draw[thick] (0,0) rectangle (6,6);
            \draw[blue] (0,6) -- (6,0);
            \node at (1.5,2.5) {\scriptsize ${+}$};
            \node at (4,4) {\scriptsize ${-}$};
            \node at (1,1) {\scriptsize $\ast$};
            \node at (5,2) {\scriptsize $\ast$};
            \fill (0,0) circle (15pt);
            \fill (6,0) circle (15pt);
            \fill (6,6) circle (15pt);
            \fill (0,6) circle (15pt);
        }\ 
    }\bigg).
$ Its asymptotic degree is
\begin{align}\label{eq:degree_weight2}
    \mathrm{deg}_\infty=(2\varpi_1+\varpi_2,2\varpi_1+\varpi_2,2\varpi_1+\varpi_2,2\varpi_1+\varpi_2) \in \mathsf{P}_+^{\oplus 4}.
\end{align}
The corresponding web clusters until $\cD_0$ has been confirmed, which are shown in \cref{fig:infinite_weight2}. 
The $\mathfrak{sp}_4$-webs appearing here are all tree-type elementary webs, where
$
\ \tikz[baseline=-.6ex, scale=.05, yshift=18cm]{
    \foreach \i in {1,2,...,6}
    \foreach \j in {1,2,...,6}
    {
        \coordinate (P\i\j) at (5*\j,-5*\i);
    }
    \coordinate (C) at ($(P11)!.5!(P66)$);
    \draw[wline] (P61) -- ($(P31)+(2,-2)$);
    \draw[wline] (P66) -- ($(P63)+(2,2)$);
    \draw[wline] (P11) -- ($(P14)+(-2,-2)$);
    \draw[wline] (P44) -- (P66);
    \draw[wline] (P32) -- (P33);
    \draw[wline] (P24) -- (P34);
    \draw[webline]  (P44) -- (P34) -- (P33);
    \draw[webline] (P61) -- ($(P63)+(2,2)$);
    \draw[webline] (P33) -- ($(P63)+(2,2)$);
    \draw[overarc] (P44) -- (P61);
    \draw[webline] (P11) -- ($(P31)+(2,-2)$) -- (P32) -- (P11);
    \draw[webline] (P16) -- ($(P14)+(-2,-2)$) -- (P24) -- (P16);
    \draw[thick] (P11) -- (P61) -- (P66) -- (P16) -- cycle;
    \foreach \i in {1,6}
    \foreach \j in {1,6}
    {
        \draw[fill] (P\i\j) circle (30pt);
    }
}\
$
gives a lift of $e_4$ in the web cluster $\cD_0$.
We believe that the $\mathfrak{sp}_4$-webs $\{z_2^{(3)},z_2^{(7)},z_2^{(11)},\ldots\}$ give the  corresponding sequence of web clusters of weight $2$ from $\cD_0$, 
where
\begin{align*}
    z_{2}^{(k)}&:=
    \mathord{
        \ \tikz[baseline=.5em, scale=.1]{
            \foreach \i in {0,1,...,10}
            {
                \coordinate (A\i) at (8*\i,0);
                \coordinate (B\i) at (8*\i,10);
            }
            \draw[webline] (A2) -- ($(A2)+(-2,4)$) -- ($(A2)+(2,4)$) -- (A2);
            \draw[webline] (A3) -- ($(A3)+(-2,4)$) -- ($(A3)+(2,4)$) -- (A3);
            \draw[webline] (A6) -- ($(A6)+(-2,4)$) -- ($(A6)+(2,4)$) -- (A6);
            \draw[webline] (A8) -- ($(A8)+(-2,6)$) -- ($(A8)+(2,6)$) -- (A8);
            \draw[webline] (A1) to[out=north, in=west] ($(A2)+(-2,6)$);
            \draw[webline] ($(A2)+(-2,6)$) -- ($(A3)+(-2,6)$);
            \draw[webline] ($(A3)+(-2,6)$) -- ($(A4)+(-2,6)$);
            \draw[webline] ($(A5)+(-2,6)$) -- ($(A6)+(2,6)$);
            \draw[webline] ($(A6)+(-2,6)$) -- ($(A7)+(2,6)$);
            \draw[wline] (A3) -- ($(A2)+(2,4)$);
            \draw[wline] (A4) -- ($(A3)+(2,4)$);
            \draw[wline] (A7) -- ($(A6)+(2,4)$);
            \draw[wline] ($(A2)+(-2,4)$) -- ($(A2)+(-2,6)$);
            \draw[wline] ($(A3)+(-2,4)$) -- ($(A3)+(-2,6)$);
            \draw[wline] ($(A6)+(-2,4)$) -- ($(A6)+(-2,6)$);
            \draw[webline, dashed] ($(A4)+(-2,6)$) -- ($(A5)+(-2,6)$);
            \draw[webline, dashed] ($(A4)+(2,2)$) -- ($(A5)+(2,2)$);
            \draw[webline] (A7) -- ($(A7)+(2,6)$);
            \draw[wline]  ($(A7)+(2,6)$) -- ($(A8)+(-2,6)$);
            \draw[wline, rounded corners] (A9) -- ($(A9)+(-2,6)$) -- ($(A8)+(2,6)$);
            \draw[dashed] ($(B0)+(-5,0)$) -- ($(B10)+(5,0)$);
            \draw[dashed, thick] ($(A0)+(-5,0)$) -- ($(A10)+(5,0)$);
            \bdryline{(A0)}{(A10)}{2cm}
            \foreach \j in {1,2,3,4,6,7,8,9}
            {
                \fill (A\j) circle (20pt);
            }
            \node at (A1) [below]{\scriptsize $p_2$};
            \node at (A2) [below]{\scriptsize $p_{3}$};
            \node at (A3) [below]{\scriptsize $p_{4}$};
            \node at (A6) [below]{\scriptsize $p_{k+2}$};
            \node at (A7) [below]{\scriptsize $p_{k+3}$};
            \node at (A8) [below]{\scriptsize $p_{k+4}$};
            \node at (A9) [below]{\scriptsize $p_{k+5}$};
        }\ 
    }.
\end{align*}
We have verified the first two quantum exchange relations $e_4z_2^{(3)}=q^{\bullet}[e_{6}^2e_{7}e_{9}^2e_{11}^2e_{12}e_{13}e_{14}]+q^{\bullet}[e_2e_3^{2}]$ and $e_6z_2^{(7)}=q^{\bullet}[e_2e_3^2e_{11}^2e_{14}]+q^{\bullet}(z_2^{(3)})^2$, where the non-primed variables are elementary webs in $\cD_0$.

It is an interesting observation that the asymptotic ensemble degrees \eqref{eq:degree_weight1}, \eqref{eq:degree_weight2} are invariant under rotations (\emph{i.e.}, the cluster Donaldson--Thomas transformation). The authors do not know if a general theory exists behind this phenomenon.
\end{rem}


\end{document}